\newtheorem{theorem}{Theorem}[section]
\newtheorem{lemma}[theorem]{Lemma}
\newtheorem{remark}[theorem]{Remark}
\newtheorem{corollary}[theorem]{Corollary}
\newtheorem{proposition}[theorem]{Proposition}
\newtheorem{example}[theorem]{Example}
\newtheorem{exercise}[theorem]{Exercise}
\newtheorem{problem}[theorem]{Problem}
\newtheorem{conjecture}[theorem]{Conjecture}
\newtheorem{definition}[theorem]{Definition}
\numberwithin{equation}{section}
\newcommand{\tto}{\twoheadrightarrow}
\font\sc=rsfs10
\newcommand{\cC}{\sc\mbox{C}\hspace{1.0pt}}
\newcommand{\cS}{\sc\mbox{S}\hspace{1.0pt}}
\newcommand{\cA}{\sc\mbox{A}\hspace{1.0pt}}
\newcommand{\cF}{\sc\mbox{F}\hspace{1.0pt}}
\font\scc=rsfs7
\newcommand{\ccC}{\scc\mbox{C}\hspace{1.0pt}}
\begin{document}

\title{Lectures on algebraic categorification}
\author{Volodymyr Mazorchuk}
\date{\today}

\maketitle

\tableofcontents

\section*{Introduction}\label{s0}

The most traditional approach to solving mathematical problems is:
start with a difficult problem and simplify it until it becomes easy 
enough to be solved. However, developing mathematical theories quite
often goes in the opposite direction: starting with an ``easy'' theory 
one tries to ``generalize'' it to something more complicated which could
hopefully describe a much bigger class of phenomena.

An example of the latter is the development of what is now known as
{\em Khovanov homology}. The {\em Jones polynomial} is a very elementary
classical combinatorial invariant appearing in the low dimensional topology. 
However, as most of known topological invariants, it is not an absolute one
(it does not distinguish all knots). Some twelve years ago Mikhail Khovanov
has developed a very advanced ``refinement'' of Jones polynomial which 
seriously increased the level of theoretical sophistication necessary to 
be able to define and work with it. Instead of elementary combinatorics 
and basic algebra, Khovanov's definition was based on category theory and
homological algebra and very soon led to the study of higher categorical 
structures. This ``categorification'' of the Jones polynomial created a
new direction in topology and attracted a lot of attention from some other
parts of mathematics, notably algebra and category theory. Within a few 
years {\em categorification} became an intensively studied subject in
several mathematical areas. It completely changed the viewpoint on many
long standing problems and led to several spectacular results and applications.

This text is a write-up of the lectures given by the author during the Master 
Class ``Categorification'' at {\AA}rhus University, Denmark in October 2010.
It mostly concentrates on algebraical aspects of the theory, presented in the
historical perspective, but also contains several topological applications, 
in particular, an algebraic (or, more precisely, representation theoretical) 
approach to categorification of the Jones polynomial mentioned above. The text 
consists of fifteen sections corresponding to fifteen one hour lectures given 
during the Master Class and fairly describes the content of these lectures. 
There are some exercises (which were proposed to participants of the Master Class) 
collected at the end of the text and a rather extensive list of references.
Unfortunately, the time constrains on the Master Class resulted in the fact that
several recent developments related to categorification did not make it into the text.

The text is aimed to be and introductory overview of the subject rather than 
a fully detailed monograph. The emphasis is made on definitions, examples
and formulations of the results. Most proofs are either shortly outlined or omitted, 
however, complete proofs could be found by tracking references. It is assumed
that the reader is familiar with basics of category theory, representation 
theory, topology and Lie algebras.

\section{Basics: decategorification and categorification}\label{s1}

\subsection{The idea of categorification}\label{s1.1}

The term ``categorification'' was introduced by Louis Crane in \cite{Cr}
and the idea originates from the earlier joint work \cite{CF} with 
Igor Frenkel. The term refers to the process of replacing set-theoretic 
notions by the corresponding category-theoretic analogues as shown in the 
following table:

\begin{center}
\begin{tabular}{|c||c|}
\hline
Set Theory & Category Theory\\
\hline\hline
set & category\\
\hline
element & object\\
\hline
relation between elements& morphism of objects\\
\hline
function & functor\\
\hline
relation between functions& natural transformation of functors \\
\hline
\end{tabular}
\end{center}

The general idea (or hope) is that, replacing a ``simpler'' object by 
something ``more complicated'', one gets a bonus in the form of some extra 
structure which may be used to study the original object. A priori
there are no explicit rules how to categorify some object and the
answer might depend on what kind of extra structure and properties
one expects.

\begin{example}\label{exm1}
{\rm  
The category $\cF\cS$ of finite sets may be considered as
a categorification of the semi-ring $(\mathbb{N}_0,+,\cdot)$
of non-negative integers. In this picture addition is categorified 
via the disjoint union and multiplication via the Cartesian 
product. Note that the categorified operations satisfy 
commutativity, associativity and distributivity laws only up
to a natural isomorphism.
}
\end{example}

In these lectures we will deal with some rather special 
categorifications of algebraic objects (which are quite
different from the above example). There exist many others,
even for the same objects. Our categorifications are usually
motivated by the naturality of their constructions and 
various applications.

It is always easier to ``forget'' information than to ``make it up''.
Therefore it is much more natural to start the study of 
categorification with the study of the opposite process
of forgetting information, called {\em decategorification}.
One of the most natural classical ways to ``forget'' the 
categorical information encoded in a category is  to
consider the corresponding {\em Grothendieck group}.

\subsection{Grothendieck group}\label{s1.2}

Originally, the Grothendieck group is defined for a commutative monoid 
and provides the universal way of making that monoid into an abelian 
group. Let $M=(M,+,0)$ be a  commutative  monoid. 
The {\em Grothendieck group} of $M$ is a pair
\index{Grothendieck group}
$(G,\varphi)$, where $G$ is a commutative group and
$\varphi:M\to G$ is a homomorphism of monoids, such that for 
every monoid homomorphism $\psi:M\to A$, where $A$ is a commutative 
group, there is a unique group homomorphism $\Psi:G\to A$ making the
following diagram commutative:
\begin{displaymath}
\xymatrix{ 
M\ar[rr]^{\varphi}\ar@{.>}[dr]_{\psi}&&G\ar@{=>}[dl]^{\Psi}\\ &A& 
} 
\end{displaymath}
In the language of category theory, the functor that sends a commutative monoid $M$ to its Grothendieck group $G$ is left adjoint to the 
forgetful functor from the category of abelian groups to the category 
of commutative monoids. As usual, uniqueness of the 
Grothendieck group (up to isomorphism) follows directly from the
universal property. Existence is guaranteed by the following construction:

Consider the set $G=M\times M/\sim$, where $(m,n)\sim(x,y)$
if and only if $m+y+s=n+x+s$ for some $s\in M$.

\begin{lemma}\label{lem1}
\begin{enumerate}[$($a$)$]
\item\label{lem1.1} The relation $\sim$ is a congruence on the
monoid $M\times M$ (i.e. $a\sim b$ implies $ac\sim bc$ and
$ca\sim cb$ for all $a,b,c\in M\times M$)
and the quotient $G$ is a commutative group
(the identity element of $G$ is $\overline{(0,0)}$; and the
inverse of $\overline{(m,n)}$ is $\overline{(n,m)}$). 
\item\label{lem1.2} The map $\varphi:M\to G$ defined via
$\varphi(m)=(m,0)$ is a homomorphism of monoids.
\item\label{lem1.3} The pair $(G,\varphi)$ is a Grothendieck group 
of $M$.
\end{enumerate}
\end{lemma}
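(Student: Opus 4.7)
The plan is to verify the three parts of the lemma in order, with the first part being the most technical.

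For part (\ref{lem1.1}), I would first show that $\sim$ is an equivalence relation. Reflexivity and symmetry are immediate from the defining formula $m+y+s=n+x+s$; transitivity is the only nontrivial point and is precisely the reason a ``slack'' element $s$ appears in the definition. Given $(m,n)\sim(x,y)$ with witness $s$ and $(x,y)\sim(p,q)$ with witness $t$, I would add the two defining equalities and rearrange to obtain $m+q+(s+t+x+y)=n+p+(s+t+x+y)$, so $(m,n)\sim(p,q)$ with witness $s+t+x+y$. Next, to show $\sim$ is a congruence, I would take $(m,n)\sim(x,y)$ and observe that adding the same element $(a,b)$ to both sides preserves the defining equality, using only commutativity and associativity in $M$. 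The quotient $G$ then inherits a well-defined commutative monoid structure; I would check that $\overline{(0,0)}$ is the identity and that $\overline{(n,m)}$ inverts $\overline{(m,n)}$ by noting $(m+n,n+m)\sim(0,0)$ directly from the definition.

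For part (\ref{lem1.2}), the verification $\varphi(m+n)=\overline{(m+n,0)}=\overline{(m,0)}+\overline{(n,0)}=\varphi(m)+\varphi(n)$ and $\varphi(0)=\overline{(0,0)}=0_G$ is routine from the definitions.

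For part (\ref{lem1.3}), given any monoid homomorphism $\psi:M\to A$ into a commutative group $A$, I would define $\Psi:G\to A$ by $\Psi(\overline{(m,n)})=\psi(m)-\psi(n)$. The key step is well-definedness: if $(m,n)\sim(x,y)$ with witness $s$, applying $\psi$ to $m+y+s=n+x+s$ gives an equality in $A$, and since $A$ is a group the term $\psi(s)$ may be cancelled, yielding $\psi(m)-\psi(n)=\psi(x)-\psi(y)$. That $\Psi$ is a group homomorphism with $\Psi\circ\varphi=\psi$ then follows immediately. For uniqueness, any homomorphism $\Psi'$ with $\Psi'\circ\varphi=\psi$ must satisfy $\Psi'(\overline{(m,0)})=\psi(m)$, and because $\overline{(m,n)}=\overline{(m,0)}-\overline{(n,0)}$ in $G$, the value of $\Psi'$ on every class is forced to equal $\Psi$.

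The main obstacle is not any single step but rather keeping track of why the witness $s$ is needed: it is exactly what makes $\sim$ transitive when $M$ is not cancellative, and equivalently what makes $\Psi$ well-defined above. Once this point is understood, everything else is a bookkeeping exercise in commutative monoid arithmetic.
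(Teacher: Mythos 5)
Your proof is correct and complete; all three parts are verified by the standard argument, and you correctly identify the role of the slack element $s$ both in establishing transitivity for non-cancellative $M$ and in the well-definedness of $\Psi$. The paper itself states Lemma~\ref{lem1} without proof (it is left as Exercise~\ref{exs1}\eqref{exs1.1}), and your argument is exactly the intended one.
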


This idea of the Grothendieck group can be easily generalized
to the situation of essentially small categories 
with some additional structures. Recall that a category is called
{\em essentially small} if its skeleton is small. Categories which
\index{essentially small category}
will normally appear in our context are module categories,
additive subcategories of modules categories, and
derived categories of module categories. All such categories are
easily seen to be essentially small.

The most classical example is the Grothendieck group of an
abelian category. Let $\cA$ be an essentially small 
abelian category with a fixed skeleton $\underline{\cA}$. Then the
{\em Grothendieck group} $[\cA]=\mathrm{K}_0(\cA)$ of $\cA$ is 
\index{Grothendieck group}
defined as the quotient of the free abelian group generated by
$[X]$, where $X\in \underline{\cA}$, modulo the relation $[Y]=[X]+[Z]$
for every exact sequence
\begin{equation}\label{eq01}
0\to X\to Y\to Z\to 0 
\end{equation}
in $\underline{\cA}$. This comes together with the natural map 
$[\cdot]:\cA\to[\cA]$ which maps $M\in\cA$ to the class $[M']$ 
in $[\cA]$, where $M'\in \underline{\cA}$ is the unique object 
satisfying $M\cong M'$. The group $[\cA]$ has the following natural universal 
property: for every abelian group $A$ and for every
{\em additive} function $\chi:\cA\to A$ (i.e. a function such that
\index{additive function}
$\chi(Y)=\chi(X)+\chi(Z)$ for any exact sequence \eqref{eq01})
there is a unique group homomorphism $\overline{\chi}:[\cA]\to A$ 
making the following diagram commutative:
\begin{displaymath}
\xymatrix{ 
\cA\ar[rr]^{[\cdot]}\ar@{.>}[dr]_{\chi}&&[\cA]
\ar@{=>}[dl]^{\overline{\chi}}\\ &A& 
} 
\end{displaymath}
The Grothendieck group of $\cA$ is the ``easiest'' way to make
$\cA$ into just an abelian group. In some classical cases the
group $[\cA]$ admits a very natural description:

\begin{example}\label{exm2}
{\rm
Let $\Bbbk$ be a  field and $\cA=A\text{-}\mathrm{mod}$ the category 
of finite-dimensional (left) modules over some finite dimensional 
$\Bbbk$-algebra $A$. As every $A$-module has a composition series, 
the group $[\cA]$ is isomorphic to the free abelian group with the 
basis given by classes of simple $A$-modules.
}
\end{example}

Similarly one defines the notion of a  Grothendieck group for
additive and triangulated categories. Let $\cA$ be an essentially
small additive category with biproduct $\oplus$ and 
a fixed skeleton $\underline{\cA}$. Then the
{\em split Grothendieck group} $[\cA]_{\oplus}$ of 
\index{split Grothendieck group}
$\cA$  is defined as the quotient of the free abelian group generated 
by $[X]$, where $X\in \underline{\cA}$, modulo the relations $[Y]=[X]+[Z]$
whenever $Y\cong X\oplus Z$. Note that any abelian category is
additive, however, if $\cA$ is abelian, then the group $[\cA]_{\oplus}$ 
can be bigger than $[\cA]$ if there are exact sequences of the
form \eqref{eq01} which do not split.

Let $\cC$ be an essentially small triangulated category with a 
fixed skeleton $\underline{\cC}$. Then the
{\em Grothendieck group} $[\cC]$ of  $\cC$  is defined as the quotient 
\index{Grothendieck group}
of the free abelian group generated by $[X]$, where $X\in \underline{\cC}$, modulo 
the relations $[Y]=[X]+[Z]$ for every distinguished triangle
\begin{displaymath}
X\to Y\to Z\to X[1]. 
\end{displaymath}
Again, a triangulated category is always additive, but $[\cC]_{\oplus}$ 
is usually bigger than $[\cC]$ by the same arguments as for
abelian categories. 

Let $\Bbbk$ be a field and $A$ a finite dimensional $\Bbbk$-algebra. 
Then we have two naturally defined triangulated categories associated 
with $A\text{-}\mathrm{mod}$: the bounded derived category 
$\mathcal{D}^b(A)$ and its subcategory $\mathcal{P}(A)$ of perfect 
complexes (i.e. complexes, quasi-isomorphic to finite complexes
of $A$-projectives). For projective $P$ the map
\begin{displaymath}
\begin{array}{ccc}
[\mathcal{P}(A)] &\overset{\varphi}{\longrightarrow}
&[A\text{-}\mathrm{mod}] \\
\left[P\right]&\mapsto &\left[P\right]
\end{array}
\end{displaymath}
is a group homomorphism. 

\begin{example}\label{exm3}
{\rm 
If $A$ has finite global dimension, then
$\mathcal{D}^b(A)\cong \mathcal{P}(A)$ and the map
$\varphi$ is an isomorphism. This means that in this case the group
$[A\text{-}\mathrm{mod}]$ has another distinguished basis, namely
the one corresponding to isomorphism classes of indecomposable
projective $A$-modules.
}
\end{example}

\begin{example}\label{exm5}
{\rm 
Consider the algebra $D=\mathbb{C}[x]/(x^2)$ of {\em dual numbers}.
\index{dual numbers}
This algebra has a unique simple module $L:=\mathbb{C}$
(which is annihilated by $x$). The projective cover of
$L$ is isomorphic to the left regular module $P:={}_DD$.
The module $P$ has length $2$. Therefore the group
$[D\text{-}\mathrm{mod}]$ is the free abelian group with
basis $[L]$. The group $[\mathcal{P}(D)]$ is the free abelian 
group with basis $[P]$ and $\varphi([P])=2[L]$. This means that
$[\mathcal{P}(D)]$ is a proper subgroup of $[D\text{-}\mathrm{mod}]$.
}
\end{example} 

\subsection{Decategorification}\label{s1.3}

From now on all abelian, triangulated and additive categories are assumed
to be essentially small.

\begin{definition}\label{def6}
{\rm
Let $\cC$ be an abelian or triangulated, respectively additive, category.
Then the  {\em decategorification} of $\cC$ is the abelian
\index{decategorification}
group $[\cC]$, resp. $[\cC]_{\oplus}$.
}
\end{definition}

In what follows, objects which we would like to
categorify will usually be algebras over some base ring (field).
Hence we now have to extend the notion of decategorification to
allow base rings. This is done in the usual way
(see \cite[Section~2]{MS2}) as follows: 
Let $\mathbb{F}$ be a commutative ring with $1$.

\begin{definition}\label{def7}
{\rm
Let $\cC$ be an abelian or triangulated, respectively additive, category.
Then the  {\em $\mathbb{F}$-decategorification} of $\cC$ is the
$\mathbb{F}$-module $[\cC]^{\mathbb{F}}:= \mathbb{F}\otimes_{\mathbb{Z}}
[\cC]$ (resp. $[\cC]^{\mathbb{F}}_{\oplus}:=
\mathbb{F}\otimes_{\mathbb{Z}}  [\cC]_{\oplus}$). 
}
\end{definition}

The element $1\otimes [M]$ of some $\mathbb{F}$-decategorification 
will be denoted by $[M]$ for simplicity. We have 
$[\cC]=[\cC]^{\mathbb{Z}}$ 
and $[\cC]_{\oplus}= [\cC]^{\mathbb{Z}}_{\oplus}$.

\subsection{(Pre)categorification of an $\mathbb{F}$-module}\label{s1.4}

\begin{definition}\label{def8}
{\rm
Let $V$ be an $\mathbb{F}$-module. An {\em $\mathbb{F}$-precategorification}
\index{precategorification}
$(\cC,\varphi)$ of $V$ is an abelian (resp. triangulated or additive)
category $\cC$ with a fixed monomorphism $\varphi$ from $V$ to the $\mathbb{F}$-decategorification of $\cC$.  If $\varphi$ is an isomorphism,
then $(\cC,\varphi)$ is called an {\em $\mathbb{F}$-categorification}
\index{categorification}
of $V$.
}
\end{definition}

Whereas the decategorification of a category is uniquely
defined, there are usually many different categorifications
of an $\mathbb{F}$-module $V$. For example, in case 
$\mathbb{F}=\mathbb{Z}$ we can consider the category 
$A\text{-}\mathrm{mod}$ for any $\Bbbk$-algebra $A$ having exactly
$n$ simple modules and realize $A\text{-}\mathrm{mod}$ as a
categorification of the free module $V=\mathbb{Z}^n$. In particular,
$V$ has the {\em trivial} categorification given by a semisimple category 
\index{trivial categorification}
of the appropriate size, for example by
$\mathbb{C}^{n}\text{-}\mathrm{mod}$.

\begin{definition}\label{def9}
{\rm
Let $V$ be an $\mathbb{F}$-module. Let further $(\cC,\varphi)$
and $(\cA,\psi)$ be two $\mathbb{F}$-(pre)categorifications of $V$ via
abelian (resp. triangulated or additive) categories. An 
exact (resp. triangular or additive) functor $\Phi:\cC\to \cA$
is called  a {\em morphism of categorifications} provided that 
\index{morphism of categorifications}
the following diagram commutes:
\begin{displaymath}
\xymatrix{ 
[\cC]^{\mathbb{F}}\ar[rr]^{[\Phi]}&&[\cA]^{\mathbb{F}}\\
&V\ar[ul]^{\varphi}\ar[ur]_{\psi}&
}
\end{displaymath}
where $[\Phi]$ denotes the $\mathbb{F}$-linear transformation
induced by $\Phi$.
}
\end{definition}

Definition~\ref{def9} turns all $\mathbb{F}$-(pre)categorifications 
of $V$ into a category. In what follows we will usually categorify
$\mathbb{F}$-modules using module categories for finite
dimensional $\Bbbk$-algebras. We note that extending scalars
without changing the category $\cC$ may turn 
an $\mathbb{F}$-precategorification of $V$ into an
$\mathbb{F}'$-categorification of $\mathbb{F}'\otimes_{\mathbb{F}}V$.

\begin{example}\label{exm10}
{\rm  
Consider the algebra $D$ of dual numbers (see Example~\ref{exm5}).
Then there is a unique monomorphism 
$\varphi:\mathbb{Z}\to[D\text{-}\mathrm{mod}]$ such that 
$\varphi(1)=[D]$. The homomorphism $\varphi$ is not surjective,
however, it induces an isomorphism after tensoring over
$\mathbb{Q}$, that is $\overline{\varphi}:
\mathbb{Q}\overset{\sim}{\to}[D\text{-}\mathrm{mod}]^{\mathbb{Q}}$.
Hence $(D\text{-}\mathrm{mod},\varphi)$ is a pre-categorification of
$\mathbb{Z}$ while $(D\text{-}\mathrm{mod},\overline{\varphi})$
is a $\mathbb{Q}$-categorification of $\mathbb{Q}=
\mathbb{Q}\otimes_{\mathbb{Z}}\mathbb{Z}$.
}
\end{example}

The last example to some extend explains the necessity of the notion 
of precategorification. We will usually study categorifications
of various modules. Module structures will be categorified using
functorial actions, say by exact functors. Such functors
are completely determined by their action on the additive
category of projective modules. This means that in most cases the
natural ``basis'' for categorification in the one given by 
indecomposable projectives. As we saw in Example~\ref{exm5},
isomorphism classes of indecomposable projectives do not have to 
form a basis of the decategorification. 

\subsection{Graded setup}\label{s1.5}

By {\em graded} we will always mean $\mathbb{Z}$-graded. Let
\index{graded}
$R$ be a graded ring. Consider the category $R\text{-}\mathrm{gMod}$
of all graded $R$-modules and denote by $\langle 1\rangle$ the
shift of grading autoequivalence of $R\text{-}\mathrm{gMod}$ 
normalized as follows: for a graded module $M=\oplus_{i\in\mathbb{Z}}
M_i$ we have $(M\langle 1\rangle)_j=M_{j+1}$. Assume that 
$\cC$ is a category of graded $R$-modules closed under 
$\langle \pm 1\rangle$ (for example the abelian category 
$R\text{-}\mathrm{gMod}$ or the additive category of graded 
projective modules or  the triangulated derived category of 
graded modules). Then the group $[\cC]$ (resp. $[\cC]_{\oplus}$) 
becomes a $\mathbb{Z}[v,v^{-1}]$-module via $v^{i}[M]=[M\langle -i\rangle]$ 
for any $M\in\cC$, $i\in\mathbb{Z}$.

To extend the notion of decategorification to a category of
graded modules (or complexes of graded modules), let $\mathbb{F}$ 
be a unitary commutative ring and $\iota:\mathbb{Z}[v,v^{-1}]
\to\mathbb{F}$ be a fixed homomorphism of unitary rings. 
Then $\iota$ defines on $\mathbb{F}$ the structure of a (right)
$\mathbb{Z}[v,v^{-1}]$-module.

\begin{definition}\label{def11}
{\rm
The {\em $\iota$-decategorification} of $\cC$ is
\index{decategorification}
the $\mathbb{F}$-module
\begin{displaymath}
[\cC]^{(\mathbb{F},\iota)}:=
\mathbb{F}\otimes_{\mathbb{Z}[v,v^{-1}]} [\cC]\quad
(\text{resp. }[\cC]^{(\mathbb{F},\iota)}_{\oplus}:=
\mathbb{F}\otimes_{\mathbb{Z}[v,v^{-1}]} [\cC]_{\oplus}).
\end{displaymath}
}
\end{definition}

In most of our examples the homomorphism $\iota:\mathbb{Z}[v,v^{-1}]\to
\mathbb{F}$ will be the obvious canonical inclusion. In such cases
we will omit $\iota$ in the notation. We have
\begin{displaymath}
[\cC]=[\cC]^{(\mathbb{Z}[v,v^{-1}],\mathrm{id})},\quad
[\cC]_{\oplus}=[\cC]^{(\mathbb{Z}[v,v^{-1}],\mathrm{id})}_{\oplus}.
\end{displaymath}

\begin{definition}\label{def12}
{\rm
Let $V$ be an $\mathbb{F}$-module. A {\em $\iota$-precategorification}
\index{precategorification}
$(\cC,\varphi)$ of $V$ is an abelian or triangulated, respectively additive,
category $\cC$ with a fixed free action of $\mathbb{Z}$ and a fixed
monomorphism $\varphi$ from $V$ to the $(\mathbb{F},\iota)$-decategorification
of $\cC$. If $\varphi$ is an isomorphism, $(\cC,\varphi)$ is called a
{\em $\iota$-categorification} of $V$.
\index{categorification}
}
\end{definition}

\begin{example}\label{exm14}
{\rm
The algebra $D$ of dual numbers is naturally graded with
$x$ being of degree $2$ (this is motivated by the realization of
$D$ as the cohomology ring of a flag variety). Let
$\cC=D\text{-}\mathrm{gmod}$. Then $[\cC]\cong\mathbb{Z}[v,v^{-1}]$ 
as a $\mathbb{Z}[v,v^{-1}]$-module, hence the graded category $\cC$ 
is a $(\mathbb{Z}[v,v^{-1}],\mathrm{id})$-categorification  
of $\mathbb{Z}[v,v^{-1}]$. 
}
\end{example}

\subsection{Some constructions}\label{s1.6}

As already mentioned before, for any $\mathbb{F}$ we have the
{\em trivial} categorification of the free module $\mathbb{F}^n$
\index{trivial categorification}
given by $\Bbbk^n\text{-}\mathrm{mod}$ and the isomorphism 
$\varphi:\mathbb{F}^n \to [\Bbbk^n\text{-}\mathrm{mod}]^{\mathbb{F}}$ 
which maps the usual basis of $\mathbb{F}^n$ to the basis of 
$[\Bbbk^n\text{-}\mathrm{mod}]^{\mathbb{F}}$ given by 
isomorphism classes of simple modules.

If $A\text{-}\mathrm{mod}$ categorifies some $\mathbb{F}^k$
and $B\text{-}\mathrm{mod}$ categorifies $\mathbb{F}^n$
for some finite dimensional $\Bbbk$-algebras $A$ and $B$,
then $A\oplus B\text{-}\mathrm{mod}$
categorifies $\mathbb{F}^k\oplus\mathbb{F}^n$.
This follows from the fact that every simple $A\oplus B$-modules
is either a simple $A$-module or a simple $B$-module.

If $A\text{-}\mathrm{mod}$ categorifies $\mathbb{F}^k$
and $B\text{-}\mathrm{mod}$ categorifies $\mathbb{F}^n$
for some finite dimensional $\Bbbk$-algebras $A$ and $B$,
then $A\otimes_{\Bbbk}B\text{-}\mathrm{mod}$
categorifies $\mathbb{F}^k\otimes_{\mathbb{F}}\mathbb{F}^n$.
This follows from the fact that simple $A\otimes_{\Bbbk}B$-modules
are of the form $L\otimes_{\Bbbk}N$, where $L$ is a simple
$A$-module and $N$ is a simple $B$-module.

Let $\cA=A\text{-}\mathrm{mod}$ categorify $\mathbb{F}^k$
such that the natural basis of $\mathbb{F}^k$ is given by
the isomorphism classes of simple $A$-modules.
Let $\cC$ be a Serre subcategory of $\cA$ (i.e. for any 
exact sequence \eqref{eq01} in $\cA$ we have
$Y\in\cC$ if and only if $X,Z\in\cC$). Then there is
an idempotent $e\in A$ such that $\cC$ is the category of all
modules annihilated by $e$. Thus $\cC$ is equivalent to
$B\text{-}\mathrm{mod}$, where $B=A/AeA$. The group
$[\cC]$ is a subgroup of $[\cA]$ spanned by all simple
$A$-modules belonging to $\cC$. Hence $\cC$ categorifies
the corresponding direct summand $V$ of $\mathbb{F}^k$. 
We also have the associated abelian quotient category
$\cA/\cC$ which has the same objects as $\cA$ and
morphisms given by
\begin{displaymath}
\cA/\cC(X,Y):= \lim_{\rightarrow}
\cA(X',Y/Y'), 
\end{displaymath}
where the limit is taken over all $X'\subset X$ and $Y'\subset Y$
such that $X/X',Y'\in\cC$.
The category $\cA/\cC$ is equivalent to
$C\text{-}\mathrm{mod}$, where $C=eAe$ (see for example
\cite[Section~9]{AM} for details). It follows that
$\cA/\cC$ categorifies the quotient $\mathbb{F}^k/V$,
which is also isomorphic to the direct complement of
$V$ in $\mathbb{F}^k$.

\section{Basics: from categorification of linear maps 
to $2$-categories}\label{s2}

The aim of the section is to discuss various approaches to 
categorification of algebras and modules. An important
common feature  is that any such approach categorifies
linear maps as a special case.

\subsection{Categorification of linear maps}\label{s2.1}

Let $V,W$ be $\mathbb{F}$-modules and $f:V\to W$ be
a homomorphism. Assume that $(\cA,\varphi)$ and
$(\cC,\psi)$ are abelian (resp. additive, triangulated) 
$\mathbb{F}$-categorifications of $V$ and $W$, respectively.

\begin{definition}\label{def201}
{\rm
An {\em $\mathbb{F}$-categorification} of $f$ is an exact
(resp. additive, triangular) functor $\mathrm{F}:\cA\to\cC$
such that $[\mathrm{F}]\circ\varphi=\psi\circ f$, where
$[\mathrm{F}]:[\cA]^{\mathbb{F}}_{(\oplus)}\to [\cC]^{\mathbb{F}}_{(\oplus)}$
denotes the induced homomorphism. In other words, the
following diagram commutes:
\begin{displaymath}
\xymatrix{ 
V\ar[rr]^f\ar[d]_{\varphi}&&W\ar[d]^{\psi}\\
[\cA]^{\mathbb{F}}_{(\oplus)}\ar[rr]^{[\mathrm{F}]}
&&[\cC]^{\mathbb{F}}_{(\oplus)}
}
\end{displaymath}
}
\end{definition}

For example, the identity functor is a categorification of the
identity morphism; the zero functor is the (unique) categorification
of the zero morphism.

\subsection{Na{\"\i}ve categorification}\label{s2.2}

Let $A$ be an (associative) $\mathbb{F}$-algebra with a fixed 
generating system $\mathtt{A}=\{a_i:i\in I\}$. Given an
$A$-module $M$, every $a_i$ defines a linear transformation
$a_i^M$ of $M$. 

\begin{definition}\label{def202}
{\rm
A {\em na{\"\i}ve $\mathbb{F}$-categorification} of $M$ is a tuple
\index{na{\"\i}ve categorification}
$(\cA,\varphi,\{\mathrm{F}_i:i\in I\})$, where $(\cA,\varphi)$
is an $\mathbb{F}$-categorification of $M$ and for every $i\in I$
the functor $\mathrm{F}_i$ is an $\mathbb{F}$-categorification of $a_i^M$.
}
\end{definition}

There are several natural ways to define morphisms of 
na{\"\i}ve categorifications. Let $A$ and $\mathtt{A}$ be as above,
$M$ and $N$ be two $A$-modules and $(\cA,\varphi,\{\mathrm{F}_i:i\in I\})$,
$(\cC,\psi,\{\mathrm{G}_i:i\in I\})$ be na{\"\i}ve categorifications of
$M$ and $N$, respectively. In what follows dealing with different 
categorifications we always assume that they have the same type
(i.e. either they all are abelian or additive or triangulated).
By a {\em structural} functor we will mean an exact functor
\index{structural functor}
between abelian categories, an additive functor between additive
categories and a triangular functor between triangulated categories.

\begin{definition}\label{def203}
{\rm
A {\em na{\"\i}ve morphism} of categorifications from 
\index{na{\"\i}ve morphism}
$(\cA,\varphi,\{\mathrm{F}_i:i\in I\})$ to
$(\cC,\psi,\{\mathrm{G}_i:i\in I\})$ is a structural functor 
$\Phi:\cA\to\cC$ such that for every $i\in I$ 
the following digram commutes:
\begin{displaymath}
\xymatrix{ 
[\cA]^{\mathbb{F}}\ar[rr]^{[\mathrm{F}_i]}\ar[d]_{[\Phi]}
&& [\cA]^{\mathbb{F}}\ar[d]^{[\Phi]}\\
[\cC]^{\mathbb{F}}\ar[rr]^{[\mathrm{G}_i]}&& [\cC]^{\mathbb{F}},\\
}
\end{displaymath}
where $[\Phi]$ denotes the morphism, induced by $\Phi$.
}
\end{definition}

\begin{definition}\label{def204}
{\rm
A {\em weak morphism} from 
\index{weak morphism}
$(\cA,\varphi,\{\mathrm{F}_i:i\in I\})$ to
$(\cC,\psi,\{\mathrm{G}_i:i\in I\})$ is a structural functor 
$\Phi:\cA\to\cC$ such that for every $i\in I$ 
the following digram commutes (up to isomorphism of functors):
\begin{displaymath}
\xymatrix{ 
\cA\ar[rr]^{\mathrm{F}_i}\ar[d]_{\Phi}
&& \cA\ar[d]^{\Phi}\\
\cC\ar[rr]^{\mathrm{G}_i}&& \cC\\
}
\end{displaymath}
}
\end{definition}

\begin{definition}\label{def205}
{\rm
A {\em strict morphism} from 
\index{strict morphism}
$(\cA,\varphi,\{\mathrm{F}_i:i\in I\})$ to
$(\cC,\psi,\{\mathrm{G}_i:i\in I\})$ is a structural functor 
$\Phi:\cA\to\cC$ such that for every $i\in I$ 
the following digram commutes strictly:
\begin{displaymath}
\xymatrix{ 
\cA\ar[rr]^{\mathrm{F}_i}\ar[d]_{\Phi}
&& \cA\ar[d]^{\Phi}\\
\cC\ar[rr]^{\mathrm{G}_i}&& \cC\\
}
\end{displaymath}
}
\end{definition}

Definitions~\ref{def203}--\ref{def205} give rise to the {\em na{\"\i}ve}
(resp. {\em weak} or {\em strict}) category of na{\"\i}ve categorifications
of $A$-modules (with respect to the basis $\mathtt{A}$). These categories
are in the natural way (not full) subcategories of each other.
\index{na{\"\i}ve categorification}
\index{weak categorification}
\index{strict categorification}

\begin{example}\label{exm206}
{\rm  
Consider the complex group algebra $\mathbb{C}[\mathbb{S}_n]$ of the
symmetric group $\mathbb{S}_n$. For $\lambda\vdash n$ let $S_{\lambda}$
be the corresponding Specht module and $d_{\lambda}$ be its
dimension (the number of standard Young tableaux of shape $\lambda$). 
Choose in $\mathbb{C}[\mathbb{S}_n]$
the standard generating system $\mathtt{A}$ consisting of transpositions
$s_i=(i,i+1)$, $i=1,2,\dots,n-1$. In $S_{\lambda}$ choose the basis
consisting of standard polytabloids (see e.g. \cite[Chapter~2]{Sa}). 
Then the action of every $s_i$ in this basis is given by some
matrix $M_i=(m^i_{st})_{s,t=1}^{d_{\lambda}}$ with integral coefficients.
Categorify $S_{\lambda}$ via $\mathcal{D}^b(\mathbb{C}^{d_{\lambda}})$, 
such that the basis of standard  polytabloids corresponds to the usual
basis of $[\mathcal{D}^b(\mathbb{C}^{d_{\lambda}})]$ given by simple
modules. Categorify the action of every $s_i$ using the appropriate 
direct sum, given by the corresponding coefficient in $M_i$, of 
the identity functors (shifted by $1$ in homological position in 
the case of negative coefficients). We obtain a (trivial) 
na{\"\i}ve categorification of the Specht module $S_{\lambda}$.
}
\end{example}

Similarly to Example~\ref{exm206} one can construct trivial 
na{\"\i}ve categorifications for any module with a fixed basis in which
the action of generators has integral coefficients. Instead of 
$\mathbb{C}\text{-}\mathrm{mod}$ one can also use the category 
of modules over any local algebra. We refer the reader to 
\cite[Chapter~7]{Ma} for more details.

\subsection{Weak categorification}\label{s2.3}

To define the na{\"\i}ve categorification of an $A$-module $M$ we
simply required that the functor $\mathrm{F}_i$ categorifies the
action of $a_i^M$ only {\em numerically}, that is only on the level
of the Grothendieck group. This is an extremely weak requirement
so it is natural to expect that there should exist lots of different
categorifications of $M$ and that it should be almost impossible
to classify, study and compare them in the general case. To make the
classification problem more realistic we should impose some extra
conditions. To see what kinds of conditions we may consider, we have
to analyze what kind of structure we (usually) have.

The most important piece of information which we (intentionally)
neglected up to this point is that $A$ is an algebra and hence
elements of $A$ can be multiplied. As we categorify the
action of the elements of $A$ via functors, it is natural
to expect that the multiplication in $A$ should be categorified
as the composition of functors. As we have already fixed a
generating system $\mathtt{A}$ in $A$, we can consider some 
presentation of $A$ (or the corresponding image with respect to the
action on $M$) relative to this generating system. In other words, 
the generators $a_i$'s could satisfy some relations. So, we can try
to look for functorial interpretations of such relations. Here is
a list of some natural ways to do this:
\begin{itemize}
\item equalities can be interpreted as isomorphisms of functors;
\item addition in $A$ can be interpreted as direct sum of functors;
\item for triangulated categorifications one could interpret the
negative coefficient $-1$ as the shift by $1$ in homological position
(see Example~\ref{exm206}), in particular, subtraction in $A$
can be sometimes interpreted via taking cone in the derived category;
\item negative coefficients could be made positive by moving the
corresponding terms to the other side of an equality.
\end{itemize}
Another quite common feature is that the algebra $A$ we are working
with usually comes equipped with an anti-involution $*$. The most natural
way for the functorial interpretation of an anti-involution is via
(bi)adjoint functors. Of course one has to emphasize that none of
the above interpretations is absolutely canonical. Still, we might
give the following loose definition from \cite{MS2} 
(from now on, if $\mathbb{F}$
is fixed, we will omit it in our notation for simplicity):

\begin{definition}\label{def207}
{\rm
A na{\"\i}ve categorification $(\cA,\varphi,\{\mathrm{F}_i:i\in I\})$
of an $A$-module $M$ is called a {\em weak categorification} if 
\index{weak categorification}
it satisfies the conditions given by some chosen interpretations 
of defining relations and eventual anti-involution for $A$.
}
\end{definition}

In what follows for a functor $\mathrm{F}$ we will denote by
$\mathrm{F}^*$ the biadjoint of $\mathrm{F}$ (if it exists).

\begin{example}\label{exm208}
{\rm  
Let $A=\mathbb{C}[a]/(a^2-2a)$ and $\mathtt{A}=\{a\}$. Let further
$M=\mathbb{C}$ be the $A$-module with the action $a\cdot 1=0$ and
$N=\mathbb{C}$ the $A$-module with the action $a\cdot 1=2$.
Consider $\cC=\mathbb{C}\text{-}\mathrm{mod}$,
$\mathrm{F}=0$ and $\mathrm{G}=\mathrm{Id}_{\cC}\oplus \mathrm{Id}_{\cC}$.
Define $\varphi:M\to [\cC]$ and $\psi:N\to [\cC]$ by sending 
$1$ to $[\mathbb{C}]$ (the class of the simple $\mathbb{C}$-module).
The algebra $A$ has the $\mathbb{C}$-linear involution $*$ defined 
via $a^*=a$. We have both $\mathrm{F}^*=\mathrm{F}$ and
$\mathrm{G}^*=\mathrm{G}$. We interpret $a^2-2a=0$ as
$a\cdot a=a+a$. We have $\mathrm{F}\circ \mathrm{F}\cong
\mathrm{F}\oplus \mathrm{F}$ and $\mathrm{G}\circ \mathrm{G}\cong
\mathrm{G}\oplus \mathrm{G}$. Hence $(\cC,\varphi,\mathrm{F})$
and $(\cC,\psi,\mathrm{G})$ are weak categorifications of 
$M$ and $N$, respectively. Note that $A\cong \mathbb{C}[\mathbb{S}_2]$
and that under this identification the modules $M$ and $N$ become
the sign and the trivial $\mathbb{C}[\mathbb{S}_2]$-modules, respectively.
}
\end{example}

\subsection{$2$-categories}\label{s2.4}

Following our description above we now can summarize that to
categorify the action of some algebra $A$ on some module $M$ we
would like to ``lift'' this action to a functorial ``action''
of $A$ on some category $\cC$. So, the image of our ``lift''
should be a ``nice'' subcategory of the category of endofunctors
on $\cC$. This latter category has an extra structure, which we
already tried to take into account in the previous subsections,
namely, we can compose endofunctors. This is a special case
of the structure known as a {\em $2$-category}.
\index{$2$-category}

\begin{definition}\label{def208}
{\rm
A $2$-category is a category enriched over the category of categories.
}
\end{definition}

This means that if $\cC$ is a $2$-category, then for any 
$\mathtt{i},\mathtt{j}\in\cC$ the morphisms $\cC(\mathtt{i},\mathtt{j})$
form a category, its objects are called {\em $1$-morphisms} and its
morphisms are called {\em $2$-morphisms}. 
\index{$1$-morphism}\index{$2$-morphism}
Composition $\circ_0:
\cC(\mathtt{j},\mathtt{k})\times \cC(\mathtt{i},\mathtt{j})\to
\cC(\mathtt{i},\mathtt{k})$ 
is called {\em horizontal} composition and is strictly associative and
\index{horizontal composition}
unital. Composition $\circ_1$ of $2$-morphisms inside
$\cC(\mathtt{i},\mathtt{j})$ is called 
{\em vertical} composition and is strictly associative and unital. 
\index{vertical composition}
We have the following {\em interchange law} for any composable
\index{interchange law}
$2$-morphisms $\alpha,\beta,\gamma,\delta$:
\begin{displaymath}
(\alpha\circ_0\beta)\circ_1(\gamma\circ_0\delta)=
(\alpha\circ_1\gamma)\circ_0(\beta\circ_1\delta) 
\end{displaymath}
This is usually depicted as follows:
\footnote{Thanks to Hnaef on Wikimedia for the \LaTeX-source file!}
\begin{displaymath}
\vcenter{
\xymatrix@R=0.5pc{
  {\bullet} \ar[r]^{}="mt1" \ar@/^2pc/[r]_{}="t1" \ar@{=>}"t1";"mt1" &
  {\bullet} \ar[r]^{}="mt2" \ar@/^2pc/[r]_{}="t2" \ar@{=>}"t2";"mt2" &
  {\bullet} \\
  & \circ_1 & \\
  {\bullet} \ar[r]_{}="mb1" \ar@/_2pc/[r]_{}="b1" \ar@{=>}"mb1";"b1" &
  {\bullet} \ar[r]_{}="mb2" \ar@/_2pc/[r]_{}="b2" \ar@{=>}"mb2";"b2" &
  {\bullet}
} }\,\,=\,\,
\xymatrix{
  {\bullet} \ar[r]^{}="ml" \ar@/^2pc/[r]_{}="tl" \ar@/_2pc/[r]^{}="bl"
  \ar@{=>}"tl";"ml" \ar@{=>}"ml";"bl" & 
  {\bullet} \ar[r]^{}="ml" \ar@/^2pc/[r]_{}="tl" \ar@/_2pc/[r]^{}="bl"
  \ar@{=>}"tl";"ml" \ar@{=>}"ml";"bl" & {\bullet}
} \,\,=\,\,
\xymatrix{
  {\bullet} \ar[r]^{}="ml" \ar@/^2pc/[r]_{}="tl" \ar@/_2pc/[r]^{}="bl"
  \ar@{=>}"tl";"ml" \ar@{=>}"ml";"bl" & {\bullet} \ar@{}[r]|{\textstyle\circ_0}
  &
  {\bullet} \ar[r]^{}="ml" \ar@/^2pc/[r]_{}="tl" \ar@/_2pc/[r]^{}="bl"
  \ar@{=>}"tl";"ml" \ar@{=>}"ml";"bl" & {\bullet}
}
\end{displaymath}

There is a weaker notion of a {\em bicategory}, in which, in particular,
\index{bicategory}
composition of $1$-morphisms is only required to be associative up to a 
$2$-isomorphism. There is a natural extension of the notion of 
categorical equivalence to bicategories, called {\em biequivalence}.
\index{biequivalence}
We will, however, always work with $2$-categories,
which is possible thanks to the following statement:

\begin{theorem}[\cite{MP,Le}]\label{thm209}
Every bicategory is biequivalent to a $2$-category.
\end{theorem}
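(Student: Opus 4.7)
The plan is to construct, for any bicategory $\mathcal{B}$, a strict $2$-category $\mathcal{B}'$ together with a biequivalence $\mathcal{B}\to\mathcal{B}'$. The most elegant route is a bicategorical Yoneda embedding into a $2$-category of pseudofunctors; the guiding principle is that any bicategory whose $1$- and $2$-morphisms are built out of structure in a strict $2$-category inherits strictness.

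First I would recall that the category $\mathbf{Cat}$ of small categories is naturally a strict $2$-category. Consider then $\mathcal{E}:=\mathbf{Psd}(\mathcal{B}^{\mathrm{op}},\mathbf{Cat})$, the collection of pseudofunctors $\mathcal{B}^{\mathrm{op}}\to\mathbf{Cat}$, equipped with pseudonatural transformations as $1$-morphisms and modifications as $2$-morphisms. Because the target $\mathbf{Cat}$ is strict, horizontal and vertical composition of modifications inherit strict associativity and unitality from composition of natural transformations in $\mathbf{Cat}$; thus $\mathcal{E}$ is itself a strict $2$-category. This step has to be unwound carefully, but it only uses that the structural $2$-cells in $\mathcal{B}$ appear inside the data of pseudofunctors and not in the composition laws of $\mathcal{E}$.

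Next I would define the bicategorical Yoneda embedding $y\colon\mathcal{B}\to\mathcal{E}$, sending $\mathtt{i}\in\mathcal{B}$ to the representable pseudofunctor $\mathcal{B}(-,\mathtt{i})$, a $1$-morphism $f\colon\mathtt{i}\to\mathtt{j}$ to post-composition with $f$, and a $2$-morphism to the corresponding whiskering. Let $\mathcal{B}'$ be the full sub-$2$-category of $\mathcal{E}$ on the essential image of $y$; this is a strict $2$-category by construction. It remains to verify that $y\colon\mathcal{B}\to\mathcal{B}'$ is a biequivalence: essential surjectivity on objects is immediate from the definition of $\mathcal{B}'$, and the induced functors on hom-categories $\mathcal{B}(\mathtt{i},\mathtt{j})\to\mathcal{E}(y\mathtt{i},y\mathtt{j})$ are equivalences by the bicategorical Yoneda lemma.

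The main obstacle is precisely the bicategorical Yoneda lemma: one must show that evaluation at the identity $1$-morphism $\mathrm{id}_{\mathtt{i}}$ yields an equivalence between the category of pseudonatural transformations $\mathcal{B}(-,\mathtt{i})\Rightarrow F$ and the category $F(\mathtt{i})$, for every pseudofunctor $F$. This forces a careful bookkeeping of associators and unitors in $\mathcal{B}$, and ultimately rests on the coherence theorem for bicategories (all diagrams built from associators and unitors commute). An alternative, more hands-on construction is to take $\mathcal{B}'$ with the same objects as $\mathcal{B}$, $1$-morphisms being finite strings of composable $1$-morphisms of $\mathcal{B}$ composed by concatenation (which is strictly associative), and $2$-morphisms being $2$-morphisms of $\mathcal{B}$ between a chosen left-bracketing of the source and target strings; strictness is then automatic, and the biequivalence with $\mathcal{B}$ again follows from coherence, which produces canonical isomorphisms between any two bracketings of the same string.
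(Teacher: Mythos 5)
Your argument is correct and is the standard strictification proof: the paper itself offers no proof, only the citations to Mac~Lane--Par\'e and Leinster, and both of your routes (the bicategorical Yoneda embedding into the strict $2$-category $\mathbf{Psd}(\mathcal{B}^{\mathrm{op}},\mathbf{Cat})$, and the hands-on model whose $1$-morphisms are composable strings) are exactly the arguments found there. The only points deserving explicit mention are that the Yoneda embedding is merely a pseudofunctor (which suffices for a biequivalence) and that, in the string model, strict functoriality of horizontal composition is what requires the coherence/naturality of the comparison isomorphisms between bracketings.
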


A typical example of a $2$-category is the category of functors
on some category. It has one object, its $1$-morphisms are functors,
and its $2$-morphisms are natural transformations of functors.
One can alternatively describe this using the notion of a (strict)
{\em tensor category}, which is equivalent to the notion of a
$2$-category with one object.
\index{tensor category}

A $2$-functor $\mathrm{F}:\cA\to\cC$ between two $2$-categories 
is a triple of functions sending objects, $1$-morphisms and
$2$-morphisms of $\cA$ to items of the same type in $\cC$ such that
it preserves (strictly) all the categorical structures. If
$\mathrm{G}:\cA\to\cC$ is another $2$-functor, then a $2$-natural
transformation $\zeta$ from $\mathrm{F}$ to $\mathrm{G}$ is a 
function sending $\mathtt{i}\in\cA$ to a $1$-morphism
$\zeta_{\mathtt{i}}\in\cC$ such that for every $2$-morphism
$\alpha:f\to g$, where $f,g\in \cA(\mathtt{i},\mathtt{j})$, we have
\begin{displaymath}
\xymatrix{
\mathrm{F}(\mathtt{i})
\ar@/_2pc/[rr]_{\mathrm{F}(g)}="mt1" \ar@/^2pc/[rr]^{\mathrm{F}(f)}="t1" 
\ar@{=>}"t1";"mt1"|-{\mathrm{F}(\alpha)}
&&
\mathrm{F}(\mathtt{j})\ar[r]^{\zeta_{\mathtt{j}}}&\mathrm{G}(\mathtt{j})
}=\xymatrix{
\mathrm{F}(\mathtt{i})\ar[r]^{\zeta_{\mathtt{i}}}&
\mathrm{G}(\mathtt{i})
\ar@/_2pc/[rr]_{\mathrm{G}(g)}="mt1" \ar@/^2pc/[rr]^{\mathrm{G}(f)}="t1" 
\ar@{=>}"t1";"mt1"|-{\mathrm{G}(\alpha)}
&&
\mathrm{G}(\mathtt{j})
}
\end{displaymath}
In particular, applied to the identity $2$-morphisms we get that 
$\zeta$ is an ordinary natural transformation between the associated
ordinary functors $\mathrm{F}$ and $\mathrm{G}$. Note that $2$-categories 
with $2$-functors and $2$-natural transformations form a $2$-category.

A $2$-category is called {\em additive} if it is enriched over
the category of additive categories. If $\Bbbk$ is a fixed field,
a $2$-category is called {\em $\Bbbk$-linear} if it is enriched 
over the category of $\Bbbk$-linear categories.
\index{additive $2$-category}\index{$\Bbbk$-linear $2$-category}

Now some notation: for a 
$2$-category $\cC$, objects of $\cC$ will be denoted by 
$\mathtt{i},\mathtt{j}$ and so on, objects of 
$\cC(\mathtt{i},\mathtt{j})$ (that is $1$-morphisms) will 
be called $f,g$ and so on, and $2$-morphisms
from $f$ to $g$ will be written $\alpha,\beta$
and so on. The identity $1$-morphism in $\cC(\mathtt{i},\mathtt{i})$
will be denoted $\mathbbm{1}_{\mathtt{i}}$ and the identity 
$2$-morphism from $f$ to $f$ will be denoted
$\mathrm{id}_{f}$. Composition of $1$-morphisms will be
denoted by $\circ$, horizontal composition of $2$-morphisms will be
denoted by $\circ_0$ and vertical composition of $2$-morphisms will be
denoted by $\circ_1$.

\subsection{(Genuine) categorification}\label{s2.5}

Let $\cC$ be an additive $2$-category.

\begin{definition}\label{def210}
{\rm
The {\em Grothendieck category} $[\cC]$ of $\cC$ is the category
\index{Grothendieck category}
defined as follows: $[\cC]$ 
has the same objects as $\cC$, for $\mathtt{i},\mathtt{j}\in [\cC]$ 
we have $[\cC](\mathtt{i},\mathtt{j})=[\cC(\mathtt{i},\mathtt{j})]$
and the multiplication of morphisms in $[\cC]$ is given by 
$[M]\circ[N]:=[M\circ_0 N]$.
}
\end{definition}

Note that $[\cC]$ is a preadditive category (i.e. it is enriched
over the category of abelian groups). As before, let $\mathbb{F}$ 
be a commutative ring with $1$.

\begin{definition}\label{def250}
{\rm
The {\em $\mathbb{F}$-decategorification} $[\cC]^{\mathbb{F}}$ of 
$\cC$ is the category $\mathbb{F}\otimes_{\mathbb{Z}}[\cC]$.
\index{decategorification}
}
\end{definition}

The category $[\cC]^{\mathbb{F}}$ is $\mathbb{F}$-linear (i.e.
enriched over $\mathbb{F}\text{-}\mathrm{Mod}$) by definition. 
Now we are ready to define our central notion of (genuine) 
categorification.

\begin{definition}\label{def211}
{\rm
Let $\mathcal{A}$ be an $\mathbb{F}$-linear category
with at most countably many objects. A 
{\em categorification} of $\mathcal{A}$ is a pair
\index{categorification}
$(\cA,\varphi)$, where $\cA$ is an additive $2$-category and
$\varphi:\mathcal{A}\to[\cA]^{\mathbb{F}}$ is an isomorphism.
}
\end{definition}

In the special case when $\mathcal{A}$ has only one object, say
$\mathtt{i}$, the morphism set $\mathcal{A}(\mathtt{i},\mathtt{i})$
is an $\mathbb{F}$-algebra. Therefore Definition~\ref{def211}
contains, as a special case, the definition of categorification
for arbitrary $\mathbb{F}$-algebras.

\begin{example}\label{exm212}
{\rm
Let $A=\mathbb{C}[a]/(a^2-2a)$. For the algebra $D$ of dual numbers
consider the bimodule $X=D\otimes_{\mathbb{C}}D$ and denote by
$\cC$ the $2$-category with one object 
$\mathtt{i}=D\text{-}\mathrm{mod}$ such that $\cC(\mathtt{i},\mathtt{i})$ 
is the full additive subcategory of the category of endofunctors 
of $\mathtt{i}$, consisting of all functors isomorphic to direct sums of
copies of $\mathrm{Id}=\mathrm{Id}_{D\text{-}\mathrm{mod}}$
and  $\mathrm{F}=X\otimes_{D}{}_-$. It is easy to check that
$X\otimes_{D}X\cong X\oplus X$, which implies that 
$\cC(\mathtt{i},\mathtt{i})$ is closed under  composition of 
functors. The classes  $[\mathrm{Id}]$ and 
$[\mathrm{F}]$ form a basis of $[\cC(\mathtt{i},\mathtt{i})]$.
Since $\mathrm{F}\circ\mathrm{F}=\mathrm{F}\oplus \mathrm{F}$, it follows
that the map $\varphi:A\to [\cC(\mathtt{i},\mathtt{i})]^{\mathbb{C}}$ 
such that $1\mapsto [\mathrm{Id}]$ and $a\mapsto [\mathrm{F}]$ is an
isomorphism. Hence $(\cC,\varphi)$ is a $\mathbb{C}$-categorification 
of $A$.
}
\end{example}

The above leads to the following major problem:

\begin{problem}\label{exm214}
{\rm 
Given an $\mathbb{F}$-linear category $\mathcal{A}$ 
(satisfying some reasonable integrality conditions),
construct a categorification of $\mathcal{A}$.
}
\end{problem}

Among various solutions to this problem in some special cases
one could mention \cite{Ro0,Ro,La,KL}. Whereas \cite{Ro0,Ro}
propagates algebraic approach (using generators and relations),
the approach of \cite{La,KL} uses diagrammatic calculus and
is motivated and influenced by topological methods. The idea 
to use  $2$-categories for a proper definition of algebraic 
categorification seems to go back at least to \cite{Ro0}
and is based on the results of \cite{CR} which will be mentioned
later on. One of the main advantages of this approach when compared
with weak categorification is that now all extra properties
(e.g. relations or involutions) for the generators of our
algebra can be encoded into the internal structure of the
$2$-category. Thus relations between generators now can be
interpreted as invertability of some $2$-morphisms and
biadjointness of elements connected by an anti-involution
can be interpreted in terms of existence of adjunction
morphisms, etc.

\section{Basics: $2$-representations of finitary $2$-categories}\label{s3}

\subsection{$2$-representations of $2$-categories}\label{s3.1}

Let $\cC$ be a $2$-category and $\Bbbk$ a field.  As usual, a $2$-representation of a
$\cC$ is a $2$-functor to some other $2$-category. We will deal
with $\Bbbk$-linear representations. Denote by $\mathfrak{A}_{\Bbbk}$,
$\mathfrak{R}_{\Bbbk}$ and $\mathfrak{D}_{\Bbbk}$ the $2$-categories whose objects 
are fully additive $\Bbbk$-linear categories with finitely many isomorphism classes
of indecomposable objects, categories equivalent to 
module categories of finite-dimensional $\Bbbk$-algebras,
and their (bounded) derived categories, respectively; 
$1$-morphisms are functors between objects; and $2$-morphisms are 
natural transformations of functors.  Define the $2$-categories 
$\cC\text{-}\mathfrak{amod}$, $\cC\text{-}\mathfrak{mod}$ and 
$\cC\text{-}\mathfrak{dmod}$  of {\em $\Bbbk$-linear additive $2$-representations},
{\em $\Bbbk$-linear $2$-representations} and
\index{$2$-representation}
{\em $\Bbbk$-linear triangulated $2$-representations} of $\cC$ as follows:
\begin{itemize}
\item Objects of $\cC\text{-}\mathfrak{amod}$ 
(resp. $\cC\text{-}\mathfrak{mod}$ and $\cC\text{-}\mathfrak{dmod}$) are $2$-functors
from $\cC$ to $\mathfrak{A}_{\Bbbk}$
(resp. $\mathfrak{R}_{\Bbbk}$ and $\mathfrak{D}_{\Bbbk}$);
\item $1$-morphisms are $2$-natural transformations
(these are given by a collection of structural functors);
\item $2$-morphisms are the so-called {\em modifications}, defined
\index{modification}
as follows: Let $\mathbf{M},\mathbf{N}\in \cC\text{-}\mathfrak{mod}$
(or $\cC\text{-}\mathfrak{amod}$ or $\cC\text{-}\mathfrak{dmod}$)
and $\zeta,\xi:\mathbf{M}\to\mathbf{N}$ be $2$-natural 
transformations. A modification $\theta:\zeta\to\xi$ is 
a function, which assigns to every $\mathtt{i}\in \cC$
a $2$-morphism $\theta_{\mathtt{i}}:\zeta_{\mathtt{i}}\to
\xi_{\mathtt{i}}$ such that for every $1$-morphisms
$f,g\in \cC(\mathtt{i},\mathtt{j})$ and any $2$-morphism
$\alpha:f\to g$ we have
\begin{displaymath}
\xymatrix@!=0.6pc{
\mathrm{F}(\mathtt{i})
\ar@/_2pc/[rr]_{\mathrm{F}(g)}="mt1" \ar@/^2pc/[rr]^{\mathrm{F}(f)}="t1" 
\ar@{=>}"t1";"mt1"|-{\mathrm{F}(\alpha)}
&&
\mathrm{F}(\mathtt{j})
\ar@/_2pc/[rr]_{\xi_{\mathtt{j}}}="mt1" 
\ar@/^2pc/[rr]^{\zeta_{\mathtt{j}}}="t1" 
\ar@{=>}"t1";"mt1"|-{\theta_{\mathtt{j}}}
&&\mathrm{G}(\mathtt{j})
}=\xymatrix@!=0.6pc{
\mathrm{F}(\mathtt{i})\ar@/_2pc/[rr]_{\xi_{\mathtt{i}}}="mt1" 
\ar@/^2pc/[rr]^{\zeta_{\mathtt{i}}}="t1" 
\ar@{=>}"t1";"mt1"|-{\theta_{\mathtt{i}}}
&&
\mathrm{G}(\mathtt{i})
\ar@/_2pc/[rr]_{\mathrm{G}(g)}="mt1" \ar@/^2pc/[rr]^{\mathrm{G}(f)}="t1" 
\ar@{=>}"t1";"mt1"|-{\mathrm{G}(\alpha)}
&&
\mathrm{G}(\mathtt{j})
} 
\end{displaymath}
\end{itemize}
For simplicity we will identify objects in $\cC(\mathtt{i},\mathtt{k})$  
with their images under a $2$-representation (i.e. we will use the
module notation). We will also use {\em $2$-action} and
\index{$2$-action}\index{$2$-module}
{\em $2$-module} as a synonym for {\em $2$-representation}.
Define an equivalence relation on $2$-representations of $\cC$ as the minimal equivalence
relations such that two $2$-representations of $\cC$ are {\em equivalent} if there is a 
morphism between these $2$-representations, such that the restriction of it to every object
of $\cC$ is an equivalence of categories. Now we 
can define genuine categorifications for $A$-modules.
\index{equivalent $2$-representation}

\begin{definition}\label{def301}
{\rm
Let $\Bbbk$ be a field, $A$ a $\Bbbk$-linear category 
with at most countably many objects and $M$ an
$A$-module. A {\em (pre)categorification} of $M$ is a tuple
$(\cA,\mathbf{M},\varphi,\psi)$, where 
\begin{itemize}
\item $(\cA,\varphi)$ is a categorification of $A$;
\item $\mathbf{M}\in \cA\text{-}\mathfrak{amod}$ or $\mathbf{M}\in \cA\text{-}\mathfrak{mod}$
or $\mathbf{M}\in \cA\text{-}\mathfrak{dmod}$ is such that
for every $\mathtt{i},\mathtt{j}\in \cA$  and 
$a\in\cA(\mathtt{i},\mathtt{j})$ the functor
$\mathbf{M}(a)$ is additive, exact or triangulated, respectively;
\item $\psi=(\psi_{\mathtt{i}})_{\mathtt{i}\in\cA}$, where every
$\psi_{\mathtt{i}}:M(\mathtt{i})\to [\mathbf{M}(\mathtt{i})]$ is
a monomorphism such that for every $\mathtt{i},\mathtt{j}\in \cA$ 
and $a\in\cA(\mathtt{i},\mathtt{j})$ the following diagram commutes:
\begin{displaymath}
\xymatrix{ 
M(\mathtt{i})\ar[rr]^{M(\varphi^{-1}([a]))}\ar[d]_{\psi_{\mathtt{i}}} && 
M(\mathtt{j})\ar[d]^{\psi_{\mathtt{j}}}\\ 
[\mathbf{M}(\mathtt{i})]_{(\oplus)}^{\Bbbk}\ar[rr]^{[\mathbf{M}(a)]}
&&[\mathbf{M}(\mathtt{j})]_{(\oplus)}^{\Bbbk}.  
} 
\end{displaymath}
\end{itemize}
If every $\psi_{\mathtt{i}}$ is an isomorphism, 
$(\cA,\mathbf{M},\varphi,\psi)$ is called a 
{\em categorification} of $M$.
}
\end{definition}

It is worth to note the following: if  $(\cA,\mathbf{M},\varphi,\psi)$
and $(\cA,\mathbf{M}',\varphi,\psi')$ are categorifications of $M$
and $M'$, respectively, and, moreover, the $2$-representations $\mathbf{M}$
and $\mathbf{M}'$ are equivalent, then the modules $M$ and $M'$ are isomorphic.

An important feature of the above definition is that categorification
of all relations and other structural properties of $A$ is encoded
into the internal structure of the $2$-category $\cA$. In particular,
the requirement for $\mathbf{M}(a)$ to be structural is often
automatically satisfied because of the existence of adjunctions in 
$\cA$ (see the next subsection). Later on we will see many examples
of categorifications of modules over various algebras. For the
moment we would like to look at the other direction: to be able
to categorify modules one should develop some abstract
$2$-representation theory of $2$-categories. Some overview of
this (based on \cite{MM}) is the aim of this section.

\subsection{Fiat-categories}\label{s3.2}

An additive $2$-category $\cC$ with a weak involution $*$
is called a {\em fiat-category} (over $\Bbbk$) provided that
\index{fiat-category}
\begin{enumerate}[(I)]
\item\label{fin.1} $\cC$ has finitely many objects;
\item\label{fin.2} for every $\mathtt{i},\mathtt{j}\in \cC$
the category $\cC(\mathtt{i},\mathtt{j})$ is fully additive with
finitely many isomorphism classes of indecomposable objects;
\item\label{fin.3} for every $\mathtt{i},\mathtt{j}\in \cC$
the category $\cC(\mathtt{i},\mathtt{j})$ is enriched over
$\Bbbk\text{-}\mathrm{mod}$ (in particular, all spaces of $2$-morphisms 
are  finite dimensional) and all compositions are $\Bbbk$-bilinear;
\item\label{fin.4} for every $\mathtt{i}\in \cC$ the identity
object in $\cC(\mathtt{i},\mathtt{i})$ is indecomposable;
\item\label{fin.5} for any $\mathtt{i},
\mathtt{j}\in\cC$ and any $1$-morphism
$f\in\cC(\mathtt{i},\mathtt{j})$ there exist 
$2$-morphisms $\alpha:f\circ f^*\to
\mathbbm{1}_{\mathtt{j}}$ and $\beta:\mathbbm{1}_{\mathtt{i}}\to
f^*\circ f$ such that 
$(\alpha\circ_0\mathrm{id}_{f})\circ_1
(\mathrm{id}_{f}\circ_0\beta)=\mathrm{id}_{f}$ and
$(\mathrm{id}_{f^*}\circ_0\alpha)\circ_1
(\beta\circ_0\mathrm{id}_{f^*})=\mathrm{id}_{f^*}$.
\end{enumerate}

If $\cC$ is a fiat category and $\mathbf{M}$ a $2$-representation
of $\cC$, then for every $1$-morphism $f$ the functor 
$\mathbf{M}(f)$ is both left and right adjoint to
$\mathbf{M}(f^*)$, in particular, both are exact if
$\mathbf{M}\in\cC\text{-}\mathfrak{mod}$.

\begin{example}\label{exm302}
{\rm 
The category $\cC$ from Example~\ref{exm212} is a fiat-category.
}
\end{example}

Let  $\mathbf{M}\in \cC\text{-}\mathfrak{mod}$, $\mathtt{i}\in \cC$
and $M\in \mathbf{M}(\mathtt{i})$. For $\mathtt{j}\in \cC$ denote by 
$\langle M\rangle(\mathtt{j})$ the additive closure in $\mathbf{M}(\mathtt{j})$
of all objects of the form $f\, M$, where $f\in \cC(\mathtt{i},\mathtt{j})$.
Then $\langle M\rangle$ inherits from $\mathbf{M}$ the structure of an additive
$2$-representation of $\cC$, moreover, $\langle M\rangle\in \cC\text{-}\mathfrak{amod}$
if $\cC$ is fiat.

\subsection{Principal $2$-representations of fiat-categories}\label{s3.3}

Let $\cC$ be a fiat category and $\mathtt{i}\in\cC$. Consider the
$2$-functor $\mathbf{P}_{\mathtt{i}}:\cC\to\mathfrak{A}_{\Bbbk}$ defined
as follows: 
\begin{itemize}
\item for all $\mathtt{j}\in \cC$ the additive category 
$\mathbf{P}_{\mathtt{i}}(\mathtt{j})$ is defined to be 
$\cC(\mathtt{i},\mathtt{j})$;
\item for all $\mathtt{j},\mathtt{k}\in \cC$ and any $1$-morphism
$f\in \cC(\mathtt{j},\mathtt{k})$ the functor $\mathbf{P}_{\mathtt{i}}(f)$
is defined to be the functor $f\circ_0{}_-:\cC(\mathtt{i},\mathtt{j})\to
\cC(\mathtt{i},\mathtt{k})$;
\item for all $\mathtt{j},\mathtt{k}\in \cC$, all $1$-morphisms
$f,g\in \cC(\mathtt{j},\mathtt{k})$ and any $2$-morphism $\alpha:f\to g$,
the natural transformation $\mathbf{P}_{\mathtt{i}}(\alpha)$
is defined to be $\alpha\circ_0{}_-:f\circ_0{}_-\to g\circ_0{}_-$.
\end{itemize}
The $2$-representation $\mathbf{P}_{\mathtt{i}}$ is called the
{\em $\mathtt{i}$-th principal additive} $2$-representation of $\cC$.
\index{principal additive $2$-representation}

Define also the {\em abelianization $2$-functor} 
$\overline{\cdot}:\cC\text{-}\mathfrak{amod}\to\cC\text{-}\mathfrak{mod}$
as follows: Let $\mathbf{M}\in \cC\text{-}\mathfrak{amod}$.
\index{abelianization $2$-functor}
For $\mathtt{i}\in \cC$ define $\overline{\mathbf{M}}(\mathtt{i})$ as
the category with objects $X\overset{a}{\to} Y$, where $X,Y\in \mathbf{M}(\mathtt{i})$
and $a\in \mathrm{Hom}_{\mathbf{M}(\mathtt{i})}(X,Y)$. Morphisms in $\overline{\mathbf{M}}(\mathtt{i})$
are equivalence  classes of diagrams as given by the solid part of the following picture:
\begin{displaymath}
\xymatrix{
X\ar[rr]^{a}\ar[d]_{b}&&Y
\ar[d]^{b'}\ar@{.>}[dll]_{c}\\
X'\ar[rr]^{a'}&&Y'
},
\end{displaymath}
where $X,X',Y,Y'\in \mathbf{M}(\mathtt{i})$, $a\in \mathbf{M}(\mathtt{i})(X,Y)$
$a'\in \mathbf{M}(\mathtt{i})(X',Y')$,
$b\in \mathbf{M}(\mathtt{i})(X,X')$ and $b'\in \mathbf{M}(\mathtt{i})(Y,Y')$, 
modulo the ideal generated by all  morphisms for which there exists $c$ as 
shown by the dotted arrow above  such that $a'c=b'$. In particular, the category
$\overline{\mathbf{M}}(\mathtt{i})$ is equivalent to the category of modules over
the opposite category of a skeleton of $\mathbf{M}(\mathtt{i})$. The $2$-action of $\cC$
on $\overline{\mathbf{M}}(\mathtt{i})$ is induced from that
on ${\mathbf{M}}(\mathtt{i})$ by applying elements of $\cC$ component-wise. 
The rest of the $2$-structure of 
$\overline{\cdot}$ is also defined via component-wise application. 
The $2$-representation $\overline{\mathbf{P}}_{\mathtt{i}}$ is called the 
{\em $\mathtt{i}$-th principal} $2$-representation of $\cC$.
\index{principal $2$-representation}

\begin{proposition}[The universal property of $\mathbf{P}_{\mathtt{i}}$]
\label{prop303}
Let $\mathbf{M}\in \cC\text{-}\mathfrak{amod}$, 
$M\in \mathbf{M}(\mathtt{i})$. 
\begin{enumerate}[$($a$)$]
\item \label{prop3.1}
For $\mathtt{j}\in \cC$ define $\Phi^M_{\mathtt{j}}:
\mathbf{P}_{\mathtt{i}}(\mathtt{j})\to \mathbf{M}(\mathtt{j})$ 
as follows: $\Phi^M_{\mathtt{j}}(f):=\mathbf{M}(f)\, M$ 
for every object $f\in \mathbf{P}_{\mathtt{i}}(\mathtt{j})$; 
$\Phi^M_{\mathtt{j}}(\alpha):=\mathbf{M}(\alpha)_M$
for every objects $f,g\in \mathbf{P}_{\mathtt{i}}(\mathtt{j})$
and every morphism $\alpha:f\to g$.
Then $\Phi^M=(\Phi^M_{\mathtt{j}})_{\mathtt{j}\in\ccC}$ is the
unique morphism from $\mathbf{P}_{\mathtt{i}}$ to $\mathbf{M}$
sending $\mathbbm{1}_{\mathtt{i}}$ to $M$.
\item \label{prop3.2} The correspondence 
$M\mapsto \Phi^M$ is functorial.
\end{enumerate}
\end{proposition}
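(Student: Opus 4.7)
The statement is a 2-categorical Yoneda-type universality result, and my plan is to treat it as such: the object $\mathbbm{1}_{\mathtt{i}} \in \mathbf{P}_{\mathtt{i}}(\mathtt{i})$ plays the role of a universal generator, since every object $f \in \mathbf{P}_{\mathtt{i}}(\mathtt{j}) = \cC(\mathtt{i},\mathtt{j})$ can be written as $f = f \circ_0 \mathbbm{1}_{\mathtt{i}} = \mathbf{P}_{\mathtt{i}}(f)(\mathbbm{1}_{\mathtt{i}})$, and every $2$-morphism $\alpha : f \to g$ can be written as $\alpha = \alpha \circ_0 \mathrm{id}_{\mathbbm{1}_{\mathtt{i}}} = \mathbf{P}_{\mathtt{i}}(\alpha)_{\mathbbm{1}_{\mathtt{i}}}$. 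This observation drives both halves of part (a).

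For existence in part (a), I first check that each $\Phi^M_{\mathtt{j}}$ is a functor, which is immediate because $f \mapsto \mathbf{M}(f)$ is a functor into the functor category and evaluation at $M$ is functorial. Next I verify 2-naturality: for any $1$-morphism $g \in \cC(\mathtt{j},\mathtt{k})$, the equality $\Phi^M_{\mathtt{k}} \circ \mathbf{P}_{\mathtt{i}}(g) = \mathbf{M}(g) \circ \Phi^M_{\mathtt{j}}$ on objects reduces to $\mathbf{M}(g \circ f)\,M = \mathbf{M}(g)\,\mathbf{M}(f)\,M$, which is precisely the strict preservation of $1$-composition by the $2$-functor $\mathbf{M}$; the analogous statement for $2$-morphisms uses the strict compatibility of $\mathbf{M}$ with horizontal composition, i.e.\ $\mathbf{M}(g \circ_0 \beta) = \mathrm{id}_{\mathbf{M}(g)} \circ_0 \mathbf{M}(\beta)$. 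The remaining $2$-naturality equation involving a $2$-morphism $\alpha : g \to g'$ unwinds via $\mathbf{M}(\alpha \circ_0 \mathrm{id}_f) = \mathbf{M}(\alpha) \circ_0 \mathbf{M}(\mathrm{id}_f)$. For uniqueness, I let $\Psi$ be any morphism with $\Psi_{\mathtt{i}}(\mathbbm{1}_{\mathtt{i}}) = M$ and apply $2$-naturality of $\Psi$ to the identities $f = \mathbf{P}_{\mathtt{i}}(f)(\mathbbm{1}_{\mathtt{i}})$ and $\alpha = \mathbf{P}_{\mathtt{i}}(\alpha)_{\mathbbm{1}_{\mathtt{i}}}$, forcing $\Psi_{\mathtt{j}}(f) = \mathbf{M}(f)\,M$ and $\Psi_{\mathtt{j}}(\alpha) = \mathbf{M}(\alpha)_M$, which is exactly $\Phi^M_{\mathtt{j}}$.

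For part (b), given a morphism $a : M \to N$ in $\mathbf{M}(\mathtt{i})$, I define a modification $\Phi^a : \Phi^M \to \Phi^N$ whose $\mathtt{j}$-component is the natural transformation with value $\mathbf{M}(f)(a)$ at $f \in \mathbf{P}_{\mathtt{i}}(\mathtt{j})$. Naturality in $f$ amounts to $\mathbf{M}(\beta)_N \circ \mathbf{M}(f)(a) = \mathbf{M}(f')(a) \circ \mathbf{M}(\beta)_M$ for $\beta : f \to f'$, which is the naturality square of the natural transformation $\mathbf{M}(\beta)$ applied to $a$. The modification axiom, when evaluated at $\mathbbm{1}_{\mathtt{i}}$ and composed on the right by an arbitrary $\mathbf{P}_{\mathtt{i}}(h)$, reduces again to the strict $2$-functoriality of $\mathbf{M}$. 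Finally, the identities $\Phi^{\mathrm{id}_M} = \mathrm{id}_{\Phi^M}$ and $\Phi^{a' \circ a} = \Phi^{a'} \circ \Phi^a$ follow coordinatewise from functoriality of each $\mathbf{M}(f)$, establishing that $M \mapsto \Phi^M$ is a functor from $\mathbf{M}(\mathtt{i})$ to the category of morphisms $\mathbf{P}_{\mathtt{i}} \to \mathbf{M}$.

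The calculations are essentially all bookkeeping, and I expect the main, though mild, obstacle to lie in keeping the strict versus weak distinction straight when unfolding the $2$-naturality and modification axioms, in particular correctly handling horizontal composition of a natural transformation with a functor (``whiskering'') when checking compatibility of $\Phi^M$ with $2$-morphisms $\alpha$ of $\cC$. Because the text works throughout with strict $2$-categories and strict $2$-functors, no coherence $2$-isomorphisms intervene and the verifications reduce cleanly to applications of the interchange law together with the strict $2$-functor axioms for $\mathbf{M}$.
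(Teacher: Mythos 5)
Your proof is correct and follows exactly the route the paper indicates: the paper's entire argument is the one-line remark that the claim ``follows from the $2$-functoriality of $\mathbf{M}$,'' and your write-up is precisely the Yoneda-style unwinding of that remark, with $\mathbbm{1}_{\mathtt{i}}$ as universal generator and all verifications reducing to the strict $2$-functor axioms. No discrepancies to report.
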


\begin{proof}[Idea of the proof.]
This follows from the $2$-functoriality of $\mathbf{M}$.
\end{proof}

\subsection{Cells}\label{s3.4}

Let $\cC$ be a fiat-category. For $\mathtt{i},\mathtt{j}\in\cC$ denote
by $\mathcal{C}_{\mathtt{i},\mathtt{j}}$ the set of isomorphism classes of
indecomposable $1$-morphisms in $\cC(\mathtt{i},\mathtt{j})$.
Set $\mathcal{C}=\cup_{\mathtt{i},\mathtt{j}}
\mathcal{C}_{\mathtt{i},\mathtt{j}}$.
Let $\mathtt{i},\mathtt{j},\mathtt{k},\mathtt{l}\in\cC$,
$f\in\mathcal{C}_{\mathtt{i},\mathtt{j}}$ and
$g\in\mathcal{C}_{\mathtt{k},\mathtt{l}}$. 
We will write $f\leq_R g$ provided that there 
exists $h\in\cC(\mathtt{j},\mathtt{l})$ such that $g$
occurs as a direct summand of $h\circ f$
(note that this is possible only if $\mathtt{i}=\mathtt{k}$).
Similarly, we will write $f\leq_L g$ provided that there 
exists $h\in\cC(\mathtt{k},\mathtt{i})$ such that $g$
occurs as a direct summand of $f\circ h$
(note that this is possible only if $\mathtt{j}=\mathtt{l}$).
Finally, we will write $f\leq_{LR} g$ provided that there 
exists $h_1\in\cC(\mathtt{k},\mathtt{i})$ 
and $h_2\in\cC(\mathtt{j},\mathtt{l})$ such that $g$
occurs as a direct summand of $h_2\circ f\circ h_1$.
The relations $\leq_{L}$, $\leq_{R}$ and $\leq_{LR}$ are partial
preorders on $\mathcal{C}$. The map $f\mapsto f^*$
preserves $\leq_{LR}$ and swaps $\leq_{L}$ and $\leq_{R}$.

For $f\in \mathcal{C}$ the set of all 
$g\in \mathcal{C}$ such that $f\leq_R g$
and $g\leq_R f$ will be called the {\em right
cell} of $f$ and denoted by $\mathcal{R}_{f}$.
The {\em left cell}  $\mathcal{L}_{f}$ 
and the {\em two-sided cell}  $\mathcal{LR}_{f}$ 
are defined analogously. 
\index{right cell}\index{left cell}\index{two-sided cell}

\begin{example}\label{exm304}
{\rm  
The $2$-category $\cC$ from Example~\ref{exm212} has two right 
cells, namely $\{\mathrm{Id}\}$ and $\{\mathrm{F}\}$, 
which are also left cells and thus two-sided cells as well.
}
\end{example}

\subsection{Cells modules}\label{s3.5}

Let $\cC$ be a fiat-category.
For $\mathtt{i},\mathtt{j}\in\cC$ indecomposable projective 
modules in  $\overline{\mathbf{P}}_{\mathtt{i}}(\mathtt{j})$ are indexed
by objects of $\mathcal{C}_{\mathtt{i},\mathtt{j}}$. 
For $f\in \mathcal{C}_{\mathtt{i},\mathtt{j}}$ we denote by
$L_f$ the unique simple quotient of the indecomposable projective object
$P_f:=(0\to f)\in \overline{\mathbf{P}}_{\mathtt{i}}(\mathtt{j})$.

\begin{proposition}\label{prop306}
\begin{enumerate}[$($a$)$]
\item\label{prop306.01} For $f,g\in \mathcal{C}$ the inequality
$f\,L_{g}\neq 0$ is equivalent to 
$f^*\leq_L g$.
\item\label{prop306.02}  For $f,g,h\in \mathcal{C}$ 
the inequality $[f\,L_{g}:L_{h}]\neq 0$ 
implies $h\leq_R g$.
\item\label{prop306.03} For $g,h\in \mathcal{C}$ 
such that $h\leq_R g$ there exists 
$f\in \mathcal{C}$ such that 
$[f\,L_{g}:L_{h}]\neq 0$.
\item\label{prop306.04} 
Let $f,g,h\in \mathcal{C}$.
If $L_{f}$ occurs in the top or in the socle of
$h\,L_{g}$, then $f\in \mathcal{R}_{g}$.
\item\label{prop306.05}
For any $f\in \mathcal{C}_{\mathtt{i},\mathtt{j}}$ there is a unique 
(up to scalar) nontrivial homomorphism from $P_{\mathbbm{1}_{\mathtt{i}}}$
to $f^*L_{f}$. In particular, $f^*L_{f}\neq 0$.
\end{enumerate}
\end{proposition}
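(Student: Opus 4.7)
The plan is to derive all five parts from a single computation built on the fiat biadjunction $f\dashv f^{*}\dashv f$ and the explicit description $P_{h} = (0\to h)$ in the principal $2$-representation. For $f\in\cC(\mathtt{j},\mathtt{k})$ and indecomposable $g\in\mathcal{C}_{\mathtt{i},\mathtt{j}}$, $h\in\mathcal{C}_{\mathtt{i},\mathtt{k}}$, the identity $f^{*}\circ(0\to h)=(0\to f^{*}h)$ in $\overline{\mathbf{P}}_{\mathtt{i}}(\mathtt{j})$ together with biadjunction yields
\[
\mathrm{Hom}(P_{h},\,f\,L_{g})\;\cong\;\mathrm{Hom}(f^{*}P_{h},\,L_{g})\;\cong\;\mathrm{Hom}(P_{f^{*}h},\,L_{g}).
\]
Decomposing $f^{*}h\cong\bigoplus_{m}m^{\mu_{m}}$ into indecomposables gives $P_{f^{*}h}\cong\bigoplus_{m}P_{m}^{\mu_{m}}$, and since $\mathrm{Hom}(P_{m},L_{g})\neq 0$ iff $m=g$ (and is then one-dimensional), the displayed hom is nonzero iff $g$ is a direct summand of $f^{*}h$. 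Moreover, the standard projective-cover argument shows $\mathrm{Hom}(P_{h},M)\neq 0$ iff $L_{h}$ is a composition factor of $M$, which translates every multiplicity statement into one about direct-summand decompositions.

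From this key computation, parts (a), (b), (c) become formal. For (a), $fL_{g}\neq 0$ iff some $\mathrm{Hom}(P_{h},fL_{g})\neq 0$, iff $g$ is a summand of $f^{*}h$ for some $h$, which is the definition of $f^{*}\leq_{L}g$. For (b), $[fL_{g}:L_{h}]\neq 0$ gives $g$ a summand of $f^{*}\circ h$, witnessing $h\leq_{R}g$ with $f^{*}$ as the left factor. For (c), given $h\leq_{R}g$, choose $h'$ with $g$ a summand of $h'\circ h$ and set $f:=(h')^{*}$; then $f^{*}h=h'h$ contains $g$, so by the key computation $[fL_{g}:L_{h}]\neq 0$.

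Part (d) combines (b) with biadjunction twice. If $L_{f}$ lies in the top of $hL_{g}$, then $[hL_{g}:L_{f}]\neq 0$ gives $f\leq_{R}g$ by (b); and $\mathrm{Hom}(hL_{g},L_{f})\neq 0\cong\mathrm{Hom}(L_{g},h^{*}L_{f})$ exhibits $L_{g}$ as a composition factor of $h^{*}L_{f}$, so (b) applied again gives $g\leq_{R}f$, whence $f\in\mathcal{R}_{g}$. The socle case is symmetric via $\mathrm{Hom}(L_{f},hL_{g})\cong\mathrm{Hom}(h^{*}L_{f},L_{g})$. Part (e) is a direct calculation:
\[
\mathrm{Hom}(P_{\mathbbm{1}_{\mathtt{i}}},\,f^{*}L_{f})\;\cong\;\mathrm{Hom}(fP_{\mathbbm{1}_{\mathtt{i}}},\,L_{f})\;=\;\mathrm{Hom}(P_{f},L_{f})\;\cong\;\Bbbk,
\]
which simultaneously gives uniqueness (up to scalar) of the nontrivial map and non-vanishing of $f^{*}L_{f}$.

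The main obstacle here is not really a technical one but just bookkeeping: choosing consistently between the two adjunctions $f\dashv f^{*}$ and $f^{*}\dashv f$ for each hom swap, matching the conventions for $\leq_{L}$ versus $\leq_{R}$ (i.e.\ which factor sits on which side), and being careful that $\mathrm{Hom}$-nonvanishing -- rather than exact dimension -- is all that is needed, so that no assumption on $\mathrm{End}(L_{h})$ beyond what is implicit in condition (IV) of the fiat axioms enters anywhere except (e).
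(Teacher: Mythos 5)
Your argument is correct and is essentially the paper's own: the paper proves part (a) by exactly this adjunction computation, rewriting $\mathrm{Hom}(P_{h},f\,L_{g})$ as $\mathrm{Hom}(f^{*}\circ h\,P_{\mathbbm{1}_{\mathtt{i}}},L_{g})$ and testing whether $g$ is a direct summand of $f^{*}\circ h$, and then states that the remaining claims follow similarly. Your write-up just supplies those ``similar'' deductions for (b)--(e) in the natural way, so there is nothing to object to.
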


\begin{proof}[Idea of the proof.]
To prove \eqref{prop306.01}, without loss of generality we may assume 
$g\in \mathcal{C}_{\mathtt{i},\mathtt{j}}$
and $f\in \mathcal{C}_{\mathtt{j},\mathtt{k}}$.
Then $f\,L_{g}\neq 0$ if and only if there is 
$h\in\mathcal{C}_{\mathtt{i},\mathtt{k}}$ such that 
$\mathrm{Hom}_{\overline{\mathbf{P}}_{\mathtt{i}}(\mathtt{k})}
(P_{h},f\,L_{g})\neq 0$. Using
$P_{h}=h\, P_{\mathbbm{1}_{\mathtt{i}}}$
and adjunction we obtain
\begin{displaymath}
0\neq \mathrm{Hom}_{\overline{\mathbf{P}}_{\mathtt{i}}(\mathtt{k})}
(P_{h},f\,L_{g})=
\mathrm{Hom}_{\overline{\mathbf{P}}_{\mathtt{i}}(\mathtt{j})}
(f^*\circ h\, P_{\mathbbm{1}_{\mathtt{i}}},L_{g}).
\end{displaymath}
This inequality is equivalent to the claim that
$P_{g}=g\, P_{\mathbbm{1}_{\mathtt{i}}}$ 
is a direct summand of $f^*\circ h\, 
P_{\mathbbm{1}_{\mathtt{i}}}$, that is $g$ is a direct summand
of $f^*\circ h$. Claim \eqref{prop306.01} follows.
Other claims are proved similarly
\end{proof}

Fix $\mathtt{i}\in\cC$.
Let $\mathcal{R}$ be a right cell in 
$\mathcal{C}$ such that $\mathcal{R}\cap 
\mathcal{C}_{\mathtt{i},\mathtt{j}}\neq \varnothing$ for
some $\mathtt{j}\in\cC$.

\begin{proposition}\label{prop305}
\begin{enumerate}[$($a$)$]
\item\label{prop305.01}
There is a unique submodule $K=K_{\mathcal{R}}$ of 
$P_{\mathbbm{1}_{\mathtt{i}}}$ which has the following
properties:
\begin{enumerate}[$($i$)$]
\item\label{prop305.1} Every simple subquotient of 
$P_{\mathbbm{1}_{\mathtt{i}}}/K$ is annihilated by any
$f\in\mathcal{R}$.
\item\label{prop305.2} The module $K$ has simple top,
which we denote by $L_{g_{\mathcal{R}}}$, and 
$f\, L_{g_{\mathcal{R}}}\neq 0$
for any $f\in\mathcal{R}$.
\end{enumerate}
\item\label{prop305.02} For any $f\in\mathcal{R}$
the module $f\, L_{g_{\mathcal{R}}}$
has simple top $L_{f}$.
\item\label{prop305.03} We have $g_{\mathcal{R}}\in\mathcal{R}$.
\item\label{prop305.04} For any $f\in\mathcal{R}$
we have $f^*\leq_L g_{\mathcal{R}}$
and $f\leq_R g_{\mathcal{R}}^*$.
\item\label{prop305.05} We have $g^*_{\mathcal{R}}\in 
\mathcal{R}$.
\end{enumerate}
\end{proposition}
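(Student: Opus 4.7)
The plan is to realise $K_{\mathcal{R}}$ as the minimal submodule of $P_{\mathbbm{1}_{\mathtt{i}}}$ satisfying~(i), and then to exploit Proposition~\ref{prop306} (in particular part~\ref{prop306.05}) to pin down a unique simple top and identify the Duflo-type element $g_{\mathcal{R}}$. Since $*$ swaps $\leq_L$ and $\leq_R$, all duals $f^*$ with $f\in\mathcal{R}$ lie in a single left cell, so by Proposition~\ref{prop306}\ref{prop306.01} the set
\[
\mathcal{X}:=\{g\in\mathcal{C}_{\mathtt{i},\mathtt{i}} : fL_g\neq 0 \text{ for all } f\in\mathcal{R}\} = \{g : f^*\leq_L g\}
\]
is well-defined independently of the choice of $f\in\mathcal{R}$, and condition~(i) becomes the requirement that every composition factor of $P_{\mathbbm{1}_{\mathtt{i}}}/K$ carry a label outside $\mathcal{X}$.

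The family of submodules enjoying this property is non-empty (it contains $P_{\mathbbm{1}_{\mathtt{i}}}$) and closed under intersection (via the embedding $P/(K_1\cap K_2)\hookrightarrow P/K_1\oplus P/K_2$), so a unique minimal $K=K_{\mathcal{R}}$ exists thanks to the finite-length property of $P_{\mathbbm{1}_{\mathtt{i}}}$ built into the fiat axioms. Minimality also forces every simple summand of $\mathrm{top}(K)$ to be indexed by $\mathcal{X}$: otherwise one could strictly shrink $K$ while preserving~(i).

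To identify the simple top, I would then fix $f\in\mathcal{R}$ and consider the essentially unique non-zero map $\psi_f\colon P_{\mathbbm{1}_{\mathtt{i}}}\to f^*L_f$ from Proposition~\ref{prop306}\ref{prop306.05}. By Proposition~\ref{prop306}\ref{prop306.04} the socle of $f^*L_f$ is supported on simples $L_h$ with $h\in\mathcal{R}_f=\mathcal{R}$, and by adjunction
\[
\mathrm{Hom}(L_g, f^*L_f)=\mathrm{Hom}(fL_g, L_f),
\]
so $L_g$ appears in $\mathrm{soc}(f^*L_f)$ precisely when $L_f$ appears in $\mathrm{top}(fL_g)$. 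The one-dimensionality in Proposition~\ref{prop306}\ref{prop306.05}, together with the minimality of $K$ (which forces the head of $K$ to detect every $\psi_f$ as $f$ varies in $\mathcal{R}$), then singles out a unique simple class $L_{g_{\mathcal{R}}}$ with $g_{\mathcal{R}}\in\mathcal{R}\cap\mathcal{X}$ appearing in $\mathrm{top}(K)$; this establishes~(ii) of~(a) together with~(c).

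The remaining parts follow once $g_{\mathcal{R}}$ is in hand: (b) by applying $f$ to the surjection $K\twoheadrightarrow L_{g_{\mathcal{R}}}$ and using the above adjunction together with Proposition~\ref{prop306}\ref{prop306.04} to force $L_f$ as the unique simple quotient of $fL_{g_{\mathcal{R}}}$; (d) by reading $f^*\leq_L g_{\mathcal{R}}$ off $g_{\mathcal{R}}\in\mathcal{X}$ and dualising via $*$; and (e) by running the same construction for the left cell $\mathcal{R}^*$ of $f^*$, whose Duflo element matches $g_{\mathcal{R}}^*$. The main obstacle is the Duflo-type uniqueness in the third paragraph---producing a \emph{single} simple top rather than a direct sum---and it is precisely there that the fiat biadjunction $(f,f^*)$ is indispensable, converting head-uniqueness for $K$ into socle-uniqueness for $f^*L_f$ where Proposition~\ref{prop306}\ref{prop306.04},\ref{prop306.05} apply.
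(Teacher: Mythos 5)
Your overall strategy (take $K$ minimal with property (i), then feed everything into Proposition~\ref{prop306}) is the paper's strategy, and your existence/uniqueness argument for $K$ and the observation that minimality forces every simple summand of $\mathrm{top}(K)$ to survive all $f\in\mathcal{R}$ are both fine. But there is a genuine gap exactly where you flag "the main obstacle": you never actually prove that $\mathrm{top}(K)$ is simple. The linchpin you are missing is the computation
\begin{displaymath}
f\,K \;=\; f\,P_{\mathbbm{1}_{\mathtt{i}}} \;=\; P_f \qquad\text{for every } f\in\mathcal{R},
\end{displaymath}
which holds because $f$ is exact (fiat biadjunction) and, by property (i), kills every composition factor of $P_{\mathbbm{1}_{\mathtt{i}}}/K$. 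Once you have this, simplicity of $\mathrm{top}(K)$ is immediate: two distinct summands $L_{h_1},L_{h_2}$ of $\mathrm{top}(K)$ would give a surjection $P_f=f\,K\twoheadrightarrow f\,L_{h_1}\oplus f\,L_{h_2}$ with both summands nonzero, contradicting the fact that the projective $P_f$ has simple top; the same equality gives (b) (since $f\,L_{g_{\mathcal{R}}}$ is then a nonzero quotient of $P_f$) and hence (c) via Proposition~\ref{prop306}\eqref{prop306.04}.

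Your proposed substitute -- adjunction into $f^*L_f$ together with Proposition~\ref{prop306}\eqref{prop306.04}--\eqref{prop306.05} -- does not close this. Part \eqref{prop306.05} says $\dim\mathrm{Hom}(P_{\mathbbm{1}_{\mathtt{i}}},f^*L_f)=1$, i.e.\ the multiplicity $[f^*L_f:L_{\mathbbm{1}_{\mathtt{i}}}]$ equals one; it says nothing about $\mathrm{soc}(f^*L_f)$ being simple, so there is no ``socle-uniqueness'' to convert. Likewise, your adjunction identity $\mathrm{Hom}(L_g,f^*L_f)=\mathrm{Hom}(fL_g,L_f)$ detects whether $L_f$ occurs in $\mathrm{top}(fL_g)$, but knowing $fL_g\neq 0$ (which is all that minimality plus Proposition~\ref{prop306}\eqref{prop306.01} gives you) does not tell you that $L_f$ occurs there -- that is essentially statement (b), which you would be assuming. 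So the argument as written is circular at the decisive point; inserting the displayed equality above repairs it and recovers the paper's proof.
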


\begin{proof}[Idea of the proof.]
The module $K$ is defined as the minimal module with property
\eqref{prop305.1}. Then for every $f\in\mathcal{R}$ we have
$f\,K=f\, P_{\mathbbm{1}_{\mathtt{i}}}=P_f$. The latter module
has simple top, which implies that $K$ has simple top.
The rest follows from Proposition~\ref{prop306}.
\end{proof}

Set $L=L_{g_{\mathcal{R}}}$. For $\mathtt{j}\in\cC$ denote by 
$\mathcal{D}_{\mathcal{R},\mathtt{j}}$ the full subcategory of
$\mathbf{P}_{\mathtt{i}}(\mathtt{j})$ with objects
$g\, L$, $g\in \mathcal{R}\cap \mathcal{C}_{\mathtt{i},\mathtt{j}}$.  
Note that $2$-morphisms in $\cC$ surject onto homomorphisms between 
these $g\, L$. 

\begin{theorem}[Construction of cell modules.]
\label{thm307}
\begin{enumerate}[$($a$)$]
\item\label{thm307.01}
For every $f\in\mathcal{C}$ and $g\in \mathcal{R}$,
the module $f\circ g\,L$ is isomorphic to a direct
sum of modules of the form $h\,L$, $h\in \mathcal{R}$.
\item\label{thm307.02}
For every $f,h\in \mathcal{R}\cap
\mathcal{C}_{\mathtt{i},\mathtt{j}}$ we have
\begin{displaymath}
\dim\mathrm{Hom}_{\overline{\mathbf{P}}_{\mathtt{i}}(\mathtt{j})}
(f\,L,h\,L)=[h\,L:L_{f}].
\end{displaymath}
\item\label{thm307.03}
For $f\in\mathcal{R}$ let $\mathrm{Ker}_{f}$ be the kernel of
$P_f\tto f\, L$. Then the module
$\oplus_{f\in \mathcal{R}}\mathrm{Ker}_{f}$ is stable under any 
endomorphism of $\oplus_{f\in \mathcal{R}}P_{f}$.
\item\label{thm307.04}
The full subcategory $\mathbf{C}_{\mathcal{R}}(\mathtt{j})$ of
$\overline{\mathbf{P}}_{\mathtt{i}}(\mathtt{j})$ consisting of all objects
$M$ which admit a two step resolution $X_1\to X_0\tto M$,
$X_1,X_0\in\mathrm{add}(\oplus_{f\in \mathcal{R}\cap
\mathcal{C}_{\mathtt{i},\mathtt{j}}}f\, L)$, 
is equivalent to 
$\mathcal{D}_{\mathcal{R},\mathtt{j}}^{\mathrm{op}}\text{-}\mathrm{mod}$.
\item\label{thm307.05}
Restriction from  $\overline{\mathbf{P}}_{\mathtt{i}}$ defines the structure of a 
$2$-representation of $\cC$ on  $\mathbf{C}_{\mathcal{R}}$, which
is called the {\em cell module} corresponding to
$\mathcal{R}$.
\index{cell module}
\end{enumerate}
\end{theorem}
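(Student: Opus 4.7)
The plan is to prove the five parts in the order (a), (c), (b), (d), (e), which matches their logical dependencies. For (a), I decompose $f\circ g\cong \bigoplus_i h_i^{\oplus n_i}$ as $1$-morphisms in $\cC$; since $\cC$ acts by additive functors, $f\circ g\,L\cong \bigoplus_i(h_i\,L)^{\oplus n_i}$, so it suffices to show that any indecomposable summand $h_i$ with $h_i\,L\neq 0$ lies in $\mathcal{R}$. By the very definition of $\leq_R$, being a summand of $f\circ g$ with $g\in\mathcal{R}$ gives $g_{\mathcal{R}}\leq_R g\leq_R h_i$. On the other hand, Proposition~\ref{prop306}\eqref{prop306.01} asserts that $h_i\,L=h_i\,L_{g_{\mathcal{R}}}\neq 0$ is equivalent to $h_i^*\leq_L g_{\mathcal{R}}$; applying $*$, which swaps $\leq_L$ and $\leq_R$, this rewrites as $h_i\leq_R g_{\mathcal{R}}^*$, and by Proposition~\ref{prop305}\eqref{prop305.05} as $h_i\leq_R g_{\mathcal{R}}$. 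Combining both inequalities places $h_i$ in $\mathcal{R}$.

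For (c), unpacking the abelianization on $P_f=(0\to f)$ identifies $\mathrm{Hom}_{\overline{\mathbf{P}}_{\mathtt{i}}(\mathtt{j})}(P_f,P_{f'})$ with the space of $2$-morphisms $f\to f'$ in $\cC$, so any endomorphism $\phi$ of $\oplus_{f\in\mathcal{R}}P_f$ is a matrix whose entries are such $2$-morphisms. A $2$-morphism $\alpha:f\to f'$ is a natural transformation of the associated functors on $\overline{\mathbf{P}}_{\mathtt{i}}$, so naturality applied to the surjection $K\twoheadrightarrow L$ yields a commutative square with rows $f\,K\to f\,L$ and $f'\,K\to f'\,L$. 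Identifying $P_f = f\,P_{\mathbbm{1}_{\mathtt{i}}}$ with $f\,K$ via the canonical isomorphism that holds for $f\in\mathcal{R}$ (as in the sketch of Proposition~\ref{prop305}), this square says precisely $\alpha(\mathrm{Ker}_f)\subseteq\mathrm{Ker}_{f'}$, giving (c). Part (b) then drops out: any $\phi\in\mathrm{Hom}(P_f,h\,L)$ lifts by projectivity of $P_f$ to $\tilde\phi:P_f\to P_h$, and (c), applied to the endomorphism of $P_f\oplus P_h$ with off-diagonal entry $\tilde\phi$, forces $\tilde\phi(\mathrm{Ker}_f)\subseteq\mathrm{Ker}_h$, so $\phi$ descends to $f\,L\to h\,L$. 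Thus the surjection $P_f\twoheadrightarrow f\,L$ induces an isomorphism $\mathrm{Hom}(f\,L,h\,L)\cong\mathrm{Hom}(P_f,h\,L)$, whose dimension equals $[h\,L:L_f]$ since $P_f$ is the projective cover of $L_f$.

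For (d), set $Q:=\bigoplus_{f\in\mathcal{R}\cap\mathcal{C}_{\mathtt{i},\mathtt{j}}}f\,L$; by (b), $\mathrm{End}(Q)^{\mathrm{op}}$ is the endomorphism algebroid of $\mathcal{D}_{\mathcal{R},\mathtt{j}}$, and the standard functor $\mathrm{Hom}_{\overline{\mathbf{P}}_{\mathtt{i}}(\mathtt{j})}(Q,-)$ identifies $\mathbf{C}_{\mathcal{R}}(\mathtt{j})$ with $\mathcal{D}_{\mathcal{R},\mathtt{j}}^{\mathrm{op}}\text{-}\mathrm{mod}$: the two-step presentability condition in the definition of $\mathbf{C}_{\mathcal{R}}(\mathtt{j})$ is exactly what cuts out the essential image of this Morita-type equivalence. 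For (e), any $1$-morphism $\mathrm{F}$ of $\cC$ acts by an exact functor on $\overline{\mathbf{P}}_{\mathtt{i}}$ and, by (a), sends $\mathrm{add}(Q)$ into the analogous additive hull on the codomain side; exactness carries a two-step resolution $X_1\to X_0\twoheadrightarrow M$ of $M\in\mathbf{C}_{\mathcal{R}}(\mathtt{j})$ to one of $\mathrm{F}(M)$, so the $\cC$-action restricts to $\mathbf{C}_{\mathcal{R}}$.

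The main obstacle is part (a), where one must track carefully how $*$ swaps the left and right preorders and invoke the right-cell self-duality of $g_{\mathcal{R}}$ from Proposition~\ref{prop305}\eqref{prop305.05}; any misplacement of a direction in $\leq_L,\leq_R$ collapses the equivalence. Once (a) is secured, parts (b)--(e) reduce to formal consequences of naturality of $2$-morphisms, projectivity of the $P_f$, and standard Morita theory.
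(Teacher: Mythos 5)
Your proof is correct. The paper itself states Theorem~\ref{thm307} without any proof (Section~\ref{s3} is a summary of \cite{MM}), so there is no in-text argument to compare against; your route --- establishing (a) by combining $g_{\mathcal{R}}\leq_R g\leq_R h_i$ with $h_i\leq_R g_{\mathcal{R}}^*$ obtained from Proposition~\ref{prop306}\eqref{prop306.01} after applying $*$ and invoking Proposition~\ref{prop305}\eqref{prop305.05}, proving (c) by naturality of $2$-morphisms applied to $K\twoheadrightarrow L$ under the identification $P_f\cong f\,K$ for $f\in\mathcal{R}$, and then deducing (b), (d), (e) from projectivity of $P_f$, Auslander's projectivization, and (right) exactness of the $\cC$-action together with (a) --- is exactly the standard argument one finds in \cite{MM}.
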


\begin{example}\label{exm312}
{\rm Consider the category $\cC$ from Example~\ref{exm212}. For the
cell representation $\mathbf{C}_{\{\mathbbm{1}_{\mathtt{i}}\}}$
we have $G_{\{\mathbbm{1}_{\mathtt{i}}\}}=\mathbbm{1}_{\mathtt{i}}$,
which implies that $\mathbf{C}_{\{\mathbbm{1}_{\mathtt{i}}\}}(\mathtt{i})=
\mathbb{C}\text{-}\mathrm{mod}$; 
$\mathbf{C}_{\{\mathbbm{1}_{\mathtt{i}}\}}(\mathrm{F})=0$
and $\mathbf{C}_{\{\mathbbm{1}_{\mathtt{i}}\}}(\alpha)=0$ for 
all radical $2$-morphisms $\alpha$. For the
cell representation $\mathbf{C}_{\{\mathrm{F}\}}$
we have $G_{\{\mathrm{F}\}}=\mathrm{F}$,
which implies that $\mathbf{C}_{\{\mathrm{F}\}}(\mathtt{i})\cong
D\text{-}\mathrm{mod}$, 
$\mathbf{C}_{\{\mathrm{F}\}}(\mathrm{F})=\mathrm{F}$
and $\mathbf{C}_{\{\mathrm{F}\}}(\alpha)=\alpha$ for 
all radical $2$-morphisms $\alpha$.
}
\end{example}

\subsection{Homomorphisms from a cell module}\label{s3.6}

Let $\cC$ be a fiat category, $\mathcal{R}$ a right cell in 
$\cC$ and $\mathtt{i}\in\cC$ be such 
that $g_{\mathcal{R}}\in\mathcal{C}_{\mathtt{i},\mathtt{i}}$.
Let further $f\in\cC(\mathtt{i},\mathtt{i})$
and $\alpha:f\to g_{\mathcal{R}}$ be such that 
$\overline{\mathbf{P}}_{\mathtt{i}}(\alpha):f\,P_{\mathbbm{1}_{\mathtt{i}}}
\to g_{\mathcal{R}}\,P_{\mathbbm{1}_{\mathtt{i}}}$
gives a projective presentation for $L_{g_{\mathcal{R}}}$.

\begin{theorem}\label{thm309}
Let $\mathbf{M}$ be a $2$-representation of $\cC$. Denote by
$\Theta=\Theta_{\mathcal{R}}^{\mathbf{M}}$ the full subcategory of 
$\mathbf{M}(\mathtt{i})$ consisting of all objects isomorphic to
those which appear as cokernel of $\mathbf{M}(\alpha)$. 
\begin{enumerate}[$($a$)$] 
\item\label{thm309.2} For every morphism $\Psi$ 
from $\mathbf{C}_{\mathcal{R}}$ to $\mathbf{M}$ we have
$\Psi(L_{g_{\mathcal{R}}})\in\Theta$.
\item\label{thm309.3} For every $M\in\Theta$
there is a morphism $\Psi^M$  from $\mathbf{C}_{\mathcal{R}}$ 
to $\overline{\langle M\rangle}$ sending $L_{g_{\mathcal{R}}}$ to $M$.
\end{enumerate}
\end{theorem}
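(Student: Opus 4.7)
My plan is to use the $2$-naturality of morphisms of $2$-representations together with the universal property of principal $2$-representations from Proposition~\ref{prop303}. Throughout I treat $L_{g_{\mathcal{R}}}$ as the object of $\mathbf{C}_{\mathcal{R}}(\mathtt{i})\subseteq\overline{\mathbf{P}}_{\mathtt{i}}(\mathtt{i})$ presented by
\[
f\,P_{\mathbbm{1}_{\mathtt{i}}}\xrightarrow{\;\overline{\mathbf{P}}_{\mathtt{i}}(\alpha)_{P_{\mathbbm{1}_{\mathtt{i}}}}\;}g_{\mathcal{R}}\,P_{\mathbbm{1}_{\mathtt{i}}}\tto L_{g_{\mathcal{R}}},
\]
which is exact by the hypothesis placed on $\alpha$.

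For part (a), apply the exact functor $\Psi_{\mathtt{i}}$ to the displayed presentation. Because $\Psi$ is a $2$-natural transformation, there are coherent isomorphisms $\Psi_{\mathtt{i}}(h\cdot X)\cong \mathbf{M}(h)(\Psi_{\mathtt{i}}(X))$ natural in $X$ and in $2$-morphisms between $1$-morphisms $h$. Setting $N:=\Psi_{\mathtt{i}}(P_{\mathbbm{1}_{\mathtt{i}}})$, these isomorphisms turn the image of the presentation into an exact sequence
\[
\mathbf{M}(f)(N)\xrightarrow{\;\mathbf{M}(\alpha)_{N}\;}\mathbf{M}(g_{\mathcal{R}})(N)\tto \Psi_{\mathtt{i}}(L_{g_{\mathcal{R}}}),
\]
so $\Psi_{\mathtt{i}}(L_{g_{\mathcal{R}}})\cong\mathrm{coker}(\mathbf{M}(\alpha)_{N})$ lies in $\Theta$.

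For part (b), given $M\in\Theta$ fix some $N\in\mathbf{M}(\mathtt{i})$ with $M\cong\mathrm{coker}(\mathbf{M}(\alpha)_{N})$. Proposition~\ref{prop303} produces a unique morphism $\Phi^{N}:\mathbf{P}_{\mathtt{i}}\to\mathbf{M}$ of additive $2$-representations sending $\mathbbm{1}_{\mathtt{i}}$ to $N$; by construction its image is contained in $\langle N\rangle$. Componentwise application of the abelianization $2$-functor extends $\Phi^{N}$ to $\overline{\Phi}^{N}:\overline{\mathbf{P}}_{\mathtt{i}}\to\overline{\langle N\rangle}$. Evaluating on the above presentation gives $\overline{\Phi}^{N}(L_{g_{\mathcal{R}}})\cong M$, and for each $h\in\mathcal{R}\cap \mathcal{C}_{\mathtt{i},\mathtt{j}}$ the $2$-naturality isomorphisms supply $\overline{\Phi}^{N}(h\,L_{g_{\mathcal{R}}})\cong h\,M\in\langle M\rangle$. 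Since every object of $\mathbf{C}_{\mathcal{R}}(\mathtt{j})$ admits a two-term resolution by direct sums of the $h\,L_{g_{\mathcal{R}}}$, its image under $\overline{\Phi}^{N}$ admits a two-term resolution by direct sums of the $h\,M$ and therefore lies in $\overline{\langle M\rangle}$. Define $\Psi^{M}$ to be the resulting restriction $\mathbf{C}_{\mathcal{R}}\to\overline{\langle M\rangle}$, which manifestly sends $L_{g_{\mathcal{R}}}$ to $M$.

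The main technical obstacle is verifying that $\Psi^{M}$ genuinely assembles into a $2$-natural transformation rather than a mere collection of coordinatewise functors, and that its image is landed in $\overline{\langle M\rangle}$ and not only in the a~priori larger $\overline{\langle N\rangle}$. Both points reduce to checking the modification coherence diagrams for the isomorphisms $\overline{\Phi}^{N}(h\cdot-)\cong \mathbf{M}(h)(\overline{\Phi}^{N}(-))$, which are inherited through abelianization from the coherence data guaranteed by Proposition~\ref{prop303}(a); granted $2$-functoriality of $\mathbf{M}$ and of the abelianization, this is a routine naturality chase, but it is the only place where the delicate bookkeeping of the construction really has to be carried out.
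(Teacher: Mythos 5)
Your part~(\ref{thm309.2}) has a genuine gap: you apply $\Psi_{\mathtt{i}}$ to the projective presentation
$f\,P_{\mathbbm{1}_{\mathtt{i}}}\to g_{\mathcal{R}}\,P_{\mathbbm{1}_{\mathtt{i}}}\tto L_{g_{\mathcal{R}}}$ and in particular set $N:=\Psi_{\mathtt{i}}(P_{\mathbbm{1}_{\mathtt{i}}})$, but $\Psi$ is only defined on $\mathbf{C}_{\mathcal{R}}$, and the objects $P_{\mathbbm{1}_{\mathtt{i}}}$, $P_f=f\,P_{\mathbbm{1}_{\mathtt{i}}}$ and $P_{g_{\mathcal{R}}}=g_{\mathcal{R}}\,P_{\mathbbm{1}_{\mathtt{i}}}$ do not lie in $\mathbf{C}_{\mathcal{R}}(\mathtt{i})$ in general. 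Indeed, $\mathbf{C}_{\mathcal{R}}(\mathtt{i})$ consists of objects admitting a two-step resolution by $\mathrm{add}(\oplus_{h\in\mathcal{R}\cap\mathcal{C}_{\mathtt{i},\mathtt{i}}}h\,L_{g_{\mathcal{R}}})$; by Proposition~\ref{prop306}\eqref{prop306.04} a simple $L_h$ with $h$ in the top of such an object forces $h\in\mathcal{R}$, so $P_{\mathbbm{1}_{\mathtt{i}}}$ (with top $L_{\mathbbm{1}_{\mathtt{i}}}$) is excluded unless $\mathbbm{1}_{\mathtt{i}}\in\mathcal{R}$. The repair is to work entirely inside $\mathbf{C}_{\mathcal{R}}$: take $N:=\Psi_{\mathtt{i}}(L_{g_{\mathcal{R}}})$ and first prove that $f\,L_{g_{\mathcal{R}}}\xrightarrow{\alpha_{L_{g_{\mathcal{R}}}}}g_{\mathcal{R}}\,L_{g_{\mathcal{R}}}\tto L_{g_{\mathcal{R}}}$ is exact. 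This needs an argument: applying the natural transformation $\alpha$ to the surjection $K\tto L_{g_{\mathcal{R}}}$ (with $fK=fP_{\mathbbm{1}_{\mathtt{i}}}$, $g_{\mathcal{R}}K=g_{\mathcal{R}}P_{\mathbbm{1}_{\mathtt{i}}}$ as in the proof of Proposition~\ref{prop305}) shows $\mathrm{coker}(\alpha_{L_{g_{\mathcal{R}}}})$ is a quotient of the simple $L_{g_{\mathcal{R}}}$, and it is nonzero because the image of $\alpha_{L_{g_{\mathcal{R}}}}$ lies in the radical of $g_{\mathcal{R}}\,L_{g_{\mathcal{R}}}$ (the image of $\alpha_{P_{\mathbbm{1}_{\mathtt{i}}}}$ is the radical of $P_{g_{\mathcal{R}}}$, which cannot surject onto $g_{\mathcal{R}}L_{g_{\mathcal{R}}}$ under the projective cover $P_{g_{\mathcal{R}}}\tto g_{\mathcal{R}}L_{g_{\mathcal{R}}}$). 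Only then does exactness of $\Psi_{\mathtt{i}}$ together with $2$-naturality give $\Psi_{\mathtt{i}}(L_{g_{\mathcal{R}}})\cong\mathrm{coker}(\mathbf{M}(\alpha)_{N})\in\Theta$.

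Part~(\ref{thm309.3}) follows the intended route (the universal property of $\mathbf{P}_{\mathtt{i}}$ plus abelianization, exactly the paper's one-line sketch), and you correctly identify where the remaining work lies. One caveat: your phrase ``the a~priori larger $\overline{\langle N\rangle}$'' is misleading, since $\langle M\rangle$ is not contained in $\langle N\rangle$ in general ($M$ is only a quotient of $g_{\mathcal{R}}N$, and $\langle N\rangle$ is merely additive, not closed under quotients). The passage from $\overline{\langle N\rangle}$ to $\overline{\langle M\rangle}$ is not a restriction of codomain but goes through evaluating the formal cokernels in $\mathbf{M}$ and then re-presenting the images of objects of $\mathbf{C}_{\mathcal{R}}$ by their two-step resolutions in $\mathrm{add}(\oplus_h h\,M)$; this is where Theorem~\ref{thm307}\eqref{thm307.03}--\eqref{thm307.04} enters and deserves to be said explicitly.
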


\begin{proof}[Idea of the proof.]
Using the universal property of $\mathbf{P}_{\mathtt{i}}$,
this follows from the $2$-functoriality of $\mathbf{M}$.
\end{proof}

\subsection{Serre subcategories and quotients}\label{s3.7}

Let $\cC$ be a fiat category and $\mathbf{M}\in\cC\text{-}\mathfrak{mod}$.
In every $\mathbf{M}(\mathtt{i})$ choose a set
of simple modules and denote by $\mathbf{N}(\mathtt{i})$ the Serre
subcategory of $\mathbf{M}(\mathtt{i})$ which these modules generate.
If $\mathbf{N}$ turns out to be stable with respect to the
action of $\cC$ (restricted from $\mathbf{M}$), then 
$\mathbf{N}$ become a $2$-representation of $\cC$
(a {\em Serre submodule}). Moreover,
\index{Serre submodule}
the quotient $\mathbf{Q}$, defined via 
$\mathbf{Q}(\mathtt{i}):=\mathbf{M}(\mathtt{i})/\mathbf{N}(\mathtt{i})$,
also carries the natural structure of a $2$-representation of $\cC$.

\begin{example}\label{exm315}
{\rm 
The cell module $\mathbf{C}_{\{\mathbbm{1}_{\mathtt{i}}\}}$ in
Example~\ref{exm312} is a Serre submodule of
$\mathbf{P}_{\mathtt{i}}$ and the cell module
$\mathbf{C}_{\{\mathrm{F}\}}$ is equivalent to the corresponding quotient.
}
\end{example}

Serre submodules of $\mathbf{M}$ can be organized into a
{\em Serre filtration} of $\mathbf{M}$.
\index{Serre filtration}

\subsection{Naturally commuting functors}\label{s3.8}

A $2$-morphism between two $2$-representations of some
$2$-category $\cC$ can be understood via functors {\em naturally
commuting} with the functors defining the action of $\cC$
\index{naturally commuting functors}
in the terminology of \cite{Kh}. Note that this notion is not
symmetric, that is if some functor $\mathrm{F}$ naturally commutes 
with the action of $\cC$ it does not follow that an element
of this action naturally commutes with $\mathrm{F}$ (in fact, the
latter does not really make sense as $\mathrm{F}$ is not specified
as an object of some $2$-action of a $2$-category).

Another interesting notion from \cite{Kh} is that of a category
with full projective functors, which just means that we have a
$2$-representation of a $2$-category $\cC$ such that 
$2$-morphisms of $\cC$ surject onto homomorphisms between
projective modules of this representation.

\section{Category $\mathcal{O}$: definitions}\label{s4}

\subsection{Definition of category $\mathcal{O}$}\label{s4.1}

One of the main sources for categorification models is 
the Bernstein-Gelfand-Gelfand (BGG) category $\mathcal{O}$ associated 
to a fixed triangular decomposition of a semi-simple complex 
finite dimensional Lie algebra  $\mathfrak{g}$. 
This category appears in \cite{BGG} as a non-semi-simple extension of 
the semi-simple category of finite-dimensional $\mathfrak{g}$-modules,
which, on the one hand, contains a lot of new objects, notably all
simple highest weight modules, but, on the other hand, has several very
nice properties, notably the celebrated BGG reciprocity, see Theorem~\ref{thm405}.
The study of category $\mathcal{O}$ that followed revealed a number
of different kinds of symmetries (e.g. Ringel self-duality and Koszul self-duality)
and spectacular connections to, in particular, combinatorics and geometry.
Most importantly for us, the Lie-theoretic nature of $\mathcal{O}$ leads to
a variety of naturally defined functorial actions on this category which, as we
will see, are very useful and play an important role in various categorifications.
Let us start with a short recap of basic  properties of $\mathcal{O}$
(see also \cite{Di,Hu}).

For $n\in\mathbb{N}$ consider the reductive complex Lie algebra 
$\mathfrak{g}=\mathfrak{g}_n=\mathfrak{gl}_n=\mathfrak{gl}_n(\mathbb{C})$.
Let $\mathfrak{h}$ denote the commutative subalgebra of all
diagonal matrices (the {\em Cartan subalgebra} of $\mathfrak{g}$).
\index{Cartan subalgebra}
Denote by $\mathfrak{n}_+$ and $\mathfrak{n}_-$ the Lie subalgebras
of upper and lower triangular matrices, respectively.
Then we have the {\em standard triangular decomposition}
\index{triangular decomposition}
$\mathfrak{g}=\mathfrak{n}_-\oplus\mathfrak{h}\oplus\mathfrak{n}_+$.
The subalgebra $\mathfrak{b}=\mathfrak{h}\oplus\mathfrak{n}_+$
is the {\em Borel subalgebra} of $\mathfrak{g}$.
\index{Borel subalgebra}
For a Lie algebra $\mathfrak{a}$ we denote  by $U(\mathfrak{a})$
the universal enveloping algebra of $\mathfrak{a}$.

\begin{definition}\label{def401}
{\rm 
The category $\mathcal{O}=\mathcal{O}(\mathfrak{g})$ is the full
subcategory of the category of $\mathfrak{g}$-modules which consists
of all modules $M$ satisfying the following conditions:
\begin{itemize}
\item $M$ is finitely generated;
\item the action of $\mathfrak{h}$ on $M$ is diagonalizable;
\item the action of $U(\mathfrak{n}_+)$ on $M$ is locally finite, 
that is $\dim U(\mathfrak{n}_+)v<\infty$ for all $v\in M$.  
\end{itemize}
}
\end{definition}
\index{category $\mathcal{O}$}

For example, all semi-simple finite-dimensional $\mathfrak{g}$-modules
are objects of $\mathcal{O}$. Elements of $\mathfrak{h}^*$ are called
{\em weights}. For a $\mathfrak{g}$-module $M$ and
\index{weight}
$\lambda\in\mathfrak{h}^*$ define the corresponding weight space
\begin{displaymath}
M_{\lambda}:=\{v\in M:hv=\lambda(h)v\text{ for all }
h\in\mathfrak{h}\}. 
\end{displaymath}
Then the condition of $\mathfrak{h}$-diagonalizability can be
written in the following form:
\begin{displaymath}
M=\bigoplus_{\lambda\in\mathfrak{h}^*} M_{\lambda}.
\end{displaymath}

For $i,j\in\{1,2,\dots,n\}$ denote by $e_{ij}$ the corresponding
matrix unit. Then $\{e_{ii}\}$ form a standard basis of $\mathfrak{h}$. 
For $i<j$ set $\alpha_{ij}=e_{ii}^*-e_{jj}^*$. Then 
$\mathbf{R}:=\{\pm\alpha_{ij}\}$
is a root system of type $A_n$ (in its linear hull). Let 
$W\cong\mathbb{S}_n$ be the corresponding Weyl group. It acts on
$\mathfrak{h}^*$ (and $\mathfrak{h}$) by permuting indexes 
of elements of the standard basis. For a root $\alpha$ we denote
by $s_{\alpha}$ the corresponding reflection in $W$.
For $i=1,2,\dots,n-1$ we also denote by $s_i$ the simple
reflection $s_{\alpha_{ii+1}}$.  We denote by $S$ the set of all 
simple reflections. The corresponding positive roots form a basis 
of $\mathbf{R}$. In terms of the generators $s_1,s_2,\dots,s_{n-1}$
the set of defining relations for $W$ consists of the relations
$s_i^2=e$ for all $i$ (meaning that every $s_i$ is an involution) 
together with the following relations, called {\em braid relations}:
\index{braid relations}
\begin{displaymath}
\begin{array}{ccc}
s_is_j=s_js_i&\text{ for all } i,j \text{ such that } &|i-j|>1;\\
s_is_{j}s_i=s_{j}s_is_{j}&\text{ for all } i,j \text{ such that } & |i-j|=1.
\end{array}
\end{displaymath}
A presentation of the {\em braid group} $\mathbb{B}_n$ is given by the 
\index{braid group}
same generators using only braid relations (so, the generators of $\mathbb{B}_n$
are no longer involutive). In this way $\mathbb{B}_n$ appears with the
natural epimorphism onto $W$ given by the identity map on the generators.

Let $\rho$ be the half of the sum of all positive 
roots. Define the {\em dot-action} of $W$ on $\mathfrak{h}^*$ as follows:
\index{dot-action}
$w\cdot\lambda=w(\lambda+\rho)-\rho$. 

For any $M$, $\lambda$ and $i<j$ we have
$e_{ij} M_{\lambda}\subset M_{\lambda+\alpha_{ij}}$ and
$e_{ji} M_{\lambda}\subset M_{\lambda-\alpha_{ij}}$.
For $i<j$ we have positive roots
$\{\alpha_{ij}\}$ and negative roots
$\{-\alpha_{ij}\}$, moreover,  the matrix
units $e_{ij}$ and $e_{ji}$ are root element for roots $\alpha_{ij}$
and $-\alpha_{ij}$, respectively. In particular, 
$\mathfrak{n}_+$ and $\mathfrak{n}_-$ are the linear spans of all 
positive and negative root spaces, respectively. With respect to
the system
$S$ we have the length function $\mathfrak{l}:W\to\{0,1,2,\dots\}$.
We denote by $w_o$ the longest element of $W$ and by 
$\leq$ the Bruhat order on $W$.

Define the {\em standard} partial order $\leq$ on $\mathfrak{h}^*$ as 
\index{standard partial order}
follows: $\lambda\leq \mu$ if and only if $\mu-\lambda$ is a 
linear combination of positive roots with non-negative integral 
coefficients. Let $\mathfrak{h}^*_{\mathrm{dom}}$ denote the set of all 
elements in $\mathfrak{h}^*$ dominant with respect to the dot-action.

\subsection{Verma modules}\label{s4.2}

For $\lambda\in \mathfrak{h}^*$ let $\mathbb{C}_{\lambda}$ be the
one-dimensional $\mathfrak{h}$-module on which elements of 
$\mathfrak{h}$ act via $\lambda$. Setting 
$\mathfrak{n}_+\mathbb{C}_{\lambda}=0$ defines on $\mathbb{C}_{\lambda}$
the structure of a $\mathfrak{b}$-module. The induced module
\begin{displaymath}
M(\lambda):=U(\mathfrak{g})\bigotimes_{U(\mathfrak{b})}
\mathbb{C}_{\lambda}  
\end{displaymath}
is called the {\em Verma module} with highest weight $\lambda$,
\index{Verma module}
see \cite{Ve,BGG0,Hu} and \cite[Chapter~7]{Di}. By adjunction,
$M(\lambda)$ has the following universal property: for any
$\mathfrak{g}$-module $N$ we have
\begin{displaymath}
\begin{array}{ccc}
\mathrm{Hom}_{\mathfrak{g}}(M(\lambda),N)
&\cong&\{v\in N_{\lambda}:\mathfrak{n}_+v=0\}\\
\varphi&\mapsto& \varphi(1\otimes 1).
\end{array} 
\end{displaymath}
A weight vector $v$ satisfying $\mathfrak{n}_+v=0$ is called a
{\em highest weight} vector. A module generated by a highest weight 
\index{highest weight vector}\index{highest weight module}
vector is called a {\em highest weight} module.

It is easy to check that $M(\lambda)\in\mathcal{O}$ and that 
$M(\lambda)_{\lambda}=\mathbb{C}\langle 1\otimes 1\rangle$.
Further, $M(\lambda)$ has the unique maximal submodule, namely
the sum of all submodules which do not intersect $M(\lambda)_{\lambda}$.
Hence $M(\lambda)$ has the unique simple quotient denoted by
$L(\lambda)$. Obviously $L(\lambda)\in \mathcal{O}$.
The set $\{L(\lambda):\lambda\in \mathfrak{h}^*\}$ is
a complete and irredundant set of simple highest weight modules
(which are exactly the simple objects in $\mathcal{O}$).

\begin{theorem}\label{thm402}
\begin{enumerate}[$($a$)$]
\item\label{thm402.1} $M(\lambda)$ has finite length. 
\item\label{thm402.2} $[M(\lambda):L(\lambda)]=1$ and
$[M(\lambda):L(\mu)]\neq 0$ implies $\mu\leq \lambda$. 
\item\label{thm402.3} $M(\lambda)$ has simple socle. 
\item\label{thm402.8} $\dim\mathrm{Hom}_{\mathfrak{g}}
(M(\lambda):M(\mu))\leq 1$ and any nonzero element of this space is 
injective. 
\item\label{thm402.4} The following conditions are equivalent:
\begin{enumerate}[$($i$)$]
\item\label{thm402.4.1} $\dim\mathrm{Hom}_{\mathfrak{g}}
(M(\mu):M(\lambda))\neq 0$. 
\item\label{thm402.4.2} $[M(\lambda):L(\mu)]\neq 0$. 
\item\label{thm402.4.3} There is a sequence $\beta_1,\dots,\beta_k$
of positive roots such that 
\begin{displaymath}
\lambda\geq s_{\beta_1}\cdot \lambda\geq 
s_{\beta_2}s_{\beta_1}\cdot \lambda\geq\dots\geq
s_{\beta_k}\cdots s_{\beta_2}s_{\beta_1}\cdot \lambda=\mu.
\end{displaymath}
\end{enumerate}
\item\label{thm402.5} Every endomorphism of $M(\lambda)$ is
scalar.
\end{enumerate} 
\end{theorem}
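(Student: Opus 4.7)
The foundation for the entire theorem is the PBW isomorphism $M(\lambda)\cong U(\mathfrak{n}_-)\otimes_{\mathbb{C}}\mathbb{C}_{\lambda}$, which makes $M(\lambda)$ a free rank-one module over the Noetherian domain $U(\mathfrak{n}_-)$ and yields the weight decomposition $M(\lambda)=\bigoplus_{\mu\leq\lambda}M(\lambda)_\mu$ with finite-dimensional weight spaces and $\dim M(\lambda)_\lambda=1$, spanned by $v_\lambda:=1\otimes 1$. Parts \eqref{thm402.2} and \eqref{thm402.5} then follow essentially at once: any composition factor $L(\mu)$ of $M(\lambda)$ must satisfy $\mu\leq\lambda$, while $L(\lambda)$ appears exactly once since only $L(\lambda)$ contributes to the one-dimensional $\lambda$-weight space; the universal property of $M(\lambda)$ identifies $\mathrm{End}_{\mathfrak{g}}(M(\lambda))$ with $\{v\in M(\lambda)_{\lambda}:\mathfrak{n}_+v=0\}=\mathbb{C}v_\lambda$, so every endomorphism is scalar.

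For \eqref{thm402.1} the plan is to invoke Harish-Chandra's theorem on central characters: the centre $Z(\mathfrak{g})$ acts on $M(\lambda)$ by the character $\chi_\lambda$, and $\chi_\mu=\chi_\lambda$ forces $\mu\in W\cdot\lambda$. Hence composition factors $L(\mu)$ of $M(\lambda)$ have $\mu\in W\cdot\lambda\cap\{\mu\leq\lambda\}$, a finite set; combined with finite-dimensionality of each weight space, this bounds the total length. The injectivity half of \eqref{thm402.8} is almost as quick: a nonzero map $\varphi:M(\mu)\to M(\lambda)$ sends $v_\mu$ to a nonzero highest weight vector $w$, and the map $u\mapsto uw$ from $U(\mathfrak{n}_-)$ to $M(\lambda)$ is injective because $M(\lambda)$ is torsion-free over the domain $U(\mathfrak{n}_-)$.

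The main technical obstacle is the bound $\dim\mathrm{Hom}_{\mathfrak{g}}(M(\mu),M(\lambda))\leq 1$. I plan to pass to the Ore localization: $U(\mathfrak{n}_-)$ is a Noetherian domain, hence an Ore domain, admitting a skew field of fractions $\mathcal{K}$, and $M(\lambda)\otimes_{U(\mathfrak{n}_-)}\mathcal{K}$ is one-dimensional over $\mathcal{K}$. Any two highest weight vectors $v_1,v_2\in M(\lambda)_\mu$ thus become $\mathcal{K}$-proportional after localization, giving $av_1=bv_2$ in $M(\lambda)$ for nonzero $a,b\in U(\mathfrak{n}_-)$, and a weight-graded analysis combined with $\mathfrak{n}_+$-invariance of $v_1,v_2$ forces $a,b$ to reduce to scalars, so that $v_1,v_2$ are proportional. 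Once this is established, part \eqref{thm402.3} drops out: any nonzero submodule $N\subseteq M(\lambda)$ contains a highest weight vector (take a weight $\mu$ minimal among weights occurring in $N$), so any simple submodule equals $M(\mu)=L(\mu)$ generated by one such vector, and the just-established one-dimensionality of highest weight vectors at a fixed weight forces the socle to be simple.

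For the equivalence \eqref{thm402.4}, the implication \eqref{thm402.4.1}$\Rightarrow$\eqref{thm402.4.2} is immediate from the injectivity in \eqref{thm402.8}. I would handle \eqref{thm402.4.3}$\Rightarrow$\eqref{thm402.4.1} by induction on $k$, reducing to the single-step BGG lemma that $s_\beta\cdot\lambda\leq\lambda$ entails $M(s_\beta\cdot\lambda)\hookrightarrow M(\lambda)$; the latter is proved by constructing the required singular vector as a suitable power of a negative root vector along $\beta$ via an $\mathfrak{sl}_2$-reduction. The remaining direction \eqref{thm402.4.2}$\Rightarrow$\eqref{thm402.4.3} is the deepest part of the statement and is the BGG strong linkage theorem in composition-factor form; my approach would be through the Jantzen filtration and its sum formula, which supplies the inductive descent needed to assemble the chain of reflections linking $\mu$ back to $\lambda$.
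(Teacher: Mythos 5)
The paper states this theorem without proof (it is a survey; the result is standard and is delegated to Dixmier, Humphreys and BGG), so your sketch has to stand on its own. Most of it does: the PBW weight-space analysis for \eqref{thm402.2} and \eqref{thm402.5}, Harish-Chandra's theorem plus finiteness of $W\cdot\lambda\cap\{\mu\leq\lambda\}$ for \eqref{thm402.1}, torsion-freeness of $M(\lambda)$ over the domain $U(\mathfrak{n}_-)$ for the injectivity half of \eqref{thm402.8}, and the Jantzen sum formula for \eqref{thm402.4.2}$\Rightarrow$\eqref{thm402.4.3} are all correct routes.

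The genuine gap is your proof that $\dim\mathrm{Hom}_{\mathfrak{g}}(M(\mu),M(\lambda))\leq 1$. The Ore property only yields nonzero homogeneous $a,b$ with $av_1=bv_2$, i.e.\ that the two images intersect nontrivially; the claim that ``$\mathfrak{n}_+$-invariance of $v_1,v_2$ forces $a,b$ to reduce to scalars'' is exactly the statement to be proved, and no weight-graded computation delivers it, since $av_1$ is not itself a highest weight vector and the highest-weight condition on $v_1,v_2$ imposes nothing visible on $a,b$. The standard repair reverses your order of \eqref{thm402.3} and \eqref{thm402.8}: the Ore condition makes $U(\mathfrak{n}_-)$ uniform as a left module over itself, hence $M(\lambda)$ is uniform, so its socle (nonzero by finite length) is simple; every simple submodule of a Verma module is a simple Verma module $M(\nu)=L(\nu)$, generated by a highest weight vector of a \emph{maximal} (not minimal, as you wrote --- minimal weights need not even exist) weight of the submodule; and then restriction to $\mathrm{soc}\,M(\mu)$ is injective on $\mathrm{Hom}(M(\mu),M(\lambda))$ and lands in $\mathrm{Hom}(L(\nu),\mathrm{soc}\,M(\lambda))$, which is at most one-dimensional by Schur's lemma. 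Relatedly, your deduction of \eqref{thm402.3} from \eqref{thm402.8} only shows the socle is multiplicity-free: two simple summands with \emph{distinct} highest weights are not excluded by the one-dimensionality of highest weight vectors of a fixed weight, and uniformity is again the missing ingredient. Finally, in \eqref{thm402.4.3}$\Rightarrow$\eqref{thm402.4.1}, the singular vector realizing $M(s_\beta\cdot\lambda)\hookrightarrow M(\lambda)$ is a power of the root vector $f_\beta$ applied to $v_\lambda$ only when $\beta$ is simple; for a general positive root that vector is not $\mathfrak{n}_+$-invariant, and the one-step embedding requires the BGG induction through simple reflections (or a Shapovalov-form argument).
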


\subsection{Block decomposition}\label{s4.3}

Denote by $Z(\mathfrak{g})$ the center of $U(\mathfrak{g})$.
By Theorem~\ref{thm402}\eqref{thm402.5}, the action of $Z(\mathfrak{g})$
on $M(\lambda)$ is scalar. Let 
$\chi_{\lambda}:Z(\mathfrak{g})\to\mathbb{C}$ be the corresponding
{\em central character}. The map $\lambda\mapsto\chi_{\lambda}$ sets
\index{central character}
up a bijection between $\mathfrak{h}^*_{\mathrm{dom}}$ and the
set of central characters (see \cite[Section~7.4]{Di}).
For a dominant weight $\lambda$ denote by $\mathcal{O}_{\lambda}$
the full subcategory of $\mathcal{O}$ consisting of all modules
on which the kernel of $\chi_{\lambda}$ acts locally nilpotently.

\begin{theorem}\label{thm403}
Every simple module in $\mathcal{O}_{\lambda}$ has the form
$L(w\cdot\lambda)$ for some $w\in W$ and we have
\begin{displaymath}
\mathcal{O}=\bigoplus_{\lambda\in
\mathfrak{h}^*_{\mathrm{dom}}}\mathcal{O}_{\lambda}. 
\end{displaymath}
\end{theorem}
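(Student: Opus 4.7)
The plan is to prove the two assertions separately. For the first claim, every simple object in $\mathcal{O}$ is isomorphic to some $L(\mu)$ with $\mu\in\mathfrak{h}^*$ (as noted just before Theorem~\ref{thm402}). Since $L(\mu)$ is a quotient of $M(\mu)$ and, by Theorem~\ref{thm402}\eqref{thm402.5}, any element of $Z(\mathfrak{g})$ acts on $M(\mu)$ as a scalar --- namely $\chi_\mu$ on the one-dimensional highest weight line, and hence on everything that line generates --- the action of $Z(\mathfrak{g})$ on $L(\mu)$ is through $\chi_\mu$. Therefore $L(\mu)\in\mathcal{O}_\lambda$ if and only if $\chi_\mu=\chi_\lambda$. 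By the stated Harish-Chandra bijection between $\mathfrak{h}^*_{\mathrm{dom}}$ and the set of central characters, combined with the (standard) equality $\chi_{w\cdot\mu}=\chi_\mu$ for $w\in W$, this is equivalent to $\mu\in W\cdot\lambda$, yielding the first part.

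For the block decomposition, the core step is to show that for every $M\in\mathcal{O}$ the action of $Z(\mathfrak{g})$ on $M$ is locally finite. Given this, $M$ splits as the direct sum of its generalised eigenspaces for the maximal ideals of $Z(\mathfrak{g})$, and each summand lies in $\mathcal{O}_\lambda$ for the unique $\lambda\in\mathfrak{h}^*_{\mathrm{dom}}$ realising the corresponding central character. Any $\mathfrak{g}$-homomorphism commutes with $Z(\mathfrak{g})$ and hence preserves these components, so morphisms between distinct blocks vanish and one gets $\mathcal{O}=\bigoplus_{\lambda\in\mathfrak{h}^*_{\mathrm{dom}}}\mathcal{O}_\lambda$ as an honest block decomposition.

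To obtain local finiteness of $Z(\mathfrak{g})$ on $M\in\mathcal{O}$, I would build a finite filtration $0=M_0\subset M_1\subset\cdots\subset M_k=M$ whose subquotients $M_i/M_{i-1}$ are highest weight modules. Finite generation together with local $U(\mathfrak{n}_+)$-finiteness guarantees a highest weight vector in any nonzero quotient of $M$, whose cyclic submodule is a highest weight module; iterating, the noetherianity of $U(\mathfrak{g})$ forces the process to terminate. Each $M_i/M_{i-1}$ is a quotient of some $M(\mu_i)$ and is therefore annihilated by $\ker\chi_{\mu_i}$, so $M$ is annihilated by the product $\prod_i\ker\chi_{\mu_i}\subset Z(\mathfrak{g})$, which is the desired (in fact global) finiteness.

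The main obstacle is the finite highest-weight filtration: the termination argument requires controlling that only finitely many weights can serve as highest weights of successive subquotients, which rests on the fact that the set of weights of a finitely generated $M\in\mathcal{O}$ is contained in a finite union of sets of the form $\{\mu\in\mathfrak{h}^*:\mu\leq\nu\}$ (with $\leq$ the standard partial order). With this lemma in hand, the Harish-Chandra description of central characters converts the abstract generalised-eigenspace decomposition into one indexed by $\mathfrak{h}^*_{\mathrm{dom}}$, and the remainder of the proof is formal bookkeeping.
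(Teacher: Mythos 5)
Your proof is correct. The paper states Theorem~\ref{thm403} without proof (it is one of the standard facts about $\mathcal{O}$ quoted from \cite{BGG,Di,Hu}), and your argument is precisely the classical one: identify the central character of each $L(\mu)$ via its Verma cover and the Harish--Chandra description of $\chi_\lambda$, and obtain the block decomposition from the fact that every $M\in\mathcal{O}$ is annihilated by a finite product of maximal ideals $\ker\chi_{\mu_i}$ of $Z(\mathfrak{g})$, coming from a finite filtration with highest weight subquotients.

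One small remark on your last paragraph: the ``main obstacle'' you flag is not actually needed for your chosen route. Once you produce a highest weight vector in every nonzero quotient (which follows from local $\mathfrak{n}_+$-finiteness plus $\mathfrak{h}$-diagonalizability alone: the finite-dimensional space $U(\mathfrak{n}_+)v$ has a weight vector of maximal weight, which is then annihilated by $\mathfrak{n}_+$), noetherianity of $M$ over $U(\mathfrak{g})$ already forces the ascending chain $M_0\subsetneq M_1\subsetneq\cdots$ to terminate; no control on the set of possible highest weights is required. The weight-cone lemma you mention is the ingredient for the alternative induction on the dimension of a generating $\mathfrak{b}$-stable subspace, so you may invoke either mechanism, but you do not need both.
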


In particular, it follows that every object in $\mathcal{O}$ has
finite length. Categories $\mathcal{O}_{\lambda}$ are called 
{\em blocks} of $\mathcal{O}$. For $\lambda=0$ the block
\index{block}\index{principal block}
$\mathcal{O}_0$ contains the (unique up to isomorphism) one-dimensional 
$\mathfrak{g}$-module and is called the {\em principal} block 
of $\mathcal{O}$. Note that blocks $\mathcal{O}_{\lambda}$ might
be further decomposable. To remedy this, in what follows we will
restrict our attention to {\em integral} blocks, that is blocks
\index{integral block}
corresponding to $\lambda$, the coordinates of which in the
standard basis have integral differences. By \cite{So0}, any block 
is equivalent to an integral block, possibly for a smaller $n$. 
Note that, for integral $\mu\in\mathfrak{h}^*$, the Verma module 
$M(\mu)$ is simple if and only if $\mu$ is dot-antidominant, further,
$M(\mu)$ is projective if and only if $\mu$ is dot-dominant.

An integral block $\mathcal{O}_{\lambda}$ is called {\em regular} 
\index{regular block}
if the orbit $W\cdot \lambda$ is regular. In this case simple
objects in the block are bijectively indexed by elements of $W$.
An integral block $\mathcal{O}_{\lambda}$ is called {\em singular}
\index{singular block}
if it is not regular. Let $W_{\lambda}$ denote the dot-stabilizer 
of $\lambda$ in $W$. Then simple objects in the singular block
$\mathcal{O}_{\lambda}$ are bijectively indexed by 
cosets $W/W_{\lambda}$, or , alternatively, by longest coset
representatives in these cosets.

\begin{theorem}\label{thm404}
Every $\mathcal{O}_{\lambda}$ has enough projectives, in particular,
$\mathcal{O}_{\lambda}$ is equivalent to the category 
$B_{\lambda}\text{-}\mathrm{mod}$ of finite dimensional modules over 
some finite dimensional associative algebra $B_{\lambda}$.
\end{theorem}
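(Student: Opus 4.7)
The plan is to prove both assertions together: first construct enough projectives in $\mathcal{O}_\lambda$, then use a projective generator to produce the algebra $B_\lambda$ realizing $\mathcal{O}_\lambda$ as its module category.

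The starting point is to fix $\mu\in\mathfrak{h}^*_{\mathrm{dom}}$ such that $M(\mu)$ is projective in $\mathcal{O}$ (the remark after Theorem~\ref{thm403} says this happens whenever $\mu$ is dot-dominant). The workhorse is then the fact that for any finite-dimensional $\mathfrak{g}$-module $V$, the functor $V\otimes_{\mathbb{C}}-$ is an exact endofunctor of $\mathcal{O}$ which is biadjoint to $V^*\otimes_{\mathbb{C}}-$. Exactness plus the adjunction
\[
\mathrm{Hom}_{\mathfrak{g}}(V\otimes M(\mu),-)\;\cong\;\mathrm{Hom}_{\mathfrak{g}}(M(\mu),V^*\otimes -)
\]
shows that $V\otimes M(\mu)$ is projective in $\mathcal{O}$. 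Next I would invoke the standard tensor identity: for a Verma module $M(\mu)$ and a finite-dimensional $V$, the module $V\otimes M(\mu)$ carries a Verma flag with subquotients $M(\mu+\nu)$, $\nu$ running over the weights of $V$ counted with multiplicity. Given any $\nu\in\mathfrak{h}^*$, I choose $V$ whose set of weights includes $\nu-\mu$ (any finite-dimensional module with sufficiently large highest weight will do); then $V\otimes M(\mu)$ has $M(\nu)$ as a Verma subquotient, and in particular surjects onto the simple $L(\nu)$. Decomposing $V\otimes M(\mu)$ into indecomposable summands and using that projective covers are indecomposable projectives with a unique simple quotient, I extract a projective cover $P(\nu)\twoheadrightarrow L(\nu)$ for every simple. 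This gives enough projectives in $\mathcal{O}$, and restricting to the block $\mathcal{O}_\lambda$ gives enough projectives there, since the block decomposition of Theorem~\ref{thm403} is compatible with projectivity.

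With projective covers in hand, I form
\[
P\;:=\;\bigoplus_{w\in W/W_\lambda} P(w\cdot\lambda),
\]
a finite direct sum because $W$ is finite. This is a projective generator of $\mathcal{O}_\lambda$ since every simple of $\mathcal{O}_\lambda$ is of the form $L(w\cdot\lambda)$ by Theorem~\ref{thm403}, and every object has finite length (from Theorem~\ref{thm402}\eqref{thm402.1} together with the block decomposition). Setting $B_\lambda:=\mathrm{End}_{\mathfrak{g}}(P)^{\mathrm{op}}$, the standard Morita recipe gives a functor
\[
\mathrm{Hom}_{\mathfrak{g}}(P,-):\mathcal{O}_\lambda\longrightarrow B_\lambda\text{-}\mathrm{mod},
\]
which is exact (as $P$ is projective), faithful on simples and essentially surjective on projectives; these together with finite length and enough projectives yield an equivalence of abelian categories by the usual argument.

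The remaining point, and in my opinion the real technical step, is finite dimensionality of $B_\lambda$. This reduces to showing $\dim\mathrm{Hom}_{\mathfrak{g}}(P(\mu),P(\nu))<\infty$ for each pair, and for that I would argue: by projectivity and finite length of $P(\nu)$, the dimension of $\mathrm{Hom}_{\mathfrak{g}}(P(\mu),P(\nu))$ equals the multiplicity $[P(\nu):L(\mu)]$, and this multiplicity is finite because $P(\nu)$ has finite length. Finite length of $P(\nu)$ is itself the step I expect to be the most delicate: it is not automatic from finite length of simples, and one must either bound the composition length of $V\otimes M(\mu)$ directly (using the Verma flag plus Theorem~\ref{thm402}\eqref{thm402.1}) or invoke a truncation/BGG-style argument showing that only finitely many $L(w\cdot\lambda)$ occur as composition factors. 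Once finite length of the projective cover is established, all other ingredients are routine, and the equivalence $\mathcal{O}_\lambda\cong B_\lambda\text{-}\mathrm{mod}$ follows.
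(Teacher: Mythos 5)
The paper states Theorem~\ref{thm404} without proof, so there is no in-text argument to compare against; your route is the standard one from the literature (tensor a projective dominant Verma module with finite-dimensional modules, extract projective covers, take the endomorphism algebra of a projective generator). Most of it is sound: projectivity of $M(\mu)$ for dot-dominant $\mu$, projectivity of $V\otimes_{\mathbb{C}}M(\mu)$ via exactness and biadjointness, the Morita-type argument with $P_{\lambda}=\oplus_{\mu\in W\cdot\lambda}P(\mu)$ (exactly the paper's $B_{\lambda}=\mathrm{End}_{\mathfrak{g}}(P_{\lambda})^{\mathrm{op}}$), and finite dimensionality of $B_{\lambda}$. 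The finite-length issue you single out as the delicate point is in fact immediate: $V\otimes_{\mathbb{C}}M(\mu)$ has a filtration by $\dim V$ Verma modules, each of finite length by Theorem~\ref{thm402}\eqref{thm402.1}, so it and all of its direct summands have finite length.

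The genuine gap is the sentence ``then $V\otimes M(\mu)$ has $M(\nu)$ as a Verma subquotient, and in particular surjects onto the simple $L(\nu)$.'' This is a non sequitur: a projective object can have $L(\nu)$ as a composition factor without admitting a surjection onto it, in which case none of its indecomposable summands is $P(\nu)$. The paper's own $\mathfrak{gl}_2$ example illustrates this: $P(0,0)=\Delta(0,0)$ is projective and has $L(-1,1)$ as a composition factor (its socle), yet it does not surject onto $L(-1,1)$ and $P(-1,1)$ is not a direct summand of it. To repair the step you must arrange for $M(\nu)$ to sit at the \emph{top} of the standard filtration of $V\otimes_{\mathbb{C}}M(\mu)$, not merely somewhere inside it. The filtration induced from a $\mathfrak{b}$-stable flag of $V\otimes\mathbb{C}_{\mu}$ necessarily has $M(\mu+\nu_{\mathrm{high}})$ as a submodule and $M(\mu+\nu_{\mathrm{low}})$ as a quotient, where $\nu_{\mathrm{high}}$ and $\nu_{\mathrm{low}}$ are the highest and lowest weights of $V$. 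So given $\nu$, choose a dominant integral $\theta$ large enough that $\mu:=\nu+\theta$ is dot-dominant, and take $V$ to be the dual of the simple finite-dimensional module of highest weight $\theta$, so that $\nu-\mu=-\theta$ is the lowest weight of $V$. Then $V\otimes_{\mathbb{C}}M(\mu)\twoheadrightarrow M(\nu)\twoheadrightarrow L(\nu)$, and the remainder of your argument goes through as written.
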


For $\mu\in\mathfrak{h}^*$ we denote by $P(\mu)$ and
$I(\mu)$ the indecomposable projective cover and injective hull of
$L(\mu)$ in $\mathcal{O}$, respectively. For $\lambda\in
\mathfrak{h}^*_{\mathrm{dom}}$ set $P_{\lambda}:=\oplus_{\mu\in
W\cdot\lambda}P(\mu)$. Then $P_{\lambda}$ is a projective generator
of $\mathcal{O}_{\lambda}$ and hence we can take 
$B_{\lambda}=\mathrm{End}_{\mathfrak{g}}(P_{\lambda})^{\mathrm{op}}$.

\subsection{BGG reciprocity and quasi-hereditary structure}\label{s4.4}

A module $N\in \mathcal{O}$ is said to have a {\em standard filtration}
\index{standard filtration}\index{Verma flag}
or {\em Verma flag} if there is a filtration of $N$ whose subquotients
are Verma modules. The category of all modules in $\mathcal{O}$ which
have a standard filtration is denoted by $\mathcal{F}(\Delta)$.

\begin{theorem}[BGG reciprocity]\label{thm405}
\begin{enumerate}[$($a$)$]
\item\label{thm405.1} Every projective module in 
$\mathcal{O}$ has a standard filtration.
\item\label{thm405.2} If some $N\in \mathcal{O}$ has a standard filtration,
then for any $\mu\in\mathfrak{h}^*$ the multiplicity 
$[N:M(\mu)]$ of $M(\mu)$ as a subquotient of a standard
filtration of $N$ does not depend on the choice of such filtration.
\item\label{thm405.3} For every $\mu,\nu\in\mathfrak{h}^*$ we have
$[P(\mu):M(\nu)]=[M(\nu):L(\mu)]$.
\item\label{thm405.4} We have $[P(\mu):M(\mu)]=1$ 
and $[P(\mu):M(\nu)]\neq 0$ implies $\nu\geq \mu$.
\end{enumerate}
\end{theorem}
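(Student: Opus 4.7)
The plan is to prove the four parts in order, drawing on the standard toolbox for category $\mathcal{O}$: projective (i.e. translation) functors, formal characters, and the contravariant duality $d$ on $\mathcal{O}$ that fixes simples.

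For part (a), I would produce every $P(\mu)$ as a direct summand of a ``translate'' of a projective Verma. The starting observation, already recorded just after Theorem~\ref{thm404}, is that in each orbit $W\cdot\lambda$ the dot-dominant weight $\nu$ satisfies $M(\nu)=P(\nu)$. The key input is a Verma-flag result: for any finite-dimensional $\mathfrak{g}$-module $E$ and any Verma $M(\mu)$, the tensor product $M(\mu)\otimes E$ admits a filtration whose subquotients are $M(\mu+\chi)$ indexed, with multiplicity, by the weights $\chi$ of $E$. This follows by choosing a $\mathfrak{b}$-stable filtration of $E$ adapted to a total order refining the weight partial order, tensoring with $M(\mu)=U(\mathfrak{g})\otimes_{U(\mathfrak{b})}\mathbb{C}_\mu$, and using exactness of induction. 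Since block projection is exact and sends each Verma to itself or to zero, every projective functor applied to $M(\nu)$ is projective and lies in $\mathcal{F}(\Delta)$. A standard argument using the $L(\mu)$-isotypic component shows that $P(\mu)$ is a summand of such a translate, hence $P(\mu)\in\mathcal{F}(\Delta)$.

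For part (b), uniqueness is extracted from formal characters. Every Verma has character $\mathrm{ch}\,M(\mu)=e^\mu\prod_{\alpha>0}(1-e^{-\alpha})^{-1}$, and these characters are linearly independent since each $M(\mu)$ has $\mu$ as its unique highest weight. If $N$ has a standard filtration with subquotients $M(\mu_1),\dots,M(\mu_k)$, then $\mathrm{ch}\,N=\sum_i\mathrm{ch}\,M(\mu_i)$, and the multiset of $\mu_i$ is determined by $\mathrm{ch}\,N$, which is an invariant of $N$.

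For parts (c) and (d), I would use the contravariant duality $d:\mathcal{O}\to\mathcal{O}$ (restricted dual twisted by the Chevalley involution) together with the dual Verma $\nabla(\nu):=d(M(\nu))$. A direct highest-weight computation gives
\[
\mathrm{Ext}^i_{\mathcal{O}}(M(\mu'),\nabla(\nu))=
\begin{cases}\mathbb{C},&i=0\text{ and }\mu'=\nu,\\ 0,&\text{otherwise}.\end{cases}
\]
Combined with an induction on the length of a standard filtration, the $\mathrm{Ext}^1$-vanishing yields $[N:M(\nu)]=\dim\mathrm{Hom}_{\mathcal{O}}(N,\nabla(\nu))$ for any $N\in\mathcal{F}(\Delta)$. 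Together with the projective-cover identity $\dim\mathrm{Hom}_{\mathcal{O}}(P(\mu),X)=[X:L(\mu)]$ and the fact that $d$ preserves composition multiplicities, this yields
\[
[P(\mu):M(\nu)]=\dim\mathrm{Hom}_{\mathcal{O}}(P(\mu),\nabla(\nu))=[\nabla(\nu):L(\mu)]=[M(\nu):L(\mu)],
\]
which is (c). Part (d) follows by specialization: $[P(\mu):M(\mu)]=[M(\mu):L(\mu)]=1$ by Theorem~\ref{thm402}\eqref{thm402.2}, and $[P(\mu):M(\nu)]\neq 0$ forces $L(\mu)$ to appear in $M(\nu)$, hence $\mu\leq\nu$ again by Theorem~\ref{thm402}\eqref{thm402.2}.

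The main obstacle is part (a). Everything else cascades cleanly once one has (a), the Verma-flag property of $M(\mu)\otimes E$, and the $\mathrm{Ext}$-orthogonality of Vermas against dual Vermas; but producing $P(\mu)$ itself as a summand of some $\mathrm{pr}_{\mathcal{O}_{\lambda}}(M(\nu)\otimes E)$ requires identifying an $E$ whose weights transport $\nu$ to $\mu$ inside the orbit while controlling the top of the resulting projective, and this last bookkeeping is where I would expect to spend the most effort.
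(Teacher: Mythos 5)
Your proposal is correct, and it is the standard proof of BGG reciprocity (the paper itself states Theorem~\ref{thm405} without proof, deferring to \cite{BGG,Hu}, and your argument is exactly the one found there): translates of dominant Vermas plus the Verma-flag property of $M(\mu)\otimes E$ for (a), linear independence of characters for (b), and the $\mathrm{Ext}$-orthogonality of standard against costandard objects (cf.\ Proposition~\ref{cor436}) combined with $\dim\mathrm{Hom}(P(\mu),X)=[X:L(\mu)]$ for (c) and (d). The only point worth flagging is that for the induction in (c) you need only $\mathrm{Ext}^{\leq 1}$-vanishing, and the $\mathrm{Ext}^1$ case is not quite a ``direct highest-weight computation'': one first uses the duality $\star$ to reduce to the case $\mu'\not<\nu$ before the highest-weight-vector argument applies.
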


The claims of Theorem~\ref{thm405}\eqref{thm405.4} and
Theorem~\ref{thm402}\eqref{thm402.2} literally mean
that for $\lambda\in \mathfrak{h}^*_{\mathrm{dom}}$ 
the category $\mathcal{O}_{\lambda}$ is a {\em highest weight}
\index{highest weight category}
category in the sense of \cite{CPS}, and the algebra $B_{\lambda}$ is 
{\em quasi-hereditary}  in the sense of \cite{DR} with Verma modules being
\index{quasi-hereditary algebra}
{\em standard modules} for this quasi-hereditary structure. 
\index{standard module}
Traditionally, standard modules for quasi-hereditary algebras
are denoted $\Delta(\mu)=M(\mu)$.

The category $\mathcal{O}$ has a {\em simple preserving duality}
\index{simple preserving duality}
$\star$, that is an involutive contravariant equivalence which
preserves isoclasses of simple modules. This implies that 
$B_{\lambda}\cong B_{\lambda}^{\mathrm{op}}$ such that the
isomorphism preserves the equivalence classes of primitive 
idempotents. We have $L(\mu)^{\star}=L(\mu)$ and 
$P(\mu)^{\star}\cong I(\mu)$. The modules 
$\nabla(\mu)=\Delta(\mu)^{\star}$ are called {\em dual Verma}
\index{dual Verma module}
modules. Applying $\star$ to Theorem~\ref{thm405} we, in particular, 
have that every injective in $\mathcal{O}$ has a filtration whose 
subquotients are dual Verma modules (the so-called 
{\em costandard filtration}). The category of all modules in 
\index{costandard filtration}
$\mathcal{O}$ which have a costandard filtration is denoted by 
$\mathcal{F}(\nabla)$.

\begin{corollary}\label{cor406}
For every $\lambda\in \mathfrak{h}^*_{\mathrm{dom}}$ the block 
$\mathcal{O}_{\lambda}$ has finite global dimension. 
\end{corollary}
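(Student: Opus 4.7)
The plan is to reduce finite global dimension of $\mathcal{O}_{\lambda}$ to the finiteness of $\mathrm{pd}(L(\mu))$ for each simple object, and then exploit the quasi-hereditary structure provided by Theorems~\ref{thm402} and~\ref{thm405} through a pair of interlocking inductions along the standard order. By Theorem~\ref{thm404}, $\mathcal{O}_{\lambda}\simeq B_{\lambda}\text{-}\mathrm{mod}$ with $B_{\lambda}$ finite-dimensional, so there are only finitely many simples $L(\mu)$ (indexed by $W\cdot\lambda$, or by $W/W_{\lambda}$ in the singular case), and the global dimension of $\mathcal{O}_{\lambda}$ equals the maximum of $\mathrm{pd}(L(\mu))$ over this finite set; hence it suffices to bound each $\mathrm{pd}(L(\mu))$.

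The key intermediate step is to show that $\mathrm{pd}(M(\mu))<\infty$ for every $\mu\in W\cdot\lambda$, by \emph{downward} induction on $\mu$ in the standard partial order $\leq$ restricted to the finite orbit $W\cdot\lambda$ (whose maximum is $\lambda$ itself). The base case is $\mu=\lambda$, where $M(\lambda)$ is projective by the remark following Theorem~\ref{thm403}. For the inductive step, consider the projective cover
\begin{displaymath}
0\to K\to P(\mu)\to M(\mu)\to 0.
\end{displaymath}
By Theorem~\ref{thm405}\eqref{thm405.1},\eqref{thm405.3},\eqref{thm405.4}, the module $P(\mu)$ admits a standard filtration in which $M(\mu)$ occurs with multiplicity $1$ and every other subquotient is $M(\nu)$ with $\nu>\mu$; consequently $K$ inherits a standard filtration whose subquotients are Verma modules strictly above $\mu$. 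The inductive hypothesis gives $\mathrm{pd}(M(\nu))<\infty$ for each such $\nu$, hence $\mathrm{pd}(K)<\infty$, and therefore $\mathrm{pd}(M(\mu))<\infty$.

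Next, bound $\mathrm{pd}(L(\mu))$ by \emph{upward} induction on $\mu$ in $W\cdot\lambda$. The base case is $\mu=w_o\cdot\lambda$, the unique minimum of the orbit (which is dot-antidominant); here $M(\mu)=L(\mu)$ is simple, and its projective dimension is finite by the previous step. For the inductive step, consider
\begin{displaymath}
0\to N\to M(\mu)\to L(\mu)\to 0,
\end{displaymath}
where $N$ is the maximal submodule. By Theorem~\ref{thm402}\eqref{thm402.2}, every composition factor of $N$ is of the form $L(\nu)$ with $\nu<\mu$, so the inductive hypothesis together with the long exact sequence for the composition series gives $\mathrm{pd}(N)<\infty$. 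Combined with $\mathrm{pd}(M(\mu))<\infty$, this yields $\mathrm{pd}(L(\mu))<\infty$, and taking the maximum over the finite set of simples in $\mathcal{O}_{\lambda}$ concludes the proof.

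The main obstacle is not any single estimate but the careful bookkeeping that the two inductions terminate: the downward induction on $M(\mu)$ rests on the fact that the standard filtration of $P(\mu)$ strictly increases the label above $\mu$, which is exactly the content of Theorem~\ref{thm405}\eqref{thm405.4}, while the upward induction on $L(\mu)$ rests on the analogous strict decrease in Theorem~\ref{thm402}\eqref{thm402.2}. Both rely on the well-foundedness of $\leq$ on the finite orbit $W\cdot\lambda$, which holds trivially.
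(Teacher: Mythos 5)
Your proposal is correct and follows essentially the same route as the paper: first establish $\mathrm{pd}(M(\mu))<\infty$ by induction along $\leq$ using the standard filtrations of projectives from Theorem~\ref{thm405}, then establish $\mathrm{pd}(L(\mu))<\infty$ by the opposite induction using Theorem~\ref{thm402}\eqref{thm402.2}. The only point worth noting is that your claim that the kernel $K$ of $P(\mu)\tto M(\mu)$ inherits a standard filtration by Vermas $M(\nu)$ with $\nu>\mu$ implicitly uses that the standard filtration of $P(\mu)$ can be arranged with $M(\mu)$ on top, which is a standard fact for highest weight categories and consistent with the level of detail in the paper's sketch.
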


\begin{proof}[Idea of the proof.]
Using induction with respect to $\leq$ and 
Theorem~\ref{thm405}\eqref{thm405.1}, show that every Verma 
module has finite projective dimension. Then, using the opposite
induction with respect to $\leq$ and 
Theorem~\ref{thm402}\eqref{thm402.2} show that every simple module
has finite projective dimension.
\end{proof}

One can show that the global dimension of $\mathcal{O}_0$ equals
$2\mathfrak{l}(w_o)$, see e.g. \cite{Ma2}. In fact, there are 
explicit formulae for the projective dimension of all structural
modules in $\mathcal{O}_0$, see \cite{Ma2,Ma3}.
Another important homological feature of category $\mathcal{O}$,
as a highest weight category, is:

\begin{proposition}\label{cor436}
$\mathcal{F}(\Delta)=\{N\in\mathcal{O}:\mathrm{Ext}_{\mathcal{O}}^i
(N,\nabla(\mu))=0\text{ for all }\mu\in\mathfrak{h}^*, i>0\}$. 
\end{proposition}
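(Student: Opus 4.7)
The plan is to prove both inclusions separately. The forward inclusion $\mathcal{F}(\Delta)\subseteq\{N:\mathrm{Ext}^i(N,\nabla(\mu))=0\text{ for all }i>0,\mu\}$ reduces, by dévissage along a standard filtration of $N$, to establishing the key orthogonality $\mathrm{Ext}^i(\Delta(\lambda),\nabla(\mu))=\delta_{i,0}\delta_{\lambda\mu}\mathbb{C}$ for all $i\geq 0$. The case $i=0$ is immediate from matching the simple head $L(\lambda)$ of $\Delta(\lambda)$ with the simple socle $L(\mu)$ of $\nabla(\mu)$. For $i=1$, I would apply $\mathrm{Hom}(-,\nabla(\mu))$ to the projective cover sequence $0\to K_\lambda\to P(\lambda)\to\Delta(\lambda)\to 0$, where $K_\lambda\in\mathcal{F}(\Delta)$ has a Verma flag indexed by weights strictly above $\lambda$ (Theorem~\ref{thm405}). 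Combining the identity $\dim\mathrm{Hom}(M,\nabla(\mu))=[M:\Delta(\mu)]$ for $M\in\mathcal{F}(\Delta)$, the BGG reciprocity $[P(\lambda):\Delta(\mu)]=[\Delta(\mu):L(\lambda)]$, and the additivity $[P(\lambda):\Delta(\mu)]=[K_\lambda:\Delta(\mu)]+\delta_{\lambda\mu}$, the alternating sum of dimensions in the resulting long exact sequence telescopes to zero. Higher Ext's follow from the dimension shift $\mathrm{Ext}^i(\Delta(\lambda),\nabla(\mu))\cong\mathrm{Ext}^{i-1}(K_\lambda,\nabla(\mu))$ (using projectivity of $P(\lambda)$), together with induction on $i$ and dévissage along the Verma flag of $K_\lambda$.

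For the reverse inclusion, I proceed by strong induction on the composition length of $N$. If $N\neq 0$, choose $\lambda$ maximal in the weight support of $N$ and set $d=\dim N_\lambda$. Maximality of $\lambda$ forces every element of $N_\lambda$ to be annihilated by $\mathfrak{n}_+$, so a choice of basis of $N_\lambda$ produces a morphism $\alpha:V:=\Delta(\lambda)^{\oplus d}\to N$ which is bijective on $\lambda$-weight spaces. Grant, for the moment, that $\alpha$ is injective. Then from the short exact sequence $0\to V\to N\to N/V\to 0$, the Ext-vanishing of $V$ (from the forward direction) and of $N$ (hypothesis) combine in the long exact sequence to yield $\mathrm{Ext}^i(N/V,\nabla(\mu))=0$ for all $i\geq 1$. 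Since $V$ contains $L(\lambda)^{\oplus d}$ in its top, $N/V$ has strictly smaller composition length; the induction hypothesis then places $N/V$ in $\mathcal{F}(\Delta)$, and extending by $V\in\mathcal{F}(\Delta)$ gives $N\in\mathcal{F}(\Delta)$.

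The main obstacle is therefore the injectivity of $\alpha$. My strategy is by contradiction: suppose $K:=\ker\alpha\neq 0$; since $\alpha$ is bijective on $\lambda$-weight spaces we have $K_\lambda=0$, so there exists a weight $\nu$ maximal in the support of $K$ with $\nu<\lambda$. The calculation $\mathrm{Hom}(K,\nabla(\nu))\cong\mathrm{Hom}(\Delta(\nu),K^\star)=(K_\nu)^{*}$ shows this space is nonzero. Applying $\mathrm{Hom}(-,\nabla(\nu))$ to $0\to K\to V\to\mathrm{Im}(\alpha)\to 0$, and using $\mathrm{Hom}(V,\nabla(\nu))=0=\mathrm{Ext}^1(V,\nabla(\nu))$ (both from the orthogonality, since $\nu\neq\lambda$), gives $\mathrm{Hom}(K,\nabla(\nu))\cong\mathrm{Ext}^1(\mathrm{Im}(\alpha),\nabla(\nu))$; then $\mathrm{Hom}(-,\nabla(\nu))$ applied to $0\to\mathrm{Im}(\alpha)\to N\to N/\mathrm{Im}(\alpha)\to 0$ together with $\mathrm{Ext}^i(N,\nabla(\nu))=0$ for $i\geq 1$ produces $\mathrm{Ext}^1(\mathrm{Im}(\alpha),\nabla(\nu))\cong\mathrm{Ext}^2(N/\mathrm{Im}(\alpha),\nabla(\nu))$. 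Iterating the two long exact sequences produces a chain of isomorphisms $\mathrm{Ext}^j(K,\nabla(\nu))\cong\mathrm{Ext}^{j+2}(N/\mathrm{Im}(\alpha),\nabla(\nu))$ for $j\geq 0$, and the delicate final step propagates the initial nonvanishing up the chain until it conflicts with the finite global dimension of $\mathcal{O}$ (Corollary~\ref{cor406}), forcing $K=0$. This injectivity argument is the technical heart of the proof; once it is settled, the induction closes cleanly.
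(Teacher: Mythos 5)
Your forward inclusion is essentially correct, but note that the identity $\dim\mathrm{Hom}(M,\nabla(\mu))=[M:\Delta(\mu)]$ for $M\in\mathcal{F}(\Delta)$ is itself equivalent to the $\mathrm{Ext}^1$-vanishing you are trying to prove, so quoting it outright is circular; the cure is to organize the whole argument as a downward induction on $\lambda$ (for $\lambda$ dominant one has $\Delta(\lambda)=P(\lambda)$ projective), so that the identity is available for $K_\lambda$, whose Verma subquotients have highest weights $>\lambda$, while $\dim\mathrm{Hom}(P(\lambda),\nabla(\mu))=[\nabla(\mu):L(\lambda)]$ holds by projectivity alone.

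The reverse inclusion contains two real gaps. First, the long exact sequence for $0\to V\to N\to N/V\to 0$ gives $\mathrm{Ext}^1(N/V,\nabla(\mu))\cong\mathrm{coker}\bigl(\mathrm{Hom}(N,\nabla(\mu))\to\mathrm{Hom}(V,\nabla(\mu))\bigr)$, not $0$; the surjectivity of this restriction map is a separate (true, but nontrivial) point: the target vanishes unless $\mu=\lambda$, and for $\mu=\lambda$ both spaces have dimension $d$ while restriction is injective, since a map $N\to\nabla(\lambda)$ killing $N_{\lambda}$ has image with trivial $\lambda$-weight space and hence misses the simple socle. Without this, the induction hypothesis cannot be applied to $N/V$. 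Second, and more seriously, the injectivity argument does not close. Your isomorphisms only produce $0\neq\mathrm{Hom}(K,\nabla(\nu))\cong\mathrm{Ext}^2(N/\mathrm{Im}(\alpha),\nabla(\nu))$: they relate $\mathrm{Ext}^j$ of $K$ to $\mathrm{Ext}^{j+2}$ of the \emph{different} module $N/\mathrm{Im}(\alpha)$, about which nothing is assumed, and since only $j=0$ carries a nonvanishing input, no infinite string of nonzero $\mathrm{Ext}$-groups is generated and finite global dimension is never contradicted. The standard repair inverts your order of operations: using the surjectivity above, first conclude $\mathrm{Ext}^1(N/U,\nabla(\mu))=0$ for $U=\mathrm{Im}(\alpha)$ and apply the induction (best run with only the $\mathrm{Ext}^1$-condition as hypothesis) to get $N/U\in\mathcal{F}(\Delta)$; the forward direction then gives $\mathrm{Ext}^2(N/U,\nabla(\nu))=0$, whence $\mathrm{Ext}^1(U,\nabla(\nu))=0$, whence $\mathrm{Hom}(V,\nabla(\nu))\twoheadrightarrow\mathrm{Hom}(K,\nabla(\nu))$; for $\nu<\lambda$ maximal in the support of $K$ the left-hand side is zero and the right-hand side is $(K_{\nu})^{*}\neq 0$, forcing $K=0$. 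Corollary~\ref{cor406} plays no role.
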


\subsection{Tilting modules and Ringel self-duality}\label{s4.5}

One of the nicest properties of quasi-hereditary algebras is
existence of the so-called {\em tilting modules}, established in
\index{tilting module}
\cite{Ri} (see also \cite{CI} for the special case of the category
$\mathcal{O}$).

\begin{theorem}\label{thm406}
For every $\mu\in\mathfrak{h}^*$ there is a
unique (up to isomorphism) indecomposable module $T(\mu)\in
\mathcal{F}(\Delta)\cap\mathcal{F}(\nabla)$ such that 
$\Delta(\mu)\subset T(\mu)$ and the cokernel of this
inclusion admits a standard filtration.
\end{theorem}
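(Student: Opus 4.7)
The plan is to construct $T(\mu)$ by induction on $\mu$ with respect to the partial order $\leq$ restricted to the finite dot-orbit containing $\mu$, building it via a sequence of universal extensions of $\Delta(\mu)$ by lower-order Verma modules. Indecomposability and uniqueness will then follow from the $\mathrm{Ext}$-characterization of $\mathcal{F}(\Delta)$ provided by Proposition~\ref{cor436}, its dual for $\mathcal{F}(\nabla)$ obtained via the duality $\star$, and a Fitting-type argument.

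For the base of the induction, consider $\mu$ minimal in its block, i.e.\ dot-antidominant. By Theorem~\ref{thm402}\eqref{thm402.4} we have $\Delta(\mu) = L(\mu)$, and applying $\star$ yields $\nabla(\mu) = L(\mu)$; hence $T(\mu) := L(\mu)$ trivially satisfies all required properties with zero cokernel. For the inductive step, assume $T(\nu)$ has been constructed for every $\nu < \mu$ in the block. By Proposition~\ref{cor436} together with its dual, $T(\mu) \in \mathcal{F}(\Delta) \cap \mathcal{F}(\nabla)$ is equivalent to requiring both $\mathrm{Ext}^i_{\mathcal{O}}(T(\mu), \nabla(\nu)) = 0$ and $\mathrm{Ext}^i_{\mathcal{O}}(\Delta(\nu), T(\mu)) = 0$ for all $\nu$ and $i > 0$. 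Starting from $M_0 := \Delta(\mu)$ (which is in $\mathcal{F}(\Delta)$), I would form the universal extension
\[
0 \to M_0 \to M_1 \to \bigoplus_{\nu < \mu} \Delta(\nu)^{n_\nu} \to 0, \qquad n_\nu := \dim \mathrm{Ext}^1_{\mathcal{O}}(\Delta(\nu), M_0),
\]
classified by a basis of each $\mathrm{Ext}^1$-group; Theorem~\ref{thm402}\eqref{thm402.2} ensures that only $\nu < \mu$ can contribute. The long exact sequence for $\mathrm{Ext}^{*}_{\mathcal{O}}(\Delta(\nu),-)$ then forces $\mathrm{Ext}^1_{\mathcal{O}}(\Delta(\nu), M_1) = 0$. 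Iterating this process, using the already-constructed $T(\nu)$ from the inductive hypothesis in place of $\Delta(\nu)$ to simultaneously kill higher $\mathrm{Ext}^i$'s, and invoking Corollary~\ref{cor406} to guarantee that only finitely many Ext-groups can be nonzero, produces after finitely many steps a module $\widetilde{T}(\mu) \in \mathcal{F}(\Delta) \cap \mathcal{F}(\nabla)$ containing $\Delta(\mu)$ with $\Delta$-filtered cokernel whose factors are $\Delta(\nu)$ for $\nu < \mu$.

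To finish, extract $T(\mu)$ as the unique indecomposable direct summand of $\widetilde{T}(\mu)$ containing the submodule $\Delta(\mu)$; this is well-defined because $[\widetilde{T}(\mu) : L(\mu)] = [\Delta(\mu) : L(\mu)] = 1$, since by Theorem~\ref{thm402}\eqref{thm402.2} no $\Delta(\nu)$ with $\nu < \mu$ has $L(\mu)$ as a composition factor. For uniqueness, given any indecomposable $T'$ satisfying the listed properties, the embedding $\Delta(\mu) \hookrightarrow T(\mu)$ lifts to a map $T' \to T(\mu)$ (the obstruction in $\mathrm{Ext}^1_{\mathcal{O}}(T'/\Delta(\mu), \Delta(\mu))$ vanishes because $T'/\Delta(\mu) \in \mathcal{F}(\Delta)$ and $T(\mu) \in \mathcal{F}(\nabla)$), and symmetrically in the reverse direction; Fitting's lemma applied to the composition upgrades this to an isomorphism. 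I expect the main obstacle to be the twofold bookkeeping in the inductive step: namely showing simultaneously that the iterative universal-extension procedure terminates in $\mathcal{F}(\nabla)$ (rather than merely achieving $\mathrm{Ext}^1$-vanishing) and that the multiplicities $[T(\mu) : \Delta(\nu)]$ are forced by the defining properties, which is essential for the uniqueness argument to go through cleanly.
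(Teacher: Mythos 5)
The paper itself gives no proof of this theorem --- it is quoted from Ringel \cite{Ri} (see also \cite{CI}) --- and your construction by iterated universal extensions, followed by extraction of the indecomposable summand containing $\Delta(\mu)$ and a Fitting-lemma uniqueness argument, is exactly the standard route taken in those sources. The uniqueness step is sound apart from a slip of notation: the obstruction to extending $\Delta(\mu)\to T(\mu)$ along $\Delta(\mu)\hookrightarrow T'$ lives in $\mathrm{Ext}^1_{\mathcal{O}}(T'/\Delta(\mu),T(\mu))$, not in $\mathrm{Ext}^1_{\mathcal{O}}(T'/\Delta(\mu),\Delta(\mu))$; the justification you give (namely $T'/\Delta(\mu)\in\mathcal{F}(\Delta)$ and $T(\mu)\in\mathcal{F}(\nabla)$) already refers to the correct group.

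The construction half, however, has a genuine gap --- the one you flag yourself --- and it is closed by a lemma you never state. First, a single simultaneous universal extension does not force $\mathrm{Ext}^1_{\mathcal{O}}(\Delta(\nu),M_1)=0$: the long exact sequence leaves the terms $\mathrm{Ext}^1_{\mathcal{O}}(\Delta(\nu),\Delta(\nu')^{n_{\nu'}})$, which can be nonzero whenever $\nu<\nu'<\mu$. The remedy is to perform one universal extension per weight, running through the finitely many $\nu<\mu$ in \emph{decreasing} order: since $\mathrm{Ext}^1_{\mathcal{O}}(\Delta(\nu),\Delta(\nu'))\neq 0$ forces $\nu<\nu'$, adding copies of $\Delta(\nu)$ on top cannot recreate extensions against $\Delta(\nu'')$ for $\nu''\geq\nu$, so one pass terminates; finite global dimension (Corollary~\ref{cor406}) is not the relevant mechanism, and the previously constructed $T(\nu)$ are not needed at all. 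Second --- and this is the crux you leave open --- one must know that $\mathrm{Ext}^1_{\mathcal{O}}(\Delta(\nu),N)=0$ for all $\nu$ already implies $N\in\mathcal{F}(\nabla)$. This follows from the quasi-hereditary structure by dimension shifting: by Theorem~\ref{thm405} there is a short exact sequence $0\to K\to P(\nu)\to\Delta(\nu)\to 0$ with $K\in\mathcal{F}(\Delta)$, whence $\mathrm{Ext}^{i+1}_{\mathcal{O}}(\Delta(\nu),N)\cong\mathrm{Ext}^{i}_{\mathcal{O}}(K,N)$ and all higher $\mathrm{Ext}$'s vanish by induction once the first ones do; combined with the dual of Proposition~\ref{cor436} this puts the output of the iteration in $\mathcal{F}(\nabla)$. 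Without this lemma the procedure only achieves $\mathrm{Ext}^1$-orthogonality and the proof is incomplete.
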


For $\lambda\in\mathfrak{h}^*_{\mathrm{dom}}$ the 
module $T_{\lambda}:=\oplus_{\mu\in W\cdot\lambda}T(\mu)$ is
called the {\em characteristic tilting module}. The word
\index{characteristic tilting module}
{\em tilting} is justified by the following property:

\begin{theorem}\label{thm407}
The module $T_{\lambda}$ is ext-selforthogonal, 
has finite projective dimension and there is an exact sequence
\begin{displaymath}
0\to P_{\lambda}\to Q_0\to Q_1\to\dots\to Q_k\to 0 
\end{displaymath}
such that $Q_i\in\mathrm{add}(T_{\lambda})$ for all $i$.
\end{theorem}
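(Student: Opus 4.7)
The three conclusions---ext-selforthogonality, finite projective dimension, and the finite coresolution of $P_\lambda$---I would tackle in that order.

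For ext-selforthogonality, the key input is Proposition~\ref{cor436}. Each summand $T(\mu)$ of $T_\lambda$ lies in $\mathcal{F}(\Delta)\cap\mathcal{F}(\nabla)$ by Theorem~\ref{thm406}. Applying Proposition~\ref{cor436} to $T(\mu)\in\mathcal{F}(\Delta)$ gives $\mathrm{Ext}^i_{\mathcal{O}}(T(\mu),\nabla(\nu))=0$ for every $\nu$ and every $i>0$. Since $T(\mu')\in\mathcal{F}(\nabla)$ admits a filtration whose subquotients are of the form $\nabla(\nu)$, an induction on the length of such a filtration via the long exact Ext-sequence upgrades the vanishing to $\mathrm{Ext}^i_{\mathcal{O}}(T(\mu),T(\mu'))=0$ for all $i>0$. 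Summing over $\mu,\mu'\in W\cdot\lambda$ yields ext-selforthogonality of $T_\lambda$. Finite projective dimension of $T_\lambda$ is then immediate from Corollary~\ref{cor406}: the block $\mathcal{O}_\lambda$ has finite global dimension, hence so does every object of it, $T_\lambda$ in particular.

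The real work is constructing the coresolution. I would proceed by iterating the following \emph{Claim}: for every $M\in\mathcal{F}(\Delta)$ there exists a short exact sequence $0\to M\to T\to M'\to 0$ with $T\in\mathrm{add}(T_\lambda)$ and $M'\in\mathcal{F}(\Delta)$. Starting from $M_0=P_\lambda$ (which is in $\mathcal{F}(\Delta)$ by Theorem~\ref{thm405}) and iterating the Claim produces a coresolution $0\to P_\lambda\to Q_0\to Q_1\to\cdots$ with $Q_i\in\mathrm{add}(T_\lambda)$ and all cokernels $M_i\in\mathcal{F}(\Delta)$. Termination would follow from finite projective dimension: applying $\mathrm{Hom}_{\mathcal{O}}(-,\nabla(\nu))$, using Proposition~\ref{cor436} to kill the positive Ext-groups of the $M_i$, and running a dimension-shift argument bounds the length of the coresolution by $\mathrm{pd}(T_\lambda)$.

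The Claim is the main obstacle. My first attempt would be a descending induction on the support of the standard filtration of $M$: pick a maximal weight $\nu$ with $k=[M:\Delta(\nu)]\neq 0$ and build, using the canonical embedding $\Delta(\nu)\hookrightarrow T(\nu)$ from Theorem~\ref{thm406} together with the ext-vanishing already established in step one, an injective map $\varphi:M\to T(\nu)^{\oplus k}\oplus M''$ whose cokernel lies in $\mathcal{F}(\Delta)$ with strictly smaller support. Checking that the cokernel stays in $\mathcal{F}(\Delta)$ is the delicate point and relies crucially on Proposition~\ref{cor436}. A more conceptual alternative, which I would fall back on if the elementary argument proves unwieldy, is to invoke Ringel duality: the algebra $\mathrm{End}_{\mathcal{O}}(T_\lambda)^{\mathrm{op}}$ is itself quasi-hereditary with finite global dimension, and $\mathrm{RHom}_{\mathcal{O}}(T_\lambda,-)$ implements a derived equivalence under which the desired coresolution of $P_\lambda$ corresponds to the finite projective resolution (in the Ringel-dual algebra) of the image of $P_\lambda$.
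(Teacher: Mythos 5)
The paper gives no proof of this theorem (it is quoted from \cite{Ri}, see also \cite{CI}), so I am judging your argument against the standard one. Your overall route is exactly the classical one: ext-selforthogonality via Proposition~\ref{cor436} applied to $T(\mu)\in\mathcal{F}(\Delta)$ and then induction along a costandard filtration of $T(\mu')$ is correct; finite projective dimension from Corollary~\ref{cor406} is correct; and the coresolution via the Claim (``universal extension'': every $M\in\mathcal{F}(\Delta)$ embeds into some $T\in\mathrm{add}(T_\lambda)$ with cokernel in $\mathcal{F}(\Delta)$) is Ringel's construction. For the Claim itself, the cleaner version of your ``first attempt'' is to peel off a submodule $\Delta(\nu)\subseteq M$ with $\nu$ maximal, push out along $\Delta(\nu)\hookrightarrow T(\nu)$ using the short exact sequence of Theorem~\ref{thm406}, and induct on the length of the standard filtration; your Ringel-duality fallback is also legitimate, though it is somewhat backwards historically since the coresolution is usually an input to, not a consequence of, the derived equivalence.

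The one genuinely defective step is your termination argument. Applying $\mathrm{Hom}_{\mathcal{O}}(-,\nabla(\nu))$ to the cokernels $M_i$ tells you nothing: the groups $\mathrm{Ext}^{>0}_{\mathcal{O}}(M_i,\nabla(\nu))$ vanish for \emph{every} $i$ simply because $M_i\in\mathcal{F}(\Delta)$, so this cannot detect when the process stops. The coresolution terminates at step $k$ precisely when $M_k$ also lies in $\mathcal{F}(\nabla)$, since $\mathcal{F}(\Delta)\cap\mathcal{F}(\nabla)=\mathrm{add}(T_\lambda)$. To see this you need the \emph{dual} of Proposition~\ref{cor436}, namely $\mathcal{F}(\nabla)=\{N:\mathrm{Ext}^i_{\mathcal{O}}(\Delta(\mu),N)=0\text{ for all }\mu,\ i>0\}$, and the dimension shift in the \emph{second} variable: since each $Q_i\in\mathcal{F}(\nabla)$, the short exact sequences $0\to M_i\to Q_i\to M_{i+1}\to 0$ give $\mathrm{Ext}^j(\Delta(\mu),M_{k})\cong\mathrm{Ext}^{j+k}(\Delta(\mu),P_\lambda)$, which vanishes for $j+k$ larger than the global dimension of $\mathcal{O}_\lambda$ (Corollary~\ref{cor406}). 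So the correct bound on the length is the global dimension (equivalently the injective dimension of $P_\lambda$), not $\mathrm{pd}(T_\lambda)$ as you wrote. With this repair the proof is complete.
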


The (opposite of the) endomorphism algebra of $T_{\lambda}$ is called 
the  {\em Ringel dual} of $B_{\lambda}$. The Ringel dual is defined
\index{Ringel dual}
for any quasi-hereditary algebra and is again a quasi-hereditary algebra.
However, for category $\mathcal{O}$ we have the following
{\em Ringel self-duality}:
\index{Ringel self-duality}

\begin{theorem}[\cite{So}]\label{thm408}
$\mathrm{End}_{\mathfrak{g}}(T_{\lambda})=B_{\lambda}$.
\end{theorem}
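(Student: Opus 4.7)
The plan is to construct an explicit algebra isomorphism $B_\lambda \cong \mathrm{End}_\mathfrak{g}(T_\lambda)$ by exhibiting a functor that takes the basic projective generator $P_\lambda = \bigoplus_{\mu\in W\cdot\lambda} P(\mu)$ to the characteristic tilting module $T_\lambda = \bigoplus_{\mu\in W\cdot\lambda} T(\mu)$ in a way that is fully faithful on additive closures. First I would use the simple preserving duality $\star$ (which gives $B_\lambda \cong B_\lambda^{\mathrm{op}}$ and sends $P(\mu)$ to $I(\mu)$, $\Delta(\mu)$ to $\nabla(\mu)$, and, most importantly, fixes every $T(\mu)$ since $\mathcal{F}(\Delta)^\star = \mathcal{F}(\nabla)$ and the defining property of $T(\mu)$ is self-dual). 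Consequently it suffices to build an equivalence between the full additive subcategories $\mathrm{add}(P_\lambda)$ and $\mathrm{add}(T_\lambda)$ of $\mathcal{O}_\lambda$, and then pass to opposite endomorphism rings.

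The natural tool is Arkhipov's twisting functor $\top_{w_o}$ associated to the longest element $w_o \in W$. This is a right exact endofunctor of $\mathcal{O}_\lambda$ whose left derived functor $L\top_{w_o}$ is an auto-equivalence of the bounded derived category $\mathcal{D}^b(\mathcal{O}_\lambda)$ (recall that this category is equivalent to $\mathcal{P}(B_\lambda)$ by Corollary~\ref{cor406}). Its key homological property is that it transforms Verma flags into dual Verma flags along the Weyl group twist: $L\top_{w_o}\Delta(\mu)\cong \nabla(w_o\cdot\mu)[\mathfrak{l}(w_o)]$, up to a suitable shift. The second step is therefore to verify, using a five-lemma type argument along $\Delta$-filtrations of $P(\mu)$, that $\star\circ L\top_{w_o}$ sends each indecomposable projective $P(\mu)$ to a module lying in $\mathcal{F}(\Delta)\cap\mathcal{F}(\nabla)$ with socle $\Delta(w_o\cdot\mu)$, which by Theorem~\ref{thm406} identifies it with the indecomposable tilting $T(w_o\cdot\mu)$.

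Having obtained a bijection $P(\mu)\leftrightarrow T(w_o\cdot\mu)$ realized by an equivalence of additive categories, taking endomorphism rings yields $B_\lambda = \mathrm{End}_\mathfrak{g}(P_\lambda)^{\mathrm{op}} \cong \mathrm{End}_\mathfrak{g}(T_\lambda)^{\mathrm{op}}$, which combined with the self-duality $B_\lambda\cong B_\lambda^{\mathrm{op}}$ yields the claim. Alternatively, one can circumvent twisting functors entirely by Soergel's approach from~\cite{So}: apply the Soergel functor $\mathbb{V}=\mathrm{Hom}_\mathfrak{g}(P(w_o\cdot\lambda),-)$ to embed both $\mathrm{add}(P_\lambda)$ and $\mathrm{add}(T_\lambda)$ fully faithfully into the category of modules over the coinvariant algebra $C = \mathrm{End}_\mathfrak{g}(P(w_o\cdot\lambda))$, and show by a direct computation with the $\mathbb{V}$-images of standard/costandard modules that the two images are related by the duality $\mathrm{Hom}_C(-,C)$, so their endomorphism rings are opposed to one another.

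The main obstacle is the key property of $L\top_{w_o}$: proving that it is an equivalence on $\mathcal{D}^b(\mathcal{O}_\lambda)$ and acts on Verma flags as claimed. This requires a careful construction of $\top_{w_o}$ (as partial coapproximation via Arkhipov's semi-regular bimodule, or inductively via simple twisting functors $\top_{s_i}$ together with the braid relations, which themselves must be verified on the functor level), plus a non-trivial verification that it preserves the subcategory $\mathcal{F}(\Delta)$ in the appropriate derived sense. In the Soergel-style alternative, the analogous obstacle is establishing the \emph{Struktursatz}, i.e.\ full faithfulness of $\mathbb{V}$ on projectives, from which full faithfulness on tiltings then follows via a tilting–projective ext-orthogonality argument using Proposition~\ref{cor436}.
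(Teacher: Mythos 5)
The paper gives no proof of this theorem --- it is quoted from \cite{So} --- but your outline matches exactly the argument the text alludes to (twisting functors ``were used in \cite{So,Ar2}\dots to prove Ringel self-duality'', and Exercise~\ref{exs2}(\ref{exs2.6}) proposes the parallel route $\mathrm{C}_{w_o}P_\lambda\cong T_\lambda$ via shuffling). Your plan is sound: $\mathcal{L}\mathrm{T}_{w_o}$ is a derived self-equivalence whose higher derived functors vanish on $\mathcal{F}(\Delta)$ (Proposition~\ref{prop604}), so it is fully faithful there, and once one knows $\mathrm{T}_{w_o}P(\mu)\cong T(w_o\cdot\mu)$ the endomorphism rings are identified, with $B_\lambda\cong B_\lambda^{\mathrm{op}}$ absorbing the variance. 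Two small corrections: the key formula is $\mathrm{T}_{w_o}\Delta(\mu)\cong\nabla(w_o\cdot\mu)$ with \emph{no} homological shift (the higher left derived functors vanish on Verma modules, so the image sits in degree zero --- your $[\mathfrak{l}(w_o)]$ should not be there), and in fact $\mathrm{T}_{w_o}$ already sends $P(\mu)$ to the self-dual module $T(w_o\cdot\mu)$, so the pre-composition with $\star$ is harmless but redundant; also ``socle $\Delta(w_o\cdot\mu)$'' should read ``contains $\Delta(w_o\cdot\mu)$ as a submodule with $\Delta$-filtered cokernel'', which is the characterization in Theorem~\ref{thm406} one actually matches against.
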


For every $\mu\in\mathfrak{h}^*$ we have $T(\mu)=T(\mu)^*$
and hence tilting modules in $\mathcal{O}$ are also {\em cotilting}.
\index{cotilting module}

\subsection{Parabolic category $\mathcal{O}$}\label{s4.6}

Let $\mathfrak{p}\subset \mathfrak{g}$ be a parabolic subalgebra
containing $\mathfrak{b}$. In what follows we only consider such
parabolic subalgebras. Define $\mathcal{O}^{\mathfrak{p}}$
as the full subcategory of $\mathcal{O}$ consisting of all 
modules, on which the action of $U(\mathfrak{p})$ is locally finite.
Alternatively, one could consider modules which decompose into a
direct sum of finite dimensional modules, when restricted to 
the Levi factor of $\mathfrak{p}$. For example, taking
$\mathfrak{g}=\mathfrak{p}$ the category $\mathcal{O}^{\mathfrak{p}}$
becomes the subcategory  of finite dimensional modules in
$\mathcal{O}$.

From the definition it follows
that $\mathcal{O}^{\mathfrak{p}}$ is a Serre subcategory of 
$\mathcal{O}$ and thus is uniquely determined by simple modules
which it contains. Let 
$\mathfrak{i}_{\mathfrak{p}}:\mathcal{O}^{\mathfrak{p}}\to
\mathcal{O}$ be the natural inclusion, which is obviously exact. 
Further, denote by $\mathrm{Z}_{\mathfrak{p}}:\mathcal{O}\to
\mathcal{O}^{\mathfrak{p}}$ and 
$\hat{\mathrm{Z}}_{\mathfrak{p}}:\mathcal{O}\to
\mathcal{O}^{\mathfrak{p}}$ the left and the right adjoints to
$\mathfrak{i}_{\mathfrak{p}}$, respectively. The functor
$\mathrm{Z}_{\mathfrak{p}}$ is called the {\em Zuckerman} functor
\index{Zuckerman functor}\index{dual Zuckerman functor}
and $\hat{\mathrm{Z}}_{\mathfrak{p}}$ is called the {\em dual Zuckerman} 
functor (see \cite{Zu}). These functors can be described as the functors of 
taking the maximal quotient and submodule in $\mathcal{O}^{\mathfrak{p}}$,
respectively. Denote by $W_{\mathfrak{p}}$ the parabolic subgroup
of $W$ corresponding to the Levi factor of $\mathfrak{p}$.

\begin{proposition}\label{prop411}
For $\mu\in\mathfrak{h}^*$ we have 
$L(\mu)\in \mathcal{O}^{\mathfrak{p}}$ if and only if
$w\cdot\mu<\mu$ for all $w\in W_{\mathfrak{p}}$, $w\neq e$.
\end{proposition}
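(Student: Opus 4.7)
The plan is to reduce the condition $L(\mu)\in\mathcal{O}^{\mathfrak{p}}$ to a dominance condition on $\mu$ with respect to the Levi subalgebra $\mathfrak{l}$ of $\mathfrak{p}$, and then translate that dominance into the stated Weyl group inequality. First, decompose $\mathfrak{p}=\mathfrak{l}\oplus\mathfrak{n}^{\mathfrak{p}}$ with $\mathfrak{n}^{\mathfrak{p}}\subset\mathfrak{n}_+$ the nilradical. Since $\mathfrak{n}_+$ automatically acts locally finitely on every object of $\mathcal{O}$ by definition, $L(\mu)\in\mathcal{O}^{\mathfrak{p}}$ is equivalent to local finiteness of the $U(\mathfrak{l})$-action on $L(\mu)$.

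Next I would prove that $U(\mathfrak{l})$ is locally finite on $L(\mu)$ if and only if the $\mathfrak{l}$-submodule $N:=U(\mathfrak{l})v_\mu$ is finite-dimensional. Because $\mathfrak{n}_+\cap\mathfrak{l}$ kills $v_\mu$ while $\mathfrak{h}\subset\mathfrak{l}$ acts on it by $\mu$, $N$ is a highest weight $\mathfrak{l}$-module of highest weight $\mu$. The forward direction is immediate. For the reverse, any element of $L(\mu)$ has the form $Xv_\mu$ with $X\in U(\mathfrak{n}_-)$, and PBW decomposes $X$ as a sum of products $X^{\mathfrak{p}}X^{\mathfrak{l}}$ with $X^{\mathfrak{p}}\in U(\mathfrak{n}_-^{\mathfrak{p}})$ and $X^{\mathfrak{l}}\in U(\mathfrak{n}_-\cap\mathfrak{l})$. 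Using that $\mathfrak{l}$ both normalizes $\mathfrak{n}_-^{\mathfrak{p}}$ (preserving its PBW filtration, since $\mathfrak{n}_-^{\mathfrak{p}}$ is the nilradical of the opposite parabolic) and preserves $N$, an induction shows that $U(\mathfrak{l})\cdot X^{\mathfrak{p}}X^{\mathfrak{l}}v_\mu$ is contained in the finite-dimensional subspace $U(\mathfrak{n}_-^{\mathfrak{p}})_{\leq\deg X^{\mathfrak{p}}}\cdot N$. Since $\mathfrak{l}$ is reductive, classical finite-dimensional representation theory now gives $\dim N<\infty$ iff $\mu$ is $\mathfrak{l}$-dominant integral, i.e.\ $\langle\mu,\alpha^\vee\rangle\in\mathbb{Z}_{\geq 0}$ for every simple root $\alpha$ of $\mathfrak{l}$.

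Finally I would translate $\mathfrak{l}$-dominance into the Weyl group inequality. Let $\rho_{\mathfrak{l}}$ be half the sum of the positive roots of $\mathfrak{l}$. Since $W_{\mathfrak{p}}$ permutes the positive roots lying outside $\mathfrak{l}$, the vector $\rho-\rho_{\mathfrak{l}}$ is $W_{\mathfrak{p}}$-invariant, so the dot actions of $W_{\mathfrak{p}}$ via $\rho$ and via $\rho_{\mathfrak{l}}$ coincide. For a simple root $\alpha$ of $\mathfrak{l}$ one has $\mu-s_\alpha\cdot\mu=\langle\mu+\rho,\alpha^\vee\rangle\alpha$ with $\langle\rho,\alpha^\vee\rangle=1$, so $s_\alpha\cdot\mu<\mu$ in the standard order iff $\langle\mu,\alpha^\vee\rangle\in\mathbb{Z}_{\geq 0}$. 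The passage from simple reflections to arbitrary $w\in W_{\mathfrak{p}}\setminus\{e\}$ is the standard Weyl group fact that a dot-dominant integral weight is the strict maximum of its $W_{\mathfrak{p}}$-orbit in the standard partial order, proved by induction along a reduced expression of $w$.

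The main technical point is the PBW propagation step in the second paragraph: \emph{a priori} the $U(\mathfrak{l})$-orbit of a non-highest vector could escape into unboundedly large subspaces of $L(\mu)$, and the argument must combine $\mathfrak{l}$-invariance of $N$ with the normalizer property of $\mathfrak{n}_-^{\mathfrak{p}}$ to cut this orbit down to finite dimension. Everything else reduces either to the definition of $\mathcal{O}$ or to standard facts about highest weight modules for reductive Lie algebras and Weyl group orbits of dominant weights.
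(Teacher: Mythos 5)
Your overall strategy --- reduce membership in $\mathcal{O}^{\mathfrak{p}}$ to local finiteness of the Levi factor $\mathfrak{l}$ on $L(\mu)$, reduce that to finite-dimensionality of $N=U(\mathfrak{l})v_\mu$ via the PBW propagation step, and then translate $\mathfrak{l}$-dominance into the dot-order condition using the $W_{\mathfrak{p}}$-invariance of $\rho-\rho_{\mathfrak{l}}$ --- is the standard proof of this statement, and those reduction steps and the Weyl-group translation are all correct as written. (The paper states the proposition without proof, so there is nothing there to compare against.)

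There is, however, one genuine gap, and it is not the PBW step you single out as the main technical point but the assertion that ``classical finite-dimensional representation theory gives $\dim N<\infty$ iff $\mu$ is $\mathfrak{l}$-dominant integral.'' You have only established that $N$ is \emph{a} highest weight $\mathfrak{l}$-module of highest weight $\mu$, and in that generality the ``if'' direction is false: an $\mathfrak{l}$-Verma module with dominant integral highest weight is infinite-dimensional. The symptom is that your write-up never uses the simplicity of $L(\mu)$, yet the proposition fails with $M(\mu)$ in place of $L(\mu)$ --- there $U(\mathfrak{l})v_\mu$ is an $\mathfrak{l}$-Verma module, so your argument run verbatim would ``prove'' $M(\mu)\in\mathcal{O}^{\mathfrak{p}}$ for dot-dominant $\mu$. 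The missing ingredient is that $N$ is the \emph{simple} highest weight $\mathfrak{l}$-module: any $(\mathfrak{n}_+\cap\mathfrak{l})$-singular vector $u\in N$ has weight $\mu-\nu$ with $\nu$ supported on the simple roots of $\mathfrak{l}$, so for any root $\beta$ of $\mathfrak{n}^{\mathfrak{p}}$ the weight $\mu-\nu+\beta$ is not $\leq\mu$ and hence $\mathfrak{n}^{\mathfrak{p}}u=0$; thus $u$ is $\mathfrak{n}_+$-singular and, by simplicity of $L(\mu)$, a multiple of $v_\mu$. Alternatively, for the ``if'' direction you can bypass $N$ altogether by writing $L(\mu)$ as a quotient of the parabolic Verma module $U(\mathfrak{g})\otimes_{U(\mathfrak{p})}V$, with $V$ the finite-dimensional simple $\mathfrak{l}$-module of highest weight $\mu$, and noting that $\mathcal{O}^{\mathfrak{p}}$ is closed under quotients. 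With either fix the proof is complete; the ``only if'' direction is unaffected, since a nonzero finite-dimensional highest weight module always has dominant integral highest weight.
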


In other words, $L(\mu)\in \mathcal{O}^{\mathfrak{p}}$ if and
only if $\mu$ is $W_{\mathfrak{p}}$-dominant. In particular, if
$\lambda$ is dominant, integral and regular, then simple
modules in $\mathcal{O}^{\mathfrak{p}}_{\lambda}$ have the form
$L(w\cdot \lambda)$, where $w$ is the shortest coset
representative for some coset from $W_{\mathfrak{p}}\backslash W$.
For $\mu\in\mathfrak{h}^*$ we set
\begin{displaymath}
L^{\mathfrak{p}}(\mu)= \mathrm{Z}_{\mathfrak{p}}L(\mu),\quad
P^{\mathfrak{p}}(\mu)= \mathrm{Z}_{\mathfrak{p}}P(\mu),\quad
\Delta^{\mathfrak{p}}(\mu)= \mathrm{Z}_{\mathfrak{p}}\Delta(\mu).
\end{displaymath}
Modules $\Delta^{\mathfrak{p}}(\mu)$ are called {\em parabolic
Verma modules} and can be alternatively described using
\index{parabolic Verma module}
parabolic induction from a simple
finite-dimensional $\mathfrak{p}$-module.

For $\lambda\in \mathfrak{h}^*_{\mathrm{dom}}$ the category
$\mathcal{O}^{\mathfrak{p}}_{\lambda}$ is equivalent to 
the category of modules over the quotient algebra 
$B^{\mathfrak{p}}_{\lambda}=B_{\lambda}/B_{\lambda}eB_{\lambda}$,
where $e$ is a maximal idempotent annihilating 
$\mathcal{O}^{\mathfrak{p}}_{\lambda}$. The duality $\star$
preserves $\mathcal{O}^{\mathfrak{p}}$ and hence
$B^{\mathfrak{p}}_{\lambda}=(B^{\mathfrak{p}}_{\lambda})^{\mathrm{op}}$.
We set
\begin{displaymath}
\nabla^{\mathfrak{p}}(\mu)=\Delta^{\mathfrak{p}}(\mu)^{\star},\quad
I^{\mathfrak{p}}(\mu)=P^{\mathfrak{p}}(\mu)^{\star}.
\end{displaymath}

Finally, the algebra $B^{\mathfrak{p}}_{\lambda}$ is quasi-hereditary
with parabolic Verma modules being standard modules with respect to
this structure, see \cite{RC}. We denote by $T^{\mathfrak{p}}(\mu)$
the corresponding indecomposable tilting modules and by
$T^{\mathfrak{p}}_{\lambda}$ the corresponding characteristic tilting 
module. Note that these cannot be obtained from the corresponding
tilting modules in $\mathcal{O}$ using Zuckerman functors na{\"\i}vely.
Similarly to $B_{\lambda}$, the algebra $B^{\mathfrak{p}}_{\lambda}$
is Ringel self-dual. One important difference with the usual 
category $\mathcal{O}$ is that parabolic Verma modules might have
non-simple socle and that the dimension of the homomorphism space
between different parabolic Verma modules might be bigger than two.

\subsection{$\mathfrak{gl}_2$-example}\label{s4.7}

For $n=2$ every singular block is semi-simple (as $W$ contains
only two elements). Note that we have $\rho=\frac{1}{2}(1,-1)$. 
The regular block $\mathcal{O}_0$ contains two simple modules,
$L(0,0)$ and $L(-1,1)=\Delta(-1,1)=\Delta(-1,1)^*=T(-1,1)$. The module
$L(0,0)$ is the trivial module. The action of $\mathfrak{g}$ on the 
module $L(-1,1)$, considered as a Verma module, is given by
the following picture (here left arrows represent the action of 
$e_{21}$, right arrows represent the action of 
$e_{12}$, and numbers are coefficients):
\begin{displaymath}
\xymatrix{ 
\dots\ar@/^1pc/[rr]^{-12} && 
e_{21}^2\otimes 1\ar@/^1pc/[ll]^1\ar@/^1pc/[rr]^{-6}  
&& e_{21}\otimes 1\ar@/^1pc/[ll]^1\ar@/^1pc/[rr]^{-2} && 
1\otimes 1\ar@/^1pc/[ll]^1
}
\end{displaymath}
The Verma module $\Delta(0,0)=P(0,0)$ is given by
the following picture (no right arrow means that the corresponding
coefficient is zero):
\begin{displaymath}
\xymatrix{ 
\dots\ar@/^1pc/[rr]^{-6} && 
e_{21}^2\otimes 1\ar@/^1pc/[ll]^1\ar@/^1pc/[rr]^{-2}  
&& e_{21}\otimes 1\ar@/^1pc/[ll]^1&& 
1\otimes 1\ar@/^1pc/[ll]^1
}
\end{displaymath}
The dual Verma module $\Delta(0,0)^{\star}$ is given by
the following picture:
\begin{displaymath}
\xymatrix{ 
\dots\ar@/^1pc/[rr]^{1} && 
(e_{21}^2\otimes 1)^*\ar@/^1pc/[ll]^{-6}\ar@/^1pc/[rr]^{1}  
&& (e_{21}\otimes 1)^*\ar@/^1pc/[ll]^{-2}\ar@/^1pc/[rr]^{1}&& 
(1\otimes 1)^*
}
\end{displaymath}
In this small example there is only one indecomposable module
left, namely the projective module $P(-1,1)$. One can choose
a basis in $P(-1,1)$ (given by bullets below) such that the action of
$e_{12}$ and $e_{21}$ in this basis can be given by the following picture
(see \cite[Chapter~3]{Ma} for explicit coefficients):
\begin{displaymath}
\xymatrix{ 
\dots\ar@/^/[rr]\ar[rrd] 
&&\bullet\ar@/^/[ll]\ar@/^/[rr]\ar[rrd]
&&\bullet\ar@/^/[ll]\ar@/^/[rr]\ar[rrd] 
&&\bullet\ar@/^/[ll]\ar[rrd] && \\
\dots\ar@/^/[rr]&&\bullet\ar@/^/[ll]\ar@/^/[rr]
&&\bullet\ar@/^/[ll]\ar@/^/[rr] 
&&\bullet\ar@/^/[ll] && \bullet\ar@/^/[ll]\\
}
\end{displaymath}
In particular, we see that $\Delta(0)\subset P(-1,1)$ and the cokernel
of this map is isomorphic to  $\Delta(-1,1)$. This means that 
$P(-1,1)=T(0)=P(-1,1)^*$. The algebra $B_0$ is given by the
following quiver and relations:
\begin{equation}\label{eq457}
\xymatrix{ 
s\ar@/^/[rr]^a &&e\ar@/^/[ll]^b
},\quad\quad ab=0.
\end{equation}
Here the vertex $e$ corresponds to the simple $L(0,0)$ and
the vertex $s$ corresponds to the simple $L(-1,1)$. Note that all
indecomposable $B_0$-modules are uniserial. Abbreviating
$B_0$-simples by the notation for the corresponding vertexes,
we obtain the following unique Loewy filtrations of 
indecomposable $B_0$-modules:
\begin{displaymath}
\begin{array}{|c|c|c|c|c|}
\hline
L(0,0)&L(-1,1)&\Delta(0,0)&\nabla(0,0)&P(-1,1)\\
\hline\hline
\begin{array}{c}e\\\\\end{array}&\begin{array}{c}s\\\\\end{array}&
\begin{array}{c}e\\s\\\end{array}&\begin{array}{c}s\\e\\\end{array}&
\begin{array}{c}s\\e\\s\end{array}\\
\hline
\end{array}
\end{displaymath}

The only non-Borel parabolic subalgebra of $\mathfrak{g}$ is
$\mathfrak{g}$ itself. Hence $\mathcal{O}_0^{\mathfrak{g}}$
is the semi-simple category with simple module $L(0,0)$.

\section{Category $\mathcal{O}$: projective and shuffling 
functors}\label{s5}

\subsection{Projective functors}\label{s5.1}

Recall that for any two $\mathfrak{g}$-modules $X$ and $Y$ the vector
space $X\otimes Y$ has the structure of a $\mathfrak{g}$-module given
by $g(x\otimes y)=g(x)\otimes y+ x\otimes g(y)$ for all 
$g\in \mathfrak{g}$, $x\in X$ and $y\in Y$. Hence for every 
$\mathfrak{g}$-module $V$ we have the endofunctor 
$V\otimes_{\mathbb{C}}{}_-$ on the category of 
$\mathfrak{g}$-modules. If $V$ is finite dimensional, the functor
$V\otimes_{\mathbb{C}}{}_-$ preserves $\mathcal{O}$.

\begin{definition}\label{def501}
{\rm 
A functor $\theta:\mathcal{O}\to\mathcal{O}$ 
(or $\theta:\mathcal{O}_{\lambda}\to \mathcal{O}_{\lambda'}$)
is called {\em projective}
\index{projective functor}
if it is isomorphic to a direct summand of some 
$V\otimes_{\mathbb{C}}{}_-$, where $V$ is finite dimensional.
} 
\end{definition}

For example, the identity functor is a projective functor
(it is isomorphic to the tensoring with the one-dimensional 
$\mathfrak{g}$-module). Projective functors appeared already 
in \cite{BGG0,Ja0}. Indecomposable projective functors are classified by:

\begin{theorem}[\cite{BG}]\label{thm502}
Let $\lambda$ and $\lambda'$ be dominant and integral. 
\begin{enumerate}[$($a$)$]
\item \label{thm502.1} For every $W_{\lambda}$-antidominant 
$\mu\in W\cdot \lambda'$ there is a unique indecomposable projective 
functor $\theta_{\lambda,\mu}$ such that 
$\theta_{\lambda,\mu}\,\Delta(\lambda)=P(\mu)$.
\item\label{thm502.2} Every indecomposable projective functor 
from $\mathcal{O}_{\lambda}$ to $\mathcal{O}_{\lambda'}$ is isomorphic 
to $\theta_{\lambda,\mu}$ for some $W_{\lambda}$-antidominant 
$\mu\in W\cdot \lambda'$.
\end{enumerate}
\end{theorem}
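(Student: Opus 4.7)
The plan is to follow the classical Bernstein--Gelfand approach, whose two pillars are a tensor identity giving Verma flags on $V\otimes_{\mathbb{C}}\Delta(\lambda)$ and a rigidity statement saying that a projective functor is determined by its value on $\Delta(\lambda)$.

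First I would establish the \emph{filtration lemma}: for any finite dimensional $\mathfrak{g}$-module $V$, the module $V\otimes_{\mathbb{C}}\Delta(\lambda)$ lies in $\mathcal{F}(\Delta)$ with subquotients $\Delta(\lambda+\nu)$, each $\nu$ appearing with multiplicity $\dim V_{\nu}$. This is proved via the identification $V\otimes_{\mathbb{C}}\Delta(\lambda)\cong U(\mathfrak{g})\otimes_{U(\mathfrak{b})}(V|_{\mathfrak{b}}\otimes\mathbb{C}_\lambda)$ and a $\mathfrak{b}$-composition series refining the weight decomposition of $V$. Since for dominant integral $\lambda$ we have $\Delta(\lambda)=P(\lambda)$ projective, it follows that $V\otimes_{\mathbb{C}}\Delta(\lambda)$ is projective in $\mathcal{O}$, and hence so is every summand; by projection onto $\mathcal{O}_{\lambda'}$ the standard filtration restricts to one whose layers are the $\Delta(\mu)$ with $\mu\in W\cdot\lambda'$ and $\mu-\lambda$ a weight of $V$.

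Next I would prove the \emph{rigidity} statement: the evaluation $\theta\mapsto \theta\Delta(\lambda)$ is fully faithful from the category of projective functors on $\mathcal{O}_\lambda$ into $\mathcal{O}$. Using the adjunction $\mathrm{Hom}(V\otimes M,N)\cong\mathrm{Hom}(M,V^*\otimes N)$ together with the universal property of $\Delta(\lambda)$ as the module of $\mathfrak{n}_+$-invariants of weight $\lambda$, one computes $\mathrm{Hom}(V\otimes\Delta(\lambda),V'\otimes\Delta(\lambda))$ explicitly and matches it with the space of natural transformations $V\otimes{}_-\to V'\otimes{}_-$ restricted to $\mathcal{O}_\lambda$. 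Combined with the Krull--Schmidt theorem this reduces the classification of indecomposable projective functors to the identification of their values on $\Delta(\lambda)$.

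Finally, I would identify the image of this evaluation functor after projection to $\mathcal{O}_{\lambda'}$. By the filtration lemma and projectivity, every indecomposable summand of $\mathrm{pr}_{\lambda'}(V\otimes\Delta(\lambda))$ is some $P(\mu)$ with $\mu\in W\cdot\lambda'$, and the top Verma in the flag forces $\mu$ to be $W_\lambda$-antidominant (otherwise the corresponding $\Delta(\mu)$ would not be the largest available layer among $W_\lambda$-translates, and the would-be summand would be absorbed into a larger one via Theorem~\ref{thm405}). Conversely, for each such $\mu$ one chooses $V$ with a weight $\nu=\mu-\lambda$ and exhibits $P(\mu)$ as an indecomposable summand with multiplicity one, defining $\theta_{\lambda,\mu}$; uniqueness follows from rigidity. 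Part (b) is then immediate: any indecomposable summand of any $\mathrm{pr}_{\lambda'}(V\otimes{}_-)$ takes value $P(\mu)$ on $\Delta(\lambda)$ for a unique $W_\lambda$-antidominant $\mu\in W\cdot\lambda'$, and hence coincides with $\theta_{\lambda,\mu}$ by the rigidity step.

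The main obstacle is the rigidity/full-faithfulness in the second step: computing natural transformations between tensor functors requires careful bookkeeping of $\mathfrak{n}_+$-invariants and interacting adjunctions, and this is where the bulk of the Bernstein--Gelfand argument is concentrated. Once this and the filtration lemma are in place, the classification and the $W_\lambda$-antidominance condition drop out from Krull--Schmidt and the Verma flag structure provided by Theorem~\ref{thm405}.
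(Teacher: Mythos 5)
The paper gives no proof of this theorem at all --- it is quoted from \cite{BG} --- so there is nothing internal to compare against; your outline is precisely the standard Bernstein--Gelfand argument (tensor identity giving the Verma flag on $V\otimes\Delta(\lambda)$, projectivity of $\Delta(\lambda)=P(\lambda)$ for dominant $\lambda$, and the rigidity theorem that evaluation at the dominant Verma module is fully faithful on projective functors), and you correctly locate the real work in the rigidity step. One point to tighten: your justification of the $W_{\lambda}$-antidominance condition is stated backwards and is not yet a proof. Antidominant means \emph{minimal} in its $W_{\lambda}$-orbit, and by Theorem~\ref{thm405}\eqref{thm405.4} the layer $\Delta(\mu)$ sits at the \emph{bottom} of the order among the layers of $P(\mu)$; so the correct heuristic is that the flag layers $\Delta(w\cdot\mu)$, $w\in W_{\lambda}$, can only all be absorbed by the projective cover of the minimal (antidominant) weight in the orbit. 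Even so, the flag structure alone does not rule out a non-antidominant $P(\mu)$ splitting off as a direct summand --- that exclusion, and the multiplicity-one statement defining $\theta_{\lambda,\mu}$, genuinely require the rigidity theorem (equivalently, the Harish-Chandra bimodule computation of $\mathrm{End}$ of $\mathrm{pr}_{\lambda'}(V\otimes\Delta(\lambda))$), so the burden on your second step is slightly heavier than the last paragraph suggests.
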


Theorem~\ref{thm502} implies that an indecomposable projective functor 
$\theta$ is completely determined by its value $\theta\,\Delta(\lambda)$
on the corresponding dominant Verma module $\Delta(\lambda)$.
Moreover, as $\theta\,\Delta(\lambda)$ is projective and projective
modules form a basis of $[\mathcal{O}_{\lambda'}]$ (as 
$\mathcal{O}_{\lambda'}$,
being quasi-hereditary, has finite global dimension), the functor
$\theta$ is already uniquely determined by $[\theta\,\Delta(\lambda)]$.

Another consequence is that in case  $\lambda$ is regular,  
indecomposable projective
endofunctors of $\mathcal{O}_{\lambda}$ are in a natural bijection 
with $W$. For $w\in W$ we will denote by $\theta_w$ the indecomposable
projective functor such that $\theta_w\,\Delta(\lambda)=P(w\cdot \lambda)$.
Here are some basic general properties of projective functors
(see e.g. \cite{BG}):

\begin{proposition}\label{prop503}
\begin{enumerate}[$($a$)$]
\item\label{prop503.1} Any direct summand of a projective functor is
a projective functor.
\item\label{prop503.2} Any direct sum of projective functors is
a projective functor.
\item\label{prop503.3} Any composition of projective functors is
a projective functor.
\item\label{prop503.5} Every projective functor is both left and
right adjoint to some other projective functor.
\item\label{prop503.4} Projective functors are exact.
\item\label{prop503.6} Projective functors preserve the additive
category of projective modules and the additive
category of injective modules.
\item\label{prop503.7} Projective functors preserve both
$\mathcal{F}(\Delta)$ and $\mathcal{F}(\nabla)$ and hence also the 
additive category of tilting modules.
\item\label{prop503.8} Projective functors commute with $\star$. 
\end{enumerate}
\end{proposition}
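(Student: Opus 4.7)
The plan is to verify each property first for the ``basic'' projective functors $V \otimes_{\mathbb{C}} -$ with $V$ a finite-dimensional $\mathfrak{g}$-module, and then transfer the conclusion to direct summands, which by Definition~\ref{def501} exhausts all projective functors.

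Parts (a), (b), (c), (e) are essentially formal. (a) is transitivity of ``being a direct summand''; (b) uses $(V_1 \otimes -) \oplus (V_2 \otimes -) \cong (V_1 \oplus V_2) \otimes -$; (c) uses the natural associator $V_1 \otimes (V_2 \otimes M) \cong (V_1 \otimes V_2) \otimes M$, where $V_1 \otimes V_2$ is again finite-dimensional; and (e) follows because $V$ is a finite-dimensional $\mathbb{C}$-vector space (hence flat), so $V \otimes -$ is exact, and summands of exact functors between abelian categories are exact. For (d), the usual hom--tensor adjunction, valid because $V$ is finite-dimensional, shows that $V^{*} \otimes -$ is both left and right adjoint to $V \otimes -$; an idempotent $e \in \mathrm{End}(V \otimes -)$ cutting out a summand $\theta$ is carried by the adjunction isomorphisms to an idempotent on $V^{*}\otimes -$, whose associated summand is biadjoint to $\theta$. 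Part (f) is then immediate: by (d) and (e) every projective functor has both an exact left and an exact right adjoint, and such functors preserve projectives as well as injectives.

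Part (g) rests on the classical Verma-flag decomposition of $V \otimes \Delta(\mu)$: this module admits a standard filtration whose subquotients are exactly the Verma modules $\Delta(\mu + \nu)$, with $\nu$ ranging over the weights of $V$ counted with multiplicity (see e.g.\ \cite{Di,Hu}). Combined with exactness, induction along a standard filtration of $M \in \mathcal{F}(\Delta)$ gives $V \otimes M \in \mathcal{F}(\Delta)$; since $\mathcal{F}(\Delta)$ is closed under direct summands by Proposition~\ref{cor436} (Ext-vanishing transfers to summands), this transfers to projective functors. For $\mathcal{F}(\nabla)$ I would combine Proposition~\ref{cor436} with the adjunction from (d): if $N \in \mathcal{F}(\nabla)$ and $\theta'$ is the biadjoint of $\theta$, then $\mathrm{Ext}^{i}(\Delta(\mu), \theta N) \cong \mathrm{Ext}^{i}(\theta' \Delta(\mu), N) = 0$, because $\theta' \Delta(\mu) \in \mathcal{F}(\Delta)$ by the case already handled and the right-hand side vanishes by Proposition~\ref{cor436}. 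Since $\mathrm{add}(T_{\lambda}) = \mathcal{F}(\Delta) \cap \mathcal{F}(\nabla)$, the tilting statement follows.

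For (h), the duality $\star$ is, up to a twist by the Chevalley involution $\sigma$, the restricted linear dual on finite-dimensional weight spaces, and a direct calculation produces a natural isomorphism $(V \otimes M)^{\star} \cong V^{\sigma} \otimes M^{\star}$. Since every finite-dimensional $\mathfrak{g}$-module is self-dual with respect to $\sigma$ (its character is $W$-symmetric), one has $V^{\sigma} \cong V$, so $\star$ commutes with $V \otimes -$; naturality of the isomorphism in $\mathrm{End}(V \otimes -)$ then carries the conclusion to every summand. The main obstacle is the Verma-flag decomposition used in (g): this is a genuine input resting on the PBW theorem and the structure of finite-dimensional $\mathfrak{b}$-orbits in $V$, rather than a formal consequence of the definition, and I would simply invoke it from the standard literature. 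Everything else is bookkeeping around exactness, biadjunction, and naturality.
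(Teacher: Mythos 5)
The paper offers no proof of this proposition at all; it simply points to \cite{BG}. Your argument is the standard one from that source (reduce everything to the ``big'' functors $V\otimes_{\mathbb{C}}{}_-$ via the tensor identity, hom--tensor adjunction, and Ext-orthogonality), and it is essentially correct, so there is no competing approach in the paper to compare against. Two places deserve a little more care than you give them. First, in (d) the statement asks for a \emph{single} projective functor $\theta'$ to which $\theta$ is both left and right adjoint. Transporting the idempotent $e$ across the left adjunction $(V\otimes{}_-,V^*\otimes{}_-)$ and across the right adjunction are two different mate constructions, producing a priori two different idempotents $e'$ and $e''$ on $V^*\otimes{}_-$, hence possibly non-isomorphic summands (one a right adjoint of $\theta$, the other a left adjoint). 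The clean fix is to note that both adjoints are projective functors and then either (i) use part (h) together with the adjunction to show that a right adjoint commuting with $\star$ is automatically also a left adjoint, or (ii) invoke Theorem~\ref{thm502}: a projective functor is determined by the class of its value on the dominant Verma module, and both candidate adjoints have the same such class. Second, in (h) your natural isomorphism $(V\otimes M)^{\star}\cong V\otimes M^{\star}$ only shows that $\star\circ\theta\circ\star$ is again \emph{some} summand of $V\otimes{}_-$; identifying it with $\theta$ itself again requires the uniqueness statement of Theorem~\ref{thm502} (note that $\star$ fixes classes of simples, so $[\star\theta\star\,\Delta(\lambda)]=[\theta\,\Delta(\lambda)]$). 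With those two patches — both of which lean on the classification theorem the paper states immediately before this proposition — your proof is complete; the rest (exactness, the Verma-flag of $V\otimes\Delta(\mu)$, closure of $\mathcal{F}(\Delta)$ under summands via Proposition~\ref{cor436}, and the adjunction argument for $\mathcal{F}(\nabla)$) is correct as written.
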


For projective endofunctors of $\mathcal{O}_{\lambda}$ 
the claim of Proposition~\ref{prop503}\eqref{prop503.5} can be
made more explicit:

\begin{proposition}\label{prop504}
Let $\lambda$ be regular, integral and dominant. Then for $w\in W$ 
the functor $\theta_w$ is both left and right adjoint to 
$\theta_{w^{-1}}$.
\end{proposition}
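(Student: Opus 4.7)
My plan is first to produce the biadjoint abstractly via Proposition~\ref{prop503}\eqref{prop503.5}, and then identify it with $\theta_{w^{-1}}$ by induction on the length $\mathfrak{l}(w)$. Since passing to the biadjoint of an additive functor induces an (anti-)isomorphism of endomorphism rings and hence preserves indecomposability, Proposition~\ref{prop503}\eqref{prop503.5} together with Theorem~\ref{thm502}\eqref{thm502.2} produces an indecomposable projective biadjoint $\theta_v$ of $\theta_w$ for a uniquely determined $v\in W$. The whole content of the proposition is the identification $v=w^{-1}$.

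The base case handles $w\in\{e\}\cup S$. For a simple reflection $s$, I realise $\theta_s$ as a composition $T^s\circ T_s$, where $T_s$ is translation from $\mathcal{O}_{\lambda}$ to a block on the $s$-wall adjacent to $\lambda$ and $T^s$ is translation back; the functors $T_s$ and $T^s$ are well known to form a biadjoint pair. Because passing to biadjoints reverses composition, $\theta_s^{*}\cong T_s^{*}\circ (T^s)^{*}\cong T^s\circ T_s\cong\theta_s=\theta_{s^{-1}}$, which also covers $w=e$ trivially.

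For $\mathfrak{l}(w)\ge 2$ I pick a reduced expression $w=sw'$ with $s\in S$, so that $\mathfrak{l}(w')=\mathfrak{l}(w)-1$. The inductive hypothesis and the base case give
\[
(\theta_s\circ\theta_{w'})^{*}\cong\theta_{w'}^{*}\circ\theta_s^{*}\cong\theta_{(w')^{-1}}\circ\theta_s.
\]
Both sides are projective functors by Proposition~\ref{prop503}\eqref{prop503.3}, and by Theorem~\ref{thm502}\eqref{thm502.2} their decompositions into indecomposable summands can be read off from the decompositions of the projective modules $\theta_s\,P(w'\cdot\lambda)$ and $\theta_{(w')^{-1}}\,P(s\cdot\lambda)$ into indecomposable projectives, using the characterisation $\theta_u\,\Delta(\lambda)=P(u\cdot\lambda)$. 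A translation-functor calculation from \cite{BG} shows that for the reduced expression $w=sw'$, $P(w\cdot\lambda)$ occurs in $\theta_s\,P(w'\cdot\lambda)$ with multiplicity one and every other indecomposable summand is of the form $P(u\cdot\lambda)$ with $\mathfrak{l}(u)<\mathfrak{l}(w)$; equivalently, $\theta_w$ is the unique summand of $\theta_s\circ\theta_{w'}$ of length $\mathfrak{l}(w)$. Applying the same observation to the reduced expression $w^{-1}=(w')^{-1}s$ singles out $\theta_{w^{-1}}$ as the unique summand of $\theta_{(w')^{-1}}\circ\theta_s$ of length $\mathfrak{l}(w)$, with multiplicity one. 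Matching these two extremal summands across the biadjoint isomorphism above, and using the inductive hypothesis to pair up the remaining summands of length $<\mathfrak{l}(w)$ via $u\mapsto u^{-1}$, forces $\theta_w^{*}\cong\theta_{w^{-1}}$.

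The principal obstacle is the multiplicity-one, top-length assertion used in the inductive step: that for a reduced expression $w=sw'$, the composition $\theta_s\circ\theta_{w'}$ contains $\theta_w$ as its unique summand of length $\mathfrak{l}(w)$, and it occurs exactly once. This is where the translation-functor input from \cite{BG} does the real work, as it relies on analysing how translation through a wall acts on indecomposable projectives in the principal block; once this is taken on board, the matching of biadjoints is purely formal bookkeeping.
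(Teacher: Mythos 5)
The paper states Proposition~\ref{prop504} without any proof (it is quoted as a refinement of Proposition~\ref{prop503}\eqref{prop503.5}, going back to \cite{BG}), so there is no in-text argument to compare against. Your overall strategy is sound and is essentially the standard one: produce an indecomposable biadjoint abstractly via Proposition~\ref{prop503}\eqref{prop503.5} together with the mate anti-isomorphism of endomorphism rings, settle the base case by the wall-crossing factorization \eqref{eq591} and biadjointness of translations to and out of the wall, and then identify the biadjoint by induction on $\mathfrak{l}(w)$ using Krull--Schmidt and the multiplicity-one statement, which is available in the paper as Proposition~\ref{prop505}\eqref{prop505.1}.

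There is, however, one concrete error in your inductive step, caused by the paper's conventions. Since $\theta_w$ is normalized by $\theta_w\,\Delta(\lambda)=P(w\cdot\lambda)$ and $\theta_s\Delta(w\cdot\lambda)$ has Verma subquotients $\Delta(w\cdot\lambda)$ and $\Delta(ws\cdot\lambda)$ (Proposition~\ref{prop505}\eqref{prop505.2}), the categorification in Corollary~\ref{cor506} is an \emph{anti}-isomorphism: for a reduced word $w=s_1s_2\cdots s_k$ it is the reversed composition $\theta_{s_k}\circ\cdots\circ\theta_{s_1}$ that contains $\theta_w$ (Proposition~\ref{prop505}\eqref{prop505.1}). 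Consequently, for your decomposition $w=sw'$ the unique top-length summand of $\theta_s\circ\theta_{w'}$ is $\theta_{w's}$, not $\theta_{sw'}=\theta_w$, and the assertion that $P(w\cdot\lambda)$ is the leading summand of $\theta_s\,P(w'\cdot\lambda)$ is false in general. The repair is immediate: peel a simple reflection off the \emph{right}, writing $w=w's$ reduced. Then $(\theta_s\circ\theta_{w'})^{*}\cong\theta_{(w')^{-1}}\circ\theta_s$ has leading summand $\theta_{s(w')^{-1}}=\theta_{w^{-1}}$, while the left-hand side contains the indecomposable $\theta_w^{*}$ as a summand; every other indecomposable summand of $\theta_{(w')^{-1}}\circ\theta_s$ is some $\theta_z$ with $\mathfrak{l}(z)<\mathfrak{l}(w)$, and $\theta_w^{*}\cong\theta_z$ would give $\theta_w\cong\theta_{z^{-1}}$ by the inductive hypothesis, contradicting $\mathfrak{l}(z^{-1})<\mathfrak{l}(w)$. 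Krull--Schmidt then forces $\theta_w^{*}\cong\theta_{w^{-1}}$, as you intended. With this correction your proof is complete.
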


\subsection{Translations through walls}\label{s5.2}

Let $\lambda$ be regular, integral and dominant.
For a simple reflection $s$ the projective endofunctor 
$\theta_s$ of $\mathcal{O}_{\lambda}$ is called {\em 
translation through the $s$-wall}. 
\index{translation through the wall}
Let $\mu$ be integral and 
dominant such that $W_{\mu}=\{e,s\}$. Then
\begin{equation}\label{eq591}
\theta_s\cong\theta_{\mu,s\cdot\lambda}\circ
\theta_{\lambda,\mu}. 
\end{equation}
The functor $\theta_{\mu,s\cdot\lambda}$ is called {\em translation
out of the $s$-wall} and is both left and right adjoint 
to the functor $\theta_{\lambda,\mu}$, the latter being called
{\em translation to the $s$-wall}. These ``smallest'' translations 
$\theta_s$ have the following properties:
\index{translation out of the wall}
\index{translation to the wall}

\begin{proposition}\label{prop505}
Let $\lambda$ be regular, integral and dominant.
\begin{enumerate}[$($a$)$]
\item\label{prop505.1} Let $w\in W$ and $w=s_1s_2\cdots s_k$ be
a reduced decomposition. Then there is a unique direct summand
of $\theta_{s_k}\theta_{s_{k-1}}\cdots\theta_{s_1}$ 
isomorphic to $\theta_w$. All other direct summands of
$\theta_{s_k}\theta_{s_{k-1}}\cdots\theta_{s_1}$ are isomorphic to
$\theta_x$ for $x<w$.
\item\label{prop505.2} For any simple reflection $s\in S$ and
any $w\in W$ there are exact sequences
\begin{displaymath}
\begin{array}{rc}
0\to \Delta(w\cdot\lambda)\to 
\theta_s\Delta(w\cdot\lambda) \to 
\Delta(ws\cdot\lambda)\to 0, & ws>w;\\
0\to \Delta(ws\cdot\lambda)\to 
\theta_s\Delta(w\cdot\lambda) \to 
\Delta(w\cdot\lambda)\to 0, & ws<w.
\end{array}
\end{displaymath}
\item\label{prop505.3} $\theta_s\circ \theta_s\cong\theta_s\oplus\theta_s$.
\item\label{prop505.4} If $s$ and $t$ are commuting simple reflections, then $\theta_s\circ \theta_t\cong\theta_t\circ\theta_s$.
\item\label{prop505.5} If $s$ and $t$ are simple reflections such that
$sts=tst$, then 
\begin{displaymath}
(\theta_s\circ \theta_t\circ \theta_s)\oplus\theta_t \cong(\theta_t\circ\theta_s\circ\theta_t)\oplus\theta_s. 
\end{displaymath}
\end{enumerate} 
\end{proposition}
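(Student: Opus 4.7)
The overall strategy is to reduce every claim to a statement about projective modules in $\mathcal{O}_{\lambda}$ via Theorem~\ref{thm502}: a projective endofunctor $\theta$ of $\mathcal{O}_{\lambda}$ is determined up to isomorphism by the projective module $\theta\,\Delta(\lambda)$, and Krull--Schmidt together with the fact that the classes $[P(w\cdot\lambda)]$ are linearly independent in $[\mathcal{O}_{\lambda}]$ (Corollary~\ref{cor406} plus BGG reciprocity) lets us read off the decomposition into indecomposable $\theta_{w}$'s.

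I would prove part \eqref{prop505.2} first, since everything else rests on it. Writing $\theta_{s}$ as a summand of $V\otimes_{\mathbb{C}}{}_{-}$ for a finite-dimensional $V$, the standard fact that $V\otimes\Delta(w\cdot\lambda)$ has a $\Delta$-filtration whose subquotients are $\Delta(w\cdot\lambda+\nu)$ for weights $\nu$ of $V$, together with the decomposition $\theta_{s}\cong\theta_{\mu,s\cdot\lambda}\circ\theta_{\lambda,\mu}$ from \eqref{eq591} and the universal property of Verma modules, yields that $\theta_{s}\Delta(w\cdot\lambda)$ is an extension of $\Delta(w\cdot\lambda)$ and $\Delta(ws\cdot\lambda)$, each appearing with multiplicity one. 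The direction of the extension is forced by the highest-weight structure: since $\lambda$ is dominant, the dot-action reverses the standard order, so $ws>w$ in the Bruhat order is equivalent to $ws\cdot\lambda<w\cdot\lambda$; in a $\Delta$-filtration the submodule must have the larger highest weight, which gives the two stated short exact sequences.

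Part \eqref{prop505.1} then follows by induction on $k$. Applying \eqref{prop505.2} to an iterated application of $\theta_{s_{i}}$'s starting from $\Delta(\lambda)=P(\lambda)$ shows that $\theta_{s_{k}}\cdots\theta_{s_{1}}\Delta(\lambda)$ has a $\Delta$-filtration whose subquotients are exactly the $\Delta(x\cdot\lambda)$ for subexpressions $x$ of $s_{1}\cdots s_{k}$; reducedness of the expression implies $x\leq w$ throughout, with $\Delta(w\cdot\lambda)$ occurring exactly once. This module is projective (Proposition~\ref{prop503}\eqref{prop503.6}), so Krull--Schmidt plus BGG reciprocity (Theorem~\ref{thm405}\eqref{thm405.4}) identify $P(w\cdot\lambda)$ as a summand of multiplicity one and force all remaining summands to be $P(x\cdot\lambda)$ with $x<w$; Theorem~\ref{thm502} then translates this back to functors. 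Parts \eqref{prop505.3}--\eqref{prop505.5} are now matters of computing $[\theta\,\Delta(\lambda)]$ in $[\mathcal{O}_{\lambda}]$: for \eqref{prop505.3} one checks $[\theta_{s}^{2}\Delta(\lambda)]=2[\Delta(\lambda)]+2[\Delta(s\cdot\lambda)]=2[P(s\cdot\lambda)]$ and invokes Theorem~\ref{thm502}; for \eqref{prop505.4} commutativity of $s,t$ together with iterated application of \eqref{prop505.2} makes $[\theta_{s}\theta_{t}\Delta(\lambda)]$ and $[\theta_{t}\theta_{s}\Delta(\lambda)]$ manifestly equal; for \eqref{prop505.5} the analogous computation, using $sts=tst$, shows that the difference $[\theta_{s}\theta_{t}\theta_{s}\Delta(\lambda)]-[\theta_{t}\theta_{s}\theta_{t}\Delta(\lambda)]$ equals $[\Delta(s\cdot\lambda)]-[\Delta(t\cdot\lambda)]=[\theta_{s}\Delta(\lambda)]-[\theta_{t}\Delta(\lambda)]$, which is exactly the correction supplied by the extra $\theta_{s}$ and $\theta_{t}$ summands on the two sides.

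The main technical obstacle is the filtration direction in \eqref{prop505.2}: showing that the $\Delta$-flag on $\theta_{s}\Delta(w\cdot\lambda)$ really does put the correct Verma at the bottom rather than merely obtaining the correct subquotient multiplicities. One cleanly handles this by exploiting the adjunction between $\theta_{\lambda,\mu}$ and $\theta_{\mu,\lambda}$ (Proposition~\ref{prop503}\eqref{prop503.5}) together with $\mathrm{Hom}_{\mathfrak{g}}(\Delta(\nu),\Delta(\nu'))=0$ unless $\nu\leq\nu'$ (Theorem~\ref{thm402}\eqref{thm402.4}), which pins down the unique nontrivial extension. After this, the remainder of the proposition is bookkeeping in the Grothendieck group, but one must be careful in \eqref{prop505.5} to note that while $\theta_{s}\theta_{t}\theta_{s}\not\cong\theta_{t}\theta_{s}\theta_{t}$ in general, the two sides become isomorphic only after adding the indicated correction summands.
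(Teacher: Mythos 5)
Your proposal is correct and follows exactly the route the paper indicates (its proof is the one-line idea ``apply functors to the dominant Verma and compute the images of the results in the Grothendieck group''): you reduce everything to the projective module $\theta\,\Delta(\lambda)$, whose class determines the projective functor by Theorem~\ref{thm502}, and read off the decompositions via Krull--Schmidt and BGG reciprocity. The extra care you take with the direction of the extension in part (b) and with the subword count in part (e) fills in precisely the details the paper omits.
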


\begin{proof}[Idea of the proof.]
Apply functors to the dominant Verma and compute the images 
of the results in the Grothendieck group.
\end{proof}

For a regular, integral and dominant $\lambda$ denote by 
$\cS_{\lambda}$ the category of projective endofunctors 
of $\mathcal{O}_{\lambda}$. The category $\cS_{\lambda}$
is additive and monoidal (the monoidal structure is given
by composition of functors). Hence $\cS_{\lambda}$ can be
viewed as the endomorphism category of the object in a 
$2$-category with one object. Recall that $W\cong\mathbb{S}_n$.
The following results provides a categorification of the
integral group ring $\mathbb{Z}\left[\mathbb{S}_n\right]$:

\begin{corollary}[Categorification of the integral group ring
of $\mathbb{S}_n$]\label{cor506}
The map
\begin{displaymath}
\begin{array}{ccc}
\mathbb{Z}[W]&\to& [\cS_{\lambda}]\\
e+s&\mapsto& [\theta_s]
\end{array} 
\end{displaymath}
induces an anti-isomorphism of unital rings.
\end{corollary}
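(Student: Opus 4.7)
The plan is to construct a ring anti-homomorphism $\phi: \mathbb{Z}[W] \to [\cS_{\lambda}]$ satisfying $\phi(e+s) = [\theta_s]$ for each $s \in S$, and then show it is a bijection. First I would observe that $[\cS_{\lambda}]$ carries a ring structure with identity $[\mathrm{Id}] = [\theta_e]$, induced by composition of functors (well-defined by Proposition~\ref{prop503}(b),(c)). Combining Theorem~\ref{thm502} with the Krull--Schmidt property of the fully additive category $\cS_{\lambda}$ shows that $\{[\theta_w]\}_{w\in W}$ is a $\mathbb{Z}$-basis of $[\cS_{\lambda}]$. In particular both $\mathbb{Z}[W]$ and $[\cS_{\lambda}]$ are free $\mathbb{Z}$-modules of rank $|W|=n!$.

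Next I would construct $\phi$ using the standard Coxeter presentation of $\mathbb{Z}[W]$ as the unital $\mathbb{Z}$-algebra generated by $S$ subject to $s^2=1$, commuting relations, and length-three braid relations. A ring anti-homomorphism is thus specified by providing involutive elements $\phi(s) \in [\cS_{\lambda}]$ satisfying the same relations. Requiring $\phi(e+s) = [\theta_s]$ forces $\phi(s) = [\theta_s] - [\mathrm{Id}]$, and a direct expansion (repeatedly using $[\theta_s]^2 = 2[\theta_s]$) reduces the three relation types to
\[
[\theta_s]^2 = 2[\theta_s], \qquad [\theta_s][\theta_t] = [\theta_t][\theta_s], \qquad [\theta_s][\theta_t][\theta_s] + [\theta_t] = [\theta_t][\theta_s][\theta_t] + [\theta_s],
\]
which are exactly Proposition~\ref{prop505}(c), (d) and (e) respectively. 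Hence $\phi$ is a well-defined ring anti-homomorphism.

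For bijectivity, since both sides are free $\mathbb{Z}$-modules of equal finite rank, it suffices to prove surjectivity. I would argue by induction on the Bruhat order. For $w \in W$ with reduced expression $w=s_1\cdots s_k$, anti-multiplicativity combined with Proposition~\ref{prop505}(a) yields
\[
\phi\bigl((e+s_1)(e+s_2)\cdots(e+s_k)\bigr) = [\theta_{s_k}]\circ\cdots\circ[\theta_{s_1}] = [\theta_w] + \sum_{x<w} n_{x,w}[\theta_x]
\]
for some non-negative integers $n_{x,w}$. The base case $w=e$ is immediate since $\phi(e)=[\mathrm{Id}]=[\theta_e]$; inductively, every $[\theta_x]$ with $x<w$ already lies in the image of $\phi$, hence so does $[\theta_w]$. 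Thus the image of $\phi$ contains the whole basis $\{[\theta_w]\}_{w\in W}$, establishing surjectivity and therefore bijectivity. I expect the principal obstacle to be the braid computation in the second step: one must carefully verify that upon expanding $\phi(s)\phi(t)\phi(s) - \phi(t)\phi(s)\phi(t)$ and applying $[\theta_s]^2 = 2[\theta_s]$ to collapse the quadratic terms, the residual ``lower-order'' summands $[\theta_s]$ and $[\theta_t]$ appear on precisely opposite sides of the equation, matching the asymmetric correction in Proposition~\ref{prop505}(e) rather than the naively expected symmetric braid relation.
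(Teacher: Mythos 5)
Your proof is correct and follows essentially the same route as the paper's: reduce everything to the relations satisfied by the generators $e+s$ of $\mathbb{Z}[W]$, verify them via Proposition~\ref{prop505}\eqref{prop505.3}--\eqref{prop505.5}, obtain surjectivity from Proposition~\ref{prop505}\eqref{prop505.1}, and conclude by comparing ranks. The only (welcome) difference is one of detail rather than of method: you derive the presentation of $\mathbb{Z}[W]$ in the generators $e+s$ by expanding the Coxeter relations for $s=(e+s)-e$, and you make the surjectivity step explicit via induction on the Bruhat order, both of which the paper merely asserts.
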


\begin{proof}[Idea of the proof.]
The ring $\mathbb{Z}[W]$ is generated, as a unital ring, by 
elements $e+s$, where $s$ is a simple reflection. The defining
relations of $\mathbb{Z}[W]$ in this basis are:
\begin{gather*}
(e+s)^2=e+s;\quad 
(e+s)(e+t)=(e+t)(e+s) \text{ if } st=ts;\\
(e+s)(e+t)(e+s)+(e+t)=(e+t)(e+s)(e+t)+(e+s) \text{ if } sts=tst.
\end{gather*}
By Proposition~\ref{prop505}\eqref{prop505.3}-\eqref{prop505.5}, 
the elements $[\theta_s]$ satisfy
these relations and hence the map $e+s\mapsto [\theta_s]$ induces
a ring homomorphism. It is surjective as translations through walls 
generate $\cS_{\lambda}$ by 
Proposition~\ref{prop505}\eqref{prop505.1}. Bijectivity now
follows by comparing ranks.
\end{proof}

From Corollary~\ref{cor506} it follows that the set 
$\{[\theta_w]:w\in W\}$ is a new basis of $\mathbb{Z}[W]$.
Later we will identify this basis as the {\em Kazhdan-Lusztig} basis.
\index{Kazhdan-Lusztig basis}
Note that existence of this basis is a bonus of our categorification
result. From Proposition~\ref{prop505}\eqref{prop505.2} it follows
that the action of projective functors on $\mathcal{O}_{\lambda}$
categorifies the right regular representation of $\mathbb{S}_n$:

\begin{proposition}[Categorification of the right regular
$\mathbb{S}_n$-module]\label{prop521}
\begin{enumerate}[$($a$)$]
\item\label{prop521.1} There is a unique isomorphism of abelian groups
$\varphi:\mathbb{Z}[\mathbb{S}_n]\to [\mathcal{O}_{\lambda}]$ such that
$\varphi(w)=[\Delta(w\cdot\lambda)]$.
\item\label{prop521.2} For any $s\in S$ the following 
diagram commutes:
\begin{displaymath}
\xymatrix{ 
\mathbb{Z}[\mathbb{S}_n]\ar[rr]^{\cdot (e+s)}\ar[d]_{\varphi}
&&\mathbb{Z}[\mathbb{S}_n]\ar[d]_{\varphi} \\
[\mathcal{O}_{\lambda}]\ar[rr]^{[\theta_s]\cdot}&&[\mathcal{O}_{\lambda}]
}
\end{displaymath}
\end{enumerate}
\end{proposition}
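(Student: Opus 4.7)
The plan is to establish (a) by exhibiting the Verma classes $\{[\Delta(w\cdot\lambda)]:w\in W\}$ as a $\mathbb{Z}$-basis of $[\mathcal{O}_\lambda]$, and then to establish (b) by checking commutativity on this basis via the short exact sequences of Proposition~\ref{prop505}\eqref{prop505.2}. Both assertions then reduce to two elementary Grothendieck-group computations.

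For part (a), I would argue as follows. Since $\lambda$ is regular, integral and dominant, Theorem~\ref{thm403} says the simple modules in $\mathcal{O}_\lambda$ are exactly $\{L(w\cdot\lambda):w\in W\}$, and since every object of $\mathcal{O}_\lambda$ has finite length (also by Theorem~\ref{thm403}), these simple classes form a free $\mathbb{Z}$-basis of $[\mathcal{O}_\lambda]$. By Theorem~\ref{thm402}\eqref{thm402.2}, $[\Delta(w\cdot\lambda):L(w\cdot\lambda)]=1$ and $[\Delta(w\cdot\lambda):L(y\cdot\lambda)]\ne 0$ implies $y\cdot\lambda\le w\cdot\lambda$ in the standard order. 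Thus the transition matrix from $\{[\Delta(w\cdot\lambda)]\}$ to $\{[L(w\cdot\lambda)]\}$, indexed by $W$ and ordered compatibly with $\le$ on the dot-orbit, is unitriangular with integer entries and hence invertible over $\mathbb{Z}$. Consequently $\{[\Delta(w\cdot\lambda)]:w\in W\}$ is also a $\mathbb{Z}$-basis. The map sending the $\mathbb{Z}$-basis $W$ of $\mathbb{Z}[\mathbb{S}_n]$ to this basis is then the unique group isomorphism $\varphi$ with $\varphi(w)=[\Delta(w\cdot\lambda)]$.

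For part (b), since both $[\theta_s]\cdot$ and right multiplication by $(e+s)$ are $\mathbb{Z}$-linear, it suffices to verify the commutativity on the basis elements $w\in W$. Fix $s\in S$ and $w\in W$. Apply Proposition~\ref{prop505}\eqref{prop505.2}: in the case $ws>w$ we get a short exact sequence whose outer terms are $\Delta(w\cdot\lambda)$ and $\Delta(ws\cdot\lambda)$, and in the case $ws<w$ the outer terms are swapped; in either case the additivity of $[\,\cdot\,]$ on short exact sequences yields
\begin{displaymath}
[\theta_s\,\Delta(w\cdot\lambda)]=[\Delta(w\cdot\lambda)]+[\Delta(ws\cdot\lambda)].
\end{displaymath}
On the other side, $w(e+s)=w+ws$ in $\mathbb{Z}[\mathbb{S}_n]$, so $\varphi(w(e+s))=[\Delta(w\cdot\lambda)]+[\Delta(ws\cdot\lambda)]$. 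The two expressions agree, which gives exactly the commutativity of the required square on every basis element.

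The only genuine input is Proposition~\ref{prop505}\eqref{prop505.2} together with Theorem~\ref{thm402}\eqref{thm402.2}; once those are in hand, the proof is purely bookkeeping in the Grothendieck group. The place where one must be a little careful is the order-theoretic step in part (a): one needs the standard order $\le$ restricted to the dot-orbit $W\cdot\lambda$ to be a partial order under which the $[\Delta(w\cdot\lambda)]$-to-$[L(w\cdot\lambda)]$ transition matrix is unitriangular. This follows from Theorem~\ref{thm402}\eqref{thm402.2} together with the fact that any total refinement of $\le$ on the finite set $W\cdot\lambda$ makes the transition matrix upper unitriangular, which is all that is needed to conclude that both families are $\mathbb{Z}$-bases.
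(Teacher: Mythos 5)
Your proof is correct and follows exactly the route the paper intends: the text introduces Proposition~\ref{prop521} with the words ``From Proposition~\ref{prop505}\eqref{prop505.2} it follows that\dots'', and your part (b) is precisely that computation, while your part (a) supplies the standard unitriangularity argument (via Theorem~\ref{thm402}\eqref{thm402.2}) that the paper leaves implicit. No gaps.
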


The decategorification of the action of projective functors 
on $\mathcal{O}_{\lambda}$ can thus be depicted as follows:
\begin{displaymath}
\xymatrix{\mathcal{O}_{\lambda}\ar@(d,r)[]_{\cS_{\lambda}}}
\qquad\mapsto\qquad
\xymatrix{[\mathcal{O}_{\lambda}]\ar@(d,r)[]_{[\cS_{\lambda}]}}
\qquad\cong\qquad
\xymatrix{\mathbb{Z}[\mathbb{S}_n]\ar@(d,r)[]_{\mathbb{Z}[\mathbb{S}_n]}}
\end{displaymath}

\subsection{Description via Harish-Chandra bimodules}\label{s5.3}

Projective functors, being right exact, can be described, by the
general nonsense, as tensoring with certain bimodules. 
The identity functor is of course isomorphic to the functor 
$U(\mathfrak{g})\otimes_{U(\mathfrak{g})}{}_-$. Hence, for a
finite dimensional module $V$, the functor $V\otimes_{\mathbb{C}}{}_-$
is isomorphic  to the functor $V\otimes_{\mathbb{C}}U(\mathfrak{g})
\otimes_{U(\mathfrak{g})}{}_-$. However, the bimodule 
$V\otimes_{\mathbb{C}}U(\mathfrak{g})$ is ``too big''. On the other
hand, if we fix dominant $\lambda$, then any module in 
$\mathcal{O}_{\lambda}$ is annihilated by 
$(\mathrm{Ker}\chi_{\lambda})^k$ for some fixed big enough $k$ 
(since every module is a quotient of a projective module, we have only 
finitely many indecomposable projectives in  $\mathcal{O}_{\lambda}$, every 
projective  has finite length, and any simple is annihilated by
$\mathrm{Ker}\chi_{\lambda}$). Therefore, restricted to 
$\mathcal{O}_{\lambda}$, the functors
$U(\mathfrak{g})\otimes_{U(\mathfrak{g})}{}_-$ and
$U(\mathfrak{g})/(\mathrm{Ker}\chi_{\lambda})^k\otimes_{U(\mathfrak{g})}{}_-$
are isomorphic. The bimodule 
$M=U(\mathfrak{g})/(\mathrm{Ker}\chi_{\lambda})^k$ is now reasonably 
``small'' in the following sense: Under the adjoint action
of $\mathfrak{g}$ the bimodule $M$ decomposes into a direct sum 
of simple finite-dimensional $\mathfrak{g}$-modules, each occurring with 
finite multiplicity. A finitely generated $\mathfrak{g}$-bimodule
satisfying this condition is called a {\em Harish-Chandra} bimodule.
\index{Harish-Chandra bimodule}
The category of Harish-Chandra bimodules is denoted by 
$\mathcal{H}$ (see \cite[Kapitel~6]{Ja} for details). 

We have the following decomposition of $\mathcal{H}$ with respect to the
action of $Z(\mathfrak{g})$:
\begin{displaymath}
\mathcal{H}=\bigoplus_{\lambda,\mu\in\mathfrak{h}^*_{\mathrm{dom}}} 
{}_{\lambda}\mathcal{H}_{\mu},
\end{displaymath}
where ${}_{\lambda}\mathcal{H}_{\mu}$ denotes the full subcategory of
$\mathcal{H}$, which consists of all bimodules $M$ such that for any
$v\in M$ we have 
$(\mathrm{Ker}\chi_{\lambda})^kv=0$ and
$v(\mathrm{Ker}\chi_{\mu})^k=0$ for $k\gg 0$. For 
$k,l\in\{\infty,1,2,\dots\}$ denote by 
${}_{\lambda}^{k}\mathcal{H}_{\mu}^l$ the full subcategory of
${}_{\lambda}\mathcal{H}_{\mu}$ consisting of all $M$ satisfying 
$(\mathrm{Ker}\chi_{\lambda})^kM=0$ and
$M(\mathrm{Ker}\chi_{\mu})^l=0$. Then we have:

\begin{theorem}[\cite{BG}]\label{thm507}
Let $\lambda'$ be integral and dominant and $\lambda$ be integral,
dominant and regular. Then tensoring with 
$\Delta(\lambda)$ induces an equivalence 
${}_{\,\lambda'}^{\infty}\mathcal{H}_{\lambda}^1
\cong\mathcal{O}_{\lambda'}$.
\end{theorem}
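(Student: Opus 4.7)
The plan is to write down the functor $F := (-) \otimes_{U(\mathfrak{g})} \Delta(\lambda)$ explicitly, construct a candidate quasi-inverse $G$, and then verify the equivalence first on a generating subcategory of projective objects and extend by abstract nonsense. For $M \in {}_{\,\lambda'}^{\infty}\mathcal{H}_{\lambda}^{1}$, the decomposition of $M$ into finite-dimensional isotypic components under the adjoint $\mathfrak{g}$-action, combined with the $\mathfrak{h}$-diagonalizability and $U(\mathfrak{n}_+)$-local finiteness of $\Delta(\lambda)$, shows directly (via the identity $x\cdot(m\otimes u) = (x\cdot_{\mathrm{ad}}m)\otimes u + m\otimes(xu)$ for $x\in\mathfrak{g}$) that $F(M)$ lies in $\mathcal{O}$; finite generation over $U(\mathfrak{g})$ follows from cyclicity of $\Delta(\lambda)$ plus finite generation of $M$ as a bimodule, and the block condition is immediate from the left $Z(\mathfrak{g})$-action on $M$. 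As a candidate inverse I would take $G(X)$ to be the maximal Harish-Chandra sub-bimodule $\mathcal{L}(\Delta(\lambda),X)$ of $\mathrm{Hom}_{\mathbb{C}}(\Delta(\lambda), X)$, namely the union of its $\mathrm{ad}(\mathfrak{g})$-locally finite vectors. Its right central character is $\chi_{\lambda}$ exactly because $\Delta(\lambda)$ has this character, and its left central character is annihilated by a power of $\mathrm{Ker}(\chi_{\lambda'})$ by the block condition on $X$; hence $G(X)\in{}_{\,\lambda'}^{\infty}\mathcal{H}_{\lambda}^{1}$.

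The main leverage point is the classification of projective functors, Theorem~\ref{thm502}. Since $\lambda$ is dominant and regular, every indecomposable projective $P(\mu)\in\mathcal{O}_{\lambda'}$ is of the form $\theta_{\lambda,\mu}\,\Delta(\lambda)$ for a unique indecomposable projective functor $\theta_{\lambda,\mu}$. A projective functor is a direct summand of some $V\otimes_{\mathbb{C}}(-)$, and the discussion preceding Theorem~\ref{thm507} identifies its restriction to $\mathcal{O}_{\lambda}$ with tensoring over $U(\mathfrak{g})$ with a corresponding Harish-Chandra bimodule $Q(\mu)$ lying in ${}_{\,\lambda'}^{\infty}\mathcal{H}_{\lambda}^{1}$ and satisfying $F(Q(\mu))=P(\mu)$. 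Moreover, since projective functors from $\mathcal{O}_{\lambda}$ to $\mathcal{O}_{\lambda'}$ are determined up to isomorphism by their value on $\Delta(\lambda)$ (again Theorem~\ref{thm502}), the natural map $\mathrm{Hom}(Q(\mu), Q(\nu)) \to \mathrm{Hom}_{\mathcal{O}_{\lambda'}}(P(\mu), P(\nu))$ induced by $F$ is an isomorphism. Thus $F$ is fully faithful and essentially surjective on the additive category of projective bimodules in ${}_{\,\lambda'}^{\infty}\mathcal{H}_{\lambda}^{1}$.

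To finish, observe that both categories have enough projectives: the target by Theorem~\ref{thm404}, and the source because the $Q(\mu)$ form a projective generator by construction. Right-exactness of $F$ (it is a tensor functor) propagates essential surjectivity from projectives to cokernels, and hence to all of $\mathcal{O}_{\lambda'}$; full faithfulness then extends by a five-lemma comparison of projective presentations. The hard part of this plan is the middle step: verifying that the bimodule picture of projective functors exactly matches the intrinsic bimodule structure of ${}_{\,\lambda'}^{\infty}\mathcal{H}_{\lambda}^{1}$ and that no additional indecomposable bimodules hide outside the projective orbit. Regularity of $\lambda$ is indispensable here; it both ensures that $\Delta(\lambda)=P(\lambda)$ is projective in $\mathcal{O}_{\lambda}$, so that $F$ is exact on the relevant subcategories, and it forces the evaluation-at-$\Delta(\lambda)$ functor on projective functors to be faithful, matching the Hom-counts on both sides.
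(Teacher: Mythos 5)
The paper states Theorem~\ref{thm507} without proof, citing \cite{BG}, so I am comparing your plan against the standard Bernstein--Gelfand argument. Your overall architecture is the right one and matches that argument in outline: check that $F=(-)\otimes_{U(\mathfrak{g})}\Delta(\lambda)$ is well defined, identify the candidate adjoint $G=\mathcal{L}(\Delta(\lambda),-)$, verify the equivalence on projective objects, and extend by right-exactness and projective presentations. Those framing steps are fine.

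The gap is exactly where you suspect it is, and it cannot be closed from Theorem~\ref{thm502} alone. That theorem classifies indecomposable projective functors up to isomorphism by their value on $\Delta(\lambda)$; this is a statement about objects, so it gives essential surjectivity of $F$ on projectives, but it does not give that the map $\mathrm{Hom}(\theta,\theta')\to\mathrm{Hom}_{\mathcal{O}_{\lambda'}}(\theta\,\Delta(\lambda),\theta'\,\Delta(\lambda))$ is bijective, which is what full faithfulness of $F$ on projectives requires. That Hom-level statement is a separate theorem in \cite{BG} (cf. also \cite{Bac2}), and its proof --- like the proof that the adjunction morphisms for $(F,G)$ are isomorphisms --- rests on the computation $\mathcal{L}(\Delta(\lambda),V\otimes\Delta(\lambda))\cong V\otimes U(\mathfrak{g})/\mathrm{Ann}\,\Delta(\lambda)$ together with Duflo's identification $\mathrm{Ann}\,\Delta(\lambda)=U(\mathfrak{g})\mathrm{Ker}\chi_{\lambda}$ for dominant $\lambda$, i.e. the positive answer to Kostant's problem for dominant Verma modules (cf. Subsection~\ref{s15.2}). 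Unwinding your claim that the $Q(\mu)$ form a projective generator of ${}_{\,\lambda'}^{\infty}\mathcal{H}_{\lambda}^1$ leads to the same input: $\mathrm{Hom}_{\mathcal{H}}(V\otimes U/I,M)\cong (V^*\otimes M)^{\mathrm{ad}\,\mathfrak{g}}$, projectivity uses exactness of $\mathrm{ad}$-invariants on locally finite bimodules, and matching this space with $\mathrm{Hom}_{\mathcal{O}_{\lambda'}}(V\otimes\Delta(\lambda),F(M))$ is again precisely the unit isomorphism on $U/I$. So the ``middle step'' you flag is not a technical verification but the substance of the theorem; without the Kostant--Duflo input the argument does not close. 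Two smaller corrections: projectivity of $\Delta(\lambda)$ in $\mathcal{O}_{\lambda}$ follows from dominance alone, not regularity (Subsection~\ref{s4.3}); what regularity buys is that $\mathcal{L}(\Delta(\lambda),-)$ kills no simple object of $\mathcal{O}_{\lambda'}$, which is why for singular $\lambda$ one only obtains an equivalence with a quotient of $\mathcal{O}_{\lambda'}$ (Theorem~\ref{thm906}).
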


Under the equivalence from Theorem~\ref{thm507}, indecomposable
projective functors from $\mathcal{O}_{\lambda}$ to $\mathcal{O}_{\lambda'}$
correspond to indecomposable Harish-Chandra bimodules. It follows
that every $\theta_{\lambda,\mu}:\mathcal{O}_{\lambda}\to
\mathcal{O}_{\lambda'}$ is isomorphic to tensoring with the
corresponding indecomposable projective object from 
${}_{\,\lambda'}^{\infty}\mathcal{H}_{\lambda}^l$ for appropriate $l$.
The same description is true in the case of singular $\lambda$,
though in this case ${}_{\,\lambda'}^{\infty}\mathcal{H}_{\lambda}^1$
is isomorphic only to a certain quotient of $\mathcal{O}_{\lambda'}$.

Let $\lambda$ be integral, dominant and regular. Then the category
${}_{\,\,\lambda}^{\infty}\mathcal{H}_{\lambda}^{\infty}$ has the
natural monoidal structure given by tensor product of bimodules. 
The disadvantage of ${}_{\,\,\lambda}^{\infty}\mathcal{H}_{\lambda}^{\infty}$
is that this category does not have projective modules. The category
${}_{\,\,\lambda}^{\infty}\mathcal{H}_{\lambda}^{\infty}$ can
be described as a {\em deformation} of the category
\index{deformation}
${}_{\,\,\lambda}^{\infty}\mathcal{H}_{\lambda}^{1}$ and can be
realized via the so-called {\em thick category $\tilde{\mathcal{O}}$},
\index{thick category $\tilde{\mathcal{O}}$}
defined by weakening the condition of $\mathfrak{h}$-diagonalizability
to the condition of local finiteness  of the $U(\mathfrak{h})$-action.

\subsection{Shuffling functors}\label{s5.4}

As we have seen in Corollary~\ref{cor506}, the projective functors
$\theta_s$ are categorical analogues of the elements $e+s\in
\mathbb{Z}[\mathbb{S}_n]$. A natural question is: what
are analogues of simple reflections? As $s=(e+s)-e$, and the
categorical analogue of $e$ is the identity functor $\theta_e$, to
answer this question we have to ``subtract'' $\theta_e$ from
$\theta_s$ in some way. Na{\"\i}vely this is not possible as
$\theta_s$ is an indecomposable functor. But, recall from 
Subsection~\ref{s2.3} that we have another natural way to interpret
negative coefficients: as shifts in homological position. This
suggests to try to realize $s$ as the cone of some morphism between
$\theta_e$ and $\theta_s$. And it turns out that there is a very
natural candidate for this morphism.

By \eqref{eq591}, the functor $\theta_s$ is the composition of 
two functors, both left and right adjoint to each other. Let 
$\mathrm{adj}_s:\theta_e\to \theta_s$ and 
$\overline{\mathrm{adj}}_s:\theta_s\to \theta_e$ 
be the corresponding adjunction
morphisms. Denote by $\mathrm{C}_s$ the cokernel of $\mathrm{adj}_s$
and by $\mathrm{K}_s$ the kernel of 
$\overline{\mathrm{adj}}_s$. The functor
$\mathrm{C}_s$ is called the {\em shuffling functor}  
\index{shuffling functor}\index{coshuffling functor}
and the functor $\mathrm{K}_s$ is called the {\em coshuffling functor}. 
These functors appeared first in \cite{Ca} and then were studied
in more details in \cite{Ir} and further in \cite{MS0}.

Being the cokernel of a natural transformation between two exact functors,
the functor $\mathrm{C}_s$ is only right exact (and, similarly, the 
functor $\mathrm{K}_s$ is only left exact). Hence, for the  induced action of 
these functors on the level of the Grothendieck group to make sense, 
we will be forced to consider their derived  versions acting 
on some derived version of  category $\mathcal{O}$.
Here are some basic properties of (co)shuffling functors and their derived
versions (see e.g. \cite{MS0}):

\begin{proposition}\label{prop508}
Let $s\in W$ be a simple reflection and $\lambda$ be dominant integral
and regular.
\begin{enumerate}[$($a$)$]
\item\label{prop508.1}  The pair $(\mathrm{C}_s,\mathrm{K}_s)$ is adjoint.
\item\label{prop508.2}  There is an isomorphism 
$\mathrm{K}_s\cong \star\circ \mathrm{C}_s\circ \star$.
\item\label{prop508.25} For $w\in W$ such that $ws>w$ we have
$\mathrm{C}_s\Delta(w\cdot\lambda)\cong\Delta(ws\cdot\lambda)$.
\item\label{prop508.26} For $w\in W$ such that $ws<w$ there is 
an exact sequence
\begin{displaymath}
0\to \mathrm{Coker}_w\to \mathrm{C}_s\Delta(w\cdot\lambda)
\to \Delta(w\cdot\lambda)\to 0,
\end{displaymath}
where $\mathrm{Coker}_w$ denotes the cokernel of the natural inclusion
$\Delta(w\cdot\lambda)\hookrightarrow \Delta(ws\cdot\lambda)$.
\item\label{prop508.3} We have 
$\mathcal{L}_i\mathrm{C}_s\Delta(w\cdot\lambda)=0$ for all $w\in W$ and 
all $i>0$, in particular, the restriction of $\mathrm{C}_s$ to 
$\mathcal{F}(\Delta)$ is exact.
\item\label{prop508.4}
$\mathcal{L}_i\mathrm{C}_s=
\begin{cases}\mathrm{Ker}(\mathrm{adj}_s), & i=1;\\0,& i>1.\end{cases}$
\item\label{prop508.5} $\mathcal{L}\mathrm{C}_s$ is an autoequivalence of
$\mathcal{D}^b(\mathcal{O}_{\lambda})$ with inverse 
$\mathcal{R}\mathrm{K}_s$.
\end{enumerate}
\end{proposition}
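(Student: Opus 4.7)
The assertions split into two groups: (a)--(f) are structural consequences of the definitions, Proposition~\ref{prop505}\eqref{prop505.2}, and the biadjunction of $\theta_s$ with itself (Proposition~\ref{prop504}); (g) is a more delicate derived-category statement. The plan is to proceed in the order (a), (b), (c), (d), (f), (e), (g).

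For (a), apply $\mathrm{Hom}_{\mathcal{O}_\lambda}(-,Y)$ to the right-exact sequence $X \xrightarrow{\mathrm{adj}_s(X)} \theta_s X \to \mathrm{C}_s X \to 0$. The self-biadjunction of $\theta_s$ identifies the arrow $\mathrm{Hom}(\theta_s X, Y) \to \mathrm{Hom}(X, Y)$ induced by $\mathrm{adj}_s(X)$ with the arrow $\mathrm{Hom}(X, \theta_s Y) \to \mathrm{Hom}(X, Y)$ induced by $\overline{\mathrm{adj}}_s(Y)$; its kernel is $\mathrm{Hom}(X, \mathrm{K}_s Y)$, giving the adjunction $(\mathrm{C}_s, \mathrm{K}_s)$. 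Item (b) is immediate: by Proposition~\ref{prop503}\eqref{prop503.8}, $\star$ commutes with $\theta_s$, and by the uniqueness (up to scalar) of non-zero natural transformations between $\mathrm{Id}$ and $\theta_s$ it must swap $\mathrm{adj}_s$ with $\overline{\mathrm{adj}}_s$, so dualizing the defining presentation of $\mathrm{C}_s$ produces that of $\mathrm{K}_s$.

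The heart of the proof is (c)--(d), which pin down $\mathrm{adj}_s(\Delta(w\cdot\lambda))$. Applying $\mathrm{Hom}(\Delta(w\cdot\lambda),-)$ to the short exact sequences of Proposition~\ref{prop505}\eqref{prop505.2} and using Theorem~\ref{thm402} to compute Hom-spaces between Vermas shows that $\mathrm{Hom}(\Delta(w\cdot\lambda),\theta_s\Delta(w\cdot\lambda))$ is one-dimensional in both cases. Since $\theta_s\Delta(w\cdot\lambda)\neq 0$, the unit $\mathrm{adj}_s(\Delta(w\cdot\lambda))$ is non-zero and therefore, up to scalar, a specific morphism determined by the Verma filtration: the subobject inclusion $\Delta(w\cdot\lambda)\hookrightarrow \theta_s\Delta(w\cdot\lambda)$ when $ws>w$ (giving (c)), and the composition of the BGG embedding $\Delta(w\cdot\lambda)\hookrightarrow \Delta(ws\cdot\lambda)$ with the subobject inclusion $\Delta(ws\cdot\lambda)\hookrightarrow \theta_s\Delta(w\cdot\lambda)$ when $ws<w$. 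The third isomorphism theorem applied to the latter case immediately yields the exact sequence in (d). For (f), view $\mathrm{adj}_s$ as a two-term complex $A^\bullet := [\mathrm{Id}\to \theta_s]$ of exact endofunctors in degrees $0$ and $1$; applying $A^\bullet$ to a short exact sequence of modules produces a short exact sequence of complexes, whose cohomology long exact sequence exhibits $(\mathrm{C}_s,\mathrm{Ker}(\mathrm{adj}_s))$ as a $\delta$-functor. Effaceability on projectives reduces, via standard filtrations (Theorem~\ref{thm405}\eqref{thm405.1}) and the snake lemma, to injectivity of $\mathrm{adj}_s$ on Vermas, which is exactly what (c) and (d) have just supplied; universality of derived functors then gives (f), and (e) is the Verma case.

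For (g), derived adjunction lifts (a) to an adjoint pair $(\mathcal{L}\mathrm{C}_s,\mathcal{R}\mathrm{K}_s)$ on $\mathcal{D}^b(\mathcal{O}_\lambda)$. The functor $\mathcal{L}\mathrm{C}_s$ is represented by the distinguished triangle $X\to \theta_s X\to \mathcal{L}\mathrm{C}_s(X)\to X[1]$ (cone on $\mathrm{adj}_s$), and symmetrically $\mathcal{R}\mathrm{K}_s$ by $\mathcal{R}\mathrm{K}_s(X)\to \theta_s X\to X\to \mathcal{R}\mathrm{K}_s(X)[1]$. Splicing these and invoking the triangle identities for the self-biadjunction of $\theta_s$, which render the relevant composite $\theta_s\to \theta_s^2\to \theta_s$ the identity, gives $\mathcal{R}\mathrm{K}_s\circ \mathcal{L}\mathrm{C}_s(X)\cong X$ for $X$ in the heart; a dévissage using that Vermas generate $\mathcal{D}^b(\mathcal{O}_\lambda)$ as a triangulated category upgrades this to a natural isomorphism of functors, and the other composite is treated symmetrically. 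The main obstacle is (g): while (a)--(f) are essentially bookkeeping with Hom-spaces and the snake lemma, establishing the derived autoequivalence demands careful control of the triangle identities across the two representing cones. A pleasant alternative is to use the factorization \eqref{eq591} of $\theta_s$ to reduce (g) to the rank-one situation, where it can be verified directly in the principal block for $\mathfrak{sl}_2$ using the explicit quiver presentation \eqref{eq457}.
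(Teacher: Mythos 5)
The paper offers no proof of Proposition~\ref{prop508} --- it is stated with a pointer to \cite{MS0} --- so there is nothing internal to compare against; your argument is essentially the standard one from \cite{MS0,MS01} and is sound in outline. Two places deserve tightening. First, in (c)--(d) your claim that $\dim\mathrm{Hom}(\Delta(w\cdot\lambda),\theta_s\Delta(w\cdot\lambda))=1$ does not follow in the case $ws<w$ from Hom-spaces between Vermas alone: applying $\mathrm{Hom}(\Delta(w\cdot\lambda),{}_-)$ to the sequence of Proposition~\ref{prop505}\eqref{prop505.2} only bounds the dimension between $1$ and $2$, and one must additionally rule out splitting (e.g.\ by comparing endomorphism rings) or, more cleanly, use the factorization \eqref{eq591} and adjunction to identify the Hom-space with $\mathrm{End}$ of the translated Verma on the wall, which is one-dimensional by Theorem~\ref{thm402}\eqref{thm402.5}. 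Second, in (g) the phrase ``a d{\'e}vissage upgrades this to a natural isomorphism of functors'' is the one step that cannot be left as stated: objectwise isomorphisms on the heart do not assemble into a natural isomorphism. The correct formulation is to take the unit $\mathrm{Id}\to\mathcal{R}\mathrm{K}_s\circ\mathcal{L}\mathrm{C}_s$ and counit of the derived adjunction you already have from (a), observe that the full subcategory of $\mathcal{D}^b(\mathcal{O}_{\lambda})$ on which a fixed natural transformation is invertible is triangulated, and verify invertibility on the generating class (Vermas, or all objects of the heart) via the octahedron built from the two cones together with $\theta_s\circ\theta_s\cong\theta_s\oplus\theta_s$ and the triangle identities. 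With that repair the argument goes through; the proposed ``reduction to rank one'' at the end should be treated only as a sanity check in the $\mathfrak{gl}_2$ quiver \eqref{eq457}, not as a proof, since the factorization through the wall does not literally reduce the general statement to $\mathfrak{sl}_2$.
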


Combining Proposition~\ref{prop508}\eqref{prop508.25}-\eqref{prop508.26}
we have the following corollary, which says that $\mathrm{C}_s$ is
a ``good'' candidate for the role of categorification of $s$.

\begin{corollary}\label{cor509}
For any $w\in W$ and any simple reflection $s\in W$ we have the following
equality: $[\mathrm{C}_s\Delta(w\cdot\lambda)]=[\Delta(ws\cdot\lambda)]$.
\end{corollary}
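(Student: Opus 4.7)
The plan is to prove the equality in the Grothendieck group by splitting into the two cases provided for by Proposition~\ref{prop508}, namely whether $ws>w$ or $ws<w$ in the Bruhat order.

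In the first case ($ws>w$), Proposition~\ref{prop508}\eqref{prop508.25} gives an honest isomorphism $\mathrm{C}_s\Delta(w\cdot\lambda)\cong\Delta(ws\cdot\lambda)$, so passing to $[\mathcal{O}_{\lambda}]$ immediately yields the desired equality.

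In the second case ($ws<w$), I would use the short exact sequence of Proposition~\ref{prop508}\eqref{prop508.26}:
\begin{displaymath}
0\to \mathrm{Coker}_w\to \mathrm{C}_s\Delta(w\cdot\lambda)\to \Delta(w\cdot\lambda)\to 0.
\end{displaymath}
Here $\mathrm{Coker}_w$ is the cokernel of the inclusion $\Delta(w\cdot\lambda)\hookrightarrow\Delta(ws\cdot\lambda)$ (note that $ws<w$ implies $ws\cdot\lambda>w\cdot\lambda$ in the standard order, which is precisely the condition from Theorem~\ref{thm402}\eqref{thm402.4} guaranteeing such an inclusion exists and is injective). The defining short exact sequence of $\mathrm{Coker}_w$ gives $[\mathrm{Coker}_w]=[\Delta(ws\cdot\lambda)]-[\Delta(w\cdot\lambda)]$ in $[\mathcal{O}_\lambda]$, and then additivity of the class function on the displayed exact sequence gives
\begin{displaymath}
[\mathrm{C}_s\Delta(w\cdot\lambda)]=[\mathrm{Coker}_w]+[\Delta(w\cdot\lambda)]=[\Delta(ws\cdot\lambda)],
\end{displaymath}
as required.

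Since neither case involves anything beyond reading off classes from the two cited parts of Proposition~\ref{prop508}, there is no genuine obstacle; the only thing to be careful about is the direction of the inclusion of Vermas in the second case (i.e.\ correctly translating $ws<w$ in Bruhat order into $w\cdot\lambda<ws\cdot\lambda$ in the standard partial order), which is what makes the cokernel contribute $[\Delta(ws\cdot\lambda)]-[\Delta(w\cdot\lambda)]$ with the right sign.
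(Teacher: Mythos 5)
Your proof is correct and is exactly the argument the paper intends: the corollary is stated as an immediate consequence of combining Proposition~\ref{prop508}\eqref{prop508.25} and \eqref{prop508.26}, with the second case handled by reading off Grothendieck-group classes from the two short exact sequences just as you do. Your sign check on the inclusion of Verma modules (that $ws<w$ forces $w\cdot\lambda\le ws\cdot\lambda$ for dominant $\lambda$) is the right point to be careful about, and you got it right.
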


Since $\mathrm{C}_s$ is only right exact, to be able to categorify anything
using it we have to consider the endofunctor $\mathcal{L}\mathrm{C}_s$
of $\mathcal{D}^b(\mathcal{O}_{\lambda})$. The latter is invertible by 
Proposition~\ref{prop508}\eqref{prop508.5}. Unfortunately, from
Proposition~\ref{prop508}\eqref{prop508.4} it follows that 
$\mathcal{L}\mathrm{C}_s\circ \mathcal{L}\mathrm{C}_s\not\cong\mathrm{Id}$.
Therefore the action of  $\mathcal{L}\mathrm{C}_s$
of $\mathcal{D}^b(\mathcal{O}_{\lambda})$ cannot be used to categorify
$\mathbb{S}_n$. However, we have the following:

\begin{proposition}\label{prop510}
Let $s,t\in W$ be simple reflections. 
\begin{enumerate}[$($a$)$]
\item\label{prop510.1} If $st=ts$, then we have both
$\mathrm{C}_s\circ\mathrm{C}_t\cong \mathrm{C}_t\circ\mathrm{C}_s$
and $\mathcal{L}\mathrm{C}_s\circ\mathcal{L}\mathrm{C}_t
\cong \mathcal{L}\mathrm{C}_t\circ\mathcal{L}\mathrm{C}_s$.
\item\label{prop510.2} If $sts=sts$, then we have both
$\mathrm{C}_s\circ\mathrm{C}_t\circ\mathrm{C}_s
\cong \mathrm{C}_t\circ\mathrm{C}_s\circ\mathrm{C}_t$
and $\mathcal{L}\mathrm{C}_s\circ\mathcal{L}\mathrm{C}_t
\circ\mathcal{L}\mathrm{C}_s
\cong \mathcal{L}\mathrm{C}_t\circ\mathcal{L}\mathrm{C}_s
\circ\mathcal{L}\mathrm{C}_t$.
\end{enumerate}
\end{proposition}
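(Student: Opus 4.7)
The plan is to realize each derived shuffling functor by an explicit two-term complex of exact projective functors and then compare total complexes via the compositional laws of Proposition~\ref{prop505}. I would set $\Sigma_s := [\theta_e \xrightarrow{\mathrm{adj}_s} \theta_s]$, placed in cohomological degrees $-1$ and $0$. By Proposition~\ref{prop508}\eqref{prop508.4}, the only non-vanishing cohomologies of $\Sigma_s$ are $H^{-1}(\Sigma_s) = \mathcal{L}_1 \mathrm{C}_s$ and $H^{0}(\Sigma_s) = \mathrm{C}_s$; since $\theta_e$ and $\theta_s$ are exact (Proposition~\ref{prop503}\eqref{prop503.4}), termwise application of $\Sigma_s$ followed by totalization represents $\mathcal{L}\mathrm{C}_s$ as an endofunctor of $\mathcal{D}^b(\mathcal{O}_{\lambda})$. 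In particular, $\mathcal{L}\mathrm{C}_s \circ \mathcal{L}\mathrm{C}_t$ is computed by the total complex of the bicomplex $\Sigma_s \circ \Sigma_t$, whose differentials are built functorially from the adjunction units.

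For part (a), the total complex of $\Sigma_s \circ \Sigma_t$ reads $\theta_e \to \theta_s \oplus \theta_t \to \theta_s \theta_t$ in degrees $-2,-1,0$, and symmetrically for $\Sigma_t \circ \Sigma_s$. When $st=ts$, Proposition~\ref{prop505}\eqref{prop505.4} supplies a canonical isomorphism $\theta_s \theta_t \cong \theta_t \theta_s$, and the naturality of $\mathrm{adj}_s, \mathrm{adj}_t$ with respect to composition of projective functors upgrades this to a chain isomorphism of total complexes. The underived statement follows because $\mathrm{C}_s \circ \mathrm{C}_t$ and $\mathrm{C}_t \circ \mathrm{C}_s$ are both right-exact, hence determined by their restrictions to the additive subcategory of projective modules, where all higher derived functors vanish.

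For part (b), the total complex of $\Sigma_s \circ \Sigma_t \circ \Sigma_s$, after using Proposition~\ref{prop505}\eqref{prop505.3} to write $\theta_s \circ \theta_s \cong \theta_s^{\oplus 2}$, has terms $\theta_e$, $\theta_s^{\oplus 2} \oplus \theta_t$, $\theta_s\theta_t \oplus \theta_s^{\oplus 2} \oplus \theta_t\theta_s$, and $\theta_s\theta_t\theta_s$ in degrees $-3,\dots,0$, with the symmetric expression for $\Sigma_t \circ \Sigma_s \circ \Sigma_t$. The crucial algebraic input is Proposition~\ref{prop505}\eqref{prop505.5}, which matches the degree-$0$ term on one side with a summand of the degree-$(-2)$ term on the other. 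This degree shift is the main obstacle: the matching of indecomposable summands across the two complexes is not cohomological-degree-preserving, so one must locate contractible (homotopy-null) summands on each side whose removal yields a common reduced complex, and check that the remaining differentials are compatible. A cleaner conceptual route is to transfer the question to the equivalent category of Soergel bimodules, where $\mathcal{L}\mathrm{C}_s$ corresponds to a Rouquier complex and braid relations are classical; the underived version of (b) then again follows by the right-exactness argument used in (a).
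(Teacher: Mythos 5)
Your framework --- realizing $\mathcal{L}\mathrm{C}_s$ by the two-term complex $\Sigma_s=[\theta_e\xrightarrow{\mathrm{adj}_s}\theta_s]$ of exact functors and comparing total complexes --- is the right one. The paper itself gives no proof of this proposition; it defers to \cite{MS0} and, for the $2$-categorical upgrade, to \cite{Ro0}, and your plan is essentially the Rouquier-complex argument carried out there. The identification of the cohomology of $\Sigma_s$ via Proposition~\ref{prop508}\eqref{prop508.4}, the computation of the terms of the total complexes, and the reduction of the underived statements to $H^0$ and right-exactness are all correct.

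There are, however, two genuine gaps. In part (a), the mere existence of an isomorphism $\theta_s\theta_t\cong\theta_t\theta_s$ (Proposition~\ref{prop505}\eqref{prop505.4}) together with naturality of $\mathrm{adj}_s$ and $\mathrm{adj}_t$ does not yet give a chain isomorphism: naturality makes the squares of the double complexes commute, but you still need a \emph{single} isomorphism $\varphi:\theta_s\theta_t\to\theta_t\theta_s$ that simultaneously intertwines the two edge maps $\theta_s=\theta_s\theta_e\to\theta_s\theta_t$ and $\theta_t=\theta_e\theta_t\to\theta_s\theta_t$ (the two whiskerings of the adjunction units) with their counterparts into $\theta_t\theta_s$. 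Proposition~\ref{prop505} is proved only on the level of the Grothendieck group and gives no control over adjunction morphisms, so this compatibility requires a separate argument (e.g.\ a computation of the relevant spaces of natural transformations between projective functors via Soergel's functor $\mathbb{V}$, where one checks the space of maps $\theta_s\to\theta_{st}$ is small enough to force agreement up to a unit). In part (b) you have correctly located the hard point --- the degree mismatch created by Proposition~\ref{prop505}\eqref{prop505.5}, to be resolved by splitting off contractible summands $[\theta_s\xrightarrow{\sim}\theta_s]$ and comparing the reduced complexes --- but you do not carry it out: no contractible summands are exhibited, the reduced differentials are not compared, and the closing appeal to the ``classical'' braid relations for Rouquier complexes is a citation of the statement being proved rather than a proof of it. So the proposal is a well-chosen and correctly structured plan, but it is incomplete at precisely the two points where the real work lies.
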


Proposition~\ref{prop510} says that shuffling functors satisfy braid
relations and hence define a weak action of the braid group $\mathbb{B}_n$ 
on  $\mathcal{D}^b(\mathcal{O}_{\lambda})$. From Corollary~\ref{cor509}
it follows that the decategorification of this action gives a
representation of $\mathbb{S}_n$ (which is isomorphic to the right 
regular representation of $\mathbb{S}_n$ by Proposition~\ref{prop521}).
This action can be understood in terms of $2$-categories. 
Rouquier defines in \cite{Ro0} a $2$-category $\mathfrak{B}_{n}$
whose decategorification is isomorphic to $\mathbb{B}_n$
and shows that the above action of shuffling functors on 
$\mathcal{D}^b(\mathcal{O}_{\lambda})$ can be considered as a 
$2$-representation of $\mathfrak{B}_{n}$.

\subsection{Singular braid monoid}\label{s5.5}

To this end we have both projective and (co)shuffling functors
acting on $\mathcal{D}^b(\mathcal{O}_{\lambda})$. The former
categorify an action of $\mathbb{Z}[\mathbb{S}_n]$ and the latter
categorify an action of $\mathbb{B}_n$. A natural question is:
What kind of object is categorified if we consider all these functors
together? It turns out that the answer is: the singular braid
monoid $\mathcal{S}\mathbb{B}_n$. This monoid is defined in
\cite{Ba,Bi} as the monoid generated by the following elements:
\begin{displaymath}
\xymatrix@!=0.1pc{
&\text{\tiny 1}\ar@{-}[dd]& & \text{\tiny i-1}\ar@{-}[dd] & \text{\tiny i}\ar@{-}[ddrr]&&
\text{\tiny i+1}\ar@{-}'[dl][ddll] &  \text{\tiny i+2}\ar@{-}[dd] & & 
\text{\tiny n}\ar@{-}[dd]\\
\sigma_{\text{\tiny i}}=&&\dots&&&&&&\dots&\\
&\text{\tiny 1}&&\text{\tiny i-1}&\text{\tiny i}&&\text{\tiny i+1}
&\text{\tiny i+2}&&\text{\tiny n}
}
\end{displaymath}
\begin{displaymath}
\xymatrix@!=0.1pc{
&\text{\tiny 1}\ar@{-}[dd]& & \text{\tiny i-1}\ar@{-}[dd] & \text{\tiny i}\ar@{-}'[dr][ddrr]&&
\text{\tiny i+1}\ar@{-}[ddll] &  \text{\tiny i+2}\ar@{-}[dd] & & 
\text{\tiny n}\ar@{-}[dd]\\
\sigma_{\text{\tiny i}}^{-1}=&&\dots&&&&&&\dots&\\
&\text{\tiny 1}&&\text{\tiny i-1}&\text{\tiny i}&&\text{\tiny i+1}
&\text{\tiny i+2}&&\text{\tiny n}
}
\end{displaymath}
\begin{displaymath}
\xymatrix@!=0.1pc{
&\text{\tiny 1}\ar@{-}[dd]& & \text{\tiny i-1}\ar@{-}[dd] & \text{\tiny i}\ar@{-}[ddrr]&&
\text{\tiny i+1}\ar@{-}[ddll] &  \text{\tiny i+2}\ar@{-}[dd] & & 
\text{\tiny n}\ar@{-}[dd]\\
\tau_{\text{\tiny i}}=&&\dots&&&\bullet&&&\dots&\\
&\text{\tiny 1}&&\text{\tiny i-1}&\text{\tiny i}&&\text{\tiny i+1}
&\text{\tiny i+2}&&\text{\tiny n}
}
\end{displaymath}
These generators satisfy the following relations:
\begin{displaymath} 
\begin{array}{cc}
\sigma_i\sigma_i^{-1}=\sigma_i^{-1}\sigma_i=1, &\mbox{ for all } i;\\
\sigma_i\sigma_{i+1}\sigma_i=\sigma_{i+1}\sigma_i\sigma_{i+1},&
\mbox{ for all } i;\\
\sigma_i\sigma_j=\sigma_j\sigma_i,&\mbox{  if } |i-j|>1;\\        
\tau_{i}\sigma_j\sigma_{i}=\sigma_j\sigma_{i}\tau_j,&
\mbox{  if   } |i-j|=1;\\     
\sigma_i\tau_j=\tau_j\sigma_i,&\mbox{  if }|i-j|\not=1;\\      
\tau_i\tau_j=\tau_j\tau_i,&\mbox{  if }|i-j|>1.
\end{array}
\end{displaymath}
The following theorem is proved in \cite{MS01}:

\begin{theorem}[Weak categorification of $\mathcal{S}\mathbb{B}_n$]
\label{thm542}
Let $\lambda$ be regular, integral and dominant. Then, mapping 
$\sigma_i\mapsto \mathcal{L}\mathrm{C}_{s_i}$,
$\sigma_i^{-1}\mapsto \mathcal{R}\mathrm{K}_{s_i}$ and
$\tau_i\mapsto\theta_{s_i}$, for $i=1,2,\dots,n-1$, defines a weak action 
of $\mathcal{S}\mathbb{B}_n$ on $\mathcal{D}^b(\mathcal{O}_{\lambda})$.
\end{theorem}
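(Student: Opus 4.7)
My plan is to check each of the six families of defining relations of $\mathcal{S}\mathbb{B}_n$ as a natural isomorphism of triangulated endofunctors on $\mathcal{D}^b(\mathcal{O}_{\lambda})$, with the understanding that ``weak'' action means equality up to such isomorphism. Several of these relations can be read off directly from tools already established in the lecture.

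The invertibility $\sigma_i \sigma_i^{-1} = \sigma_i^{-1} \sigma_i = 1$ is exactly Proposition~\ref{prop508}\eqref{prop508.5}. The braid relations $\sigma_i \sigma_{i+1}\sigma_i = \sigma_{i+1}\sigma_i\sigma_{i+1}$ and $\sigma_i \sigma_j = \sigma_j \sigma_i$ for $|i-j|>1$ are Proposition~\ref{prop510}, while the distant commutation $\tau_i \tau_j = \tau_j \tau_i$ for $|i-j|>1$ is Proposition~\ref{prop505}\eqref{prop505.4}.

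For the mixed commutation $\sigma_i \tau_j = \tau_j \sigma_i$ when $|i-j| \neq 1$, I would treat $|i-j|>1$ and $i=j$ separately. In the former case, $\theta_{s_j}$ commutes with both $\theta_e$ and $\theta_{s_i}$ through natural isomorphisms compatible with the adjunction morphism $\mathrm{adj}_{s_i}$ by naturality; since $\theta_{s_j}$ is exact by Proposition~\ref{prop503}\eqref{prop503.4}, it preserves the cokernel defining $\mathrm{C}_{s_i}$, yielding an isomorphism $\theta_{s_j}\circ \mathrm{C}_{s_i} \cong \mathrm{C}_{s_i}\circ \theta_{s_j}$ that lifts to the derived level. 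For $i=j$ I would use the Harish-Chandra bimodule realization of Subsection~\ref{s5.3}: both $\theta_{s_i}\circ \mathrm{C}_{s_i}$ and $\mathrm{C}_{s_i}\circ \theta_{s_i}$ arise as cokernels of either factor of a bimodule inclusion $X_{s_i}\hookrightarrow X_{s_i}\otimes X_{s_i}$, and the symmetry $\theta_{s_i}\theta_{s_i}\cong \theta_{s_i}^{\oplus 2}$ identifies the two cokernels.

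The main obstacle is the mixed braid relation $\tau_i \sigma_j \sigma_i = \sigma_j \sigma_i \tau_j$ for $|i-j|=1$, which categorifies as the conjugation identity
\[
\theta_{s_i} \circ \mathcal{L}\mathrm{C}_{s_j} \circ \mathcal{L}\mathrm{C}_{s_i} \;\cong\; \mathcal{L}\mathrm{C}_{s_j} \circ \mathcal{L}\mathrm{C}_{s_i} \circ \theta_{s_j}.
\]
This expresses that the braid $\sigma_j \sigma_i$ intertwines $\theta_{s_j}$ and $\theta_{s_i}$, paralleling the group-theoretic fact that $s_j s_i$ conjugates $s_j$ to $s_i$ when $|i-j|=1$. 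My approach is to reduce to a check on Verma modules, which generate $\mathcal{D}^b(\mathcal{O}_{\lambda})$ as a triangulated category: apply both sides to each $\Delta(w\cdot\lambda)$ using the formulae of Proposition~\ref{prop508}\eqref{prop508.25}--\eqref{prop508.26} for the action of $\mathcal{L}\mathrm{C}_{s_k}$ and Proposition~\ref{prop505}\eqref{prop505.2} for the action of $\theta_{s_k}$, then verify that the resulting complexes are canonically quasi-isomorphic. The hard part is carefully tracking the homological shifts introduced whenever a derived shuffling functor hits some $\Delta(w\cdot\lambda)$ with $ws_k<w$; the combinatorics of the Bruhat order on $W$ has to arrange that these shifts cancel exactly so as to convert $\theta_{s_j}$-data into $\theta_{s_i}$-data. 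Once agreement on objects is established, naturality (and hence full functor isomorphism) follows from $2$-functoriality together with the universal property expressed in Proposition~\ref{prop303}.
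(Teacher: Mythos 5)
Your dispatch of the relations involving only the $\sigma$'s is fine (Proposition~\ref{prop508}\eqref{prop508.5} and Proposition~\ref{prop510} are exactly what is needed), and the split of $\sigma_i\tau_j=\tau_j\sigma_i$ into the cases $|i-j|>1$ and $i=j$ is the right organization; for $i=j$ the cleanest statement is actually $\theta_{s}\circ\mathrm{C}_{s}\cong\mathrm{C}_{s}\circ\theta_{s}\cong\theta_{s}$, obtained by applying $\theta_s$ to $\theta_e\to\theta_s\to\mathrm{C}_s\to 0$ on either side and observing that the whiskered adjunction morphism is a split inclusion of one summand of $\theta_s\oplus\theta_s\cong\theta_s\circ\theta_s$. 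Note also that in the case $|i-j|>1$ ``naturality'' alone does not identify $\theta_{s_j}(\mathrm{adj}_{s_i})$ with $(\mathrm{adj}_{s_i})_{\theta_{s_j}}$ under the commutation isomorphism $\theta_{s_j}\theta_{s_i}\cong\theta_{s_i}\theta_{s_j}$; you need the one-dimensionality of the space of natural transformations $\theta_e\to\theta_{s_i}$ (Soergel's description of homomorphisms between projective functors) to conclude that the two maps agree up to a nonzero scalar and hence have isomorphic cokernels. This is a fixable but real omission.

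The genuine gap is your treatment of the mixed braid relation $\tau_i\sigma_j\sigma_i=\sigma_j\sigma_i\tau_j$. Verifying that $\theta_{s_i}\circ\mathcal{L}\mathrm{C}_{s_j}\circ\mathcal{L}\mathrm{C}_{s_i}$ and $\mathcal{L}\mathrm{C}_{s_j}\circ\mathcal{L}\mathrm{C}_{s_i}\circ\theta_{s_j}$ take isomorphic values on all Verma modules (or even on all objects of $\mathcal{D}^b(\mathcal{O}_{\lambda})$) does not produce an isomorphism of functors: object-level agreement on a generating set gives no natural transformation between the two composites, and Proposition~\ref{prop303} cannot be invoked here --- it concerns morphisms out of a principal additive $2$-representation of a fiat category determined by the image of $\mathbbm{1}_{\mathtt{i}}$, whereas $\mathcal{L}\mathrm{C}_{s}$ is not a $1$-morphism of the fiat category of projective functors and $\mathcal{D}^b(\mathcal{O}_{\lambda})$ is not being used as $\mathbf{P}_{\mathtt{i}}$. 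What a correct argument must do (and what the cited source \cite{MS01} does) is construct an explicit natural isomorphism, for instance by realizing all functors involved as (derived) tensoring with complexes of Harish-Chandra bimodules and exhibiting the isomorphism at the bimodule level, or by first proving that the conjugate $(\mathcal{L}\mathrm{C}_{s_j}\mathcal{L}\mathrm{C}_{s_i})\circ\theta_{s_j}\circ(\mathcal{L}\mathrm{C}_{s_j}\mathcal{L}\mathrm{C}_{s_i})^{-1}$ is again a projective functor --- only after that step does the classification Theorem~\ref{thm502} allow you to identify it with $\theta_{s_i}$ from its value on the dominant Verma module. Your proposal is missing precisely this step, and it is the heart of the theorem.
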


\subsection{$\mathfrak{gl}_2$-example}\label{s5.6}

The action of projective, (co)shuffling and the corresponding
homology functors on the principal block of $\mathfrak{gl}_2$ is 
given by the following table (notation as in Subsection~\ref{s4.7}):
\begin{displaymath}
\begin{array}{|c||c|c|c|c|c|}
\hline
M&
\begin{array}{c}e\\\\\end{array}&\begin{array}{c}s\\\\\end{array}&
\begin{array}{c}e\\s\\\end{array}&\begin{array}{c}s\\e\\\end{array}&
\begin{array}{c}s\\e\\s\end{array}\\
\hline\hline
\theta_s\, M&
\begin{array}{c}0\end{array}&\begin{array}{c}s\\e\\s\end{array}&
\begin{array}{c}s\\e\\s\end{array}&\begin{array}{c}s\\e\\s\end{array}&
\begin{array}{ccc}s&&s\\e&\oplus&e\\s&&s\end{array}\\
\hline
\mathrm{C}_s\, M&
\begin{array}{c}0\end{array}&\begin{array}{c}s\\e\\\end{array}&
\begin{array}{c}s\\\\\end{array}&\begin{array}{c}s\\e\\\end{array}&
\begin{array}{ccc}s\\e\\s\end{array}\\
\hline
\mathrm{K}_s\, M&
\begin{array}{c}0\end{array}&\begin{array}{c}e\\s\\\end{array}&
\begin{array}{c}e\\s\\\end{array}&\begin{array}{c}s\\\\\end{array}&
\begin{array}{ccc}s\\e\\s\end{array}\\
\hline
\mathcal{L}_1\mathrm{C}_s\, M&
\begin{array}{c}e\end{array}&\begin{array}{c}0\end{array}&
\begin{array}{c}0\end{array}&\begin{array}{c}e\end{array}&
\begin{array}{ccc}0\end{array}\\
\hline
\mathcal{R}_1\mathrm{K}_s\, M&
\begin{array}{c}e\end{array}&\begin{array}{c}0\end{array}&
\begin{array}{c}e\end{array}&\begin{array}{c}0\end{array}&
\begin{array}{ccc}0\end{array}\\
\hline
\end{array}
\end{displaymath}

\section{Category $\mathcal{O}$: twisting and completion}\label{s6}

\subsection{Zuckerman functors}\label{s6.1}

As mentioned in Subsection~\ref{s4.6}, for every parabolic 
$\mathfrak{p}\subset\mathfrak{g}$ we have the corresponding Zuckerman functor
$\mathrm{Z}_{\mathfrak{p}}:\mathcal{O}\to\mathcal{O}^{\mathfrak{p}}$
and dual Zuckerman functor
$\hat{\mathrm{Z}}_{\mathfrak{p}}:\mathcal{O}\to\mathcal{O}^{\mathfrak{p}}$,
which are the left and the right adjoints of the natural inclusion
$\mathcal{O}^{\mathfrak{p}}\hookrightarrow \mathcal{O}$. It is easy to
see that $\hat{\mathrm{Z}}_{\mathfrak{p}}\cong
\star\circ\mathrm{Z}_{\mathfrak{p}}\circ\star$. As
every $\mathcal{O}^{\mathfrak{p}}$ is stable under the action of
projective functors, we have the following:

\begin{proposition}\label{prop671}
Both $\mathrm{Z}_{\mathfrak{p}}$, $\hat{\mathrm{Z}}_{\mathfrak{p}}$ and
the corresponding derived functors commute with all projective
functors.
\end{proposition}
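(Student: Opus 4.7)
The plan is to exploit two structural features together: the stability of $\mathcal{O}^{\mathfrak{p}}$ under projective functors (which is stated just before the proposition) and the fact that every projective functor has a biadjoint which is again projective (Proposition~\ref{prop503}\eqref{prop503.5},\eqref{prop503.4},\eqref{prop503.6}). First, for a projective functor $\theta$, stability of $\mathcal{O}^{\mathfrak{p}}$ under $\theta$ gives an endofunctor $\theta'$ of $\mathcal{O}^{\mathfrak{p}}$ together with a canonical isomorphism $\mathfrak{i}_{\mathfrak{p}}\circ\theta'\cong\theta\circ\mathfrak{i}_{\mathfrak{p}}$.

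Next, I would derive the commutativity for the underived $\mathrm{Z}_{\mathfrak{p}}$ by a uniqueness-of-adjoint argument. Let $\theta^*$ denote the projective functor biadjoint to $\theta$, and let $(\theta^*)'$ be its restriction to $\mathcal{O}^{\mathfrak{p}}$. Composing the adjunctions $(\mathrm{Z}_{\mathfrak{p}},\mathfrak{i}_{\mathfrak{p}})$ and $(\theta,\theta^*)$ shows that $\mathrm{Z}_{\mathfrak{p}}\circ\theta$ is left adjoint to $\theta^*\circ\mathfrak{i}_{\mathfrak{p}}$; composing $(\theta',(\theta^*)')$ with $(\mathrm{Z}_{\mathfrak{p}},\mathfrak{i}_{\mathfrak{p}})$ shows that $\theta'\circ\mathrm{Z}_{\mathfrak{p}}$ is left adjoint to $\mathfrak{i}_{\mathfrak{p}}\circ(\theta^*)'\cong\theta^*\circ\mathfrak{i}_{\mathfrak{p}}$. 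By uniqueness of adjoints, $\mathrm{Z}_{\mathfrak{p}}\circ\theta\cong\theta\circ\mathrm{Z}_{\mathfrak{p}}$ (identifying $\theta'$ with $\theta$ via $\mathfrak{i}_{\mathfrak{p}}$). The symmetric argument with right adjoints yields $\hat{\mathrm{Z}}_{\mathfrak{p}}\circ\theta\cong\theta\circ\hat{\mathrm{Z}}_{\mathfrak{p}}$; alternatively one can invoke Proposition~\ref{prop503}\eqref{prop503.8} together with $\hat{\mathrm{Z}}_{\mathfrak{p}}\cong\star\circ\mathrm{Z}_{\mathfrak{p}}\circ\star$.

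For the derived statement, I would use that projective functors are exact (Proposition~\ref{prop503}\eqref{prop503.4}) and preserve both projectives and injectives (Proposition~\ref{prop503}\eqref{prop503.6}). Given $M\in\mathcal{O}$, let $P^{\bullet}\to M$ be a projective resolution and $M\to I^{\bullet}$ an injective resolution; then $\theta(P^{\bullet})\to\theta(M)$ and $\theta(M)\to\theta(I^{\bullet})$ are again projective, respectively injective, resolutions. Hence
\begin{displaymath}
\mathcal{L}\mathrm{Z}_{\mathfrak{p}}(\theta(M))=\mathrm{Z}_{\mathfrak{p}}(\theta(P^{\bullet}))\cong\theta(\mathrm{Z}_{\mathfrak{p}}(P^{\bullet}))=\theta(\mathcal{L}\mathrm{Z}_{\mathfrak{p}}(M)),
\end{displaymath}
where the middle isomorphism is the underived commutativity just established (applied term-wise), and similarly for $\mathcal{R}\hat{\mathrm{Z}}_{\mathfrak{p}}$ using $I^{\bullet}$. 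These term-wise isomorphisms are natural, so they assemble to the desired isomorphism in the derived category.

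The main obstacle is only bookkeeping: one must check that the isomorphism $\mathfrak{i}_{\mathfrak{p}}\circ\theta'\cong\theta\circ\mathfrak{i}_{\mathfrak{p}}$ is compatible with the adjunction units/counits so that the two adjunctions one writes down really land on the same functor $\theta^*\circ\mathfrak{i}_{\mathfrak{p}}$, and, at the derived level, that the resulting isomorphisms of complexes are functorial in $M$. Both are routine consequences of naturality but deserve a line of comment.
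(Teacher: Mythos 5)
Your proof is correct and is essentially the argument the paper intends: the paper gives no proof beyond the preceding remark that each $\mathcal{O}^{\mathfrak{p}}$ is stable under projective functors, and the uniqueness-of-adjoints deduction you spell out (using that projective functors are exact, self-biadjoint up to replacing $\theta$ by $\theta^*$, and preserve projectives and injectives for the derived statement) is the standard way to turn that remark into the proposition. Your closing caveats about compatibility of units/counits and naturality are indeed routine and do not hide any gap.
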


If the semi-simple part of the Levi factor of $\mathfrak{p}$ is isomorphic
to $\mathfrak{sl}_2$ and $s$ is the corresponding simple reflection,
we will denote the corresponding Zuckerman functors simply
by $\mathrm{Z}_s$ and $\hat{\mathrm{Z}}_s$, respectively.
For the derived versions of Zuckerman functors we have:

\begin{proposition}[\cite{EW}]\label{prop672}
Let $s$ be a simple reflection.
\begin{enumerate}[$($a$)$]
\item\label{prop672.1} We have $\mathcal{L}_i\mathrm{Z}_s=0$, $i>2$.
\item\label{prop672.2} We have $\mathcal{L}_2\mathrm{Z}_s=
\hat{\mathrm{Z}}_s$.
\item\label{prop672.3} For the functor $\mathrm{Q}_s:=
\mathcal{L}_1\mathrm{Z}_s$ we have $\mathrm{Q}_s\cong\star\circ
\mathrm{Q}_s\circ\star$.
\item\label{prop672.4} We have $\mathcal{L}\mathrm{Z}_s[-1]\cong
\mathcal{R}\hat{\mathrm{Z}}_s[1]$. 
\end{enumerate}
\end{proposition}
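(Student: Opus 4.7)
The plan is to establish (a) and (b) by a direct homological calculation adapted to the $\mathfrak{sl}_2$-Levi setting, and then deduce (c) and (d) by combining these with the standard identity $\hat{\mathrm{Z}}_s=\star\circ\mathrm{Z}_s\circ\star$ from Subsection~\ref{s6.1} together with the exactness properties of $\star$ from Subsection~\ref{s4.4}.

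For (a) and (b), I would exploit the short exact sequence
\begin{displaymath}
0\to\Delta(s\cdot\mu)\to\Delta(\mu)\to\Delta^{\mathfrak{p}}(\mu)\to 0
\end{displaymath}
(valid for $\mu$ with $s\cdot\mu<\mu$), which is the algebraic shadow of $P/B\cong\mathbb{P}^1$ in the geometric picture. This two-Verma presentation of $\Delta^{\mathfrak{p}}(\mu)$ assembles into a three-term complex computing $\mathcal{L}\mathrm{Z}_s L(\mu)$ whose total cohomological length is $2$, which gives (a). For (b), read off the top cohomology $\mathcal{L}_2\mathrm{Z}_s L(\mu)$ from this complex: it vanishes when $L(\mu)\notin\mathcal{O}^{\mathfrak{p}}$ and equals $L(\mu)$ when $L(\mu)\in\mathcal{O}^{\mathfrak{p}}$, matching the description of $\hat{\mathrm{Z}}_s L(\mu)$ as the maximal submodule on which the $(-\alpha_s)$-root vector $f_s$ acts locally nilpotently. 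A standard d\'evissage using the finite length of objects in $\mathcal{O}$ and the naturality of the Koszul-type complex promotes the agreement on simples to a natural isomorphism of functors $\mathcal{L}_2\mathrm{Z}_s\cong\hat{\mathrm{Z}}_s$.

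For (c) and (d), the exactness of $\star$ combined with the fact that $\star$ sends projective resolutions of $M$ to injective coresolutions of $\star M$ yields the identity of functors
\begin{displaymath}
\mathcal{R}^i\hat{\mathrm{Z}}_s\;\cong\;\star\circ\mathcal{L}_i\mathrm{Z}_s\circ\star\qquad\text{for all }i\geq 0.
\end{displaymath}
At $i=0$ this recovers the defining identity; at $i=2$, combined with (b), it gives $\mathcal{R}^2\hat{\mathrm{Z}}_s\cong\mathrm{Z}_s$; at $i=1$ it gives $\mathcal{R}^1\hat{\mathrm{Z}}_s\cong\star\circ\mathrm{Q}_s\circ\star$, so (c) is equivalent to the single assertion $\mathrm{Q}_s\cong\mathcal{R}^1\hat{\mathrm{Z}}_s$. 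Similarly, the degreewise cohomologies of $\mathcal{L}\mathrm{Z}_s[-1]$ and $\mathcal{R}\hat{\mathrm{Z}}_s[1]$ automatically agree at the outer degrees $\pm 1$ by (b) and the dual of (b), so (d) also reduces to the same middle-degree identification, together with the production of a natural morphism realizing it.

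The main obstacle is precisely this middle identification, which is not formal from the duality alone. I would address it by constructing a natural morphism $\mathcal{L}\mathrm{Z}_s[-1]\to\mathcal{R}\hat{\mathrm{Z}}_s[1]$ in $\mathcal{D}^b(\mathcal{O}^{\mathfrak{p}})$ from the unit--counit compositions of the two adjunctions $(\mathrm{Z}_s,\mathfrak{i}_{\mathfrak{p}})$ and $(\mathfrak{i}_{\mathfrak{p}},\hat{\mathrm{Z}}_s)$; by (a) and the vanishing of $\mathcal{R}^i\hat{\mathrm{Z}}_s$ for $i>2$, source and target are concentrated in the same three cohomological degrees $-1,0,1$, and the endpoint isomorphisms from (b) and its dual force the morphism to be a quasi-isomorphism, simultaneously producing (c) as the induced degree-$0$ cohomology isomorphism and (d) as the derived-category equivalence.
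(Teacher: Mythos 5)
The paper gives no argument for this proposition (it is quoted from \cite{EW}), so your proposal has to stand on its own, and it has two genuine gaps. First, for (a) and (b): the short exact sequence $0\to\Delta(s\cdot\mu)\to\Delta(\mu)\to\Delta^{\mathfrak{p}}(\mu)\to 0$ is a presentation of the parabolic Verma module, not a resolution of $L(\mu)$ by $\mathrm{Z}_s$-acyclic objects, and Verma modules are in fact \emph{not} $\mathrm{Z}_s$-acyclic: by the table in Example~\ref{exm673}, already for $\mathfrak{gl}_2$ one has $\mathcal{L}_1\mathrm{Z}_s\,\Delta(s\cdot 0)=\mathrm{Q}_s\,L(s\cdot 0)=L(0)\neq 0$. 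So the ``three-term complex computing $\mathcal{L}\mathrm{Z}_sL(\mu)$'' is never actually constructed; the whole content of (a) is the existence of a length-two acyclic complex (the relative Lie algebra (co)homology/Koszul complex for the $\mathfrak{sl}_2$-pair attached to $s$, which is what Enright--Wallach build), and your sketch does not supply it. Moreover, even granting (a), agreement of $\mathcal{L}_2\mathrm{Z}_s$ and $\hat{\mathrm{Z}}_s$ on simple objects does not yield an isomorphism of functors: two (even left exact) functors can agree on all simples and differ on extensions. You would need to exhibit a natural transformation and verify it on injective objects, which again requires the explicit complex.

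Second, for (d): the unit and counit of the adjunctions $(\mathrm{Z}_s,\mathfrak{i}_{\mathfrak{p}})$ and $(\mathfrak{i}_{\mathfrak{p}},\hat{\mathrm{Z}}_s)$ give the natural maps $\mathcal{R}\hat{\mathrm{Z}}_s\to\mathrm{Id}\to\mathcal{L}\mathrm{Z}_s$, all in cohomological degree zero. They do not produce a morphism $\mathcal{L}\mathrm{Z}_s\to\mathcal{R}\hat{\mathrm{Z}}_s[2]$ -- that map goes in the opposite direction and carries a shift by $2$; constructing it is precisely the Enright--Wallach duality (a Poincar\'e-type self-duality of the length-two complex above) and is the hard part of the theorem, not a formal consequence of adjunction. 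Finally, even if such a morphism were in hand, a map between objects with cohomology concentrated in degrees $-1,0,1$ that is an isomorphism on $H^{-1}$ and $H^{1}$ need \emph{not} be an isomorphism on $H^{0}$ (take direct sums of the cohomologies with the zero map in the middle degree), so the claim that the endpoint isomorphisms ``force'' a quasi-isomorphism is false; the middle-degree statement is exactly (c), which your argument therefore leaves unproven. Your reductions via $\mathcal{R}^i\hat{\mathrm{Z}}_s\cong\star\circ\mathcal{L}_i\mathrm{Z}_s\circ\star$ and the observation that (c) and the middle degree of (d) are the same assertion are correct and worth keeping, but the core of the proof is missing.
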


\begin{example}\label{exm673}
{\rm The action of $\mathrm{Z}_s$, $\hat{\mathrm{Z}}_s$ and
$\mathrm{Q}_s$ on the principal block of the category $\mathcal{O}$ 
for the algebra $\mathfrak{gl}_2$ is given in the following table:
\begin{displaymath}
\begin{array}{|c||c|c|c|c|c|}
\hline
M&
\begin{array}{c}e\\\\\end{array}&\begin{array}{c}s\\\\\end{array}&
\begin{array}{c}e\\s\\\end{array}&\begin{array}{c}s\\e\\\end{array}&
\begin{array}{c}s\\e\\s\end{array}\\
\hline\hline
\mathrm{Z}_s\, M&
\begin{array}{c}e\end{array}&\begin{array}{c}0\end{array}&
\begin{array}{c}e\end{array}&\begin{array}{c}0\end{array}&
\begin{array}{c}0\end{array}\\
\hline
\hat{\mathrm{Z}}_s\, M&
\begin{array}{c}e\end{array}&\begin{array}{c}0\end{array}&
\begin{array}{c}0\end{array}&\begin{array}{c}e\end{array}&
\begin{array}{ccc}0\end{array}\\
\hline
\mathrm{Q}_s\, M&
\begin{array}{c}0\end{array}&\begin{array}{c}e\end{array}&
\begin{array}{c}0\end{array}&\begin{array}{c}0\end{array}&
\begin{array}{ccc}0\end{array}\\
\hline
\end{array}
\end{displaymath}
}
\end{example}

\subsection{Twisting functors}\label{s6.2}

For $i=1,2,\dots,n-1$ denote by $U_i$ the localization of the algebra
$U(\mathfrak{g})$ with respect to the multiplicative set
$\{e_{i+1i}^k:k\in\mathbb{N}\}$. If $\{x_1=e_{i+1i},x_2,\dots,x_k\}$
is a basis of $\mathfrak{g}$, then $U_i$ has a {\em PBW-type}
\index{PBW-type basis}
basis consisting of all monomials of the form $x_1^{m_1}x_2^{m_2}\cdots
x_k^{m_k}$, where $m_1\in\mathbb{Z}$ and $m_2,\dots,m_k\in
\mathbb{N}_0:=\mathbb{N}\cup
\{0\}$. In particular, $U(\mathfrak{g})$ is a subalgebra of $U_i$ is 
the natural way. Denote by $\mathtt{B}_i$ the 
$U(\mathfrak{g})\text{-}U(\mathfrak{g})$ bimodule
$U_i/U(\mathfrak{g})$. Let $\alpha_i$ be an automorphism of 
$\mathfrak{g}$ corresponding to the simple reflection $s_i$.
Consider the bimodule ${}^{\alpha_i}\mathtt{B}_i$ in which the left
action is twisted by $\alpha_i$ and define the endofunctor
$\mathrm{T}_i$ of $\mathfrak{g}\text{-}\mathrm{Mod}$ as tensoring with 
${}^{\alpha_i}\mathtt{B}_i$. The functor $\mathrm{T}_i$ is called
{\em twisting functor}. Twisting functors appeared first in \cite{Ar}, 
\index{twisting functor}
were used in \cite{So,Ar2}, in particular, to prove Ringel self-duality 
of $\mathcal{O}$, and were studied in \cite{AS} and \cite{KhMa} in detail.

The action of $\mathrm{T}_i$ on a module $M$ can be understood as 
the composition  of the following three operations:
\begin{itemize}
\item invert the action of the element $e_{i+1i}$, obtaining the
localized module $M_1$;
\item factor out the canonical image of $M$ in $M_1$, obtaining the
quotient $M_2$;
\item twist the action of $U(\mathfrak{g})$ on $M_2$ by 
$\alpha_i$, obtaining $\mathrm{T}_i\, M$.
\end{itemize}
Twisting functors have the following basic properties (see \cite{AS}):

\begin{proposition}\label{prop601}
Let $i\in\{1,2,\dots,n-1\}$.
\begin{enumerate}[$($a$)$]
\item\label{prop601.1} The functor $\mathrm{T}_i$ preserves
both $\mathcal{O}$ and $\mathcal{O}_{\lambda}$ for any 
$\lambda\in\mathfrak{h}^*_{\mathrm{dom}}$.
\item\label{prop601.2} The functor $\mathrm{T}_i$ is right exact.
\item\label{prop601.3} The functor $\mathrm{T}_i$ is left adjoint
to the functor $\mathrm{G}_i:=\star\circ \mathrm{T}_i\circ\star$.
\item\label{prop601.4} For any finite-dimensional $\mathfrak{g}$-module
$V$ the functors $V\otimes_{\mathbb{C}}{}_-$ and $\mathrm{T}_i$ commute.
In particular, $\mathrm{T}_i$ commutes will all projective functors.
\end{enumerate}
\end{proposition}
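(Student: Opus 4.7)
The plan is to dispatch the four claims in the order (b), (d), (a), (c), using the bimodule description $\mathrm{T}_i = {}^{\alpha_i}\mathtt{B}_i \otimes_{U(\mathfrak{g})} -$ throughout. Part (b) is immediate: tensor product with any bimodule is a right exact functor.

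For (d), the key is a natural isomorphism of $U(\mathfrak{g})$-bimodules
\begin{displaymath}
V \otimes_{\mathbb{C}} {}^{\alpha_i}\mathtt{B}_i \;\cong\; {}^{\alpha_i}\mathtt{B}_i \otimes_{\mathbb{C}} V
\end{displaymath}
for $V$ finite-dimensional, which then immediately yields $\mathrm{T}_i(V \otimes_{\mathbb{C}} -) \cong V \otimes_{\mathbb{C}} \mathrm{T}_i(-)$. The isomorphism originates in the Hopf algebra structure of $U(\mathfrak{g})$: for any bimodule $B$ and finite-dimensional $V$, the comultiplication combined with local finiteness of the $\mathfrak{g}$-action on $V$ produces a natural bimodule isomorphism $V \otimes_{\mathbb{C}} B \cong B \otimes_{\mathbb{C}} V$. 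One verifies this is compatible with localization from $U(\mathfrak{g})$ to $U_i$ (the action of $e_{i+1,i}$ on $V$ is nilpotent, so inverting it is harmless when moving past $V$), with passage to the quotient ${}^{\alpha_i}\mathtt{B}_i$, and with the twist by $\alpha_i$, since $V$ as a $\mathfrak{g}$-module is intertwined by any Lie algebra automorphism of $\mathfrak{g}$. The statement about projective functors then follows by taking direct summands.

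For (a), I would verify the three defining conditions of Definition~\ref{def401} on $\mathrm{T}_i M$. The PBW-type basis of $U_i$ exhibits $U_i/U(\mathfrak{g})$ as a countable union of $e_{i+1,i}^{-m} \cdot U(\mathfrak{g})$, $m \geq 1$, yielding a weight space decomposition on the tensor product (weights translated by multiples of $-\alpha_{i,i+1}$), which the subsequent twist by $\alpha_i$ rearranges but preserves. Finite generation of $\mathrm{T}_i M$ follows by taking a finite presentation of $M$ by Vermas, applying (b) and (d), and computing $\mathrm{T}_i M(\lambda)$ explicitly. Local $\mathfrak{n}_+$-finiteness on $\mathrm{T}_i M$ is the crucial and most delicate step, reducing to weight-space bookkeeping on the twisted quotient using that $\alpha_i$ sends all but one positive root to positive roots. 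For preservation of each block $\mathcal{O}_\lambda$, invoke the Harish-Chandra isomorphism $Z(\mathfrak{g}) \cong S(\mathfrak{h})^W$: since $\alpha_i$ corresponds to an inner automorphism of $\mathfrak{g}$, it fixes $Z(\mathfrak{g})$ pointwise, and hence the central character of $\mathrm{T}_i M$ matches that of $M$.

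For (c), the standard tensor-Hom adjunction identifies the right adjoint of $\mathrm{T}_i$ on the full category of $\mathfrak{g}$-modules as $\mathrm{Hom}_{U(\mathfrak{g})}({}^{\alpha_i}\mathtt{B}_i, -)$. Restricting to $\mathcal{O}$, one identifies this with $\star \mathrm{T}_i \star$ by matching both sides on a convenient generating class (for example, injective modules, where each side can be computed explicitly using the bimodule description), then extending by left exactness of both functors. Uniqueness of adjoints yields the identification on all of $\mathcal{O}$. The main obstacle throughout is part (a), specifically local $\mathfrak{n}_+$-finiteness of $\mathrm{T}_i M$: the composition localization-quotient-twist is delicate and requires careful control of the positive root action through $\alpha_i$; once this is secured, the remaining parts are essentially formal consequences of adjunction and duality.
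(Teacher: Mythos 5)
The paper offers no proof of this proposition; it is quoted from \cite{AS}, so your proposal has to be judged against the standard arguments there and in \cite{KhMa}. In outline you do follow that route, and most of what you write is sound: (b) is formal; for (d) the essential content is indeed the bimodule isomorphism $V\otimes_{\mathbb{C}}U_i\cong U_i\otimes_{\mathbb{C}}V$, which rests on exactly the two facts you name (nilpotency of $e_{i+1\,i}$ on $V$, so that the Hopf-theoretic isomorphism $V\otimes_{\mathbb{C}} U(\mathfrak{g})\cong U(\mathfrak{g})\otimes_{\mathbb{C}} V$ survives localization, and $V^{\alpha_i}\cong V$ because $\alpha_i$ is inner) --- though your blanket claim that $V\otimes_{\mathbb{C}}B\cong B\otimes_{\mathbb{C}}V$ for an \emph{arbitrary} bimodule $B$ is not a meaningful statement and should be replaced by the specific assertion about $U_i$; and the central-character argument in (a) is correct.

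Two genuine soft spots. First, in (a) you propose ``a finite presentation of $M$ by Vermas'': objects of $\mathcal{O}$ are in general not quotients of direct sums of Verma modules (already $\nabla(0,0)$ for $\mathfrak{gl}_2$ fails this, since $\mathrm{Hom}(\Delta(s\cdot 0),\nabla(0))=0$ while the top of $\nabla(0)$ is $L(s\cdot 0)$). What you want is a presentation by projectives, each a direct summand of some $\theta\,\Delta(\lambda)$ with $\lambda$ dominant; combined with (b), (d) and the explicit identification of $\mathrm{T}_i\Delta(\lambda)$ for dominant $\lambda$ as an object of $\mathcal{O}$, this yields \emph{all} of (a) at once, because $\mathrm{T}_iM$ becomes a cokernel of a morphism in $\mathcal{O}$ --- which makes the separate ``delicate'' verification of local $\mathfrak{n}_+$-finiteness redundant. (That direct verification is essentially the route of \cite{AS} and also works, but you should commit to one argument rather than gesture at both.) Second, (c) is where the real content lies and your sketch is thinnest there: $\mathrm{Hom}_{U(\mathfrak{g})}({}^{\alpha_i}\mathtt{B}_i,N)$ does not lie in $\mathcal{O}$ for $N\in\mathcal{O}$, so the right adjoint of $\mathrm{T}_i|_{\mathcal{O}}$ is a priori only the largest submodule of it belonging to $\mathcal{O}$, and ``computing both sides explicitly on injectives'' is not easier than proving the adjunction itself. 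The arguments that actually close this step either compute $\mathrm{T}_i$ on projectives and apply $\star$, or use the (co)approximation description of Theorem~\ref{thm612} from \cite{KhMa}, where the adjunction is built into the construction. As a plan your proposal is viable and consistent with the literature, but part (c) needs a concrete argument, not a placeholder.
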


It turns out that twisting functors satisfy braid relations
(see e.g. \cite{KhMa}):

\begin{proposition}\label{prop602}
\begin{enumerate}[$($a$)$]
\item\label{prop602.1} For $i,j\in\{1,2,\dots,n-1\}$, $i\neq j\pm 1$, 
we have $\mathrm{T}_i\circ\mathrm{T}_j\cong \mathrm{T}_j\circ\mathrm{T}_i$.
\item\label{prop602.2} For $i\in\{1,2,\dots,n-2\}$ we have
$\mathrm{T}_i\circ\mathrm{T}_{i+1}\circ\mathrm{T}_i
\cong \mathrm{T}_{i+1}\circ\mathrm{T}_i\circ\mathrm{T}_{i+1}$.
\end{enumerate}
\end{proposition}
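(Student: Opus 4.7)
The plan is to prove both braid relations by reducing to a very small test class and then doing an explicit bimodule/localization calculation there. Observe first that each $\mathrm{T}_i$ is right exact and, by Proposition~\ref{prop601}\eqref{prop601.4}, commutes with every projective functor. Since $\mathcal{O}_{\lambda}$ has enough projectives and every indecomposable projective in a regular integral dominant block $\mathcal{O}_{\lambda}$ is of the form $\theta\,\Delta(\lambda)$ for some projective functor $\theta$ (by Theorem~\ref{thm502}), any two right exact functors that commute with all $\theta$ and agree on $\Delta(\lambda)$ must be naturally isomorphic. Hence it suffices to produce a natural isomorphism $\mathrm{T}_i\mathrm{T}_j\,\Delta(\lambda)\cong \mathrm{T}_j\mathrm{T}_i\,\Delta(\lambda)$ in case (a) and the analogous three-fold isomorphism in case (b).

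For part (a), when $|i-j|>1$, the simple root vectors $e_{i+1,i}$ and $e_{j+1,j}$ commute in $U(\mathfrak{g})$ and lie in commuting $\mathfrak{sl}_2$-triples. Consequently the Ore sets $\{e_{i+1,i}^k\}$ and $\{e_{j+1,j}^k\}$ commute, the two localizations $U_i$ and $U_j$ can be formed independently, and the PBW-type basis of the iterated localization decouples into a product. The twists $\alpha_i$ and $\alpha_j$ also commute. From this one reads off a natural bimodule isomorphism ${}^{\alpha_i}\mathtt{B}_i\otimes_{U(\mathfrak{g})}{}^{\alpha_j}\mathtt{B}_j\cong {}^{\alpha_j}\mathtt{B}_j\otimes_{U(\mathfrak{g})}{}^{\alpha_i}\mathtt{B}_i$, which is the desired natural isomorphism of functors.

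For part (b), the strategy is to introduce the twisting functor $\mathrm{T}_w$ associated to the longest element $w=s_is_{i+1}s_i=s_{i+1}s_is_{i+1}$ of the parabolic subgroup $\langle s_i,s_{i+1}\rangle$. Define $\mathrm{T}_w$ intrinsically as tensoring with ${}^{\alpha_w}(U_w/U(\mathfrak{g}))$, where $U_w$ is the simultaneous Ore localization at the three positive root vectors $e_{i+1,i}$, $e_{i+2,i+1}$ and $e_{i+2,i}=[e_{i+2,i+1},e_{i+1,i}]$, and $\alpha_w$ is the automorphism corresponding to $w$. The claim is that both iterated compositions $\mathrm{T}_i\mathrm{T}_{i+1}\mathrm{T}_i$ and $\mathrm{T}_{i+1}\mathrm{T}_i\mathrm{T}_{i+1}$ are naturally isomorphic to $\mathrm{T}_w$. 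Each such identification is obtained by iterating the localization process one simple root at a time and checking that the PBW-monomials in the step-by-step localization exhaust precisely the PBW-monomials for $U_w$; the twist automorphisms combine correctly because $\alpha_i\alpha_{i+1}\alpha_i=\alpha_{i+1}\alpha_i\alpha_{i+1}=\alpha_w$ on $\mathfrak{g}$.

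The main obstacle is the calculation in part (b): unlike part (a), the three relevant root vectors do not all commute, and one must keep careful track of the third root vector $e_{i+2,i}$ that appears via the commutator $[e_{i+2,i+1},e_{i+1,i}]$. Verifying that localizing successively in two different orders yields the same localization $U_w$ (and that the quotient by the image of $U(\mathfrak{g})$ behaves compatibly under the twist by $\alpha_w$) is the technical heart of the argument; all other steps are formal consequences of the reduction to $\Delta(\lambda)$ and Proposition~\ref{prop601}\eqref{prop601.4}.
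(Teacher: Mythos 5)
Your overall strategy is the one used in the literature to which the paper defers for this statement (\cite{KhMa}, building on \cite{Ar,AS}): either identify the defining bimodules directly, or characterize the composed functors by their commutation with projective functors together with their value on the dominant Verma module. However, two steps that you present as formal are in fact the substance of the proof. The reduction lemma --- ``any two right exact functors that commute with all $\theta$ and agree on $\Delta(\lambda)$ must be naturally isomorphic'' --- is not a consequence of having enough projectives. Agreement on the objects $\theta\,\Delta(\lambda)$ controls the functors on a morphism $f:\theta\,\Delta(\lambda)\to\theta'\,\Delta(\lambda)$ only when $f=\eta_{\Delta(\lambda)}$ for some natural transformation $\eta:\theta\to\theta'$; that \emph{every} such $f$ arises this way is a genuine theorem (the fullness of projective functors, cf. \cite{Bac2,Kh}, closely tied to Soergel's Struktursatz), and one also needs the isomorphisms $\mathrm{T}_i\circ\theta\cong\theta\circ\mathrm{T}_i$ of Proposition~\ref{prop601}\eqref{prop601.4} to be coherent with composition of projective functors. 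Without this input the reduction does not go through, and with it the subsequent bimodule computation becomes redundant (an isomorphism of bimodules already gives an isomorphism of functors on all of $\mathfrak{g}\text{-}\mathrm{Mod}$), so you should commit to one of the two routes.

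The second and larger gap is in part (b). The claim that ${}^{\alpha_i}\mathtt{B}_i\otimes_{U(\mathfrak{g})}{}^{\alpha_{i+1}}\mathtt{B}_{i+1}\otimes_{U(\mathfrak{g})}{}^{\alpha_i}\mathtt{B}_i$ and its mirror are both isomorphic to ${}^{\alpha_w}(U_w/U(\mathfrak{g}))$ is precisely Arkhipov's independence-of-reduced-expression theorem, and ``checking that the PBW-monomials exhaust'' does not constitute a proof. Concretely: (i) the elements $e_{i+1,i}$, $e_{i+2,i+1}$, $e_{i+2,i}$ are \emph{negative} root vectors in the paper's conventions, and the multiplicative set they generate is noncommutative, so the existence of $U_w$ requires verifying the Ore condition for this set (it holds because the subalgebra they span is nilpotent and acts locally ad-nilpotently, but this must be said); (ii) $\mathtt{B}_i=U_i/U(\mathfrak{g})$ is a quotient bimodule, and tensor products of quotients are not computed by matching PBW bases --- you need flatness/freeness of the localizations as one-sided $U(\mathfrak{g})$-modules and exactness of the relevant sequences (vanishing of the connecting $\mathrm{Tor}$ terms) to identify $\mathtt{B}_i\otimes_{U(\mathfrak{g})}\mathtt{B}_{i+1}$ with a subquotient of an iterated localization; and (iii) unlike in part (a), the twist $\alpha_{i+1}$ does \emph{not} preserve the Ore set $\{e_{i+1,i}^k\}$ of the adjacent factor (it sends $\alpha_i$ to $\alpha_i+\alpha_{i+1}$), which is exactly where the third root vector enters; this bookkeeping is the heart of the argument and is not carried out. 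As written, the proposal correctly locates the difficulty but leaves it unresolved.
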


For $w\in W$ choose a reduced decomposition 
$w=s_{i_1}s_{i_2}\cdots s_{i_k}$ of $w$ and define the
{\em twisting functor} $\mathrm{T}_w$ as follows:
\index{twisting functor}
$\mathrm{T}_w:=\mathrm{T}_{i_1}\circ\mathrm{T}_{i_2}\circ
\cdots\circ\mathrm{T}_{i_k}$. From Proposition~\ref{prop602} 
it follows that $\mathrm{T}_w$ does not depend (up to isomorphism)
on the choice of the reduced decomposition. In the following
claim we collected some combinatorial properties of twisting functors
(see \cite{AS}):

\begin{proposition}\label{prop603}
\begin{enumerate}[$($a$)$]
\item\label{prop603.1} For any $\mu\in\mathfrak{h}^*$ and any
$s\in S$ we have
\begin{displaymath}
\mathrm{T}_s\,\nabla(\mu)=
\begin{cases}
\nabla(s\cdot \mu), & \mu< s\cdot\mu;\\
\nabla(\mu), &   \text{otherwise}.
\end{cases}
\end{displaymath}
\item\label{prop603.2} For all $\lambda\in\mathfrak{h}^*_{\mathrm{dom}}$
and $w\in W$ we have 
$[\mathrm{T}_w\,\Delta(\lambda)]=[\Delta(w\cdot\lambda)]$.
\end{enumerate}
\end{proposition}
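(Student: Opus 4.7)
The plan is to prove (a) by a direct computation from the definition of $\mathrm{T}_s$, organized via the $\mathfrak{sl}_2$-structure associated to $s$, and then deduce (b) from the parallel Verma computation together with an induction on $\ell(w)$.

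For (a), recall that as an underlying vector space $\mathrm{T}_s M = M_s/M$, where $M_s$ is the localization of $M$ at the negative root vector $f := e_{i+1,i}$ (writing $s = s_i$), equipped with the $\mathfrak{g}$-action on this quotient twisted by $\alpha_s$. Since only $f$ and $\alpha_s$ enter the construction, the operation respects the decomposition of $M$ into components indexed by the cosets of $\mathfrak{h}^*$ modulo $\mathbb{Z}\alpha_s$, each of which is a $\mathfrak{g}_s$-submodule with $\mathfrak{g}_s = \langle e_{i,i+1}, e_{i+1,i}, h_s\rangle$. My approach is to analyze $\mathrm{T}_s\nabla(\mu)$ one such string at a time. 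The key structural input is that $\nabla(\mu)\vert_{\mathfrak{g}_s}$ admits a filtration by $\mathfrak{g}_s$-dual-Vermas, obtained by dualizing the $\mathfrak{g}_s$-Verma filtration of $\Delta(\mu)\vert_{\mathfrak{g}_s}$ coming from the PBW identification $\Delta(\mu) \cong U(\mathfrak{u})\otimes \Delta_{\mathfrak{p}_s}(\mu(h_s))$ (where $\mathfrak{u}$ is the $\mathrm{ad}(\mathfrak{g}_s)$-stable complement of $\mathfrak{p}_s$ in $\mathfrak{g}$); its top piece has $h_s$-weight $\mu(h_s)=\mu_i-\mu_{i+1}$. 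Using the identity $\mu - s\cdot\mu = (\mu(h_s)+1)\alpha_s$, the dichotomy $\mu\geq s\cdot\mu$ versus $\mu<s\cdot\mu$ translates to $\mu(h_s)\geq -1$ versus $\mu(h_s)\leq -2$. A direct $\mathfrak{sl}_2$-calculation on each dual-Verma subquotient shows that in the first case $\mathrm{T}_s$ fixes each subquotient, while in the second case every subquotient is shifted to the corresponding piece of $\nabla(s\cdot\mu)$; reassembling the filtration yields $\mathrm{T}_s\nabla(\mu)\cong\nabla(\mu)$ or $\nabla(s\cdot\mu)$, respectively.

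For (b), first establish the Verma analogue $\mathrm{T}_s\Delta(\mu) \cong \Delta(s\cdot\mu)$ whenever $s\cdot\mu \leq \mu$, by the parallel direct calculation (this is now much cleaner because $\Delta(\mu)$ is $U(\mathfrak{n}_-)$-free, so localization and quotient behave transparently; compare \cite{Ar}). Granted this, (b) follows by induction on $\ell(w)$: writing $w = s_{i_1}w'$ with $\ell(w')=\ell(w)-1$, the induction hypothesis gives $\mathrm{T}_{w'}\Delta(\lambda) \cong \Delta(w'\cdot\lambda)$. A short root-system argument (using that $w$ is reduced, so $(w')^{-1}\alpha_{i_1}$ is a positive root, together with dominance of $\lambda$) shows $s_{i_1}\cdot(w'\cdot\lambda)\leq w'\cdot\lambda$, so the Verma case applies and
\[
\mathrm{T}_w\Delta(\lambda) \;\cong\; \mathrm{T}_{s_{i_1}}\mathrm{T}_{w'}\Delta(\lambda) \;\cong\; \mathrm{T}_{s_{i_1}}\Delta(w'\cdot\lambda) \;\cong\; \Delta(w\cdot\lambda).
\]
Passing to Grothendieck classes gives the claim; independence of the choice of reduced decomposition is supplied by Proposition~\ref{prop602}.

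The main obstacle is the explicit $\mathfrak{sl}_2$-string calculation in (a): since $\nabla(\mu)$ is not $U(\mathfrak{n}_-)$-free, the localization-quotient-twist procedure must be run separately on each $\mathfrak{g}_s$-dual-Verma subquotient, with careful tracking of the sign arising in $[e,f^{-1}] = -f^{-1}hf^{-1}$, the interaction of the twist $\alpha_s$ with the Chevalley antiautomorphism implicit in $\nabla(\mu)=\Delta(\mu)^\star$, and the reassembly of the filtered structure. Once these $\mathfrak{sl}_2$-calculations are pinned down (and their $\Delta$-analogue is recorded for use in (b)), the remainder of the argument is formal.
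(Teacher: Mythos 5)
The paper offers no proof of this proposition, referring the reader to \cite{AS}; your argument is essentially the standard one found there and in \cite{KhMa} (reduction to the $\mathfrak{sl}_2$-subalgebra attached to $s$, an explicit localize--quotient--twist computation along $\alpha_s$-strings for (a), and the Verma analogue $\mathrm{T}_s\Delta(\nu)\cong\Delta(s\cdot\nu)$ for $s\cdot\nu\leq\nu$ followed by induction on $\mathfrak{l}(w)$ for (b)), and it is correct. The one step you should make explicit is the ``reassembly'' in (a): since $\mathrm{T}_s$ is only right exact, you cannot formally apply it to a filtration, so you should note that the decomposition of $\nabla(\mu)$ into $\alpha_s$-strings is a genuine direct sum of $(\mathfrak{g}_s\oplus\mathfrak{h})$-submodules which is preserved by localization at $e_{i+1\,i}$ (localization commutes with direct sums), and then carry out the computation within each string directly, where the ranks are finite and the torsion can be tracked by hand.
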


The most important properties of twisting functors are the
ones connected to their extensions to the derived category
(see \cite{AS,MS01}):

\begin{proposition}\label{prop604}
Let $s\in S$  and $w\in W$.
\begin{enumerate}[$($a$)$]
\item\label{prop604.1} We have $\mathcal{L}_i\mathrm{T}_w\Delta(\mu)=0$ 
for all $\mu\in\mathfrak{h}^*$ and $i>0$.
\item\label{prop604.2} We have $\mathcal{L}_i\mathrm{T}_s=0$ for all
$i>1$.
\item\label{prop604.3} We have $\mathcal{L}_1\mathrm{T}_s=
\hat{\mathrm{Z}}_s$.
\item\label{prop604.4} If $x,y\in W$ are such that $\mathfrak{l}(xy)=
\mathfrak{l}(x)+\mathfrak{l}(y)$, then 
$\mathcal{L}\mathrm{T}_{xy}\cong\mathcal{L}\mathrm{T}_x\circ
\mathcal{L}\mathrm{T}_y$.
\item\label{prop604.5} The functor $\mathcal{L}\mathrm{T}_s$
is a self-equivalence of $\mathcal{D}^b(\mathcal{O})$ with inverse
$\mathcal{R}\mathrm{G}_s$.
\end{enumerate}
\end{proposition}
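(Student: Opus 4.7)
The plan is to exploit three ingredients in sequence: the bimodule definition of $\mathrm{T}_s$ together with the length-one flat resolution of $\mathtt{B}_s$ coming from the Ore localization $U_s$, the Verma-flag structure of projectives provided by Theorem~\ref{thm405}, and the adjunction $(\mathrm{T}_s,\mathrm{G}_s)$ of Proposition~\ref{prop601}\eqref{prop601.3}. The order of attack is: the base steps (b), (c), and the $w=s$ case of (a); then a Verma-flag bootstrap for (a) in general and for (d); finally an adjunction computation on Vermas for (e).

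For the base steps, the key observation is that the short exact sequence of $U(\mathfrak{g})$-bimodules
\[
0 \to U(\mathfrak{g}) \to U_s \to \mathtt{B}_s \to 0
\]
is a flat resolution of $\mathtt{B}_s$ of length one: the Ore localization $U_s$ is flat over $U(\mathfrak{g})$, and twisting the left action by $\alpha_s$ preserves flatness. Tensoring with $M$ yields $\mathcal{L}_i\mathrm{T}_s = 0$ for $i\geq 2$, proving (b), and identifies $\mathcal{L}_1\mathrm{T}_s M$ with the $\alpha_s$-twist of the kernel of the localization map $M \to U_s\otimes_{U(\mathfrak{g})} M$, i.e.\ of the submodule $T(M) \subseteq M$ consisting of elements on which some power of $e_{i+1,i}$ vanishes. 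A Verma module $\Delta(\mu) = U(\mathfrak{n}_-)\otimes_{\mathbb{C}}\mathbb{C}_\mu$ is free over $U(\mathfrak{n}_-)$, so $e_{i+1,i}$ acts injectively on it and $T(\Delta(\mu)) = 0$, giving the $w=s$ case of (a). For (c), since $\alpha_s$ interchanges (up to sign) $e_{i,i+1}$ and $e_{i+1,i}$, the twisted module ${}^{\alpha_s}T(M)$ is an object of $\mathcal{O}$ on which the positive simple root element acts locally nilpotently and so lies in $\mathcal{O}^{\mathfrak{p}_s}$; a naturality argument then produces a canonical morphism back into $M$, identifying $\mathcal{L}_1\mathrm{T}_s M$ with the maximal $\mathcal{O}^{\mathfrak{p}_s}$-submodule $\hat{\mathrm{Z}}_s M$.

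For (a) in general and for (d), the plan is to bootstrap using that projectives are $\mathrm{T}_w$-acyclic. By Theorem~\ref{thm405}\eqref{thm405.1}, every projective admits a Verma flag; iterating the long exact sequence of $\mathcal{L}_\bullet\mathrm{T}_s$ along such a flag and invoking the Verma-acyclicity already established shows that projectives are $\mathrm{T}_s$-acyclic for every simple $s$. A short exact sequence analysis, dual via $\star$ to Proposition~\ref{prop603}\eqref{prop603.1} and controlled numerically by Proposition~\ref{prop603}\eqref{prop603.2}, shows that $\mathrm{T}_s$ sends $\mathcal{F}(\Delta)$ into itself. Hence for a reduced decomposition $w = s_{i_1}\cdots s_{i_k}$ and any $P \in \mathcal{F}(\Delta)$, every partial composition $\mathrm{T}_{i_j}\cdots\mathrm{T}_{i_k}P$ remains in $\mathcal{F}(\Delta)$ and is thus $\mathrm{T}_{i_{j-1}}$-acyclic; applying this with $P = \Delta(\mu)$ itself yields (a) for arbitrary $w$. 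For (d), concatenating reduced expressions of $x$ and $y$ gives a reduced expression of $xy$, so $\mathrm{T}_{xy} \cong \mathrm{T}_x\circ\mathrm{T}_y$ on the abelian level, and taking a projective resolution $P^\bullet\to M$ the same acyclicity produces
\[
\mathcal{L}\mathrm{T}_x\circ\mathcal{L}\mathrm{T}_y\,M \;\cong\; \mathrm{T}_x\mathrm{T}_y P^\bullet \;\cong\; \mathrm{T}_{xy}P^\bullet \;\cong\; \mathcal{L}\mathrm{T}_{xy}\,M.
\]

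For (e), the adjunction $(\mathrm{T}_s,\mathrm{G}_s)$ descends to an adjunction $(\mathcal{L}\mathrm{T}_s,\mathcal{R}\mathrm{G}_s)$ on $\mathcal{D}^b(\mathcal{O})$, and one must show that its unit and counit are isomorphisms. Since Verma (respectively dual Verma) modules generate $\mathcal{D}^b(\mathcal{O})$ as a triangulated category, it suffices to verify this on these classes: by (a) and its $\star$-dual, $\mathcal{L}\mathrm{T}_s\Delta(\mu)$ and $\mathcal{R}\mathrm{G}_s\nabla(\mu)$ are concentrated in degree zero and are explicitly describable via the Verma-flag analysis of the previous step, so the adjunction morphisms reduce to concrete maps on (dual) Vermas whose failure-to-be-isomorphisms is measured, via (c), by $\hat{\mathrm{Z}}_s$-terms, which vanish on (dual) Vermas. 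The main obstacle I anticipate is part (c): producing a \emph{canonical} natural isomorphism ${}^{\alpha_s}T(M) \cong \hat{\mathrm{Z}}_s M$ requires careful tracking of how $\alpha_s$ reflects $\mathfrak{h}$-weights and a compatibility check with the submodule characterization of $\hat{\mathrm{Z}}_s$; this identification is also what makes the unit/counit analysis in (e) trustworthy, so without it the final step becomes hard to pin down.
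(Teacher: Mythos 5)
The paper offers no proof of this proposition; it is quoted from \cite{AS,MS01}, so your argument has to stand on its own. Your base steps do: the Ore localization $U_i$ is flat over $U(\mathfrak{g})$, so $0\to U(\mathfrak{g})\to U_i\to\mathtt{B}_i\to 0$ is a length-one flat resolution and (b) follows; the long exact sequence identifies $\mathcal{L}_1\mathrm{T}_s M$ with the $\alpha_s$-twist of the $e_{i+1i}$-torsion submodule of $M$, which vanishes on Verma modules because they are free over $U(\mathfrak{n}_-)$, and which one identifies with the maximal submodule lying in the relevant parabolic subcategory to get (c). This is exactly the standard argument, and you correctly flag the trivialization of the twist on locally finite modules as the delicate point of (c).

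The genuine gap is in your bootstrap for (a) when $\mathfrak{l}(w)\geq 2$ and for (d): you claim that $\mathrm{T}_s$ sends $\mathcal{F}(\Delta)$ into itself, so that every partial composition $\mathrm{T}_{i_j}\cdots\mathrm{T}_{i_k}\Delta(\mu)$ stays in $\mathcal{F}(\Delta)$ and is hence torsion-free for the next twist. This is false, and the paper's own Example~\ref{exm607} is a counterexample: for $\mathfrak{gl}_2$ one has $\mathrm{T}_s\Delta(s\cdot 0)\cong\nabla(0,0)$, which admits no Verma flag (it has the same class as $\Delta(0,0)$ in the Grothendieck group but simple top $L(s\cdot 0)$, so a Verma flag would force $\nabla(0,0)\cong\Delta(0,0)$). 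The intermediate objects are \emph{twisted} Verma modules, and the real content of \cite{AS} at this point is to show directly, from their structure (freeness over the appropriate subalgebra of $U(\mathfrak{n}_-)$, or equivalently, via your part (c), absence of nonzero submodules in the parabolic subcategory attached to the next simple reflection), that they are acyclic for the next twisting functor; this cannot be routed through membership in $\mathcal{F}(\Delta)$. The same missing acyclicity statement is what you need in (d) to know that $\mathrm{T}_yP^{\bullet}$ is a complex of $\mathrm{T}_x$-acyclic objects, and the unit/counit computation in (e) likewise rests on an explicit description of $\mathrm{T}_s\Delta(\mu)$ and $\mathrm{T}_s\nabla(\mu)$ rather than on $\mathcal{F}(\Delta)$-preservation. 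With that lemma supplied, the remainder of your outline (composing derived functors when lengths add, and testing the adjunction morphisms on Vermas and dual Vermas, which generate $\mathcal{D}^b(\mathcal{O})$) is the standard route.
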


Another important property of twisting functors is the following
(see \cite{AS,KhMa,MS01}):

\begin{proposition}\label{prop605}
For any simple reflection $s$ there is an exact sequence of functors 
as follows: $\mathrm{Q}_s\hookrightarrow \mathrm{T}_s\to\mathrm{Id}$.
\end{proposition}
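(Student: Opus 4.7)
The plan is to construct a natural transformation $\eta \colon \mathrm{T}_s \to \mathrm{Id}$ explicitly at the bimodule level, and then identify the kernel of $\eta$ with $\mathrm{Q}_s = \mathcal{L}_1 \mathrm{Z}_s$.

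The first step is to construct $\eta$. Since $\mathrm{T}_s$ is given by tensoring with the bimodule ${}^{\alpha_s}\mathtt{B}_s$, where $\mathtt{B}_s = U_s/U(\mathfrak{g})$, it suffices to produce a $U(\mathfrak{g})$-bimodule map
\[
\psi \colon {}^{\alpha_s}\mathtt{B}_s \longrightarrow U(\mathfrak{g}).
\]
Choosing the PBW basis with $e_{i+1,i}$ as the last factor (for $s = s_i$), the bimodule $\mathtt{B}_s$ is freely spanned as a right $U(\mathfrak{g})$-module by the negative powers $\{ e_{i+1,i}^{-k} : k\geq 1\}$. Twisting the left action by $\alpha_s$ interchanges (up to normalization) the roles of $e_{i+1,i}$ and $e_{i,i+1}$, so $\psi$ should be defined on basis vectors by a formula of the shape $\psi(e_{i+1,i}^{-k}) = c_k \cdot e_{i,i+1}^{k-1}$ for appropriate scalars $c_k$ (chosen so that $\psi$ respects the twisted left $U(\mathfrak{g})$-action), extended by right $U(\mathfrak{g})$-linearity. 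Verification that $\psi$ is indeed a bimodule map reduces, via the triangular decomposition and PBW, to the $\mathfrak{sl}_2^s$-commutation relations and is carried out by induction on $k$. Tensoring $\psi$ with $M \in \mathcal{O}$ yields $\eta_M \colon \mathrm{T}_s M \to M$.

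The second step is to verify $\ker(\eta_M) \cong \mathrm{Q}_s M$ naturally in $M$. Both $\mathrm{T}_s$ and $\mathrm{Id}$ are right exact by Proposition~\ref{prop601}(b), so since $\mathcal{O}$ has enough projectives (Theorem~\ref{thm404}), a two-step projective presentation and a five-lemma argument reduce the check to the class of projective modules. By BGG reciprocity (Theorem~\ref{thm405}(a)) and the exactness of $\mathrm{T}_s$ on Verma-filtered modules (Proposition~\ref{prop604}(a)), this reduces further to Verma modules $\Delta(\mu)$. For these, Proposition~\ref{prop603}(b) pins down the character of $\mathrm{T}_s\Delta(\mu)$, which together with the known structure of twisted Verma modules (for a simple reflection these are either Verma or dual Verma) identifies $\mathrm{T}_s\Delta(\mu)$ up to isomorphism. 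A case analysis according to whether $s\cdot\mu < \mu$ or $s\cdot\mu > \mu$, combined with the computation of $\mathrm{Q}_s\Delta(\mu)$ via Proposition~\ref{prop672} and the $\star$-invariance $\mathrm{Q}_s \cong \star\mathrm{Q}_s\star$, then matches $\ker(\eta_{\Delta(\mu)})$ with $\mathrm{Q}_s\Delta(\mu)$.

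The principal obstacle is Step~1: writing down the bimodule map $\psi$ so that it is manifestly compatible with the twisted left $U(\mathfrak{g})$-action is delicate, requiring careful bookkeeping of the $\alpha_s$-twist against the $\mathfrak{sl}_2^s$-commutation relations, and one must verify the scalars $c_k$ can be chosen coherently. An alternative, more abstract route bypasses the explicit formula by first constructing $\eta$ on dual Verma modules (where Proposition~\ref{prop603}(a) specifies $\mathrm{T}_s\nabla(\mu) = \nabla(s\cdot\mu)$ for $\mu < s\cdot\mu$, and the natural map $\nabla(s\cdot\mu) \twoheadrightarrow \nabla(\mu)$ given by projection onto the top provides $\eta$ there), verifying functoriality on this full subcategory, and then extending to all of $\mathcal{O}$ via injective resolutions using the derived equivalence $\mathcal{L}\mathrm{T}_s$ with inverse $\mathcal{R}\mathrm{G}_s$ of Proposition~\ref{prop604}(e).
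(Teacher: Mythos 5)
Your Step 1 cannot work, and the obstruction is concrete. As a right $U(\mathfrak{g})$-module, $\mathtt{B}_s=U_s/U(\mathfrak{g})$ is generated by the classes of $e_{i+1,i}^{-k}$, $k\geq 1$, but it is \emph{not} free on them: in $U_s/U(\mathfrak{g})$ one has $e_{i+1,i}^{-k}\cdot e_{i+1,i}^{k}=1\equiv 0$. Twisting by $\alpha_s$ changes only the left action, so any bimodule map $\psi\colon{}^{\alpha_s}\mathtt{B}_s\to U(\mathfrak{g})$ is in particular right $U(\mathfrak{g})$-linear, whence $\psi(e_{i+1,i}^{-k})\cdot e_{i+1,i}^{k}=0$; since $U(\mathfrak{g})$ has no zero divisors this forces $\psi(e_{i+1,i}^{-k})=0$ for all $k$ and hence $\psi=0$ (already for $k=1$ your formula gives $c_1\cdot e_{i+1,i}=0$, so $c_1=0$, and inductively all $c_k=0$). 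Since $\mathrm{Nat}(X\otimes_{U}{}_-,\,Y\otimes_{U}{}_-)\cong\mathrm{Hom}_{U\text{-}U}(X,Y)$ for cocontinuous functors on $\mathfrak{g}\text{-}\mathrm{Mod}$, this shows more: there is \emph{no} nonzero natural transformation $\mathrm{T}_s\to\mathrm{Id}$ on the whole module category. The transformation of Proposition~\ref{prop605} exists only after restricting to $\mathcal{O}$ and cannot be produced at the level of the bimodule ${}^{\alpha_s}\mathtt{B}_s$.

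Your Step 2 is also not routine: on a projective $P$ the map $\eta_P$ is injective but not surjective (its cokernel is $\mathrm{Z}_sP$), so computing $\ker\eta_M$ from a presentation requires the snake lemma rather than the five lemma, and kernels of natural transformations are not determined by Verma subquotients, so the appeal to BGG reciprocity does not apply. The proofs in the cited sources avoid both issues. One route (Khomenko--Mazorchuk) first establishes $\mathrm{T}_s\cong B_{\lambda}e_sB_{\lambda}\otimes_{B_{\lambda}}{}_-$ on $\mathcal{O}_{\lambda}$ (Theorem~\ref{thm612}); the inclusion of $B_{\lambda}$-bimodules $B_{\lambda}e_sB_{\lambda}\hookrightarrow B_{\lambda}$ then \emph{is} the natural transformation, and the long exact $\mathrm{Tor}$-sequence attached to $0\to B_{\lambda}e_sB_{\lambda}\to B_{\lambda}\to B_{\lambda}/B_{\lambda}e_sB_{\lambda}\to 0$ gives at once the four-term exact sequence $0\to\mathcal{L}_1\mathrm{Z}_s\to\mathrm{T}_s\to\mathrm{Id}\to\mathrm{Z}_s\to 0$, because $\mathcal{L}_i\mathrm{Z}_s=\mathrm{Tor}_i^{B_{\lambda}}(B_{\lambda}/B_{\lambda}e_sB_{\lambda},{}_-)$. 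Another route obtains $\eta$ from the adjunction $(\mathrm{T}_s,\mathrm{G}_s)$ together with the canonical morphism $\mathrm{Id}\to\mathrm{G}_s$ into the completion functor. Your fallback via dual Verma modules gestures in a reasonable direction, but ``extending by injective resolutions'' a transformation defined only on costandard objects is exactly the nontrivial step and is not carried out.
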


Combining Proposition~\ref{prop602} and 
Proposition~\ref{prop604}\eqref{prop604.4}, we obtain that 
the functors $\mathcal{L}\mathrm{T}_i$, $i=1,2,\dots,n-1$, satisfy
braid relations and hence define a weak action of the braid group on
$\mathcal{D}^b(\mathcal{O})$. Decategorifying this action we obtain:

\begin{proposition}[Na{\"\i}ve categorification of the left regular
$\mathbb{S}_n$-module]\label{prop606}
Let $\lambda\in\mathfrak{h}^*$ be dominant, integral and regular.
Then there is a unique isomorphism $\overline{\varphi}:
\mathbb{Z}[\mathbb{S}_n]\to [\mathcal{D}^b(\mathcal{O}_{\lambda})]$
such that $\overline{\varphi}(w)=[\Delta(w\cdot\lambda)]$
for any $w\in \mathbb{S}_n$. Furthermore,
for any $w\in \mathbb{S}_n$ the following 
diagram commutes:
\begin{displaymath}
\xymatrix{ 
\mathbb{Z}[\mathbb{S}_n]\ar[rr]^{w\cdot}\ar[d]_{\overline{\varphi}}
&&\mathbb{Z}[\mathbb{S}_n]\ar[d]_{\overline{\varphi}} \\
[\mathcal{D}^b(\mathcal{O}_{\lambda})]\ar[rr]^{[\mathrm{T}_w]\cdot}
&&[\mathcal{D}^b(\mathcal{O}_{\lambda})]
},
\end{displaymath}
where $\varphi$ is as in Proposition~\ref{prop521}.
\end{proposition}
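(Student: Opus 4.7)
The plan has two steps: (i) establish that $\overline{\varphi}$ is a well-defined group isomorphism, and (ii) verify the commutativity of the diagram by induction on $\mathfrak{l}(x)$. For (i), the elements of $W$ form a $\mathbb{Z}$-basis of $\mathbb{Z}[\mathbb{S}_n]$, so the prescription $w\mapsto[\Delta(w\cdot\lambda)]$ uniquely determines a $\mathbb{Z}$-linear map; it remains to see that $\{[\Delta(w\cdot\lambda)]:w\in W\}$ is a basis of $[\mathcal{D}^b(\mathcal{O}_\lambda)]$. Corollary~\ref{cor406} gives that $\mathcal{O}_\lambda$ has finite global dimension, so Example~\ref{exm3} yields $[\mathcal{D}^b(\mathcal{O}_\lambda)]\cong[\mathcal{O}_\lambda]$; by Example~\ref{exm2}, the latter is freely generated by $\{[L(w\cdot\lambda)]:w\in W\}$. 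Theorem~\ref{thm402}(b) then implies
\[
[\Delta(w\cdot\lambda)] = [L(w\cdot\lambda)] + \sum_{\mu < w\cdot\lambda}[M(w\cdot\lambda):L(\mu)]\,[L(\mu)],
\]
which is a unitriangular change of basis with respect to $\leq$; hence the Verma classes also form a basis and $\overline{\varphi}$ is bijective.

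For (ii), both horizontal maps are $\mathbb{Z}$-linear, so it suffices to prove
\[
[\mathcal{L}\mathrm{T}_w\,\Delta(x\cdot\lambda)] = [\Delta(wx\cdot\lambda)] \qquad\text{for every }w,x\in W.
\]
By Proposition~\ref{prop604}(a), Verma modules are $\mathrm{T}_w$-acyclic, so the left-hand side equals $[\mathrm{T}_w\,\Delta(x\cdot\lambda)]$, reducing the problem to the abelian level. I proceed by induction on $\mathfrak{l}(x)$. The base case $x=e$ is Proposition~\ref{prop603}(b). For the inductive step, write $y=xs$ with $\mathfrak{l}(x)=\mathfrak{l}(y)-1$ and $s$ a simple reflection; Proposition~\ref{prop505}(b) gives, in both Bruhat configurations, the $K_0$-identity $[\theta_s\,\Delta(z\cdot\lambda)]=[\Delta(z\cdot\lambda)]+[\Delta(zs\cdot\lambda)]$ for any $z\in W$. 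Using this for $z=x$, applying $[\mathrm{T}_w]$, invoking the commutation $\mathrm{T}_w\circ\theta_s\cong\theta_s\circ\mathrm{T}_w$ (which follows from Proposition~\ref{prop601}(d) since $\mathrm{T}_w$ is a composition of the $\mathrm{T}_i$), the inductive hypothesis, and finally the same identity for $z=wx$, one obtains $[\mathrm{T}_w\,\Delta(y\cdot\lambda)]=[\Delta(wxs\cdot\lambda)]=[\Delta(wy\cdot\lambda)]$, closing the induction.

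The main subtlety is the interplay between the commutation of $\mathrm{T}_w$ with $\theta_s$ and the appearance of $\theta_s\,\Delta(x\cdot\lambda)$ in the identity above: although this object is not a Verma module, applying $\mathrm{T}_w$ to the short exact sequence of Vermas from Proposition~\ref{prop505}(b) and using Proposition~\ref{prop604}(a) on the end terms shows it is still $\mathrm{T}_w$-acyclic, so the commutation can be applied at the abelian level with no derived corrections. A secondary point, worth stating explicitly, is that Proposition~\ref{prop604}(d) alone does \emph{not} immediately give $\mathcal{L}\mathrm{T}_w\,\Delta(x\cdot\lambda)\cong\Delta(wx\cdot\lambda)$ because $\mathfrak{l}(wx)$ need not equal $\mathfrak{l}(w)+\mathfrak{l}(x)$, which is why the inductive route through projective functors is needed. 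Once these two points are handled, the induction proceeds purely by bookkeeping, and combined with (i) finishes the proof.
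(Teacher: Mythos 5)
Your argument is correct. Note, however, that the paper offers no written proof of Proposition~\ref{prop606}: it presents the statement as the immediate decategorification of the weak braid group action built from Propositions~\ref{prop602} and~\ref{prop604}\eqref{prop604.4}, together with the combinatorial facts in Propositions~\ref{prop603} and~\ref{prop604}\eqref{prop604.1}. The route implicit there is to use $\mathcal{L}\mathrm{T}_{xy}\cong\mathcal{L}\mathrm{T}_x\circ\mathcal{L}\mathrm{T}_y$ for additive lengths to reduce everything to the operators $[\mathcal{L}\mathrm{T}_{s}]$ for simple reflections $s$, and then to compute $[\mathrm{T}_s\Delta(x\cdot\lambda)]$ directly. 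You instead anchor the induction at the dominant Verma module via Proposition~\ref{prop603}\eqref{prop603.2} and propagate along the Bruhat order using the commutation of twisting functors with $\theta_s$ (Proposition~\ref{prop601}\eqref{prop601.4}) and the two short exact sequences of Proposition~\ref{prop505}\eqref{prop505.2}; this is a legitimate and complete filling-in of the gap, and it has the merit of never needing to identify $\mathrm{T}_s\Delta(x\cdot\lambda)$ explicitly. One small simplification you could make: since $\mathcal{L}\mathrm{T}_w$ is a triangulated endofunctor of $\mathcal{D}^b(\mathcal{O}_\lambda)$, the operator $[\mathcal{L}\mathrm{T}_w]$ on the Grothendieck group is additive on classes by definition, so you may run the entire inductive computation with the operators $[\mathcal{L}\mathrm{T}_w]$ and $[\theta_s]$ on $[\mathcal{D}^b(\mathcal{O}_\lambda)]$ and invoke the acyclicity of Verma modules (Proposition~\ref{prop604}\eqref{prop604.1}) only once, at the end, to identify $[\mathcal{L}\mathrm{T}_w\Delta(x\cdot\lambda)]$ with $[\mathrm{T}_w\Delta(x\cdot\lambda)]$; the separate verification that $\theta_s\Delta(x\cdot\lambda)$ is $\mathrm{T}_w$-acyclic, while correct, is then unnecessary.
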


The above categorification is only a na{\"\i}ve one since
$\mathcal{L}\mathrm{T}_i\circ \mathcal{L}\mathrm{T}_i\neq \mathrm{Id}$
(for example because of Propositions~\ref{prop604}\eqref{prop604.3}).
Combining Propositions~\ref{prop521} and \ref{prop606} with
Proposition~\ref{prop601}\eqref{prop601.4} we obtain a na{\"\i}ve
categorification of the regular $\mathbb{Z}[\mathbb{S}_n]$-bimodule.

\begin{example}\label{exm607}
{\rm The action of $\mathrm{T}_s$ on the principal block of 
the category $\mathcal{O}$  for the algebra $\mathfrak{gl}_2$ 
is given by the following table:
\begin{displaymath}
\begin{array}{|c||c|c|c|c|c|}
\hline
M&
\begin{array}{c}e\\\\\end{array}&\begin{array}{c}s\\\\\end{array}&
\begin{array}{c}e\\s\\\end{array}&\begin{array}{c}s\\e\\\end{array}&
\begin{array}{c}s\\e\\s\end{array}\\
\hline\hline
\mathrm{T}_s\, M&
\begin{array}{c}0\end{array}&\begin{array}{c}s\\e\\\end{array}&
\begin{array}{c}s\\\\\end{array}&\begin{array}{c}s\\e\\\end{array}&
\begin{array}{c}s\\e\\s\end{array}\\
\hline
\end{array}
\end{displaymath}
}
\end{example}

\subsection{Completion functors}\label{s6.3}

Let $\lambda'$ be integral and dominant, and $\lambda$ be integral, 
dominant and regular. According to Theorem~\ref{thm507}, the category
$\mathcal{O}_{\lambda'}$ is equivalent to the category
${}_{\,\lambda'}^{\infty}\mathcal{H}_{\lambda}^1$ of 
Harish-Chandra bimodules. The latter is a subcategory of the category
${}_{\,\lambda'}^{\infty}\mathcal{H}_{\lambda}^{\infty}$. The
category ${}_{\,\lambda'}^{\infty}\mathcal{H}_{\lambda}^{\infty}$
is very symmetric. In particular, it has both the usual (left)
projective functors $\theta_{\lambda',\mu}^l$, as defined in 
Subsection~\ref{s5.1}, and the right projective functors 
$\theta_{\lambda,\mu}^r$, defined similarly using tensoring of
Harish-Chandra bimodules with finite dimensional 
$\mathfrak{g}$-modules on the right. Since $\lambda$ is assumed to 
be regular, we can index indecomposable right projective functors
on ${}_{\,\lambda'}^{\infty}\mathcal{H}_{\lambda}^{\infty}$ by
elements from $W$ and denote them by $\theta_w^r$, $w\in W$.
Unfortunately, the functors $\theta_w^r$ do not preserve
the subcategory ${}_{\,\lambda'}^{\infty}\mathcal{H}_{\lambda}^1$
in general. However, similarly to the left projective functors, 
for every simple reflection $s$ there are adjunction morphisms
\begin{displaymath}
\mathrm{adj}_s^r:\theta_e^r\to\theta_s^r;\qquad 
\overline{\mathrm{adj}}_s^r:\theta_s^r\to\theta_e^r.
\end{displaymath}
Let $\mathrm{C}_s^r$ and $\mathrm{K}_s^r$ denote the cokernel of the
first morphism and the kernel of the second morphism, respectively.
Similarly to Proposition~\ref{prop508}\eqref{prop508.1}, we have
the pair $(\mathrm{C}_s^r,\mathrm{K}_s^r)$ of adjoint functors on
${}_{\,\lambda'}^{\infty}\mathcal{H}_{\lambda}^{\infty}$.

It is easy to show that both $\mathrm{C}_s^r$ and $\mathrm{K}_s^r$
preserve ${}_{\,\lambda'}^{\infty}\mathcal{H}_{\lambda}^1$ and hence
we can restrict the adjoint pair $(\mathrm{C}_s^r,\mathrm{K}_s^r)$
to ${}_{\,\lambda'}^{\infty}\mathcal{H}_{\lambda}^1$. Via the
equivalence from Theorem~\ref{thm507} this defines a pair of
adjoint functors on $\mathcal{O}_{\lambda'}$. Let us denote by
$\mathrm{G}_s$ the endofunctor of $\mathcal{O}_{\lambda'}$, which 
corresponds to $\mathrm{K}_s^r$ under the equivalence of 
Theorem~\ref{thm507}. The functor $\mathrm{G}_s$ is called
(Joseph's version of) {\em completion functor} and was defined in
\index{completion functor}
the above form in \cite{Jo}. The definition was inspired by another 
(Enright's version of) completion functor, defined
earlier  in \cite{En} in a more restrictive situation.
The name {\em completion functor} is motivated by the property, dual 
to Proposition~\ref{prop603}\eqref{prop603.1}, which can be understood
in the way that all Verma modules can be obtained from the antidominant
Verma module by a certain completion process with respect to
the action of various $\mathfrak{sl}_2$-subalgebras.
Connection between completion functors and twisting functors is
clarified by the following:

\begin{theorem}[\cite{KhMa}]\label{thm608}
The functor $\mathrm{G}_s$ is right adjoint to the functor 
$\mathrm{T}_s$.
\end{theorem}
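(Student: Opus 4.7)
My plan is to work through the Harish-Chandra bimodule picture supplied by Theorem~\ref{thm507}, pushing the adjunction $(\mathrm{C}_s^r,\mathrm{K}_s^r)$ living on the bimodule side across the equivalence ${}_{\,\lambda'}^{\infty}\mathcal{H}_{\lambda}^{1}\cong\mathcal{O}_{\lambda'}$. Joseph's functor $\mathrm{G}_s$ is defined precisely as the transport of $\mathrm{K}_s^r$ to $\mathcal{O}_{\lambda'}$, and $(\mathrm{C}_s^r,\mathrm{K}_s^r)$ is adjoint by construction (kernel and cokernel of mutually adjoint morphisms $\mathrm{adj}_s^r$ and $\overline{\mathrm{adj}}_s^r$ between the biadjoint right projective functors $\theta_e^r$ and $\theta_s^r$). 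Consequently, if $\widetilde{\mathrm{C}}_s$ denotes the transport of $\mathrm{C}_s^r$ to $\mathcal{O}_{\lambda'}$, then $(\widetilde{\mathrm{C}}_s,\mathrm{G}_s)$ is an adjoint pair, and the theorem reduces to establishing a natural isomorphism $\widetilde{\mathrm{C}}_s\cong\mathrm{T}_s$.

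To prove $\widetilde{\mathrm{C}}_s\cong\mathrm{T}_s$, I would first observe that both are right exact endofunctors of $\mathcal{O}_{\lambda'}$: $\mathrm{T}_s$ by Proposition~\ref{prop601}(\ref{prop601.2}), and $\widetilde{\mathrm{C}}_s$ because $\mathrm{C}_s^r$ is the cokernel of a morphism of exact functors (and the equivalence of Theorem~\ref{thm507} is exact). Next I would compare their values on the subcategory $\mathcal{F}(\Delta)$ of Verma-flag modules, which is $\mathrm{Ext}$-generating for $\mathcal{O}_{\lambda'}$ and contains all projectives by Theorem~\ref{thm405}(\ref{thm405.1}). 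On a Verma module $\Delta(w\cdot\lambda')$ the values of $\mathrm{T}_s$ are governed by Proposition~\ref{prop603}, while a direct computation of $\mathrm{C}_s^r$ applied to the bimodule corresponding to $P(\mu)$ (which has a filtration by bimodules corresponding to Vermas) gives matching values via the equivalence. Since any right exact functor on $\mathcal{O}_{\lambda'}$ is determined by its restriction to a projective generator, and both functors intertwine the left action of projective functors (for $\mathrm{T}_s$ by Proposition~\ref{prop601}(\ref{prop601.4}), and for $\widetilde{\mathrm{C}}_s$ because left projective functors commute with right ones on $\mathcal{H}$), the coincidence of values on $\mathcal{F}(\Delta)$ upgrades to a natural isomorphism.

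The hardest step will be the second one, namely the explicit identification of the transport of the right-projective construction on bimodules with the left tensoring by ${}^{\alpha_s}\mathtt{B}_s$ that defines $\mathrm{T}_s$. The subtlety is bookkeeping: $\mathrm{T}_s$ is constructed out of the left $U(\mathfrak{g})$-structure via localization and twist, whereas $\mathrm{C}_s^r$ is manufactured from the right $U(\mathfrak{g})$-structure, and the equivalence of Theorem~\ref{thm507} is given by $B\mapsto B\otimes_{U(\mathfrak{g})}\Delta(\lambda)$, which consumes a right action and leaves only a left action. Concretely one must show that, for the bimodule $B$ corresponding to $M\in\mathcal{O}_{\lambda'}$, the cokernel of the right-adjunction $B\to\theta_s^r B$, tensored over $U(\mathfrak{g})$ with $\Delta(\lambda)$, realises the quotient ${}^{\alpha_s}(U_s\otimes_{U(\mathfrak{g})}M)/M$ defining $\mathrm{T}_s M$. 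This hinges on recognising the regular bimodule summand inside $V\otimes_{\mathbb{C}}U(\mathfrak{g})$ (with $V$ the natural $\mathfrak{sl}_2$-module attached to $s$) in terms of the localisation $U_s$; once that matching is secured, the natural morphism supplied by the right-adjunction becomes, after passage through Theorem~\ref{thm507}, the canonical quotient map used in the construction of $\mathrm{T}_s$, completing the identification and therefore the adjunction.
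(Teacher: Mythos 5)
The paper states Theorem~\ref{thm608} without proof, citing \cite{KhMa}, so there is no in-text argument to match yours against; but your reduction is the natural one and is consistent with the paper's own logic: the text derives the identification $\mathrm{T}_s\leftrightarrow\mathrm{C}_s^r$ \emph{from} the theorem via uniqueness of adjoints, and you simply run this implication backwards, which is legitimate. Your step 1 (transporting the adjoint pair $(\mathrm{C}_s^r,\mathrm{K}_s^r)$ through the equivalence of Theorem~\ref{thm507}) is exactly what the paper sets up in Subsection~\ref{s6.3}, and your diagnosis that everything rests on $\widetilde{\mathrm{C}}_s\cong\mathrm{T}_s$ is correct. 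For comparison, the route actually taken in \cite{KhMa} (reflected here in Theorem~\ref{thm612}) identifies $\mathrm{T}_s$ with the partial coapproximation $\mathrm{R}_{e_s}$ and $\mathrm{G}_s$ with the partial approximation $\overline{\mathrm{R}}_{e_s}$, after which the adjunction is formal; that route trades your bimodule bookkeeping for the analysis of $\mathrm{T}_s$ on projectives, and has the advantage that the adjunction does not pass through the regularity hypotheses built into Theorem~\ref{thm507}.

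The one step of yours that, as written, would not go through is the final ``upgrade'': you claim that right-exactness, agreement on $\mathcal{F}(\Delta)$, and intertwining of projective functors together force $\widetilde{\mathrm{C}}_s\cong\mathrm{T}_s$. Agreement on objects alone is emphatically not enough --- Exercise~\ref{exs2}\eqref{exs2.8} of this very text exhibits $\mathrm{T}_s$ and $\mathrm{C}_s$ on $\mathcal{O}_0(\mathfrak{gl}_2)$ agreeing on every object while being non-isomorphic as functors. What saves you is that both $\mathrm{T}_s$ and $\widetilde{\mathrm{C}}_s$ commute \emph{naturally} with all projective functors (Proposition~\ref{prop601}\eqref{prop601.4} for $\mathrm{T}_s$; left/right commutation on $\mathcal{H}$ for $\widetilde{\mathrm{C}}_s$), and that $\mathcal{O}_{\lambda'}$ has full projective functors in the sense of Subsection~\ref{s3.8}: every morphism between projectives $\theta_x\Delta(\lambda')\to\theta_y\Delta(\lambda')$ is the evaluation at $\Delta(\lambda')$ of a $2$-morphism $\theta_x\to\theta_y$. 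Only with both of these in hand is a right-exact functor pinned down, on morphisms and not just objects, by its value on the dominant Verma module. You should make this explicit; as stated, your ``coincidence of values \dots upgrades to a natural isomorphism'' is precisely the inference the counterexample forbids.
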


In particular, it follows that $\mathrm{T}_s$ corresponds, 
under the equivalence of Theorem~\ref{thm507}, to $\mathrm{C}_s^r$.
From the previous subsection we obtain that $\mathrm{G}_s\cong
\star\circ\mathrm{T}_s\circ\star$ and that completion functors
satisfy braid relations. This allows us to define the completion
functor $\mathrm{G}_w$ for every $w\in W$. The action of
derived completion functors on the bounded derived category of
$\mathcal{O}_{\lambda}$ gives a na{\"\i}ve categorification of the
left regular $\mathbb{Z}[\mathbb{S}_n]$-module.

\begin{example}\label{exm609}
{\rm The action of $\mathrm{G}_s$ on the principal block of 
the category $\mathcal{O}$  for the algebra $\mathfrak{gl}_2$ 
is given in the following table:
\begin{displaymath}
\begin{array}{|c||c|c|c|c|c|}
\hline
M&
\begin{array}{c}e\\\\\end{array}&\begin{array}{c}s\\\\\end{array}&
\begin{array}{c}e\\s\\\end{array}&\begin{array}{c}s\\e\\\end{array}&
\begin{array}{c}s\\e\\s\end{array}\\
\hline\hline
\mathrm{G}_s\, M&
\begin{array}{c}0\end{array}&\begin{array}{c}e\\s\\\end{array}&
\begin{array}{c}e\\s\\\end{array}&\begin{array}{c}s\\\\\end{array}&
\begin{array}{c}s\\e\\s\end{array}\\
\hline
\end{array}
\end{displaymath}
}
\end{example}

\subsection{Alternative description via (co)approximations}\label{s6.4}

Let $A$ be a finite dimensional associative $\Bbbk$-algebra and
$e\in A$ an idempotent. Then we have the following natural
pair of adjoint functors:
\begin{equation}\label{eq630}
\xymatrix{ 
A\text{-}\mathrm{mod}\ar@/^/[rrrr]^{\mathrm{Hom}_A(Ae,{}_-)}&&&& 
eAe\text{-}\mathrm{mod}\ar@/^/[llll]^{Ae\otimes_{eAe}{}_-}
}
\end{equation}
Denote by $\mathcal{X}_e$ the full subcategory of $A\text{-}\mathrm{mod}$,
which consists of all modules $M$ admitting a presentation
$X_1\to X_0\tto M$ with $X_0,X_1\in\mathrm{add}(Ae)$. Then \eqref{eq630}
restricts to the following equivalence of categories, see 
\cite[Section~5]{Au}:
\begin{displaymath}
\xymatrix{ 
\mathcal{X}_e\ar@/^/[rrrr]^{\mathrm{Hom}_A(Ae,{}_-)}&&&& 
eAe\text{-}\mathrm{mod}\ar@/^/[llll]^{Ae\otimes_{eAe}{}_-}.
}
\end{displaymath}
As mentioned in Subsection~\ref{s1.6}, the category $\mathcal{X}_e$
is equivalent to the quotient of $A\text{-}\mathrm{mod}$ by the Serre
subcategory consisting of all modules $M$ such that $eM=0$. Set
\begin{displaymath}
\mathrm{P}_e:=Ae\otimes_{eAe}\mathrm{Hom}_A(Ae,{}_-):
A\text{-}\mathrm{mod}\to A\text{-}\mathrm{mod}.
\end{displaymath}
This functor is right exact (since $\mathrm{Hom}_A(Ae,{}_-)$ is
exact because of the projectivity of $Ae$) and hence is isomorphic
to the functor $Ae\otimes_{eAe}eA\otimes_{A}{}_-$. We also
have the right exact functor $\mathrm{R}_e:=AeA\otimes_{A}{}_-$.
The functor $\mathrm{P}_e$ is called {\em coapproximation}
\index{coapproximation}
with respect to $Ae$ and the functor $\mathrm{R}_e$ is called 
{\em partial coapproximation} with respect to $Ae$. We also have
the corresponding right adjoint functors $\overline{\mathrm{P}}_e$
and $\overline{\mathrm{R}}_e$ of approximation and partial
approximation with respect to the injective module
$\mathrm{Hom}_A(eA,\Bbbk)$, respectively.

The functor $\mathrm{R}_e$ can be understood as the composition of
the following two steps (see \cite{KhMa}):
\begin{itemize}
\item Given an $A$-module $M$ consider some projective cover
$P_M\tto M$ of $M$, and denote by $M'$ the quotient of $P_M$
modulo the trace of $Ae$ in the kernel of this projective cover.
\item Consider the trace $M''$ of $Ae$ in $M'$. We have
$M''\cong\mathrm{R}_e\, M$.
\end{itemize}
Though none of these two steps is functorial, their composition
turns out to be functorial, in particular, independent of the
choice of $P_M$. The functor $\overline{\mathrm{R}}_e$ can be described
dually using injective modules.

Let now $\lambda$ be integral and dominant. For $\mu\in W\cdot \lambda$
let $e_{\mu}$ denote the primitive idempotent of $B_{\lambda}$
corresponding to $P(\mu)$. For a simple reflection $s$ let
$e_s$ denote the sum of all $e_{\mu}$ such that 
$\mu\geq s\cdot\mu$.

\begin{theorem}[\cite{KhMa,MS01}]\label{thm612}
Let $s$ be a simple reflection. Then we have:
\begin{displaymath}
\mathrm{T}_s\cong \mathrm{R}_{e_s},\quad 
\mathrm{T}_s^2\cong \mathrm{P}_{e_s},\quad 
\mathrm{G}_s\cong \overline{\mathrm{R}}_{e_s},\quad 
\mathrm{G}_s^2\cong \overline{\mathrm{P}}_{e_s}.
\end{displaymath}
\end{theorem}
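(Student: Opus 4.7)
The four isomorphisms split into the two for the twisting functor $\mathrm{T}_s$ and the two for the completion functor $\mathrm{G}_s$. Since Theorem~\ref{thm608} asserts that $\mathrm{G}_s$ is right adjoint to $\mathrm{T}_s$, and the coapproximation functors $\mathrm{R}_{e_s},\mathrm{P}_{e_s}$ have the approximation functors $\overline{\mathrm{R}}_{e_s},\overline{\mathrm{P}}_{e_s}$ as their right adjoints by the very definitions recalled in Subsection~\ref{s6.4}, the uniqueness of right adjoints will let me deduce $\mathrm{G}_s\cong\overline{\mathrm{R}}_{e_s}$ from $\mathrm{T}_s\cong\mathrm{R}_{e_s}$, and analogously $\mathrm{G}_s^2\cong\overline{\mathrm{P}}_{e_s}$ from $\mathrm{T}_s^2\cong\mathrm{P}_{e_s}$ (using that the right adjoint of $\mathrm{T}_s^2$ is $\mathrm{G}_s^2$). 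Thus the real content is the two twisting statements.

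For $\mathrm{T}_s\cong\mathrm{R}_{e_s}$ the plan is the following. Both sides are right-exact endofunctors of $\mathcal{O}_\lambda$ (Proposition~\ref{prop601}\eqref{prop601.2} for $\mathrm{T}_s$, tautologically for the partial coapproximation). Under the Morita identification $\mathcal{O}_\lambda\cong B_\lambda\text{-}\mathrm{mod}$, a right-exact endofunctor is determined, up to canonical natural isomorphism, by its representing $(B_\lambda,B_\lambda)$-bimodule or, equivalently, by its restriction to the full additive subcategory of projectives. I would therefore compute both sides on each indecomposable $P(\mu)$ and check functoriality in $\mu$. On the right one has $\mathrm{R}_{e_s}P(\mu)=B_\lambda e_sB_\lambda e_\mu$, i.e., the trace of $\mathrm{add}(B_\lambda e_s)$ inside $P(\mu)$, whose composition factors are read off from BGG reciprocity (Theorem~\ref{thm405}). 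On the left, I combine the $\Delta$-flag of $P(\mu)$ (Theorem~\ref{thm405}\eqref{thm405.1}) with the exactness of $\mathrm{T}_s$ on $\mathcal{F}(\Delta)$ (Proposition~\ref{prop604}\eqref{prop604.1}) to reduce the computation to the Verma level, where Proposition~\ref{prop603}\eqref{prop603.2} together with Proposition~\ref{prop605} pins down $\mathrm{T}_s\Delta(\nu)$ as an honest module, not merely as a Grothendieck class. Commutation of $\mathrm{T}_s$ with projective functors (Proposition~\ref{prop601}\eqref{prop601.4}), combined with Proposition~\ref{prop505} describing the $\Delta$-filtration of the $\theta_s$-images, then lets me bootstrap from dominant $\mu$ (where $P(\mu)=\Delta(\mu)$) to arbitrary $\mu$, and a Loewy/composition-factor comparison matches the two modules on each projective. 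Naturality in $\mu$ is automatic from the functorial construction on both sides.

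For $\mathrm{T}_s^2\cong\mathrm{P}_{e_s}$ I would use the first isomorphism to rewrite $\mathrm{T}_s^2$ as tensoring with $B_\lambda e_sB_\lambda\otimes_{B_\lambda}B_\lambda e_sB_\lambda$, while $\mathrm{P}_{e_s}$ is, by the definition recalled in Subsection~\ref{s6.4}, tensoring with $B_\lambda e_s\otimes_{e_sB_\lambda e_s}e_sB_\lambda$. Multiplication in $B_\lambda$ provides a canonical comparison map between these two bimodules. Proving that this comparison map is an isomorphism is the central technical point, and reduces to a flatness/double-centralizer statement, namely $\mathrm{Tor}^{e_sB_\lambda e_s}_{>0}(B_\lambda e_s,e_sB_\lambda)=0$, equivalently, the projectivity of $e_sB_\lambda$ as a right $e_sB_\lambda e_s$-module. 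This rests on the Soergel-style identification of $e_sB_\lambda e_s$ as the endomorphism ring of the relevant projective summand of $B_\lambda$ together with the partial tilting / projective-injective properties assembled in Proposition~\ref{prop503}\eqref{prop503.6}--\eqref{prop503.7}. A more hands-on alternative is to compute $\mathrm{T}_s^2P(\mu)$ object by object via a second application of the $\Delta$-filtration machinery used in the first part, and match directly with the coapproximation description of $\mathrm{P}_{e_s}P(\mu)$ as the two-step construction (projective cover, then quotient by the trace in the kernel, then trace) given in Subsection~\ref{s6.4}.

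The main obstacle I expect is precisely this flatness/double-centralizer step: it is the only place where structural input about $\mathcal{O}_\lambda$ beyond the properties of $\mathrm{T}_s$ already assembled in Section~\ref{s6} genuinely enters, and the object-by-object alternative merely trades abstract $\mathrm{Tor}$-vanishing for a fairly laborious case analysis organized by the Bruhat-theoretic position of $\mu$ relative to $s$.
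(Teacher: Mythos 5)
The paper itself offers no proof of Theorem~\ref{thm612} (it is imported from \cite{KhMa,MS01}), so your proposal can only be judged on its own terms. Your reduction of the two $\mathrm{G}_s$-statements to the two $\mathrm{T}_s$-statements via uniqueness of right adjoints (Theorem~\ref{thm608} together with the fact, recalled in Subsection~\ref{s6.4}, that $\overline{\mathrm{R}}_{e}$ and $\overline{\mathrm{P}}_{e}$ are the right adjoints of $\mathrm{R}_{e}$ and $\mathrm{P}_{e}$) is correct. The two $\mathrm{T}_s$-statements, however, each contain a genuine gap. For $\mathrm{T}_s\cong\mathrm{R}_{e_s}$ you compute both functors on every indecomposable projective and then assert that ``naturality in $\mu$ is automatic from the functorial construction on both sides.'' It is not: two right exact functors that agree on every object, even on every object of $\mathrm{add}(P_\lambda)$, need not be isomorphic, because the object-wise isomorphisms must be chosen compatibly with all morphisms between projectives, and that compatibility is precisely the content of the theorem. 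The paper supplies the cautionary example itself: by Exercise~\ref{exs2}\eqref{exs2.8}, for $\mathfrak{gl}_2$ one has $\mathrm{T}_s\,M\cong\mathrm{C}_s\,M$ for every $M\in\mathcal{O}_0$ (compare the tables in Subsection~\ref{s5.6} and Example~\ref{exm607}), and yet $\mathrm{T}_s\not\cong\mathrm{C}_s$. The real work is therefore to construct a specific natural transformation between $\mathrm{T}_s$ and $\mathrm{R}_{e_s}$ --- in \cite{KhMa} it is assembled from the canonical morphism $\mathrm{T}_s\to\mathrm{Id}$ of Proposition~\ref{prop605} matched against the two-step description of partial coapproximation --- and then to prove it invertible. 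The object-by-object computation cannot replace this, and the same objection applies to your ``hands-on alternative'' for $\mathrm{T}_s^2$.

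For $\mathrm{T}_s^2\cong\mathrm{P}_{e_s}$, reducing to the bimodule identity $B_\lambda e_sB_\lambda\otimes_{B_\lambda}B_\lambda e_sB_\lambda\cong B_\lambda e_s\otimes_{e_sB_\lambda e_s}e_sB_\lambda$ is legitimate (this is exactly Exercise~\ref{exs2}\eqref{exs2.9}), but the criterion you identify as the central technical point --- vanishing of $\mathrm{Tor}^{e_sB_\lambda e_s}_{>0}(B_\lambda e_s,e_sB_\lambda)$, ``equivalently'' projectivity of $e_sB_\lambda$ over $e_sB_\lambda e_s$ --- is false already for $\mathfrak{gl}_2$. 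There $e_s$ is the primitive idempotent of the antidominant projective $P(s\cdot\lambda)$ (the choice forced by consistency of the first isomorphism with the tables), so $e_sB_\lambda e_s\cong\mathrm{End}_{\mathfrak{g}}(P(s\cdot\lambda))\cong\mathbb{C}[x]/(x^2)$ is local of dimension two while $e_sB_\lambda$ has dimension three; hence $e_sB_\lambda$ is not projective, $B_\lambda e_s$ and $e_sB_\lambda$ each split off the simple $\mathbb{C}[x]/(x^2)$-module, and $\mathrm{Tor}_1\neq 0$. Since the two bimodules are nonetheless isomorphic, the isomorphism is not produced by the flat-base-change mechanism you propose; it again comes from the two-step (projective cover, kill the trace of $B_\lambda e_s$ in the kernel, take the trace) description of $\mathrm{R}_{e_s}$ and $\mathrm{P}_{e_s}$. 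So this step of your plan must be replaced, not merely filled in.
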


Theorem~\ref{thm612} can be extended to the following situation:
let $\mathfrak{p}\subset \mathfrak{g}$ be a parabolic subalgebra
and $W_{\mathfrak{p}}$ the corresponding
parabolic subgroup of $W$. Denote by $e_{\mathfrak{p}}$ 
the sum of all $e_{\mu}$ such that 
$\mu\geq s\cdot\mu$ for any simple reflection $s\in W_{\mathfrak{p}}$.
Let $w_o^{\mathfrak{p}}$ denote the longest element of
$W_{\mathfrak{p}}$.

\begin{theorem}[\cite{KhMa,MS01}]\label{thm614}
We have:
\begin{displaymath}
\mathrm{T}_{w_o^{\mathfrak{p}}}\cong \mathrm{R}_{e_{\mathfrak{p}}},\quad 
\mathrm{T}_{w_o^{\mathfrak{p}}}^2\cong \mathrm{P}_{e_{\mathfrak{p}}},\quad 
\mathrm{G}_{w_o^{\mathfrak{p}}}\cong 
\overline{\mathrm{R}}_{e_{\mathfrak{p}}},\quad 
\mathrm{G}_{w_o^{\mathfrak{p}}}^2\cong 
\overline{\mathrm{P}}_{e_{\mathfrak{p}}}.
\end{displaymath}
\end{theorem}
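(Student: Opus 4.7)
The strategy is to reduce to Theorem~\ref{thm612} (the case of a single simple reflection) via the braid relations. Fix a reduced expression $w_o^{\mathfrak{p}}=s_{i_1}s_{i_2}\cdots s_{i_k}$ inside $W_{\mathfrak{p}}$. Combining Proposition~\ref{prop604}\eqref{prop604.4} with Proposition~\ref{prop602}, we have $\mathrm{T}_{w_o^{\mathfrak{p}}}\cong \mathrm{T}_{s_{i_1}}\circ\cdots\circ\mathrm{T}_{s_{i_k}}$, independent of the chosen reduced expression. Applying Theorem~\ref{thm612} factor by factor rewrites this composition as $\mathrm{R}_{e_{s_{i_1}}}\circ\cdots\circ \mathrm{R}_{e_{s_{i_k}}}$, i.e.\ tensoring on the left with the iterated bimodule $B_\lambda e_{s_{i_1}}B_\lambda\otimes_{B_\lambda}\cdots\otimes_{B_\lambda} B_\lambda e_{s_{i_k}}B_\lambda$.

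The first isomorphism in the theorem then amounts to identifying this iterated tensor product with $B_\lambda e_{\mathfrak{p}} B_\lambda$, viewed as a bimodule. This is largely combinatorial: every $W_{\mathfrak{p}}$-dominant weight is in particular $s_{i_j}$-dominant for each $j$, so $e_{\mathfrak{p}}$ is a direct summand of every $e_{s_{i_j}}$, hence $B_\lambda e_{\mathfrak{p}} B_\lambda\subseteq B_\lambda e_{s_{i_j}} B_\lambda$ for every $j$. In the other direction, $\mathrm{R}_{e_{s_{i_j}}}$ annihilates precisely the simple modules $L(\nu)$ for which $\nu<s_{i_j}\cdot\nu$, so as the simple reflections $s_{i_j}$ exhaust $W_{\mathfrak{p}}\cap S$ the successive compositions kill exactly the simple modules lying in $\mathcal{O}^{\mathfrak{p}}$, matching the behavior of $\mathrm{R}_{e_{\mathfrak{p}}}$, for which $B_\lambda/B_\lambda e_{\mathfrak{p}} B_\lambda$ is the algebra describing $\mathcal{O}^{\mathfrak{p}}_{\lambda}$. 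The identity $\mathrm{T}_{w_o^{\mathfrak{p}}}^2\cong \mathrm{P}_{e_{\mathfrak{p}}}$ then follows from the same bimodule comparison $B_\lambda e_{\mathfrak{p}} B_\lambda\otimes_{B_\lambda} B_\lambda e_{\mathfrak{p}} B_\lambda\cong B_\lambda e_{\mathfrak{p}}\otimes_{e_{\mathfrak{p}}B_\lambda e_{\mathfrak{p}}} e_{\mathfrak{p}}B_\lambda$ that underlies the square in Theorem~\ref{thm612}. Finally, the two $\mathrm{G}$-statements follow from the first two by conjugating with the simple-preserving duality $\star$, using $\mathrm{G}_{w_o^{\mathfrak{p}}}\cong\star\circ\mathrm{T}_{w_o^{\mathfrak{p}}}\circ\star$ together with the evident identities $\overline{\mathrm{R}}_{e_{\mathfrak{p}}}\cong\star\circ\mathrm{R}_{e_{\mathfrak{p}}}\circ\star$ and $\overline{\mathrm{P}}_{e_{\mathfrak{p}}}\cong\star\circ\mathrm{P}_{e_{\mathfrak{p}}}\circ\star$, both of which come directly from the definitions of partial and full approximation with respect to the injective cogenerator.

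\textbf{Main obstacle.} The delicate point is upgrading the bimodule identification from the level of Grothendieck groups (where it is easy to check using Proposition~\ref{prop603}\eqref{prop603.2} applied to the images of Verma modules) to a genuine functorial isomorphism. A natural approach is to induct on $\ell(w_o^{\mathfrak{p}})$, using the short exact sequence $\mathrm{Q}_s\hookrightarrow \mathrm{T}_s\twoheadrightarrow \mathrm{Id}$ of Proposition~\ref{prop605} to control the successive kernels: at each inductive step one must absorb the Zuckerman-type kernel $\mathrm{Q}_s$ by showing it corresponds precisely to modules already killed by the accumulated ideal. Tracking which idempotent plays the role of $e$ after each composition, and verifying that the induction terminates exactly at $e_{\mathfrak{p}}$ rather than at some a priori smaller idempotent dictated by a proper subexpression of $w_o^{\mathfrak{p}}$, is the combinatorial heart of the argument.
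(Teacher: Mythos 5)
The paper itself offers no proof of Theorem~\ref{thm614}; it simply defers to \cite{KhMa,MS01}, so your proposal has to be judged on its own merits. Your overall architecture is the right one: factor $\mathrm{T}_{w_o^{\mathfrak{p}}}$ along a reduced word (this needs only the definition of $\mathrm{T}_w$ together with Proposition~\ref{prop602}, not the derived statement Proposition~\ref{prop604}\eqref{prop604.4}), apply Theorem~\ref{thm612} to each factor, and obtain the $\mathrm{G}$-statements by conjugating with $\star$ via $\mathrm{G}_s\cong\star\circ\mathrm{T}_s\circ\star$ and the dual description of $\overline{\mathrm{R}}_e$, $\overline{\mathrm{P}}_e$. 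Those reductions are sound.

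The genuine gap is the step you label ``largely combinatorial'': the identification
$B_\lambda e_{s_{i_1}}B_\lambda\otimes_{B_\lambda}\cdots\otimes_{B_\lambda}B_\lambda e_{s_{i_k}}B_\lambda\cong B_\lambda e_{\mathfrak{p}}B_\lambda$
as bimodules. This is not a consequence of the two sides annihilating the same simple modules, nor of agreement in the Grothendieck group: the functors $\mathrm{R}_e$ are only right exact, and two right-exact functors $X\otimes_{B_\lambda}{}_-$ and $Y\otimes_{B_\lambda}{}_-$ with the same effect on classes of objects need not be isomorphic unless $X\cong Y$ as bimodules. Already for two factors, the multiplication map $B_\lambda e_sB_\lambda\otimes_{B_\lambda}B_\lambda e_tB_\lambda\to B_\lambda e_sB_\lambda e_tB_\lambda$ need not be injective, and even its image is a priori unrelated to $B_\lambda e_{\mathfrak{p}}B_\lambda$. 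This identification \emph{is} the content of the theorem; everything else in your write-up is formal. You correctly diagnose this under ``Main obstacle'' and propose the right remedy (induction on the reduced word, using the two-step description of $\mathrm{R}_e$ via projective covers and traces, and Proposition~\ref{prop605} to control the kernels $\mathrm{Q}_s$ absorbed at each stage, together with the fact that any reduced expression of $w_o^{\mathfrak{p}}$ uses every simple reflection of $W_{\mathfrak{p}}$), but you do not carry it out — and the same caveat applies to the square, where $B_\lambda e_{\mathfrak{p}}B_\lambda\otimes_{B_\lambda}B_\lambda e_{\mathfrak{p}}B_\lambda\cong B_\lambda e_{\mathfrak{p}}\otimes_{e_{\mathfrak{p}}B_\lambda e_{\mathfrak{p}}}e_{\mathfrak{p}}B_\lambda$ requires an argument (it fails for general idempotents). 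As it stands the proposal is a correct plan with the decisive step asserted rather than proved.
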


\subsection{Serre functor}\label{s6.5}

Let $\cC$ be a $\Bbbk$-linear additive category with finite 
dimensional morphism spaces. A {\em Serre functor} on $\cC$ is an 
\index{Serre functor}
additive auto-equivalence $\mathrm{F}$ of $\cC$ together with isomorphisms
\begin{displaymath}
\Psi_{x,y}:\cC(x,\mathrm{F}\,y)\cong\cC(y,x)^*,
\end{displaymath}
for all $x,y\in\cC$, natural in $x$ and $y$ (see \cite{BK}). If a 
Serre functor exists, it is unique (up to isomorphism) and commutes 
with all  auto-equivalences of $\cC$. 

\begin{proposition}[\cite{Ha}]\label{prop625}
If $A$ is a finite dimensional $\Bbbk$-algebra of finite global dimension,
then the left derived of the {\em Nakayama functor}
\index{Nakayama functor}
\begin{displaymath}
\mathrm{N}:=\mathrm{Hom}_A({}_-,A)^{*}\cong A^*\otimes_A{}_- 
\end{displaymath}
is a Serre functor on $\mathcal{D}^b(A)$.
\end{proposition}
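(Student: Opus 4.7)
The plan is to reduce the problem to the level of finitely generated projective modules by exploiting the assumption that $A$ has finite global dimension. Under this assumption, the natural functor $\mathcal{K}^b(\mathrm{proj}(A))\to\mathcal{D}^b(A)$ is an equivalence of triangulated categories, so it suffices to produce, for bounded complexes of projectives $P^\bullet,Q^\bullet$, a natural isomorphism
\[
\mathrm{Hom}_{\mathcal{D}^b(A)}(P^\bullet,\mathcal{L}\mathrm{N}(Q^\bullet))\cong
\mathrm{Hom}_{\mathcal{D}^b(A)}(Q^\bullet,P^\bullet)^{*}.
\]

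First I would verify that $\mathrm{N}$ restricts to an additive equivalence $\mathrm{proj}(A)\overset{\sim}{\to}\mathrm{inj}(A)$: indeed, for an indecomposable projective $Ae$ one has $\mathrm{N}(Ae)=\mathrm{Hom}_A(Ae,A)^{*}\cong (eA)^{*}$, which is the indecomposable injective corresponding to the simple top of $Ae$. Because $A$ has finite global dimension, every injective has finite projective dimension, and so the inclusions $\mathrm{proj}(A),\mathrm{inj}(A)\hookrightarrow A\text{-}\mathrm{mod}$ induce equivalences of bounded derived categories with $\mathcal{D}^b(A)$. Applying $\mathrm{N}$ term-wise to a bounded complex of projectives therefore yields an autoequivalence $\mathcal{L}\mathrm{N}$ of $\mathcal{D}^b(A)$; its inverse is computed by the right derived of the inverse Nakayama functor $\mathrm{Hom}_A(A^{*},-)$.

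Next I would establish the duality on the additive level: for finitely generated projectives $P,Q$ the chain of natural isomorphisms
\[
\mathrm{Hom}_A(P,\mathrm{N}(Q))=\mathrm{Hom}_A\bigl(P,\mathrm{Hom}_A(Q,A)^{*}\bigr)
\cong\bigl(\mathrm{Hom}_A(Q,A)\otimes_{A}P\bigr)^{*}
\cong\mathrm{Hom}_A(Q,P)^{*}
\]
holds, where the first step uses the tensor-Hom adjunction $\mathrm{Hom}_A(X,Y^{*})\cong(Y\otimes_{A}X)^{*}$ (with $Y^{*}$ denoting the $\Bbbk$-dual), and the second uses the standard identity $\mathrm{Hom}_A(Q,A)\otimes_{A}P\cong \mathrm{Hom}_A(Q,P)$ valid for finitely generated projective $Q$ (check it for $Q=A$ and extend by additivity and direct summands). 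These isomorphisms are natural in both arguments.

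Finally I would lift this to complexes. For bounded complexes $P^\bullet,Q^\bullet$ of finitely generated projectives one obtains a natural isomorphism of Hom-complexes $\mathrm{Hom}_A^{\bullet}(P^\bullet,\mathrm{N}(Q^\bullet))\cong \mathrm{Hom}_A^{\bullet}(Q^\bullet,P^\bullet)^{*}$ by applying the previous identity componentwise, with the sign conventions for the differentials matching because $(-)^{*}$ reverses arrows while the Hom-complex differential already involves a degree-dependent sign. Passing to $H^{0}$ gives the desired isomorphism in $\mathcal{D}^b(A)$, and naturality together with compatibility with shifts and triangles is automatic from the componentwise construction. The main obstacle is the careful bookkeeping of signs and of the naturality of $\Psi_{X,Y}$ when the isomorphism is transported from the additive category $\mathrm{proj}(A)$ to the derived category via the total complex construction; once this is in place, the uniqueness of the Serre functor identifies $\mathcal{L}\mathrm{N}$ with it.
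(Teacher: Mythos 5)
Your argument is correct and is essentially the standard proof of this result (the paper itself gives no proof, only the citation to Happel, where the same reduction to $\mathcal{K}^b(\mathrm{proj}(A))$ and the adjunction $\mathrm{Hom}_A(P,\mathrm{Hom}_A(Q,A)^{*})\cong(\mathrm{Hom}_A(Q,A)\otimes_A P)^{*}\cong\mathrm{Hom}_A(Q,P)^{*}$ is the heart of the matter). The two points you flag as bookkeeping — that $\mathcal{L}\mathrm{N}$ is computed termwise on bounded complexes of projectives, and that finite global dimension is what makes both $\mathcal{K}^b(\mathrm{proj}(A))$ and $\mathcal{K}^b(\mathrm{inj}(A))$ equivalent to $\mathcal{D}^b(A)$ so that $\mathcal{L}\mathrm{N}$ is an autoequivalence — are exactly the right ones, and your treatment of them is adequate.
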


Proposition~\ref{prop625} implies existence of a Serre functor on
$\mathcal{D}^b(\mathcal{O}_{\lambda})$. It turns out that this
functor can be described in terms of both shuffling and twisting functors.

\begin{theorem}[\cite{MS15}]\label{thm627}
Let $\lambda\in\mathfrak{h}^*_{\mathrm{dom}}$ be integral and regular.
Then the Nakayama functor on $\mathcal{O}_{\lambda}$ is
isomorphic to both $\mathrm{T}^2_{w_o}$ and $\mathrm{C}^2_{w_o}$,
in particular, the latter two functors are isomorphic.
\end{theorem}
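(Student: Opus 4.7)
The plan is to exploit uniqueness of Serre functors. By Proposition~\ref{prop625} and Corollary~\ref{cor406}, the left derived Nakayama functor $\mathbb{L}\mathrm{N}$ is a Serre functor on $\mathcal{D}^b(\mathcal{O}_\lambda)$, and any two Serre functors on a triangulated category with finite-dimensional $\mathrm{Hom}$-spaces are naturally isomorphic. The functors $\mathcal{L}\mathrm{T}_{w_o}$ and $\mathcal{L}\mathrm{C}_{w_o}$ are auto-equivalences by Propositions~\ref{prop604}\eqref{prop604.5} and~\ref{prop508}\eqref{prop508.5} (extended to $w_o$ along a reduced expression via the braid relations), so both $\mathcal{L}\mathrm{T}_{w_o}^2$ and $\mathcal{L}\mathrm{C}_{w_o}^2$ are also auto-equivalences. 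It therefore suffices to show each one is a Serre functor; the underived statement in the theorem then follows upon passing to $H_0$, since all functors in question are right exact.

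For the twisting case, the main input is Arkhipov's identity $\mathrm{T}_{w_o}\Delta(\mu)\cong\nabla(w_o\cdot\mu)$, which I would establish by induction along a reduced expression for $w_o$ using Proposition~\ref{prop603}\eqref{prop603.1} together with the Verma-vanishing in Proposition~\ref{prop604}\eqref{prop604.1}. Exactness of $\mathrm{T}_{w_o}$ on $\mathcal{F}(\Delta)$ then shows that $\mathrm{T}_{w_o}P(\mu)$ admits a $\nabla$-filtration whose multiplicities, by BGG reciprocity (Theorem~\ref{thm405}), coincide with those of the indecomposable tilting module $T(w_o\cdot\mu)$. Because $\mathcal{L}\mathrm{T}_{w_o}$ is an auto-equivalence sending the ext-self-orthogonal projective $P(\mu)$ to an ext-self-orthogonal object, this object must also lie in $\mathcal{F}(\Delta)$ by Proposition~\ref{cor436} applied to the inverse, so Theorem~\ref{thm406} together with indecomposability forces $\mathrm{T}_{w_o}P(\mu)\cong T(w_o\cdot\mu)$. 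Applying $\mathrm{T}_{w_o}$ once more and using $\mathrm{G}_s\cong\star\circ\mathrm{T}_s\circ\star$ from Proposition~\ref{prop601}\eqref{prop601.3} together with the self-duality of tilting modules under $\star$, the dual computation yields $\mathrm{T}_{w_o}T(w_o\cdot\mu)\cong I(\mu)$. Thus $\mathcal{L}\mathrm{T}_{w_o}^2$ sends $P(\mu)\mapsto I(\mu)$ functorially, matching $\mathbb{L}\mathrm{N}$ on the generating subcategory of projectives, and hence on all of $\mathcal{D}^b(\mathcal{O}_\lambda)$.

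For $\mathcal{L}\mathrm{C}_{w_o}^2$ the efficient route is via the Harish-Chandra bimodule realization of Subsection~\ref{s6.3}: under the equivalence $\mathcal{O}_\lambda\simeq{}_{\,\lambda}^{\infty}\mathcal{H}_{\lambda}^{1}$ of Theorem~\ref{thm507}, shuffling functors are incarnated as right projective functors on bimodules, whereas twisting functors act on the left; the two families therefore commute. Reversing the roles of left and right in the arguments of the previous paragraph, with Ringel self-duality (Theorem~\ref{thm408}) providing the symmetric interplay between projectives and injectives, shows that $\mathcal{L}\mathrm{C}_{w_o}^2$ is likewise a Serre functor, hence isomorphic to $\mathcal{L}\mathrm{T}_{w_o}^2\cong\mathbb{L}\mathrm{N}$ by uniqueness.

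The main obstacle is the functorial identification $\mathrm{T}_{w_o}P(\mu)\cong T(w_o\cdot\mu)$: BGG-reciprocity multiplicities fix only the underlying module up to isomorphism, and the indexing $\mu\mapsto w_o\cdot\mu$ has to be pinned down (for instance by tracking the distinguished $\Delta$-submodule $\Delta(\mu)\subset P(\mu)$ through the twist). Upgrading this to a natural isomorphism of functors requires a careful comparison of endomorphism algebras, which at bottom is the statement that $\mathrm{T}_{w_o}$ realizes Ringel self-duality; making this precise and compatible with the analogous identification on the shuffling side is the technically most demanding step of the proof.
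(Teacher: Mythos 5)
The survey does not actually prove Theorem~\ref{thm627}; it quotes it from \cite{MS15}, so your argument has to stand on its own. Your object-level computations are correct and are indeed the standard ones: $\mathrm{T}_{w_o}$ sends $\Delta(x\cdot\lambda)$ to $\nabla(w_ox\cdot\lambda)$, hence sends $P(\mu)$ to $T(w_o\cdot\mu)$ and $T(w_o\cdot\mu)$ to $I(\mu)$ (this is Soergel's proof of Ringel self-duality, cf.\ Exercise~\ref{exs2}\eqref{exs2.6} for the shuffling analogue). The problem is the step you postpone to your final paragraph, which is not a technicality but the entire content of the theorem. Knowing that the auto-equivalence $\mathcal{L}\mathrm{T}_{w_o}^2$ satisfies $\mathcal{L}\mathrm{T}_{w_o}^2\,P(\mu)\cong \mathbb{L}\mathrm{N}\,P(\mu)$ for every indecomposable projective does not give $\mathcal{L}\mathrm{T}_{w_o}^2\cong\mathbb{L}\mathrm{N}$: two functors can agree on all objects of a generating subcategory without being isomorphic (an outer automorphism of $B_\lambda$ fixing the isomorphism classes of the $P(\mu)$ already produces a counterexample to the implication you need). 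For the same reason you cannot invoke uniqueness of Serre functors, because you never verify that $\mathcal{L}\mathrm{T}_{w_o}^2$ \emph{is} one, i.e.\ you never produce the bifunctorial isomorphisms $\mathrm{Hom}(X,\mathcal{L}\mathrm{T}_{w_o}^2Y)\cong\mathrm{Hom}(Y,X)^*$; matching objects is strictly weaker.

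The way to close the gap with the machinery already in the survey is to avoid object-by-object matching altogether: Theorem~\ref{thm614} with $\mathfrak{p}=\mathfrak{g}$ identifies $\mathrm{T}_{w_o}^2$ with the coapproximation functor $\mathrm{P}_{e}\cong B_\lambda e\otimes_{eB_\lambda e}eB_\lambda\otimes_{B_\lambda}{}_-$, where $e$ is the idempotent of the projective-injective $P(w_o\cdot\lambda)$; the Struktursatz (Theorem~\ref{thm702}) together with the fact that $eB_\lambda e\cong C_\lambda$ is a symmetric algebra (Theorem~\ref{thm703}) then identifies the bimodule $B_\lambda e\otimes_{eB_\lambda e}eB_\lambda$ with $B_\lambda^*$, i.e.\ with the bimodule defining $\mathrm{N}$. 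That yields a genuine isomorphism of functors, and a parallel argument (transported through Theorem~\ref{thm507}) handles $\mathrm{C}_{w_o}^2$. On that last point, note also that you have the dictionary of Subsection~\ref{s6.3} reversed: it is the \emph{twisting} functors that become the right-handed functors $\mathrm{C}_s^r$ on Harish-Chandra bimodules, while the shuffling functors are built from the left projective functors. So as it stands the proposal proves the theorem on the level of objects (equivalently, in the Grothendieck group) but not the asserted isomorphism of functors.
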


The Serre functor on $\mathcal{O}_{\lambda}$ is described geometrically
in \cite{BBM}. A description of the Serre functor in terms of 
Harish-Chandra bimodules can be found in \cite{MM1}.
As we have seen, the braid group acts on 
$\mathcal{D}^b(\mathcal{O}_{\lambda})$ both via derived twisting
and shuffling functors. The Serre functor commutes with all
auto-equivalences, hence with all functors from these actions. Note
that the Serre functor is given by $\mathrm{T}^2_{w_o}$
(or $\mathrm{C}^2_{w_o}$) and the element $w_o^2$ generates the
center of the braid group.

\section{Category $\mathcal{O}$: grading and combinatorics}\label{s7}

\subsection{Double centralizer property}\label{s7.1}

We start with the following observation:

\begin{proposition}\label{prop701}
For any integral $\mu\in\mathfrak{h}^*$ the module $P(\mu)$ is
injective, in particular, tilting, if and only if 
$\mu$ is antidominant.
\end{proposition}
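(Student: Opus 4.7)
The plan is to reduce everything to the self-duality of tilting modules under~$\star$, combined with the basic identity $P(\mu)^\star\cong I(\mu)$. First, ``injective $\Rightarrow$ tilting'' is immediate: if $P(\mu)$ is injective, indecomposability yields $P(\mu)\cong I(\nu_0)$ for some~$\nu_0$; applying~$\star$ to Theorem~\ref{thm405}\eqref{thm405.1} shows every indecomposable injective lies in $\mathcal{F}(\nabla)$, so $P(\mu)\in\mathcal{F}(\Delta)\cap\mathcal{F}(\nabla)$, making $P(\mu)$ tilting by Theorem~\ref{thm406}. The key observation for the equivalence is that every tilting module is self-dual: $T(\nu)^\star\cong T(\nu)$, since~$\star$ exchanges $\mathcal{F}(\Delta)$ with $\mathcal{F}(\nabla)$ and preserves characters and highest weights, and uniqueness in Theorem~\ref{thm406} then forces the isomorphism.

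For~$(\Rightarrow)$, if $P(\mu)$ is injective then $P(\mu)=T(\nu_0)$ is self-dual, so $P(\mu)=P(\mu)^\star=I(\mu)$ and $\operatorname{soc}(P(\mu))=L(\mu)$ is simple. On the other hand, the standard filtration of $P(\mu)$ contains some~$\Delta(\nu)$ as a submodule, whose socle is $L(\mu^a)$ for $\mu^a$ the antidominant weight in $W\cdot\mu$, by Theorem~\ref{thm402}\eqref{thm402.3},\eqref{thm402.8},\eqref{thm402.4} (simple socle, one-dimensionality and injectivity of nonzero Verma homomorphisms, and the chain condition that guarantees $L(\mu^a)=M(\mu^a)\hookrightarrow\Delta(\nu)$). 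Hence $L(\mu^a)\hookrightarrow L(\mu)$, forcing $\mu=\mu^a$.

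For~$(\Leftarrow)$, assume $\mu$ is antidominant and let~$\lambda$ be the dominant weight in $W\cdot\mu$. I would invoke Ringel self-duality (Theorem~\ref{thm408}): the isomorphism $\mathrm{End}_{\mathfrak{g}}(T_\lambda)\cong B_\lambda$ identifies the indecomposable tilting modules of $\mathcal{O}_\lambda$ with the indecomposable projective $B_\lambda$-modules, and the tilting $T(\lambda)$ of dominant highest weight pairs with the projective~$P(\mu)$ whose socle is the antidominant simple, giving $P(\mu)\cong T(\lambda)$. Thus $P(\mu)$ is tilting, hence self-dual, hence equal to~$I(\mu)$, hence injective.

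The main obstacle is cleanly extracting the specific identification $T(\lambda)\cong P(\mu)$ from Theorem~\ref{thm408}, which in the text is cited as a black-box from Soergel. A self-contained alternative for~$(\Leftarrow)$ would show directly that $\dim\operatorname{Hom}(L(\mu),P(\mu))=1$---so $\operatorname{soc}(P(\mu))=L(\mu)$ is simple---via a long-exact-sequence argument on the standard filtration of $P(\mu)$, using $L(\mu)=\Delta(\mu)$ (by antidominance) together with Theorem~\ref{thm402}\eqref{thm402.8}; the injective-hull property then provides $P(\mu)\hookrightarrow I(\mu)$, which is an isomorphism by the character equality $[P(\mu)]=[I(\mu)]$ coming from $P(\mu)^\star=I(\mu)$.
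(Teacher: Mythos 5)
Your direction $(\Rightarrow)$ is correct and close in spirit to the paper's: the socle of any module with a Verma flag consists of antidominant simples, and self-duality of tilting modules pins the socle of an injective $P(\mu)$ down to $L(\mu)$, forcing $\mu$ to be antidominant. The converse is where your argument diverges from the paper and where it has genuine gaps. The paper proves $(\Leftarrow)$ by translation from the most singular block: $W_{-\rho}=W$, so $\mathcal{O}_{-\rho}$ is semisimple and its unique indecomposable projective $\Delta(-\rho)=L(-\rho)$ is also injective; since projective functors preserve both projectives and injectives (Proposition~\ref{prop503}\eqref{prop503.6}) and, by Theorem~\ref{thm502} applied with $\lambda=-\rho$, every $P(\mu)$ with $\mu$ antidominant is of the form $\theta_{-\rho,\mu}\,\Delta(-\rho)$, injectivity follows at once. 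This is more elementary than invoking Ringel self-duality, and it avoids a circularity worry: in Soergel's work the identification $T(\lambda)\cong P(w_o\cdot\lambda)$ is essentially established together with (not prior to) the injectivity of the antidominant projective.

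There are two concrete gaps in your $(\Leftarrow)$. First, as you yourself flag, Theorem~\ref{thm408} is only an algebra isomorphism $\mathrm{End}_{\mathfrak{g}}(T_{\lambda})\cong B_{\lambda}$; by itself it gives a bijection between indecomposable tiltings and indecomposable projectives as modules over two abstractly isomorphic algebras, not the $\mathfrak{g}$-module isomorphism $T(\lambda)\cong P(\mu)$ you need. To extract that, you must know how the self-duality permutes the labels (it is order-reversing) and that the induced equivalence is realized inside $\mathcal{O}_{\lambda}$ by a functor matching $T(\lambda)$ with $P(\mu)$; none of this is supplied. Second, your fallback computation does not close the gap: for $\mu$ antidominant, $\Delta(\mu)=L(\mu)$ is the socle of \emph{every} Verma module in the block, so $\dim\mathrm{Hom}(\Delta(\mu),\Delta(\nu))=1$ for every $\nu\in W\cdot\mu$ by Theorem~\ref{thm402}\eqref{thm402.8}, and the long exact sequence over a standard filtration of $P(\mu)$ only bounds $\dim\mathrm{Hom}(L(\mu),P(\mu))$ by the number of Verma subquotients (which is $|W\cdot\mu|$, not $1$). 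Thus simplicity of $\mathrm{soc}\,P(\mu)$ --- the crux of the converse --- is not established by the argument as written. The final step is fine once that is known: an embedding $P(\mu)\hookrightarrow I(\mu)$ together with $[P(\mu)]=[P(\mu)^{\star}]=[I(\mu)]$ does force an isomorphism.
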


\begin{proof}[Idea of the proof.]
The integral weight $-\rho$ satisfies $W_{-\rho}=W$. Hence the block
$\mathcal{O}_{-\rho}$ contains exactly one simple module $L(-\rho)$,
which is a dominant Verma module, thus projective. This implies
that $\mathcal{O}_{-\rho}$ is semi-simple and hence $L(-\rho)$ is
also injective. From Proposition~\ref{prop503}\eqref{prop503.6}
it follows that the module $\theta\,\Delta(-\rho)$ is both projective
and injective for any projective functor $\theta$, which shows that 
$P(\mu)$ is injective for all antidominant $\mu$.

On the other hand, any projective module has a Verma flag and
every Verma has simple socle, which is given by an antidominant weight.
\end{proof}

Now let $\lambda\in \mathfrak{h}^*_{\mathrm{dom}}$ be integral
and dominant. Then, by the BGG reciprocity, we 
have $\Delta(\lambda)\hookrightarrow P(w_o\cdot\lambda)$ and,
since $P(w_o\cdot\lambda)$ is a tilting module,
the cokernel of this embedding has a Verma flag. In particular,
the injective envelope of this cokernel is in 
$\mathrm{add}(P(w_o\cdot\lambda))$. This gives us an
exact sequence of the form 
\begin{displaymath}
0\to  \Delta(\lambda)\to 
P(w_o\cdot\lambda)\to X 
\end{displaymath}
with $X\in \mathrm{add}(P(w_o\cdot\lambda))$. 
Applying $\theta_w$, $w\in W$,
and summing up over all $w$ we obtain the exact sequence
\begin{equation}\label{eq791}
0\to P_{\lambda}\to X_0\to X_1,
\end{equation}
with $X_0,X_1\in \mathrm{add}(P(w_o\cdot\lambda))$. Denote by $C_{\lambda}$ 
the endomorphism algebra of $P(w_o\cdot\lambda)$. Using the standard
results on quasi-Frobenius ring (see \cite{Ta,KSX}), we obtain
the following theorem, known as Soergel's {\em Struktursatz}:
\index{Soergel's Struktursatz} 

\begin{theorem}[\cite{So0}]\label{thm702}
Let $\lambda\in \mathfrak{h}^*_{\mathrm{dom}}$ be integral and dominant.
\begin{enumerate}[$($a$)$]
\item\label{thm702.1} The functor 
\begin{displaymath}
\mathbb{V}=\mathbb{V}_{\lambda}:=
\mathrm{Hom}_{\mathfrak{g}}(P(w_o\cdot\lambda),{}_-):
\mathcal{O}_{\lambda}\to C_{\lambda}^{\mathrm{op}}\text{-}\mathrm{mod}
\end{displaymath}
is full and faithful on projective modules.
\item\label{thm702.2} The $B_{\lambda}$-module 
$\mathrm{Hom}_{\mathfrak{g}}(P_{\lambda},P(w_o\cdot\lambda))$
has the {\em double centralizer property}, that is the action of 
\index{double centralizer property}
$B_{\lambda}$ on  
$\mathrm{Hom}_{\mathfrak{g}}(P_{\lambda},P(w_o\cdot\lambda))$
coincides with the centralizer of the action of the endomorphism
algebra of this module.
\end{enumerate}
\end{theorem}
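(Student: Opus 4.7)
The plan is to exploit the exact sequence \eqref{eq791}, namely $0 \to P_\lambda \to X_0 \to X_1$ with $X_0, X_1 \in \mathrm{add}(P(w_o\cdot\lambda))$, together with the fact established in Proposition~\ref{prop701} that $P(w_o\cdot\lambda)$ is projective-injective. Both parts of the Struktursatz then follow from the classical theory of double centralizers for Artin algebras as developed in \cite{Ta} and surveyed in \cite{KSX}; the Lie-theoretic content is entirely packaged into \eqref{eq791} and Proposition~\ref{prop701}.

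For part (b), I would first translate the setting via the standard Morita equivalence $\mathrm{Hom}_{\mathfrak{g}}(P_\lambda, {}_-) : \mathcal{O}_\lambda \to B_\lambda^{\mathrm{op}}\text{-}\mathrm{mod}$. Writing $M := \mathrm{Hom}_{\mathfrak{g}}(P_\lambda, P(w_o\cdot\lambda))$, the exact sequence \eqref{eq791} is transported to an exact sequence $0 \to B_\lambda \to M_0 \to M_1$ in $B_\lambda^{\mathrm{op}}\text{-}\mathrm{mod}$, with $M_0, M_1 \in \mathrm{add}(M)$. In particular, $M$ is faithful over $B_\lambda$, since the regular representation injects into a direct sum of copies of $M$. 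The classical Tachikawa-type criterion now asserts that a faithful module $M$ admitting such a two-step copresentation of the regular module by modules in $\mathrm{add}(M)$ has the double centralizer property, which is exactly the claim of (b).

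For part (a), I would deduce it directly from (b). By additivity, it suffices to prove that $\mathbb{V}$ is full and faithful on $P_\lambda$ itself, since every projective in $\mathcal{O}_\lambda$ is a direct summand of a finite direct sum of copies of $P_\lambda$ and both sides of the comparison map respect passage to direct summands via idempotents of $B_\lambda$. The assertion
\begin{displaymath}
\mathrm{End}_{\mathfrak{g}}(P_\lambda) \longrightarrow \mathrm{End}_{C_\lambda^{\mathrm{op}}}\bigl(\mathbb{V}(P_\lambda)\bigr)
\end{displaymath}
is, after unwinding the standard identifications $\mathrm{End}_{\mathfrak{g}}(P_\lambda) = B_\lambda^{\mathrm{op}}$ and $\mathbb{V}(P_\lambda) \cong M$ as $C_\lambda^{\mathrm{op}}$-modules, precisely the double centralizer statement proved in (b), and the map in question is the structural one.

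The main obstacle, then, is not the ring-theoretic formalism but the derivation of the exact sequence \eqref{eq791}. This is where the Lie theory enters: one uses Proposition~\ref{prop701} to know that $P(w_o\cdot\lambda)$ is both projective and injective, BGG reciprocity (Theorem~\ref{thm405}) to ensure that the cokernel of $\Delta(\lambda)\hookrightarrow P(w_o\cdot\lambda)$ has a Verma flag and therefore embeds into a module in $\mathrm{add}(P(w_o\cdot\lambda))$, and finally the identity $\theta_w\,\Delta(\lambda)=P(w\cdot\lambda)$ from Theorem~\ref{thm502} together with Proposition~\ref{prop503} to propagate the resulting short sequence from $\Delta(\lambda)$ to all direct summands of $P_\lambda$. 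Once \eqref{eq791} is in hand, both parts of the Struktursatz follow from the double-centralizer machinery above.
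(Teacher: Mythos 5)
Your proposal follows the paper's own route exactly: the paper derives the exact sequence \eqref{eq791} from Proposition~\ref{prop701}, BGG reciprocity and the action of projective functors, and then simply invokes the standard double-centralizer results for quasi-Frobenius/dominant-dimension situations from \cite{Ta,KSX} (the details being left to Exercise~\ref{exs3}). The only bookkeeping point worth noting is that $\mathbb{V}(P_\lambda)=\mathrm{Hom}_{\mathfrak{g}}(P(w_o\cdot\lambda),P_\lambda)$ corresponds to $eB_\lambda$ while $M=\mathrm{Hom}_{\mathfrak{g}}(P_\lambda,P(w_o\cdot\lambda))$ corresponds to $B_\lambda e$; since $\star$ fixes $P(w_o\cdot\lambda)$ and $B_\lambda\cong B_\lambda^{\mathrm{op}}$ preserving idempotent classes, the two double-centralizer statements are equivalent and your identification goes through.
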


The functor $\mathbb{V}$ is called {\em Soergel's combinatorial}
functor.
\index{Soergel's combinatorial functor}

\subsection{Endomorphisms of the antidominant projective}\label{s7.2}

Theorem~\ref{thm702} reduces projective modules in the category 
$\mathcal{O}_{\lambda}$ to certain (not projective) modules over
the endomorphism algebra $C_{\lambda}$ of the unique indecomposable 
projective-injective module $P(w_o\cdot\lambda)$ in $\mathcal{O}_{\lambda}$.
It is very natural to ask what the algebra $C_{\lambda}$ is. 
The answer is given by
the following theorem. Consider the polynomial algebra 
$\mathbb{C}[\mathfrak{h}]$. Recall that $W\cong\mathbb{S}_n$ acts on 
$\mathfrak{h}$, which also induces an action of $W$ on 
$\mathbb{C}[\mathfrak{h}]$. We regard $\mathbb{C}[\mathfrak{h}]$ 
as a (positively) {\em graded} algebra 
\index{positively graded algebra}
by putting $\mathfrak{h}$ in degree $2$. We have 
the (graded) subalgebra  $\mathbb{C}[\mathfrak{h}]^{W}$ of $W$-invariants 
in $\mathbb{C}[\mathfrak{h}]$. Denote by $(\mathbb{C}[\mathfrak{h}]^{W}_+)$
the ideal of $\mathbb{C}[\mathfrak{h}]$ generated by all homogeneous
invariants of positive degree. Then we have the corresponding quotient
$\mathbb{C}[\mathfrak{h}]/(\mathbb{C}[\mathfrak{h}]^{W}_+)$,
called the {\em coinvariant} algebra. The coinvariant algebra 
\index{coinvariant algebra}
is a finite dimensional associative and commutative graded algebra 
of dimension $|W|$. The ``strange'' grading on $\mathbb{C}[\mathfrak{h}]$
(i.e. $\mathfrak{h}$ being of degree two) is motivated by the realization 
of the coinvariant algebra as the cohomology algebra of a flag variety, 
see \cite{Hi}. If $\lambda$ is singular and $W_{\lambda}$ is the 
dot-stabilizer of $\lambda$, then we have the (graded) subalgebra 
$(\mathbb{C}[\mathfrak{h}]/(\mathbb{C}[\mathfrak{h}]^{W}_+))^{W_{\lambda}}$ 
of $W_{\lambda}$-invariants in 
$\mathbb{C}[\mathfrak{h}]/(\mathbb{C}[\mathfrak{h}]^{W}_+)$. 
We have the following theorem, know as Soergel's {\em Endomorphismensatz}:
\index{Soergel's Endomorphismensatz}

\begin{theorem}[\cite{So0}]\label{thm703}
Let $\lambda\in \mathfrak{h}^*_{\mathrm{dom}}$ be integral and dominant.
\begin{enumerate}[$($a$)$]
\item\label{thm703.1} The center $Z(\mathfrak{g})$ of $U(\mathfrak{g})$
surjects onto $C_{\lambda}$, in particular, $C_{\lambda}$ is commutative.
\item\label{thm703.2} The algebras $C_{\lambda}$ and 
$(\mathbb{C}[\mathfrak{h}]/(\mathbb{C}[\mathfrak{h}]^{W}_+))^{W_{\lambda}}$
are isomorphic.
\end{enumerate}
\end{theorem}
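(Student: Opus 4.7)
The plan is to construct a natural ring homomorphism $\varphi : Z(\mathfrak{g}) \to C_\lambda$ by letting the center act on $P(w_o\cdot\lambda)$, and then to identify its image with the help of the Harish-Chandra isomorphism $Z(\mathfrak{g})\cong\mathbb{C}[\mathfrak{h}^*]^{(W,\cdot)}$. Since $Z(\mathfrak{g})$ is commutative, the image automatically lies in the center of $C_\lambda$; once surjectivity is established, (a) follows and (b) becomes the precise description of $\varphi$.

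First I would compute $\dim_{\mathbb{C}} C_\lambda$ directly. Because $P(w_o\cdot\lambda)$ is projective-injective by Proposition~\ref{prop701} and hence tilting, it carries both a standard and a costandard filtration. Using BGG reciprocity (Theorem~\ref{thm405}) together with the fact that $L(w_o\cdot\lambda)$ appears with multiplicity one as the simple socle of every Verma in $\mathcal{O}_\lambda$ (Theorem~\ref{thm402}), one obtains $\dim_{\mathbb{C}} C_\lambda = |W/W_\lambda|$. On the coinvariant side, Chevalley's theorem tells us that $\mathbb{C}[\mathfrak{h}]$ is free of rank $|W|$ over $\mathbb{C}[\mathfrak{h}]^W$ and realizes the regular $W$-representation, so $\dim(\mathbb{C}[\mathfrak{h}]/(\mathbb{C}[\mathfrak{h}]^W_+))^{W_\lambda} = |W/W_\lambda|$. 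Thus the two candidate algebras already have the same dimension, and the task reduces to producing an explicit isomorphism.

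For the identification I would reduce to the regular case $\lambda = 0$ via translation functors, using that $\theta_{0,\lambda}$ sends the big projective-injective in $\mathcal{O}_0$ to that in $\mathcal{O}_\lambda$ (Proposition~\ref{prop503}), inducing a surjection $C_0 \tto C_\lambda$ compatible with $\varphi$ and matching on the coinvariant side the passage to $W_\lambda$-invariants. In the regular case one has $P(w_o\cdot 0)\cong\theta_{w_o}\Delta(0)$ by Theorem~\ref{thm502}, and adjunction gives
\[
C_0 \cong \mathrm{Hom}_{\mathfrak{g}}(\Delta(0),\theta_{w_o}^{2}\Delta(0))^{\mathrm{op}}.
\]
Iterated application of Proposition~\ref{prop505} lets one resolve $\theta_{w_o}^2$ along a reduced expression into compositions $\theta_{s_i}\theta_{s_i}$, each of which categorifies the bimodule $\mathbb{C}[\mathfrak{h}]\otimes_{\mathbb{C}[\mathfrak{h}]^{s_i}}\mathbb{C}[\mathfrak{h}]$. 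An induction on $\mathfrak{l}(w_o)$ then identifies $C_0$ with the coinvariant algebra and $\varphi$ with the natural composite $\mathbb{C}[\mathfrak{h}]^W\hookrightarrow\mathbb{C}[\mathfrak{h}]\tto\mathbb{C}[\mathfrak{h}]/(\mathbb{C}[\mathfrak{h}]^W_+)$ under the Harish-Chandra identification $Z(\mathfrak{g})\cong\mathbb{C}[\mathfrak{h}]^W$.

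The main obstacle is rigorously matching the abstract $\mathfrak{g}$-module-theoretic algebra $\mathrm{End}_{\mathfrak{g}}(P(w_o\cdot\lambda))$ with the combinatorial algebra $(\mathbb{C}[\mathfrak{h}]/(\mathbb{C}[\mathfrak{h}]^W_+))^{W_\lambda}$ in a way that is intertwined with the $Z(\mathfrak{g})$-action, as opposed to only abstractly isomorphic. The cleanest route is via a deformation of category $\mathcal{O}$: replace the base field by $S(\mathfrak{h})$ and work with deformed Vermas $\tilde{\Delta}(\mu)$ in the thick category $\tilde{\mathcal{O}}$. The deformed endomorphism algebra $\widetilde{C_\lambda}$ of the deformed big projective is then a free $S(\mathfrak{h})$-module, the deformed center map $S(\mathfrak{h})^W\to\widetilde{C_\lambda}$ can be controlled generically by semisimplicity (since generically deformed blocks split into regular pieces), and specialization at the maximal ideal of $\lambda$ recovers $\varphi$, yields its surjectivity, and pins down the image as $(\mathbb{C}[\mathfrak{h}]/(\mathbb{C}[\mathfrak{h}]^W_+))^{W_\lambda}$.
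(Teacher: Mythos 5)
The paper itself offers no proof of Theorem~\ref{thm703}: it is stated with a citation to Soergel's Endomorphismensatz, so there is nothing internal to compare against. Measured against Soergel's actual argument, your outline has the right skeleton: the dimension count $\dim C_{\lambda}=|W/W_{\lambda}|$ via BGG reciprocity, the algebra map $Z(\mathfrak{g})\to C_{\lambda}$, and a deformation of $\mathcal{O}$ over $S(\mathfrak{h})$ to prove surjectivity and to identify the image through the Harish-Chandra isomorphism. The dimension count does, however, contain a small gap: multiplicity one of $L(w_o\cdot\lambda)$ in every Verma module of the block does not follow from ``simple socle'' (Theorem~\ref{thm402}\eqref{thm402.3}) alone; one needs an extra argument, e.g.\ that translation onto the most singular wall is exact, sends every $\Delta(\mu)$ to the simple Verma $\Delta(-\rho)=L(-\rho)$, and annihilates all simples of $\mathcal{O}_{\lambda}$ except the antidominant one, forcing $[\Delta(\mu):L(w_o\cdot\lambda)]=1$.

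The more serious problem is a circularity in your treatment of the regular case. You propose to compute $C_0\cong\mathrm{Hom}(\Delta(0),\theta_{w_o}^{2}\Delta(0))$ by resolving $\theta_{w_o}^2$ into wall-crossings, ``each of which categorifies the bimodule $\mathbb{C}[\mathfrak{h}]\otimes_{\mathbb{C}[\mathfrak{h}]^{s_i}}\mathbb{C}[\mathfrak{h}]$''. That identification is Theorem~\ref{thm705}/Proposition~\ref{prop801}, which in the standard development is \emph{deduced from} the Endomorphismensatz and Struktursatz, not available beforehand. To break the circle you must establish the corresponding statement independently, which is precisely what the deformed category in your last paragraph provides: there the effect of deformed translation through a wall on deformed Verma modules is computed directly by an $\mathfrak{sl}_2$-argument, and the endomorphism ring of the deformed antidominant projective is obtained by induction along its Verma flag; freeness over $S(\mathfrak{h})$ plus a Nakayama-type argument then gives surjectivity after specialization. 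So the middle paragraph should be absorbed into the deformation framework rather than run in parallel with it. Finally, the reduction from the regular to the singular case needs care: translation onto the wall sends $P(w_o\cdot 0)$ to $P(w_o\cdot\lambda)^{\oplus|W_{\lambda}|}$, not to $P(w_o\cdot\lambda)$, so the claimed surjection $C_0\tto C_{\lambda}$ is not immediate; Soergel instead realizes $C_{\lambda}$ as the $W_{\lambda}$-invariants of $C_0$. With these repairs your plan is sound and is essentially the known proof.
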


\begin{corollary}\label{cor704}
Let $\lambda\in \mathfrak{h}^*_{\mathrm{dom}}$ be integral and dominant
and $\mathrm{Id}_{\lambda}$ the identity functor on  $\mathcal{O}_{\lambda}$.
Then the evaluation map
\begin{displaymath}
\begin{array}{rccc}
\mathrm{ev}:&\mathrm{End}(\mathrm{Id}_{\lambda})&\to &
\mathrm{End}_{\mathfrak{g}}(P(w_o\cdot\lambda))\\ 
&\varphi&\mapsto &\varphi_{P(w_o\cdot\lambda)}\\ 
\end{array}
\end{displaymath}
is an isomorphism of algebras. In particular, the center of $B_{\lambda}$
is isomorphic to $C_{\lambda}$.
\end{corollary}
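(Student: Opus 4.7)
The strategy combines Soergel's Struktursatz (Theorem~\ref{thm702}) with the commutativity of $C_{\lambda}$ from Theorem~\ref{thm703}(a). First note that $\mathrm{ev}$ is automatically an algebra homomorphism, since evaluation of natural transformations at a fixed object is compatible with composition; the content of the corollary is bijectivity. It is also convenient to use the standard identification $\mathrm{End}(\mathrm{Id}_{\lambda})\cong Z(B_{\lambda})$ via $\eta\mapsto\eta_{B_{\lambda}}(1)$, so that $\mathrm{ev}$ becomes the map $Z(B_{\lambda})\to C_{\lambda}$ given by action on $P(w_{o}\cdot\lambda)$.

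For injectivity, suppose $\eta\in\mathrm{End}(\mathrm{Id}_{\lambda})$ satisfies $\eta_{P(w_{o}\cdot\lambda)}=0$. For every projective $P\in\mathcal{O}_{\lambda}$ and every $f\in\mathbb{V}(P)=\mathrm{Hom}_{\mathfrak{g}}(P(w_{o}\cdot\lambda),P)$, naturality gives $\eta_{P}\circ f = f\circ\eta_{P(w_{o}\cdot\lambda)}=0$. Hence $\mathbb{V}(\eta_{P})=0$, and by faithfulness of $\mathbb{V}$ on projectives (Theorem~\ref{thm702}(a)) we get $\eta_{P}=0$ for every projective $P$. Since every object of $\mathcal{O}_{\lambda}$ admits a projective cover and $\eta$ is natural, we conclude $\eta=0$.

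For surjectivity, fix $c\in C_{\lambda}$ and for each projective $P$ define $\rho_{c}^{P}\colon\mathbb{V}(P)\to\mathbb{V}(P)$ by $f\mapsto f\circ c$. The left $C_{\lambda}^{\mathrm{op}}$-action on $\mathbb{V}(P)$ sends $(c',f)$ to $f\circ c'$, so commutativity of $C_{\lambda}$ (Theorem~\ref{thm703}(a)) yields $\rho_{c}^{P}(f\circ c')=f\circ c'\circ c = f\circ c\circ c' = \rho_{c}^{P}(f)\circ c'$, proving that $\rho_{c}^{P}$ is $C_{\lambda}^{\mathrm{op}}$-linear. By the fullness half of Theorem~\ref{thm702}(a) there is a unique $\eta_{P}\in\mathrm{End}_{\mathfrak{g}}(P)$ with $\mathbb{V}(\eta_{P})=\rho_{c}^{P}$, i.e., $\eta_{P}\circ f=f\circ c$ for every $f$. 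For $g\colon P\to Q$ between projectives, both $g\circ\eta_{P}$ and $\eta_{Q}\circ g$ send $f\in\mathbb{V}(P)$ to $g\circ f\circ c$ under $\mathbb{V}$, so they coincide by faithfulness; this gives naturality on projectives. Extending via projective presentations yields a natural $\eta\in\mathrm{End}(\mathrm{Id}_{\lambda})$, and setting $P=P(w_{o}\cdot\lambda)$, $f=\mathrm{id}$ shows $\eta_{P(w_{o}\cdot\lambda)}=c$.

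The main obstacle is the surjectivity step, which requires both halves of Theorem~\ref{thm702}(a) and genuinely depends on the nontrivial commutativity of $C_{\lambda}$ (Theorem~\ref{thm703}(a)); without commutativity, $\rho_{c}^{P}$ would not be $C_{\lambda}^{\mathrm{op}}$-linear and therefore would not lie in the image of $\mathbb{V}$. The final assertion $Z(B_{\lambda})\cong C_{\lambda}$ is then immediate from $\mathrm{End}(\mathrm{Id}_{\lambda})\cong Z(B_{\lambda})$.
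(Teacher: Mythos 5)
Your proof is correct and follows the route the paper intends (the statement is left to the reader as Exercise~\ref{exs3}\eqref{exs3.2}, to be deduced from Soergel's Struktursatz, Theorem~\ref{thm702}, together with Theorem~\ref{thm703}): injectivity from faithfulness of $\mathbb{V}$ on projectives plus projective covers, surjectivity from fullness plus the commutativity of $C_{\lambda}$, which is exactly what makes precomposition with $c$ a morphism in $C_{\lambda}^{\mathrm{op}}\text{-}\mathrm{mod}$. You invoke the full-and-faithful formulation of Theorem~\ref{thm702}\eqref{thm702.1} rather than the double centralizer formulation of part \eqref{thm702.2} named in the exercise, but these are equivalent packagings of the same fact, so the argument is essentially the standard one.
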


\subsection{Grading on $B_{\lambda}$}\label{s7.3}

The functor $\mathbb{V}$ induces an equivalence between the additive
category of projective modules in $\mathcal{O}_{\lambda}$ and the
additive subcategory $\mathrm{add}(\mathbb{V}P_{\lambda})$
of $C_{\lambda}\text{-}\mathrm{mod}$. All projective functors are
endofunctors of the category of projective modules in $\mathcal{O}_{\lambda}$.
Hence it is natural to ask what are the images of projective functors
under $\mathbb{V}$. It turns out that these images have a very nice
description. Let $\lambda\in \mathfrak{h}^*_{\mathrm{dom}}$ be 
integral and regular and $\mu\in \mathfrak{h}^*_{\mathrm{dom}}$ 
be integral and such that $W_{\mu}=\{e,s\}$ for some 
simple reflection $s\in S$. Then $C_{\lambda}$ is the whole coinvariant
algebra for $W$ and $C_{\mu}$ is the subalgebra of $s$-invariants 
in $C_{\lambda}$. Recall from \eqref{eq591} that the projective 
endofunctor $\theta_{s}$ of $\mathcal{O}_{\lambda}$ is the composition of
$\theta_{\lambda,\mu}$ and $\theta_{\mu,s\cdot\lambda}$

\begin{theorem}[\cite{So0}]\label{thm705}
There are isomorphisms of functors as follows:
\begin{enumerate}[$($a$)$]
\item\label{thm705.1} 
$\mathbb{V}_{\mu}\circ \theta_{\lambda,\mu}\cong 
\mathrm{Res}^{C_{\lambda}}_{C_{\mu}}\circ \mathbb{V}_{\lambda}$;
\item\label{thm705.2} 
$\mathbb{V}_{\lambda}\circ \theta_{\mu,s\cdot\lambda}\cong \mathrm{Ind}^{C_{\lambda}}_{C_{\mu}}\circ \mathbb{V}_{\mu}$;
\item\label{thm705.3} 
$\mathbb{V}\circ \theta_{s}\cong C_{\lambda}\otimes_{C_{\mu}}\mathbb{V}{}_-$.
\end{enumerate}
\end{theorem}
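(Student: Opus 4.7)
The plan is to prove (a) directly by biadjunction, deduce (b) by an adjoint argument, and obtain (c) by composing (a) and (b) via the factorization (5.1).

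For (a), fix $M\in\mathcal{O}_\lambda$. By Proposition 5.3 and Proposition 5.4, the functor $\theta_{\mu,s\cdot\lambda}$ is biadjoint to $\theta_{\lambda,\mu}$, so adjunction gives
\[
\mathbb{V}_\mu(\theta_{\lambda,\mu}M)=\mathrm{Hom}_{\mathfrak{g}}(P(w_o\cdot\mu),\theta_{\lambda,\mu}M)\cong\mathrm{Hom}_{\mathfrak{g}}(\theta_{\mu,s\cdot\lambda}P(w_o\cdot\mu),M).
\]
The key technical step is to identify $\theta_{\mu,s\cdot\lambda}P(w_o\cdot\mu)\cong P(w_o\cdot\lambda)$. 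By Proposition 5.3(f), projective functors preserve the additive category of projective-injective modules, and by (the proof of) Proposition 7.1 each integral block contains essentially a unique indecomposable projective-injective module, so $\theta_{\mu,s\cdot\lambda}P(w_o\cdot\mu)\in\mathrm{add}(P(w_o\cdot\lambda))$. The multiplicity is pinned down to $1$ using (5.1) together with Proposition 5.5(c): the isomorphism $\theta_s\circ\theta_s\cong\theta_s\oplus\theta_s$ forces the translation-to-wall and translation-out-of-wall multiplicities on antidominant projectives to multiply to $2$, and matching with the rank of $C_\lambda$ as a free $C_\mu$-module distributes this factor so that $\theta_{\mu,s\cdot\lambda}$ contributes multiplicity $1$. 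Given this identification, the displayed chain rewrites as $\mathbb{V}_\mu(\theta_{\lambda,\mu}M)\cong\mathbb{V}_\lambda M$ as vector spaces, naturally in $M$.

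To upgrade this to a $C_\mu$-module isomorphism, I would use Corollary 7.4 to reinterpret the $C_\mu$-action on $\mathbb{V}_\mu(-)$ and the $C_\lambda$-action on $\mathbb{V}_\lambda(-)$ as the evaluations of natural endomorphisms of the identity functors on the respective blocks. By Theorem 7.3 both endomorphism algebras arise as quotients of $Z(\mathfrak{g})$, and the central action commutes with every $\mathfrak{g}$-module morphism, so the biadjunction isomorphism automatically intertwines these actions. Under the identification of $C_\mu$ with the $s$-invariants in $C_\lambda$ supplied by Theorem 7.3, the $C_\mu$-structure transported through the adjunction coincides with the restriction of the $C_\lambda$-structure on $\mathbb{V}_\lambda M$.

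Part (b) follows formally from (a) by taking left adjoints — using $\theta_{\mu,s\cdot\lambda}\dashv\theta_{\lambda,\mu}$ on the representation-theoretic side and $\mathrm{Ind}^{C_\lambda}_{C_\mu}\dashv\mathrm{Res}^{C_\lambda}_{C_\mu}$ on the algebraic side — together with full faithfulness of $\mathbb{V}$ on projectives (Theorem 7.2) to identify the adjoint functors unambiguously; alternatively one repeats the argument of (a) with the source and target blocks exchanged. Part (c) is then immediate: combining (a) and (b) with (5.1) yields
\[
\mathbb{V}\circ\theta_s\cong\mathbb{V}_\lambda\circ\theta_{\mu,s\cdot\lambda}\circ\theta_{\lambda,\mu}\cong\mathrm{Ind}^{C_\lambda}_{C_\mu}\circ\mathrm{Res}^{C_\lambda}_{C_\mu}\circ\mathbb{V}_\lambda\cong C_\lambda\otimes_{C_\mu}\mathbb{V}_\lambda.
\]
The main obstacle is the bookkeeping for the $C_\mu$-module structures under the biadjunction: checking that the action transported through the adjunction unit/counit matches the restriction of the $C_\lambda$-action is precisely where Corollary 7.4 and Theorem 7.3 play an essential joint role, translating abstract endomorphism-of-identity actions into the concrete algebra inclusion $C_\mu\hookrightarrow C_\lambda$.
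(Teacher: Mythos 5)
The paper does not prove this theorem --- it is quoted from \cite{So0} --- so your argument has to be judged on its own terms. The skeleton is the right one: adjunction, identification of the images of the antidominant projectives under the two translations, and then (c) from (a), (b) and \eqref{eq591}. But your multiplicity argument is circular as written: you appeal to ``the rank of $C_\lambda$ as a free $C_\mu$-module'' to distribute the factor $2$, yet the link between $C_\lambda\otimes_{C_\mu}{}_-$ and the translation functors is precisely what is being proved. The honest way to pin the multiplicities down is a Verma-flag count: $P(w_o\cdot\lambda)$ has exactly $|W|$ Verma subquotients (BGG reciprocity together with $[\Delta(y\cdot\lambda):L(w_o\cdot\lambda)]=1$), translation to the wall sends each $\Delta(y\cdot\lambda)$ to a single singular Verma, and $P(w_o\cdot\mu)$ has $|W|/2$ of them; since $\theta_{\lambda,\mu}P(w_o\cdot\lambda)$ is projective-injective and $P(w_o\cdot\mu)$ is the unique indecomposable projective-injective in $\mathcal{O}_\mu$, this forces $\theta_{\lambda,\mu}P(w_o\cdot\lambda)\cong P(w_o\cdot\mu)^{\oplus 2}$, and then $\theta_sP(w_o\cdot\lambda)\cong P(w_o\cdot\lambda)^{\oplus 2}$ gives $\theta_{\mu,s\cdot\lambda}P(w_o\cdot\mu)\cong P(w_o\cdot\lambda)$ with multiplicity one.

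The genuine gap is in the module structures, and it is not ``automatic''. Centrality plus Corollary~\ref{cor704} does show that precomposition with $\phi\in C_\mu=\mathrm{End}(P(w_o\cdot\mu))$ transports through the adjunction to precomposition with $\theta_{\mu,s\cdot\lambda}(\phi)\in C_\lambda$, so the transported action is given by the ring homomorphism $C_\mu\to C_\lambda$, $\phi\mapsto\theta_{\mu,s\cdot\lambda}(\phi)$. What remains --- and is the real content of the theorem --- is that this homomorphism is the inclusion $C_\mu=C_\lambda^{s}\hookrightarrow C_\lambda$ under the identifications of Theorem~\ref{thm703}\eqref{thm703.2}. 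Theorem~\ref{thm703} identifies the two algebras abstractly; it says nothing about how translation functors act on them, and since $z\in Z(\mathfrak{g})$ does not act componentwise on $V\otimes M$ (the comultiplication intervenes), the compatibility of the two maps $Z(\mathfrak{g})\to C_\lambda$ requires tracking the Harish-Chandra homomorphism through tensoring with finite-dimensional modules --- this is where Soergel's actual work lies. Relatedly, part (b) cannot be obtained ``formally by taking adjoints'' since $\mathbb{V}$ is not an equivalence; your fallback of rerunning the computation works, but it needs the same unproved compatibility. So: correct outline, one fixable slip, and one step that you have correctly located but asserted rather than closed.
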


Consider the category $C_{\lambda}\text{-}\mathrm{gmod}$ of
{\em finite dimensional graded} $C_{\lambda}$-modules
(here morphisms are homogeneous maps of degree zero).
Note that $\mathbb{V}P(\lambda)\cong\mathbb{C}$ can be considered
as a graded $C_{\lambda}$-module concentrated in degree zero. 
The subalgebra $C_{\mu}$ of $C_{\lambda}$ is graded and
$C_{\lambda}$ is free as a $C_{\mu}$-module with generators
being of degrees zero and two. It follows that 
the functor $C_{\lambda}\otimes_{C_{\mu}}{}_-$ is an exact endofunctor 
of $C_{\lambda}\text{-}\mathrm{gmod}$. Since every indecomposable
projective in $\mathcal{O}_{\lambda}$ is a direct summand
of some module of the form $\theta_{s_1}\theta_{s_2}\cdots
\theta_{s_k}\,P(\lambda)$, by standard arguments (see e.g. \cite{St2})
we obtain that every $\mathbb{V}P(w\cdot \lambda)$, $w\in W$,
can be considered as an object in $C_{\lambda}\text{-}\mathrm{gmod}$
(unique up to isomorphism and shift of grading).

We fix a grading of $\mathbb{V}P(w\cdot \lambda)$ such that its unique
simple quotient is concentrated in degree zero. This fixes a
grading on the module $\mathbb{V}P_{\lambda}$ and thus induces
a grading on its endomorphism algebra $B_{\lambda}$. We denote by
$\mathcal{O}_{\lambda}^{\mathbb{Z}}:=B_{\lambda}\text{-}\mathrm{gmod}$ 
the category of {\em finite dimensional graded} $B_{\lambda}$-modules
and  consider this category as a graded
version of the category $\mathcal{O}_{\lambda}$. Note that not
all modules in $\mathcal{O}_{\lambda}$ are gradable, see \cite{St3}.

Theorem~\ref{thm705}\eqref{thm705.1} provides a graded interpretation
of translation to the wall. This induces a canonical grading 
on $B_{\lambda}$ for any integral 
$\lambda\in \mathfrak{h}^*_{\mathrm{dom}}$. For any integral
$\lambda,\mu\in \mathfrak{h}^*_{\mathrm{dom}}$ and any
indecomposable projective functor 
$\theta_{\lambda,\nu}:\mathcal{O}_{\lambda}\to
\mathcal{O}_{\mu}$ there is a unique (up to isomorphism) 
{\em graded lift} of $\theta_{\lambda,\nu}$, that is a functor from
\index{graded lift}
$B_{\lambda}\text{-}\mathrm{gmod}\to B_{\mu}\text{-}\mathrm{gmod}$
which maps the graded module $P(\lambda)$ to the graded module 
$P(\nu)$ (note that there are no graded shifts here!) and is isomorphic 
to $\theta_{\lambda,\nu}$ after forgetting the grading.
Abusing notation, we denoted this graded lift of $\theta_{\lambda,\nu}$
also by $\theta_{\lambda,\nu}$.
All this was worked out in \cite{St3} in details.

\begin{example}\label{exm711}
{\rm  
Let $n=2$. Then the algebra $B_0$ is given by \eqref{eq457} and
it is easy to check that the above procedure results in
the following grading:
\begin{displaymath}
\begin{array}{|c||c|}
\hline
\text{degree }&\text{component } \\
i&(B_0)_i\\
\hline\hline
0& \mathtt{1}_e, \mathtt{1}_s\\
\hline
1& a, b\\
\hline
2& ba\\
\hline\end{array}
\end{displaymath}
One observes that this grading is {\em positive}, that is 
\index{positive grading}
all nonzero components have non-negative degrees, and the zero
component is semisimple.
} 
\end{example}

Abusing notation we will denote {\em standard graded lifts} in
\index{standard graded lifts}
$\mathcal{O}_{\lambda}^{\mathbb{Z}}$ of structural modules 
from $\mathcal{O}_{\lambda}$ in the same way. Thus $P(\lambda)$
is the standard graded lift described above. It has simple
top $L(\lambda)$ concentrated in degree zero. Further,
$\Delta(\lambda)$ is a graded quotient of $P(\lambda)$.
The duality $\star$ lifts to $\mathcal{O}_{\lambda}^{\mathbb{Z}}$
in the standard way such that  for 
$M\in \mathcal{O}_{\lambda}^{\mathbb{Z}}$ we have
$(M^{\star})_i=(M_{-i})^*$, $i\in\mathbb{Z}$. Using the graded version
of $\star$ we obtain standard graded lifts of $I(\lambda)$
and $\nabla(\lambda)$. The standard graded lift of $T(\lambda)$
is defined so that the unique up to scalar inclusion 
$\Delta(\lambda)\hookrightarrow T(\lambda)$ 
is homogeneous of degree zero. Note that all canonical maps between
structural modules are homogeneous of degree zero.

\begin{example}\label{exm712}
{\rm  
Let $n=2$. Here are graded filtration of all standard graded 
lifts of structural modules in $B_0\text{-}\mathrm{gmod}$
(as usual, we abbreviate $L(e):=L(0)$ by $e$ and 
$L(s):=L(s\cdot 0)$ by $s$). 
\begin{displaymath}
\begin{array}{|c||c|c|c|c|c|c|c|}
\hline
\text{degree }&\text{\tiny$L(e)$}&\text{\tiny$L(s)=T(s)=\Delta(s)=\nabla(s)$}
&\text{\tiny$\Delta(e)=P(e)$}&\text{\tiny$P(s)$}&
\text{\tiny$I(e)=\nabla(e)$}&\text{\tiny$I(s)$}&\text{\tiny$T(e)$}\\
\hline\hline
-2&&&&&&s&\\\hline
-1&&&&&s&e&s\\\hline
0&e&s&e&s&e&s&e\\\hline
1&&&s&e&&&s\\\hline
2&&&&s&&&\\
\hline\end{array}
\end{displaymath}
} 
\end{example}

After Example~\ref{exm711} it is natural to ask whether the grading on $B_{\lambda}$ will always be positive. It turns out that the
answer is ``yes'', but to motivate and explain it we would need
to ``upgrade'' the Weyl group to the Hecke algebra.

\subsection{Hecke algebra}\label{s7.4}

Denote by $\mathbb{H}=\mathbb{H}_n=\mathbb{H}(W,S)$ the 
{\it Hecke algebra} of $W$. It is defined as a free 
$\mathbb{Z}[v,v^{-1}]$-module with  the {\em standard} 
\index{standard basis}
basis $\{H_x: x\in W\}$  and multiplication given by
\begin{equation} \label{eqhecke}
H_xH_y=H_{xy}\, \text{if $\mathfrak{l}(x)+
\mathfrak{l}(y)=\mathfrak{l}(xy),\,\,$ and}\,\,
H_s^2=H_e+(v^{-1}-v)H_s\, \text{for $s\in S$}.
\end{equation}
The algebra $\mathbb{H}$ is a deformation of the group  algebra
$\mathbb{Z}[W]$. As a $\mathbb{Z}[v,v^{-1}]$-algebra, it is generated 
by $\{H_s: s\in S\}$, or (which will turn out to be more convenient) 
by the set $\{\underline{H}_s=H_s+vH_e:s\in S\}$. 

There is a unique involution ${}^-$ on $\mathbb{H}$ which maps 
$v\mapsto v^{-1}$ and  $H_s\mapsto (H_s)^{-1}$. Note that this
involution fixes all $\underline{H}_s$. More general, $\mathbb{H}$
has a unique basis, called {\em Kazhdan-Lusztig basis}, which 
\index{Kazhdan-Lusztig basis}
consists of fixed under ${}^-$ elements $\underline{H}_x$, $x\in W$,  
such that $\underline{H}_x=H_x+\sum_{y\in W,\, y\neq x}h_{y,x}H_y$,
where $h_{y,x}\in v\mathbb{Z}[v]$ (here we use the 
normalization of \cite{So3}). Set $h_{x,x}:=1$.
The polynomials $h_{y,x}$ are called
{\em Kazhdan-Lusztig polynomials} and were defined in \cite{KaLu}.
\index{Kazhdan-Lusztig polynomial}

With respect to the generators $\underline{H}_s$, $s\in S$, we have 
the following set of defining relations for $\mathbb{H}$:
\begin{eqnarray*} 
\underline{H}_s^2&=&(v+v^{-1})\underline{H}_s;\\ \nonumber
\underline{H}_s\underline{H}_t&=&\underline{H}_t\underline{H}_s,
\quad\quad\quad\quad\, \text{ if } ts=st;\\ \nonumber
\underline{H}_s\underline{H}_t\underline{H}_s+
\underline{H}_t&=&\underline{H}_t\underline{H}_s\underline{H}_t
+\underline{H}_s, \,\, \text{ if }\, tst= sts.
\end{eqnarray*}

The algebra $\mathbb{H}$ has a symmetrizing trace form
$\tau:\mathbb{H}\to \mathbb{Z}[v,v^{-1}]$, which sends 
$H_e$ to $1$ and $H_w$ to $0$ for $w\neq e$. With respect to $\tau$
we have the {\em dual Kazhdan-Lusztig basis} 
\index{dual Kazhdan-Lusztig basis}
$\{\hat{\underline{H}}_x:x\in W\}$, defined via 
$\tau(\hat{\underline{H}}_x\underline{H}_{y^{-1}})=\delta_{x,y}$.

\begin{example}\label{exm777}
{\rm 
In the case $n=3$ we have two simple reflections $s$ and $t$,
$W=\{e,s,t,st,ts,sts=tst\}$ and the elements of the
Kazhdan-Lusztig basis are given by the following table:
\begin{displaymath}
\begin{array}{cclcccl}
\underline{H}_e&=&H_e,&&\underline{H}_{st}&=&H_{st}+vH_s+vH_t+v^2H_e,\\ 
\underline{H}_s&=&H_s+vH_e,&&
\underline{H}_{ts}&=&H_{ts}+vH_s+vH_t+v^2H_e,\\ 
\underline{H}_t&=&H_t+vH_e,&&
\underline{H}_{sts}&=&H_{sts}+vH_{st}+vH_{ts}+v^2H_s+v^2H_t+v^3H_e. 
\end{array}
\end{displaymath}
The  elements of the dual Kazhdan-Lusztig basis
are given by the following table:
\begin{displaymath}
\begin{array}{cclcccl}
\hat{\underline{H}}_e&=&H_e-vH_s-vH_t+v^2H_{st}+v^2H_{ts}-v^3H_{sts},&&
\hat{\underline{H}}_{st}&=&H_{st}-vH_{sts},\\ 
\hat{\underline{H}}_s&=&H_s-vH_{st}-vH_{ts}+v^2H_{sts},&&
\hat{\underline{H}}_{ts}&=&H_{ts}-vH_{sts},\\ 
\hat{\underline{H}}_t&=&H_t-vH_{st}-vH_{ts}+v^2H_{sts},&&
\hat{\underline{H}}_{sts}&=&H_{sts}. 
\end{array}
\end{displaymath}
}
\end{example}

Let $\mathbb{F}$ be any commutative ring and 
$\iota:\mathbb{Z}[v,v^{-1}]\to\mathbb{F}$ be a homomorphism of 
unitary rings. Then we have the {\em specialized} Hecke algebra
\index{specialized Hecke algebra}
$\mathbb{H}^{(\mathbb{F},\iota)}=\mathbb{F}\otimes_{\mathbb{Z}[v,v^{-1}]}
\mathbb{H}$. Again, if $\iota$ is clear from the context (for
instance if $\iota$ is the natural inclusion), we will omit it in
the notation.

\subsection{Categorification of the right regular 
$\mathbb{H}$-module}\label{s7.5}

For any regular and integral 
$\lambda\in\mathfrak{h}^*_{\mathrm{dom}}$ we have the following:

\begin{lemma}\label{lem731}
Let $s\in S$ be a simple reflection. 
\begin{enumerate}[$($a$)$]
\item\label{lem731.1} For any $w\in W$ with $ws>w$ there is an 
exact sequence in $\mathcal{O}_{\lambda}^{\mathbb{Z}}$ as follows:
\begin{displaymath}
0\to\Delta(w\cdot\lambda)\langle -1\rangle\to \theta_s\Delta(w\cdot\lambda)
\to \Delta(ws\cdot\lambda)\to 0.
\end{displaymath}
\item\label{lem731.2} For any $w\in W$ with $ws<w$ there is an 
exact sequence in $\mathcal{O}_{\lambda}^{\mathbb{Z}}$ as follows:
\begin{displaymath}
0\to\Delta(ws\cdot\lambda)\to \theta_s\Delta(w\cdot\lambda)\to 
\Delta(w\cdot\lambda)\langle 1\rangle\to 0.
\end{displaymath}
\end{enumerate}
\end{lemma}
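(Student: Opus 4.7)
The plan is to reduce to the ungraded sequences of Proposition~\ref{prop505}(b) and then compute the grading shifts using the canonical graded lift of $\theta_s$ fixed in \S\ref{s7.3}.

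First I would lift the ungraded exact sequence. The module $\theta_s\Delta(w\cdot\lambda)$, the Vermas $\Delta(w\cdot\lambda)$ and $\Delta(ws\cdot\lambda)$ all carry standard graded lifts, and all canonical maps between structural modules are homogeneous of degree zero (as recorded at the end of \S\ref{s7.3}); therefore the ungraded Verma filtration of $\theta_s\Delta(w\cdot\lambda)$ from Proposition~\ref{prop505}(b) yields a graded short exact sequence
\begin{displaymath}
0\to\Delta(w\cdot\lambda)\langle a\rangle\to\theta_s\Delta(w\cdot\lambda)\to\Delta(ws\cdot\lambda)\langle b\rangle\to 0\quad(ws>w),
\end{displaymath}
and analogously for $ws<w$, for some integers $a,b\in\mathbb{Z}$. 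Which Verma sits on top is dictated by the ungraded case: in (a) the top quotient is $\Delta(ws\cdot\lambda)$ (the Verma with the strictly larger highest weight, by Theorem~\ref{thm402}(b)), and in (b) it is $\Delta(w\cdot\lambda)$.

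Next I would pin down the shifts. The base case $w=e$ of (a) is direct: $\theta_s\Delta(\lambda)=\theta_sP(\lambda)=P(s\cdot\lambda)$ in standard graded lift, whose graded Verma filtration in $\mathcal{O}_\lambda^{\mathbb{Z}}$ reads $0\to\Delta(\lambda)\langle -1\rangle\to P(s\cdot\lambda)\to\Delta(s\cdot\lambda)\to 0$, as witnessed in the $\mathfrak{gl}_2$ computation of Example~\ref{exm712} and confirmed in general by graded BGG reciprocity together with the normalization that the simple top of $P(s\cdot\lambda)$ sits in degree $0$. For general $w$ in case (a) I would apply Theorem~\ref{thm705}(c) to obtain $\mathbb{V}\theta_s\Delta(w\cdot\lambda)\cong C_\lambda\otimes_{C_\mu}\mathbb{V}\Delta(w\cdot\lambda)$ as graded $C_\lambda$-modules. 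Since $\mathbb{V}=\mathrm{Hom}_{\mathfrak{g}}(P(w_o\cdot\lambda),{}_-)$ is exact (the first argument is projective) and $C_\lambda\cong C_\mu\oplus C_\mu\langle-2\rangle$ as graded right $C_\mu$-modules, applying $\mathbb{V}$ to the graded filtration gives a graded character identity which forces $a=-1$, $b=0$ uniformly in $w$. Case (b) then follows from case (a) applied to $ws$ (where $(ws)s=w>ws$) by applying $\theta_s$ to that sequence and extracting the summand corresponding to $\theta_s\langle 1\rangle$ inside the graded decomposition $\theta_s\circ\theta_s\cong\theta_s\langle 1\rangle\oplus\theta_s\langle-1\rangle$.

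The main obstacle is the graded freeness $C_\lambda\cong C_\mu\oplus C_\mu\langle-2\rangle$, which underlies both the tensor product computation above and the decomposition $\theta_s\circ\theta_s\cong\theta_s\langle 1\rangle\oplus\theta_s\langle-1\rangle$. This is a classical consequence of Chevalley's theorem: $C_\lambda$ is free over $C_\mu$ of rank $|W|/|W_\mu|=2$, and a Poincar\'e polynomial count using the known graded dimensions of $C_\lambda$ and $C_\mu$ places the two free generators in degrees $0$ and $2$. Once this identification is in place, the grading shifts in both parts of the lemma are determined.
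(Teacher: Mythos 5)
Your overall strategy is the standard one (it is essentially how this lemma is proved in \cite{St2}, to which the paper defers): lift the ungraded sequences of Proposition~\ref{prop505}\eqref{prop505.2} to graded ones with unknown shifts, anchor the shifts at the dominant Verma module, and propagate via Soergel's functor $\mathbb{V}$. The base case $w=e$ is correct, and the deduction of part (b) from part (a) via the graded decomposition $\theta_s\circ\theta_s\cong\theta_s\langle 1\rangle\oplus\theta_s\langle -1\rangle$ is a valid Grothendieck-group argument, given that decomposition.

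However, the step that is supposed to do the work for general $w$ in part (a) contains a normalization error which, in a lemma whose entire content is the grading shifts, is fatal as written. With the graded lift of $\theta_s$ normalized as in Subsection~\ref{s7.3} (so that $\theta_sP(\lambda)=P(s\cdot\lambda)$ with both tops in degree zero), the graded form of Theorem~\ref{thm705}\eqref{thm705.3} is \emph{not} $\mathbb{V}\theta_s\cong C_{\lambda}\otimes_{C_{\mu}}\mathbb{V}$ on the nose; it is $\mathbb{V}\theta_s\cong \underline{D}_s\otimes_{C_{\lambda}}\mathbb{V}$ with the self-dual Soergel bimodule $\underline{D}_s=C_{\lambda}\otimes_{C_{\mu}}C_{\lambda}\langle 1\rangle$ of Subsection~\ref{s8.1}, i.e.\ there is an extra shift $\langle 1\rangle$. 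Equivalently, after incorporating this shift the relevant decomposition is $C_{\lambda}\cong C_{\mu}\langle 1\rangle\oplus C_{\mu}\langle -1\rangle$ rather than $C_{\mu}\oplus C_{\mu}\langle -2\rangle$. If you run your character count with the unshifted formula and generators in degrees $0$ and $2$, the two Verma subquotients of $\theta_s\Delta(w\cdot\lambda)$ land in degrees differing by $2$ \emph{above} the degree of $\mathbb{V}\Delta(w\cdot\lambda)$, which forces $(a,b)=(0,-3)$ or $(-2,-1)$ --- not $(-1,0)$ --- and in particular contradicts your own (correct) base case $w=e$. The same slip propagates to your justification of $\theta_s\circ\theta_s\cong\theta_s\langle 1\rangle\oplus\theta_s\langle -1\rangle$: the unshifted freeness statement would instead give $\theta_s\oplus\theta_s\langle -2\rangle$. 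The repair is easy and is implicit in your own outline: use the base case to calibrate the shift in $\mathbb{V}\theta_s$ once (this is exactly where the $\langle\mathfrak{l}(w)\rangle$ in the definition of $D_w$ comes from), and then the character count does force $a=-1$, $b=0$ uniformly in $w$. Please make that calibration explicit, since every grading statement in Sections~\ref{s7}--\ref{s9} ultimately hinges on it.
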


Lemma~\ref{lem731} implies the following ``upgrade'' of 
Proposition~\ref{prop521} to $\mathcal{O}_{\lambda}^{\mathbb{Z}}$:

\begin{proposition}[Categorification of the right regular
$\mathbb{H}$-module]\label{prop732}
Let $\lambda$ be dominant, regular and integral.
\begin{enumerate}[$($a$)$]
\item\label{prop732.1} There is a unique isomorphism of 
$\mathbb{Z}[v,v^{-1}]$-modules
$\varphi:\mathbb{H}\to [\mathcal{O}_{\lambda}^{\mathbb{Z}}]$ such that
$\varphi(H_w)=[\Delta(w\cdot\lambda)]$ for all $w\in W$.
\item\label{prop732.2} For any $s\in S$ the following 
diagram commutes:
\begin{displaymath}
\xymatrix{ 
\mathbb{H}\ar[rr]^{\cdot \underline{H}_s}\ar[d]_{\varphi}
&&\mathbb{H}\ar[d]_{\varphi} \\
[\mathcal{O}_{\lambda}^{\mathbb{Z}}]\ar[rr]^{[\theta_s]\cdot}
&&[\mathcal{O}_{\lambda}^{\mathbb{Z}}]
}
\end{displaymath}
\end{enumerate}
\end{proposition}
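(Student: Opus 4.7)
The plan is to deduce both statements from Lemma~\ref{lem731} together with the observation that the classes of graded Vermas form a $\mathbb{Z}[v,v^{-1}]$-basis of $[\mathcal{O}_{\lambda}^{\mathbb{Z}}]$. Statement (a) is then essentially formal, and (b) reduces to a case-by-case comparison between the exact sequences of Lemma~\ref{lem731} and the product $H_{w}\underline{H}_{s}$ in $\mathbb{H}$.

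For (a), first I would argue that $[\mathcal{O}_{\lambda}^{\mathbb{Z}}]$ is a free $\mathbb{Z}[v,v^{-1}]$-module with basis $\{[L(w\cdot\lambda)]:w\in W\}$, where each simple is taken in its standard graded lift (top in degree zero). Indeed, every object of $\mathcal{O}_{\lambda}^{\mathbb{Z}}$ has a finite graded composition series whose subquotients are grading shifts of standard simples, and the $\mathbb{Z}[v,v^{-1}]$-action $v^{i}[M]=[M\langle -i\rangle]$ collapses the $\mathbb{Z}$-basis indexed by $W\times\mathbb{Z}$ to a $\mathbb{Z}[v,v^{-1}]$-basis indexed by $W$. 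Because $\mathcal{O}_{\lambda}$ is a highest weight category, the transition matrix expressing the standard graded Vermas $\Delta(w\cdot\lambda)$ in terms of the simples $L(y\cdot\lambda)$ is triangular with respect to a refinement of $\leq$, with $1$'s on the diagonal (by Theorem~\ref{thm402}\eqref{thm402.2}) and graded composition multiplicities in $\mathbb{Z}_{\geq 0}[v,v^{-1}]$ off the diagonal. Hence it is invertible over $\mathbb{Z}[v,v^{-1}]$, so $\{[\Delta(w\cdot\lambda)]:w\in W\}$ is also a $\mathbb{Z}[v,v^{-1}]$-basis of $[\mathcal{O}_{\lambda}^{\mathbb{Z}}]$. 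Since $\{H_{w}:w\in W\}$ is a $\mathbb{Z}[v,v^{-1}]$-basis of $\mathbb{H}$ by definition, the assignment $H_{w}\mapsto[\Delta(w\cdot\lambda)]$ extends to a unique isomorphism $\varphi$ of $\mathbb{Z}[v,v^{-1}]$-modules.

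For (b), it suffices to check that $[\theta_{s}]\cdot\varphi(H_{w})=\varphi(H_{w}\cdot\underline{H}_{s})$ for every $w\in W$, and I split into the two cases dictated by $\mathfrak{l}(ws)$. If $ws>w$, then $H_{w}H_{s}=H_{ws}$ by \eqref{eqhecke}, so
\begin{displaymath}
H_{w}\underline{H}_{s}=H_{w}(H_{s}+vH_{e})=H_{ws}+vH_{w},
\end{displaymath}
and Lemma~\ref{lem731}\eqref{lem731.1} gives
\begin{displaymath}
[\theta_{s}\Delta(w\cdot\lambda)]=[\Delta(w\cdot\lambda)\langle-1\rangle]+[\Delta(ws\cdot\lambda)]=v[\Delta(w\cdot\lambda)]+[\Delta(ws\cdot\lambda)],
\end{displaymath}
which matches $\varphi(H_{ws}+vH_{w})$. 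If $ws<w$, then $H_{w}=H_{ws}H_{s}$, and the quadratic relation $H_{s}^{2}=H_{e}+(v^{-1}-v)H_{s}$ yields $H_{w}H_{s}=H_{ws}+(v^{-1}-v)H_{w}$, so $H_{w}\underline{H}_{s}=H_{ws}+v^{-1}H_{w}$; on the other side, Lemma~\ref{lem731}\eqref{lem731.2} gives $[\theta_{s}\Delta(w\cdot\lambda)]=[\Delta(ws\cdot\lambda)]+v^{-1}[\Delta(w\cdot\lambda)]$, again matching $\varphi$.

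The main obstacle in this argument is the input from Lemma~\ref{lem731}, whose proof requires knowing that the ungraded short exact sequences from Proposition~\ref{prop505}\eqref{prop505.2} admit graded lifts with precisely the grading shifts stated; this in turn rests on the canonical graded lift of $\theta_{s}$ (fixed in Subsection~\ref{s7.3}) together with the fact that the adjunction morphism $\mathrm{adj}_{s}:\mathrm{Id}\to\theta_{s}$ has a homogeneous lift of degree $-1$, which follows from the graded version of Theorem~\ref{thm705} and the $C_\lambda$-$C_\mu$ bimodule structure of $C_\lambda$ being free of graded rank $1+v^{2}$ over $C_\mu$. Granted these inputs, both (a) and (b) are essentially bookkeeping.
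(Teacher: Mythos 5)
Your proposal is correct and follows exactly the route the paper intends: the text derives Proposition~\ref{prop732} directly from Lemma~\ref{lem731}, and your case-by-case comparison of the graded short exact sequences with the products $H_w\underline{H}_s$ (together with the unitriangularity argument showing the graded Verma classes form a $\mathbb{Z}[v,v^{-1}]$-basis) is precisely the bookkeeping the paper leaves implicit. Your closing remark correctly identifies that the real content sits in Lemma~\ref{lem731} itself, i.e.\ in the existence of graded lifts of the sequences from Proposition~\ref{prop505}\eqref{prop505.2} with the stated shifts.
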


However, now we can say much more, namely:

\begin{theorem}[Graded reformulation of the Kazhdan-Lusztig conjecture]
\label{thm733}
For any $w\in W$ the following diagram commutes:
\begin{displaymath}
\xymatrix{ 
\mathbb{H}\ar[rr]^{\cdot \underline{H}_w}\ar[d]_{\varphi}
&&\mathbb{H}\ar[d]_{\varphi} \\
[\mathcal{O}_{\lambda}^{\mathbb{Z}}]\ar[rr]^{[\theta_w]\cdot}
&&[\mathcal{O}_{\lambda}^{\mathbb{Z}}]
}
\end{displaymath} 
\end{theorem}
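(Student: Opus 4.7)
The plan is to prove Theorem~\ref{thm733} by induction on the Bruhat order, reducing the general case to the already-known case of simple reflections via the multiplicative structure and Proposition~\ref{prop505}(a). The base case $w = e$ is trivial, since $\theta_e = \mathrm{Id}$ and $\underline{H}_e = H_e$. For the inductive step, I would fix $w \in W$ with $\ell(w) = k \geq 1$, choose a reduced decomposition $w = s_{i_1} s_{i_2} \cdots s_{i_k}$, and exploit the two compatible decompositions on the two sides of the diagram.

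On the categorical side, Proposition~\ref{prop505}(a), upgraded to the graded category $\mathcal{O}_\lambda^{\mathbb{Z}}$, provides an isomorphism of graded functors
\[
\theta_{s_{i_k}} \circ \theta_{s_{i_{k-1}}} \circ \cdots \circ \theta_{s_{i_1}} \;\cong\; \theta_w \oplus \bigoplus_{x < w} \bigoplus_{j \in \mathbb{Z}} \theta_x\langle j\rangle^{\oplus n_{x,j}},
\]
with non-negative integers $n_{x,j}$ that assemble into a Laurent polynomial $n_x(v) := \sum_j n_{x,j} v^{-j} \in \mathbb{Z}_{\geq 0}[v,v^{-1}]$. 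On the Hecke algebra side, iterated use of the relations \eqref{eqhecke} (and induction on $k$) yields an analogous expansion
\[
\underline{H}_{s_{i_1}} \underline{H}_{s_{i_2}} \cdots \underline{H}_{s_{i_k}} \;=\; \underline{H}_w + \sum_{x < w} m_x(v)\, \underline{H}_x
\]
with $m_x(v) \in \mathbb{Z}_{\geq 0}[v,v^{-1}]$. Proposition~\ref{prop732} already yields the commutativity of the diagram for each simple factor $\underline{H}_{s_{i_j}}$, and hence, composing, for the entire product $\underline{H}_{s_{i_1}} \cdots \underline{H}_{s_{i_k}}$. Combining this with the inductive hypothesis (which gives the theorem for every $x < w$), the theorem for $w$ reduces to the identification of graded multiplicities
\[
n_x(v) = m_x(v) \qquad \text{for every } x < w,
\]
since then subtracting the lower-order terms from both sides isolates the contribution of $\underline{H}_w$ and $\theta_w$.

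The main obstacle is exactly this matching of multiplicities, which is the substantive content of the theorem and is equivalent to the Kazhdan--Lusztig conjecture in its graded form. I would establish it via Soergel's combinatorial functor $\mathbb{V}$ of Theorem~\ref{thm702}: by Theorem~\ref{thm705}(c), $\mathbb{V}$ intertwines translation through walls with tensor product of modules over the coinvariant algebra along the subalgebra of $s$-invariants, so the category of (graded lifts of) projective functors is equivalent to the category of Soergel bimodules. Under this equivalence, $\theta_w$ corresponds to the indecomposable Soergel bimodule $B_w$, and the content of Soergel's conjecture (proven by Elias--Williamson) is precisely that the graded class of $B_w$ equals $\underline{H}_w$, which forces $n_x(v) = m_x(v)$. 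Alternatively, one may invoke the geometric route of Beilinson--Bernstein and Brylinski--Kashiwara, identifying $\mathcal{O}_\lambda$ with perverse sheaves on the flag variety and recognizing the Kazhdan--Lusztig polynomials as local intersection cohomology Poincar\'e polynomials of Schubert varieties. Either input, once in hand, closes the induction and establishes the commutativity of the diagram for arbitrary $w \in W$.
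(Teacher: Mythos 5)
Your reduction is logically sound, and you have correctly located the irreducible core of the statement. The paper itself offers no proof of Theorem~\ref{thm733}: immediately after the statement it remarks that the Kazhdan--Lusztig conjecture ``was formulated in a different (equivalent) way in [KaLu] and proved in [BB, BrKa],'' i.e.\ it simply cites the geometric proofs of Beilinson--Bernstein and Brylinski--Kashiwara. Your proposal makes explicit what that citation leaves implicit: by induction on $\mathfrak{l}(w)$, using the graded Krull--Schmidt decomposition of $\theta_{s_{i_k}}\circ\cdots\circ\theta_{s_{i_1}}$ against the expansion of $\underline{H}_{s_{i_1}}\cdots\underline{H}_{s_{i_k}}$ in the Kazhdan--Lusztig basis, the theorem is equivalent to the matching of graded multiplicities $n_x(v)=m_x(v)$, which is exactly the (graded, Soergel-style) form of the Kazhdan--Lusztig conjecture and is not accessible by the elementary categorical manipulations available in the text (Proposition~\ref{prop732} only controls products of the $\underline{H}_s$, not the individual $\underline{H}_w$). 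Your two proposed closures of the gap --- Soergel bimodules plus Soergel's conjecture (via Elias--Williamson, or via Soergel's own deduction from the decomposition theorem), or the $D$-module/perverse-sheaf route --- are both legitimate, the second being the one the paper points to; note only that the Elias--Williamson input postdates these lectures. Two small points: in the graded decomposition one must check that the distinguished summand $\theta_w$ occurs with shift $\langle 0\rangle$, which follows from the normalization of graded lifts fixed in Subsection~\ref{s7.3} (namely $\theta_w P(\lambda)=P(w\cdot\lambda)$ with simple top in degree zero); and the positivity $m_x(v)\in\mathbb{Z}_{\geq 0}[v,v^{-1}]$, while true for Weyl groups by Proposition~\ref{prop803} and the non-negativity of $\mu$, is not needed for the argument and is itself a nontrivial fact, so it is better not to lean on it.
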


Kazhdan-Lusztig conjecture was formulated in a different (equivalent)
way in \cite{KaLu} and proved in \cite{BB,BrKa}. It has many important
combinatorial consequences. The most basic ones are described in the
next subsection.

\subsection{Combinatorics of $\mathcal{O}_{\lambda}$}\label{s7.6}

Recall first that for $x\in W$ the indecomposable projective 
functor $\theta_x$ was uniquely defined by the property $\theta_x\,
\Delta(\lambda)=P(x\cdot \lambda)$. As every projective in $\mathcal{O}_{\lambda}$ has a standard filtration, going to the 
Grothendieck group, Theorem~\ref{thm733} implies that the
(graded) multiplicities of standard modules in a standard
filtration of an indecomposable projective module in 
$\mathcal{O}_{\lambda}$ are given by Kazhdan-Lusztig polynomials
as follows:

\begin{corollary}\label{cor734}
For $x\in W$ we have 
\begin{displaymath}
[P(x\cdot \lambda)]=\varphi(\underline{H}_x)=
\sum_{y\in W}h_{y,x}[\Delta(y\cdot \lambda)].
\end{displaymath}
\end{corollary}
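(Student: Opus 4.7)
The plan is to show that this corollary is a direct consequence of Theorem~\ref{thm733} together with Theorem~\ref{thm502} and the definition of $\varphi$ from Proposition~\ref{prop732}. The whole argument is essentially ``apply the commutative diagram to the element $H_e$''.

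First I would recall two facts. By Theorem~\ref{thm502}, the indecomposable projective functor $\theta_x$ is characterized by $\theta_x\,\Delta(\lambda)=P(x\cdot\lambda)$, and by the setup of standard graded lifts in Subsection~\ref{s7.3}, this identity holds verbatim in $\mathcal{O}_{\lambda}^{\mathbb{Z}}$ (with no auxiliary grading shift) once the standard graded lifts of both sides are chosen so that their simple tops sit in degree zero. Second, by Proposition~\ref{prop732}\eqref{prop732.1} we have $\varphi(H_e)=[\Delta(\lambda)]$.

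Next I would chase Theorem~\ref{thm733}, taking $w=x$ and applying the commutative diagram to $H_e\in\mathbb{H}$:
\begin{displaymath}
\varphi(\underline{H}_x)=\varphi(H_e\cdot\underline{H}_x)=[\theta_x]\cdot\varphi(H_e)=[\theta_x]\cdot[\Delta(\lambda)]=[\theta_x\,\Delta(\lambda)]=[P(x\cdot\lambda)],
\end{displaymath}
which gives the first equality of the corollary.

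For the second equality, I would simply expand $\underline{H}_x$ in the standard basis of $\mathbb{H}$. By definition of the Kazhdan--Lusztig basis recalled in Subsection~\ref{s7.4}, we have $\underline{H}_x=\sum_{y\in W}h_{y,x}H_y$ with $h_{x,x}=1$ and $h_{y,x}\in v\mathbb{Z}[v]$ for $y\neq x$. Since $\varphi$ is $\mathbb{Z}[v,v^{-1}]$-linear and $\varphi(H_y)=[\Delta(y\cdot\lambda)]$, we obtain
\begin{displaymath}
\varphi(\underline{H}_x)=\sum_{y\in W}h_{y,x}\,[\Delta(y\cdot\lambda)],
\end{displaymath}
which completes the proof. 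There is no serious obstacle here; the only point requiring care is verifying that the standard graded lifts have been chosen so that $\theta_x\,\Delta(\lambda)\cong P(x\cdot\lambda)$ holds as \emph{graded} modules without an extra shift, but this is built into the normalizations of Subsection~\ref{s7.3}, and all the deep content has already been absorbed into Theorem~\ref{thm733}.
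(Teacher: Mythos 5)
Your proof is correct and follows exactly the route the paper intends: the corollary is obtained by applying the commutative diagram of Theorem~\ref{thm733} to $H_e$, using $\varphi(H_e)=[\Delta(\lambda)]$ and the normalization $\theta_x\,\Delta(\lambda)=P(x\cdot\lambda)$ (with no grading shift, as fixed in Subsection~\ref{s7.3}), and then expanding $\underline{H}_x$ in the standard basis. Your explicit attention to the graded normalization is exactly the one point worth checking, and it is handled correctly.
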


Using the BGG-reciprocity we obtain that the (graded) composition 
multiplicities of Verma modules are also given by Kazhdan-Lusztig 
polynomials as follows:

\begin{corollary}\label{cor735}
For $y\in W$ we have 
\begin{displaymath}
[\Delta(y\cdot \lambda)]=
\sum_{x\in W}h_{y,x}[L(x\cdot \lambda)].
\end{displaymath}
\end{corollary}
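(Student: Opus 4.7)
The plan is to derive this directly from Corollary~\ref{cor734} by invoking (the graded version of) BGG reciprocity (Theorem~\ref{thm405}\eqref{thm405.3}). The key point is that the numerical relationship between projective and Verma modules on one hand, and Verma and simple modules on the other, is symmetric under the Kazhdan–Lusztig combinatorics once one reads off the appropriate multiplicities in the Grothendieck group.

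More precisely, Corollary~\ref{cor734} says that in $[\mathcal{O}_{\lambda}^{\mathbb{Z}}]$ we have
\begin{displaymath}
[P(x\cdot\lambda)]=\sum_{y\in W} h_{y,x}\,[\Delta(y\cdot\lambda)],
\end{displaymath}
so the graded multiplicity of $\Delta(y\cdot\lambda)$ in a standard filtration of $P(x\cdot\lambda)$ is the Kazhdan–Lusztig polynomial $h_{y,x}$. First I would lift the BGG reciprocity $[P(\mu):\Delta(\nu)]=[\Delta(\nu):L(\mu)]$ of Theorem~\ref{thm405}\eqref{thm405.3} to a graded identity by choosing the standard graded lifts described in Subsection~\ref{s7.3} (so that $L(\mu)$ sits in degree zero and the canonical maps between structural modules are homogeneous of degree zero); this graded lift is a standard consequence of the fact that $\mathcal{O}_{\lambda}^{\mathbb{Z}}$ is a graded highest weight category in which the tilting/projective resolutions used in the proof of reciprocity are themselves gradable.

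Given the graded BGG reciprocity, comparing with the expression for $[P(x\cdot\lambda)]$ yields
\begin{displaymath}
[\Delta(y\cdot\lambda):L(x\cdot\lambda)]=[P(x\cdot\lambda):\Delta(y\cdot\lambda)]=h_{y,x}
\end{displaymath}
as elements of $\mathbb{Z}[v,v^{-1}]$. Summing over $x\in W$ gives exactly the claimed identity
\begin{displaymath}
[\Delta(y\cdot\lambda)]=\sum_{x\in W} h_{y,x}\,[L(x\cdot\lambda)].
\end{displaymath}

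The main obstacle, and really the only nontrivial point, is justifying that BGG reciprocity survives the passage to the graded category $\mathcal{O}_{\lambda}^{\mathbb{Z}}$ with the chosen normalization of graded lifts (in particular that no overall grading shift appears on either side). Once that normalization issue is handled — which amounts to checking on Verma modules that the socle inclusion $L(\mu)\hookrightarrow \nabla(\mu)$ is homogeneous of degree zero and using Proposition~\ref{cor436} to compute $\mathrm{Hom}$'s in terms of the $\Delta$–$\nabla$ pairing — the corollary is a purely formal consequence of Corollary~\ref{cor734}.
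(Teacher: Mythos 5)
Your proposal is correct and is exactly the paper's argument: the text derives Corollary~\ref{cor735} from Corollary~\ref{cor734} by invoking BGG reciprocity (Theorem~\ref{thm405}\eqref{thm405.3}), reading off $[\Delta(y\cdot\lambda):L(x\cdot\lambda)]=[P(x\cdot\lambda):\Delta(y\cdot\lambda)]=h_{y,x}$. Your extra care about normalizing the graded lifts so that reciprocity holds without shifts is a reasonable elaboration of a point the paper leaves implicit.
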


We would like to categorify (i.e. present a categorical analogue for) 
the form $\tau$. Define the linear map 
$\Phi:[\mathcal{O}_{\lambda}^{\mathbb{Z}}]\to\mathbb{Z}[v,v^{-1}]$
as follows. For $M\in \mathcal{O}_{\lambda}^{\mathbb{Z}}$ set
\begin{displaymath}
\Phi([M]):=\sum_{i\in\mathbb{Z}}
\dim\mathrm{Hom}(\Delta(\lambda)\langle i\rangle,M) v^{-i}.
\end{displaymath}
This is well-defined as $\Delta(\lambda)$ is projective.
Then, for any $M\in \mathcal{O}_{\lambda}^{\mathbb{Z}}$ 
we have $\Phi([M])=\tau(\varphi^{-1}([M]))$ (this is enough to check
for $M=\Delta(x\cdot\lambda)$, $x\in W$, in which case it is clear).
The form $\Phi$ implies the following:

\begin{corollary}\label{cor736}
For $y\in W$ we have $[L(y\cdot \lambda)]=
\varphi(\hat{\underline{H}}_y)$.
\end{corollary}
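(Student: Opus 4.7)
The strategy is to verify that the element $\xi_y := \varphi^{-1}([L(y\cdot\lambda)]) \in \mathbb{H}$ satisfies the defining property of the dual Kazhdan-Lusztig basis element $\hat{\underline{H}}_y$, namely $\tau(\xi_y\cdot\underline{H}_{x^{-1}}) = \delta_{x,y}$ for every $x \in W$. Since this orthogonality relation uniquely characterizes the dual basis, the result will follow.

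First, I would invoke the categorification Theorem~\ref{thm733}: right multiplication by $\underline{H}_{x^{-1}}$ on $\mathbb{H}$ corresponds under $\varphi$ to the action of $[\theta_{x^{-1}}]$, so
\begin{displaymath}
\xi_y\cdot\underline{H}_{x^{-1}} = \varphi^{-1}\bigl([\theta_{x^{-1}}L(y\cdot\lambda)]\bigr).
\end{displaymath}
Combining this with the identity $\Phi([M])=\tau(\varphi^{-1}([M]))$ established immediately before the statement, the task reduces to computing
\begin{displaymath}
\Phi\bigl([\theta_{x^{-1}}L(y\cdot\lambda)]\bigr)
= \sum_{i\in\mathbb{Z}}\dim\mathrm{Hom}\bigl(\Delta(\lambda)\langle i\rangle,\theta_{x^{-1}}L(y\cdot\lambda)\bigr)\, v^{-i}.
\end{displaymath}

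Next, applying the graded biadjunction between $\theta_x$ and $\theta_{x^{-1}}$ (Proposition~\ref{prop504}, with the graded lifts normalized as in Subsection~\ref{s7.3} so that $\theta_w\Delta(\lambda)=P(w\cdot\lambda)$ on the nose), each hom-space is identified with $\mathrm{Hom}(P(x\cdot\lambda)\langle i\rangle,L(y\cdot\lambda))$. Because the standard graded lift of $P(x\cdot\lambda)$ has simple top $L(x\cdot\lambda)$ concentrated in degree zero, this hom-space is one-dimensional when $x=y$ and $i=0$, and zero otherwise. Hence $\Phi([\theta_{x^{-1}}L(y\cdot\lambda)])=\delta_{x,y}$, and the uniqueness of the dual Kazhdan-Lusztig basis forces $\xi_y=\hat{\underline{H}}_y$, giving $[L(y\cdot\lambda)]=\varphi(\hat{\underline{H}}_y)$.

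The main subtlety lies in the graded adjunction step: one must check that the biadjunction $(\theta_x,\theta_{x^{-1}})$ lifts to the graded category without introducing a shift on the pair $(\Delta(\lambda),P(x\cdot\lambda))=(\theta_e\Delta(\lambda),\theta_x\Delta(\lambda))$, so that the identification of hom-spaces is unshifted and yields $\delta_{x,y}$ exactly (rather than $\delta_{x,y}v^k$ for some nonzero $k$). This is precisely the point of the normalization of graded lifts described in Subsection~\ref{s7.3} (and in \cite{St3}): the lift of $\theta_w$ is specified by $\theta_w P(\lambda)=P(w\cdot\lambda)$ in degree zero, so the adjunction morphisms are homogeneous of degree zero on $\Delta(\lambda)$. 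Once this normalization is in place, the remaining steps are formal consequences of results already established in the preceding subsections.
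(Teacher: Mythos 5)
Your proposal is correct and follows essentially the same route as the paper: both compute $\Phi([\theta_{x^{-1}}L(y\cdot\lambda)])$ (the paper writes it as $\Phi([\theta_x L(y^{-1}\cdot\lambda)])$, which is the same computation up to relabelling), use the graded biadjunction to move $\theta_{x^{-1}}$ onto the dominant Verma, identify $\theta_x\Delta(\lambda)$ with $P(x\cdot\lambda)$, and read off $\delta_{x,y}$ from the fact that the simple top of the standard graded lift of $P(x\cdot\lambda)$ sits in degree zero, so that $\varphi^{-1}([L(y\cdot\lambda)])$ satisfies the defining orthogonality of $\hat{\underline{H}}_y$ with respect to $\tau$. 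Your explicit attention to the degree-zero normalization of the graded adjunction is a reasonable elaboration of what the paper leaves implicit.
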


\begin{proof}[Idea of the proof.]
For $x,y\in W$, using adjunction and definitions, we have 
\begin{displaymath}
\begin{array}{ccl}
\Phi([\theta_xL(y^{-1}\cdot\lambda)])&=& \displaystyle
\sum_{i\in\mathbb{Z}}
\dim\mathrm{Hom}(\Delta(\lambda)\langle i\rangle,
\theta_xL(y^{-1}\cdot\lambda)) v^{-i}\\
&=&\displaystyle\sum_{i\in\mathbb{Z}}
\dim\mathrm{Hom}(\theta_{x^{-1}}\Delta(\lambda)\langle i\rangle,
L(y^{-1}\cdot\lambda)) v^{-i}\\
&=&\displaystyle\sum_{i\in\mathbb{Z}}
\dim\mathrm{Hom}(P(x^{-1}\cdot\lambda)\langle i\rangle,
L(y^{-1}\cdot\lambda)) v^{-i}\\
&=&\delta_{x,y}
\end{array}
\end{displaymath}
and the claim follows from Corollary~\ref{cor734}.
\end{proof}

Corollary~\ref{cor736} says that, categorically, the dual Kazhdan-Lusztig
basis is the ``most natural'' basis of $\mathbb{H}$ as it corresponds
to the ``most natural'' basis of the Grothendieck group of 
$\mathcal{O}_{\lambda}^{\mathbb{Z}}$ consisting of the classes of
simple modules. Later on we will see that all ``nice'' categorifications
of $\mathbb{H}$-modules have a dual Kazhdan-Lusztig basis.
From the above we have that all composition subquotients of all
standard lifts of indecomposable projective modules 
in $\mathcal{O}_{\lambda}^{\mathbb{Z}}$ live in non-negative
degrees with only simple top being in degree zero. Hence we get:

\begin{corollary}\label{cor737}
The algebra $B_{\lambda}$ is positively graded.
\end{corollary}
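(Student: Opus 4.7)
The plan is to derive positivity of the grading on $B_{\lambda}$ directly from the composition-series data in Corollaries~\ref{cor734} and~\ref{cor735}, and then to compute each homogeneous component $(B_{\lambda})_{i}$ from the definition.

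First, I would unpack those two corollaries in the graded setting using the convention $v[M]=[M\langle -1\rangle]$ from Subsection~\ref{s1.5}. Since every Kazhdan--Lusztig polynomial $h_{y,x}(v)$ lies in $\mathbb{Z}[v]$ with $h_{y,x}(0)=\delta_{y,x}$, Corollary~\ref{cor734} upgrades to a graded Verma flag of $P(x\cdot\lambda)$ in which $\Delta(y\cdot\lambda)\langle -j\rangle$ occurs with multiplicity $[v^{j}]h_{y,x}$; in particular only shifts with $j\geq 0$ appear, and $\Delta(x\cdot\lambda)$ itself occurs exactly once (unshifted). Applying Corollary~\ref{cor735} to each $\Delta(y\cdot\lambda)$ expresses its composition factors as $L(z\cdot\lambda)\langle -k\rangle$ with $k\geq 0$. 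Composing, every composition factor of $P(x\cdot\lambda)$ has the form $L(z\cdot\lambda)\langle -\ell\rangle$ for some $\ell=j+k\geq 0$. Since $L(z\cdot\lambda)$ is concentrated in degree zero by the normalization of Subsection~\ref{s7.3}, such a factor is concentrated in degree $\ell\geq 0$.

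This forces $P(x\cdot\lambda)$ to be supported in non-negative degrees. The only contribution to $P(x\cdot\lambda)_{0}$ comes from terms with $j=k=0$, whose coefficients are $\delta_{y,x}$ and $\delta_{y,z}$; summing over $y$ yields $\delta_{x,z}$. Hence $P(x\cdot\lambda)_{0}$ is one-dimensional and coincides with the simple top $L(x\cdot\lambda)$.

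Finally, I would pass to the endomorphism algebra. Writing $P_{\lambda}=\bigoplus_{y\in W}P(y\cdot\lambda)$ and letting $e_{x}$ denote the idempotent projecting onto $P(x\cdot\lambda)$, the standard identification $\mathrm{Hom}(P(x\cdot\lambda),N)=e_{x}N$ gives, for each $i$,
\begin{displaymath}
(B_{\lambda})_{i}\;=\;\bigoplus_{x,y\in W}\mathrm{Hom}^{0}\bigl(P(x\cdot\lambda),P(y\cdot\lambda)\langle i\rangle\bigr)\;=\;\bigoplus_{x,y\in W}e_{x}P(y\cdot\lambda)_{i}\;=\;\bigoplus_{y\in W}P(y\cdot\lambda)_{i}.
\end{displaymath}
This vanishes for $i<0$ by the previous step, and for $i=0$ it equals $\bigoplus_{y\in W}\mathbb{C}\,e_{y}$, a system of pairwise orthogonal idempotents summing to~$1$, hence a commutative semisimple algebra. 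Both defining properties of a positive grading in the sense of Example~\ref{exm711} are therefore established. The argument is essentially formal once the two corollaries are in hand; the only subtlety is bookkeeping with the sign of the grading shift so that $L(x\cdot\lambda)$ genuinely sits in degree zero, and I do not anticipate a serious obstacle.
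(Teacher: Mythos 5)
Your proposal is correct and follows the same route as the paper, which derives the corollary in the sentence immediately preceding it: combine Corollaries~\ref{cor734} and~\ref{cor735} (using that $h_{y,x}\in v\mathbb{Z}[v]$ for $y\neq x$ and $h_{x,x}=1$) to see that all composition subquotients of the standard graded lifts $P(x\cdot\lambda)$ live in non-negative degrees with only the simple top in degree zero, and then read off positivity of $B_{\lambda}=\mathrm{End}(P_{\lambda})^{\mathrm{op}}$. Your explicit computation of $(B_{\lambda})_i$ via $\mathrm{Hom}^{0}(P(x\cdot\lambda),P(y\cdot\lambda)\langle i\rangle)=e_{x}P(y\cdot\lambda)_{i}$ just spells out the final step the paper leaves implicit.
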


\section{$\mathbb{S}_n$-categorification: Soergel bimodules, cells
and Specht modules}\label{s8}

\subsection{Soergel bimodules}\label{s8.1}

Let $\lambda\in\mathfrak{h}^*_{\mathrm{dom}}$ be integral and regular.
Then we have the (strict) monoidal category $\cS_{\lambda}$
of projective endofunctors on $\mathcal{O}_{\lambda}$. As was mentioned
in the previous section, both $\mathcal{O}_{\lambda}$ and
$\cS_{\lambda}$ admit graded lifts, which we can formalize
as follows. Denote by $\cS=\cS_n$ the $2$-category with unique object
$\mathtt{i}$ and $\cS(\mathtt{i},\mathtt{i})=\cS_{\lambda}$
(see \cite{Bac2} for a description of homomorphisms between projective
functors).
Denote also by  $\cS^{\mathbb{Z}}$ the $2$-category with unique object
$\mathtt{i}$, whose $1$-morphisms are all endofunctors of 
$\mathcal{O}_{\lambda}^{\mathbb{Z}}$ isomorphic to finite direct sums
of graded shifts of standard graded lifts of $\theta_w$, $w\in W$;
and $2$-morphisms are all natural transformations of functors which
are homogeneous of degree zero. The category $\cS$ is a fiat category
and the category $\cS^{\mathbb{Z}}$ is a $\mathbb{Z}$-cover of $\cS$
(in the sense that there is a free action of $\mathbb{Z}$ on 
$\cS^{\mathbb{Z}}$, by grading shifts, such that the quotient is
isomorphic to $\cS$ and thus endows $\cS$ with the structure of a 
graded category).

Using Soergel's combinatorial description of $\mathcal{O}_{\lambda}$,
indecomposable $1$-morphisms of $\cS$ and $\cS^{\mathbb{Z}}$ can be described,
up to isomorphism and graded shift, in the following way: Let 
$w\in W$ and $w=s_1s_2\cdots s_k$ be a reduced decomposition of $w$. 
Consider the graded $C_{\lambda}\text{-}C_{\lambda}$-bimodule
\begin{displaymath}
D_w:= 
C_{\lambda}\otimes_{C^{s_k}_{\lambda}}C_{\lambda}
\otimes_{C^{s_{k-1}}_{\lambda}}\dots
\otimes_{C^{s_2}_{\lambda}}
C_{\lambda}\otimes_{C^{s_1}_{\lambda}}C_{\lambda}
\langle\mathfrak{l}(w)\rangle. 
\end{displaymath}
Define the $C_{\lambda}\text{-}C_{\lambda}$-bimodules $\underline{D}_w$
recursively as follows: $\underline{D}_e=D_e=C_{\lambda}$; for
$\mathfrak{l}(w)>0$ let  $\underline{D}_w$ be the unique indecomposable
direct summand of $D_w$ which is not isomorphic (up to shift of grading)
to $\underline{D}_x$
for some $x$ such that $\mathfrak{l}(x)<\mathfrak{l}(w)$. 
From Subsection~\ref{s7.3} we derive that the bimodule 
$\underline{D}_w$ realizes the action of $\theta_w$ on the
level of $C_{\lambda}\text{-}\mathrm{mod}$.

\begin{proposition}\label{prop801}
For every $w\in W$ there is an isomorphism of graded functors 
as follows: $\mathbb{V}\circ\theta_w({}_-)
\cong \underline{D}_w\otimes_{C_{\lambda}}\circ\mathbb{V}({}_-)$.
\end{proposition}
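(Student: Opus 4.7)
The plan is to proceed by induction on $\mathfrak{l}(w)$, with base case $w = e$ trivial since $\theta_e = \mathrm{Id}$ and $\underline{D}_e = C_\lambda$ acts as the identity bimodule. The inductive step rests on Theorem \ref{thm705}(c), which realizes $\mathbb{V}\circ \theta_s$ as tensoring by $C_\lambda \otimes_{C_\lambda^s} C_\lambda$ (up to the graded shift prescribed in Subsection \ref{s7.3}), combined with the summand structure of iterated compositions of translations-through-walls given by Proposition \ref{prop505}(a).

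Concretely, I would fix a reduced decomposition $w = s_1 s_2 \cdots s_k$ and iterate Theorem \ref{thm705}(c) along it to produce an isomorphism of graded functors
\begin{displaymath}
\mathbb{V}\circ (\theta_{s_k}\circ\cdots\circ\theta_{s_1})({}_-) \;\cong\; D_w \otimes_{C_\lambda} \mathbb{V}({}_-),
\end{displaymath}
where the shift $\langle \mathfrak{l}(w)\rangle$ in the definition of $D_w$ is exactly the cumulative grading shift coming from the $k$-fold iteration of the base-case isomorphism. By Proposition \ref{prop505}(a), the left-hand composition decomposes, with appropriate graded shifts, as $\theta_w$ plus a direct sum of indecomposable projective functors of the form $\theta_x\langle j\rangle$ for $x < w$. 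Applying $\mathbb{V}$ and invoking the inductive hypothesis for each $\theta_x$ with $x < w$ identifies $D_w$ with a direct sum of shifted copies of $\underline{D}_x$ for $x < w$, together with a single additional indecomposable graded $C_\lambda$-bimodule $D'_w$ realizing $\mathbb{V}\circ\theta_w$.

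It remains to recognize $D'_w$ as $\underline{D}_w$. Indecomposability of $D'_w$ follows from indecomposability of $\theta_w$ via the fact that $\mathbb{V}$ is full and faithful on projectives (Theorem \ref{thm702}(a)), and $D'_w$ cannot be isomorphic to a graded shift of $\underline{D}_x$ for some $x < w$, since otherwise $\theta_w$ would be isomorphic to a shifted $\theta_x$, contradicting the classification of indecomposable projective functors in Theorem \ref{thm502}. Thus $D'_w$ is precisely the ``new'' indecomposable direct summand of $D_w$ not already accounted for by the $\underline{D}_x$ with $x < w$, which is the defining characterization of $\underline{D}_w$; this closes the induction.

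The main obstacle will be the careful bookkeeping of grading shifts: Theorem \ref{thm705}(c) is stated as an ungraded isomorphism, so one must first promote it to a graded statement by fixing the standard graded lift of $\theta_s$ from Subsection \ref{s7.3} and verifying that the normalization $\langle 1\rangle$ in $D_s = C_\lambda \otimes_{C_\lambda^s} C_\lambda\langle 1\rangle$ is the one compatible with this graded lift (for instance via Lemma \ref{lem731}). One also tacitly needs a Krull--Schmidt property for the relevant graded $C_\lambda$-bimodules appearing as summands of the $D_w$'s, which follows from finite-dimensionality of their degree-zero morphism spaces but deserves explicit mention.
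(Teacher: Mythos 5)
Your argument is correct and is precisely the route the paper alludes to (the paper itself omits the proof, saying only that the claim is ``derived from Subsection~7.3''): iterate Theorem~\ref{thm705}\eqref{thm705.3} along a reduced expression to identify $\mathbb{V}\circ\theta_{s_k}\cdots\theta_{s_1}$ with $D_w\otimes_{C_\lambda}\mathbb{V}$, then match indecomposable summands on both sides using Proposition~\ref{prop505}\eqref{prop505.1}, the Struktursatz, and Krull--Schmidt. Your closing remarks correctly flag the only real work left (graded normalization of the bimodule for $\theta_s$, and transporting idempotents between $\mathrm{End}(\theta)$ and bimodule endomorphisms via evaluation at $P(w_o\cdot\lambda)$), so no gap.
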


Bimodules $\underline{D}_w$, $w\in W$, are called {\em Soergel
bimodules} and were introduced in \cite{So0,So4}. Usually they are defined
\index{Soergel bimodule}
in the deformed version as bimodules over the polynomial ring
$\mathbb{C}[\mathfrak{h}]$. The disadvantage of the latter definition
(which corresponds to the action of projective functors on the
category ${}_{\,\,\,0}^{\infty}\mathcal{H}_{0}^{\infty}$) is that it does 
not produce a fiat category. Therefore in this paper we will restrict
ourselves to the case of Soergel bimodules over the coinvariant
algebra. Soergel's combinatorial functor $\mathbb{V}$ provides a
biequivalence between the $2$-category $\cS^{\mathbb{Z}}$ and the
$2$-category with one object whose endomorphism category is the minimal
full fully additive subcategory of the category of 
$C_{\lambda}\text{-}C_{\lambda}$-bimodules containing all Soergel bimodules
and closed under isomorphism and shifts of grading.

\begin{theorem}[Categorification of the Hecke algebra]\label{thm802}
The map 
\begin{displaymath}
\begin{array}{ccc}
\left[\cS^{\mathbb{Z}}(\mathtt{i},\mathtt{i})\right]&\longrightarrow&\mathbb{H}\\
\left[\theta_w\right]&\mapsto&\underline{H}_w
\end{array}
\end{displaymath}
induces an anti-isomorphism of unital $\mathbb{Z}[v,v^{-1}]$-rings.
\end{theorem}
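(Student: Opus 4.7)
The plan is to lift the non-graded anti-isomorphism $\mathbb{Z}[W]\cong[\cS_{\lambda}]$ of Corollary~\ref{cor506} to the graded setting by equipping the adjunction morphisms of $\theta_s$ with their canonical degrees, and then to invoke the graded Kazhdan-Lusztig theorem (Theorem~\ref{thm733}) to identify the classes of indecomposable projective functors with the Kazhdan-Lusztig basis. By Krull-Schmidt in $\cS^{\mathbb{Z}}(\mathtt{i},\mathtt{i})$, combined with the classification of indecomposable projective functors in Theorem~\ref{thm502} and the existence of standard graded lifts discussed in Subsection~\ref{s7.3}, the family $\{[\theta_w]:w\in W\}$ is a free $\mathbb{Z}[v,v^{-1}]$-basis of $[\cS^{\mathbb{Z}}(\mathtt{i},\mathtt{i})]$, where the $\mathbb{Z}[v,v^{-1}]$-action is given by $v\cdot[\mathrm{F}]=[\mathrm{F}\langle -1\rangle]$.

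First I would check that $[\theta_s]\mapsto\underline{H}_s$ extends to an anti-homomorphism of unital $\mathbb{Z}[v,v^{-1}]$-algebras by verifying the defining relations of $\mathbb{H}$ on generators. The graded refinement of Proposition~\ref{prop505}\eqref{prop505.3}--\eqref{prop505.5}, obtained by promoting the adjunction morphisms $\mathrm{adj}_s$, $\overline{\mathrm{adj}}_s$ to homogeneous maps of degrees $\mp 1$ (the unique degrees making them compatible with the standard graded lifts of Subsection~\ref{s7.3}), yields inside $\cS^{\mathbb{Z}}(\mathtt{i},\mathtt{i})$ the isomorphisms
\begin{displaymath}
\theta_s\circ\theta_s\cong\theta_s\langle 1\rangle\oplus\theta_s\langle -1\rangle,\qquad
\theta_s\circ\theta_t\cong\theta_t\circ\theta_s\;\;\text{if }st=ts,
\end{displaymath}
together with $(\theta_s\theta_t\theta_s)\oplus\theta_t\cong(\theta_t\theta_s\theta_t)\oplus\theta_s$ when $sts=tst$. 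These pass to the Grothendieck group as $[\theta_s]^2=(v+v^{-1})[\theta_s]$, commutation, and the braid-plus relation---exactly the defining relations of $\mathbb{H}$ in the Kazhdan-Lusztig generators. The fact that one obtains an \emph{anti}-homomorphism is then forced by conventions: the product $[f]\cdot[g]$ in $[\cS^{\mathbb{Z}}(\mathtt{i},\mathtt{i})]$ is defined as $[f\circ g]$, whereas by Proposition~\ref{prop732} the class $[\theta_w]$ acts on $[\mathcal{O}_\lambda^{\mathbb{Z}}]$ as right multiplication by $\underline{H}_w$, so $[\theta_x\circ\theta_y]$ corresponds to $\underline{H}_y\underline{H}_x$.

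Surjectivity of the resulting anti-homomorphism follows at once from the graded lift of Proposition~\ref{prop505}\eqref{prop505.1}: each indecomposable $\theta_w$ occurs, up to graded shift, as a direct summand of a composition $\theta_{s_{i_1}}\circ\cdots\circ\theta_{s_{i_k}}$ attached to a reduced expression for $w$, so the classes $[\theta_s]$ with $s\in S$ generate the domain as a $\mathbb{Z}[v,v^{-1}]$-algebra. For injectivity, both source and target are free $\mathbb{Z}[v,v^{-1}]$-modules of rank $|W|$, so it suffices to show that the basis $\{[\theta_w]\}$ is sent to the basis $\{\underline{H}_w\}$. Applying the anti-homomorphism to $\theta_w$ and evaluating the corresponding operator on $[\Delta(\lambda)]=\varphi(H_e)$, Theorem~\ref{thm733} (together with the identification from Proposition~\ref{prop732}) gives $\varphi^{-1}([\theta_w\Delta(\lambda)])=\underline{H}_w$, which is precisely the required identification.

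The genuinely hard step is this last identification, which rests on the graded Kazhdan-Lusztig theorem of \cite{BB,BrKa}. Without that input, the construction only places $[\theta_w]$ within the set of bar-invariant elements of $\mathbb{H}$ whose expansion in the standard basis is of the form $H_w+\sum_{y<w}h_{y,w}H_y$ with $h_{y,w}\in v\mathbb{Z}[v]$; verifying the positivity of the coefficients $h_{y,w}$ (which is what characterizes $\underline{H}_w$) requires the geometric interpretation via intersection cohomology. The remaining steps---Krull-Schmidt, the graded relations, and the surjectivity---are standard monoidal bookkeeping with the graded structure developed in Subsection~\ref{s7.3}.
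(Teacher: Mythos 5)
Your argument is correct and is essentially the proof the paper intends: the graded refinement of Corollary~\ref{cor506} (relations of $\mathbb{H}$ in the generators $\underline{H}_s$ via the graded lifts of Proposition~\ref{prop505}, surjectivity from generation by the $\theta_s$, the anti-direction from the left-composition/right-multiplication mismatch), with Theorem~\ref{thm733} supplying the identification $[\theta_w]\mapsto\underline{H}_w$ that makes the map send basis to basis. The only slip is in your closing remark: what characterizes $\underline{H}_w$ among bar-invariant elements is the condition $h_{y,w}\in v\mathbb{Z}[v]$ for $y\neq w$ (non-negativity of the coefficients is automatic since they are graded multiplicities), but this does not affect the proof since you correctly route the identification through Theorem~\ref{thm733}.
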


Now we would like to study $2$-representation of the 
$2$-categories $\cS$ and $\cS^{\mathbb{Z}}$. 
Note that both categories come together with the canonical
{\em natural} $2$-representation, namely the action of
$\cS$ on $\mathcal{O}_{\lambda}$ and of $\cS^{\mathbb{Z}}$
on $\mathcal{O}_{\lambda}^{\mathbb{Z}}$. The idea now is
to use these natural representations to construct other
$2$-representation.
\index{natural $2$-representation}

\subsection{Kazhdan-Lusztig cells}\label{s8.2}

The $2$-category $\cS$ is a fiat category and hence we have the
corresponding notions of left, right and two-sided cells as defined
in Subsection~\ref{s3.4}. Indecomposable objects of $\cS$ correspond
to elements of the Kazhdan-Lusztig basis in $\mathbb{H}$. To be
able to describe cells for $\cS$ we need to know structure
constants of $\mathbb{H}$ with respect to the Kazhdan-Lusztig basis.

For different $x,y\in W$ denote by $\mu(y,x)$ the coefficient of $v$ in 
the Kazhdan-Lusztig  polynomial $h_{y,x}$ from Subsection~\ref{s7.4}.
The function $\mu$ is called {\em Kazhdan-Lusztig $\mu$-function}. 
\index{Kazhdan-Lusztig $\mu$-function}
Its importance is motivated by the following (see \cite{So3}
for our normalization):

\begin{proposition}[\cite{KaLu}]\label{prop803}
For any $x\in W$ and any simple reflection $s$ we have
\begin{displaymath}
\underline{H}_x\underline{H}_s=
\begin{cases}
\underline{H}_{xs}+\sum_{y<x,ys<y}\mu(y,x)\underline{H}_y, 
& xs>x;\\
(v+v^{-1})\underline{H}_x, & xs<x;
\end{cases}
\end{displaymath}
and
\begin{displaymath}
\underline{H}_s\underline{H}_x=
\begin{cases}
\underline{H}_{sx}+\sum_{y<x,sy<y}\mu(y,x)\underline{H}_y, 
& sx>x;\\
(v+v^{-1})\underline{H}_x, & sx<x.
\end{cases}
\end{displaymath}
\end{proposition}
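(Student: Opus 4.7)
The plan is to prove the first formula (for $\underline{H}_x \underline{H}_s$) by induction on $\mathfrak{l}(x)$; the second (for $\underline{H}_s \underline{H}_x$) will then follow by applying the algebra anti-involution $\iota\colon \mathbb{H} \to \mathbb{H}$ defined by $\iota(v)=v$ and $\iota(H_w) = H_{w^{-1}}$. This $\iota$ fixes $\underline{H}_s$, commutes with the bar involution (check on generators), sends $\underline{H}_w \mapsto \underline{H}_{w^{-1}}$ by Kazhdan--Lusztig uniqueness, and transports the identities using the inversion symmetry $\mu(y,x)=\mu(y^{-1},x^{-1})$ of the $\mu$-function (itself a consequence of $h_{y,x}=h_{y^{-1},x^{-1}}$). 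The base $x=e$ is immediate: $\underline{H}_e \underline{H}_s = \underline{H}_s$ matches the stated formula with empty correction sum.

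For the inductive step with $xs<x$, I write $x = y \cdot s$ with $y := xs$ of shorter length (so $ys=x>y$), and apply the inductive hypothesis to $y$:
\begin{displaymath}
\underline{H}_y \underline{H}_s \;=\; \underline{H}_x + \sum_{z<y,\, zs<z} \mu(z,y)\, \underline{H}_z.
\end{displaymath}
Solving for $\underline{H}_x$, right-multiplying by $\underline{H}_s$, and then invoking the elementary identity $\underline{H}_s^2 = (v+v^{-1})\underline{H}_s$ together with the inductive hypothesis applied to each $\underline{H}_z \underline{H}_s$ in the subtracted sum (every such $z$ has $zs<z$ and $\mathfrak{l}(z)<\mathfrak{l}(x)$), the right-hand side collapses and delivers $\underline{H}_x \underline{H}_s = (v+v^{-1})\underline{H}_x$.

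The harder case is $xs>x$. Here I set
\begin{displaymath}
P \;:=\; \underline{H}_x \underline{H}_s \;-\; \sum_{y<x,\, ys<y} \mu(y,x)\, \underline{H}_y
\end{displaymath}
and argue $P = \underline{H}_{xs}$ via the uniqueness of the Kazhdan--Lusztig basis: $P$ is bar-invariant (the $\mu$-values are integers and each summand is bar-invariant), so it suffices to show that the standard-basis expansion of $P$ has the form $H_{xs}+\sum_{z\neq xs} p_z H_z$ with $p_z \in v\mathbb{Z}[v]$ and $p_z=0$ unless $z\leq xs$. Expanding $\underline{H}_x \underline{H}_s = \underline{H}_x H_s + v\underline{H}_x$ in the standard basis via the quadratic relation, a short case split on whether $ws\gtrless w$ gives that the coefficient of $H_w$ in $\underline{H}_x \underline{H}_s$ equals $h_{ws,x}+v\, h_{w,x}$ when $ws>w$, and $h_{ws,x}+v^{-1} h_{w,x}$ when $ws<w$. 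For $w=xs$ (in the second regime) this specializes to $h_{x,x}+v^{-1}h_{xs,x}=1$, since $xs\not\leq x$ forces $h_{xs,x}=0$, and the subtracted sum contributes nothing to $H_{xs}$ by the Bruhat-order bound on the support of $h_{xs,y}$. For other $w$, the only source of a non-positive power of $v$ is the term $v^{-1}h_{w,x}$ in the regime $ws<w$; its constant term in $v$ is precisely $\mu(w,x)$ by definition, and this is cancelled exactly by the $y=w$ contribution to the subtracted sum, which is present precisely when $w<x$ and $ws<w$.

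The main obstacle will be the remaining bookkeeping in the case $xs>x$: checking that $p_z=0$ for $z\not\leq xs$ and that no extraneous constant terms enter from the other terms of $\sum \mu(y,x)\,\underline{H}_y$. Both reduce to the standard vanishing $h_{z,y}=0$ whenever $z\not\leq y$, together with the classical lifting property of the Bruhat order, which controls how the intervals $[e,x]$ and $[e,xs]$ relate through the simple reflection $s$. Once these are verified, the triangular expansion of $P$ matches the defining conditions of the Kazhdan--Lusztig element $\underline{H}_{xs}$ and uniqueness completes the argument.
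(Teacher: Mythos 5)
The paper gives no proof of this proposition — it is simply quoted from Kazhdan--Lusztig in Soergel's normalization — so there is nothing internal to compare against; your argument is correct and is the standard proof of this fact. In particular, the coefficient computation ($h_{ws,x}+vh_{w,x}$ for $ws>w$ versus $h_{ws,x}+v^{-1}h_{w,x}$ for $ws<w$), the exact cancellation of the constant terms $\mu(w,x)$ by the subtracted sum, the lifting-property verification of the support bound $z\leq xs$, the appeal to uniqueness of the bar-invariant element $\underline{H}_{xs}$, and the reduction of the left-multiplication formula to the right-multiplication one via the anti-involution $H_w\mapsto H_{w^{-1}}$ all match the classical treatment.
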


\begin{example}\label{exm804}
{\rm  
Here is the essential part of the multiplication table for 
$\mathbb{H}$ in the Kazhdan-Lusztig basis for $n=3$ 
(see Example~\ref{exm777}):
{\small
\begin{displaymath}
\begin{array}{|c||c|c|c|c|c|c|}
\hline
*&\underline{H}_e&\underline{H}_s&\underline{H}_t&
\underline{H}_{st}&\underline{H}_{ts}&\underline{H}_{sts}\\
\hline\hline
\underline{H}_e&\underline{H}_e&\underline{H}_s&\underline{H}_t&
\underline{H}_{st}&\underline{H}_{ts}&\underline{H}_{sts}\\
\hline
\underline{H}_s&\underline{H}_s&(v+v^{-1})\underline{H}_s&
\underline{H}_{st}&
(v+v^{-1})\underline{H}_{st}&
\underline{H}_{sts}+\underline{H}_{s}&(v+v^{-1})\underline{H}_{sts}\\
\hline
\underline{H}_t&\underline{H}_t&\underline{H}_{ts}&
(v+v^{-1})\underline{H}_t&
\underline{H}_{sts}+\underline{H}_{t}&
(v+v^{-1})\underline{H}_{ts}&(v+v^{-1})\underline{H}_{sts}\\
\hline
\end{array}
\end{displaymath}
}
}
\end{example}

For $x,y\in W$ write $x\leq_L y$ provided that there is $z\in W$
such that $\underline{H}_y$ occurs with a nonzero coefficient in
the decomposition of $\underline{H}_z\underline{H}_x$ in the
Kazhdan-Lusztig basis. Then $\leq_L$ is a partial pre-order on
$W$. Define $\leq_R$ and $\leq_{LR}$ similarly for the right and
the two-sided multiplications, respectively. Equivalence classes
with respect to $\leq_L$, $\leq_R$ and $\leq_{LR}$ are called
{\em Kazhdan-Lusztig left, right and two-sided cells}, respectively.
\index{Kazhdan-Lusztig cell}
We denote the corresponding equivalence relations by
$\sim_L$, $\sim_R$ and $\sim_{LR}$, respectively.
It turns out that the latter
can be given a nice combinatorial description. Recall that
the  Robinson-Schensted correspondence associates to every 
$w\in W$ a pair $(p(w),q(w))$ of standard Young tableaux
of the same shape, see \cite[Section~3.1]{Sa}.

\begin{proposition}[\cite{KL}]\label{prop805}
For $x,y\in W$ we have the following:
\begin{enumerate}[$($a$)$]
\item\label{prop805.1} $x\sim_R y$
if and only if $p(x)=p(y)$.
\item\label{prop805.2} $x\sim_L y$
if and only if $q(x)=q(y)$.
\item\label{prop805.3} $x\sim_{LR} y$
if and only if $p(x)$ and $p(y)$ have the same shape.
\end{enumerate}
\end{proposition}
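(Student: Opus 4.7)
The plan is to reduce the description of cells for $W\cong\mathbb{S}_n$ to the combinatorics of Knuth equivalence and then invoke the classical bijection underlying the Robinson-Schensted correspondence. Claim \eqref{prop805.2} will follow from \eqref{prop805.1} by symmetry: the anti-automorphism $w\mapsto w^{-1}$ of $W$ induces a ring anti-automorphism of $\mathbb{H}$ fixing every $\underline{H}_s$, hence swaps $\sim_L$ and $\sim_R$; combined with the basic RS-identity $p(w^{-1})=q(w)$ this converts \eqref{prop805.1} into \eqref{prop805.2}. Claim \eqref{prop805.3} will then follow from \eqref{prop805.1} and \eqref{prop805.2} together with the elementary fact that two pairs $(p(x),q(x))$ and $(p(y),q(y))$ of standard Young tableaux have the same shape if and only if one can go from $x$ to $y$ by alternating right and left equivalences.

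The heart of the argument is claim \eqref{prop805.1}. First, I would rewrite $\leq_R$ in a more combinatorial form using Proposition~\ref{prop803}: for a simple reflection $s$ with $xs>x$ the product $\underline{H}_x\underline{H}_s$ expands as $\underline{H}_{xs}+\sum_{y<x,\,ys<y}\mu(y,x)\underline{H}_y$, so the ``elementary'' downward steps in $\leq_R$ are controlled by pairs $(y,x)$ with $\mu(y,x)\neq 0$, $ys<y$, $xs>x$ (right descent sets differ in exactly the position $s$). In particular, $\sim_R$ is the equivalence relation on $W$ generated by all such pairs $(x,xs)$ together with those pairs $(y,x)$ coming from nonzero $\mu$'s with matching right-descent behaviour.

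Next, one analyses these generators through \emph{Kazhdan--Lusztig star operations}. For adjacent simple reflections $s=s_i$, $t=s_{i+1}$ (satisfying $sts=tst$) and for $w\in W$ whose right descent set meets $\{s,t\}$ in exactly one element, the set $\{w,w'\}$ consisting of the two such elements in a common right-cell-neighbourhood is called an $\{s,t\}$-string, and the star operation $w\mapsto w^{\ast}$ produces a canonical partner. The key point is that $w$ and $w^{\ast}$ lie in the same right cell; this is checked directly from the $\mu$-formulas in Proposition~\ref{prop803}, using that $\mu(w,w^{\ast})=1$ in this configuration. The hard step is then the combinatorial identification: the star operations on $W=\mathbb{S}_n$, read off the one-line notation of $w$, coincide precisely with the elementary \emph{dual Knuth} moves that permute the positions of $i$, $i{+}1$, $i{+}2$ in the one-line notation according to the rules $acb\leftrightarrow cab$ and $bac\leftrightarrow bca$ (with $a<b<c$).

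Finally, I would invoke the classical theorem (see \cite[Chapter~3]{Sa}) that two elements of $\mathbb{S}_n$ are dual-Knuth equivalent if and only if they have the same insertion tableau $p(\cdot)$. Combined with the previous step this proves one inclusion: $x\sim_R y$ implies $p(x)=p(y)$, and conversely dual-Knuth equivalence lies inside $\sim_R$. A separate counting/matching argument, comparing the number of right cells with the number of standard tableaux via $|W|=\sum_{\lambda\vdash n} d_\lambda^2$, shows the two partitions agree, yielding \eqref{prop805.1}. The main obstacle is the explicit identification of KL star operations with dual Knuth moves; everything else is formal manipulation of $\mu$-values and of the RS bijection.
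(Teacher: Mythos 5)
First, note that the paper does not prove this proposition: it is quoted as a known theorem of Kazhdan--Lusztig (the citation key printed as \cite{KL} actually resolves to Khovanov--Lauda in the bibliography; the intended reference is \cite{KaLu}). So your sketch has to be judged on its own. It does follow the classical route (star operations, Knuth-type relations, the RS bijection), and the easy reductions are fine: \eqref{prop805.2} from \eqref{prop805.1} via $w\mapsto w^{-1}$ and $p(w^{-1})=q(w)$, and the fact that star partners lie in the same right cell via the $\mu$-recursions of Proposition~\ref{prop803}. One terminological slip: the moves you write, $acb\leftrightarrow cab$ and $bac\leftrightarrow bca$ on three consecutive positions of the one-line notation, are the \emph{Knuth} relations (which preserve the insertion tableau $p$), not the dual Knuth relations (which act on the values $i,i{+}1,i{+}2$ and preserve $q$); since right multiplication by $s_i$ swaps positions, it is indeed the Knuth relations that match the right star operations, so the substance is correct but the naming should be fixed.

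The genuine gap is in the converse inclusion of \eqref{prop805.1}. Your star-operation argument shows that each fiber of $p$ (each Knuth class) is contained in a single right cell, i.e.\ $p(x)=p(y)\Rightarrow x\sim_R y$. To conclude equality you propose ``a counting/matching argument via $|W|=\sum_\lambda d_\lambda^2$,'' but this only tells you that the number of right cells is \emph{at most} $\sum_\lambda d_\lambda$; you have no a priori lower bound, so a right cell could still be a union of several Knuth classes. The classical proof closes this by a separate argument: one shows that $x\sim_R y$ forces $x$ and $y$ to have the same generalized $\tau$-invariant (iterated left descent sets transported by star operations), and that in $\mathbb{S}_n$ this invariant determines $p(x)$. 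Any attempt to get the lower bound from dimension counting instead tends to presuppose irreducibility of the cell modules, which in this paper (Proposition~\ref{prop807}) is a \emph{consequence} of the statement being proved. A smaller but analogous gap occurs in \eqref{prop805.3}: from \eqref{prop805.1} and \eqref{prop805.2} you get that equal shape implies $x\sim_{LR}y$ (pass through $z$ with $p(z)=p(y)$, $q(z)=q(x)$), but the reverse implication needs to rule out that $\leq_{LR}$ identifies distinct shapes, which requires an extra input such as the compatibility of the two-sided order with the dominance order on partitions (or the $a$-function); it is not the ``elementary fact'' you describe.
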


\begin{example}\label{exm806}
{\rm  
In the case $n=3$ we have the following
\begin{displaymath}
\begin{array}{rl}
\text{right cells:}\quad&
\{e\} , \quad \{s,{st}\}, 
\quad \{t,{ts}\}, \quad \{{sts}\};\\
\text{left cells:}\quad&
\{e\} , \quad \{s,{ts}\}, 
\quad \{t,{st}\}, \quad \{{sts}\};\\
\text{two-sided cells:}\quad&
\{e\} , \quad \{s,{st},t,{ts}\}, 
\quad \{{sts}\}.
\end{array}
\end{displaymath}
}
\end{example}

\subsection{Cell modules}\label{s8.3}

Fix now a left cell $\mathcal{L}$ of $W$. Then the 
$\mathbb{Z}[v,v^{-1}]$-linear span 
$\mathbb{Z}[v,v^{-1}]\mathcal{L}$
of $\underline{H}_w$, $w\in \mathcal{L}$,
has the natural structure of an $\mathbb{H}$-module, given by the
left multiplication with the elements in the Kazhdan-Lusztig basis
(and treating all vectors which do not belong to 
$\mathbb{Z}[v,v^{-1}]\mathcal{L}$ as zero). This module
is called the {\em cell module} corresponding to $\mathcal{L}$.
\index{cell module}
Similarly one defines a right cell $\mathbb{H}$-module for every
right cell $\mathcal{R}$. Note that the cell module 
$\mathbb{Z}[v,v^{-1}]\mathcal{L}$ comes along with
a distinguished $\mathbb{Z}[v,v^{-1}]$-bases, namely the 
Kazhdan-Lusztig basis $\{\underline{H}_w:w\in \mathcal{L}\}$.

\begin{proposition}[\cite{KaLu,Na}]\label{prop807}
Let $\mathcal{L}\subset W$ be a left cell and $\lambda\vdash n$
such that $p(w)$ has shape $\lambda$ for all $w\in \mathcal{L}$.
Then the $W$-module obtained by complexification and generic
specialization of $\mathbb{Z}[v,v^{-1}]\mathcal{L}$ is isomorphic 
to the Specht $W$-module corresponding to $\lambda$, in particular,
it is simple.
\end{proposition}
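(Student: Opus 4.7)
The plan is to set up the cell module as a subquotient of the Hecke algebra, count its rank via the Robinson-Schensted correspondence, and identify it with the Specht module after generic specialization using the block decomposition of the semisimple Hecke algebra.

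First I would verify that $\mathbb{Z}[v,v^{-1}]\mathcal{L}$ genuinely carries a left $\mathbb{H}$-action. Fix any $w_0 \in \mathcal{L}$ and set
\[
I^{\leq_L} := \bigoplus_{w \leq_L w_0} \mathbb{Z}[v,v^{-1}]\underline{H}_w, \qquad I^{<_L} := \bigoplus_{w <_L w_0} \mathbb{Z}[v,v^{-1}]\underline{H}_w.
\]
Proposition~\ref{prop803} shows that left multiplication by any $\underline{H}_s$ sends $\underline{H}_x$ to a $\mathbb{Z}[v,v^{-1}]$-combination of $\underline{H}_y$'s with $y \leq_L x$, so both subspaces are left ideals (independent of the choice of $w_0 \in \mathcal{L}$, since $\mathcal{L}$ is an equivalence class for $\sim_L$). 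The quotient $I^{\leq_L}/I^{<_L}$ carries the desired left $\mathbb{H}$-action with distinguished basis $\{[\underline{H}_w] : w \in \mathcal{L}\}$, realising the cell module as in the statement.

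Second, I would determine its rank via Robinson-Schensted. By Proposition~\ref{prop805}\eqref{prop805.2}, $\mathcal{L}$ is precisely the set of $w \in \mathbb{S}_n$ whose recording tableau $q(w)$ equals some fixed $Q$ of shape $\lambda$; then $p(w)$ ranges freely over standard Young tableaux of shape $\lambda$, giving $|\mathcal{L}| = f^{\lambda} = \dim_{\mathbb{C}} S_{\lambda}$. Third, I would pass to $\mathbb{K} := \mathbb{C}(v)$. The algebra $\mathbb{H}_{\mathbb{K}}$ is semisimple, and by Tits' deformation theorem $\mathbb{H}_{\mathbb{K}} \cong \mathbb{K}[\mathbb{S}_n] \cong \bigoplus_{\mu \vdash n} M_{f^{\mu}}(\mathbb{K})$, with the $\mu$-th Wedderburn block corresponding to the Specht module $S_{\mu}^{\mathbb{K}}$. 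The filtration of $\mathbb{H}$ by two-sided cell ideals becomes, after base change, compatible with this block decomposition, and the two-sided cell containing $\mathcal{L}$ — labelled by $\lambda$ via Proposition~\ref{prop805}\eqref{prop805.3} — maps onto the $\lambda$-block. Since $\mathbb{K} \otimes \mathbb{Z}[v,v^{-1}]\mathcal{L}$ is then a left module of dimension $f^{\lambda}$ for $M_{f^{\lambda}}(\mathbb{K})$, and the latter admits a unique simple left module of exactly that dimension, the cell module must be simple and isomorphic to $S_{\lambda}^{\mathbb{K}}$.

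The main obstacle is the correct matching of the two-sided cell filtration with the Wedderburn blocks — in particular, verifying that the two-sided cell containing $\mathcal{L}$ really corresponds to the block indexed by $\lambda$ and not by some unrelated partition $\mu$ that happens to satisfy $f^{\mu} = f^{\lambda}$. In general this compatibility is the hard work of Lusztig's asymptotic Hecke algebra and the $\mathbf{a}$-function; in type $A$ one can proceed more directly, either by computing the character of the cell module on an appropriate conjugacy class and invoking the Murnaghan-Nakayama rule, or — following \cite{Na} — by constructing an explicit isomorphism between $\{[\underline{H}_w] : w \in \mathcal{L}\}$ and the polytabloid basis of $S_{\lambda}$, for which the Kazhdan-Lusztig left action in Proposition~\ref{prop803} matches the standard $\mathbb{S}_n$-action on polytabloids after specialization.
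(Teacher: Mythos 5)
The paper offers no proof of this proposition---it is quoted from \cite{KaLu} and \cite{Na}---and your outline is exactly the standard argument those references carry out: realise the cell module as a subquotient of left ideals in the Kazhdan--Lusztig basis, count its rank as $f^{\lambda}$ via Robinson--Schensted and Proposition~\ref{prop805}, pass to the semisimple generic Hecke algebra, and match the two-sided cell with the Wedderburn block indexed by $\lambda$. You correctly isolate that last matching as the only genuinely nontrivial step, and your proposed resolution via an explicit isomorphism onto the polytabloid basis is precisely the content of \cite{Na}, so there is no gap. One small caution: the paper's convention for $\leq_L$ in Subsection~\ref{s8.2} is reversed relative to yours ($x\leq_L y$ there means $\underline{H}_y$ occurs in some $\underline{H}_z\underline{H}_x$), so your ideals $I^{\leq_L}$ and $I^{<_L}$ should be written with the inequalities in the other direction to be consistent with the text; the construction itself is unaffected.
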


Using the abstract approach of Subsection~\ref{s3.5} we get 
the $2$-representations of $\cS$ and $\cS^{\mathbb{Z}}$ corresponding
to right cells. By construction, these $2$-representations of $\cS$
are (genuine) categorifications of the corresponding cell modules.
Note that our terminology from Subsection~\ref{s3.5} is opposite
to the one above. This is done to compensate for
the anti-nature of our categorification of $\mathbb{S}_n$ and
$\mathbb{H}$ (see Corollary~\ref{cor506} and 
Theorem~\ref{thm802}). Later on we will construct these 
$2$-representations intrinsically in terms of the category $\mathcal{O}$.

\subsection{Categorification of the induced sign module}\label{s8.4}

Let $\mathfrak{p}\subset\mathfrak{g}$ be a parabolic subalgebra. 
Then the category 
$\mathcal{O}_{\lambda}^{\mathfrak{p}}$ is stable with respect to
the action of projective functors and hence defines, by restriction,
a $2$-representation of $\cS$. The algebra $B_{\lambda}^{\mathfrak{p}}$
inherits from $B_{\lambda}$ a positive grading. Hence we can
consider the graded lift 
${}^{\mathbb{Z}}\mathcal{O}_{\lambda}^{\mathfrak{p}}$ of
$\mathcal{O}_{\lambda}^{\mathfrak{p}}$, which is stable with respect to
the action of graded lifts of projective functors. In this way we obtain,
by restriction, a $2$-representation of $\cS^{\mathbb{Z}}$.
A natural question to ask is: What do these $2$-representations 
categorify?

\begin{example}\label{exm808}
{\rm  
If $\mathfrak{p}=\mathfrak{g}$, the category 
$\mathcal{O}_{\lambda}^{\mathfrak{p}}$ is semi-simple and
$L(\lambda)$ is the unique up to isomorphism simple module in this
category. We have $\theta_s L(\lambda)=0$ for every simple
reflection $s$. Thus $[\mathcal{O}_{\lambda}^{\mathfrak{p}}]^{\mathbb{C}}$
is one-dimensional and  we have $(e+s)[L(\lambda)]=0$, that is
$s[L(\lambda)]=-[L(\lambda)]$. This means that our 
$2$-representation of $\cS$ on $\mathcal{O}_{\lambda}^{\mathfrak{p}}$
is a categorification of the {\em sign} $W$-module.
\index{sign module}
}
\end{example}

The above example suggests the answer in the general case. 
Let $\mathbb{H}^{\mathfrak{p}}$ denote the Hecke algebra of
$W^{\mathfrak{p}}$. For $u\in\{-v,v^{-1}\}$ define on 
$\mathbb{Z}[v,v^{-1}]$ the structure of an 
$\mathbb{H}^{\mathfrak{p}}$-bimodule, which we denote by $\mathcal{V}_u$,
via the surjection $H_s\mapsto u$ for every simple reflection 
$s\in W^{\mathfrak{p}}$. Define the {\em parabolic} $\mathbb{H}$-module
\index{parabolic module}
$\mathcal{M}_u$ as follows:
\begin{displaymath}
\mathcal{M}_u:=\mathcal{V}_u\otimes_{\mathbb{H}^{\mathfrak{p}}}
\mathbb{H}. 
\end{displaymath}
Let $W(\mathfrak{p})$ denote the set of shortest coset representatives
in $W^{\mathfrak{p}}\setminus W$. The elements $M_x=1\otimes H_x$,
$x\in W(\mathfrak{p})$, form a basis of $\mathcal{M}_u$. The action
of $\underline{H}_s$, $s\in S$, in this basis is given by:
\begin{equation}\label{eq821}
M_x \underline{H}_s=
\begin{cases}
M_{xs}+vM_x,& xs\in W(\mathfrak{p}), xs>x;\\
M_{xs}+v^{-1}M_x,& xs\in W(\mathfrak{p}), xs<x;\\
(v+v^{-1})M_x,& xs\not\in W(\mathfrak{p}), u=v^{-1};\\
0,& xs\not\in W(\mathfrak{p}), u=-v.
\end{cases}
\end{equation}
Under the specialization $v\mapsto 1$, the module 
$\mathcal{M}_{v^{-1}}$ specializes to the permutation module
${}_{\mathbb{Z}[W^{\mathfrak{p}}]}\setminus\mathbb{Z}[W]$, while the module
$\mathcal{M}_{-v}$ specializes to the {\em induced sign} module
\index{induced sign module}
\begin{displaymath}
\mathrm{sign}\bigotimes_{\mathbb{Z}[W^{\mathfrak{p}}]} \mathbb{Z}[W],
\end{displaymath}
where the $\mathbb{Z}[W^{\mathfrak{p}}]$-module $\mathrm{sign}$ is 
given via the surjection $\mathbb{Z}[W^{\mathfrak{p}}]\to\mathbb{Z}$ 
which sends a simple reflection $s$ to $-1$.

Applying $\mathrm{Z}_{\mathfrak{p}}$ to Lemma~\ref{lem731}, we obtain:

\begin{lemma}\label{lem809}
For every $x\in W(\mathfrak{p})$ and any simple reflection $s$ we have:
\begin{enumerate}[$($a$)$]
\item\label{lem809.1} If $xs\not\in W(\mathfrak{p})$, then
$\theta_s \Delta^{\mathfrak{p}}(x\cdot\lambda)=0$.
\item\label{lem809.2} If $xs\in W(\mathfrak{p})$ and $xs>x$, then
there is a short exact sequence as follows:
\begin{displaymath}
0\to \Delta^{\mathfrak{p}}(x\cdot\lambda)\langle -1\rangle\to
\theta_s \Delta^{\mathfrak{p}}(x\cdot\lambda)\to
\Delta^{\mathfrak{p}}(xs\cdot\lambda)\to 0.
\end{displaymath}
\item\label{lem809.3} If $xs\in W(\mathfrak{p})$ and $xs<x$, then
there is a short exact sequence as follows:
\begin{displaymath}
0\to \Delta^{\mathfrak{p}}(xs\cdot\lambda)\to
\theta_s \Delta^{\mathfrak{p}}(x\cdot\lambda)\to
\Delta^{\mathfrak{p}}(x\cdot\lambda)\langle 1\rangle\to 0.
\end{displaymath}
\end{enumerate}
\end{lemma}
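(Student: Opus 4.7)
The plan is to apply the Zuckerman functor $\mathrm{Z}_{\mathfrak{p}}$ to the short exact sequences of Lemma~\ref{lem731} and exploit three properties in tandem: $\mathrm{Z}_{\mathfrak{p}}$ is right exact (as a left adjoint), $\mathrm{Z}_{\mathfrak{p}}\circ\theta_s\cong\theta_s\circ\mathrm{Z}_{\mathfrak{p}}$ by Proposition~\ref{prop671}, and the dichotomy
\[
\mathrm{Z}_{\mathfrak{p}}\Delta(w\cdot\lambda)=
\begin{cases}
\Delta^{\mathfrak{p}}(w\cdot\lambda), & w\in W(\mathfrak{p}),\\
0, & w\notin W(\mathfrak{p}).
\end{cases}
\]
The second line holds because $\mathrm{Z}_{\mathfrak{p}}M$ is the largest quotient of $M$ in the Serre subcategory $\mathcal{O}^{\mathfrak{p}}$: the module $\Delta(w\cdot\lambda)$ has simple top $L(w\cdot\lambda)$, which by Proposition~\ref{prop411} fails to lie in $\mathcal{O}^{\mathfrak{p}}$ when $w\notin W(\mathfrak{p})$, so no nonzero quotient lies in $\mathcal{O}^{\mathfrak{p}}$. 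I will also need a short Bruhat-length verification at the outset that for $x\in W(\mathfrak{p})$ with $xs\notin W(\mathfrak{p})$ one has $xs>x$: otherwise a simple $t\in W_{\mathfrak{p}}$ with $txs<xs$ would give $\ell(tx)\ge\ell(x)+1$ and $\ell(txs)\le\ell(x)-2$, contradicting $|\ell(tx)-\ell(txs)|=1$.

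For part~(a), applying $\mathrm{Z}_{\mathfrak{p}}$ to the sequence of Lemma~\ref{lem731}(a) makes the rightmost term vanish by the dichotomy, so $\theta_s\Delta^{\mathfrak{p}}(x\cdot\lambda)$ appears as a quotient of $\Delta^{\mathfrak{p}}(x\cdot\lambda)\langle-1\rangle$. To see this quotient is zero I would pass to the dual picture via $\star$ (which commutes with $\theta_s$ by Proposition~\ref{prop503}(\ref{prop503.8})): it suffices to prove $\hat{\mathrm{Z}}_{\mathfrak{p}}\theta_s\nabla(x\cdot\lambda)=0$, where $\hat{\mathrm{Z}}_{\mathfrak{p}}$ returns the largest submodule in $\mathcal{O}^{\mathfrak{p}}$. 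The dual of Lemma~\ref{lem731}(a) realizes $\nabla(xs\cdot\lambda)$ as a submodule of $\theta_s\nabla(x\cdot\lambda)$, forcing the simple $L(xs\cdot\lambda)\notin\mathcal{O}^{\mathfrak{p}}$ into the socle; a self-adjunction analysis $\mathrm{Hom}(L,\theta_s\nabla(x\cdot\lambda))\cong\mathrm{Hom}(\theta_s L,\nabla(x\cdot\lambda))$ rules out the only other candidate socle constituent $L(x\cdot\lambda)\langle 1\rangle$, showing that the socle of $\theta_s\nabla(x\cdot\lambda)$ sits entirely outside $\mathcal{O}^{\mathfrak{p}}$ and hence $\hat{\mathrm{Z}}_{\mathfrak{p}}\theta_s\nabla(x\cdot\lambda)=0$.

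For parts~(b) and~(c), applying $\mathrm{Z}_{\mathfrak{p}}$ to the respective sequence of Lemma~\ref{lem731} together with the commutation $\mathrm{Z}_{\mathfrak{p}}\theta_s\cong\theta_s\mathrm{Z}_{\mathfrak{p}}$ immediately produces the asserted three-term sequences, but only as right-exact sequences. The remaining issue, which is the main obstacle, is to upgrade these to short exact sequences, i.e.\ to establish injectivity of the leftmost map. Passing to the derived Zuckerman functor $\mathcal{L}\mathrm{Z}_{\mathfrak{p}}$ and examining the associated long exact sequence, this injectivity reduces to the acyclicity
\[
\mathcal{L}_i\mathrm{Z}_{\mathfrak{p}}\Delta(y\cdot\lambda)=0\quad\text{for }y\in W(\mathfrak{p})\text{ and }i>0,
\]
applied to $y=xs$ in~(b) and $y=x$ in~(c). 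The plan for this acyclicity is to exploit the parabolic BGG-type resolution of $\Delta^{\mathfrak{p}}(y\cdot\lambda)$ by ordinary Vermas $\Delta(vy\cdot\lambda)$ with $v\in W_{\mathfrak{p}}$: every intermediate Verma ($v\ne e$) satisfies $vy\notin W(\mathfrak{p})$ and is therefore annihilated by $\mathrm{Z}_{\mathfrak{p}}$, and an induction on homological degree collapses the higher derived contributions to zero.
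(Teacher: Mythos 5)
Your overall route is exactly the paper's: the entire published argument is the single phrase ``Applying $\mathrm{Z}_{\mathfrak{p}}$ to Lemma~\ref{lem731}, we obtain'', and your proposal fills in what that means, correctly isolating the two points that need work (why $\theta_s\Delta^{\mathfrak{p}}(x\cdot\lambda)$ vanishes in case (a), and why right-exactness of $\mathrm{Z}_{\mathfrak{p}}$ upgrades to exactness in cases (b) and (c)). Your treatment of (a) is correct: the length argument showing $xs\notin W(\mathfrak{p})$ forces $xs>x$ works, and the socle analysis of $\theta_s\nabla(x\cdot\lambda)$ goes through once you state explicitly the standard fact $\theta_s L(x\cdot\lambda)=0$ for $xs>x$ which your adjunction step silently uses. (A shortcut: the right-exact sequence already exhibits $\theta_s\Delta^{\mathfrak{p}}(x\cdot\lambda)$ as a quotient of $\Delta^{\mathfrak{p}}(x\cdot\lambda)\langle -1\rangle$, so if nonzero it has simple top $L(x\cdot\lambda)\langle -1\rangle$, and $\mathrm{Hom}(\theta_s\Delta^{\mathfrak{p}}(x\cdot\lambda),L(x\cdot\lambda)\langle -1\rangle)\cong \mathrm{Hom}(\Delta^{\mathfrak{p}}(x\cdot\lambda),\theta_s L(x\cdot\lambda)\langle -1\rangle)=0$.)

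The gap is in your justification of the acyclicity $\mathcal{L}_i\mathrm{Z}_{\mathfrak{p}}\Delta(y\cdot\lambda)=0$ for $y\in W(\mathfrak{p})$ and $i>0$. The statement is true and is exactly the right input for (b) and (c), but it does not follow from the relative BGG resolution as you describe. That resolution resolves $\Delta^{\mathfrak{p}}(y\cdot\lambda)$ by the ordinary Verma modules $\Delta(vy\cdot\lambda)$, $v\in W_{\mathfrak{p}}$; to compute left derived functors of $\mathrm{Z}_{\mathfrak{p}}$ on $\Delta(y\cdot\lambda)$ you would need a resolution of $\Delta(y\cdot\lambda)$ itself by projective or $\mathrm{Z}_{\mathfrak{p}}$-acyclic objects, and the terms $\Delta(vy\cdot\lambda)$ with $v\neq e$ are not $\mathrm{Z}_{\mathfrak{p}}$-acyclic: they are killed by $\mathrm{Z}_{\mathfrak{p}}$ but have nonvanishing higher derived Zuckerman functors (already for $\mathfrak{gl}_2$ one has $\mathcal{L}_1\mathrm{Z}_s\Delta(s\cdot 0)=\mathrm{Q}_s L(s\cdot 0)=L(0)\neq 0$, cf.\ Proposition~\ref{prop672} and Example~\ref{exm673}). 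So ``annihilated by $\mathrm{Z}_{\mathfrak{p}}$'' is not the vanishing you need, and the proposed induction on homological degree has nothing to run on. To close the gap, either invoke the Bott--Kostant type computation of derived Zuckerman functors on Verma modules (the content of \cite{EW}, which underlies Proposition~\ref{prop672}), or bypass derived functors entirely: $\theta_s$ preserves the category of modules with a parabolic standard flag, so $\theta_s\Delta^{\mathfrak{p}}(x\cdot\lambda)$ has a $\Delta^{\mathfrak{p}}$-flag whose multiplicities equal $\dim\mathrm{Hom}(\Delta^{\mathfrak{p}}(x\cdot\lambda),\theta_s\nabla^{\mathfrak{p}}(y\cdot\lambda))$ by adjunction, and comparing these with the right-exact sequence you already obtained pins down the flag and hence yields injectivity of the left-hand map.
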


There is a unique $\mathbb{Z}[v,v^{-1}]$-linear isomorphism 
from $\mathcal{M}_{-v}$ to 
$[{}^{\mathbb{Z}}\mathcal{O}_{\lambda}^{\mathfrak{p}}]$
sending $M_x$ to $[\Delta^{\mathfrak{p}}(x\cdot\lambda)]$
for all $x\in W(\mathfrak{p})$. Comparing \eqref{eq821} with
Lemma~\ref{lem809}, we obtain:

\begin{proposition}[\cite{So3}]\label{prop811}
The action of $\cS^{\mathbb{Z}}$ (resp. $\cS$) on 
${}^{\mathbb{Z}}\mathcal{O}_{\lambda}^{\mathfrak{p}}$
(resp. $\mathcal{O}_{\lambda}^{\mathfrak{p}}$) categorifies
the parabolic module $\mathcal{M}_{-v}$ (resp. the induced sign module).
\end{proposition}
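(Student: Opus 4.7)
The plan is to verify directly that the $\mathbb{Z}[v,v^{-1}]$-linear map
\begin{displaymath}
\Phi : \mathcal{M}_{-v} \longrightarrow [{}^{\mathbb{Z}}\mathcal{O}_{\lambda}^{\mathfrak{p}}], \qquad M_x \longmapsto [\Delta^{\mathfrak{p}}(x\cdot\lambda)],
\end{displaymath}
identified just before the proposition, is an isomorphism of $\mathbb{H}$-modules. First I would record that $\Phi$ is already a $\mathbb{Z}[v,v^{-1}]$-module isomorphism: the elements $M_x$, $x \in W(\mathfrak{p})$, form a basis of $\mathcal{M}_{-v}$ by construction, while, since $B_{\lambda}^{\mathfrak{p}}$ is quasi-hereditary with parabolic Verma modules as standard modules (by \cite{RC}, cited in Subsection~\ref{s4.6}), the classes $[\Delta^{\mathfrak{p}}(x\cdot\lambda)]$, $x \in W(\mathfrak{p})$, form a $\mathbb{Z}[v,v^{-1}]$-basis of $[{}^{\mathbb{Z}}\mathcal{O}_{\lambda}^{\mathfrak{p}}]$.

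Since the elements $\underline{H}_s$, $s \in S$, generate $\mathbb{H}$ as a $\mathbb{Z}[v,v^{-1}]$-algebra, it suffices to verify that $\Phi(M_x \underline{H}_s) = [\theta_s]\,\Phi(M_x)$ for every simple reflection $s$ and every $x \in W(\mathfrak{p})$. Recalling from Subsection~\ref{s1.5} the convention $v^i[M] = [M\langle -i\rangle]$, I would then match the three clauses of \eqref{eq821} (with $u = -v$) against the three clauses of Lemma~\ref{lem809}. When $xs \notin W(\mathfrak{p})$, formula \eqref{eq821} gives $M_x \underline{H}_s = 0$, and Lemma~\ref{lem809}\eqref{lem809.1} gives $\theta_s \Delta^{\mathfrak{p}}(x\cdot\lambda) = 0$, so both sides vanish. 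When $xs \in W(\mathfrak{p})$ with $xs > x$, the short exact sequence of Lemma~\ref{lem809}\eqref{lem809.2} yields
\begin{displaymath}
[\theta_s \Delta^{\mathfrak{p}}(x\cdot\lambda)] = [\Delta^{\mathfrak{p}}(xs\cdot\lambda)] + v[\Delta^{\mathfrak{p}}(x\cdot\lambda)] = \Phi(M_{xs} + v M_x),
\end{displaymath}
matching $M_x \underline{H}_s = M_{xs} + v M_x$. The case $xs \in W(\mathfrak{p})$ with $xs < x$ is analogous, using Lemma~\ref{lem809}\eqref{lem809.3} to produce $[\Delta^{\mathfrak{p}}(xs\cdot\lambda)] + v^{-1}[\Delta^{\mathfrak{p}}(x\cdot\lambda)] = \Phi(M_{xs} + v^{-1} M_x)$. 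This establishes the graded statement.

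For the ungraded claim, I would specialize at $v \mapsto 1$: tensoring the above isomorphism with $\mathbb{Z}$ over $\mathbb{Z}[v,v^{-1}]$ identifies $[\mathcal{O}_{\lambda}^{\mathfrak{p}}]$ (obtained from the graded category by forgetting the grading) with the $v = 1$ specialization of $\mathcal{M}_{-v}$, which, as observed in the paragraph preceding Lemma~\ref{lem809}, is precisely the induced sign module $\mathrm{sign} \otimes_{\mathbb{Z}[W^{\mathfrak{p}}]} \mathbb{Z}[W]$; the commutativity with $[\theta_s]$ persists under specialization.

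There is no genuine obstacle here, as the proof is bookkeeping once Lemma~\ref{lem809} is available. The only point to be careful with is matching the sign convention $u = -v$ in \eqref{eq821} against the vanishing in Lemma~\ref{lem809}\eqref{lem809.1}; had we instead tried $u = v^{-1}$ (the parabolic permutation module), Lemma~\ref{lem809} would not suffice, and one would need to work with dual parabolic Vermas or a different functorial setup.
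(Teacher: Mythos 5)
Your proposal is correct and is exactly the argument the paper intends: the text introduces the map $M_x\mapsto[\Delta^{\mathfrak{p}}(x\cdot\lambda)]$ and derives the proposition by ``comparing \eqref{eq821} with Lemma~\ref{lem809}'', which is precisely the case-by-case matching you carry out (with the correct sign convention $u=-v$ and the correct reading of $\langle\pm 1\rangle$ as $v^{\mp 1}$ in the Grothendieck group). Your added remarks on why the parabolic Verma classes form a $\mathbb{Z}[v,v^{-1}]$-basis and on the $v\mapsto 1$ specialization are just the bookkeeping the paper leaves implicit.
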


Since $\mathcal{O}_{\lambda}^{\mathfrak{p}}$, being quasi-hereditary,
has finite projective dimension, the $\mathbb{H}$-module 
$\mathcal{M}_{-v}\cong [{}^{\mathbb{Z}}\mathcal{O}_{\lambda}^{\mathfrak{p}}]$
has the following natural bases:
\begin{itemize}
\item The {\em standard} basis $M_x$ given by classes of parabolic
Verma modules.
\index{standard basis}
\item The {\em Kazhdan-Lusztig} basis given by classes of indecomposable
projective modules.
\index{Kazhdan-Lusztig basis}
\item The {\em dual Kazhdan-Lusztig} basis given by classes of 
simple modules.
\index{dual Kazhdan-Lusztig basis}
\item The {\em twisted Kazhdan-Lusztig} basis given by classes of 
indecomposable tilting  modules.
\index{twisted  Kazhdan-Lusztig basis}
\end{itemize}

\subsection{Categorification of Specht modules}\label{s8.5}

Similarly to the previous subsection, let 
$\mathfrak{p}\subset\mathfrak{g}$ be a parabolic subalgebra 
and $\lambda\in\mathfrak{h}^*_{\mathrm{dom}}$
be integral and regular. Denote by $w_o^{\mathfrak{p}}$ the
longest element of $W_{\mathfrak{p}}$. Then the element 
$w_o^{\mathfrak{p}}w_o$ is the unique longest element
in $W(\mathfrak{p})$. In \cite{IS} it was shown  (by explicit calculation)
that there exists an integral $\lambda'\in\mathfrak{h}^*_{\mathrm{dom}}$
such that the category $\mathcal{O}_{\lambda'}^{\mathfrak{p}}$ has
a unique simple module and hence is semi-simple. In particular,
this unique simple module is both projective and injective. Using
translation functors it follows that there is at least one
projective injective module in $\mathcal{O}_{\lambda}^{\mathfrak{p}}$.
In fact, we have the following:

\begin{theorem}[\cite{Ir0}]\label{thm821}
For $w\in W(\mathfrak{p})$ the following conditions are equivalent:
\begin{enumerate}[$($a$)$]
\item\label{thm821.1} The module $P^{\mathfrak{p}}(w\cdot\lambda)$
is injective.
\item\label{thm821.2} The module $L^{\mathfrak{p}}(w\cdot\lambda)$
occurs in the socle of some $\Delta^{\mathfrak{p}}(\mu)$.
\item\label{thm821.3} The module $L^{\mathfrak{p}}(w\cdot\lambda)$
has maximal Gelfand-Kirillov dimension in
$\mathcal{O}_{\lambda}^{\mathfrak{p}}$.
\item\label{thm821.4} $w\sim_R 
w_o^{\mathfrak{p}}w_o$.
\end{enumerate}
\end{theorem}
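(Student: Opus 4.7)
The plan is to establish $(a) \Leftrightarrow (b)$, $(a) \Leftrightarrow (d)$, and $(c) \Leftrightarrow (d)$ as three separate equivalences that share a common cell-theoretic backbone. The equivalence $(a) \Leftrightarrow (b)$ reduces, via the simple-preserving duality $\star$ which gives $P^{\mathfrak{p}}(\mu)^{\star} \cong I^{\mathfrak{p}}(\mu)$, to showing that $P^{\mathfrak{p}}(w\cdot\lambda)$ has simple socle $L^{\mathfrak{p}}(w\cdot\lambda)$ if and only if $L^{\mathfrak{p}}(w\cdot\lambda)$ sits in the socle of some parabolic Verma module. For $(a) \Rightarrow (b)$, I would take $\Delta^{\mathfrak{p}}(\mu_0)$ to be the bottom term of a parabolic Verma flag of $P^{\mathfrak{p}}(w\cdot\lambda)$; since $\Delta^{\mathfrak{p}}(\mu_0) \hookrightarrow P^{\mathfrak{p}}(w\cdot\lambda)$, its socle embeds into $\mathrm{soc}(P^{\mathfrak{p}}(w\cdot\lambda)) = L^{\mathfrak{p}}(w\cdot\lambda)$, giving the desired copy of $L^{\mathfrak{p}}(w\cdot\lambda)$ in $\mathrm{soc}(\Delta^{\mathfrak{p}}(\mu_0))$. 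For $(b) \Rightarrow (a)$, the composite embedding $L^{\mathfrak{p}}(w\cdot\lambda) \hookrightarrow \Delta^{\mathfrak{p}}(\mu) \hookrightarrow P^{\mathfrak{p}}(\mu)$ together with the $\mathrm{Ext}$-vanishing characterization of $\mathcal{F}(\nabla^{\mathfrak{p}})$ dual to Proposition \ref{cor436} forces $P^{\mathfrak{p}}(w\cdot\lambda) \in \mathcal{F}(\nabla^{\mathfrak{p}}) \cap \mathcal{F}(\Delta^{\mathfrak{p}})$, so $P^{\mathfrak{p}}(w\cdot\lambda)$ is tilting and therefore injective.

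For $(a) \Leftrightarrow (d)$, I would translate from the Irving--Shelton block. By \cite{IS}, there is an integral dominant $\lambda'$ such that $\mathcal{O}^{\mathfrak{p}}_{\lambda'}$ is semisimple with a single simple module, necessarily projective-injective-tilting. Proposition \ref{prop503}\eqref{prop503.6} guarantees that projective functors preserve both projectives and injectives, so translating out of the walls from $\lambda'$ to $\lambda$ produces projective-injective modules in $\mathcal{O}^{\mathfrak{p}}_{\lambda}$; conversely, every projective-injective in $\mathcal{O}^{\mathfrak{p}}_{\lambda}$ arises as an indecomposable summand in this way, because translating back to $\lambda'$ via the adjoint projective functor (Proposition \ref{prop504}) cannot kill a nonzero projective-injective. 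The identification of the resulting weights as the right cell of $w_o^{\mathfrak{p}} w_o$ then reduces, via Theorem \ref{thm802} and the categorification of $\mathcal{M}_{-v}$ from Proposition \ref{prop811}, to the combinatorial statement that the support in $W(\mathfrak{p})$ of $\underline{H}_z \hat{\underline{H}}_{w_o^{\mathfrak{p}} w_o}$ (as $z$ varies in $W$) coincides with the right cell of $w_o^{\mathfrak{p}} w_o$, which is essentially the definition of $\leq_R$ combined with Proposition \ref{prop803}.

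Finally, $(c) \Leftrightarrow (d)$ requires importing external input, namely the theorem of Joseph (building on Lusztig and Barbasch--Vogan) that the Gelfand--Kirillov dimension of $L(x\cdot\lambda)$ is determined by the Kazhdan--Lusztig two-sided cell of $x$ via the associated Lusztig/Springer nilpotent orbit. Within $\mathcal{O}^{\mathfrak{p}}_{\lambda}$, the two-sided cell producing maximal GK dimension corresponds to the Richardson orbit of $\mathfrak{p}$, whose Robinson--Schensted shape matches $p(w_o^{\mathfrak{p}} w_o)$ by Steinberg's theorem; together with Proposition \ref{prop805}\eqref{prop805.1} identifying right cells via $p$, this yields (d) once one checks that within the maximal two-sided cell only the right cell of $w_o^{\mathfrak{p}} w_o$ meets $W(\mathfrak{p})$ nontrivially on the level of nonvanishing $L^{\mathfrak{p}}$. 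The main obstacle will be precisely this last equivalence: the passage from GK dimension to Kazhdan--Lusztig cells relies on substantial machinery from the theory of primitive ideals and nilpotent orbits that lies beyond the scope of the preceding sections.
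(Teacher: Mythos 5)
The paper does not prove this statement; it is quoted from Irving's work \cite{Ir0}, so there is no internal argument to compare against. Your overall architecture --- splitting into (a)$\Leftrightarrow$(b), then (a)$\Leftrightarrow$(d) via the Irving--Shelton semisimple block together with translation functors, and (c)$\Leftrightarrow$(d) via the Joseph/Lusztig relation between Gelfand--Kirillov dimension and Kazhdan--Lusztig cells --- is the standard route, and you correctly identify the external machinery that the last two equivalences require.

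The genuine gap is in (b)$\Rightarrow$(a). First, the ``composite embedding'' $\Delta^{\mathfrak{p}}(\mu)\hookrightarrow P^{\mathfrak{p}}(\mu)$ does not exist in general: $\Delta^{\mathfrak{p}}(\mu)$ is the \emph{top} quotient of the standard filtration of $P^{\mathfrak{p}}(\mu)$, and the submodule at the bottom of that filtration is some $\Delta^{\mathfrak{p}}(\nu)$ with $\nu>\mu$. Second, and more seriously, even a genuine embedding of $L^{\mathfrak{p}}(w\cdot\lambda)$ into a projective module does not force $P^{\mathfrak{p}}(w\cdot\lambda)\in\mathcal{F}(\nabla^{\mathfrak{p}})$: for the path algebra of the quiver $1\to 2$ (a quasi-hereditary algebra) the simple projective $L(2)$ embeds into $P(1)$, yet $P(2)=L(2)$ is not injective. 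You never verify the vanishing $\mathrm{Ext}^{i}(\Delta^{\mathfrak{p}}(\nu),P^{\mathfrak{p}}(w\cdot\lambda))=0$ required by the dual of Proposition~\ref{cor436}, and hypothesis (b) gives no direct handle on it. The standard repair routes (b) through (c): any nonzero submodule of $\Delta^{\mathfrak{p}}(\mu)$ has the same (maximal) Gelfand--Kirillov dimension as $\Delta^{\mathfrak{p}}(\mu)$ itself, because by PBW the latter is free over the enveloping algebra of the nilradical of the opposite parabolic; hence (b)$\Rightarrow$(c), and injectivity of $P^{\mathfrak{p}}(w\cdot\lambda)$ is then recovered from (c)$\Rightarrow$(d)$\Rightarrow$(a). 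A minor point in (a)$\Rightarrow$(b): the identification $\mathrm{soc}\,P^{\mathfrak{p}}(w\cdot\lambda)=L^{\mathfrak{p}}(w\cdot\lambda)$ is Irving's self-duality theorem, i.e.\ the main content of the cited paper, not a formal consequence of $\star$; your argument survives if you instead run the flag argument on $I^{\mathfrak{p}}(w\cdot\lambda)=P^{\mathfrak{p}}(w\cdot\lambda)^{\star}$, which under hypothesis (a) is projective and has socle $L^{\mathfrak{p}}(w\cdot\lambda)$ by construction.
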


As a direct corollary we have that indecomposable projective injective
modules in $\mathcal{O}_{\lambda}^{\mathfrak{p}}$ are indexed by 
elements of the right cell of $w_o^{\mathfrak{p}}w_o$. Since projective
functors preserve both projective and injective modules, the latter suggests
that their action on the category of projective-injective modules
in $\mathcal{O}_{\lambda}^{\mathfrak{p}}$ should categorify the
cell module corresponding to the right cell of $w_o^{\mathfrak{p}}w_o$.

Denote by $\mathcal{C}$ the Serre subcategory of 
$\mathcal{O}_{\lambda}^{\mathfrak{p}}$ consisting of all modules
which do not have maximal Gelfand-Kirillov dimension. 
Let $\mathcal{R}_{\mathfrak{p}}$ 
denote the right cell of $w_o^{\mathfrak{p}}w_o$. By 
Theorem~\ref{thm821}, $\mathcal{C}$ is generated by
$L^{\mathfrak{p}}(w\cdot\lambda)$ such that 
$w\not\in\mathcal{R}_{\mathfrak{p}}$. As projective functors are
essentially tensoring with finite-dimensional modules, they preserve
$\mathcal{C}$. Therefore we can restrict the action of 
$\cS$ to $\mathcal{C}$ and also get the induced action of $\cS$ on the 
quotient $\mathcal{O}_{\lambda}^{\mathfrak{p}}/\mathcal{C}$.
If we denote by $e_{\mathfrak{p}}$ the sum of all primitive 
idempotents in $B_{\lambda}^{\mathfrak{p}}$, corresponding to 
$L(w\cdot\lambda)$, $w\in\mathcal{R}_{\mathfrak{p}}$, we get
$\mathcal{O}_{\lambda}^{\mathfrak{p}}/\mathcal{C}\cong
e_{\mathfrak{p}}B_{\lambda}^{\mathfrak{p}}e_{\mathfrak{p}}
\text{-}\mathrm{mod}$. Let $\mathcal{C}^{\mathbb{Z}}$ denote the graded
lift of $\mathcal{C}$. Denote by $\xi$ the transpose of the 
partition of $n$ corresponding to $\mathfrak{p}$.

\begin{theorem}[\cite{KMS}]\label{thm812}
\begin{enumerate}[$($a$)$]
\item\label{thm812.1}
The action of $\cS^{\mathbb{Z}}$ (resp. $\cS$) on 
the additive category of projective-injective modules 
in ${}^{\mathbb{Z}}\mathcal{O}_{\lambda}^{\mathfrak{p}}$ 
(resp. $\mathcal{O}_{\lambda}^{\mathfrak{p}}$) categorifies
the cell module $\mathbb{Z}[v,v^{-1}]\mathcal{R}_{\mathfrak{p}}$ 
(resp. the Specht module corresponding to $\xi$).
\item\label{thm812.2}
The action of $\cS^{\mathbb{Z}}$ (resp. $\cS$) on 
${}^{\mathbb{Z}}\mathcal{O}_{\lambda}^{\mathfrak{p}}/
\mathcal{C}^{\mathbb{Z}}$ (resp. 
$\mathcal{O}_{\lambda}^{\mathfrak{p}}/\mathcal{C}$) categorifies  
the cell module $\mathbb{Z}[v,v^{-1}]\mathcal{R}_{\mathfrak{p}}$ 
(resp. the Specht module corresponding to $\xi$) after extending
scalars to $\mathbb{Q}$.
\end{enumerate}
\end{theorem}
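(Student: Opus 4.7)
The plan is to combine Theorem~\ref{thm821}, which identifies indecomposable projective-injective modules in $\mathcal{O}_{\lambda}^{\mathfrak{p}}$ with $\{P^{\mathfrak{p}}(w\cdot\lambda):w\in\mathcal{R}_{\mathfrak{p}}\}$, with Proposition~\ref{prop811}, which identifies $[{}^{\mathbb{Z}}\mathcal{O}_{\lambda}^{\mathfrak{p}}]$ with the parabolic module $\mathcal{M}_{-v}$, and then to restrict this identification to the subcategory in (a) and the quotient in (b), matching it with the cell module via parabolic Kazhdan-Lusztig combinatorics.

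For part (a), I would first invoke Proposition~\ref{prop503}\eqref{prop503.6}: projective functors preserve both projective and injective modules, so the additive subcategory $\mathcal{PI}^{\mathbb{Z}}$ of projective-injectives in ${}^{\mathbb{Z}}\mathcal{O}_{\lambda}^{\mathfrak{p}}$ is stable under $\cS^{\mathbb{Z}}$; by Theorem~\ref{thm821} its split Grothendieck group is free on $\{[P^{\mathfrak{p}}(w\cdot\lambda)]:w\in\mathcal{R}_{\mathfrak{p}}\}$. Since $\mathcal{O}_{\lambda}^{\mathfrak{p}}$ is quasi-hereditary, the canonical map $[\mathcal{PI}^{\mathbb{Z}}]_{\oplus}\to[{}^{\mathbb{Z}}\mathcal{O}_{\lambda}^{\mathfrak{p}}]$ is injective, and using the parabolic analog of Corollary~\ref{cor734} each $[P^{\mathfrak{p}}(w\cdot\lambda)]$ is identified with the parabolic Kazhdan-Lusztig basis element $\underline{M}_w\in\mathcal{M}_{-v}$. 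Crucially, stability of $\mathcal{PI}^{\mathbb{Z}}$ under projective functors forces $\underline{H}_s\underline{M}_w$ to remain inside the span of $\{\underline{M}_{w'}:w'\in\mathcal{R}_{\mathfrak{p}}\}$, so this span is a genuine $\mathbb{H}$-submodule of $\mathcal{M}_{-v}$. Since $\mathcal{R}_{\mathfrak{p}}$ consists precisely of the elements of $W(\mathfrak{p})$ whose simples attain maximal Gelfand-Kirillov dimension (Theorem~\ref{thm821}\eqref{thm821.3}), by the Robinson-Schensted characterization of cells (Proposition~\ref{prop805}) this submodule is canonically isomorphic to the cell module $\mathbb{Z}[v,v^{-1}]\mathcal{R}_{\mathfrak{p}}$; the ungraded Specht-module statement then follows by specializing $v\mapsto 1$ and applying Proposition~\ref{prop807}.

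For part (b), first verify that projective functors preserve $\mathcal{C}$ and its graded lift: they are summands of tensoring with finite-dimensional modules, and such tensoring preserves the maximal Gelfand-Kirillov dimension within the block. Hence $\cS^{\mathbb{Z}}$ acts on the quotient ${}^{\mathbb{Z}}\mathcal{O}_{\lambda}^{\mathfrak{p}}/\mathcal{C}^{\mathbb{Z}}$, whose simple objects are $L^{\mathfrak{p}}(w\cdot\lambda)$ with $w\in\mathcal{R}_{\mathfrak{p}}$. The parabolic analog of Corollary~\ref{cor736} identifies $[L^{\mathfrak{p}}(w\cdot\lambda)]$ with the dual parabolic Kazhdan-Lusztig basis element $\hat{\underline{M}}_w$, and the quotient map from $\mathcal{M}_{-v}$ kills the span of $\hat{\underline{M}}_w$ for $w\notin\mathcal{R}_{\mathfrak{p}}$, producing an $\mathbb{H}$-module dual to the submodule from part (a). After tensoring with $\mathbb{Q}$, the unitriangular change of basis between the KL and dual KL bases within the cell becomes invertible, yielding the isomorphism with the cell module.

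The principal obstacle is establishing the parabolic analogs of Corollaries~\ref{cor734} and \ref{cor736}, that is, the identification of the classes of projective and simple objects in ${}^{\mathbb{Z}}\mathcal{O}_{\lambda}^{\mathfrak{p}}$ with the parabolic Kazhdan-Lusztig and dual Kazhdan-Lusztig bases of $\mathcal{M}_{-v}$, respectively. These should follow from the Kazhdan-Lusztig conjecture (Theorem~\ref{thm733}) together with compatibility of the Zuckerman functor $\mathrm{Z}_{\mathfrak{p}}$ with translation, but the combinatorial bookkeeping is delicate, especially when tracking how right cells of $W$ intersect $W(\mathfrak{p})$ and interact with the parabolic structure constants in \eqref{eq821}. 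The necessity of passing to $\mathbb{Q}$-scalars in (b) reflects exactly that inverting the $\mu$-triangular transition between the KL and dual KL bases is only a rational operation in general; over $\mathbb{Z}[v,v^{-1}]$ one obtains only an inclusion of sublattices rather than an isomorphism.
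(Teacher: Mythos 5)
Your proposal follows essentially the same route as the paper's (very terse) argument: compare the action of $\theta_s$ on the indecomposable projective-injectives with the action of $\underline{H}_s$ on the Kazhdan--Lusztig basis, using Theorem~\ref{thm821} to index the projective-injectives by $\mathcal{R}_{\mathfrak{p}}$ and Proposition~\ref{prop811} to anchor everything in $\mathcal{M}_{-v}$; your identification of the parabolic analogues of Corollaries~\ref{cor734} and~\ref{cor736} as the real technical input is exactly right.

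One correction to your explanation of why part (b) requires $\mathbb{Q}$-scalars: the transition matrix between the classes of projectives and the classes of simples in $[\mathcal{O}_{\lambda}^{\mathfrak{p}}/\mathcal{C}]$ is \emph{not} unitriangular --- a unitriangular matrix is invertible over $\mathbb{Z}$, which would contradict the need to extend scalars at all. The relevant matrix is the Cartan matrix of the symmetric algebra $e_{\mathfrak{p}}B_{\lambda}^{\mathfrak{p}}e_{\mathfrak{p}}$, which is symmetric with determinant typically greater than $1$ (cf.\ Example~\ref{exm815}, where for $\mathfrak{p}=\mathfrak{b}$ the single class $[P(w_o\cdot\lambda)]$ equals $\dim C_{\lambda}=|W|$ times the class of the simple). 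So over $\mathbb{Z}[v,v^{-1}]$ the classes of projectives span only a finite-index sublattice of $[\mathcal{O}_{\lambda}^{\mathfrak{p}}/\mathcal{C}]$, and the isomorphism with the cell module holds only after inverting that index, i.e.\ over $\mathbb{Q}$. Your concluding sentence reaches the right conclusion, but the mechanism you cite for it is the wrong one.
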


\begin{proof}[Idea of the proof.]
Compare the action of $\theta_s$ in the basis of indecomposable
projective modules modules with the action of $\underline{H}_s$. 
\end{proof}

We have the usual {\em Kazhdan-Lusztig} basis in the split Grothendieck
group of the category of projective-injective modules in
${}^{\mathbb{Z}}\mathcal{O}_{\lambda}^{\mathfrak{p}}$ given by
classes of indecomposable projective modules. The only obvious natural 
basis of $[\mathcal{O}_{\lambda}^{\mathfrak{p}}/\mathcal{C}]$
is the one given by classes of simple modules
(the {\em dual Kazhdan-Lusztig} basis). The algebra 
$e_{\mathfrak{p}}B_{\lambda}^{\mathfrak{p}}e_{\mathfrak{p}}$
is known to be symmetric (see \cite{MS15}), in particular, it
has infinite global dimension in general. As a consequences, the classes
of projective modules do not usually form a $\mathbb{Z}$-basis
of $[\mathcal{O}_{\lambda}^{\mathfrak{p}}/\mathcal{C}]$.

\begin{example}\label{exm815}
{\em  
To categorify the trivial $\mathbb{S}_n$-module we have to take
$\mathfrak{p}=\mathfrak{b}$. In this case $W_{\mathfrak{p}}=\{e\}$,
$\mathcal{R}_{\mathfrak{p}}=\{w_o\}$ and 
$e_{\mathfrak{p}}B_{\lambda}^{\mathfrak{p}}e_{\mathfrak{p}}\cong
C_{\lambda}$ is the coinvariant algebra. For $n>1$ the class 
$[P(w_o\cdot\lambda)]$ does not form a $\mathbb{Z}$-basis in the
Grothendieck group, but gives a basis if one extends scalars
to $\mathbb{Q}$ or $\mathbb{C}$.
}
\end{example}

A partition $\xi$ of $n$ does not define the parabolic subalgebra 
$\mathfrak{p}$ uniquely, so one could naturally ask whether
our categorification of the Specht module corresponding to $\xi$
depends on the choice of $\mathfrak{p}$. The answer is given
by the following:

\begin{theorem}[\cite{MS15}]\label{thm817}
If $\mathfrak{p}$ and $\mathfrak{q}$ are two parabolic subalgebras
which give rise to the same partition $\xi$, then the corresponding
categorifications of the Specht modules are equivalent
(in the sense of representations of $2$-categories).
\end{theorem}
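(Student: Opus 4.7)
The plan is to use the intrinsic characterization of projective-injective modules from Theorem~\ref{thm821} to identify both categorifications with cell $2$-representations of $\cS^{\mathbb{Z}}$ in the sense of Section~\ref{s3}, and then exhibit a biequivalence between these cell modules that exploits the conjugacy of the parabolic subgroups $W_{\mathfrak{p}}$ and $W_{\mathfrak{q}}$ inside $W$.

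First, I would assemble the combinatorial input. By Theorem~\ref{thm821} the indecomposable projective-injectives in ${}^{\mathbb{Z}}\mathcal{O}_{\lambda}^{\mathfrak{p}}$ are indexed by the Kazhdan-Lusztig right cell $\mathcal{R}_{\mathfrak{p}} \ni w_o^{\mathfrak{p}} w_o$, and analogously for $\mathfrak{q}$. Since $\mathfrak{p}$ and $\mathfrak{q}$ give the same partition $\xi$, the Young subgroups $W_{\mathfrak{p}}$ and $W_{\mathfrak{q}}$ are conjugate in $W$, so there is $w \in W$ with $w W_{\mathfrak{p}} w^{-1} = W_{\mathfrak{q}}$. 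By Proposition~\ref{prop805}(c) the cells $\mathcal{R}_{\mathfrak{p}}$ and $\mathcal{R}_{\mathfrak{q}}$ share the two-sided cell labeled by $\xi$, and conjugation by $w$ induces a bijection $\mathcal{R}_{\mathfrak{p}} \to \mathcal{R}_{\mathfrak{q}}$ which is the combinatorial shadow of the equivalence sought.

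Next, I would construct the equivalence. By Theorem~\ref{thm307}(e), the additive category of projective-injectives in each parabolic category, with its induced $\cS^{\mathbb{Z}}$-action, realizes the cell $2$-representation $\mathbf{C}_{\mathcal{R}_{\mathfrak{p}}}$, respectively $\mathbf{C}_{\mathcal{R}_{\mathfrak{q}}}$. Inside ${}^{\mathbb{Z}}\mathcal{O}_{\lambda}^{\mathfrak{q}}$ I would locate a simple module $M$ realizing the annihilation pattern that characterizes $L_{g_{\mathcal{R}_{\mathfrak{p}}}}$ under the above bijection of cells---that is, a simple killed precisely by those $\theta_x$ with $x \not\geq_R g_{\mathcal{R}_{\mathfrak{p}}}$. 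Theorem~\ref{thm309}(b) then produces a morphism of $2$-representations $\Phi : \mathbf{C}_{\mathcal{R}_{\mathfrak{p}}} \to \overline{\langle M \rangle}$, and by Theorem~\ref{thm821} the target $\overline{\langle M \rangle}$ coincides with the additive category of projective-injectives in ${}^{\mathbb{Z}}\mathcal{O}_{\lambda}^{\mathfrak{q}}$.

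Finally, to upgrade $\Phi$ to an equivalence I would verify essential surjectivity using the bijection of right cells above, and fully-faithfulness via a Hom-space dimension count forced by the positive grading of $B_{\lambda}^{\mathfrak{p}}$ (Corollary~\ref{cor737}) together with the Kazhdan-Lusztig combinatorics of Section~\ref{s8.2}. The main obstacle is the construction of $M$ and the verification that the universal morphism $\Phi^M$ is a componentwise equivalence rather than merely an injective map of $2$-representations: a priori cell modules for distinct right cells in a common two-sided cell only agree after specialization $v \mapsto 1$, so this step requires invoking the additional rigidity of the fiat category $\cS^{\mathbb{Z}}$ for the symmetric group. In \cite{MS15} this is handled by an explicit isomorphism of endomorphism algebras $e_{\mathfrak{p}} B_{\lambda}^{\mathfrak{p}} e_{\mathfrak{p}} \cong e_{\mathfrak{q}} B_{\lambda}^{\mathfrak{q}} e_{\mathfrak{q}}$ compatible with the Soergel bimodule action, which forces the associated $2$-representations to be equivalent.
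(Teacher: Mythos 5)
Your setup is sound: both categorifications live on the additive categories of projective-injective modules, which by Theorem~\ref{thm821} are indexed by the right cells $\mathcal{R}_{\mathfrak{p}}$ and $\mathcal{R}_{\mathfrak{q}}$ lying in the common two-sided cell attached to $\xi$, so the theorem reduces to showing that the cell $2$-representations attached to two right cells inside one two-sided cell are equivalent. But the step you yourself flag as the ``main obstacle'' is exactly where the proof has to happen, and your proposed resolution does not close it. An isomorphism of endomorphism algebras $e_{\mathfrak{p}}B_{\lambda}^{\mathfrak{p}}e_{\mathfrak{p}}\cong e_{\mathfrak{q}}B_{\lambda}^{\mathfrak{q}}e_{\mathfrak{q}}$ only identifies the underlying categories; it does not by itself intertwine the $\cS^{\mathbb{Z}}$-actions, and producing an identification compatible with the $2$-action is the entire content of the statement. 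Moreover, in the logical order of this development that isomorphism (Corollary~\ref{cor905}) is a \emph{consequence} of the equivalence of cell $2$-representations (Theorem~\ref{thm904}), not an input to it, so invoking it here is circular. The universal morphism of Theorem~\ref{thm309}\eqref{thm309.3} goes \emph{out of} the cell module with no a priori fullness or essential surjectivity, and ``a Hom-space dimension count forced by the positive grading'' is not yet an argument. (As a side remark, conjugation by $w$ with $wW_{\mathfrak{p}}w^{-1}=W_{\mathfrak{q}}$ does not induce the relevant bijection $\mathcal{R}_{\mathfrak{p}}\to\mathcal{R}_{\mathfrak{q}}$; the correct combinatorial matching comes from Robinson--Schensted, fixing the recording tableau.)

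The missing idea is the explicit construction of an intertwiner. The equivalence is provided by a suitable composition of derived Zuckerman functors (the functors $\mathrm{Q}_s$ of Proposition~\ref{prop672}): these commute with projective functors by Proposition~\ref{prop671}, hence automatically define morphisms of $2$-representations, and the remaining work is to check, using the Kazhdan--Lusztig combinatorics relating $\mathcal{R}_{\mathfrak{p}}$ and $\mathcal{R}_{\mathfrak{q}}$, that the composition restricts to an equivalence between the two additive categories of projective-injectives (equivalently, between the quotients by the Serre subcategories of modules of non-maximal Gelfand--Kirillov dimension). This is the same mechanism as in the proof of Theorem~\ref{thm904}. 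If you wish to keep the abstract cell-module framework, you must still exhibit such a functor; the universal property alone will not produce an invertible one.
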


The equivalence of Theorem~\ref{thm817} is provided by a certain
composition of derived Zuckerman functors (which naturally
commute with projective functors).

\begin{remark}\label{rem819}
{\rm 
Let $W=\{e,s,t,st,ts,sts,tst,stst\}$ be of type $B_2$
and $V=\mathbb{C}$ be the one dimensional module on which $s$ and $t$
acts via $1$ and $-1$, respectively. Then $V$ cannot be categorified
via the action of the $B_2$-analogue of $\cS$ on some module
category. Indeed, for any such action $\theta_t$ must act by the
zero functor and $\theta_s$ must act by a nonzero functor, which is
not possible as $s$ and $t$ belong to the same two-sided cell.
On the other hand, $V$ admits a categorification using a 
triangulated category.
} 
\end{remark}

\section{$\mathbb{S}_n$-categorification: (induced) cell modules}\label{s9}

\subsection{Categories $\mathcal{O}^{\hat{\mathcal{R}}}$}\label{s9.1}

Let $\mathcal{R}$ be a right cell of $W$ and 
$\lambda\in\mathfrak{h}^*_{\mathrm{dom}}$ be integral and regular. Set 
\begin{displaymath}
\hat{\mathcal{R}}=\{w\in W:w\leq_R x \text{ for any } x\in \mathcal{R}\}
\end{displaymath}
and denote by $\mathcal{O}_{\lambda}^{\hat{\mathcal{R}}}$ the 
Serre subcategory of $\mathcal{O}_{\lambda}$ generated by 
$L(w\cdot\lambda)$, $w\in \hat{\mathcal{R}}$.

\begin{example}\label{exm901}
{\rm
If $\mathcal{R}$ contains $w_o^{\mathfrak{p}}w_o$ for some
parabolic subalgebra $\mathfrak{p}$ of $\mathfrak{g}$, 
then $\mathcal{O}_{\lambda}^{\hat{\mathcal{R}}}=
\mathcal{O}_{\lambda}^{\mathfrak{p}}$. 
}
\end{example}

In the general case the associative algebra describing 
$\mathcal{O}_{\lambda}^{\hat{\mathcal{R}}}$ might have infinite
global dimension (in particular, it does not have to be quasi-hereditary).
Denote by $\mathrm{Z}_{\mathcal{R}}:\mathcal{O}_{\lambda}\to
\mathcal{O}_{\lambda}^{\hat{\mathcal{R}}}$ the right exact
functor of taking the maximal quotient contained in 
$\mathcal{O}_{\lambda}^{\hat{\mathcal{R}}}$. This is a natural
generalization of Zuckerman's functor. Similarly to the case of
the parabolic category we will denote by 
$P^{\mathcal{R}}(w\cdot\lambda)$ etc. all structural modules
in $\mathcal{O}_{\lambda}^{\hat{\mathcal{R}}}$.

From the Kazhdan-Lusztig
combinatorics it follows that $\mathcal{O}_{\lambda}^{\hat{\mathcal{R}}}$
is closed with respect to the action of projective functors. 
Hence the restriction of the action of projective functors to
$\mathcal{O}_{\lambda}^{\hat{\mathcal{R}}}$ gives a $2$-representation
of $\cS$. Let ${}^{\mathbb{Z}}\mathcal{O}_{\lambda}^{\hat{\mathcal{R}}}$
denote the graded lift of $\mathcal{O}_{\lambda}^{\hat{\mathcal{R}}}$,
which, obviously, exists. Then 
${}^{\mathbb{Z}}\mathcal{O}_{\lambda}^{\hat{\mathcal{R}}}$ carries
the natural structure of a $2$-representation of $\cS^{\mathbb{Z}}$.

\begin{proposition}[\cite{MS2}]\label{prop902}
For $w\in \hat{\mathcal{R}}$ the following conditions are equivalent:
\begin{enumerate}[$($a$)$]
\item\label{prop902.1} The module $P^{\mathcal{R}}(w\cdot\lambda)$
is injective.
\item\label{prop902.2} The module $L^{\mathcal{R}}(w\cdot\lambda)$
has maximal Gelfand-Kirillov dimension in
$\mathcal{O}_{\lambda}^{\hat{\mathcal{R}}}$.
\item\label{prop902.3} $w\in\mathcal{R}$.
\end{enumerate}
\end{proposition}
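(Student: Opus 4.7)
My plan is to prove the three conditions equivalent via the cycle $(c) \Rightarrow (b) \Rightarrow (a) \Rightarrow (c)$, generalizing Theorem~\ref{thm821} from the parabolic case to the Serre subcategory $\mathcal{O}_\lambda^{\hat{\mathcal{R}}}$.

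For $(c) \Rightarrow (b)$, I would combine two inputs. First, by Joseph's theorem, the Gelfand--Kirillov dimension of $L(w\cdot\lambda)$ is constant on two-sided Kazhdan--Lusztig cells and strictly monotone in $\leq_{LR}$. Second, a type-$A$ consequence of Proposition~\ref{prop805} (Robinson--Schensted) is that distinct right cells within one two-sided cell are pairwise $\leq_R$-incomparable. Since any $w \in \hat{\mathcal{R}}$ satisfies $w \leq_R x$, hence $w \leq_{LR} x$, for $x \in \mathcal{R}$, the first input identifies the locus of maximal GK dimension in $\mathcal{O}_\lambda^{\hat{\mathcal{R}}}$ with $\hat{\mathcal{R}} \cap \mathcal{LR}_x$, and the second pins this intersection down to $\mathcal{R}$ itself.

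For $(b) \Rightarrow (a)$, I would use that $\star$ preserves $\mathcal{O}_\lambda^{\hat{\mathcal{R}}}$ (it fixes every simple), so $P^{\mathcal{R}}(w\cdot\lambda)^{\star} \cong I^{\mathcal{R}}(w\cdot\lambda)$. Assuming $L^{\mathcal{R}}(w\cdot\lambda)$ has maximal GK dimension, the key claim is that every composition factor of $\mathrm{rad}\,P^{\mathcal{R}}(w\cdot\lambda)$ has strictly smaller GK dimension; dualizing yields the same for the cosocle of $I^{\mathcal{R}}(w\cdot\lambda)$, and indecomposability together with matching head and socle forces $P^{\mathcal{R}}(w\cdot\lambda) \cong I^{\mathcal{R}}(w\cdot\lambda)$. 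The claim reduces via the radical filtration to the vanishing of $\mathrm{Ext}^1_{\mathcal{O}_\lambda^{\hat{\mathcal{R}}}}(L(w\cdot\lambda), L(w'\cdot\lambda))$ between distinct maximal-GK simples, which follows from Joseph's primitive-ideal-theoretic analysis in $\mathcal{O}_\lambda$ plus the fact that Serre inclusion preserves $\mathrm{Ext}^1$.

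For $(a) \Rightarrow (c)$, Proposition~\ref{prop503} says projective functors preserve the additive categories of projectives and of injectives, so they act on the class of projective-injective indecomposables in $\mathcal{O}_\lambda^{\hat{\mathcal{R}}}$ (note that projective functors commute with $\mathrm{Z}_{\mathcal{R}}$ since $\hat{\mathcal{R}}$ is a cell ideal). Since any two elements of a right cell are connected by projective functors via the right-regular action encoded in the Kazhdan--Lusztig multiplication (Proposition~\ref{prop803}), the set $\{w \in \hat{\mathcal{R}} : P^{\mathcal{R}}(w\cdot\lambda) \text{ is injective}\}$ is a union of right cells inside $\hat{\mathcal{R}}$. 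Combined with $(b) \Leftrightarrow (c)$ and the fact that $\mathcal{R}$ is the unique maximal right cell in $\hat{\mathcal{R}}$, this set must equal $\mathcal{R}$. The main obstacle is the $\mathrm{Ext}^1$-vanishing claim in $(b) \Rightarrow (a)$: this draws on the deepest input (Joseph's analysis of annihilators of maximal-GK simples), and transferring it from $\mathcal{O}_\lambda$ to the Serre subcategory requires care, though the fact that Serre embeddings are exact and fully faithful keeps the transfer at the level of $\mathrm{Ext}^1$ straightforward.
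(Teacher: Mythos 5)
The paper itself gives no proof of Proposition~\ref{prop902}; it is quoted from \cite{MS2}. Your reduction of $(b)\Leftrightarrow(c)$ to Joseph's theorem (constancy and strict monotonicity of Gelfand--Kirillov dimension on two-sided cells) together with the type-$A$ fact that $w\leq_R x$ and $w\sim_{LR}x$ force $w\sim_R x$ is sound and is indeed the standard way to see that the maximal-GK locus in $\hat{\mathcal{R}}$ is exactly $\mathcal{R}$.

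The implication $(b)\Rightarrow(a)$, however, contains a genuine gap: the ``key claim'' that every composition factor of $\mathrm{rad}\,P^{\mathcal{R}}(w\cdot\lambda)$ has strictly smaller GK dimension is false. A projective-injective indecomposable is self-dual-like with socle isomorphic to its top, so its radical always contains a composition factor of maximal GK dimension (already for $\mathfrak{gl}_2$ and $\mathcal{R}=\{w_o\}$, the module $P(s\cdot 0)$ has Loewy layers $s/e/s$ and its radical contains the infinite-dimensional simple). Moreover, $\mathrm{Ext}^1$-vanishing between maximal-GK simples only controls the layer $\mathrm{rad}P/\mathrm{rad}^2P$, not the whole radical, so the proposed reduction does not establish even a corrected version of the claim; and even if $P$ were shown to have simple socle $L(w\cdot\lambda)$, that alone does not force $P\cong I$ (a dominant Verma module is projective with simple socle but not injective). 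This direction is the heart of the proposition and needs a different mechanism: the route taken in \cite{MS2} (compare Proposition~\ref{prop1501} and Proposition~\ref{prop305} of the present text) is to exhibit the projective in question as $\theta_x L(d\cdot\lambda)$ with $d$ the Duflo involution of $\mathcal{R}$; such a module is automatically self-dual because projective functors commute with $\star$ and simples are $\star$-fixed, so projectivity immediately yields injectivity. A secondary gap occurs at the end of $(a)\Rightarrow(c)$: knowing that the set of $w$ with $P^{\mathcal{R}}(w\cdot\lambda)$ injective is a union of right cells containing $\mathcal{R}$ does not exclude that it contains a further right cell of $\hat{\mathcal{R}}$; you still need an argument that injectivity forces maximal GK dimension (classically, via the fact that the socle of a module with a Verma-type flag consists of maximal-GK simples).
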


\subsection{Categorification of cell modules}\label{s9.2}

Denote by $\mathcal{C}$ the Serre subcategory of 
$\mathcal{O}_{\lambda}^{\hat{\mathcal{R}}}$ consisting of all modules
which do not have maximal Gelfand-Kirillov dimension. 
By Proposition~\ref{prop902}, $\mathcal{C}$ is generated by
$L^{\mathcal{R}}(w\cdot\lambda)$ such that 
$w\not\in\mathcal{R}$ and is stable under the action of $\cS$. 
Therefore we have the induced action of $\cS$ on the 
quotient $\mathcal{O}_{\lambda}^{\hat{\mathcal{R}}}/\mathcal{C}$.
Let $\mathcal{C}^{\mathbb{Z}}$ denote the graded
lift of $\mathcal{C}$. 

\begin{theorem}[\cite{MS2}]\label{thm903}
\begin{enumerate}[$($a$)$]
\item\label{thm903.1}
The action of $\cS^{\mathbb{Z}}$ (resp. $\cS$) on 
the additive category of projective-injective modules 
in ${}^{\mathbb{Z}}\mathcal{O}_{\lambda}^{\hat{\mathcal{R}}}$ 
(resp. $\mathcal{O}_{\lambda}^{\hat{\mathcal{R}}}$) categorifies
the cell module $\mathbb{Z}[v,v^{-1}]\mathcal{R}$ 
(resp. its specialization for $v=1$).
\item\label{thm903.2}
The action of $\cS^{\mathbb{Z}}$ (resp. $\cS$) on 
${}^{\mathbb{Z}}\mathcal{O}_{\lambda}^{\hat{\mathcal{R}}}/
\mathcal{C}^{\mathbb{Z}}$ 
(resp. $\mathcal{O}_{\lambda}^{\hat{\mathcal{R}}}/
\mathcal{C}^{\mathbb{Z}}$) categorifies 
the cell module $\mathbb{Z}[v,v^{-1}]\mathcal{R}$ 
(resp. its specialization for $v=1$) after extending
scalars to $\mathbb{Q}$.
\end{enumerate}
\end{theorem}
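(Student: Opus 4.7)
The plan is to adapt the strategy of Theorem~\ref{thm812} to general right cells, replacing $\mathcal{O}_{\lambda}^{\mathfrak{p}}$ by $\mathcal{O}_{\lambda}^{\hat{\mathcal{R}}}$ and using Proposition~\ref{prop902} in place of Theorem~\ref{thm821}. The crucial combinatorial input is the following: if $w\in\mathcal{R}$ and $\underline{H}_y$ occurs in $\underline{H}_w\underline{H}_s$ for some simple reflection $s$, then $w\leq_R y$ by definition of $\leq_R$; if additionally $y\in\hat{\mathcal{R}}$ (so $y\leq_R w$ by transitivity through $\mathcal{R}$), then $y\sim_R w$ and hence $y\in\mathcal{R}$. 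Thus the Kazhdan--Lusztig expansion of $\underline{H}_w\underline{H}_s$, once restricted to $\hat{\mathcal{R}}$, is automatically supported on $\mathcal{R}$, which is exactly the content of the cell-module action on $\mathbb{Z}[v,v^{-1}]\mathcal{R}$.

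For part (a), I would proceed in four steps. First, the Kazhdan--Lusztig argument referenced in Subsection~\ref{s9.1} shows that $\mathcal{O}_{\lambda}^{\hat{\mathcal{R}}}$ is stable under projective functors; combined with Proposition~\ref{prop503}\eqref{prop503.6}, this implies that projective functors preserve the additive category of projective-injective modules in ${}^{\mathbb{Z}}\mathcal{O}_{\lambda}^{\hat{\mathcal{R}}}$. Second, Proposition~\ref{prop902} identifies the indecomposable objects of this category as $P^{\mathcal{R}}(w\cdot\lambda)$, $w\in\mathcal{R}$, so its split Grothendieck group is free over $\mathbb{Z}[v,v^{-1}]$ on these classes, and I define the candidate identification with $\mathbb{Z}[v,v^{-1}]\mathcal{R}$ by $[P^{\mathcal{R}}(w\cdot\lambda)]\mapsto\underline{H}_w$. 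Third, the functor $\mathrm{Z}_{\mathcal{R}}$ (left adjoint to the inclusion of the Serre subcategory) commutes with projective functors by the same adjunction argument as Proposition~\ref{prop671}, and satisfies $\mathrm{Z}_{\mathcal{R}}P(w\cdot\lambda)=P^{\mathcal{R}}(w\cdot\lambda)$ for $w\in\hat{\mathcal{R}}$; hence $\theta_s P^{\mathcal{R}}(w\cdot\lambda)\cong\mathrm{Z}_{\mathcal{R}}(\theta_s P(w\cdot\lambda))$. Fourth, Theorem~\ref{thm733} and Corollary~\ref{cor734} give the decomposition of $\theta_s P(w\cdot\lambda)$ with multiplicities encoded in $\underline{H}_w\underline{H}_s=\sum_y c_y \underline{H}_y$ (Proposition~\ref{prop803}); applying $\mathrm{Z}_{\mathcal{R}}$ kills summands with $y\notin\hat{\mathcal{R}}$, and the combinatorial observation above ensures every surviving $y$ lies in $\mathcal{R}$, yielding precisely the cell-module action.

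For part (b), the same combinatorial argument shows that $\mathcal{C}$ and $\mathcal{C}^{\mathbb{Z}}$ are stable under projective functors, so the action descends to the quotient. The Grothendieck group $[{}^{\mathbb{Z}}\mathcal{O}_{\lambda}^{\hat{\mathcal{R}}}/\mathcal{C}^{\mathbb{Z}}]$ is free over $\mathbb{Z}[v,v^{-1}]$ on the classes $[L^{\mathcal{R}}(w\cdot\lambda)]$ with $w\in\mathcal{R}$, which categorically realize the dual Kazhdan--Lusztig basis of $\mathbb{Z}[v,v^{-1}]\mathcal{R}$. Comparing with the Kazhdan--Lusztig basis $\{\underline{H}_w\}$ passes through the change-of-basis matrix between simple and projective classes in the quotient; unlike the highest-weight case, this matrix need not be invertible over $\mathbb{Z}[v,v^{-1}]$ (cf.\ Example~\ref{exm815}), so one extends scalars to $\mathbb{Q}$ to obtain the desired $\mathbb{H}^{\mathbb{Q}}$-module isomorphism.

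The hard part will be that $\mathcal{O}_{\lambda}^{\hat{\mathcal{R}}}$ is in general not quasi-hereditary: the associated finite-dimensional algebra can have infinite global dimension (Subsection~\ref{s9.1}), so the highest-weight toolkit --- BGG reciprocity, standard filtrations, tilting theory --- is not directly available. The verification that $\mathrm{Z}_{\mathcal{R}}P(w\cdot\lambda)=P^{\mathcal{R}}(w\cdot\lambda)$ and the identification of indecomposable projective-injectives must therefore rest purely on the universal property of $\mathrm{Z}_{\mathcal{R}}$ as a left adjoint and on Proposition~\ref{prop902}, without appealing to the convenient structural results exploited in Theorem~\ref{thm812}.
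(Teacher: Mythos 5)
Your proposal is correct and takes essentially the route the paper itself indicates for the parallel Theorem~\ref{thm812} (``compare the action of $\theta_s$ on indecomposable projectives with that of $\underline{H}_s$''): identify the indecomposable projective-injectives via Proposition~\ref{prop902}, transport Corollary~\ref{cor734} through the intertwining $\theta_s\circ\mathrm{Z}_{\mathcal{R}}\cong\mathrm{Z}_{\mathcal{R}}\circ\theta_s$, and use the cell combinatorics to see that the truncation to $\hat{\mathcal{R}}$ of $\underline{H}_w\underline{H}_s$ is automatically supported on $\mathcal{R}$. The one step you assert rather than justify is that in part (b) extending scalars to $\mathbb{Q}$ actually yields an isomorphism, i.e.\ that the matrix $\bigl([P^{\mathcal{R}}(y\cdot\lambda):L^{\mathcal{R}}(x\cdot\lambda)]\bigr)_{x,y\in\mathcal{R}}$ is nonsingular over $\mathbb{Q}$ --- this is true but not automatic for a general finite-dimensional algebra and requires the BGG-reciprocity/positivity argument from \cite{MS2}.
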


We have the usual {\em Kazhdan-Lusztig} basis in the split Grothendieck
group of the category of projective-injective modules in
$\mathcal{O}_{\lambda}^{\hat{\mathcal{R}}}$ given by
classes of indecomposable projective modules. We also have the natural 
basis of $[\mathcal{O}_{\lambda}^{\hat{\mathcal{R}}}/\mathcal{C}]$
given by classes of simple modules (the {\em dual Kazhdan-Lusztig} basis). 

\begin{theorem}[\cite{MS2}]\label{thm904}
If $\mathcal{R}$ and $\mathcal{R}'$ are two right cells of $W$ inside
the same two-sided cell, then the corresponding $2$-representations
of $\cS^{\mathbb{Z}}$ and $\cS$ categorifying cell modules are
equivalent.
\end{theorem}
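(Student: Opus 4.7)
The plan is to exploit the intrinsic cell-module construction of Section~\ref{s3.5}, which realizes every cell module for a fiat category abstractly inside a principal $2$-representation, and then to transport between $\mathbf{C}_{\mathcal{R}}$ and $\mathbf{C}_{\mathcal{R}'}$ using projective functors indexed by Kazhdan-Lusztig basis elements that witness $\mathcal{R}\sim_{LR}\mathcal{R}'$. Since the statement concerns equivalence of $2$-representations, the tools of Subsections~\ref{s3.5}--\ref{s3.6} (universal properties of $\mathbf{P}_{\mathtt{i}}$ and $\mathbf{C}_{\mathcal{R}}$) combined with the Hecke-algebra categorification (Theorem~\ref{thm802}) are exactly what is needed.

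First I would identify the categorification of Theorem~\ref{thm903}\eqref{thm903.2} with the abstract cell module $\mathbf{C}_{\mathcal{R}}$ from Theorem~\ref{thm307}, generated in $\overline{\mathbf{P}}_{\mathtt{i}}$ by a distinguished simple object $L_{g_{\mathcal{R}}}$; via Soergel's combinatorial description this simple corresponds to $L^{\mathcal{R}}(g_{\mathcal{R}}\cdot\lambda)$ in $\mathcal{O}_{\lambda}^{\hat{\mathcal{R}}}/\mathcal{C}^{\mathbb{Z}}$. The same identification applies to $\mathcal{R}'$. Next, because $\mathcal{R}\sim_{LR}\mathcal{R}'$, the Kazhdan-Lusztig combinatorics for $\mathbb{S}_n$ (Proposition~\ref{prop805}) supplies $x,y\in W$ such that $\underline{H}_{g_{\mathcal{R}'}}$ occurs in the Kazhdan-Lusztig decomposition of $\underline{H}_{x}\underline{H}_{g_{\mathcal{R}}}\underline{H}_{y}$, and symmetrically for $\mathcal{R}$. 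By Theorem~\ref{thm802} this categorifies to a projective functor $\theta\in\cS(\mathtt{i},\mathtt{i})$ such that $\theta\,L_{g_{\mathcal{R}}}$ has $L_{g_{\mathcal{R}'}}$ in its top (Proposition~\ref{prop306}\eqref{prop306.04} guarantees only simples from $\mathcal{R}'$ can appear there). Theorem~\ref{thm309}\eqref{thm309.3} then produces canonical morphisms $\Phi:\mathbf{C}_{\mathcal{R}'}\to\overline{\langle\theta\,L_{g_{\mathcal{R}}}\rangle}\subseteq\mathbf{C}_{\mathcal{R}}$ and, symmetrically, $\Psi:\mathbf{C}_{\mathcal{R}}\to\mathbf{C}_{\mathcal{R}'}$ of $2$-representations of $\cS$.

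The main obstacle will be verifying that these morphisms are honest equivalences rather than merely nonzero maps. I would attack this in two steps. Step one: rank comparison. By Proposition~\ref{prop805}\eqref{prop805.1}--\eqref{prop805.3}, both $|\mathcal{R}|$ and $|\mathcal{R}'|$ equal the number of standard Young tableaux of the common shape, so the evaluation categories of $\mathbf{C}_{\mathcal{R}}$ and $\mathbf{C}_{\mathcal{R}'}$ have the same number of isomorphism classes of simples. Step two: use biadjointness of projective functors (Proposition~\ref{prop503}\eqref{prop503.5}, itself a manifestation of the fiat structure) to show that $\Phi\circ\Psi$ and $\Psi\circ\Phi$ send distinguished simples to distinguished simples up to nonzero scalars; combined with the rank equality and Proposition~\ref{prop306}\eqref{prop306.04}, this forces $\Phi$ to be essentially surjective and fully faithful on each $\mathbf{C}_{\mathcal{R}}(\mathtt{j})$.

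Finally, for the graded statement involving $\cS^{\mathbb{Z}}$, I would invoke positivity of the grading on $B_{\lambda}$ (Corollary~\ref{cor737}) to conclude that the functors $\theta$ above admit unique (up to overall shift) graded lifts with the distinguished simples placed in degree zero; pinning down these shifts yields a graded version of the equivalence. Extending scalars to $\mathbb{Q}$ in the non-additive incarnation of Theorem~\ref{thm903} is harmless since the statement is already about equivalence of $2$-representations rather than isomorphism of integral lattices.
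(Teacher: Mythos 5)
There is a genuine gap at the heart of your construction: the connecting morphisms $\Phi$ and $\Psi$ do not exist. You want a projective functor $\theta$ with $L_{g_{\mathcal{R}'}}$ in the top of $\theta\,L_{g_{\mathcal{R}}}$, and you cite Proposition~\ref{prop306}\eqref{prop306.04} as guaranteeing that "only simples from $\mathcal{R}'$ can appear there". That proposition says the opposite of what you need: if $L_f$ occurs in the top (or socle) of $h\,L_g$, then $f$ lies in the right cell of $g$. Since $g_{\mathcal{R}}\in\mathcal{R}$ while $g_{\mathcal{R}'}\in\mathcal{R}'\neq\mathcal{R}$, the simple $L_{g_{\mathcal{R}'}}$ can never occur in the top of $\theta\,L_{g_{\mathcal{R}}}$ for any $1$-morphism $\theta$ of $\cS$. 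The underlying reason is that a $2$-representation of $\cS$ only categorifies one-sided multiplication in $\mathbb{H}$; the element $\underline{H}_x\underline{H}_{g_{\mathcal{R}}}\underline{H}_y$ witnessing $\mathcal{R}\sim_{LR}\mathcal{R}'$ involves multiplication on both sides, and the factor on the other side is not realized by any object of $\cS(\mathtt{i},\mathtt{i})$ acting inside the given $2$-representation. Consequently Theorem~\ref{thm309} cannot be invoked to produce $\Phi$, and the subsequent steps (rank comparison, biadjointness) have nothing to bootstrap from.

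What is needed instead is a functor that commutes with the $\cS$-action rather than one belonging to it, i.e.\ a morphism of $2$-representations in the sense of Subsection~\ref{s3.8}. This is exactly what the paper does: the equivalence is obtained by composing the functors $\mathrm{Q}_s=\mathcal{L}_1\mathrm{Z}_s$ (equivalently, the subfunctors of twisting functors from Proposition~\ref{prop605}), which commute with all projective functors by Proposition~\ref{prop671} and categorify the left regular action, hence can move between right cells inside a fixed two-sided cell; compare Exercise~\ref{exs3}\eqref{exs3.7}, where exactness of $\mathrm{Q}_s$ on the relevant quotient category is the key point. Your observation that $|\mathcal{R}|=|\mathcal{R}'|$ via Robinson--Schensted is correct and is useful at the very end to see that such a morphism is an equivalence, but it cannot substitute for the missing construction. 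Your treatment of the graded lift and of the scalar extension to $\mathbb{Q}$ is unobjectionable once a correct ungraded equivalence is in hand.
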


\begin{proof}[Idea of the proof.]
The equivalence is provided by composing functors $\mathrm{Q}_s$, $s\in S$. 
\end{proof}

\begin{corollary}[\cite{MS2}]\label{cor905}
The endomorphism algebra of a basic projective injective module in 
$\mathcal{O}_{\lambda}^{\hat{\mathcal{R}}}$ does not depend on the choice of
$\mathcal{R}$ inside a fixed two-sided cell. In particular, this
endomorphism algebra is symmetric.
\end{corollary}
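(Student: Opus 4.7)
The plan is to derive the corollary directly from Theorem~\ref{thm904} combined with Theorem~\ref{thm903}\eqref{thm903.1}, with the symmetry assertion reduced to the parabolic case through Example~\ref{exm901}.

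First, I would note that Theorem~\ref{thm903}\eqref{thm903.1} realizes the categorification of the cell module $\mathbb{Z}[v,v^{-1}]\mathcal{R}$ as the additive category of projective-injective modules in $\mathcal{O}_{\lambda}^{\hat{\mathcal{R}}}$, with indecomposable objects indexed by $\mathcal{R}$ via Proposition~\ref{prop902}. The equivalence of $2$-representations furnished by Theorem~\ref{thm904} consists, by Definition~\ref{def301} and the definition of equivalence in Subsection~\ref{s3.1}, of an equivalence of underlying categories $\mathcal{O}_{\lambda}^{\hat{\mathcal{R}}}\simeq \mathcal{O}_{\lambda}^{\hat{\mathcal{R}'}}$ intertwining the actions of $\cS$. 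Such an equivalence preserves the classes of projective and of injective objects, hence restricts to an equivalence between the additive categories of projective-injective modules in $\mathcal{O}_{\lambda}^{\hat{\mathcal{R}}}$ and $\mathcal{O}_{\lambda}^{\hat{\mathcal{R}'}}$. Equivalent additive categories have isomorphic basic endomorphism algebras, proving independence from the choice of $\mathcal{R}$ inside the fixed two-sided cell.

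For the symmetry claim, I would use the Robinson-Schensted description of cells (Proposition~\ref{prop805}): every two-sided cell of $W\cong \mathbb{S}_n$ corresponds to a partition shape, and hence contains a right cell $\mathcal{R}_{\mathfrak{p}}$ of the element $w_o^{\mathfrak{p}}w_o$ for an appropriate parabolic subalgebra $\mathfrak{p}$ (the one whose associated composition has the transpose shape). For this distinguished right cell, Example~\ref{exm901} gives $\mathcal{O}_{\lambda}^{\hat{\mathcal{R}_{\mathfrak{p}}}}=\mathcal{O}_{\lambda}^{\mathfrak{p}}$, and the endomorphism algebra of its basic projective-injective is exactly $e_{\mathfrak{p}}B_{\lambda}^{\mathfrak{p}}e_{\mathfrak{p}}$, known to be symmetric by \cite{MS15} (as recorded in the discussion following Theorem~\ref{thm812}). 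Combined with the independence established above, every basic projective-injective endomorphism algebra in $\mathcal{O}_{\lambda}^{\hat{\mathcal{R}}}$ for $\mathcal{R}$ in the same two-sided cell is isomorphic to this symmetric algebra, hence symmetric itself.

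The main technical point is justifying that the equivalence of $2$-representations from Theorem~\ref{thm904} really does preserve the subcategory of projective-injective modules; this is clear once one notes that projectivity and injectivity in $\mathcal{O}_{\lambda}^{\hat{\mathcal{R}}}$ are intrinsic categorical properties, and the equivalence (being an equivalence of abelian categories in each component) preserves both. The one background ingredient that deserves to be double-checked is that, as explained after Theorem~\ref{thm904}, the equivalence can indeed be constructed by composing the functors $\mathrm{Q}_s$ from Proposition~\ref{prop672}, which are compositions of exact functors and in particular send projectives-injectives to projectives-injectives; this is the content of the reference \cite{MS2}.
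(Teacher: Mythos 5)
Your proof is correct and follows the route the paper intends: the corollary is meant to be read off from Theorem~\ref{thm904} (equivalences of abelian categories preserve projective-injectives, hence basic endomorphism algebras), with the symmetry reduced via Robinson--Schensted and Example~\ref{exm901} to the parabolic case $e_{\mathfrak{p}}B_{\lambda}^{\mathfrak{p}}e_{\mathfrak{p}}$, whose symmetry is quoted from \cite{MS15}. Nothing essential is missing.
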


\begin{remark}\label{rem973}
{\rm  
The categorification of the cell module constructed in 
Theorem~\ref{thm903} is isomorphic to the cell $2$-representation
$\mathbf{C}_{\mathcal{R}}$ of $\cS$ in the sense of Subsection~\ref{s3.5}. 
}
\end{remark}

\subsection{Categorification of permutation modules}\label{s9.3}

Let $\lambda\in\mathfrak{h}^*_{\mathrm{dom}}$ be integral and 
regular and $\nu\in\mathfrak{h}^*_{\mathrm{dom}}$ be integral
(but, possibly, singular). Let $\mathfrak{p}$ be a parabolic subalgebra
of $\mathfrak{g}$ such that 
$W_{\mathfrak{p}}=W_{\nu}$. Denote by $W[\mathfrak{p}]$ the
set of {\em longest} representatives in cosets from 
\index{longest representative}
$W_{\mathfrak{p}}\setminus W$. Let $\mathcal{C}$ denote the 
Serre subcategory of $\mathcal{O}_{\lambda}$ generated by 
$L(w\cdot\lambda)$, $w\in W\setminus W[\mathfrak{p}]$.
We identify the quotient $\mathcal{O}_{\lambda}/\mathcal{C}$ 
with the full subcategory of $\mathcal{O}_{\lambda}$ consisting of
all $M$ having a two step presentation $P_1\to P_0\tto M$,
where both $P_1$ and $P_0$ are isomorphic to direct sums of 
projectives of the form $P(w\cdot\lambda)$, $w\in W[\mathfrak{p}]$.
Then $\mathcal{O}_{\lambda}/\mathcal{C}$ can be described
in terms of Harish-Chandra bimodules as follows:

\begin{theorem}[\cite{BG}]\label{thm906}
Tensoring with $\Delta(\nu)$ induces an equivalence between
${}^{\infty}_{\,\,\lambda}\mathcal{H}_{\nu}^1$ and
$\mathcal{O}_{\lambda}/\mathcal{C}$.
\end{theorem}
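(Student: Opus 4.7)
The plan is to mirror Bernstein–Gelfand's proof of Theorem~\ref{thm507}, treating $F_\nu(M) := M \otimes_{U(\mathfrak{g})} \Delta(\nu)$ as the candidate equivalence and reducing the singular situation to the regular case via translation on the right. First I would verify that $F_\nu$ is an exact functor from ${}^{\infty}_{\,\,\lambda}\mathcal{H}_{\nu}^{1}$ to $\mathcal{O}_\lambda$: exactness is immediate because $\Delta(\nu)$ is free over $U(\mathfrak{n}_-)$ by the PBW theorem, while membership in $\mathcal{O}_\lambda$ follows from finite generation (both tensor factors are finitely generated), local $\mathfrak{n}_+$-finiteness (a consequence of local finiteness of the adjoint $\mathfrak{g}$-action on $M$ combined with the cyclicity of $\Delta(\nu)$), and $\mathfrak{h}$-diagonalizability. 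Its right adjoint is the Bernstein–Gelfand functor $G(Y) := \mathcal{L}(\Delta(\nu),Y)$ extracting the locally $\mathrm{ad}\,\mathfrak{g}$-finite part of $\mathrm{Hom}_{\mathbb{C}}(\Delta(\nu),Y)$ and projecting onto the central character $\chi_\lambda$ on the left; because $\Delta(\nu)$ itself (rather than a thickened version) is annihilated by $\mathrm{Ker}\,\chi_\nu$, the image of $G$ lies in ${}^{\infty}_{\,\,\lambda}\mathcal{H}_{\nu}^{1}$.

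Next I would identify the essential image and check full faithfulness. By the classification of projective functors (Theorem~\ref{thm502}), indecomposable projective objects of ${}^{\infty}_{\,\,\lambda}\mathcal{H}_{\nu}^{1}$ are bimodule realizations of the projective functors $\theta_{\nu,w\cdot\lambda}:\mathcal{O}_\nu \to \mathcal{O}_\lambda$, indexed by $W_\nu$-antidominant weights in $W\cdot\lambda$, i.e.\ by $w \in W[\mathfrak{p}]$ (using $W_\mathfrak{p}=W_\nu$). Applied to such a bimodule, $F_\nu$ yields $\theta_{\nu,w\cdot\lambda}\,\Delta(\nu) = P(w\cdot\lambda)$, so the essential image consists exactly of those $\mathcal{O}_\lambda$-modules admitting a two-step projective presentation by summands of $\bigoplus_{w\in W[\mathfrak{p}]} P(w\cdot\lambda)$, matching the description of $\mathcal{O}_\lambda/\mathcal{C}$ given in the text. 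For full faithfulness I would reduce to the regular case handled by Theorem~\ref{thm507}: choose $\nu'\in\mathfrak{h}^*_{\mathrm{dom}}$ integral regular with $\theta_{\nu',\nu}\,\Delta(\nu') \cong \Delta(\nu)$, and observe that right translation by the corresponding finite-dimensional module $V$ defines an exact functor $\theta^r: {}^{\infty}_{\,\,\lambda}\mathcal{H}_{\nu'}^{\infty} \to {}^{\infty}_{\,\,\lambda}\mathcal{H}_{\nu}^{\infty}$. An associativity calculation provides a natural isomorphism $F_\nu \circ \theta^r \cong ({-}) \otimes_{U(\mathfrak{g})}(V \otimes_{\mathbb{C}}\Delta(\nu'))$, from which one extracts $F_\nu(\theta^r M) \cong F_{\nu'}(M)$ up to summands whose images lie in $\mathcal{C}$. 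Combined with the equivalence $F_{\nu'}:{}^{\infty}_{\,\,\lambda}\mathcal{H}_{\nu'}^{1}\overset{\sim}{\to}\mathcal{O}_\lambda$ of Theorem~\ref{thm507}, this transports hom-space isomorphisms from the regular block back to the additive generator of ${}^{\infty}_{\,\,\lambda}\mathcal{H}_{\nu}^{1}$, which is enough since every object of ${}^{\infty}_{\,\,\lambda}\mathcal{H}_{\nu}^{1}$ admits a two-step presentation by these projectives.

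The main obstacle is the bookkeeping surrounding right translation: one must verify that $\theta^r$ hits (up to direct summands) a progenerator of ${}^{\infty}_{\,\,\lambda}\mathcal{H}_{\nu}^{1}$, and that the extra summands produced by the decomposition of $V \otimes_{\mathbb{C}}\Delta(\nu')$ correspond under $F_{\nu'}$ precisely to the $P(w\cdot\lambda)$ with $w \notin W[\mathfrak{p}]$ generating $\mathcal{C}$. The combinatorics of longest coset representatives for $W_\mathfrak{p}\setminus W$ matched against the multiplicities of Verma modules in $V \otimes_{\mathbb{C}}\Delta(\nu')$ -- via Proposition~\ref{prop505} applied in the relevant direction -- is what makes this accounting close up and yields the desired equivalence.
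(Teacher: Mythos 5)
The paper offers no argument for this statement at all: it is quoted from Bernstein--Gelfand with the bare citation \cite{BG}, so there is no in-text proof to compare yours against. Your architecture (exactness of $F_{\nu}$, the adjoint $\mathcal{L}(\Delta(\nu),{}_-)$, matching projectives via the classification of projective functors, identifying the essential image with modules presented by $P(w\cdot\lambda)$, $w\in W[\mathfrak{p}]$) is indeed the standard Bernstein--Gelfand/Jantzen route, and the identification of the image is correct.

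Two steps need repair. First, the exactness justification is wrong as stated: $-\otimes_{U(\mathfrak{g})}\Delta(\nu)$ is exact only if $\Delta(\nu)$ is flat over $U(\mathfrak{g})$, which it is not, and freeness of $\Delta(\nu)$ over $U(\mathfrak{n}_-)$ does not help. The genuine content here is the Bernstein--Gelfand vanishing $\mathrm{Tor}^{U(\mathfrak{g})}_{i}(M,\Delta(\nu))=0$ for $i>0$ and $M\in{}^{\infty}_{\,\,\lambda}\mathcal{H}_{\nu}^{1}$, which rests on Kostant's theorem that $U(\mathfrak{g})/U(\mathfrak{g})\mathrm{Ker}\chi_{\nu}$ is free over $U(\mathfrak{n}_-)\otimes U(\mathfrak{n}_+)$, applied along a filtration of $M$ by sub-bimodules generated by $\mathrm{ad}$-isotypic components; this is a theorem, not an immediate consequence of PBW. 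Second, your reduction of full faithfulness to Theorem~\ref{thm507} via a right translation functor $\theta^{r}$ runs into the obstruction recorded in Subsection~\ref{s6.3} of the text: right projective functors do not preserve the subcategory ${}^{\infty}_{\,\,\lambda}\mathcal{H}_{\nu}^{1}$ (translation on the right can produce nontrivial generalized eigenspaces for $Z(\mathfrak{g})$), so the "bookkeeping" you defer is not merely combinatorial --- the functor you want to transport along need not land where you need it. The cleaner and standard way to close this is to prove fullness and faithfulness directly on projectives: a projective of ${}^{\infty}_{\,\,\lambda}\mathcal{H}_{\nu}^{1}$ is $\theta\,(U(\mathfrak{g})/U(\mathfrak{g})\mathrm{Ker}\chi_{\nu})$ for a left projective functor $\theta$, its image under $F_{\nu}$ is $\theta\,\Delta(\nu)$, and by adjunction both Hom-spaces reduce to the statement that morphisms between projective functors are determined by their values on the dominant Verma module --- which is exactly the uniqueness part of the Bernstein--Gelfand classification underlying Theorem~\ref{thm502}. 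With exactness and full faithfulness on projectives in hand, your two-step-presentation argument finishes the proof.
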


The equivalence from Theorem~\ref{thm906} obviously commutes with
the left action of projective functors. Therefore we may
regard ${}^{\infty}_{\,\,\lambda}\mathcal{H}_{\nu}^1$
as a $2$-representation of $\cS$. Note that the category 
$\mathcal{C}$ clearly admits a graded lift $\mathcal{C}^{\mathbb{Z}}$,
which allows us to consider a graded version 
$({}^{\infty}_{\,\,\lambda}\mathcal{H}_{\nu}^1)^{\mathbb{Z}}$ of
${}^{\infty}_{\,\,\lambda}\mathcal{H}_{\nu}^1$. The category 
$({}^{\infty}_{\,\,\lambda}\mathcal{H}_{\nu}^1)^{\mathbb{Z}}$ is a 
$2$-representation of $\cS^{\mathbb{Z}}$. If $\nu$ is singular, the
category ${}^{\infty}_{\,\,\lambda}\mathcal{H}_{\nu}^1$ has infinite
global dimension and hence is not described by a quasi-hereditary 
algebra.

For $w\in W[\mathfrak{p}]$ denote by $\Delta_{\mathfrak{p}}(w)$
the preimage in ${}^{\infty}_{\,\,\lambda}\mathcal{H}_{\nu}^1$ of
the quotient of $P(w\cdot \lambda)$ by the trace of all 
$P(x\cdot \lambda)$ such that $x\in W[\mathfrak{p}]$ and $x<w$.
We also let $P(w)$ denote the preimage of $P(w\cdot\lambda)$
and define $L(w)$ as the simple top of $P(w)$. Finally, denote by
$\overline{\Delta}_{\mathfrak{p}}(w)$ the quotient of
$\Delta_{\mathfrak{p}}(w)$ modulo the trace of 
$\Delta_{\mathfrak{p}}(w)$ in the radical of 
$\Delta_{\mathfrak{p}}(w)$.

\begin{theorem}[\cite{FKM,KM}]\label{thm907}
Let $w\in W[\mathfrak{p}]$.
\begin{enumerate}[$($a$)$]
\item\label{thm907.1} The kernel of the natural projection
$P(w)\tto \Delta_{\mathfrak{p}}(w)$ has a filtration
whose subquotients are isomorphic to $\Delta_{\mathfrak{p}}(x)$
for $x<w$.
\item\label{thm907.2} The kernel of the natural projection
$\overline{\Delta}_{\mathfrak{p}}(w)\tto L(w)$ has a filtration
whose subquotients are isomorphic to $L(x)$
for $x>w$.
\item\label{thm907.3} The module $\Delta_{\mathfrak{p}}(w)$
has a filtration whose subquotients are isomorphic to 
$\overline{\Delta}_{\mathfrak{p}}(w)$.
\end{enumerate}
\end{theorem}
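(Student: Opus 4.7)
The plan is to establish (a), (b), (c) simultaneously by exhibiting $\mathcal{O}_{\lambda}/\mathcal{C}$ (equivalently ${}_{\,\lambda}^{\infty}\mathcal{H}_{\nu}^{1}$ via Theorem~\ref{thm906}) as a \emph{standardly stratified category} in which the modules $\Delta_{\mathfrak{p}}(w)$ are the standard modules, the $\overline{\Delta}_{\mathfrak{p}}(w)$ are the proper standard modules, and the stratification is indexed by $W[\mathfrak{p}]$ with (the opposite of) the Bruhat order. The three claims are then precisely the defining axioms of such a stratification. The inputs I would use are the standard Verma flag of $P(w\cdot\lambda)$ in $\mathcal{O}_{\lambda}$, BGG reciprocity (Theorem~\ref{thm405}), and the explicit structure of the endomorphism rings of the $\Delta_{\mathfrak{p}}(w)$ coming from a singular analogue of Soergel's Endomorphismensatz.

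For (a), I would build the filtration of the kernel $K$ of $P(w)\tto\Delta_{\mathfrak{p}}(w)$ by iterated peeling. Pick a Bruhat-maximal $y<w$ in $W[\mathfrak{p}]$ admitting a nonzero morphism $P(y)\to K$; among all such maps, select one whose image has simple top $L(y)$ and is minimal with this property. Using the standard Verma flag of $P(w\cdot\lambda)$, BGG-reciprocity control on subtraces, and the maximality of $y$, one shows the image is isomorphic to $\Delta_{\mathfrak{p}}(y)$: neither a proper quotient (since the multiplicity $[P(w):\Delta_{\mathfrak{p}}(y)]=[\Delta_{\mathfrak{p}}(y):L(w)]$ in $\mathcal{O}_{\lambda}/\mathcal{C}$ forces the presence of the full module) nor anything larger (since maximality of $y$ prevents contributions from $P(z)$, $z>y$). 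Splitting off this submodule and iterating with smaller elements of $W[\mathfrak{p}]$ yields the required filtration, the iteration terminating at the Bruhat-minimal element.

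Parts (c) and (b) then follow by analysing the endomorphism ring of $\Delta_{\mathfrak{p}}(w)$. By construction, $\overline{\Delta}_{\mathfrak{p}}(w)$ is the quotient of $\Delta_{\mathfrak{p}}(w)$ by its own trace in its radical, so every $\alpha\in\mathrm{rad}(\mathrm{End}(\Delta_{\mathfrak{p}}(w)))$ contributes an image of $\Delta_{\mathfrak{p}}(w)$ inside this radical. Using the singular analogue of Soergel's Endomorphismensatz combined with the equivalence of Theorem~\ref{thm906}, one identifies $\mathrm{End}(\Delta_{\mathfrak{p}}(w))$ with a quotient of the coinvariant algebra depending only on $W_{\nu}$; iterated quotienting by the images of a Jordan--H\"older basis of radical endomorphisms then produces a filtration of $\Delta_{\mathfrak{p}}(w)$ whose successive quotients are all $\overline{\Delta}_{\mathfrak{p}}(w)$, giving (c). For (b), if $L(x)$ were a composition factor of $\mathrm{rad}(\overline{\Delta}_{\mathfrak{p}}(w))$ with $x\leq w$, then lifting via the projective cover $P(x)$ of $L(x)$ would yield a nonzero map $P(x)\to\overline{\Delta}_{\mathfrak{p}}(w)$; chasing this through the standard filtration of $P(x)$ from part (a) and the defining property of $\overline{\Delta}_{\mathfrak{p}}(w)$ as the maximal quotient with no self-maps into the radical produces a contradiction, so only $x>w$ can occur.

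The main obstacle is the explicit identification of $\mathrm{End}(\Delta_{\mathfrak{p}}(w))$ with the appropriate coinvariant quotient, which is the technical heart of the argument in \cite{FKM,KM}: it requires careful control of the interaction between projective functors, translation to and from the singular block $\mathcal{O}_{\nu}$, and the Harish-Chandra bimodule equivalence of Theorem~\ref{thm906}. Once this cornerstone is in place, the remaining work reduces to Bruhat-order bookkeeping and standard manipulations within a (properly) stratified algebra.
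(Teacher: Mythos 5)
Your framing is right: the theorem is exactly the assertion that the algebra describing ${}^{\infty}_{\,\,\lambda}\mathcal{H}_{\nu}^1\cong\mathcal{O}_{\lambda}/\mathcal{C}$ is properly stratified, with $\Delta_{\mathfrak{p}}$ the standard and $\overline{\Delta}_{\mathfrak{p}}$ the proper standard objects. But the proposal is missing the one identification on which the proofs in \cite{FKM,KM} actually rest, and without it two of your three arguments do not close. The key fact is that, under the equivalence of Theorem~\ref{thm906}, the module $\Delta_{\mathfrak{p}}(w)$ is isomorphic to $\theta^{\mathrm{out}}\Delta(\overline{w}\cdot\nu)$, the translation out of the wall of the corresponding Verma module in the singular block $\mathcal{O}_{\nu}$ (equivalently, tensoring the Verma-flagged kernel with $\Delta(\nu)$ in the Harish-Chandra picture). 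Once this is in hand: (a) follows by applying the exact functor $\theta^{\mathrm{out}}$ to the standard filtration of $P(\overline{w}\cdot\nu)$ guaranteed by Theorem~\ref{thm405}\eqref{thm405.1}; (c) follows because $\theta^{\mathrm{out}}\Delta(\overline{w}\cdot\nu)$ has a Verma flag with subquotients $\Delta(y\cdot\lambda)$, $y$ ranging over the coset of $w$, and the images of all of these in $\mathcal{O}_{\lambda}/\mathcal{C}$ coincide with $\overline{\Delta}_{\mathfrak{p}}(w)$; and (b) then reduces to Theorem~\ref{thm402}\eqref{thm402.2} for the Verma module $\Delta(w\cdot\lambda)$.

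Concretely, the gaps in your route are these. In (a), you invoke the reciprocity $[P(w):\Delta_{\mathfrak{p}}(y)]=[\Delta_{\mathfrak{p}}(y):L(w)]$ to force the peeled-off submodule to be a full copy of $\Delta_{\mathfrak{p}}(y)$; but BGG reciprocity for the stratified structure is a \emph{consequence} of the existence of standard filtrations of the $P(w)$ (via an Ext-orthogonality argument), so using it here is circular --- the only reciprocity available as input is the one in $\mathcal{O}_{\lambda}$ itself, and converting it into $\Delta_{\mathfrak{p}}$-multiplicities already requires knowing the $\Delta(y\cdot\lambda)$-content of $\Delta_{\mathfrak{p}}$, i.e.\ the translation-functor description. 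In (c), knowing that $\mathrm{End}(\Delta_{\mathfrak{p}}(w))$ is the coinvariant algebra of $W_{\nu}$ does not by itself show that quotienting by images of radical endomorphisms produces \emph{full} copies of $\overline{\Delta}_{\mathfrak{p}}(w)$ at each step rather than proper quotients; one needs the character count $[\Delta_{\mathfrak{p}}(w)]=|W_{\nu}|\cdot[\overline{\Delta}_{\mathfrak{p}}(w)]$ (in the Grothendieck group), and that count is again exactly what the Verma flag of $\theta^{\mathrm{out}}\Delta(\overline{w}\cdot\nu)$ supplies. Your sketch of (b) by a projective-cover chase is too vague to assess and is in any case unnecessary once $\overline{\Delta}_{\mathfrak{p}}(w)$ is identified with the image of $\Delta(w\cdot\lambda)$ in the quotient category. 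I recommend restructuring the proof around the single lemma $\Delta_{\mathfrak{p}}(w)\cong\theta^{\mathrm{out}}\Delta(\overline{w}\cdot\nu)$; everything else is then bookkeeping.
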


Theorem~\ref{thm907} says that the associative algebra describing
the category ${}^{\infty}_{\,\,\lambda}\mathcal{H}_{\nu}^1$ is
{\em properly stratified} in the sense of  \cite{Dl}. Modules
\index{properly stratified algebra}
$\Delta_{\mathfrak{p}}(w)$ are called {\em standard modules}
\index{standard module}
and modules $\overline{\Delta}_{\mathfrak{p}}(w)$ are called
{\em proper standard modules} with respect to this structure.
\index{proper standard module}
Note that standard modules have finite projective dimension,
while proper standard modules usually have infinite projective 
dimension. There is an appropriate titling theory developed
for properly stratified algebras in \cite{AHLU}.
In \cite{FKM} it is shown that 
${}^{\infty}_{\,\,\lambda}\mathcal{H}_{\nu}^1$ is Ringel self-dual.

\begin{lemma}\label{lem909}
For every $x\in W[\mathfrak{p}]$ and any simple reflection $s$ we have:
\begin{enumerate}[$($a$)$]
\item\label{lem909.1} If $xs\not\in W(\mathfrak{p})$, then
there is a short exact sequence as follows:
\begin{displaymath}
0\to \Delta_{\mathfrak{p}}(x)\langle -1\rangle\to
\theta_s \Delta_{\mathfrak{p}}(x)\to
\Delta_{\mathfrak{p}}(x)\langle 1\rangle\to 0.
\end{displaymath}
\item\label{lem909.2} If $xs\in W[\mathfrak{p}]$ and $xs>x$, then
there is a short exact sequence as follows:
\begin{displaymath}
0\to \Delta_{\mathfrak{p}}(x)\langle -1\rangle\to
\theta_s \Delta_{\mathfrak{p}}(x)\to
\Delta_{\mathfrak{p}}(xs)\to 0.
\end{displaymath}
\item\label{lem909.3} If $xs\in W[\mathfrak{p}]$ and $xs<x$, then
there is a short exact sequence as follows:
\begin{displaymath}
0\to \Delta_{\mathfrak{p}}(xs)\to
\theta_s \Delta_{\mathfrak{p}}(x)\to
\Delta_{\mathfrak{p}}(x)\langle 1\rangle\to 0.
\end{displaymath}
\end{enumerate}
\end{lemma}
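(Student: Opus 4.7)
The plan is to apply $\theta_s$ to the Verma filtration of $\Delta_{\mathfrak{p}}(x)$ guaranteed by Theorem~\ref{thm907}(c), compute the subquotient-by-subquotient action via Lemma~\ref{lem731}, and reassemble via the characterization of $\Delta_{\mathfrak{p}}(y)$ as the standard module with graded top $L(y\cdot\lambda)$ whose Verma flag is supported in the coset $W_{\mathfrak{p}} y$. The preimage of $\Delta_{\mathfrak{p}}(x)$ in $\mathcal{O}_\lambda$ admits a graded Verma filtration with subquotients $\Delta(tx\cdot\lambda)$ for $t\in W_{\mathfrak{p}}$, each of multiplicity one, with $\Delta(x\cdot\lambda)$ at the graded top and the remaining grading shifts governed by the length function on $W_{\mathfrak{p}}$ (the standard description, obtained e.g.\ from realizing $\Delta_{\mathfrak{p}}(x)$ as the translation $\theta_{\nu,\lambda}\Delta(x\cdot\nu)$ out of the $\nu$-wall). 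Since $\theta_s$ is exact and preserves $\mathcal{F}(\Delta)$ by Proposition~\ref{prop503}, applying Lemma~\ref{lem731} subquotient-by-subquotient produces a graded Verma filtration of $\theta_s\Delta_{\mathfrak{p}}(x)$.

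For case (b), $xs\in W[\mathfrak{p}]$ and $xs>x$: the cosets $W_{\mathfrak{p}} x$ and $W_{\mathfrak{p}} xs$ are distinct, and a graded-character calculation using the above Verma flag together with Lemma~\ref{lem731}(a) yields
\[
[\theta_s\Delta_{\mathfrak{p}}(x)] \;=\; [\Delta_{\mathfrak{p}}(x)\langle -1\rangle] + [\Delta_{\mathfrak{p}}(xs)].
\]
Since $xs>x$, the graded top of $\theta_s\Delta_{\mathfrak{p}}(x)$ in $\mathcal{O}_\lambda/\mathcal{C}$ is $L(xs\cdot\lambda)$ (from Lemma~\ref{lem731}(a) applied to the top Verma $\Delta(x\cdot\lambda)$), producing a natural surjection $\theta_s\Delta_{\mathfrak{p}}(x)\twoheadrightarrow\Delta_{\mathfrak{p}}(xs)$. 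Its kernel has the Verma character of $\Delta_{\mathfrak{p}}(x)\langle -1\rangle$ and graded top $L(x\cdot\lambda)\langle -1\rangle$, forcing it to be isomorphic to $\Delta_{\mathfrak{p}}(x)\langle -1\rangle$ by the standard-module characterization. Case (c) is formally symmetric, using Lemma~\ref{lem731}(b) and observing that the graded top of $\theta_s\Delta_{\mathfrak{p}}(x)$ becomes $L(x\cdot\lambda)\langle 1\rangle$.

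Case (a), $xs\notin W[\mathfrak{p}]$, is the technical heart. A short combinatorial check (using that $xs>x$ always forces $xs\in W[\mathfrak{p}]$, since $\ell(txs)\le\ell(tx)+1=\ell(x)<\ell(xs)$ for every simple $t\in W_{\mathfrak{p}}$) shows that this condition is equivalent to $W_{\mathfrak{p}} xs=W_{\mathfrak{p}} x$, which in turn forces $xs<x$ and that right multiplication by $s$ acts on $W_{\mathfrak{p}} x$ as the involution induced by the internal reflection $xsx^{-1}\in W_{\mathfrak{p}}$. Applying Lemma~\ref{lem731} subquotient-by-subquotient then produces Vermas still confined to $W_{\mathfrak{p}} x$, with graded shifts that, upon careful accounting across the involution (each orbit contributing one $\Delta(\,\cdot\,)\langle -1\rangle$ and one $\Delta(\,\cdot\,)\langle 1\rangle$), yield the identification $[\theta_s\Delta_{\mathfrak{p}}(x)] = [\Delta_{\mathfrak{p}}(x)\langle -1\rangle]+[\Delta_{\mathfrak{p}}(x)\langle 1\rangle]$. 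The graded top is $L(x\cdot\lambda)\langle 1\rangle$ by Lemma~\ref{lem731}(b) applied to $\Delta(x\cdot\lambda)$, and the standard-module characterization identifies the quotient as $\Delta_{\mathfrak{p}}(x)\langle 1\rangle$ and the kernel as $\Delta_{\mathfrak{p}}(x)\langle -1\rangle$. The principal obstacle throughout, and most acute in case (a), is this graded bookkeeping: lifting the Grothendieck-group identification to a genuine isomorphism of standard modules hinges on the uniqueness of $\Delta_{\mathfrak{p}}(y)$ characterized by its graded top together with the confinement of its Verma flag to $W_{\mathfrak{p}} y$, combined with the exactness and functoriality of $\theta_s$.
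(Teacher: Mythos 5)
Your strategy is the right one (and the intended one): read off the graded Verma flag of $\Delta_{\mathfrak{p}}(x)$, with subquotients $\Delta(tx\cdot\lambda)\langle -\mathfrak{l}(t)\rangle$ for $t\in W_{\mathfrak{p}}$, apply the exact functor $\theta_s$ subquotient by subquotient via Lemma~\ref{lem731}, and match the outcome against \eqref{eq821}. Your case analysis is correct, in particular the observation that $xs>x$ forces $xs\in W[\mathfrak{p}]$, so that case (a) is exactly the case $xsx^{-1}\in W_{\mathfrak{p}}$, where right multiplication by $s$ permutes the coset by a fixed-point-free involution; and the orbit-by-orbit bookkeeping of grading shifts in all three cases is accurate, so the identity in the graded Grothendieck group is established.

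The gap is in the promotion of this character identity to the asserted short exact sequences, which is where the actual content of the lemma lies. Two specific points. First, you claim the graded top of $\theta_s\Delta_{\mathfrak{p}}(x)$ in the quotient category is the single simple $L(xs\cdot\lambda)$ (resp.\ $L(x\cdot\lambda)\langle 1\rangle$). What the Verma flag gives for free is only that this top is a quotient of $L(x\cdot\lambda)\langle \pm 1\rangle\oplus L(xs\cdot\lambda)$ (the tops of all other Verma subquotients $\Delta(tx\cdot\lambda)$, $t\neq e$, die in $\mathcal{O}_{\lambda}/\mathcal{C}$), and Lemma~\ref{lem731} only yields a surjection onto the single Verma module $\Delta(xs\cdot\lambda)$, not onto $\Delta_{\mathfrak{p}}(xs)$; extending that surjection over the whole standard module is precisely what must be proved. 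Second, your ``standard-module characterization'' is incomplete as stated: to identify a subquotient $K$ with $\Delta_{\mathfrak{p}}(y)\langle j\rangle$ you need (i) that $K$ again lies in $\mathcal{F}(\Delta)$, which holds for kernels of surjections in $\mathcal{F}(\Delta)$ by Proposition~\ref{cor436} but is never checked, and (ii) that $K$ is a quotient of $P(y\cdot\lambda)\langle j\rangle$ on which the traces of all $P(z\cdot\lambda)$, $z<y$, vanish; confinement of the Verma flag to the coset $W_{\mathfrak{p}}y$ does not by itself give the required vanishing $[K:L(z\cdot\lambda)]=0$ for $z\in W[\mathfrak{p}]$, $z<y$. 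The standard way to close both gaps is to show first that $\theta_s$ preserves the category of $\Delta_{\mathfrak{p}}$-filtered modules (using self-adjointness of $\theta_s$ and the homological characterization of that category by orthogonality to proper costandard modules); your character computation then pins down the multiset of filtration subquotients, and the order of the two-step filtration follows from $\mathrm{Ext}^1(\Delta_{\mathfrak{p}}(y),\Delta_{\mathfrak{p}}(z))=0$ for $y\leq z$, which is immediate from Theorem~\ref{thm907}\eqref{thm907.1} together with $[\Delta_{\mathfrak{p}}(z):L(x'\cdot\lambda)]=0$ for $x'<z$ in $W[\mathfrak{p}]$. With that ingredient supplied, your argument goes through.
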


There is a unique $\mathbb{Z}[v,v^{-1}]$-linear homomorphism 
from $\mathcal{M}_{v^{-1}}$ to 
$[({}^{\infty}_{\,\,\lambda}\mathcal{H}_{\nu}^1)^{\mathbb{Z}}]$
sending $M_x$ to $[\Delta_{\mathfrak{p}}(x)]$
for all $x\in W(\mathfrak{p})$. This becomes an isomorphism if we
extend our scalars to $\mathbb{Q}$. Comparing \eqref{eq821} with
Lemma~\ref{lem909}, we obtain:

\begin{proposition}[\cite{MS0}]\label{prop910}
The action of $\cS^{\mathbb{Z}}$ (resp. $\cS$) on 
$({}^{\infty}_{\,\,\lambda}\mathcal{H}_{\nu}^1)^{\mathbb{Z}}$
(resp. ${}^{\infty}_{\,\,\lambda}\mathcal{H}_{\nu}^1$) categorifies
the parabolic module $\mathbb{Q}\otimes_{\mathbb{Z}}\mathcal{M}_{v^{-1}}$ 
(resp. the corresponding permutation module).
The integral version $\mathcal{M}_{v^{-1}}$ is categorified by the 
action of $\cS^{\mathbb{Z}}$ (resp. $\cS$) on the 
additive category of projective modules in 
$({}^{\infty}_{\,\,\lambda}\mathcal{H}_{\nu}^1)^{\mathbb{Z}}$
(resp. ${}^{\infty}_{\,\,\lambda}\mathcal{H}_{\nu}^1$).
\end{proposition}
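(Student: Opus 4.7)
The plan is to translate Lemma~\ref{lem909} into the action of $\underline{H}_s$ on the parabolic module $\mathcal{M}_{v^{-1}}$, match the two computations term by term, and then deduce the isomorphism statements from the filtration data in Theorem~\ref{thm907}. First, I would fix the natural bijection $x\leftrightarrow \tilde x := w_o^{\mathfrak{p}}x$ between the set $W(\mathfrak{p})$ of shortest coset representatives (indexing the basis $\{M_x\}$ of $\mathcal{M}_{v^{-1}}$) and the set $W[\mathfrak{p}]$ of longest coset representatives (indexing the standard modules $\Delta_{\mathfrak{p}}(\tilde x)$). Using this, define the candidate $\mathbb{Z}[v,v^{-1}]$-linear map $\Psi\colon\mathcal{M}_{v^{-1}}\to [({}^{\infty}_{\,\,\lambda}\mathcal{H}_{\nu}^1)^{\mathbb{Z}}]$ by $M_x\mapsto [\Delta_{\mathfrak{p}}(\tilde x)]$, as suggested in the paragraph preceding the proposition.

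Second, I would verify that $\Psi$ intertwines the actions. Passing the three short exact sequences of Lemma~\ref{lem909} to the graded Grothendieck group and using the convention $v[M]=[M\langle -1\rangle]$ yields: $[\theta_s\Delta_{\mathfrak{p}}(y)]$ equals $(v+v^{-1})[\Delta_{\mathfrak{p}}(y)]$ when $ys\notin W[\mathfrak{p}]$; equals $[\Delta_{\mathfrak{p}}(ys)]+v[\Delta_{\mathfrak{p}}(y)]$ when $ys\in W[\mathfrak{p}]$ and $ys>y$; and equals $[\Delta_{\mathfrak{p}}(ys)]+v^{-1}[\Delta_{\mathfrak{p}}(y)]$ when $ys\in W[\mathfrak{p}]$ and $ys<y$. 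Matching these against \eqref{eq821} at $u=v^{-1}$ requires observing that left multiplication by $w_o^{\mathfrak{p}}$ is a length-reversing bijection on each coset, so ``$xs\in W(\mathfrak{p})$'' is equivalent to ``$\tilde x s\in W[\mathfrak{p}]$'' and the ascent/descent relations on $W(\mathfrak{p})$ and $W[\mathfrak{p}]$ coincide under $x\leftrightarrow\tilde x$. With that observation, the two formulas agree case by case.

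Third, to deduce the isomorphism statements I would argue as follows. For the rationalized version, Theorem~\ref{thm907}(c) implies $[\Delta_{\mathfrak{p}}(w)]$ is a positive integer multiple of $[\overline{\Delta}_{\mathfrak{p}}(w)]$, while Theorem~\ref{thm907}(b) shows that proper standards expand upper-unitriangularly in simples; hence over $\mathbb{Q}$ the classes $\{[\Delta_{\mathfrak{p}}(w)]\}_{w\in W[\mathfrak{p}]}$ form a $\mathbb{Z}[v,v^{-1}]\otimes\mathbb{Q}$-basis of $[({}^{\infty}_{\,\,\lambda}\mathcal{H}_{\nu}^1)^{\mathbb{Z}}]^{\mathbb{Q}}$, of the same rank as $\{M_x\}_{x\in W(\mathfrak{p})}$, so $\Psi\otimes\mathbb{Q}$ is an isomorphism. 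For the integral statement about projectives, pass to the split Grothendieck group of the additive category of graded projectives, which has the honest $\mathbb{Z}[v,v^{-1}]$-basis $\{[P(w)]\}_{w\in W[\mathfrak{p}]}$. Theorem~\ref{thm907}(a) gives an upper-unitriangular change-of-basis matrix $[P(w)]=[\Delta_{\mathfrak{p}}(w)]+\sum_{x<w}c_{x,w}(v)[\Delta_{\mathfrak{p}}(x)]$, so the projectives span the same $\mathbb{Z}[v,v^{-1}]$-lattice as the standards, yielding an integral categorification of $\mathcal{M}_{v^{-1}}$ via $\Psi^{-1}$.

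The main obstacle is the combinatorial bookkeeping in the second step: one has to cleanly reconcile the case distinctions of \eqref{eq821}, which are phrased in terms of right multiplication by $s$ on $W(\mathfrak{p})$, with those of Lemma~\ref{lem909}, phrased in terms of right multiplication by $s$ on $W[\mathfrak{p}]$, and to confirm that the scalars $v^{\pm 1}$ correspond to the graded shifts $\langle\mp 1\rangle$ appearing in Lemma~\ref{lem909} under the length-reversing bijection $W(\mathfrak{p})\leftrightarrow W[\mathfrak{p}]$. Once this Coxeter-combinatorial matching is set up, everything else is direct computation in the Grothendieck group.
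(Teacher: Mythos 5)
Your proposal is correct and follows essentially the same route as the paper, whose entire argument is the sentence ``comparing \eqref{eq821} with Lemma~\ref{lem909}'' together with the preceding remark that $M_x\mapsto[\Delta_{\mathfrak{p}}(x)]$ becomes an isomorphism over $\mathbb{Q}$; your case-by-case matching in the Grothendieck group and your use of the filtrations in Theorem~\ref{thm907} for the basis statements are exactly the details the paper leaves implicit. The explicit length-reversing bijection $W(\mathfrak{p})\leftrightarrow W[\mathfrak{p}]$ is a welcome clarification of an indexing ambiguity present in the paper's own text.
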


The Grothendieck group  of 
$({}^{\infty}_{\,\,\lambda}\mathcal{H}_{\nu}^1)^{\mathbb{Z}}$
has two natural bases:
\begin{itemize}
\item The {\em proper standard} basis given by classes of
proper standard modules.
\index{proper standard basis}
\item The {\em dual Kazhdan-Lusztig} basis given by classes of
simple modules.
\end{itemize}
Extending the scalar to $\mathbb{Q}$ we get three other bases:
\begin{itemize}
\item The {\em standard} basis given by classes of
standard modules.
\index{standard basis}
\item The {\em Kazhdan-Lusztig} basis given by classes of
indecomposable projective modules.
\item The {\em twisted Kazhdan-Lusztig} basis given by classes of
indecomposable tilting modules.
\end{itemize}

\subsection{Parabolic analogues of $\mathcal{O}$}\label{s9.4}

Comparing our categorifications of the induced sign module
(Subsection~\ref{s8.4}) and the permutation module
(Subsection~\ref{s9.3}) one could observe certain similarities 
in constructions. A natural question is: could they be two special
cases of some more general construction. The answer turns out
to be ``yes'' and the corresponding general construction is 
the general approach to parabolic generalizations of $\mathcal{O}$
proposed in \cite{FKM0}.

Let $\lambda$ and $\mathfrak{p}$ be as in the previous section.
Denote by $\mathfrak{n}$ and $\mathfrak{a}$ the nilpotent radical 
and the Levi quotient of $\mathfrak{p}$, respectively. Then
$\mathfrak{a}$ is a reductive Lie algebra, isomorphic to the direct
sum of some $\mathfrak{gl}_{k_i}$. For any category $\mathcal{C}$ 
of $\mathfrak{a}$-modules one can consider the full subcategory 
$\mathcal{O}(\mathfrak{p},\mathcal{C})$ of the category of all
$\mathfrak{g}$-modules, which consists of all modules $M$ satisfying
the following conditions:
\begin{itemize}
\item $M$ is finitely generated;
\item the action of $U(\mathfrak{n})$ on $M$ is locally finite;
\item after restriction to $\mathfrak{a}$, the module $M$ decomposes
into a direct sum of modules from $\mathcal{C}$.
\end{itemize}
Under some rather general assumptions on $\mathcal{C}$ one can show
that the category $\mathcal{O}(\mathfrak{p},\mathcal{C})$ admits a
block decomposition such that each block is equivalent to the module
category over a finite dimensional associative algebra (see 
\cite{FKM0}). Here we will deal with some very special categories
$\mathcal{C}$ and will formulate all results for these categories.

Fix some right cell $\mathcal{R}$ in $W_{\mathfrak{p}}$ and 
consider the Serre subcategory $\mathcal{C}_{\mathcal{R}}$ of the 
category $\mathcal{O}$ for the algebra $\mathfrak{a}$, which is 
generated by simple modules $L(w\cdot\lambda_{\mathfrak{a}})$ and all 
their translations  to all possible walls, where $\lambda_{\mathfrak{a}}$ 
is integral, regular and dominant and $w\in \hat{\mathcal{R}}$. Denote by
$\mathcal{C}'_{\mathcal{R}}$ the Serre subcategory 
of $\mathcal{C}_{\mathcal{R}}$ generated by simple modules 
$L(w\cdot\lambda_{\mathfrak{a}})$ and all their translations  
to all possible walls, 
where $\lambda_{\mathfrak{a}}$ is integral, regular and dominant and 
$w\in \hat{\mathcal{R}}\setminus\mathcal{R}$. By
Subsection~\ref{s9.2}, the action of projective functors on
a regular block of $\mathcal{C}_{\mathcal{R}}/\mathcal{C}'_{\mathcal{R}}$
categorifies the cell $W_{\mathfrak{p}}$-module corresponding
to $\mathcal{R}$. As usual, we identify the quotient
$\mathcal{C}:=\mathcal{C}_{\mathcal{R}}/\mathcal{C}'_{\mathcal{R}}$
with the full subcategory of $\mathcal{C}_{\mathcal{R}}$
consisting of modules which have a presentation by projective-injective
modules (see Proposition~\ref{prop902}).

Now we can consider the category $\mathcal{O}(\mathfrak{p},\mathcal{C})$,
as was proposed in \cite{MS2}. To obtain our categorification of the
induced sign module we have to take $\mathcal{R}=\{e\}$, which 
implies that $\mathcal{C}_{\mathcal{R}}$ is the category of all
finite dimensional semi-simple integral $\mathfrak{a}$-modules and
$\mathcal{C}'_{\mathcal{R}}=0$. To obtain our categorification of the
permutation module we have to take $\mathcal{R}=\{w_o^{\mathfrak{p}}\}$, 
which implies that each regular block of $\mathcal{C}_{\mathcal{R}}/
\mathcal{C}'_{\mathcal{R}}$ is the category of modules over the
coinvariant algebra of $W_{\mathfrak{p}}$, realized via modules
in $\mathcal{O}$ (for $\mathfrak{a}$) admitting a presentation by
projective-injective modules.  By construction, 
$\mathcal{O}(\mathfrak{p},\mathcal{C})$ is a Serre subquotient of
$\mathcal{O}$, in particular, it inherits from $\mathcal{O}$
a decomposition into blocks, indexed by dominant $\lambda$.

\begin{lemma}[\cite{MS2}]\label{lem911}
Let $\lambda\in\mathfrak{h}^*_{\mathrm{dom}}$ be integral and regular.
Denote by $\mathcal{K}$ and $\mathcal{K}'$ the Serre subcategories of 
$\mathcal{O}_{\lambda}$ generated by $L(xy\cdot\lambda)$, where
$y\in W(\mathfrak{p})$ and $x\in \hat{\mathcal{R}}$ or
$x\in \hat{\mathcal{R}}\setminus\mathcal{R}$, respectively. Then
$\mathcal{O}(\mathfrak{p},\mathcal{C})_{\lambda}$ is equivalent to
$\mathcal{K}/\mathcal{K}'$.
\end{lemma}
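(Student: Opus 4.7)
The plan is to identify the simple objects of both sides, then promote the bijection to an equivalence of abelian categories by comparing endomorphism algebras of projective generators. Both categories are length categories with finitely many simples (the left hand side by the general framework of \cite{FKM0}, the right hand side as a Serre subquotient of $\mathcal{O}_\lambda$), so it suffices to exhibit a bijection between simples together with an exact functor realizing it that is fully faithful.

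First I would classify simples on each side. Using the unique factorization $W = W_\mathfrak{p} \cdot W(\mathfrak{p})$, the simples of $\mathcal{K}/\mathcal{K}'$ are indexed by pairs $(x,y)$ with $x \in \mathcal{R}$ and $y \in W(\mathfrak{p})$, corresponding to $L(xy \cdot \lambda)$. On the other side, a simple of $\mathcal{O}(\mathfrak{p},\mathcal{C})_\lambda$ is the head of a module parabolically induced from a simple of $\mathcal{C}$ extended trivially across $\mathfrak{n}$; the $\mathfrak{g}$-central character $\chi_\lambda$ determines the $\mathfrak{a}$-central character uniquely as $y \cdot \lambda_\mathfrak{a}$ for some $y \in W(\mathfrak{p})$, and within this block the simples of $\mathcal{C} = \mathcal{C}_\mathcal{R}/\mathcal{C}'_{\mathcal{R}}$ are indexed by $\mathcal{R}$ in view of Proposition~\ref{prop902} applied to $\mathfrak{a}$. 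This gives the required bijection on simples.

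To build the equivalence, I would observe that objects of $\mathcal{O}(\mathfrak{p},\mathcal{C})_\lambda$ automatically satisfy the defining conditions of $\mathcal{O}_\lambda$, so there is a natural embedding $\mathcal{O}(\mathfrak{p},\mathcal{C})_\lambda \hookrightarrow \mathcal{O}_\lambda$. Since the composition factors of any such object, read off from the bijection above, all have the form $L(xy \cdot \lambda)$ with $x \in \hat{\mathcal{R}}$, the image lands in $\mathcal{K}$. Composing with the Serre projection to $\mathcal{K}/\mathcal{K}'$ gives an exact comparison functor that sends the simple labelled $(x,y)$ to $L(xy\cdot\lambda)$. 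Full faithfulness can then be checked on a basic projective generator: by Morita theory it suffices to see that its endomorphism algebra in $\mathcal{O}(\mathfrak{p},\mathcal{C})_\lambda$ coincides with that of its image in $\mathcal{K}/\mathcal{K}'$, which in turn follows from the Harish-Chandra bimodule realization of morphisms in Subsection~\ref{s5.3} combined with the explicit description of $\mathcal{C}$ via modules admitting a presentation by projective-injectives.

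The main obstacle will be ensuring that extensions are faithfully transported across the Serre quotient, i.e.\ that no extensions among the $L(xy\cdot\lambda)$ with $x \in \mathcal{R}$ are lost when passing from $\mathcal{K}$ to $\mathcal{K}/\mathcal{K}'$, and conversely that every extension visible in $\mathcal{O}(\mathfrak{p},\mathcal{C})_\lambda$ lifts to $\mathcal{K}$. This is controlled by cell combinatorics: by Proposition~\ref{prop306}, a projective functor applied to a simple in a cell $\mathcal{R}$ has simple subquotients whose labels are $\leq_R$ the elements of $\mathcal{R}$, so restricting a $\mathfrak{g}$-extension to $\mathfrak{a}$ keeps all composition factors inside $\mathcal{C}_\mathcal{R}$, and symmetrically any extension class killed by the Serre quotient would force composition factors outside $\hat{\mathcal{R}}$, contradicting the choice of $\mathcal{K}$. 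Once this extension-matching is in place, both categories are described by the same Gabriel quiver with relations, and the equivalence follows.
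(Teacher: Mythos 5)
Your overall strategy---realize both sides as module categories over finite dimensional algebras and compare a projective generator together with its endomorphism ring---is the right one, and is essentially how the result is established in \cite{MS2} (the present paper gives no proof, only the citation). However, the steps where all the work actually lies are asserted rather than proved, and the fallback argument in your last paragraph does not work. First, the conclusion that ``both categories are described by the same Gabriel quiver with relations, and the equivalence follows'' is a non sequitur: matching the simple objects and the $\mathrm{Ext}^1$-spaces between them only matches the quivers, not the relations, and two algebras with the same quiver and the same dimensions of $\mathrm{Ext}^1$ between simples need not be isomorphic. So extension-matching cannot serve as the final step. Second, in the Morita-theoretic route the essential points are: (i) a projective generator of $\mathcal{O}(\mathfrak{p},\mathcal{C})_{\lambda}$ is given by translates of the parabolically induced modules $U(\mathfrak{g})\otimes_{U(\mathfrak{p})}Q$ with $Q$ projective in $\mathcal{C}$ (this is the content of \cite{FKM0}); (ii) its image in $\mathcal{K}/\mathcal{K}'$ additively generates the images of the $P(xy\cdot\lambda)$, $x\in\mathcal{R}$, $y\in W(\mathfrak{p})$; and (iii) the comparison functor is bijective on homomorphisms between these projectives. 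None of (i)--(iii) is proved in your text; in particular (iii) is attributed to Subsection~\ref{s5.3}, which concerns morphisms between projective functors on $\mathcal{O}$, not morphisms between modules induced from $\mathfrak{a}$. The genuine argument here uses the adjunction for parabolic induction together with the double centralizer property of the projective-injective modules in $\mathcal{C}_{\mathcal{R}}$.

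A further point that needs care is the exactness of your comparison functor. The abelian structure on $\mathcal{O}(\mathfrak{p},\mathcal{C})_{\lambda}$ is not the one inherited from $\mathcal{O}_{\lambda}$: since $\mathcal{C}$ is only a full (not a Serre) subcategory of the category $\mathcal{O}$ for $\mathfrak{a}$, cokernels computed in $\mathcal{O}(\mathfrak{p},\mathcal{C})_{\lambda}$ and in $\mathcal{O}_{\lambda}$ differ in general, so the composite with the Serre quotient is not automatically exact. Making it exact amounts to identifying $\mathcal{O}(\mathfrak{p},\mathcal{C})_{\lambda}$ with the subcategory of $\mathcal{K}$ of modules admitting a presentation by the relevant projectives---which is, in effect, the substance of the lemma, so this step cannot be waved through without risking circularity.
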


For $w\in \mathcal{R}W(\mathfrak{p})\subset W$ denote by 
$P^{\mathfrak{p},\mathcal{R}}(w)$ the projective cover of
$L(w\cdot\lambda)$ in $\mathcal{K}/\mathcal{K}'$.
For $x,x'\in \mathcal{R}$ and $y,y'\in W(\mathfrak{p})$
we write $xy\preceq x'y'$ if and only if $y\leq y'$. Denote by
$\Delta^{\mathfrak{p},\mathcal{R}}(w)$ the quotient of 
$P^{\mathfrak{p},\mathcal{R}}(w)$ modulo the trace of all
projectives $P^{\mathfrak{p},\mathcal{R}}(w')$ such that
$w'\prec w$. Denote by $\overline{\Delta}^{\mathfrak{p},\mathcal{R}}(w)$ 
the quotient of  $\Delta^{\mathfrak{p},\mathcal{R}}(w)$ modulo the 
trace of $\Delta^{\mathfrak{p},\mathcal{R}}(w')$ such that
$w'\preceq w$ in the radical of $\Delta^{\mathfrak{p},\mathcal{R}}(w)$.
Denote by $L^{\mathfrak{p},\mathcal{R}}(w)$ the unique simple top of
$P^{\mathfrak{p},\mathcal{R}}(w)$. The $\mathfrak{g}$-module
$L(w\cdot\lambda)$ is a representative for the simple object
$L^{\mathfrak{p},\mathcal{R}}(w)$ of $\mathcal{K}/\mathcal{K}'$.
We can now formulate the following principal structural properties 
of the category $\mathcal{K}/\mathcal{K}'$.

\begin{theorem}[\cite{MS2}]\label{thm915}
Let $w\in \mathcal{R}W(\mathfrak{p})$.
\begin{enumerate}[$($a$)$]
\item\label{thm915.1} The kernel of the natural projection
$P^{\mathfrak{p},\mathcal{R}}(w)\tto \Delta^{\mathfrak{p},\mathcal{R}}(w)$ 
has a filtration whose subquotients are isomorphic to 
$\Delta^{\mathfrak{p},\mathcal{R}}(x)$ for $x\prec w$.
\item\label{thm915.2} The kernel of the natural projection
$\overline{\Delta}^{\mathfrak{p},\mathcal{R}}(w)\tto 
L^{\mathfrak{p},\mathcal{R}}(w)$ 
has a filtration whose subquotients are isomorphic to 
$L^{\mathfrak{p},\mathcal{R}}(x)$ for $w\prec x$.
\item\label{thm915.3} The module $\Delta^{\mathfrak{p},\mathcal{R}}(w)$
has a filtration whose subquotients are isomorphic to 
$\overline{\Delta}^{\mathfrak{p},\mathcal{R}}(x)$ such that
$w\preceq x$ and $x\preceq w$.
\end{enumerate}
\end{theorem}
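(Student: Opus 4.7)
The theorem generalizes both Theorem~\ref{thm907} (the case $\mathcal{R}=\{w_o^{\mathfrak{p}}\}$, giving the parabolic category as in Section~\ref{s9.3}) and the cell construction of Section~\ref{s9.2} (the case $\mathfrak{p}=\mathfrak{b}$, where $\mathcal{O}(\mathfrak{p},\mathcal{C})$ degenerates into the cell quotient). My plan is to combine both structures: the stratification along $y\in W(\mathfrak{p})$ comes from the parabolic structure, while within each stratum the organization is governed by the cell categorification of $\mathcal{R}\subset W_{\mathfrak{p}}$. The key initial observation is that, by definition, $w\preceq w'$ depends only on the $W(\mathfrak{p})$-component: writing $w=xy$ with $x\in\mathcal{R}$ and $y\in W(\mathfrak{p})$, the preorder is the pullback of the Bruhat order on $W(\mathfrak{p})$ via $xy\mapsto y$.

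First I would realize $P^{\mathfrak{p},\mathcal{R}}(xy)$ concretely as the image in $\mathcal{K}/\mathcal{K}'$ of the indecomposable projective $P(xy\cdot\lambda)\in\mathcal{O}_{\lambda}$, and analyze its Verma filtration modulo the Serre subcategory $\mathcal{K}'$. Kazhdan-Lusztig combinatorics (Proposition~\ref{prop805} and Corollary~\ref{cor734}) then constrain which Vermas survive and in what order. For part (a), the kernel of $P^{\mathfrak{p},\mathcal{R}}(w)\tto\Delta^{\mathfrak{p},\mathcal{R}}(w)$ is by construction the trace of all $P^{\mathfrak{p},\mathcal{R}}(w')$ with $w'\prec w$; induction on the Bruhat depth of $y$, together with projective (translation) functors for simple reflections inside $W(\mathfrak{p})$, assembles a filtration of this trace whose successive quotients are $\Delta^{\mathfrak{p},\mathcal{R}}(x)$ with $x\prec w$. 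Part (b) I would derive dually using the simple-preserving duality $\star$ descending to $\mathcal{K}/\mathcal{K}'$: $\overline{\Delta}^{\mathfrak{p},\mathcal{R}}(w)$ being the top ``proper'' layer of $\Delta^{\mathfrak{p},\mathcal{R}}(w)$, its radical factors are controlled by the composition factors $L(x\cdot\lambda)$ appearing strictly below $L(w\cdot\lambda)$ in $\Delta(w\cdot\lambda)$, and these are exactly the ones with $w\prec x$ inside $\mathcal{R}W(\mathfrak{p})$.

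The main obstacle will be part (c): producing the filtration of $\Delta^{\mathfrak{p},\mathcal{R}}(w)$ by proper standards $\overline{\Delta}^{\mathfrak{p},\mathcal{R}}(x)$ with $x$ $\preceq$-equivalent to $w$, i.e., sharing the same $W(\mathfrak{p})$-component. My approach here is to identify, for each fixed $y\in W(\mathfrak{p})$, the full subcategory of $\mathcal{K}/\mathcal{K}'$ generated by the modules $\Delta^{\mathfrak{p},\mathcal{R}}(xy)$ for $x\in\mathcal{R}$ with a copy of the cell $2$-representation $\mathbf{C}_{\mathcal{R}}$ (interpreted for the Levi $\mathfrak{a}$), transported up to $\mathfrak{g}$ via parabolic induction through $\mathfrak{p}$. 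Under this identification, $\Delta^{\mathfrak{p},\mathcal{R}}(xy)$ should correspond to a projective object in $\mathbf{C}_{\mathcal{R}}$ labelled by $x$, while $\overline{\Delta}^{\mathfrak{p},\mathcal{R}}(xy)$ corresponds to a ``Verma-type'' quotient; the claimed filtration is then the Loewy-type filtration of a projective by these cell-Vermas, which exists by the general structure of $\mathbf{C}_{\mathcal{R}}$ (Theorem~\ref{thm307} and Proposition~\ref{prop306}).

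The technical difficulty in executing (c) is twofold. First, one must verify that the identification above is an equivalence of $2$-representations compatible with the restrictions of the $\cS$-action; here Theorem~\ref{thm904} (independence of the cell categorification on the choice of right cell) is essential, since it lets one transport structure through a composition of $\mathrm{Q}_s$-functors without losing the filtration data. Second, one must check that the ``anchor'' module in the minimal stratum (the image of $P^{\mathfrak{p},\mathcal{R}}(w_o^{\mathfrak{p}}w\cdot\lambda)$ for the minimal $y$) is indeed projective-injective in the cell categorification, so the filtration constructed is genuinely a filtration by proper standards and not merely by their semisimplifications. With these two points in place, part (c) follows by unwinding the cell-module picture stratum by stratum.
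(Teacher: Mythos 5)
The paper itself gives no proof of this theorem (it only cites \cite{MS2}), so I am comparing your sketch with the argument in that source and with the surrounding framework of Subsections~\ref{s9.3}--\ref{s9.4}. Your overall strategy is the right one: the observation that $\preceq$ is pulled back from the Bruhat order on the $W(\mathfrak{p})$-component is correct; part (a) via the standard ``apply $\theta_s$ and induct, using Lemma~\ref{lem909}-type exact sequences'' argument is how such standard filtrations of projectives are always obtained; and for part (c) the key point really is to identify $\Delta^{\mathfrak{p},\mathcal{R}}(xy)$ with the generalized Verma module $M(\mathfrak{p},Q)$ induced from an indecomposable projective(-injective) object $Q$ of $\mathcal{C}$, and $\overline{\Delta}^{\mathfrak{p},\mathcal{R}}(xy)$ with $M(\mathfrak{p},L)$ for $L$ the simple top of $Q$; then (c) follows by applying the exact induction functor to a composition series of $Q$ in $\mathcal{C}$. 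You correctly flag this identification as the main technical burden, though I would phrase it in terms of induced modules rather than only as an equivalence of $2$-representations.

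The genuine gap is in your treatment of (b). It is not true that the composition factors $L(x\cdot\lambda)$ of $\mathrm{rad}\,\Delta(w\cdot\lambda)$ which survive in $\mathcal{K}/\mathcal{K}'$ all satisfy $w\prec x$: whenever $\mathcal{R}$ has more than one element, same-stratum factors occur. For instance, take $\mathfrak{p}=\mathfrak{g}=\mathfrak{gl}_3$ and $\mathcal{R}=\{s,st\}$; then $L(st\cdot\lambda)$ is a composition factor of $\mathrm{rad}\,\Delta(s\cdot\lambda)$, it survives in $\mathcal{K}/\mathcal{K}'$, and $st$ is $\preceq$-equivalent to $s$. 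Consequently $\overline{\Delta}^{\mathfrak{p},\mathcal{R}}(w)$ is in general a \emph{proper} quotient of the image of the Verma module (in the degenerate case $\mathfrak{p}=\mathfrak{g}$ it is just $L^{\mathfrak{p},\mathcal{R}}(w)$), and (b) cannot be read off from the composition factors of $\Delta(w\cdot\lambda)$; nor is (b) the $\star$-dual of (a), which would instead concern costandard objects and injectives. The correct argument for (b) only becomes available \emph{after} the identification $\overline{\Delta}^{\mathfrak{p},\mathcal{R}}(w)\cong M(\mathfrak{p},L)$ with $L$ simple in $\mathcal{C}$: a PBW/weight argument shows that the only composition factor of $M(\mathfrak{p},L)$ lying in the stratum of $w$ (or below) is the single copy of $L^{\mathfrak{p},\mathcal{R}}(w)$ coming from $1\otimes L$, all other factors lying in strictly $\prec$-higher strata. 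So the logical order of your proof should be reversed: establish the induced-module descriptions first, then deduce (b) and (c) from them.
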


Theorem~\ref{thm915} says that the algebra describing 
$\mathcal{O}(\mathfrak{p},\mathcal{C})_{\lambda}$ is {\em standardly
stratified} in the sense of \cite{CPS2}. Modules
\index{standardly stratified algebra}
$\Delta^{\mathfrak{p},\mathcal{R}}(w)$ and
$\overline{\Delta}^{\mathfrak{p},\mathcal{R}}(w)$ are called
{\em standard} and {\em proper standard} modules with respect
to this structure, respectively. There is an appropriate titling 
theory developed for standardly stratified algebras in \cite{Fr}.
In \cite{MS2} it is shown that 
$\mathcal{O}(\mathfrak{p},\mathcal{C})_{\lambda}$ 
is Ringel self-dual and that for this category one has an
analogue of Theorem~\ref{thm821}.

Clearly, the category 
$\mathcal{O}(\mathfrak{p},\mathcal{C})_{\lambda}$
admits a graded analogue, 
$\mathcal{O}(\mathfrak{p},\mathcal{C})_{\lambda}^{\mathbb{Z}}$.

\subsection{Categorification of induced cell modules}\label{s9.5}

Let $\mathcal{R}$ be as in the previous subsection. By definition,
the induced cell module $\mathbb{Z}[v,v^{-1}]\mathcal{R}
\otimes_{\mathbb{H}^{\mathfrak{p}}}\mathbb{H}$ has a basis, which
consists of all elements of the form $\underline{H}_x\otimes H_y$, where
$x\in \mathcal{R}$ and $y\in W(\mathfrak{p})$ (see \cite{HY} for 
detailed combinatorics of induced cell modules). Define a
$\mathbb{Z}[v,v^{-1}]$-linear map $\Psi$ from 
$\mathbb{Z}[v,v^{-1}]\mathcal{R}
\otimes_{\mathbb{H}^{\mathfrak{p}}}\mathbb{H}$
to $[\mathcal{O}(\mathfrak{p},\mathcal{C})^{\mathbb{Z}}_{\lambda}]$ by
sending $\underline{H}_x\otimes H_y$ to the class of
$\Delta^{\mathfrak{p},\mathcal{R}}(xy)$.
As both $\mathcal{K}$ and $\mathcal{K}'$, as defined in the
previous subsection, are closed with respect to the action of
projective functors, we have a natural action of 
$\cS^{\mathbb{Z}}$  (resp. $\cS$) on
the additive category of projective objects in 
$\mathcal{O}(\mathfrak{p},\mathcal{C})^{\mathbb{Z}}_{\lambda}$
(resp. $\mathcal{O}(\mathfrak{p},\mathcal{C})_{\lambda}$).

\begin{theorem}[\cite{MS2}]\label{thm917}
\begin{enumerate}[$($a$)$]
\item\label{thm917.1} After extending the scalars to 
$\mathbb{Q}$, the map $\Psi$ becomes an isomorphism of
$\mathbb{H}$-modules.
\item\label{thm917.2} The action of $\cS^{\mathbb{Z}}$ 
(resp. $\cS$) on
the additive category of projective objects in 
$\mathcal{O}(\mathfrak{p},\mathcal{C})^{\mathbb{Z}}_{\lambda}$
(resp. $\mathcal{O}(\mathfrak{p},\mathcal{C})_{\lambda}$)
is a categorification of $\mathbb{Z}[v,v^{-1}]\mathcal{R}
\otimes_{\mathbb{H}^{\mathfrak{p}}}\mathbb{H}$
(resp. the corresponding specialized induced cell module).
\item\label{thm917.3} The action of $\cS^{\mathbb{Z}}$ 
(resp. $\cS$) on 
$\mathcal{O}(\mathfrak{p},\mathcal{C})^{\mathbb{Z}}_{\lambda}$
(resp. $\mathcal{O}(\mathfrak{p},\mathcal{C})_{\lambda}$)
is a categorification of $\mathbb{Z}[v,v^{-1}]\mathcal{R}
\otimes_{\mathbb{H}^{\mathfrak{p}}}\mathbb{H}$
(resp. the corresponding specialized induced cell module)
after extending scalars to $\mathbb{Q}$.
\end{enumerate}
\end{theorem}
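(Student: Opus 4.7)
The plan is to establish (a) first, via $\mathbb{H}$-linearity plus rank matching, and then to deduce (b) and (c) by reading off the categorified action of $\cS^{\mathbb{Z}}$ on the appropriate subcategories. Well-definedness of $\Psi$ is immediate since $\{\underline{H}_x\otimes H_y:x\in\mathcal{R},\,y\in W(\mathfrak{p})\}$ is a $\mathbb{Z}[v,v^{-1}]$-basis of the induced cell module and the classes $\{[\Delta^{\mathfrak{p},\mathcal{R}}(xy)]\}$ are $\mathbb{Z}[v,v^{-1}]$-linearly independent in $[\mathcal{O}(\mathfrak{p},\mathcal{C})_{\lambda}^{\mathbb{Z}}]$ by the unitriangular expansions provided by Theorem~\ref{thm915}\eqref{thm915.2}--\eqref{thm915.3}.

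The heart of the proof is to verify that $\Psi$ is $\mathbb{H}$-linear. As $\mathbb{H}$ is generated over $\mathbb{Z}[v,v^{-1}]$ by $\{\underline{H}_s:s\in S\}$, it suffices to check the identity $\Psi((\underline{H}_x\otimes H_y)\underline{H}_s)=[\theta_s\Delta^{\mathfrak{p},\mathcal{R}}(xy)]$ for every simple reflection $s$. Via Lemma~\ref{lem911} I would transfer this computation to the subquotient $\mathcal{K}/\mathcal{K}'$ of $\mathcal{O}_{\lambda}$, where the action of $\theta_s$ on $\Delta(xy\cdot\lambda)$ is described by Lemma~\ref{lem731}. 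The case analysis splits according to whether $ys\in W(\mathfrak{p})$ and whether $xys>xy$ or $xys<xy$, and must be compared with the Hecke-theoretic expansion of $(\underline{H}_x\otimes H_y)\underline{H}_s$ obtained from Proposition~\ref{prop803} together with the tensor product relations. Rank matching over $\mathbb{Q}$, where both sides are free of rank $|\mathcal{R}W(\mathfrak{p})|$, combined with the triangularity just described, then yields (a).

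For (b), the split Grothendieck group of the additive category of projectives in $\mathcal{O}(\mathfrak{p},\mathcal{C})_{\lambda}^{\mathbb{Z}}$ is spanned over $\mathbb{Z}[v,v^{-1}]$ by classes of standard modules thanks to the standard filtration of Theorem~\ref{thm915}\eqref{thm915.1}; since these classes correspond under $\Psi$ exactly to the integral basis $\{\underline{H}_x\otimes H_y\}$, no denominators are needed and (a) restricts to an integral isomorphism on this subcategory. Part (c) is the same statement on the full category, where the change-of-basis from standard to simple classes is in general not unimodular (proper standard modules may have infinite projective dimension), hence the need to extend scalars to $\mathbb{Q}$ in order to invert it.

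The main obstacle will be the case in the $\mathbb{H}$-linearity check when $ys\notin W(\mathfrak{p})$. The Hecke computation then forces one to pass a generator arising from $\mathbb{H}^{\mathfrak{p}}$ across the tensor symbol so that it acts on $\underline{H}_x$ via the cell module structure on $\mathbb{Z}[v,v^{-1}]\mathcal{R}$, which is governed by Proposition~\ref{prop803} in terms of the Kazhdan--Lusztig $\mu$-function. Matching this expansion term by term with the short exact sequence from Lemma~\ref{lem731}, pushed through the quotient by $\mathcal{K}'$, is a detailed $\mu$-function bookkeeping; it is precisely this compatibility which explains why the categorification requires the Serre subquotient $\mathcal{K}/\mathcal{K}'$ rather than just a Serre subcategory or quotient.
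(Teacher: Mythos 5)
Your overall architecture (check $\mathbb{H}$-linearity of $\Psi$ on the generators $\underline{H}_s$, split into cases according to whether $ys\in W(\mathfrak{p})$, use the unitriangularity from Theorem~\ref{thm915} plus rank counting to get (a), and then read off (b) and (c) from which classes span which lattice integrally) is the same as the paper's, and your explanation of why (b) holds integrally while (c) needs $\mathbb{Q}$ is correct. The case $ys\in W(\mathfrak{p})$ is also handled as in the paper, via short exact sequences of the type in Lemma~\ref{lem809}\eqref{lem809.2}--\eqref{lem809.3} (equivalently Lemma~\ref{lem909}).

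The gap is in your treatment of the case $ys\notin W(\mathfrak{p})$. You correctly see that the Hecke-side computation pushes a simple reflection $t=ysy^{-1}\in W_{\mathfrak{p}}$ across the tensor symbol, so that $\underline{H}_x\underline{H}_t$ must be expanded in the Kazhdan--Lusztig basis of the cell module $\mathbb{Z}[v,v^{-1}]\mathcal{R}$, with coefficients given by the $\mu$-function as in Proposition~\ref{prop803}. But you propose to match this against Lemma~\ref{lem731} ``pushed through the quotient by $\mathcal{K}'$'', and that tool cannot produce these coefficients: Lemma~\ref{lem731} describes $\theta_s$ applied to \emph{Verma} modules, whereas $\Delta^{\mathfrak{p},\mathcal{R}}(xy)$ is not the image of $\Delta(xy\cdot\lambda)$ in $\mathcal{K}/\mathcal{K}'$ (it is a quotient of $P^{\mathfrak{p},\mathcal{R}}(xy)$ filtered by several proper standard modules, Theorem~\ref{thm915}\eqref{thm915.3}). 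The $\mu$-function multiplicities arise from decomposing a projective functor applied to an \emph{indecomposable projective(-injective)} object of the Levi cell category $\mathcal{C}=\mathcal{C}_{\mathcal{R}}/\mathcal{C}'_{\mathcal{R}}$ into indecomposables, not from an exact sequence of Verma modules. The paper closes exactly this step by observing that, on the categorical side, this case reduces to the action of the $\mathfrak{a}$-analogue of $\theta_t$ on projectives in $\mathcal{C}$, and that this action realizes the cell module structure is precisely Theorem~\ref{thm903} (the already-established categorification of cell modules). Replace your Lemma~\ref{lem731} bookkeeping by an appeal to Theorem~\ref{thm903} applied to the Levi factor $\mathfrak{a}$, and the argument goes through.
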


\begin{proof}[Idea of the proof.]
One should check that the action of projective functors on 
standard modules is compatible with the action of 
$\underline{H}_s$ on the module 
$\mathbb{Z}[v,v^{-1}]\mathcal{R}
\otimes_{\mathbb{H}^{\mathfrak{p}}}\mathbb{H}$ in the
standard basis $\underline{H}_x\otimes H_y$. This reduces either
to short exact sequences similar to the ones in
Lemma~\ref{lem809}\eqref{lem809.2}-\eqref{lem809.3}, or to
the action of the $\mathfrak{a}$-analogue of $\theta_s$
on projective modules in $\mathcal{C}$. That the latter
action is the right one follows from Theorem~\ref{thm903}.
\end{proof}

The $\mathbb{H}$-module 
$[\mathcal{O}(\mathfrak{p},\mathcal{C})^{\mathbb{Z}}_{\lambda}]$ has
two natural bases:
\begin{itemize}
\item The {\em proper standard} basis given by classes of proper
standard modules.
\item The {\em dual Kazhdan-Lusztig} basis given by classes of 
simple modules.
\end{itemize}
After extending the scalars to 
$\mathbb{Q}$, we get three other natural bases:
\begin{itemize}
\item The {\em standard} basis given by classes of 
standard modules.
\item The {\em Kazhdan-Lusztig} basis given by classes of 
indecomposable projective modules.
\item The {\em twisted Kazhdan-Lusztig} basis given by classes of 
indecomposable tilting modules.
\end{itemize}

Similarly to the case of cell modules, we have the following
uniqueness result:

\begin{proposition}[\cite{MS2}]\label{prop937}
The categorification of the module 
$\mathbb{Z}[v,v^{-1}]\mathcal{R}
\otimes_{\mathbb{H}^{\mathfrak{p}}}\mathbb{H}$
described in Theorem~\ref{thm917} does not depend, up to
equivalence, on the choice of $\mathcal{R}$ inside a fixed
two-sided cell.
\end{proposition}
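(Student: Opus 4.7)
My plan is to adapt the strategy used to prove Theorem~\ref{thm904}: there the equivalence between two cell module categorifications was produced as a composition of derived Zuckerman-type functors $\mathrm{Q}_s$, and I want to show that the same recipe, using only simple reflections $s\in W_{\mathfrak{p}}$, yields an equivalence between the categorifications of $\mathbb{Z}[v,v^{-1}]\mathcal{R}\otimes_{\mathbb{H}^{\mathfrak{p}}}\mathbb{H}$ and $\mathbb{Z}[v,v^{-1}]\mathcal{R}'\otimes_{\mathbb{H}^{\mathfrak{p}}}\mathbb{H}$.

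First I would fix right cells $\mathcal{R},\mathcal{R}'$ inside a common two-sided cell of $W_{\mathfrak{p}}$. By Theorem~\ref{thm904} applied inside category $\mathcal{O}$ for the Levi $\mathfrak{a}$, there is a sequence $s_1,\ldots,s_k$ of simple reflections in $W_{\mathfrak{p}}$ such that $\mathbf{E}_{\mathfrak{a}}:=\mathrm{Q}_{s_1}\circ\cdots\circ\mathrm{Q}_{s_k}$ gives an equivalence of $2$-representations of the $\mathfrak{a}$-analogue of $\cS^{\mathbb{Z}}$ between $\mathcal{C}_{\mathcal{R}}/\mathcal{C}'_{\mathcal{R}}$ and $\mathcal{C}_{\mathcal{R}'}/\mathcal{C}'_{\mathcal{R}'}$. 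Next I would pass to the $\mathfrak{g}$-level via Lemma~\ref{lem911}, writing $\mathcal{O}(\mathfrak{p},\mathcal{C})_{\lambda}\simeq\mathcal{K}/\mathcal{K}'$, and form the candidate equivalence $\mathbf{E}:=\mathrm{Q}_{s_1}\circ\cdots\circ\mathrm{Q}_{s_k}$ using the $\mathfrak{g}$-level functors from Section~\ref{s6} (the same $s_i\in W_{\mathfrak{p}}\subseteq W$). I would then verify that $\mathbf{E}$ maps $\mathcal{K}$ to $\mathcal{K}_{\mathcal{R}'}$ and $\mathcal{K}'$ to $\mathcal{K}'_{\mathcal{R}'}$, so that it descends to a functor between the relevant quotients; commutation with all $\mathfrak{g}$-projective functors is automatic from Proposition~\ref{prop671}, making $\mathbf{E}$ a morphism of $2$-representations of $\cS^{\mathbb{Z}}$ (and of $\cS$ after forgetting the grading). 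A final compatibility check, tracing $\mathbf{E}$ on the standard basis $[\Delta^{\mathfrak{p},\mathcal{R}}(xy)]$ of Theorem~\ref{thm917}, confirms that $\mathbf{E}$ intertwines the two instances of $\Psi$ and hence realises the required equivalence of categorifications.

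The principal obstacle lies in the third step: showing that the $\mathfrak{g}$-level functors $\mathrm{Q}_s$ for $s\in W_{\mathfrak{p}}$ respect the Serre subcategories $\mathcal{K}$ and $\mathcal{K}'$ in a way that faithfully mirrors the action of $\mathbf{E}_{\mathfrak{a}}$ on the $\mathfrak{a}$-side. The key combinatorial input should be the compatibility between the right-cell decomposition of $W$ and the product decomposition $\mathcal{R}W(\mathfrak{p})$: multiplying $x\in\hat{\mathcal{R}}$ on the right by $y\in W(\mathfrak{p})$ preserves the relevant right-cell class, which is precisely what makes $\mathbb{Z}[v,v^{-1}]\mathcal{R}\otimes_{\mathbb{H}^{\mathfrak{p}}}\mathbb{H}$ a well-defined induced cell module in the first place. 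This Kazhdan-Lusztig bookkeeping should control the action of each $\mathrm{Q}_s$ on the generating simples $L(xy\cdot\lambda)$ and thereby reduce the $\mathfrak{g}$-level compatibility to the $\mathfrak{a}$-level one already granted by Theorem~\ref{thm904}.
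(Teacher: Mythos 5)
Your overall strategy --- realising the equivalence as a composition of the functors $\mathrm{Q}_s$ for simple reflections $s\in W_{\mathfrak{p}}$, exactly as in Theorems~\ref{thm904} and~\ref{thm817} --- is the intended one: the paper itself gives no proof of Proposition~\ref{prop937} beyond the citation of \cite{MS2}, but the two analogous uniqueness statements it does sketch are proved precisely by composing (derived) Zuckerman functors, which commute with projective functors by Proposition~\ref{prop671} and hence automatically give morphisms of $2$-representations of $\cS^{\mathbb{Z}}$ and $\cS$. So the outline is sound and matches the source.

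However, the combinatorial fact you invoke at the step you yourself identify as the principal obstacle is false. Right multiplication by $y\in W(\mathfrak{p})$ does \emph{not} preserve right cells: already for $n=3$ with $W_{\mathfrak{p}}=\{e,s\}$ one has $W(\mathfrak{p})=\{e,t,ts\}$ and $s\cdot ts=sts$, while $s$ and $sts$ lie in different right cells (indeed in different two-sided cells, see Example~\ref{exm806}). Nor is any such preservation what makes $\mathbb{Z}[v,v^{-1}]\mathcal{R}\otimes_{\mathbb{H}^{\mathfrak{p}}}\mathbb{H}$ well defined --- that is just the tensor product of a right $\mathbb{H}^{\mathfrak{p}}$-module with a bimodule. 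The input you actually need concerns \emph{left} descent sets: every $w\in\hat{\mathcal{R}}W(\mathfrak{p})$ factors uniquely as $w=xy$ with $x\in W_{\mathfrak{p}}$, $y\in W(\mathfrak{p})$ and $\mathfrak{l}(xy)=\mathfrak{l}(x)+\mathfrak{l}(y)$, whence for $s\in W_{\mathfrak{p}}$ one has $sxy<xy$ if and only if $sx<x$. Thus the left wall-crossing combinatorics governing $\mathrm{Q}_s L(xy\cdot\lambda)$ is controlled entirely by the $W_{\mathfrak{p}}$-component $x$ and mirrors the action of the $\mathfrak{a}$-level functor $\mathrm{Q}_s$ on $L(x\cdot\lambda_{\mathfrak{a}})$; this is what lets the functors respect $\mathcal{K}$ and $\mathcal{K}'$ of Lemma~\ref{lem911} and reduces the claim to Theorem~\ref{thm904} for the Levi factor. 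With that substitution your argument goes through.
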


\section{Category $\mathcal{O}$: Koszul duality}\label{s10}

\subsection{Quadratic dual of a positively graded algebra}\label{s10.1}

Let $A=\oplus_{i\in\mathbb{Z}}A_i$ be a {\em positively graded} 
\index{positively graded algebra}
$\mathbb{C}$-algebra, that is:
\begin{itemize}
\item for all $i<0$ we have  $A_i=0$;
\item for all $i\in\mathbb{Z}$ we have $\dim A_i<\infty$;
\item for some $k\in\mathbb{N}$ we have $A_0\cong \mathbb{C}^k$.
\end{itemize}
Denote by $A\text{-}\mathrm{gfmod}$ the category of
locally finite dimensional graded $A$-modules.
Let $\overline{A}$ denote the subalgebra of $A$, generated by
$A_0$ and $A_1$. Clearly, $\overline{A}$ inherits from $A$ a
positive grading. The multiplication in $A$ defines on $A_1$
the structure of an $A_0\text{-}A_0$--bimodule. Consider the {\em free tensor 
algebra} $A_0[A_1]$ of this bimodule. It is defined as follows:
\index{free tensor algebra}
\begin{displaymath}
A_0[A_1]:=\bigoplus_{i\geq 0} A_1^{\otimes i},
\end{displaymath}
where $A_1^{\otimes 0}:=A_0$, for $i>0$ we have
$A_1^{\otimes i}=A_1\otimes_{A_0}A_1\otimes_{A_0}\cdots\otimes_{A_0}A_1$
(with $i$ factors $A_1$), and multiplication is given, as usual, 
by tensoring over $A_0$. The identity maps on $A_0$ and $A_1$
induce an algebra homomorphism $A_0[A_1]\tto A$, whose image 
coincides with $\overline{A}$. Let $I$ be the kernel of this
homomorphism. The algebra $A_0[A_1]$ is graded in the natural way
($A_1^{\otimes i}$ has degree $i$) and $I$ is a homogeneous ideal 
of $A_0[A_1]$. Hence $I=\oplus_{i\in\mathbb{Z}}I_i$.

\begin{definition}\label{def1001}
{\rm  
The algebra $A$ is called {\em quadratic} if $A=\overline{A}$
and $I$ is generated in degree $2$.
} 
\end{definition}
\index{quadratic algebra}

\begin{example}\label{exm1002}
{\em  
Any path algebra of a quiver without relations is graded in the
natural way (each arrow has degree one) and is obviously quadratic. 
Another example of a quadratic algebra is the algebra $D$ of dual 
numbers.
}
\end{example}

Consider the dual $A_0\text{-}A_0$--bimodule 
$A_1^*=\mathrm{Hom}_{\mathbb{C}}(A_1,\mathbb{C})$.
Since $\mathbb{C}$ is symmetric, we may identify 
$(A_1^*)^{\otimes i}$ with $(A_1^{\otimes i})^*$.

\begin{definition}\label{def1003}
{\rm  
The {\em quadratic dual} of $A$ is defined as the algebra
\begin{displaymath}
A^!:=A_0[A_1^*]/(I_2^*),
\end{displaymath}
where $I_2^*:=\{f:A_0[A_1]_2\to\mathbb{C}\text{ such that } f|_{I_2}=0\}$
(under the above identification).
} 
\end{definition}
\index{quadratic dual}

Directly from the definition we have that $(A^!)^!\cong A$
in case $A$ is quadratic. 

\begin{example}\label{exm1004}
{\em  
The quadratic dual of $\mathbb{C}[x]/(x^2)$ is $\mathbb{C}[x]$
and the quadratic dual of $\mathbb{C}[x]$ is $\mathbb{C}[x]/(x^2)$.
On the other hand, for any $k>2$ the quadratic dual of
$\mathbb{C}[x]/(x^k)$ is $\mathbb{C}[x]/(x^2)$ (note that in this
case $\mathbb{C}[x]/(x^k)$ is not quadratic). The quadratic dual
of the path algebra of a quiver without relations is the path
algebra of the opposite quiver with radical square zero.
}
\end{example}

\subsection{Linear complexes of projectives}\label{s10.2}

Denote by $\mathscr{LC}(A)$ the category of {\em linear complexes}
\index{linear complex}
of projective $A$-modules defined as follows: objects of
$\mathscr{LC}(A)$ are complexes
\begin{displaymath}
\mathcal{X}^{\bullet}:\qquad
\dots\to X^{i-1}\to X^{i}\to X^{i+1}\to \dots  
\end{displaymath}
such that for any $i\in\mathbb{Z}$ we have 
$X^i\in\mathrm{add}(A\langle i\rangle)$; morphisms in 
$\mathscr{LC}(A)$ are just usual morphisms between complexes
of graded modules. 

\begin{example}\label{ex1051}
{\rm
The positively graded algebra $A=\mathbb{C}[x]$, where $x$ has degree one,
has a unique graded indecomposable projective module (up to isomorphism
and shift of grading), namely $A$ itself. Then the following complexes
(in which the middle term $A$ is in position zero) are indecomposable objects
of $\mathscr{LC}(A)$:
\begin{displaymath}
\begin{array}{ccccccccccccc}
\dots&\to&0&\to&0&\to&A&\to&0&\to&0&\to&\dots\\ 
\dots&\to&0&\to&A\langle -1\rangle&
\overset{\cdot x}{\to}&A&\to&0&\to&0&\to&\dots,\\ 
\end{array}
\end{displaymath}
moreover, one can show that every indecomposable object in 
$\mathscr{LC}(A)$ is isomorphic, up to a shift of the form 
$\langle s\rangle[-s]$, $s\in\mathbb{Z}$, to one of 
these complexes. The second complex
is both projective and injective in $\mathscr{LC}(A)$.
}
\end{example}

\begin{example}\label{ex1052}
{\rm
The positively graded algebra $D$ of dual numbers (where $x$ has degree one),
has a unique graded indecomposable projective module (up to isomorphism
and shift of grading), namely $D$ itself. Then for any $n$ the 
following complex 
(in which the rightmost occurrence of $D$ is in position zero) 
is an indecomposable objects of $\mathscr{LC}(A)$:
{\small
\begin{displaymath}
\begin{array}{ccccccccccccccccc}
\dots&\to&\hspace{-1mm}0&\to&\hspace{-1mm}D\langle -n\rangle&
\overset{\cdot x}{\to}
&\dots&\overset{\cdot x}{\to}&\hspace{-1mm}D\langle -2\rangle&
\overset{\cdot x}{\to}&\hspace{-1mm}D\langle -1\rangle&
\overset{\cdot x}{\to}&\hspace{-1mm}D&\to&\hspace{-1mm}0
&\to&\dots
\end{array}
\end{displaymath}
}\noindent
The following complex 
(in which the rightmost occurrence of $D$ is in position zero) 
is an indecomposable objects of $\mathscr{LC}(A)$ as well:
{\small
\begin{displaymath}
\begin{array}{ccccccccccccccccc}
\dots&\overset{\cdot x}{\to}&\hspace{-1.5mm}D\langle -n-1\rangle\hspace{-1.5mm}
&\overset{\cdot x}{\to}&\hspace{-1.5mm}D\langle -n\rangle\hspace{-1.5mm}&
\overset{\cdot x}{\to}
&\dots&\overset{\cdot x}{\to}&\hspace{-1.5mm}D\langle -2\rangle\hspace{-1.5mm}&\overset{\cdot x}{\to}&
\hspace{-1.5mm}D\langle -1\rangle\hspace{-1.5mm}
&\overset{\cdot x}{\to}&\hspace{-1.5mm}D\hspace{-1.5mm}&\to&\hspace{-1.5mm}0\hspace{-1.5mm}
&\to&\dots
\end{array}
\end{displaymath}
}\noindent
The last complex is injective in  $\mathscr{LC}(A)$.
The following complex 
(in which the leftmost occurrence of $D$ is in position zero) 
is an indecomposable objects of $\mathscr{LC}(A)$:
{\small
\begin{displaymath}
\begin{array}{cccccccccccccccccc}
\dots&\to&0&\to&D&\overset{\cdot x}{\to}&
\hspace{-1.5mm}D\langle 1\rangle\hspace{-1.5mm}&\overset{\cdot x}{\to}
&\hspace{-1.5mm}D\langle 2\rangle\hspace{-1.5mm}&
\overset{\cdot x}{\to}& &\dots&\overset{\cdot x}{\to}&
\hspace{-1.5mm}D\langle n\rangle\hspace{-1.5mm}&
\overset{\cdot x}{\to}&\hspace{-1.5mm}D\langle n+1\rangle\hspace{-1.5mm}
&\overset{\cdot x}{\to}&\dots
\end{array}
\end{displaymath}
}\noindent
This complex is projective in  $\mathscr{LC}(A)$.
}
\end{example}

\begin{theorem}[\cite{MVS,MO,MOS}]\label{thm1005}
The category $\mathscr{LC}(A)$ is equivalent to the category 
$A^!\text{-}\mathrm{gfmod}$ of
locally finite dimensional graded $A^!$-modules.
\end{theorem}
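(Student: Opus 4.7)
The plan is to construct explicit quasi-inverse functors between $\mathscr{LC}(A)$ and $A^!\text{-}\mathrm{gfmod}$ by extracting the ``top layer'' of each term of a linear complex and reinterpreting the differential as the action of $A^!_1=A_1^*$. Let $e_1,\dots,e_k$ be the primitive idempotents of $A_0\cong\mathbb{C}^k$. Any $X^i\in\mathrm{add}(A\langle i\rangle)$ is determined, up to isomorphism, by the left $A_0$-module $M_i:=(X^i)_{-i}$, namely $X^i\cong A\otimes_{A_0}M_i\langle i\rangle$. I would define a functor $\mathcal{F}:\mathscr{LC}(A)\to A^!\text{-}\mathrm{gfmod}$ by $\mathcal{F}(\mathcal{X}^\bullet):=\bigoplus_i M_i$ (with $M_i$ placed in internal degree $i$), using the evident $A_0$-action as $A^!_0$-action. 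The key point is that $(X^{i+1})_{-i}\cong A_1\otimes_{A_0}M_{i+1}$ as left $A_0$-modules, so the degree-$(-i)$ part of the differential $d^i$ is a map $\tilde d^i:M_i\to A_1\otimes_{A_0}M_{i+1}$. Since $A_0$ is semisimple and $A_1$ is finite dimensional, tensor-hom adjunction converts $\tilde d^i$ canonically into a map $A_1^*\otimes_{A_0}M_i\to M_{i+1}$, which I take as the action of $A^!_1$.

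Next I would check that the resulting action of the free tensor algebra $A_0[A_1^*]$ descends to $A^!=A_0[A_1^*]/(I_2^*)$. The composite $(1\otimes\tilde d^{i+1})\circ\tilde d^i:M_i\to A_1\otimes_{A_0}A_1\otimes_{A_0}M_{i+2}$ maps, under multiplication $A_1\otimes_{A_0}A_1\twoheadrightarrow\overline A_2$, to the degree-$(-i)$ component of $d^{i+1}\circ d^i$; since the differentials in a linear complex take values in $A_1\subset A$, their products automatically land in $\overline A_2\subseteq A_2$. Hence $d^{i+1}d^i=0$ is equivalent to the image of $(1\otimes\tilde d^{i+1})\circ\tilde d^i$ lying in $I_2\otimes_{A_0}M_{i+2}$, which, via the identification $(A_1\otimes_{A_0}A_1)^*\cong A_1^*\otimes_{A_0}A_1^*$, is precisely the condition that every $f\in I_2^*$ act as zero. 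Functoriality is automatic because degree-$0$ chain maps restrict componentwise to $A_0$-linear maps that, by construction, intertwine the actions of $A_1^*$.

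For the quasi-inverse $\mathcal{G}:A^!\text{-}\mathrm{gfmod}\to\mathscr{LC}(A)$, given $M=\bigoplus_iM_i$, I would set $X^i:=A\otimes_{A_0}M_i\langle i\rangle$ and reverse the adjunction above to read an $A_0$-linear map $\tilde d^i:M_i\to A_1\otimes_{A_0}M_{i+1}$ off the $A^!_1$-action, then extend it $A$-linearly and homogeneously of degree zero to $d^i:X^i\to X^{i+1}$. The quadratic relations $I_2^*$ being annihilated guarantees, by the same duality argument run backwards, that the composite $d^{i+1}d^i$ lies in $I_2\otimes_{A_0}M_{i+2}$ before multiplication and hence vanishes in $\overline A_2\otimes_{A_0}M_{i+2}$, i.e.\ $d^{i+1}d^i=0$ in $A$. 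Mutual quasi-inverseness of $\mathcal{F}$ and $\mathcal{G}$ is then tautological on objects and follows on morphisms by using tensor-hom adjunction twice.

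The main obstacle will be bookkeeping of the bimodule adjunctions, i.e.\ verifying that the iso $(A_1\otimes_{A_0}A_1)^*\cong A_1^*\otimes_{A_0}A_1^*$ matches the condition $d^2=0$ precisely to the relations $I_2^*$, with the correct left/right $A_0$-actions; it is crucial here that $A_0$ is semisimple and $A_1$ finite dimensional, and that only the quadratic part $\overline A_2$ of $A_2$ is visible to the differentials, so that the definition of $A^!$ in terms of $\overline A$ rather than $A$ is exactly the right one.
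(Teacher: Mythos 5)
Your construction is correct and is essentially the proof given in the references \cite{MVS,MO,MOS} that the paper cites for this statement (the text itself supplies no proof): one identifies $X^i\cong A\otimes_{A_0}M_i\langle i\rangle$ with $M_i=(X^i)_{-i}$, converts the degree-$(-i)$ component of the differential into an $A_1^*$-action via tensor-hom adjunction over the semisimple ring $A_0$, and matches $d^2=0$ with the relations $I_2^*$ through the duality $I_2^{\perp}=I_2^*$ inside $(A_1\otimes_{A_0}A_1)^*\cong A_1^*\otimes_{A_0}A_1^*$. The one point you rightly flag as requiring care — the left/right $A_0$-bimodule conventions on $A_1^*$ — is exactly where the references must (and do) choose conventions so that the equivalence lands on $A^!$-$\mathrm{gfmod}$ rather than on modules over the opposite algebra; with that bookkeeping done, your argument is complete.
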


Let $e_{\mathtt{i}}$, $\mathtt{i}\in\Lambda$, be a complete and 
irredundant list of primitive idempotents for $A$. 
Then $A_0=\oplus_{\mathtt{i}\in\Lambda}\mathbb{C}
\langle e_{\mathtt{i}}\rangle$. For $\mathtt{i}\in\Lambda$
denote by $\mathcal{P}^{\bullet}(\mathtt{i})$ a minimal projective 
resolution of the simple graded $A$-module $e_{\mathtt{i}}A_0$. 
Since $A$ is positively graded, every indecomposable 
direct summand of $\mathcal{P}^{i}(\mathtt{i})$,
$i\in \mathbb{Z}$, is isomorphic to $Ae_{\mathtt{j}}\langle j\rangle$
for some $\mathtt{j}\in\Lambda$ and $j\leq i$. Taking all summands
of the form $Ae_{\mathtt{j}}\langle i\rangle$ produces
a subcomplex $\overline{\mathcal{P}}^{\bullet}(\mathtt{i})$ of 
$\mathcal{P}^{\bullet}(\mathtt{i})$, called the {\em linear part}
\index{linear part}
of $\mathcal{P}^{\bullet}(\mathtt{i})$. Similarly one defines 
injective (co)resolutions $\mathcal{I}^{\bullet}(\mathtt{i})$
and their linear parts $\overline{\mathcal{I}}^{\bullet}(\mathtt{i})$.
Denote by $\circledast$ the graded duality. Then we have the Nakayama 
functor $\mathrm{N}:=\mathrm{Hom}_A({}_-,A)^{\circledast}$,
which induces an equivalence between the additive categories of
graded projective and graded injective $A$-modules.

\begin{proposition}[\cite{MO,MOS}]\label{prop1006}
\begin{enumerate}[$($a$)$]
\item\label{prop1006.1} Every simple object of $\mathscr{LC}(A)$ 
is isomorphic, up to a shift of the form $\langle s\rangle[-s]$,
$s\in\mathbb{Z}$, to $Ae_{\mathtt{i}}$, considered
as a complex concentrated in position zero.
\item\label{prop1006.2} Every indecomposable injective object of 
$\mathscr{LC}(A)$ is isomorphic, up to a similar shift, to 
$\overline{\mathcal{P}}^{\bullet}(\mathtt{i})$ for some
$\mathtt{i}\in\Lambda$.
\item\label{prop1006.3} Every indecomposable projective object of 
$\mathscr{LC}(A)$ is isomorphic, up to a similar shift, to 
$\mathrm{N}^{-1}\,\overline{\mathcal{I}}^{\bullet}(\mathtt{i})$ 
for some $\mathtt{i}\in\Lambda$.
\end{enumerate}
\end{proposition}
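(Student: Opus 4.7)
The plan is to transport the classification to $A^!\text{-}\mathrm{gfmod}$ via the equivalence of Theorem~\ref{thm1005}, where the simple graded modules are (up to grading shift) the $\mathbb{C}\langle e_{\mathtt{i}}\rangle$ concentrated in a single internal degree, the indecomposable graded projectives are the $A^!e^!_{\mathtt{i}}\langle s\rangle$, and the indecomposable graded injectives are their graded $\mathbb{C}$-duals. I would then identify the objects of $\mathscr{LC}(A)$ corresponding to these three families.

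For (a), a simple graded $A^!$-module has one-dimensional underlying space and lives in a single internal degree; under the equivalence the corresponding linear complex is supported in a single cohomological position $s$, and the linearity requirement $X^s\in\mathrm{add}(A\langle s\rangle)$ together with indecomposability forces $X^s\cong Ae_{\mathtt{i}}\langle s\rangle$ for a unique $\mathtt{i}\in\Lambda$. Applying the shift $\langle s\rangle[-s]$ to bring $s$ to zero identifies this complex with $Ae_{\mathtt{i}}$ in position zero, yielding (a).

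For (b), I would characterise the indecomposable injective in $A^!\text{-}\mathrm{gfmod}$ with socle $\mathbb{C}\langle e_{\mathtt{i}}\rangle$ in degree zero as the indecomposable object of $\mathscr{LC}(A)$ whose socle (in the sense of the abelian structure transported from $A^!\text{-}\mathrm{gfmod}$) is $Ae_{\mathtt{i}}$ placed in position zero. Since $\overline{\mathcal{P}}^{\bullet}(\mathtt{i})$ has term $Ae_{\mathtt{i}}$ in position zero, and minimality of $\mathcal{P}^{\bullet}(\mathtt{i})$ forces the incoming differential to land inside $A_{>0}e_{\mathtt{i}}$, I would verify that $Ae_{\mathtt{i}}$ embeds as the unique simple subobject of $\overline{\mathcal{P}}^{\bullet}(\mathtt{i})$ in $\mathscr{LC}(A)$, whence $\overline{\mathcal{P}}^{\bullet}(\mathtt{i})$ is the desired injective envelope. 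Statement (c) follows by the dual argument: $\overline{\mathcal{I}}^{\bullet}(\mathtt{i})$ plays the role of an indecomposable projective cover when read through the Nakayama equivalence, and applying $\mathrm{N}^{-1}$ turns this linear complex of injectives into a linear complex of projectives realising the indecomposable projective of $\mathscr{LC}(A)$ with simple top $Ae_{\mathtt{i}}$.

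The main obstacle will be verifying that the equivalence of Theorem~\ref{thm1005} intertwines the notions of socle and top on the two sides, and in particular that the $A^!$-duality between indecomposable graded injectives and projectives translates into the combination of the Nakayama twist and a reversal of cohomological direction on $\mathscr{LC}(A)$. This is essentially a bookkeeping exercise using the construction of the equivalence: a linear complex $X^{\bullet}$ is sent to the graded $A^!$-module $\bigoplus_{i,\mathtt{j}}\mathrm{Hom}_A(X^i,Ae_{\mathtt{j}})_0$, with the $A^!_1$-action read off from the differential, and the required compatibilities follow from tracing simples, kernels, and cokernels through this construction.
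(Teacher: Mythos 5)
Your overall strategy---transporting the classification of simples, injectives and projectives through the equivalence $\mathscr{LC}(A)\simeq A^!\text{-}\mathrm{gfmod}$ of Theorem~\ref{thm1005}---is the right one (the paper itself gives no proof, deferring to \cite{MO,MOS}, where the argument runs along exactly these lines), and your part (a) is complete: a simple graded $A^!$-module is one-dimensional and concentrated in a single degree, so the corresponding linear complex has a single indecomposable term, which linearity forces to be some $Ae_{\mathtt{i}}\langle s\rangle$ in position $s$.

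The gap is in part (b), and dually in (c). From ``$\overline{\mathcal{P}}^{\bullet}(\mathtt{i})$ has simple socle $Ae_{\mathtt{i}}$ in position zero'' you conclude that it \emph{is} the injective envelope; but a simple socle only yields an embedding of $\overline{\mathcal{P}}^{\bullet}(\mathtt{i})$ into that injective envelope, not surjectivity. The missing step is a proof that $\overline{\mathcal{P}}^{\bullet}(\mathtt{i})$ is actually injective in $\mathscr{LC}(A)$, and this is where the real content sits: minimality of $\mathcal{P}^{\bullet}(\mathtt{i})$ (which you invoke) gives the socle statement, but injectivity requires the \emph{acyclicity} of the full resolution. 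Concretely, since morphisms between objects of $\mathrm{add}(A\langle i\rangle)$ in a fixed position are $A_0$-linear, kernels and cokernels in $\mathscr{LC}(A)$ are computed positionwise and split; an extension of a simple $Ae_{\mathtt{j}}\langle s\rangle[-s]$ by $\overline{\mathcal{P}}^{\bullet}(\mathtt{i})$ is therefore encoded by a single component $d'\colon Ae_{\mathtt{j}}\langle s\rangle\to\overline{\mathcal{P}}^{s+1}(\mathtt{i})$ killed by the next differential, and one splits it by lifting $d'$ through the exact differential of $\mathcal{P}^{\bullet}(\mathtt{i})$ and checking, by a degree argument using positivity of the grading, that the lift may be taken to land in the linear part. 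Equivalently, you would have to match the graded multiplicities of $\overline{\mathcal{P}}^{\bullet}(\mathtt{i})$ with the graded dimensions of the indecomposable injective $A^!$-module, i.e.\ identify $A^!$ with the linear strand of the Ext-algebra of $A_0$---this is not the ``bookkeeping exercise'' your last paragraph suggests, but the heart of the proposition. Once injectivity is established, your socle computation correctly identifies which injective it is, and (c) does follow by the dual argument combined with the Nakayama functor, as you indicate.
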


Denote by $\mathfrak{Q}$ the full subcategory of 
$\mathscr{LC}(A)$ with objects being
all $\mathrm{N}^{-1}\,\overline{\mathcal{I}}^{\bullet}(\mathtt{i})$, 
$\mathtt{i}\in\Lambda$, and their shifts as above. The group
$\mathbb{Z}$ acts on $\mathfrak{Q}$ by  shifts 
of the form $\langle s\rangle[-s]$. This equips the quotient of the
endomorphism algebra of $\mathfrak{Q}$ modulo this action with the
structure of a $\mathbb{Z}$-graded algebra. From 
Theorem~\ref{thm1005} and Proposition~\ref{prop1006} it follows that
this $\mathbb{Z}$-graded algebra is isomorphic to $A^!$. Hence 
$\mathfrak{Q}$ can be regarded as a complex of graded
$A\text{-}A^!$-bimodules which is projective on both sides. 
For any category $\mathcal{A}$,
whose objects are complexes of graded modules, denote by 
$\mathcal{A}^{\downarrow}$ and $\mathcal{A}^{\uparrow}$ the full
subcategories of $\mathcal{A}$ whose nonzero components are concentrated
inside the corresponding regions as shown on Figure~\ref{fig1} (see \cite{MOS}
for explicit formulae).
\begin{figure}[tb]
\special{em:linewidth 0.4pt}
\unitlength 0.80mm
\linethickness{0.4pt}
\begin{center}
\begin{picture}(120.00,40.00)
\drawline(50.00,40.00)(50.00,10.00)
\drawline(51.00,13.00)(50.00,10.00)
\drawline(49.00,13.00)(50.00,10.00)
\drawline(00.00,25.00)(100.00,25.00)
\drawline(97.00,26.00)(100.00,25.00)
\drawline(97.00,24.00)(100.00,25.00)
\drawline(40.00,30.00)(40.00,10.00)
\drawline(40.00,30.00)(10.00,10.00)
\drawline(60.00,20.00)(60.00,40.00)
\drawline(60.00,20.00)(90.00,40.00)
\put(50.00,05.00){\makebox(0,0)[cc]{(grading) degree}}
\put(110.00,25.00){\makebox(0,0)[cc]{position}}
\put(30.00,15.00){\makebox(0,0)[cc]{$\mathscr{A}^{\downarrow}$}}
\put(70.00,35.00){\makebox(0,0)[cc]{$\mathscr{A}^{\uparrow}$}}
\end{picture}
\end{center}
\caption{The supports of objects from the categories $\mathscr{A}^{\downarrow}$ and $\mathscr{A}^{\uparrow}$.}\label{fig1}
\end{figure}
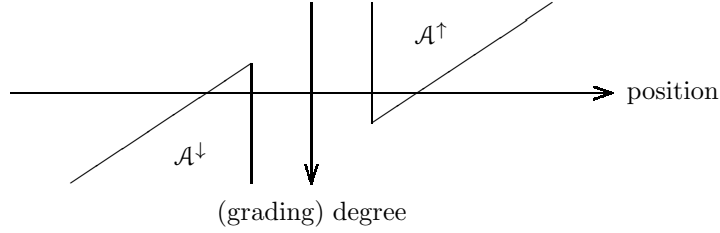
With this notation we have the following result proved in \cite{MOS}:

\begin{theorem}[Quadratic duality]\label{thm1007}
\begin{enumerate}[$($a$)$]
\item\label{thm1007.1} There is a pair of adjoint functors as
follows:
\begin{displaymath}
\xymatrix{ 
\mathcal{D}^{\downarrow}(A\text{-}\mathrm{gfmod})
\ar@/^1pc/[rrrr]^{\mathrm{K}:=\mathcal{R}\mathrm{Hom}_A(\mathfrak{Q},{}_-)}
&&&&
\mathcal{D}^{\uparrow}(A^!\text{-}\mathrm{gfmod})
\ar@/^1pc/[llll]^{\mathrm{K}':=\mathfrak{Q}
\overset{\mathcal{L}}{\otimes}_{A^!}{}_-}
}
\end{displaymath}
\item\label{thm1007.2} $\mathrm{K}$ maps simple $A$-modules to
injective $A^!$-modules and $\mathrm{K}'$ maps simple $A^!$-modules
to projective $A$-modules. 
\item\label{thm1007.3} For all $i,j\in\mathbb{Z}$ we have
$\mathrm{K}\circ\langle j\rangle\circ[i]=
\langle -j\rangle\circ[i+j]\circ\mathrm{K}$.
\end{enumerate}
\end{theorem}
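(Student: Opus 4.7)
\medskip

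\noindent\textbf{Proof proposal.} The plan is to exploit the fact that, by the discussion preceding the theorem, $\mathfrak{Q}$ is naturally a complex of graded $A$-$A^!$-bimodules whose components are projective on both sides (finitely generated projective as right $A^!$-modules by Proposition~\ref{prop1006}\eqref{prop1006.3} and as left $A$-modules by construction). Given this, the standard tensor--Hom adjunction at the level of complexes,
\begin{displaymath}
\mathrm{Hom}_{A}\bigl(\mathfrak{Q}\otimes_{A^!}X,\,Y\bigr)\;\cong\;
\mathrm{Hom}_{A^!}\bigl(X,\,\mathrm{Hom}_A(\mathfrak{Q},Y)\bigr),
\end{displaymath}
descends to the derived level because each component of $\mathfrak{Q}$ is projective on the relevant side, so no further resolutions of $\mathfrak{Q}$ are needed; this yields (a) on the nose for the full derived categories. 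The nontrivial part of (a) is then to verify that $\mathrm{K}$ and $\mathrm{K}'$ send $\mathcal{D}^{\downarrow}$ and $\mathcal{D}^{\uparrow}$ into each other.

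For this I would argue by reduction to simples. Every object of $\mathcal{D}^{\downarrow}(A\text{-}\mathrm{gfmod})$ is built (as a suitable homotopy limit) from shifts $L_A(\mathtt{i})\langle s\rangle[-s]$ of simple modules, so it suffices to compute $\mathrm{K}(L_A(\mathtt{i}))$ and check that it lies in $\mathcal{D}^{\uparrow}(A^!\text{-}\mathrm{gfmod})$; a dual argument handles $\mathrm{K}'$. Since each $\mathfrak{Q}^n$ is projective as a left $A$-module, $\mathrm{K}(L_A(\mathtt{i}))=\mathrm{Hom}_A(\mathfrak{Q},L_A(\mathtt{i}))$ without derivation. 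A direct computation, using that the component of $\mathfrak{Q}$ in homological degree $n$ consists of summands $Ae_{\mathtt{j}}\langle n\rangle$, shows that $\mathrm{Hom}_A(\mathfrak{Q}^n,L_A(\mathtt{i}))$ is concentrated in internal degree $-n$; assembling over $n$, and identifying the differential via the $A^!$-action from the right, one recognises the resulting complex as (a shift of) the linear injective coresolution $\overline{\mathcal{I}}_{A^!}^{\bullet}(\mathtt{i})$ of the simple $A^!$-module $L_{A^!}(\mathtt{i})$. Via Proposition~\ref{prop1006} applied to $A^!$ in place of $A$, this is the indecomposable graded injective $A^!$-module attached to $\mathtt{i}$, proving half of (b); the other half follows by the symmetric computation, noting that $L_{A^!}(\mathtt{i})$ as an $A^!$-module, tensored with $\mathfrak{Q}$ over $A^!$, collapses (since all but one component vanishes against a simple) to a single indecomposable graded projective $A$-module. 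Simultaneously these computations pin down the grading/position support of the images and hence confirm the restriction to $\mathcal{D}^{\downarrow}/\mathcal{D}^{\uparrow}$ as depicted in Figure~\ref{fig1}.

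The shift formula (c) is a formal consequence of the bimodule structure of $\mathfrak{Q}$: by definition of $\mathfrak{Q}$ as a subcomplex of the Koszul-type bicomplex, an internal shift $\langle 1\rangle$ on the $A$-side of $\mathfrak{Q}$ corresponds to $\langle -1\rangle[1]$ on the $A^!$-side (this is the content of the identification of indecomposables in $\mathscr{LC}(A)$ modulo the $\langle s\rangle[-s]$-action of $\mathbb{Z}$). Transporting this identity through the Hom-functor $\mathrm{K}$ gives the asserted commutation with $\langle j\rangle[i]$. The main obstacle in the argument, I expect, is not the formal adjunction or the computation on simples, but the careful bookkeeping needed to show that the images of unbounded complexes in $\mathcal{D}^{\downarrow}$ actually converge (as totalisations of the relevant double complexes) to objects of $\mathcal{D}^{\uparrow}$: one must check that for each fixed internal degree only finitely many homological degrees contribute, which is exactly the content of the diagonal shape of the supports in Figure~\ref{fig1} and ultimately rests on the linearity of $\mathfrak{Q}$.
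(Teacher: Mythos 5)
The paper gives no proof of Theorem~\ref{thm1007}; it only cites \cite{MOS}, and your sketch reproduces the argument of that reference: the tensor--Hom adjunction for the biprojective bimodule complex $\mathfrak{Q}$, the computation of $\mathrm{K}$ and $\mathrm{K}'$ on simples to get (b), the $\langle s\rangle[-s]$-equivariance of $\mathfrak{Q}$ for (c), and the support bookkeeping of Figure~\ref{fig1} for the restriction to $\mathcal{D}^{\downarrow}$ and $\mathcal{D}^{\uparrow}$. The one overstatement is the claim that the adjunction holds ``on the nose for the full derived categories'': for unbounded complexes the totalisations defining $\mathrm{K}$ and $\mathrm{K}'$ are not quasi-isomorphism-invariant without the local-finiteness guaranteed by the $\downarrow/\uparrow$ support conditions, so these conditions are where the functors are \emph{defined}, not merely where they are checked to land --- but you correctly identify this convergence issue as the crux at the end, so the argument as a whole is sound.
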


The functors $\mathrm{K}$ and $\mathrm{K}'$ are called {\em quadratic
duality} functors.
\index{quadratic duality functor}

\subsection{Koszul duality}\label{s10.3}

In a special case Theorem~\ref{thm1007} can be improved, see \cite{MOS}
for details.

\begin{definition}\label{def1008}
{\rm
A positively graded algebra $A$ is called Koszul provided that
$\overline{\mathcal{P}}^{\bullet}(\mathtt{i})=
\mathcal{P}^{\bullet}(\mathtt{i})$ for every $\mathtt{i}\in\Lambda$.
}
\end{definition}

Typical examples of Koszul algebras are $\mathbb{C}[x]$, 
$D$, quiver algebras without relations and quiver algebras 
with radical square zero. 

\begin{theorem}[Koszul duality]\label{thm1009}
\begin{enumerate}[$($a$)$]
\item\label{thm1009.1} The functors $\mathrm{K}$ and $\mathrm{K}'$
from Theorem~\ref{thm1007} are mutually inverse equivalences of
categories if and only if $A$ is Koszul.
\item\label{thm1009.2} If $A$ is a finite dimensional Koszul algebra
of finite global dimension, then the functors $\mathrm{K}$ and $\mathrm{K}'$
restrict to an equivalence between $\mathcal{D}^b(A\text{-}\mathrm{gmod})$
and $\mathcal{D}^b(A^!\text{-}\mathrm{gmod})$.
\end{enumerate}
\end{theorem}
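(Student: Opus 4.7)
The plan is to reduce both parts to checking the adjunction unit and counit on simple modules, where an explicit computation with $\mathfrak{Q}$ makes Koszulity directly responsible for the quasi-isomorphism.

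\medskip

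\noindent\textbf{Part (a).} Both functors are triangulated, and $\mathcal{D}^{\downarrow}(A\text{-}\mathrm{gfmod})$ is generated, as a triangulated category, by the objects $L_{\mathtt{i}}\langle s\rangle[-s]$ obtained from simple $A$-modules via the shifts permitted by Theorem~\ref{thm1007}(c) (with the analogous statement on the $A^!$-side). Hence it suffices to verify that the counit $\mathrm{K}'\mathrm{K}(L_{\mathtt{i}})\to L_{\mathtt{i}}$ is an isomorphism for every $\mathtt{i}\in\Lambda$; the unit then follows by symmetry once $(A^!)^!\cong A$ is used.

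First I would compute $\mathrm{K}(L_{\mathtt{i}})$ directly. Because the indecomposable summands of $\mathfrak{Q}$ are, up to the shifts $\langle s\rangle[-s]$, the complexes $\mathrm{N}^{-1}\overline{\mathcal{I}}^{\bullet}(\mathtt{j})$ whose components are projective $A$-modules, the derived hom reduces to the naive hom complex, and a standard calculation identifies $\mathrm{Hom}_A(\mathfrak{Q},L_{\mathtt{i}})$ with the indecomposable injective $A^!$-module $I^!(\mathtt{i})$, in accordance with Theorem~\ref{thm1007}(b).

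Next I would compute $\mathrm{K}'\mathrm{K}(L_{\mathtt{i}})=\mathfrak{Q}\overset{\mathcal{L}}{\otimes}_{A^!}I^!(\mathtt{i})$ by using the equivalence $\mathscr{LC}(A)\cong A^!\text{-}\mathrm{gfmod}$ of Theorem~\ref{thm1005}. Under this equivalence, the indecomposable injective $I^!(\mathtt{i})$ corresponds precisely to the linear part $\overline{\mathcal{P}}^{\bullet}(\mathtt{i})$ of the minimal projective resolution of $L_{\mathtt{i}}$, viewed as a linear complex of projective $A$-modules; reassembling via the bimodule $\mathfrak{Q}$ yields a natural isomorphism $\mathrm{K}'\mathrm{K}(L_{\mathtt{i}})\cong\overline{\mathcal{P}}^{\bullet}(\mathtt{i})$ in $\mathcal{D}^{\downarrow}(A\text{-}\mathrm{gfmod})$, with the counit realized by the canonical augmentation $\overline{\mathcal{P}}^{\bullet}(\mathtt{i})\to L_{\mathtt{i}}$.

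Both directions of (a) then fall out: the counit is a quasi-isomorphism for every $\mathtt{i}$ if and only if $\overline{\mathcal{P}}^{\bullet}(\mathtt{i})=\mathcal{P}^{\bullet}(\mathtt{i})$ for every $\mathtt{i}$, which is exactly Definition~\ref{def1008}.

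\medskip

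\noindent\textbf{Part (b).} Under the hypotheses, each $\mathcal{P}^{\bullet}(\mathtt{i})=\overline{\mathcal{P}}^{\bullet}(\mathtt{i})$ is bounded with finite-dimensional components, so via Theorem~\ref{thm1005} every indecomposable injective $A^!$-module is finite dimensional; since $A_0$ is finite dimensional, $A^!$ has finitely many simples, and therefore $A^!$ itself is finite dimensional. A dual argument using injective coresolutions shows $A^!$ has finite global dimension as well. Now any object of $\mathcal{D}^b(A\text{-}\mathrm{gmod})$ is obtained by finitely many cones from shifted simple modules $L_{\mathtt{i}}\langle s\rangle$; by the computation above, each such simple is sent by $\mathrm{K}$ to a shift of $I^!(\mathtt{i})$, now a bounded complex with finite-dimensional cohomology. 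Since $\mathrm{K}$ is triangulated, its image lies in $\mathcal{D}^b(A^!\text{-}\mathrm{gmod})$; the analogous statement for $\mathrm{K}'$ is symmetric via $(A^!)^!\cong A$, and combining with (a) yields the desired equivalence.

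\medskip

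\noindent\textbf{Main obstacle.} The delicate step is the identification, in part (a), of $\mathrm{K}'(I^!(\mathtt{i}))$ with the linear complex $\overline{\mathcal{P}}^{\bullet}(\mathtt{i})$ under Theorem~\ref{thm1005}, keeping the $A$-$A^!$-bimodule structure of $\mathfrak{Q}$ and the grading shifts of Theorem~\ref{thm1007}(c) aligned. Once that identification is nailed down, the quasi-isomorphism criterion reduces cleanly to the Koszul condition; getting the bimodule bookkeeping right is where the real work lies.
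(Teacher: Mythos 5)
The paper itself gives no proof of this theorem; it refers to \cite{MOS}, and your proposal reproduces essentially the argument given there (and in \cite{BGS}): compute the adjunction morphisms on simple modules, identify $\mathrm{K}'\mathrm{K}(L_{\mathtt{i}})$ with the linear part $\overline{\mathcal{P}}^{\bullet}(\mathtt{i})$ via Proposition~\ref{prop1006}\eqref{prop1006.2}, and observe that the augmentation to $L_{\mathtt{i}}$ is a quasi-isomorphism precisely when $A$ is Koszul. So the strategy is the right one. Two points, however, need shoring up before the argument is complete.

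First, your reduction to simples is stated too casually. The categories $\mathcal{D}^{\downarrow}$ and $\mathcal{D}^{\uparrow}$ are unbounded in one direction, so an object is in general not a finite iterated extension of shifted simples; ``generated as a triangulated category'' by the $L_{\mathtt{i}}\langle s\rangle[-s]$ is not literally true, and one must instead use the support conditions of Figure~\ref{fig1} to run a limiting/truncation argument and check that $\mathrm{K}$ and $\mathrm{K}'$ are compatible with it. This is exactly the technical purpose of the $\downarrow/\uparrow$ conditions, and it is where \cite{MOS} spends its effort; without it the claim that a natural transformation which is an isomorphism on simples is an isomorphism everywhere does not follow.

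Second, verifying the counit $\mathrm{K}'\mathrm{K}\to\mathrm{Id}$ on simples only shows that $\mathrm{K}$ is fully faithful. Your appeal to ``symmetry'' for the unit requires running the same computation on simple $A^!$-modules, and the criterion you then land on is Koszulity of $A^!$, not of $A$. So you must either invoke (or prove) the standard fact that $A$ is Koszul if and only if $A^!$ is Koszul, or argue essential surjectivity of $\mathrm{K}$ directly. As written, the ``if'' direction of (a) establishes only full faithfulness of $\mathrm{K}$. Part (b) is fine as a sketch once (a) is in place, though the ``dual argument'' for $\operatorname{gl.dim}A^!<\infty$ (which uses $\dim_{\mathbb{C}}A<\infty$ rather than $\operatorname{gl.dim}A<\infty$) deserves at least a sentence.
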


In the case of a Koszul algebra $A$ the functors $\mathrm{K}$ and 
$\mathrm{K}'$ are called {\em Koszul duality} functors.
\index{Koszul duality functor}

\begin{corollary}\label{cor10091}
If $A$ is Koszul, then $(A^!)^{\mathrm{op}}$ is isomorphic to
the Yoneda algebra  $\mathrm{Ext}_A^*(A_0,A_0)$.
\end{corollary}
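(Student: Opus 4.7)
The plan is to deduce the corollary from the Koszul duality equivalence of Theorem~\ref{thm1009}\eqref{thm1009.1}, applied to the semi-simple module $A_0=\bigoplus_{\mathtt{i}\in\Lambda}e_{\mathtt{i}}A_0$. First, I would view $A_0$ as an object of $\mathcal{D}^{\downarrow}(A\text{-}\mathrm{gfmod})$ concentrated in cohomological position zero and internal degree zero, and set $N:=\mathrm{K}(A_0)$. By Theorem~\ref{thm1007}\eqref{thm1007.2} together with additivity of $\mathrm{K}$, the object $N$ is an injective $A^!$-module. Moreover, combining Proposition~\ref{prop1006}\eqref{prop1006.2} with the Koszulity hypothesis $\overline{\mathcal{P}}^{\bullet}(\mathtt{i})=\mathcal{P}^{\bullet}(\mathtt{i})$ and the equivalence of Theorem~\ref{thm1005}, one identifies each $\mathrm{K}(e_{\mathtt{i}}A_0)$ with an indecomposable injective envelope in $A^!\text{-}\mathrm{gfmod}$, so that $N$ is isomorphic to the minimal injective cogenerator of $A^!\text{-}\mathrm{gfmod}$.

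Next I would translate graded Ext groups into graded endomorphisms of $N$ via the shift rule of Theorem~\ref{thm1007}\eqref{thm1007.3}. For $k\geq 0$ and $n\in\mathbb{Z}$, applying $\mathrm{K}$ produces a natural isomorphism
\begin{displaymath}
\mathrm{Ext}^{k}_{A\text{-}\mathrm{gfmod}}(A_0,A_0\langle n\rangle)\;\cong\;\mathrm{Hom}_{\mathcal{D}(A^!\text{-}\mathrm{gfmod})}(N,N\langle -n\rangle[k+n]).
\end{displaymath}
The right-hand side vanishes for $k+n>0$ because $N$ is injective, and for $k+n<0$ because $N$ is a module (no negative Ext between modules). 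Hence the Ext algebra is concentrated on the diagonal $n=-k$, and summing yields, as a graded vector space,
\begin{displaymath}
\mathrm{Ext}^{*}_{A}(A_0,A_0)\;\cong\;\bigoplus_{k\geq 0}\mathrm{Hom}_{A^!\text{-}\mathrm{gfmod}}(N,N\langle k\rangle)\;=\;\mathrm{End}^{\bullet}_{A^!\text{-}\mathrm{gfmod}}(N),
\end{displaymath}
with the Yoneda product on the left transported, by functoriality of $\mathrm{K}$, to composition of endomorphisms on the right.

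Finally, I would identify $\mathrm{End}^{\bullet}_{A^!\text{-}\mathrm{gfmod}}(N)$ with $(A^!)^{\mathrm{op}}$ using the standard calculation for the injective cogenerator of a positively graded algebra whose degree-zero part is semi-simple: the right $A^!$-action on $N$ (coming from its bimodule structure as the graded dual of $A^!$) gives an anti-homomorphism $A^!\to\mathrm{End}^{\bullet}_{A^!\text{-}\mathrm{gfmod}}(N)$ which is an isomorphism. Combining the three steps then yields the desired isomorphism $\mathrm{Ext}^{*}_{A}(A_0,A_0)\cong(A^!)^{\mathrm{op}}$ of graded algebras. The main obstacle is the bookkeeping inherent in Theorem~\ref{thm1007}\eqref{thm1007.3}, together with isolating where the opposite convention enters---namely in the final identification involving a cogenerator rather than a progenerator, not in Koszul duality itself.
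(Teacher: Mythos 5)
Your argument is correct and is exactly the intended deduction: the paper states this as an immediate consequence of the Koszul duality equivalence of Theorem~\ref{thm1009} and gives no written proof, and your three steps (identifying $\mathrm{K}(A_0)$ with the minimal injective cogenerator of $A^!$, using the shift rule of Theorem~\ref{thm1007}\eqref{thm1007.3} to see that graded Ext is concentrated on the diagonal $n=-k$ and equals the graded endomorphism algebra of that cogenerator, and then invoking $\mathrm{End}(D(A^!))\cong (A^!)^{\mathrm{op}}$) fill in that deduction correctly, including locating where the opposite-algebra convention enters.
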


The algebra $(A^!)^{\mathrm{op}}$ is called the {\em Koszul dual} of 
\index{Koszul dual}
$A$. Koszul algebras and duals were introduced in \cite{Pr}. 
The Koszul duality theorem as an equivalence of derived categories 
was  established in \cite{BGS}. For the category $\mathcal{O}$
we have the following:

\begin{theorem}[\cite{So0,BGS}]\label{thm1011}
Let $\mathfrak{p}$ be a parabolic subalgebra of $\mathfrak{g}$ containing
$\mathfrak{b}$, $\lambda\in\mathfrak{h}^*_{\mathrm{dom}}$ be 
integral and regular, and $\mu\in\mathfrak{h}^*_{\mathrm{dom}}$ be 
integral such that $W_{\mu}=W_{\mathfrak{p}}$. Then both
$B_{\lambda}^{\mathfrak{p}}$ and $B_{\mu}$ are Koszul, moreover,
they are Koszul dual to each other. In particular, 
$B_{\lambda}$ is Koszul and Koszul self-dual.
\end{theorem}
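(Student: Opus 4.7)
The plan is to first reduce the general statement to proving that $B_\lambda$ itself is Koszul and Koszul self-dual, and then obtain the Koszul duality between $B_\lambda^{\mathfrak{p}}$ and $B_\mu$ as a consequence of the quotient/centralizer duality under Koszulity. The self-duality of $B_\lambda$ will be the structural engine, and Koszulity itself is the main technical obstacle.

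\emph{Step 1: Reduction to the regular principal case.} Note that $B_\lambda^{\mathfrak{p}} \cong B_\lambda / B_\lambda e_{\mathfrak{p}} B_\lambda$ for the maximal idempotent $e_{\mathfrak{p}}$ annihilating $\mathcal{O}_\lambda^{\mathfrak{p}}$ (see Subsection~\ref{s4.6}), while by Theorem~\ref{thm507} the singular block $\mathcal{O}_\mu$ is equivalent to $\smash{{}^{\infty}_{\,\mu}\mathcal{H}_{\lambda}^{1}}$, and tensoring over the corresponding translation idempotent realises $B_\mu \cong e_{\mu} B_\lambda e_{\mu}$ up to Morita equivalence, where $e_\mu$ corresponds to translation out of the $W_{\mathfrak{p}}$-walls. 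A general fact about Koszul algebras (proved in \cite{BGS}) says that if $A$ is Koszul with $A \cong (A^!)^{\mathrm{op}}$ via an explicit anti-involution and $e \in A_0$ is an idempotent such that the quotient $A/AeA$ inherits Koszulity (which happens precisely when $AeA$ is generated in the appropriate degrees), then $A/AeA$ and $eAe$ are Koszul dual to each other. Since the idempotent realising the parabolic quotient for $\mathfrak{p}$ corresponds under Soergel-combinatorial duality to the idempotent realising translation to the $W_{\mathfrak{p}}$-wall, it suffices to establish this Koszul self-duality of $B_\lambda$ together with the compatibility of the relevant idempotents.

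\emph{Step 2: Koszulity of $B_\lambda$.} The grading on $B_\lambda$ constructed via Soergel's combinatorial functor $\mathbb{V}$ in Subsection~\ref{s7.3} is positive by Corollary~\ref{cor737}, and its degree-zero part $\bigoplus_{w \in W} \mathbb{C}\langle e_w \rangle$ is semisimple. To verify Koszulity it remains to show that the minimal graded projective resolution of each simple $L(w \cdot \lambda)$ is linear, equivalently, that $\mathrm{Ext}^i_{B_\lambda}(L(x \cdot \lambda), L(y \cdot \lambda)\langle j \rangle) = 0$ whenever $i \neq j$. I would deduce this from the graded Kazhdan-Lusztig conjecture (Theorem~\ref{thm733}) as follows: Corollaries~\ref{cor735} and \ref{cor736} together with the graded BGG reciprocity imply that the graded characters of standard, costandard and simple modules are governed by Kazhdan-Lusztig polynomials $h_{y,x}$ lying in $v\mathbb{Z}[v]$. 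Combined with Proposition~\ref{cor436} and a standard dimension count using the form $\Phi$ from Subsection~\ref{s7.6}, one obtains equalities $\dim \mathrm{Ext}^i(\Delta(x \cdot \lambda), \nabla(y \cdot \lambda)\langle j\rangle) = \delta_{x,y}\delta_{i,0}\delta_{j,0}$ and then propagates this to simples by induction along the Bruhat order, using that both $\Delta(x \cdot \lambda)$ and $\nabla(x \cdot \lambda)$ are built from simples via filtrations whose graded shifts are controlled by $v^k$ with $k$ equal to the homological length. This vanishing is precisely linearity of the minimal projective resolution.

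\emph{Step 3: Self-duality and the dual pair.} To identify $B_\lambda^!$, I would argue that the Koszul dual algebra $(B_\lambda)^!$ carries a positive grading whose graded Cartan matrix is the transpose-inverse of that of $B_\lambda$ (as a matrix over $\mathbb{Z}[v,v^{-1}]$), and the Kazhdan-Lusztig involution $\underline{H}_x \leftrightarrow \hat{\underline{H}}_x$, $v \leftrightarrow v^{-1}$ realises exactly this swap on the Grothendieck group $[\mathcal{O}_\lambda^{\mathbb{Z}}] \cong \mathbb{H}$. An isomorphism $B_\lambda \cong (B_\lambda^!)^{\mathrm{op}}$ can then be constructed explicitly by matching indecomposable projectives in $\mathcal{O}_\lambda^{\mathbb{Z}}$ with linear tilting complexes in $\mathscr{LC}(B_\lambda)$ (Proposition~\ref{prop1006}), where the tilting side is identified with indecomposable tilting modules in $\mathcal{O}_\lambda^{\mathbb{Z}}$ via Ringel self-duality (Theorem~\ref{thm408}) --- so simples, projectives and tiltings cyclically permute under the interaction of Koszul and Ringel dualities. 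Finally, the identification of $B_\lambda^{\mathfrak{p}}$ with $(B_\mu)^!$ follows from Step~1 once one observes that the quotient idempotent for $\mathfrak{p}$ on $B_\lambda$ corresponds, under this self-duality anti-involution, to the translation idempotent $e_\mu$ of Subsection~\ref{s5.2}; this is a direct consequence of Theorem~\ref{thm705}, which shows that translation to the $s$-wall corresponds to restriction along $C_\mu \hookrightarrow C_\lambda$, while the parabolic quotient corresponds to killing simples indexed by $w > sw$, and the two operations are Kazhdan-Lusztig-dual.

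\emph{Main obstacle.} The genuinely hard step is the $\mathrm{Ext}$-vanishing in Step~2, which is essentially equivalent to the graded form of the Kazhdan-Lusztig conjecture and relies on Theorem~\ref{thm733}; everything else is a formal manipulation with positively graded algebras and Soergel's Endomorphismensatz and Struktursatz. The subtle combinatorial point in Step~3 is the correct pairing of idempotents, which requires careful bookkeeping with the dot-action and with longest coset representatives in $W/W_{\mathfrak{p}}$.
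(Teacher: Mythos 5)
The paper does not actually prove Theorem~\ref{thm1011}; it is quoted from \cite{So0,BGS}, where the proof ultimately rests on geometry (pointwise purity of intersection cohomology sheaves on the flag variety). Your sketch therefore has to stand on its own, and it has two genuine gaps. The first is in Step~1: the claim that $B_{\mu}$ is Morita equivalent to $e_{\mu}B_{\lambda}e_{\mu}$ for the idempotent corresponding to translation out of the $W_{\mathfrak{p}}$-walls is false. Already for $\mathfrak{gl}_2$ and $\mu=-\rho$ the relevant idempotent is the one for the projective--injective $P(s\cdot 0)$, and the corresponding centralizer subalgebra of $B_0$ from \eqref{eq457} is spanned by $\mathtt{1}_s$ and $ba$, i.e.\ it is the coinvariant algebra $\mathbb{C}[x]/(x^2)$ (this is Theorem~\ref{thm703}), whereas $B_{-\rho}\cong\mathbb{C}$; these are not Morita equivalent. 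The reason is that translation out of the wall is not fully faithful: composing it with translation onto the wall gives $|W_{\mu}|$ copies of the identity, so the endomorphism algebra of the translated projective generator is $|W_{\mu}|$ times too large. The singular block is recovered from $B_{\lambda}$ via $\mathbb{V}$ and restriction to $C_{\mu}=C_{\lambda}^{W_{\mu}}$ (Theorem~\ref{thm705}), not as an idempotent truncation, and the correct Koszul-dual matching is ``translation to the wall $\leftrightarrow$ derived Zuckerman functor'' (Theorem~\ref{thm1015}) --- which is itself a consequence of the duality you are trying to establish.

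The second and more serious gap is Step~2: the graded Kazhdan--Lusztig data (Theorem~\ref{thm733}, Corollaries~\ref{cor734}--\ref{cor736}) determine only the graded Euler characteristics $\sum_i(-1)^i\dim\mathrm{Ext}^i(L(x\cdot\lambda),L(y\cdot\lambda)\langle j\rangle)$, and no dimension count extracts from an alternating sum the vanishing of the individual $\mathrm{Ext}^i$ off the diagonal $i=j$. The induction you propose along the Bruhat order stalls for exactly this reason: the filtration of $\Delta(x\cdot\lambda)$ by simples places composition factors in degrees prescribed by the coefficients of $h_{y,x}$, but a composition factor has no ``homological length,'' and the Euler-characteristic constraints permit cancellation unless one already knows a parity statement for $\mathrm{Ext}$ between simples. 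That parity statement is the purity input of \cite{So0,BGS}, and every purely algebraic surrogate (for instance the linearity of tilting resolutions of simples that you invoke in Step~3, which in \cite{Ma2} is derived using Koszulity) is of equivalent depth, so Step~3 is circular as written. The overall architecture --- Koszul self-duality of the regular block plus a quotient/centralizer formalism for the parabolic--singular pair --- is the right one, but both pillars require inputs your sketch does not supply.
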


\begin{example}\label{ex1053}
{\rm 
The associative algebra $B_0$ describing the principal block of
category $\mathcal{O}$ for $\mathfrak{gl}_2$ is given by \eqref{eq457}. 
Using the notation from Subsection~\ref{s4.7} 
(and identifying $B_0\text{-}\mathrm{mod}$ with $\mathcal{O}_0$), 
an indecomposable
projective object in $\mathscr{LC}(B_0)$ is isomorphic, up to a
shift of the form $\langle s\rangle[-s]$, for some $s\in\mathbb{Z}$, 
to one of the following two complexes (here the rightmost nonzero
component is in position zero):
\begin{displaymath}
\begin{array}{ccccccccc}
0&\to&0&\to&P(0,0)&\to& P(-1,1)&\to&0;\\
0&\to&P(0,0)&\to& P(-1,1)&\to&P(0,0)&\to&0.\\
\end{array}
\end{displaymath}
As objects of $\mathscr{LC}(B_0)$, both these complexes are uniserial,
moreover, the Loewy structure of the first one is similar to that
of $P(0,0)$, while the Loewy structure of the second one is
similar to that of $P(-1,1)$. In this easy example such comparison 
of the Loewy structures for indecomposable projectives in
$\mathscr{LC}(B_0)$ and $\mathcal{O}_0$ implies Koszul 
self-duality of $B_0$.
}
\end{example}

As the above example suggests, the Koszul self-duality of 
$B_{\lambda}$ does not preserve
primitive idempotents but rather acts on their indexing set $W$
as the map $w\mapsto w^{-1}w_o$.
Theorem~\ref{thm1011} is generalized to arbitrary categories 
${}^{\infty}_{\,\,\lambda}\mathcal{H}^{1}_{\mu}$ in \cite{Bac}.
An approach to the Koszul dualities on $\mathcal{O}$ via
linear complexes was proposed in \cite{Ma4} and was further explored
in \cite{MOS,Ma2,Ma3}. There is a well-developed theory of 
Koszul duality for abstract quasi-hereditary algebras, see
\cite{ADL,MO,Ma5,Ma6}.

Instead of the category $\mathscr{LC}(A)$ one could alternatively
consider the category of linear complexes of injective 
$A$-modules. In the case of quasi-hereditary algebra $A$ one can
also consider the category of linear complexes of tilting 
$A$-modules. In the case of $\mathcal{O}_{\lambda}$ all these
categories are equivalent via $\mathrm{T}_{w_o}$ (or
$\mathrm{C}_{w_o}$).

Many structural modules in $\mathcal{O}_{\lambda}$ admit linear
projective or linear injective or linear tilting ``resolutions''.
For example (see \cite{Ma2}):
\begin{itemize}
\item Simple modules admit both linear injective, projective and
tilting resolutions.
\item Standard (Verma) modules admit linear projective and
tilting resolutions.
\item Costandard modules admit linear injective and
tilting resolutions.
\item Projective modules in $\mathcal{O}_{\lambda}^{\mathfrak{p}}$ 
admit linear projective and tilting resolutions (in $\mathcal{O}_{\lambda}$).
\item Tilting modules in $\mathcal{O}_{\lambda}^{\mathfrak{p}}$ 
admit linear tilting resolutions (in $\mathcal{O}_{\lambda}$).
\end{itemize}
As one could expect after Proposition~\ref{prop1006}, all these resolutions
play some role in the structure of the corresponding category of
linear complexes. For example, linear tilting resolutions of simple
modules are tilting objects in the category of linear complexes of
tilting modules. In fact, the category of linear complexes of tilting
modules seems to be the most ``symmetric'' one.

\subsection{Koszul dual functors}\label{s10.4}

Let $A$ be a finite dimensional Koszul algebra of finite global dimension
and  $\mathrm{K}:\mathcal{D}^b(A\text{-}\mathrm{gmod})\to
\mathcal{D}^b(A^!\text{-}\mathrm{gmod})$ the corresponding Koszul
duality functor. Let $\mathrm{F}$ be an endofunctor of
$\mathcal{D}^b(A\text{-}\mathrm{gmod})$ and $\mathrm{G}$ an 
endofunctor of $\mathcal{D}^b(A^!\text{-}\mathrm{gmod})$.

\begin{definition}\label{def1012}
{\em  
The functors $\mathrm{F}$ and $\mathrm{G}$ are called 
{\em Koszul dual} provided that the following diagram commutes
\index{Koszul dual functor}
up to an isomorphism of functors:
\begin{displaymath}
\xymatrix{ 
\mathcal{D}^b(A\text{-}\mathrm{gmod})\ar[rrrr]^{\mathrm{K}}
\ar[d]_{\mathrm{F}}&&&&
\mathcal{D}^b(A^!\text{-}\mathrm{gmod})\ar[d]^{\mathrm{G}}\\
\mathcal{D}^b(A\text{-}\mathrm{gmod})\ar[rrrr]^{\mathrm{K}}&&&&
\mathcal{D}^b(A^!\text{-}\mathrm{gmod})\\
}
\end{displaymath}
}
\end{definition}

For the category $\mathcal{O}$ we have the following pairs
of Koszul dual functors:

\begin{theorem}[\cite{RH,MOS}]\label{thm1015}
Under the identification of $B_{\lambda}^{\mathfrak{p}}$
and $B_{\mu}^!$, given by Theorem~\ref{thm1011},
the following functors (all corresponding to
a fixed simple reflection $s$) are Koszul dual to each other
(up to an appropriate shift in grading and position):
\begin{enumerate}[$($a$)$]
\item\label{thm1015.1} Translation to the wall and 
derived Zuckerman functor.
\item\label{thm1015.2} Translation out of the wall and 
canonical inclusion.
\item\label{thm1015.3} Derived shuffling and derived twisting.
\item\label{thm1015.4} Derived coshuffling and derived completion.
\end{enumerate}
\end{theorem}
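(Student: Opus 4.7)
The plan is to reduce the four Koszul duality assertions to two via the adjunctions already recorded in the paper, and then verify the remaining two by identifying the functors through their action on generators together with a cone argument.

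First I would observe that $\mathrm{K}$, being a triangulated equivalence, intertwines adjoint pairs. Translation to and translation out of a given wall are biadjoint by Proposition~\ref{prop503}\eqref{prop503.5}, while $(\mathrm{Z}_\mathfrak{p}, \mathfrak{i}_\mathfrak{p})$ is adjoint by construction; hence (a) is equivalent to (b). Similarly $(\mathrm{C}_s, \mathrm{K}_s)$ is adjoint by Proposition~\ref{prop508}\eqref{prop508.1} and $(\mathrm{T}_s, \mathrm{G}_s)$ by Theorem~\ref{thm608}, so (c) and (d) are equivalent. It therefore suffices to establish (a) and (c).

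For (a), I would use the concrete realization of Koszul duality through linear complexes of projectives (Theorem~\ref{thm1005}) together with the explicit form of the Koszul duality functors in Theorem~\ref{thm1007}. Translation to the wall $\theta_{\lambda,\mu}$ is exact and preserves projective modules by Proposition~\ref{prop503}\eqref{prop503.6}, so it induces a functor on $\mathscr{LC}(B_\lambda)$. Transporting this along the equivalence $\mathscr{LC}(B_\lambda) \cong B_\lambda^!\text{-}\mathrm{gfmod}$ and invoking Theorem~\ref{thm1011} produces the Koszul dual of $\theta_{\lambda,\mu}$. To identify it as $\mathcal{L}\mathrm{Z}_\mathfrak{p}$, I would compare the two functors on a spanning set, using that indecomposable projectives correspond to simple modules under Koszul duality: the elementary fact that $\theta_{\lambda,\mu}$ preserves $P(w\cdot\lambda)$ precisely when $w$ is the longest representative in $wW_\mathfrak{p}$ and annihilates it otherwise, matches the equally elementary fact that $\mathrm{Z}_\mathfrak{p}$ preserves a simple module if it already lies in $\mathcal{O}_\lambda^\mathfrak{p}$ and annihilates it otherwise. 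Functoriality of the linear-complex description then upgrades this match on objects to a match of functors.

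For (c), I would reduce to (a) and (b) via the defining sequences on each side. The adjunction morphism $\mathrm{adj}_s\colon \theta_e\to\theta_s$ yields a distinguished triangle $\theta_e \to \theta_s \to \mathcal{L}\mathrm{C}_s \to \theta_e[1]$ in $\mathcal{D}^b(\mathcal{O}_\lambda)$ using Proposition~\ref{prop508}\eqref{prop508.4}, whereas Proposition~\ref{prop605} yields a triangle $\mathcal{L}\mathrm{T}_s \to \mathrm{Id} \to \mathrm{Q}_s[1]$ after derivation. Using (a) and (b) for the factorization $\theta_s\cong\theta_{\mu,s\cdot\lambda}\circ\theta_{\lambda,\mu}$ from \eqref{eq591} (with $\mathfrak{p}$ of semisimple rank one corresponding to $s$), I would Koszul-dualize $\theta_s$ to $\mathfrak{i}_\mathfrak{p}\circ \mathcal{L}\mathrm{Z}_\mathfrak{p}$ and check that $\mathrm{adj}_s$ is dual to the adjunction unit $\mathrm{Id}\to\mathfrak{i}_\mathfrak{p}\mathcal{L}\mathrm{Z}_\mathfrak{p}$. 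Taking cones of these matched natural transformations and invoking $\mathrm{Q}_s \cong \mathcal{L}_1\mathrm{Z}_s$ from Proposition~\ref{prop672}\eqref{prop672.3} identifies $\mathrm{K}\circ\mathcal{L}\mathrm{C}_s$ with $\mathcal{L}\mathrm{T}_s\circ\mathrm{K}$, up to appropriate shifts.

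The main obstacle will be the careful bookkeeping of the grading shift from Theorem~\ref{thm1007}\eqref{thm1007.3}: $\mathrm{K}$ does not commute with $\langle 1\rangle$ and $[1]$ separately but only with the combination $\langle -j\rangle\circ[i+j]$. Because shuffling and twisting are only right (respectively left) exact with non-trivial first derived functors, I would have to verify that each of the short exact sequences used above is ``linear'' in the appropriate sense so that the shifts align after passage through $\mathrm{K}$. A secondary subtlety for (a) is that $B_\mu$ for singular $\mu$ is Koszul but not self-dual, so one must genuinely use the nontrivial Koszul duality of Theorem~\ref{thm1011} rather than self-duality of $B_\lambda$ alone.
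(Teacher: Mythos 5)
The paper offers no proof of this theorem: it is quoted from \cite{RH,MOS}, with the remark that \cite{RH} uses dg-categories and \cite{MOS} a unified approach via the category $\mathscr{LC}(B_{\lambda})$ of linear complexes. Your outline is closest in spirit to the \cite{MOS} route (linear complexes for (a)--(b), cones of adjunction morphisms for (c)--(d)), and the reduction of (b) to (a) and of (d) to (c) by passing to adjoints through the equivalence $\mathrm{K}$ is legitimate. However, there are two genuine gaps. First, the ``elementary fact'' you invoke for (a) is false: translation to the wall never annihilates an indecomposable projective (every $P(w\cdot\lambda)$ has a Verma flag and $\theta_{\lambda,\mu}$ sends each Verma to a nonzero Verma, so $\theta_{\lambda,\mu}P(w\cdot\lambda)\neq 0$ for all $w$). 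The kill-or-preserve dichotomy you describe holds for \emph{simple} modules, and since $\mathrm{K}$ exchanges projectives with simples (Theorem~\ref{thm1007}\eqref{thm1007.2}), the object-level matching has to pair $\theta_{\lambda,\mu}$ on simples with $\mathcal{L}\mathrm{Z}_{\mathfrak{p}}$ on projectives (or the dual picture), not the pairing you wrote down.

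Second, and more seriously, ``functoriality of the linear-complex description then upgrades this match on objects to a match of functors'' is not a valid step: two structural functors that agree on the isomorphism classes of a generating family of objects need not be isomorphic, and Zuckerman functors are not projective functors, so no classification theorem forces the identification. This is exactly the hard part of \cite{MOS}, which constructs the isomorphism of functors by analysing how $\theta_{\lambda,\mu}$ acts on the whole category $\mathscr{LC}(B_{\lambda})$ (including morphisms) and identifying the induced endofunctor of $B_{\lambda}^{!}\text{-}\mathrm{gfmod}$ with Zuckerman's functor via an explicit bimodule/complex description. The same issue recurs in your step (c): to conclude that the cone of $\mathrm{adj}_s$ is Koszul dual to the cone of the map from Proposition~\ref{prop605} you must show that $\mathrm{K}$ carries the specific natural transformation $\mathrm{adj}_s\colon\theta_e\to\theta_s$ to the specific adjunction transformation on the other side; this requires a computation of the relevant one-dimensional Hom-spaces of natural transformations, which your sketch asserts but does not supply. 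The grading bookkeeping you flag is real but is the lesser of the difficulties.
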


The statements of Theorem~\ref{thm1015}\eqref{thm1015.1}-\eqref{thm1015.2}
were conjectured in \cite{BGS} and first proved in \cite{RH} using
dg-categories. In \cite{MOS} one finds a unifying approach to
all statements based on the study of the 
category $\mathscr{LC}(B_{\lambda})$.

\subsection{Alternative categorification of the 
permutation module}\label{s10.5}

As we know from Proposition~\ref{prop811}, the action of 
$\cS^{\mathbb{Z}}$ (resp. $\cS$) on 
${}^{\mathbb{Z}}\mathcal{O}_{\lambda}^{\mathfrak{p}}$
(resp. $\mathcal{O}_{\lambda}^{\mathfrak{p}}$) categorifies
the parabolic module $\mathcal{M}_{-v}$ (resp. the induced sign module).
Since all our functors are exact, we can derive all involved
categories and get the same result. 

The point in deriving the picture is that we can do more
with the derived picture. Namely, we may now apply
the Koszul duality functor $\mathrm{K}$ to obtain
an action of $\cS^{\mathbb{Z}}$ on the bounded derived
category of graded modules over the Koszul dual of
$B_{\lambda}^{\mathfrak{p}}$. By Theorem~\ref{thm1011}, the 
Koszul dual of  $B_{\lambda}^{\mathfrak{p}}$ is
$B_{\mu}$, where $\mu\in\mathfrak{h}^*_{\mathrm{dom}}$
is integral and such that $W_{\mu}=W_{\mathfrak{p}}$. 

Now we have to be careful, since $\mathrm{K}$ does not
obviously commute with shifts (both in position and grading).
Instead, we have the formula of Theorem~\ref{thm1007}\eqref{thm1007.3}.
Because of our conventions, this formula implies that 
$\mathrm{K}$ induces an automorphism $\psi$ of $\mathbb{H}$
given by  $v\mapsto -v^{-1}$. This means that the image,
under the Koszul duality functor $\mathrm{K}$, of the
$\mathbb{H}$-module $\mathcal{M}_{-v}$ becomes the 
$\mathbb{H}$-module $\mathcal{M}_{v^{-1}}$. Taking 
Theorem~\ref{thm1015} into account, we obtain:

\begin{proposition}\label{prop1017}
Let $\mu$ be as above. The action of derived Zuckerman  
functors on $\mathcal{D}^b(B_{\mu}\text{-}\mathrm{gmod})$
categorifies (in the image of the Kazhdan-Lusztig basis under
$\psi$) the parabolic module $\mathcal{M}_{v^{-1}}$.
\end{proposition}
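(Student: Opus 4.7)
The plan is to derive the graded version of Proposition~\ref{prop811} and then transport it through the Koszul duality equivalence provided by Theorems~\ref{thm1009} and~\ref{thm1011}. Since the projective functors $\theta_s$ are exact, the $\cS^{\mathbb{Z}}$-action on ${}^{\mathbb{Z}}\mathcal{O}_\lambda^{\mathfrak{p}}$ extends naturally to an action on $\mathcal{D}^b(B_\lambda^{\mathfrak{p}}\text{-}\mathrm{gmod})$ still categorifying $\mathcal{M}_{-v}$. By Theorem~\ref{thm1011} the algebras $B_\lambda^{\mathfrak{p}}$ and $B_\mu$ are Koszul dual, and each has finite global dimension (the first since it is quasi-hereditary, the second since $\mathcal{O}_\mu$ is a block of category $\mathcal{O}$), so Theorem~\ref{thm1009}(b) supplies an equivalence of triangulated categories $\mathrm{K}\colon\mathcal{D}^b(B_\lambda^{\mathfrak{p}}\text{-}\mathrm{gmod})\xrightarrow{\sim}\mathcal{D}^b(B_\mu\text{-}\mathrm{gmod})$. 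Transporting through $\mathrm{K}$ yields an action of $\cS^{\mathbb{Z}}$ on $\mathcal{D}^b(B_\mu\text{-}\mathrm{gmod})$ by the Koszul-dual functors $\mathrm{K}\circ\theta_s\circ\mathrm{K}^{-1}$, $s\in S$.

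Next I identify these Koszul duals. The generating projective functor $\theta_s$ factors as translation out of the $s$-wall composed with translation to that wall, and parts~(a) and~(b) of Theorem~\ref{thm1015} identify the Koszul duals of these two factors with, respectively, the derived Zuckerman functor and the canonical inclusion attached to the simple reflection $s$. Composing, $\mathrm{K}\circ\theta_s\circ\mathrm{K}^{-1}$ is identified up to a shift of grading and position with the endofunctor $\iota_s\circ\mathcal{L}\mathrm{Z}_s$ of $\mathcal{D}^b(B_\mu\text{-}\mathrm{gmod})$, which is exactly what the statement calls the ``derived Zuckerman functor'' attached to $s$.

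Finally I interpret the effect of $\mathrm{K}$ on the Grothendieck group. By Theorem~\ref{thm1007}(c), $\mathrm{K}$ intertwines $\langle j\rangle[i]$ with $\langle -j\rangle[i+j]$; after decategorification with the conventions $[1]\mapsto -1$ and $\langle 1\rangle\mapsto v^{-1}$ this becomes the ring automorphism $\psi$ of $\mathbb{Z}[v,v^{-1}]$, extended to $\mathbb{H}$, sending $v\mapsto -v^{-1}$. Pulling the $\mathbb{H}$-module $\mathcal{M}_{-v}$ back along $\psi$ replaces the specialization $-v$ by $\psi(-v)=v^{-1}$, producing $\mathcal{M}_{v^{-1}}$, and the Kazhdan--Lusztig basis on the source is carried to its $\psi$-image, which is the distinguished basis referred to in the statement. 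The hard part will be the shift bookkeeping inside step two: the ``up to an appropriate shift'' clauses of Theorem~\ref{thm1015} must be tracked precisely through both factors of $\theta_s$ and then combined with the grading/position swap from Theorem~\ref{thm1007}(c), so that the categorified action matches on the nose, rather than up to abstract $\mathbb{H}$-linear isomorphism, the explicit formula for right multiplication by $\underline{H}_s$ in $\mathcal{M}_{v^{-1}}$ obtained by specializing $u=v^{-1}$ in the formulas of Subsection~\ref{s8.4}.
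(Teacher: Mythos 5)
Your proposal is correct and follows the same route as the paper: transport the categorification of $\mathcal{M}_{-v}$ from Proposition~\ref{prop811} through the Koszul duality equivalence of Theorems~\ref{thm1009} and~\ref{thm1011}, identify the dual functors via Theorem~\ref{thm1015}, and account for the twist by the automorphism $\psi$ coming from Theorem~\ref{thm1007}(c). Your extra attention to the shift bookkeeping is a reasonable refinement of what the paper leaves implicit, but the argument is the same.
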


Taking into account the relation between twisting and Zuckerman 
functors from Proposition~\ref{prop604}, the na{\"\i}ve picture
of the corresponding categorification of the permutation
module can be described as follows:

\begin{corollary}\label{cor1019}
Let $\mu$ be as above. The action of derived twisting functors 
on $\mathcal{D}^b(B_{\mu}\text{-}\mathrm{gmod})$
gives a na{\"\i}ve categorification of the 
permutation module $W_{\mu}$ in the standard basis of $W$.
\end{corollary}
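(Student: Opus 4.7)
The plan is to deduce the corollary from Proposition~\ref{prop1017} via the short exact sequence of Proposition~\ref{prop605} relating twisting and Zuckerman functors.

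First, I would verify that derived twisting functors give a well-defined action on $\mathcal{D}^b(B_\mu\text{-}\mathrm{mod})$. By Proposition~\ref{prop601}\eqref{prop601.1} each $\mathrm{T}_s$ preserves the singular block $\mathcal{O}_\mu$, and by Propositions~\ref{prop602} and \ref{prop604}\eqref{prop604.4} the derived versions $\mathcal{L}\mathrm{T}_w$ compose correctly and satisfy braid relations, so $w \mapsto \mathcal{L}\mathrm{T}_w$ defines a weak action of $\mathbb{B}_n$ on $\mathcal{D}^b(B_\mu\text{-}\mathrm{mod})$. On the Grothendieck group this action factors through $W$, because $[\mathcal{L}\mathrm{T}_s]^2$ acts as the identity on Verma classes (as can be seen directly from Proposition~\ref{prop603}). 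Hence the decategorification is a $\mathbb{Z}[W]$-module of rank $|W/W_\mu|$.

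Next, I would compute the decategorified action explicitly. Combining the exact sequence $\mathrm{Q}_s \hookrightarrow \mathrm{T}_s \to \mathrm{Id}$ of Proposition~\ref{prop605} with the identifications $\mathcal{L}_1\mathrm{Z}_s=\mathrm{Q}_s$ and $\mathcal{L}_2\mathrm{Z}_s=\hat{\mathrm{Z}}_s$ from Proposition~\ref{prop672}, together with $\mathcal{L}_1\mathrm{T}_s=\hat{\mathrm{Z}}_s$ from Proposition~\ref{prop604}\eqref{prop604.3} and the vanishing of higher derived terms, taking Euler characteristics yields the identity
\begin{displaymath}
[\mathcal{L}\mathrm{T}_s] \;=\; [\mathrm{Id}] \;-\; [\mathcal{L}\mathrm{Z}_s] \;+\; [\mathrm{Z}_s]
\end{displaymath}
on the Grothendieck group. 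Proposition~\ref{prop1017}, specialized at $v=1$, identifies $[\mathcal{L}\mathrm{Z}_s]$ with the action of an element of $\mathbb{Z}[W]$ on the permutation module $\mathbb{Z}[W/W_\mu]$, and the contributions of $[\mathrm{Z}_s]$ and $[\mathrm{Id}]$ are equally explicit (the ordinary Zuckerman contribution simply kills the part indexed by the stabilizer).

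Finally, I would match the resulting action with right multiplication by $s$ on $\mathbb{Z}[W/W_\mu]$ in the standard basis $\{[\Delta(x\cdot\mu)] : x \in W(\mu)\}$ of Verma classes. By Proposition~\ref{prop604}\eqref{prop604.1} all higher derived terms of $\mathrm{T}_w$ vanish on Vermas, so the computation reduces to Proposition~\ref{prop603}\eqref{prop603.2} (and Proposition~\ref{prop603}\eqref{prop603.1} via the equality $[\Delta]=[\nabla]$ in the Grothendieck group), giving $[\mathcal{L}\mathrm{T}_s\Delta(x\cdot\mu)] = [\Delta(xs\cdot\mu)]$ in every case; in the singular block one then identifies $xs\cdot\mu$ with the appropriate coset representative. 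The main obstacle lies precisely in the singular situation where $xs$ lies outside $W(\mu)$ or $s$ belongs to $W_\mu$, so that $xs\cdot\mu = x\cdot\mu$: here one must verify that $[\mathcal{L}\mathrm{T}_s\Delta(x\cdot\mu)] = [\Delta(x\cdot\mu)]$ despite possible contributions from the higher derived term $\hat{\mathrm{Z}}_s$ on non-Verma subquotients. This is handled by the vanishing statement of Proposition~\ref{prop604}\eqref{prop604.1} applied to the Vermas themselves, which ensures that the derived correction does not disturb the Verma class in the Grothendieck group, completing the verification that the action is exactly the permutation action.
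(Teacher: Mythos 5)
Your overall plan (compute $[\mathcal{L}\mathrm{T}_s]$ on the classes of Verma modules in the singular block and match this with left multiplication on $W/W_{\mu}$) is the right one and is essentially what the paper intends, but the key displayed identity in your second step is false. The sequence $\mathrm{Q}_s\hookrightarrow\mathrm{T}_s\to\mathrm{Id}$ of Proposition~\ref{prop605} is not right exact: the cokernel of $\mathrm{T}_s\to\mathrm{Id}$ is $\mathrm{Z}_s$ (one has the four-term exact sequence $0\to\mathrm{Q}_s\to\mathrm{T}_s\to\mathrm{Id}\to\mathrm{Z}_s\to0$, see \cite{KhMa}), so $[\mathrm{T}_s]=[\mathrm{Q}_s]+[\mathrm{Id}]-[\mathrm{Z}_s]$; combined with $\mathcal{L}_1\mathrm{T}_s=\hat{\mathrm{Z}}_s$, $\mathcal{L}_1\mathrm{Z}_s=\mathrm{Q}_s$ and $\mathcal{L}_2\mathrm{Z}_s=\hat{\mathrm{Z}}_s$ this yields $[\mathcal{L}\mathrm{T}_s]=[\mathrm{Id}]-[\mathcal{L}\mathrm{Z}_s]$, with no extra summand $[\mathrm{Z}_s]$. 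Your formula fails already for $\mathfrak{gl}_2$: using the tables in Examples~\ref{exm607} and \ref{exm673}, one finds $[\mathcal{L}\mathrm{T}_s L(e)]=-[L(e)]$ and $[\mathcal{L}\mathrm{T}_s\Delta(e)]=[\Delta(s)]$, whereas your identity returns $0$ and $[\Delta(e)]$, respectively. So the bridge you build to Proposition~\ref{prop1017} would produce the wrong action; the corrected identity $[\mathcal{L}\mathrm{T}_s]+[\mathcal{L}\mathrm{Z}_s]=[\mathrm{Id}]$ is exactly the ``relation between twisting and Zuckerman functors'' that the paper invokes.

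Your third step, which is really an independent direct verification, also rests on citations that do not cover what is needed. Proposition~\ref{prop603}\eqref{prop603.2} computes $[\mathrm{T}_w\Delta(\lambda)]$ only for the \emph{dominant} Verma module, and the shortcut ``via $[\Delta]=[\nabla]$'' is not available in the form you use it: $\mathrm{T}_s$ is not exact, so $[\mathrm{T}_s]$ does not descend to the Grothendieck group, and although $[\mathcal{L}\mathrm{T}_s\Delta(\nu)]=[\mathcal{L}\mathrm{T}_s\nabla(\nu)]$ does hold, evaluating the right-hand side carries the correction term $\mathcal{L}_1\mathrm{T}_s\nabla(\nu)=\hat{\mathrm{Z}}_s\nabla(\nu)$, which is nonzero in general. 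What is actually required is $[\mathrm{T}_s\Delta(\nu)]=[\Delta(s\cdot\nu)]$ for arbitrary $\nu$ (this is in \cite{AS} but not in the statements you cite), which together with Proposition~\ref{prop604}\eqref{prop604.1} gives $[\mathcal{L}\mathrm{T}_s\Delta(x\cdot\mu)]=[\Delta(sx\cdot\mu)]$. Note that it must be $sx$ and not $xs$: twisting functors categorify \emph{left} multiplication (Proposition~\ref{prop606}), which is what makes the induced map on the left cosets $W/W_{\mu}$ indexing the Verma classes well defined. Finally, your concern about higher derived terms on ``non-Verma subquotients'' is vacuous, since Proposition~\ref{prop604}\eqref{prop604.1} kills $\mathcal{L}_i\mathrm{T}_w\Delta(\nu)$ for all $i>0$ and all weights $\nu$, singular or not.
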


In many cases, studying some categorification picture, it might
be useful to switch to the Koszul dual (if it exists), where
computation might turn out to be much easier (for example, some
derived functors might become exact after taking the Koszul dual).

\section{$\mathfrak{sl}_2$-categorification: simple finite
dimensional modules}\label{s11}

\subsection{The algebra $U_v(\mathfrak{sl}_2)$}\label{s11.1}

Recall that $\mathbb{N}_0$ denotes the set of non-negative integers.
Denote by $\mathbb{C}(v)$ the field of rational functions with
complex coefficients in an indeterminate $v$. Recall
the following definition (see e.g. \cite{Ja3}):

\begin{definition}\label{def1101}
{\rm  
The quantum group $U_v(\mathfrak{sl}_2)$ is the associative 
$\mathbb{C}(v)$-algebra with generators $E$, $F$, $K$, $K^{-1}$
and relations
\begin{displaymath}
KE=v^2EK,\, KF=v^{-2}FK,\, KK^{-1}=K^{-1}K=1,\,
EF-FE=\frac{K-K^{-1}}{v-v^{-1}}.
\end{displaymath}
}
\end{definition}

The algebra $U_v(\mathfrak{sl}_2)$ is a Hopf algebra with 
comultiplication $\Delta$ given by 
\begin{displaymath}
\Delta(E)=1\otimes E+E\otimes K,\quad 
\Delta(F)=K^{-1}\otimes F+F\otimes 1,\quad 
\Delta(K^{\pm 1})=K^{\pm 1}\otimes K^{\pm 1}.
\end{displaymath}

For $a\in\mathbb{Z}$ set
\begin{displaymath}
[a]:=\frac{v^a-v^{-a}}{v-v^{-1}}
\end{displaymath}
and for all $n\in\mathbb{N}$ put
\begin{displaymath}
[n]!:=[1][2]\cdots [n].
\end{displaymath}
Set $[0]!=1$ and define
\begin{displaymath}
\left[\begin{array}{c}a\\n\end{array}\right]:=
\frac{[a][a-1]\cdots [a-n+1]}{[n]!}.
\end{displaymath}
Define also 
\begin{displaymath}
E^{(a)}:=\frac{E^a}{\left[a\right]!},\qquad
F^{(a)}:=\frac{F^a}{\left[a\right]!}.
\end{displaymath}

\begin{proposition}[\cite{Ja3}]\label{prop1102}
\begin{enumerate}[$($a$)$]
\item\label{prop1102.1} The algebra $U_v(\mathfrak{sl}_2)$ has 
no zero divisors.
\item\label{prop1102.2} The algebra $U_v(\mathfrak{sl}_2)$ has a 
PBW basis consisting of monomials $F^sK^nE^r$, where $r,s\in\mathbb{N}_0$ 
and $n\in\mathbb{Z}$.
\item\label{prop1102.3} The center of $U_v(\mathfrak{sl}_2)$ is 
generated by the {\em quantum Casimir element}
\index{quantum Casimir element}
\begin{displaymath}
C:=FE+\frac{Kv+K^{-1}v^{-1}}{(v-v^{-1})^2}. 
\end{displaymath}
\end{enumerate}
\end{proposition}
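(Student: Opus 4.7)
The plan is to establish the three parts in the order (b), (a), (c), since each subsequent part relies on the previous one.

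For part (b), I would use a Bergman-style diamond-lemma argument. First, fix the reduction system suggested by the defining relations: rewrite any occurrence of $KE$ as $v^2 EK$, any occurrence of $KF$ as $v^{-2}FK$, any $EF$ as $FE + (K-K^{-1})/(v-v^{-1})$, and any $K^{\pm 1}K^{\mp 1}$ as $1$. Impose the degree-lexicographic order in which $F$-letters should appear leftmost, then powers of $K$, then $E$-letters rightmost, with total length breaking ties. Each rewrite strictly lowers this order, so the reduction terminates; confluence follows by checking that the only genuine overlap ambiguities, namely $KEF$, $KFE$ and $K^{\pm 1}K^{\mp 1}K^{\pm 1}$, resolve to the same normal form using the relation $EF-FE=(K-K^{-1})/(v-v^{-1})$. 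This shows the $F^s K^n E^r$ span $U_v(\mathfrak{sl}_2)$. For linear independence, I would exhibit a faithful action on the polynomial module $\mathbb{C}(v)[x,x^{-1}]$ (or, equivalently, on the direct sum of the Verma modules $M(\lambda)$, $\lambda\in v^{\mathbb{Z}}$) on which distinct PBW monomials act by linearly independent operators; this is a standard computation.

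Given (b), part (a) follows by a standard filtration argument. Define an $\mathbb{N}_0^3$-filtration on $U_v(\mathfrak{sl}_2)$ by placing $F^sK^nE^r$ in degree $(s,|n|,r)$ with the lexicographic order, or more cleanly, filter by the total $E$- and $F$-degree. The associated graded algebra is the skew polynomial ring with generators $\bar F, \bar K^{\pm 1}, \bar E$ subject only to $\bar K \bar E = v^2 \bar E \bar K$, $\bar K\bar F = v^{-2}\bar F\bar K$ and $\bar E \bar F = \bar F \bar E$ (the $(K-K^{-1})/(v-v^{-1})$ correction is of lower filtration degree). This associated graded is an iterated Ore extension of $\mathbb{C}(v)[K,K^{-1}]$, hence a domain. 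Since a filtered algebra whose associated graded is a domain is itself a domain, (a) follows.

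For part (c), I would exploit the weight decomposition under $K$-conjugation. Using the PBW basis, write $z\in Z(U_v(\mathfrak{sl}_2))$ uniquely as $z = \sum_{s,n,r} c_{s,n,r} F^s K^n E^r$. The relation $K z K^{-1} = z$, combined with $KF^sK^nE^r K^{-1}=v^{2(r-s)}F^sK^nE^r$, forces $c_{s,n,r}=0$ unless $r=s$. So $z = \sum_{s\ge 0} F^s p_s(K) E^s$ for Laurent polynomials $p_s$. Next impose $Ez = zE$ and, using the commutation $E\,F^s p_s(K) E^s = F^s p_s(K) E^{s+1} + [s]\,F^{s-1}(\text{something in }K)\,E^s$ obtained by induction from $EF - FE = (K-K^{-1})/(v-v^{-1})$, derive a recursion that pins down each $p_s$ from $p_0$ up to the constraint that $p_0(K)$ itself is forced to equal $(Kv + K^{-1}v^{-1})/(v-v^{-1})^2$ plus a constant multiple of the Casimir polynomial's recursive closure. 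A direct check shows $C = FE + (Kv+K^{-1}v^{-1})/(v-v^{-1})^2$ is central; by the recursion, after subtracting a suitable polynomial in $C$ one can successively eliminate the top $F^sE^s$ term, reducing to $z\in\mathbb{C}(v)[K,K^{\pm 1}]\cap Z$. Finally, the commutation $Ez=zE$ on a pure Laurent polynomial in $K$ forces it to be a scalar, so $z\in\mathbb{C}(v)[C]$.

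The main obstacle is the bookkeeping in part (c): one must carry out the inductive elimination cleanly, in particular identifying the correct polynomial in $C$ whose leading term matches a given $F^s p_s(K) E^s$, and verifying that the residual Laurent polynomial in $K$ really is forced to be constant by centrality with respect to $E$. The diamond lemma for (b) and the filtration argument for (a) are essentially mechanical once the setup is right.
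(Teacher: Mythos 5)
The paper itself gives no proof of this proposition; it is quoted directly from \cite{Ja3}, and your proposal reconstructs the standard arguments found there. Parts (a) and (b) are fine: the diamond-lemma reduction plus a faithful family of Verma modules gives the PBW basis, and the filtration whose associated graded is a $q$-commuting iterated Ore extension gives the domain property (Jantzen in fact builds $U_v(\mathfrak{sl}_2)$ as an iterated Ore extension over $\mathbb{C}(v)[K^{\pm 1}]$, which yields (a) and (b) simultaneously, but your two-step version is equally valid).

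One small correction in part (c): $E$ does not commute with $p_s(K)$; rather $E\,p_s(K)=p_s(v^{-2}K)\,E$, so the leading term of your displayed identity should read $F^s p_s(v^{-2}K)E^{s+1}$, not $F^s p_s(K)E^{s+1}$. Carrying the twist through, comparison of the coefficients of $F^s(\cdot)E^{s+1}$ in $Ez=zE$ yields
\begin{displaymath}
p_s(K)-p_s(v^{-2}K)=[s+1]\,\frac{v^{-s}K-v^{s}K^{-1}}{v-v^{-1}}\,p_{s+1}(K),
\end{displaymath}
which determines $p_{s+1}$ from $p_s$ and, at $s$ with all higher terms zero, forces the residual Laurent polynomial to satisfy $p(v^{-2}K)=p(K)$, i.e.\ to be a scalar. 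This is exactly the recursion you need for the elimination against powers of $C$, so the strategy goes through once the twist is inserted; this is the Harish-Chandra-homomorphism argument in disguise.
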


\subsection{Finite dimensional representations of  
$U_v(\mathfrak{sl}_2)$}\label{s11.2}

For $n\in\mathbb{N}_0$ let $\mathcal{V}_n$ and $\hat{\mathcal{V}}_n$ 
denote the $\mathbb{C}(v)$-vector space with basis 
$\{w_0,w_1,\dots,w_n\}$. Define linear operators
$E$, $F$, $K^{\pm 1}$ on $\mathcal{V}_n$ using the following formulae:
\begin{equation}\label{eq1191}
Ew_k=[k+1]w_{k+1},\quad Fw_k=[n-k+1]w_{k-1},\quad
K^{\pm 1}w_k=v^{\pm(2k-n)}w_k
\end{equation}
(under the convention $w_{-1}=w_{n+1}=0$). Define linear operators
$E$, $F$, $K^{\pm 1}$ on $\hat{\mathcal{V}}_n$ using the following formulae:
\begin{equation}\label{eq1197}
Ew_k=[k+1]w_{k+1},\quad Fw_k=-[n-k+1]w_{k-1},\quad
K^{\pm 1}w_k=-v^{\pm(2k-n)}w_k.
\end{equation}
By a direct calculation we get the following:

\begin{lemma}\label{lem1103}
Formulae \eqref{eq1191} and \eqref{eq1197}  define on both
$\mathcal{V}_n$ and $\hat{\mathcal{V}}_n$ the structure of a
simple $U_v(\mathfrak{sl}_2)$-module of dimension $n+1$.
\end{lemma}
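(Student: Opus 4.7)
The plan is to verify the four defining relations of $U_v(\mathfrak{sl}_2)$ directly on the given bases, and then deduce simplicity from the fact that $K$ acts with pairwise distinct eigenvalues. Everything is a direct computation; the only nontrivial ingredient is a routine identity among quantum integers.

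First I check the easy relations on $\mathcal{V}_n$. The identities $KK^{-1}=K^{-1}K=1$ are immediate since $K$ and $K^{-1}$ act diagonally with reciprocal eigenvalues, and $KEw_k = [k+1]v^{2(k+1)-n}w_{k+1} = v^2 EKw_k$, with the analogous calculation for $F$. The same verifications go through on $\hat{\mathcal{V}}_n$: the two extra sign factors coming from $K$ on successive weight vectors cancel, and the additional sign on $F$ is irrelevant for these monomial relations.

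The main computation is the commutator $EF-FE = (K-K^{-1})/(v-v^{-1})$. On $\mathcal{V}_n$ one finds
\begin{displaymath}
(EF-FE)w_k = \bigl([n-k+1][k] - [k+1][n-k]\bigr)w_k,
\end{displaymath}
so the relation reduces to the identity
\begin{displaymath}
[k+1][n-k] - [k][n-k+1] = [n-2k]
\end{displaymath}
in $\mathbb{C}(v)$. Expanding each $[m]$ as $(v^m-v^{-m})/(v-v^{-1})$, multiplying out and collecting terms, the left side equals $v^{-1}(v^2-1)(v^{n-2k}-v^{2k-n})/(v-v^{-1})^2$, which collapses to $[n-2k]$ using $v^{-1}(v^2-1) = v-v^{-1}$. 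For $\hat{\mathcal{V}}_n$, the minus sign in the $F$-action multiplies both $EF$ and $FE$ by $-1$, while the minus signs on $K^{\pm 1}$ cancel in the difference $K-K^{-1}$, so the relation takes exactly the same numerical form up to an overall change of sign on both sides.

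For the final statements: the dimension equals $n+1$ by construction. For simplicity, $K$ acts diagonally on $\mathcal{V}_n$ with eigenvalues $v^{-n}, v^{-n+2}, \dots, v^n$, which are pairwise distinct in $\mathbb{C}(v)$, so any nonzero submodule contains some $w_k$. The coefficients $[k+1]$ for $0 \le k < n$ and $[n-k+1]$ for $0 < k \le n$ are nonzero elements of $\mathbb{C}(v)$, hence repeated application of $E$ and $F$ produces every $w_j$ up to a nonzero scalar, showing that the submodule is all of $\mathcal{V}_n$. The same argument applies to $\hat{\mathcal{V}}_n$, where the eigenvalues of $K$ are merely negated but remain pairwise distinct. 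The only mild obstacle is the bookkeeping for the quantum-integer identity above; everything else amounts to comparing two diagonal operators on the explicit weight basis.
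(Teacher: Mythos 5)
Your proof is correct, and it is exactly the ``direct calculation'' that the paper invokes without writing out: verification of the monomial relations on the weight basis, reduction of the commutator relation to the quantum-integer identity $[k+1][n-k]-[k][n-k+1]=[n-2k]$, and simplicity via the pairwise distinct $K$-eigenvalues together with the nonvanishing of the coefficients $[k+1]$ and $[n-k+1]$. Nothing further is needed.
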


For $n=1$ the module $\mathcal{V}_1$ is called the {\em natural}
$U_v(\mathfrak{sl}_2)$-module.
\index{natural module}

\begin{theorem}[\cite{Ja3}]\label{thm1104}
\begin{enumerate}[$($a$)$]
\item\label{thm1104.1} For every $n\in\mathbb{N}_0$ there is a unique
simple $U_v(\mathfrak{sl}_2)$-module of dimension $n+1$, namely
$\mathcal{V}_n$, on which 
$K$ acts semi-simply with powers of $v$ as eigenvalues.
\item\label{thm1104.15} For every $n\in\mathbb{N}_0$ there is a unique
simple $U_v(\mathfrak{sl}_2)$-module of dimension $n+1$, namely
$\hat{\mathcal{V}}_n$,  on which 
$K$ acts semi-simply with minus powers of $v$ as eigenvalues.
\item\label{thm1104.16} Every simple 
$U_v(\mathfrak{sl}_2)$-module of dimension $n+1$ is isomorphic
to either $\mathcal{V}_n$ or $\hat{\mathcal{V}}_n$ and these two
modules are not isomorphic.
\item\label{thm1104.2} Every finite-dimensional  
$U_v(\mathfrak{sl}_2)$-module on which $K$ acts semi-simply is
completely reducible.
\end{enumerate}
\end{theorem}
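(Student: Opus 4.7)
The proof splits naturally into parts (a)--(c), which I would handle together via the classical highest-weight argument adapted to the quantum setting, and part (d), which is a separate Casimir-based complete reducibility argument.

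For the structural parts, let $M$ be a finite-dimensional simple $U_v(\mathfrak{sl}_2)$-module. Picking any $K$-eigenvector (over an algebraic closure of $\mathbb{C}(v)$, though the analysis below forces the eigenvalue into $\mathbb{C}(v)^{\times}$) and iterating the relation $KE = v^2 EK$, finite-dimensionality guarantees a highest-weight vector $v_0$ with $E v_0 = 0$ and $K v_0 = \lambda v_0$. Setting $v_k := F^k v_0$, a routine induction on $k$ based on the commutation identity
\[
E F^k = F^k E + [k]\, F^{k-1}\, \frac{v^{1-k}K - v^{k-1}K^{-1}}{v - v^{-1}}
\]
yields $K v_k = v^{-2k}\lambda\, v_k$ and
\[
E v_k = [k]\, \frac{v^{1-k}\lambda - v^{k-1}\lambda^{-1}}{v - v^{-1}}\, v_{k-1}.
\]
Letting $n$ be minimal with $v_{n+1} = 0$, the equation $E v_{n+1} = 0$ combined with $v_n \ne 0$ forces $\lambda^2 = v^{2n}$, i.e., $\lambda = \varepsilon v^n$ with $\varepsilon \in \{\pm 1\}$. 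The span of $v_0, \ldots, v_n$ is an $(n+1)$-dimensional $U_v(\mathfrak{sl}_2)$-submodule, hence equals $M$ by simplicity; comparing the explicit action with \eqref{eq1191} and \eqref{eq1197} identifies $M$ with $\mathcal{V}_n$ when $\varepsilon = 1$ and with $\hat{\mathcal{V}}_n$ when $\varepsilon = -1$. The $K$-eigenvalues of these two modules are disjoint, so they are non-isomorphic; combining with Lemma~\ref{lem1103} settles (a), (b), (c).

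For (d), the key computational input is the quantum Casimir $C$ of Proposition~\ref{prop1102}(c). Direct evaluation on the highest-weight vector of $\mathcal{V}_n$ (where the $FE$ contribution vanishes) shows $C$ acts on $\mathcal{V}_n$ by the scalar $c_n := (v^{n+1} + v^{-n-1})/(v - v^{-1})^2$, and on $\hat{\mathcal{V}}_n$ by $-c_n$; transcendence of $v$ makes all of these values pairwise distinct. Given a finite-dimensional $K$-semisimple $M$, I decompose $M = \bigoplus_c M_c$ into generalized $C$-eigenspaces: each $M_c$ is a submodule (since $C$ is central), inherits $K$-semisimplicity, and every simple subquotient of $M_c$ is isomorphic to a single fixed $V \in \{\mathcal{V}_n, \hat{\mathcal{V}}_n\}$.

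The main obstacle is upgrading this reduction to complete reducibility of each $M_c$, which I would attack by induction on $\dim M_c$, so that it suffices to prove $\mathrm{Ext}^1(V, V) = 0$ in the category of $K$-semisimple modules. Given a short exact sequence $0 \to V \to M' \to V \to 0$ with $K$-semisimple $M'$, write $\lambda = \varepsilon v^n$ for the highest weight of $V$. The submodule $V \subseteq M'$ is $K$-stable and therefore itself $K$-semisimple, so the projection $M' \twoheadrightarrow M'/V$ restricts to a surjection $M'_{\lambda} \twoheadrightarrow (M'/V)_{\lambda}$; lift a highest-weight vector of the quotient to $v_0 \in M'_{\lambda}$. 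Then $E v_0 \in V$ has $K$-weight $v^2 \lambda$, which does not occur among the weights $\varepsilon v^{-n}, \varepsilon v^{-n+2}, \ldots, \varepsilon v^n$ of $V$, so $E v_0 = 0$. By the highest-weight analysis of (a)--(c), $v_0$ generates a submodule of $M'$ isomorphic to $V$ that projects isomorphically onto the quotient, splitting the extension and completing the proof.
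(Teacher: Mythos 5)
Your argument is correct and is essentially the standard proof from the cited source [Ja3] (highest-weight analysis via the commutator identity for $EF^k$ for parts (a)--(c), and the quantum Casimir plus an $\mathrm{Ext}^1$-vanishing weight argument for (d)); the paper itself states the theorem without proof, deferring entirely to that reference. The only points to make fully explicit are the passage from an eigenvector of $K$ over $\overline{\mathbb{C}(v)}$ back to one over $\mathbb{C}(v)$ (possible because the analysis forces all eigenvalues to be $\pm v^{j}$), and, in the splitting step, that $F^{n+1}v_0=0$ because its $K$-weight $\varepsilon v^{-n-2}$ does not occur in $M'$ — both routine.
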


Let $X,Y$ be two $U_v(\mathfrak{sl}_2)$-modules. Using the 
comultiplication $\Delta$ we can turn $X\otimes Y$ into a
$U_v(\mathfrak{sl}_2)$-module. As usual, for $n\in\mathbb{N}_0$
we denote by $X^{\otimes n}$ the product 
$\underbrace{X\otimes X\otimes\cdots\otimes X}_{n\text{ factors}}$.

\begin{proposition}\label{prop1105}
\begin{enumerate}[$($a$)$]
\item\label{prop1105.1}
For $n\in\mathbb{N}_0$ we have:
\begin{displaymath}
\mathcal{V}_1\otimes \mathcal{V}_n\cong
\begin{cases}
\mathcal{V}_1, & n=0;\\
\mathcal{V}_{n-1}\oplus \mathcal{V}_{n+1}, & \text{otherwise}. 
\end{cases}
\end{displaymath}
\item\label{prop1105.2}
For every $n\in\mathbb{N}_0$ the module $\mathcal{V}_n$
appears in $\mathcal{V}_1^{\otimes n}$ with multiplicity one
and all other irreducible direct summands of $\mathcal{V}_1^{\otimes n}$
are isomorphic to  $\mathcal{V}_k$ for some $k<n$.
\end{enumerate}
\end{proposition}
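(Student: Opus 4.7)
The plan is to prove (a) by producing two explicit $E$-singular vectors in $\mathcal{V}_1\otimes\mathcal{V}_n$ and then to deduce (b) by induction on $n$ from (a).

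For (a), I would first argue complete reducibility and the absence of $\hat{\mathcal{V}}_k$-summands. Since $\Delta(K)=K\otimes K$ and $K$ acts semisimply with eigenvalues of the form $v^a$ on both $\mathcal{V}_1$ and $\mathcal{V}_n$, it acts semisimply on $\mathcal{V}_1\otimes\mathcal{V}_n$ with eigenvalues again of the form $v^a$, $a\in\mathbb{Z}$. Theorem~\ref{thm1104}\eqref{thm1104.2} makes the module completely reducible, and parts \eqref{thm1104.1}--\eqref{thm1104.16} force every simple summand to be some $\mathcal{V}_k$. The case $n=0$ is then immediate from a dimension count; assume $n\geq 1$. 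Using $\Delta(E)=1\otimes E+E\otimes K$, a direct computation gives
\begin{displaymath}
E(w_1\otimes w_n)=w_1\otimes E w_n+Ew_1\otimes Kw_n=0,
\end{displaymath}
while $K(w_1\otimes w_n)=v^{n+1}(w_1\otimes w_n)$, so $w_1\otimes w_n$ generates a copy of $\mathcal{V}_{n+1}$. Next I would look for a singular vector of weight $v^{n-1}$ of the form $u=w_0\otimes w_n+c\,w_1\otimes w_{n-1}$; both summands have $K$-eigenvalue $v^{n-1}$, and
\begin{displaymath}
Eu=v^n(w_1\otimes w_n)+c[n](w_1\otimes w_n),
\end{displaymath}
which vanishes for $c=-v^n/[n]$. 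This $u$ generates a copy of $\mathcal{V}_{n-1}$. A dimension count ($\dim \mathcal{V}_{n+1}+\dim\mathcal{V}_{n-1}=(n+2)+n=2(n+1)=\dim(\mathcal{V}_1\otimes\mathcal{V}_n)$) finishes (a).

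For (b), I would proceed by induction on $n$. The cases $n=0,1$ are trivial. Assume the claim for $n-1$ and write
\begin{displaymath}
\mathcal{V}_1^{\otimes n}\cong \mathcal{V}_1\otimes \mathcal{V}_1^{\otimes(n-1)}\cong(\mathcal{V}_1\otimes\mathcal{V}_{n-1})\oplus\bigoplus_i(\mathcal{V}_1\otimes\mathcal{V}_{k_i}),
\end{displaymath}
where by the inductive hypothesis each $k_i<n-1$. By (a) the first factor decomposes as $\mathcal{V}_n\oplus\mathcal{V}_{n-2}$, contributing a unique copy of $\mathcal{V}_n$; each $\mathcal{V}_1\otimes\mathcal{V}_{k_i}$ equals either $\mathcal{V}_{k_i+1}\oplus\mathcal{V}_{k_i-1}$ or $\mathcal{V}_1$ (if $k_i=0$), all summands being $\mathcal{V}_j$ with $j\leq k_i+1\leq n-1<n$. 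Hence $\mathcal{V}_n$ occurs with multiplicity one and all remaining summands are indexed by integers strictly smaller than $n$.

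The only mildly delicate step is the setup for (a): making sure that semisimplicity of the $K$-action on the tensor product together with Theorem~\ref{thm1104} really rules out $\hat{\mathcal{V}}_k$-summands. Once this is secured, the singular vector computation and dimension count do the rest, and (b) is a routine induction.
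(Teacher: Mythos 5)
Your proof is correct, and it is the standard argument (the paper states this proposition without proof, deferring to the literature): establish complete reducibility and the absence of $\hat{\mathcal{V}}_k$-summands via the $K$-eigenvalues, exhibit the two $E$-singular vectors $w_1\otimes w_n$ and $w_0\otimes w_n-\frac{v^n}{[n]}\,w_1\otimes w_{n-1}$, count dimensions, and induct for part (b). All the computations with $\Delta(E)=1\otimes E+E\otimes K$ check out, so there is nothing to add.
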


\subsection{Categorification of $\mathcal{V}_1^{\otimes n}$}\label{s11.3}

Let $n\in\mathbb{N}_0$. Set $\mathfrak{gl}_0:=0$ (then
$U(\mathfrak{gl}_0)=\mathbb{C}$). Consider the algebra
$\mathfrak{gl}_n$. For $i=0,1,2,\dots,n$ denote by 
$\lambda_i$ the dominant integral weight such that $\lambda_i+\rho=
(1,1,\dots,1,0,0,\dots,0)$, where the number of $1$'s equals $i$. 
The weight $\lambda_i$ is singular and $W_{\lambda_i}\cong
\mathbb{S}_{i}\oplus \mathbb{S}_{n-i}$ is the parabolic subgroup of
$\mathbb{S}_{n}$ corresponding to permutations of the first $i$
and the last $n-i$ elements. Consider the categories
\begin{displaymath}
\mathfrak{V}_n:=\bigoplus_{i=0}^n\mathcal{O}_{\lambda_i}\quad\text{ and }
\quad\mathfrak{V}_n^{\mathbb{Z}}:=
\bigoplus_{i=0}^n\mathcal{O}_{\lambda_i}^{\mathbb{Z}}.
\end{displaymath}
Directly from the definition we have:

\begin{lemma}\label{lem1106}
For $i=0,1,\dots,n$ the Grothendieck group
$[\mathcal{O}_{\lambda_i}^{\mathbb{Z}}]$ is a free
$\mathbb{Z}[v,v^{-1}]$-module of rank $\binom{n}{i}$.
In particular, $[\mathfrak{V}_n^{\mathbb{Z}}]$ is a free
$\mathbb{Z}[v,v^{-1}]$-module of rank $2^n$.
\end{lemma}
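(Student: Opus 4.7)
The plan is to compute each $[\mathcal{O}_{\lambda_i}^{\mathbb{Z}}]$ separately as a $\mathbb{Z}[v,v^{-1}]$-module by exhibiting an explicit basis given by classes of standard graded lifts of simple modules, and then sum the ranks over $i$.

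First I would count the simple objects in the ungraded block $\mathcal{O}_{\lambda_i}$. By the discussion of integral singular blocks in Subsection~\ref{s4.3}, simple objects in $\mathcal{O}_{\lambda_i}$ are in bijection with the set of longest coset representatives of $W/W_{\lambda_i}$. Since $W\cong \mathbb{S}_n$ and $W_{\lambda_i}\cong \mathbb{S}_i\oplus \mathbb{S}_{n-i}$, this set has cardinality $\frac{n!}{i!(n-i)!}=\binom{n}{i}$. Denote by $\{L(w\cdot\lambda_i):w\in X_i\}$ a complete irredundant list of simples in $\mathcal{O}_{\lambda_i}$, where $X_i$ denotes the set of longest coset representatives.

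Next I would pass to the graded setting. By Theorem~\ref{thm404} and the construction in Subsection~\ref{s7.3}, $\mathcal{O}_{\lambda_i}^{\mathbb{Z}}=B_{\lambda_i}\text{-}\mathrm{gmod}$ where $B_{\lambda_i}$ carries a positive grading (Corollary~\ref{cor737}), and every simple module $L(w\cdot\lambda_i)$ admits a standard graded lift (unique up to isomorphism and shift). The isomorphism classes of simple objects in $\mathcal{O}_{\lambda_i}^{\mathbb{Z}}$ are therefore exhausted by $L(w\cdot\lambda_i)\langle k\rangle$ for $w\in X_i$ and $k\in\mathbb{Z}$, and distinct pairs $(w,k)$ yield non-isomorphic simples. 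By the Jordan--H\"older theorem applied in the abelian category $\mathcal{O}_{\lambda_i}^{\mathbb{Z}}$ (every graded module over a finite-dimensional graded algebra has finite length, cf.\ Example~\ref{exm2}), the group $[\mathcal{O}_{\lambda_i}^{\mathbb{Z}}]$ is the free abelian group on these isoclasses.

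Now I would invoke the $\mathbb{Z}[v,v^{-1}]$-action from Subsection~\ref{s1.5}: $v^{k}[M]=[M\langle -k\rangle]$. Under this action, the classes $\{[L(w\cdot\lambda_i)]:w\in X_i\}$ form a free $\mathbb{Z}[v,v^{-1}]$-basis of $[\mathcal{O}_{\lambda_i}^{\mathbb{Z}}]$, so this group is free of rank $|X_i|=\binom{n}{i}$. Summing over $i$ and using $\sum_{i=0}^{n}\binom{n}{i}=2^{n}$ gives the second assertion.

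The verification is essentially formal once the foundational statements (finiteness of block algebras, existence of graded lifts of simples, freeness of the $\langle 1\rangle$-action on isoclasses) are in place, so there is no real obstacle; the only point requiring care is confirming that the grading shift $\langle 1\rangle$ acts freely on the set of isomorphism classes of simples, which holds because $B_{\lambda_i}$ is positively graded with semisimple degree-zero part and each simple is concentrated in a single degree by the standard convention.
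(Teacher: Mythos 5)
Your proof is correct and follows the same route the paper intends: the paper presents this lemma as following ``directly from the definition,'' and your argument simply spells out the standard details — simples in the singular block $\mathcal{O}_{\lambda_i}$ are indexed by cosets $W/W_{\lambda_i}$ with $W_{\lambda_i}\cong\mathbb{S}_i\oplus\mathbb{S}_{n-i}$, giving $\binom{n}{i}$ of them, and the graded Grothendieck group is free over $\mathbb{Z}[v,v^{-1}]$ on their graded lifts since the shift acts freely on graded simples. The only cosmetic remark is that positivity of the grading on the \emph{singular} block algebra $B_{\lambda_i}$ is not literally covered by Corollary~\ref{cor737} (stated for regular blocks), but it follows from the construction in Subsection~\ref{s7.3} together with Theorem~\ref{thm1011}, so this does not affect the argument.
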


Let $V:=\mathbb{C}^n$ be the natural representation of $\mathfrak{gl}_n$.
For $i=0,1,\dots,n-1$ denote by $\mathrm{E}_i$ the projective functor
from $\mathcal{O}_{\lambda_i}$ to $\mathcal{O}_{\lambda_{i+1}}$
given by tensoring with $V$ and then projecting onto
$\mathcal{O}_{\lambda_{i+1}}$. Define the endofunctors
$\mathrm{E}$ and $\mathrm{F}$ of $\mathfrak{V}_n$ as follows:
\begin{displaymath}
\mathrm{E}:=\bigoplus_{i=0}^{n-1}\mathrm{E}_i,\qquad
\mathrm{F}:=\bigoplus_{i=0}^{n-1}\mathrm{E}_i^* 
\end{displaymath}
(our convention is that $\mathrm{E}\mathcal{O}_{\lambda_{n}}=0$,
$\mathrm{F}\mathcal{O}_{\lambda_{0}}=0$ and $\mathrm{E}_n=0$). 
Set $\mathrm{F}_{i+1}:=\mathrm{E}_i^*$. By a direct calculation on the 
level of the Grothendieck group one easily shows the following:

\begin{proposition}\label{prop1107}
The action of $\mathrm{E}$ and $\mathrm{F}$ on $\mathfrak{V}_n$ 
gives a weak categorification of the $n$-th tensor power of the
natural $\mathfrak{gl}_2$-module.
\end{proposition}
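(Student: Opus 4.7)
The plan is to build an explicit isomorphism $\varphi\colon [\mathfrak{V}_n]^{\mathbb{C}}\xrightarrow{\sim}\mathcal{V}_1^{\otimes n}$ and then verify that it intertwines $[\mathrm{E}],[\mathrm{F}]$ with the operators $E,F$ coming from the coproduct on $U(\mathfrak{gl}_2)$. As a basis on the categorical side I would take the classes of Verma modules: for each $i\in\{0,1,\ldots,n\}$, the block $\mathcal{O}_{\lambda_i}$ contains one Verma module $\Delta(\mu)$ for each $\mu$ in the dot-orbit $W\cdot\lambda_i$, and this orbit consists precisely of all permutations of the tuple $(1,\ldots,1,0,\ldots,0)$ with $i$ ones (via $\mu\mapsto\mu+\rho$). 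Sending $[\Delta(\mu)]\mapsto w_{(\mu+\rho)_1}\otimes\cdots\otimes w_{(\mu+\rho)_n}$ therefore gives a bijection between the Verma basis of $[\mathfrak{V}_n]$ and the standard tensor basis of $\mathcal{V}_1^{\otimes n}$; extending $\mathbb{C}$-linearly, this is a vector space isomorphism of the correct total dimension $2^n$ by Lemma~\ref{lem1106}.

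The key computation is the classical translation identity: for the natural module $V=\mathbb{C}^n$ with weights $\epsilon_1,\ldots,\epsilon_n$, the tensor $V\otimes_{\mathbb{C}}\Delta(\mu)$ admits a standard filtration whose subquotients are exactly $\Delta(\mu+\epsilon_j)$, $j=1,\ldots,n$, each with multiplicity one. Projecting onto $\mathcal{O}_{\lambda_{i+1}}$ retains precisely those $j$ for which $\mu+\epsilon_j$ lies in $W\cdot\lambda_{i+1}$, and since both relevant orbits consist of $0/1$-tuples after shifting by $\rho$, this occurs iff $(\mu+\rho)_j=0$. Consequently
\[
[\mathrm{E}\,\Delta(\mu)]=\sum_{j:(\mu+\rho)_j=0}[\Delta(\mu+\epsilon_j)].
\]
Under $\varphi$ this matches exactly the action of $E$ on $\mathcal{V}_1^{\otimes n}$ via the primitive coproduct $\Delta(E)=E\otimes 1+1\otimes E$, using $Ew_0=w_1$ and $Ew_1=0$ on the natural module. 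The analogous calculation for $V^*\otimes_{\mathbb{C}}\Delta(\mu)$ (weights $-\epsilon_j$) handles $\mathrm{F}$ against $F$, with $Fw_1=w_0$ and $Fw_0=0$.

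The main subtlety is ensuring that no correction from the dot-action intervenes when passing from $\mu$ to $\mu+\epsilon_j$, i.e. that $\mu+\epsilon_j$ belongs to $W\cdot\lambda_{i+1}$ precisely when $(\mu+\rho)_j=0$; this is a short check from the description of the $\lambda_i$-orbits as $0/1$-tuples. The exactness of the block projection is standard, since $\mathcal{O}$ splits as a direct sum of blocks. Once the Grothendieck-level equalities for $[\mathrm{E}]$ and $[\mathrm{F}]$ are established, the categorification is na\"{\i}ve in the sense of Definition~\ref{def202}; to upgrade to weak categorification in the sense of Definition~\ref{def207} one interprets $[E,F]=H$ functorially as a decomposition of $\mathrm{E}\circ\mathrm{F}$ versus $\mathrm{F}\circ\mathrm{E}$ with identity summands on each block, which follows from biadjointness of $(\mathrm{E},\mathrm{F})$ together with the Clebsch--Gordan decomposition of Proposition~\ref{prop1105}\eqref{prop1105.1} applied to projective functors acting on Vermas.
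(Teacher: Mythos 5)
Your proof is correct and is exactly the ``direct calculation on the level of the Grothendieck group'' that the paper invokes without detail: Verma classes indexed by $0$-$1$ sequences via $\mu\mapsto\mu+\rho$, the Verma flag of $V\otimes_{\mathbb{C}}\Delta(\mu)$ with subquotients $\Delta(\mu+\epsilon_j)$, and block projection selecting the positions with $(\mu+\rho)_j=0$. The only slightly loose point is the final upgrade to weak categorification: the lift of $[E,F]=H$ from a Grothendieck-group identity to an isomorphism of functors rests on the Bernstein--Gelfand classification of projective functors (an indecomposable projective functor is determined by its value on the dominant Verma module, cf.\ Theorem~\ref{thm502}), not on biadjointness per se; this is the mechanism the paper uses for Theorem~\ref{thm1108}.
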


We would like, however, to upgrade this at least to the level of the
quantum algebra. For this we have to carefully define 
graded lifts for our functors $\mathrm{E}_i$. For $i=0,1,\dots,n-1$
consider the integral dominant weights 
$\mu_i=(2,2,\dots,2,1,0,0,\dots,0)-\rho$ and
$\nu_i=(2,2,\dots,2,0,0,0,\dots,0)-\rho$, 
where in both cases the number of $2$'s equals $i$. Then the functor
$\mathrm{E}_i$ can be realized as the composition of the following
projective functors:
\begin{equation}\label{eq1193}
\xymatrix{ 
\mathcal{O}_{\lambda_i}\ar[rr]^{\theta_{\lambda_i,\nu_i}}&&
\mathcal{O}_{\nu_i}\ar[rr]^{\theta_{\nu_i,s_{i+1}\cdot \mu_i}}&&
\mathcal{O}_{\mu_i}\ar[rr]^{\theta_{\mu_i,\nu_{i+1}}}&&
\mathcal{O}_{\nu_{i+1}}\ar[rr]^{\theta_{\nu_{i+1},\lambda_{i+1}}}&&
\mathcal{O}_{\lambda_{i+1}}
}
\end{equation}
Here the first and the last functors are equivalences, the
second functor is translation out of the wall and the third functor
is translation onto the wall (for $i=n-1$ the second functor
disappears and $\mathrm{E}_i$ is just translation to the wall).

Consider now usual graded lifts of all involved categories.
Let equivalences in \eqref{eq1193} be lifted as equivalences
of degree zero. Use Theorem~\ref{thm705} to lift translation
to and out of the wall such that they correspond to restriction 
and induction for the associated algebras of invariants in the
coinvariants. This determines a graded lift of $\mathrm{E}_i$,
which we will denote by the same symbol, abusing notation.
This defines graded lifts for functors $\mathrm{E}$ and $\mathrm{F}$
from the above, which we again will denote by the same symbols.
Finally, define $\mathrm{K}_i$ as the endofunctor
$\langle n-2i\rangle$ of $\mathcal{O}_{\lambda_i}^{\mathbb{Z}}$
and set 
\begin{displaymath}
\mathrm{K}:=\bigoplus_{i=0}^{n}\mathrm{K}_i. 
\end{displaymath}

\begin{theorem}[Categorification of $\mathcal{V}_1^{\otimes n}$,
\cite{BFK,St,FKS}]
\label{thm1108}
\begin{enumerate}[$($a$)$]
\item\label{thm1108.1} The functors $\mathrm{E}$, $\mathrm{F}$
and $\mathrm{K}$ satisfy the relations
\begin{displaymath}
\mathrm{K}\mathrm{E}\cong \mathrm{E}\mathrm{K}\langle -2\rangle, \quad
\mathrm{K}\mathrm{F}\cong \mathrm{F}\mathrm{K}\langle 2\rangle, \quad
\mathrm{K}\mathrm{K}^{-1}\cong\mathrm{K}^{-1}\mathrm{K}\cong\mathrm{Id}. 
\end{displaymath}
\item\label{thm1108.2} For $i=0,1,\dots,n$ there are isomorphisms
\begin{displaymath}
\mathrm{E}_{i-1}\mathrm{F}_i\oplus
\bigoplus_{r=1}^{n-i-1}\mathrm{Id}\langle 2r+2i+1-n\rangle\cong
\mathrm{F}_{i+1}\mathrm{E}_i\oplus
\bigoplus_{r=1}^{i-1}\mathrm{Id}\langle n+2r+1-2i\rangle.
\end{displaymath}
\item\label{thm1108.4} In the Grothendieck group we have the
equality
\begin{displaymath}
(v-v^{-1})([\mathrm{E}_{i-1}][\mathrm{F}_i]-
[\mathrm{F}_{i+1}][\mathrm{E}_i])=[\mathrm{K}_i]-[\mathrm{K}_i^{-1}] 
\end{displaymath}
\item\label{thm1108.3} The functor
$\mathrm{E}_{i-1}\mathrm{F}_i$ is a summand of 
$\mathrm{F}_{i+1}\mathrm{E}_i$ if $n-2i>0$, the functor
$\mathrm{F}_{i+1}\mathrm{E}_i$ is a summand of 
$\mathrm{E}_{i-1}\mathrm{F}_i$ if $n-2i<0$.
\item\label{thm1108.5} There is an isomorphism of 
$U_v(\mathfrak{sl}_2)$-modules, 
$[\mathfrak{V}_n^{\mathbb{Z}}]\cong\mathcal{V}_1^{\otimes n}$, where the
action of $U_v(\mathfrak{sl}_2)$ on the left hand side is given by
the exact functors $\mathrm{E}$, $\mathrm{F}$ and $\mathrm{K}^{\pm 1}$. 
\end{enumerate}
\end{theorem}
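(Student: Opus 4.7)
The plan is to establish the five assertions in order, since each feeds into the next, with the core calculation being the isomorphism in (b).

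First I would verify (a). Since $\mathrm{K}_i$ is by definition the grading shift functor $\langle n-2i\rangle$ on $\mathcal{O}_{\lambda_i}^{\mathbb{Z}}$ and the grading shift is a central endofunctor of the category of graded vector spaces, the only content in (a) is that $\mathrm{E}$ raises the index $i$ by one (so the associated shift jumps from $\langle n-2i\rangle$ to $\langle n-2i-2\rangle$, accounting for the extra $\langle -2\rangle$) while $\mathrm{F}$ does the opposite. Invertibility of $\mathrm{K}$ is automatic. This is a direct unpacking of definitions.

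Next I would tackle the central structural statement (b). Both $\mathrm{E}_{i-1}\mathrm{F}_i$ and $\mathrm{F}_{i+1}\mathrm{E}_i$ are projective endofunctors of $\mathcal{O}_{\lambda_i}^{\mathbb{Z}}$, so by Theorem~\ref{thm502} each decomposes, once the grading is forgotten, into a direct sum of indecomposable projective functors indexed by $W_{\lambda_i}$-antidominant weights in $W\cdot\lambda_i$. Using the factorisation \eqref{eq1193}, one sees that each of the two composites consists of translating $\mathcal{O}_{\lambda_i}$ out of one wall and back onto the same wall (with a detour through a less singular block), so the only indecomposable projective endofunctors of $\mathcal{O}_{\lambda_i}^{\mathbb{Z}}$ that can appear are the identity functor with various grading shifts, together with a common summand that I will isolate below. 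I would then apply both functors to the dominant Verma module $\Delta(\lambda_i)$ and use Theorem~\ref{thm705} together with the explicit description of the coinvariant algebra $C_{\lambda_i}$ as a module over $C_{\nu_i}$ and $C_{\mu_i}$ (free of rank determined by the index of the corresponding parabolic subgroups) to identify the graded multiplicities of each grading shift of $\mathrm{Id}$. The common summand that makes (b) into an equality (rather than just an equality up to identity summands) is $\theta_w$ for the indecomposable projective functor $w$ that sits on both sides and has no identity contribution; comparing both sides via Soergel's combinatorial functor $\mathbb{V}$ on the level of $C_{\lambda_i}$-bimodules reduces this to a direct computation in the coinvariant algebra. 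This is the main obstacle, since the bookkeeping of grading shifts from translation to/from the two walls $\nu_i$ and $\mu_i$ must be tracked carefully; I would use Theorem~\ref{thm705}(a)-(c) and the fact that Soergel bimodule tensor products over $C^s_{\lambda}$ are free of rank $2$ as one-sided modules, yielding grading shifts by $\pm 1$ at each step, to get the claimed ranges $r=1,\dots,n-i-1$ and $r=1,\dots,i-1$.

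Assertion (c) is then immediate: pass to the Grothendieck group in (b) and cancel the common term, using that $[\mathrm{Id}\langle j\rangle]=v^{-j}[\mathrm{Id}]$ and that the two geometric-series-like sums telescope into $\frac{v^{n-2i}-v^{-(n-2i)}}{v-v^{-1}}=[\mathrm{K}_i]-[\mathrm{K}_i^{-1}]$ after multiplication by $v-v^{-1}$. Assertion (d) falls out of (b) by inspecting which side of the isomorphism carries strictly more identity-functor summands depending on the sign of $n-2i$. For (e), I would establish the $\mathbb{Z}[v,v^{-1}]$-linear isomorphism by comparing bases: map the weight basis $w_{k_1}\otimes\cdots\otimes w_{k_n}$ of $\mathcal{V}_1^{\otimes n}$ to the classes of parabolic Verma modules (or, equivalently, to standard basis elements indexed by length-$i$ $01$-sequences inside $W(\mathfrak{p}_i)$ where $\mathfrak{p}_i$ has Levi factor $\mathfrak{gl}_i\oplus\mathfrak{gl}_{n-i}$), and use Lemma~\ref{lem731} together with the explicit action formulas \eqref{eq1191} to verify that the actions of $\mathrm{E}$, $\mathrm{F}$, $\mathrm{K}^{\pm 1}$ coincide with those of $E$, $F$, $K^{\pm 1}$. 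Since parts (a), (c) (after tensoring with $\mathbb{C}(v)$) give the defining relations of $U_v(\mathfrak{sl}_2)$, the Grothendieck group is a genuine $U_v(\mathfrak{sl}_2)$-module, and Lemma~\ref{lem1106} guarantees that both sides have the same rank $2^n$, so matching basis actions gives the isomorphism.
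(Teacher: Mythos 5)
Your proposal is correct and follows exactly the route the paper indicates in its one-line idea of proof: use the classification of projective functors, decompose the composites $\mathrm{E}_{i-1}\mathrm{F}_i$ and $\mathrm{F}_{i+1}\mathrm{E}_i$ via Soergel's combinatorial functor and the coinvariant algebra, and verify all identities on the level of the Grothendieck group. The only slip is in (e): the standard basis of $[\mathfrak{V}_n^{\mathbb{Z}}]$ is given by Verma modules in the singular blocks $\mathcal{O}_{\lambda_i}$ (indexed by cosets $W/W_{\lambda_i}$, equivalently by $0$-$1$ sequences), not by parabolic Verma modules --- the parabolic picture is the Koszul dual categorification of $\hat{\mathcal{V}}_1^{\otimes n}$ from Subsection~\ref{s11.5} --- though your parenthetical indexing by $0$-$1$ sequences is the right combinatorics and the argument goes through unchanged.
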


\begin{proof}[Idea of the proof.]
Use classification of projective functors and check all
equalities on the level of the Grothendieck group.
\end{proof}

From Theorem~\ref{thm1108} we see that the module $\mathcal{V}_1^{\otimes n}$
comes equipped with the following natural bases:
\begin{itemize}
\item the {\em standard basis} given by classes of Verma modules;
\item the {\em twisted canonical basis} given by classes of indecomposable
projective modules;
\index{twisted canonical basis}
\item the {\em dual canonical basis} given by classes of simple modules;
\index{dual canonical basis}
\item the {\em canonical basis} given by classes of indecomposable
tilting modules.
\index{canonical basis}
\end{itemize}
The canonical basis can be defined, similarly to Kazhdan-Lusztig
basis, as the set of self-dual elements with respect to some 
anti-involution. The dual of the canonical basis is the dual
in the sense of a certain bilinear form, which is categorified 
via the usual $\mathrm{Ext}$-form twisted by $\mathrm{T}_{w_o}$.

\begin{example}\label{exm1114}
{\rm 
In the case $n=0$ we have $\mathcal{O}_{\lambda_0}=
\mathbb{C}\text{-}\mathrm{mod}$ and
both $\mathrm{F}$ and $\mathrm{E}$ are zero functors.
The functor $\mathrm{K}$ is just the identity.
} 
\end{example}

\begin{example}\label{exm1111}
{\rm 
In the case $n=1$ we have $\mathcal{O}_{\lambda_0}=
\mathcal{O}_{\lambda_1}=\mathbb{C}\text{-}\mathrm{mod}$ and
both $\mathrm{F}$ and $\mathrm{E}$ are the identity functors
between the two parts of $\mathfrak{V}_1$.
The functor $\mathrm{K}$ acts as $\langle -1\rangle$ on the
part annihilated by $\mathrm{E}$ and as $\langle 1\rangle$
on the other part.
} 
\end{example}

\begin{example}\label{exm1112}
{\rm 
In the case $n=2$ we have $\mathcal{O}_{\lambda_0}=
\mathcal{O}_{\lambda_2}=\mathbb{C}\text{-}\mathrm{mod}$ and
$\mathcal{O}_{\lambda_1}$ is the category of modules over the
algebra from Subsection~\ref{s4.7}. The functors $\mathrm{E}_0$
and $\mathrm{E}_1$ are translations out of the wall and onto
the wall, respectively.
} 
\end{example}

\subsection{Categorification of $\mathcal{V}_n$}\label{s11.4}

The weight spaces of the categorified finite dimensional 
$U_v(\mathfrak{sl}_2)$-module $\mathcal{V}_1^{\otimes n}$
are certain singular blocks of the category $\mathcal{O}$.
The action of $U_v(\mathfrak{sl}_2)$ on this categorified
picture is given by projective functors. Every singular
block of $\mathcal{O}$ contains a unique (up to isomorphism)
indecomposable projective injective module. Projective functors
preserve (the additive category of) projective injective modules. 
Therefore we may restrict our $U_v(\mathfrak{sl}_2)$-action
to this category, which also induces an $U_v(\mathfrak{sl}_2)$-action
on the corresponding abelianization. This suggests the following:

Denote by $\hat{\mathfrak{V}_n^{\mathbb{Z}}}$ the full subcategory
of $\mathfrak{V}_n^{\mathbb{Z}}$ consisting of all modules which 
do not have maximal Gelfand-Kirillov dimension (alternatively, whose
Gelfand-Kirillov dimension is strictly smaller than the 
Gelfand-Kirillov dimension of a Verma module). Then 
$\hat{\mathfrak{V}_n^{\mathbb{Z}}}$ is a Serre subcategory of 
$\mathfrak{V}_n^{\mathbb{Z}}$ generated by all simples having
not maximal Gelfand-Kirillov dimension (i.e. simples which are not
Verma modules). 

\begin{theorem}[Categorification of $\mathcal{V}_n$, \cite{FKS}]
\label{thm1109}
\begin{enumerate}[$($a$)$]
\item\label{thm1109.1} The functorial action of 
$U_v(\mathfrak{sl}_2)$ on $\mathfrak{V}_n^{\mathbb{Z}}$
preserves $\hat{\mathfrak{V}_n^{\mathbb{Z}}}$ and induces
a functorial $U_v(\mathfrak{sl}_2)$-action on 
$\mathfrak{V}_n^{\mathbb{Z}}/\hat{\mathfrak{V}_n^{\mathbb{Z}}}$.
\item\label{thm1109.2} The functorial 
$U_v(\mathfrak{sl}_2)$-action on 
$\mathfrak{V}_n^{\mathbb{Z}}/\hat{\mathfrak{V}_n^{\mathbb{Z}}}$
categorifies the $U_v(\mathfrak{sl}_2)$-module $\mathcal{V}_n$.
\end{enumerate}
\end{theorem}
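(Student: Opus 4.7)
The plan for part \eqref{thm1109.1} is straightforward. The functors $\mathrm{E}$ and $\mathrm{F}$ are graded lifts of projective functors, obtained by tensoring with the natural finite-dimensional $\mathfrak{g}$-module and projecting onto an appropriate block. Tensoring with a finite-dimensional module cannot increase the Gelfand-Kirillov dimension of a module, so $\mathrm{E}$ and $\mathrm{F}$ preserve the class of modules whose GK-dimension is strictly smaller than $\dim\mathfrak{n}_-$. The functor $\mathrm{K}$ is a grading shift and hence obviously preserves GK-dimension. Since $\hat{\mathfrak{V}}_n^{\mathbb{Z}}$ is a Serre subcategory and all generators are exact functors that preserve it, the functorial $U_v(\mathfrak{sl}_2)$-action descends to a well-defined functorial action on the Serre quotient, satisfying the same relations as in Theorem~\ref{thm1108}.

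For part \eqref{thm1109.2}, I would first identify the Grothendieck group of the quotient. By definition, $\hat{\mathfrak{V}}_n^{\mathbb{Z}}$ is the Serre subcategory generated by all simple modules of non-maximal GK-dimension, so the simple objects of the quotient are the images of those simple modules of $\mathfrak{V}_n^{\mathbb{Z}}$ which do have maximal GK-dimension. Using compatibility of GK-dimension with translation to the wall, the simple modules of maximal GK-dimension in $\mathcal{O}_{\lambda_i}^{\mathbb{Z}}$ are precisely those obtained from simple modules in the regular block whose left Kazhdan-Lusztig cell corresponds (under the Robinson-Schensted bijection) to the column partition $(1^n)$. This cell is the singleton $\{w_o\}$, so each singular block $\mathcal{O}_{\lambda_i}^{\mathbb{Z}}$ contains a unique such simple, namely the simple antidominant Verma in the orbit $W\cdot\lambda_i$. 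Hence the Grothendieck group of the quotient is a free $\mathbb{Z}[v,v^{-1}]$-module of rank $n+1$.

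The next step is to identify the $U_v(\mathfrak{sl}_2)$-module structure. Since $\mathrm{K}$ acts as a scalar (a power of $v$) on each block, the $\mathrm{K}$-weight decomposition of the quotient Grothendieck group has each weight space of dimension exactly one, matching the weight decomposition of $\mathcal{V}_n$. By Theorem~\ref{thm1108}\eqref{thm1108.5}, the quotient produces a $U_v(\mathfrak{sl}_2)$-quotient of $\mathcal{V}_1^{\otimes n}$ of dimension $n+1$ with this prescribed weight decomposition. By Proposition~\ref{prop1105}\eqref{prop1105.2} the tensor power $\mathcal{V}_1^{\otimes n}$ contains $\mathcal{V}_n$ with multiplicity one and has no summands isomorphic to $\mathcal{V}_k$ with $k>n$; combined with complete reducibility (Theorem~\ref{thm1104}\eqref{thm1104.2}) and the uniqueness of simple $U_v(\mathfrak{sl}_2)$-modules of given dimension with powers-of-$v$ weights (Theorem~\ref{thm1104}\eqref{thm1104.1}), this forces the quotient to be isomorphic to $\mathcal{V}_n$.

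The main obstacle will be the rank-counting step: confirming that each singular block $\mathcal{O}_{\lambda_i}^{\mathbb{Z}}$ contains exactly one simple of maximal Gelfand-Kirillov dimension. This relies on the Joseph-Vogan description of associated varieties of simple highest weight modules via left Kazhdan-Lusztig cells together with the combinatorial fact that the cell of $w_o$ in $\mathbb{S}_n$ is a singleton; one also has to check that translation to the wall is nonzero on $L(w_o\cdot 0)$ for each of the $\lambda_i$ under consideration. Once these inputs are in place, the remaining comparison with $\mathcal{V}_n$ is essentially automatic from the structure of $\mathcal{V}_1^{\otimes n}$ as a $U_v(\mathfrak{sl}_2)$-module.
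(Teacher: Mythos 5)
Your proof is correct and follows essentially the same route as the paper's (which is only a two-line sketch there): Gelfand--Kirillov dimension considerations give part (a), and part (b) is exactly the ``comparison of characters'' with Theorem~\ref{thm1108}, which you carry out in detail by showing each singular block $\mathcal{O}_{\lambda_i}$ contributes exactly one simple of maximal Gelfand--Kirillov dimension (the antidominant simple Verma module, via the singleton two-sided cell of $w_o$), so the quotient is an $(n+1)$-dimensional quotient of $\mathcal{V}_1^{\otimes n}$ with one-dimensional weight spaces and hence isomorphic to $\mathcal{V}_n$. No gaps.
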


\begin{proof}[Idea of the proof.]
Being direct summands of tensoring with finite dimensional modules,
projective functors do not increase Gelfand-Kirillov dimension.
This implies claim \eqref{thm1109.1}. Claim \eqref{thm1109.2}
follows from claim \eqref{thm1109.1} and Theorem~\ref{thm1108}
by comparing characters.
\end{proof}

Note that the weight spaces of $\mathcal{V}_n$ are categorified as
module categories over certain algebras of invariants in
coinvariants. The latter have geometric interpretation as
cohomology algebras of certain flag varieties. Hence all the above
categorification pictures may be interpreted geometrically,
see \cite{FKS} for details.

The categorification of $\mathcal{V}_n$ constructed in 
Theorem~\ref{thm1109} equips $\mathcal{V}_n$ with two 
natural bases:
\begin{itemize}
\item the {\em canonical basis} given by classes of indecomposable
projective modules;
\index{canonical basis}
\item the {\em dual canonical basis} given by classes of simple modules.
\index{dual canonical basis}
\end{itemize}

\begin{example}\label{exm1134}
{\rm 
In the case $n=2$ we have 
$\hat{\mathfrak{V}_2^{\mathbb{Z}}}\subset \mathcal{O}_{\lambda_1}$.
From Example~\ref{exm1112} we already know that $\mathcal{O}_{\lambda_0}=
\mathcal{O}_{\lambda_2}=\mathbb{C}\text{-}\mathrm{mod}$. The category 
$\mathcal{O}_{\lambda_1}/\hat{\mathfrak{V}_n^{\mathbb{Z}}}$
is equivalent to the module category of the algebra 
$D$ of dual numbers. The functors $\mathrm{E}_0$
and $\mathrm{E}_1$ are translations out of the wall and onto
the wall, respectively (see Example~\ref{exm1112}).
} 
\end{example}

In \cite{FKS} one can find how to categorify tensor products
of arbitrary (finite) collections of simple finite-dimensional 
$U_v(\mathfrak{sl}_2)$-modules.

\subsection{Koszul dual picture}\label{s11.5}

Using the Koszul duality functor from Subsection~\ref{s10.3}
we obtain categorifications of both $\hat{\mathcal{V}}_1^{\otimes n}$ 
and $\hat{\mathcal{V}}_n$ in terms of the parabolic category 
$\mathcal{O}$. The fact that the original action of the 
$U_v(\mathfrak{sl}_2)$-functor $\mathrm{K}$, given by the shift of 
grading, transforms via the Koszul duality to a simultaneous shift in 
grading and position is the reason why the resulting modules is 
isomorphic to $\hat{\mathcal{V}}_1^{\otimes n}$ and not
$\mathcal{V}_1^{\otimes n}$. Here we briefly describe how this goes.

To categorify $\hat{\mathcal{V}}_1^{\otimes n}$ consider the direct sum 
$\mathfrak{U}_n$ of all maximal parabolic subcategories in the
regular block $\mathcal{O}_0$ for $\mathfrak{gl}_n$, that is
\begin{displaymath}
\mathfrak{U}_n:=\bigoplus_{i=0}^n  \mathcal{O}_0^{\mathfrak{p}_i},\qquad
\mathfrak{U}_n^{\mathbb{Z}}:=
\bigoplus_{i=0}^n  {}^{\mathbb{Z}}\mathcal{O}_0^{\mathfrak{p}_i}, 
\end{displaymath}
where $\mathfrak{p}_i$ is the parabolic subalgebra of 
$\mathfrak{g}$ such that the corresponding
parabolic subgroup $W_{\mathfrak{p}_i}$ is isomorphic to 
$\mathbb{S}_i\oplus \mathbb{S}_{n-1}$, where the first component
permutes the first $i$ elements and the second components permutes
the last $n-i$ elements. By Theorem~\ref{thm1011}, the category 
$\mathfrak{U}_n^{\mathbb{Z}}$ is  the Koszul dual of the 
category $\mathfrak{V}_n^{\mathbb{Z}}$.

Using Theorem~\ref{thm1015}, the functorial action of 
$U_v(\mathfrak{sl}_2)$ on $\mathfrak{V}_n^{\mathbb{Z}}$ by projective
functors transfers, via Koszul duality, to a functorial action of
$U_v(\mathfrak{sl}_2)$ on $\mathcal{D}^b(\mathfrak{U}_n^{\mathbb{Z}})$
by derived Zuckerman functors. This gives us the Koszul dual
categorification of $\hat{\mathcal{V}}_1^{\otimes n}$.

The Koszul dual categorification of the simple module 
$\hat{\mathcal{V}}_n$ is identified inside 
$\mathcal{D}^b(\mathfrak{U}_n^{\mathbb{Z}})$
as the full triangulated subcategory generated by simple
finite dimensional modules (with all their shifts).

\section{Application: categorification of the Jones polynomial}\label{s12}

\subsection{Kauffman bracket and Jones polynomial}\label{s12.1}

Let $L$ be a diagram of an oriented link. Let $n_+$ and $n_-$ denote 
the number of {\em right} and {\em left} crossing
\index{right crossing}\index{left crossing}
in $L$, respectively, as shown on the following picture:
\begin{displaymath}
\begin{array}{c}
\xymatrix@-1.2pc{&&\\&\ar[lu]&\\\ar[rruu]&&\ar@{-}[lu]}\\
\text{right crossing}
\end{array}\qquad\qquad
\begin{array}{c}
\xymatrix@-1.2pc{&&\\&\ar[ru]&\\\ar@{-}[ru]&&\ar[lluu]}\\
\text{left crossing}
\end{array}
\end{displaymath}
The {\em Kauffman bracket} $\langle L \rangle\in\mathbb{Z}[v,v^{-1}]$ 
\index{Kauffman bracket}
of $L$ is defined via the following rule:
\begin{equation}\label{eq1291}
\left\langle\begin{array}{c}
\xymatrix@-1.2pc{&&\\&&\\\ar@{-}'[ru][rruu]&&\ar@{-}[lluu]}
\end{array}\right\rangle= \left\langle
\begin{array}{c}
\xymatrix{\ar@/_/@{-}[r]&\\\ar@/^/@{-}[r]&}
\end{array}\right\rangle
- v\left\langle\begin{array}{c}
\xymatrix{&\\\ar@/_/@{-}[u]&\ar@/^/@{-}[u]}
\end{array}\right\rangle
\end{equation}
together with $\langle\bigcirc L\rangle = (v+v^{-1})\langle L\rangle$
and normalized by the conditions $\langle\emptyset\rangle=1$.

The (unnormalized) {\em Jones polynomial} $\hat{\mathrm{J}}(L)$ of $L$
\index{Jones polynomial}
is defined by 
\begin{displaymath}
\hat{\mathrm{J}}(L):=(-1)^{n_-}v^{n_+-2n_-}\langle L\rangle\in
\mathbb{Z}[v,v^{-1}]
\end{displaymath}
and the usual Jones polynomial $\mathrm{J}(L)$ is defined via
$(v+v^{-1})\mathrm{J}(L)=\hat{\mathrm{J}}(L)$
(we use the normalization from \cite{BN}).

\begin{theorem}[\cite{Jn}]\label{thm1201}
The polynomial  ${\mathrm{J}}(L)$ is an invariant of an oriented 
link.
\end{theorem}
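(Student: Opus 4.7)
The plan is to verify invariance under the three Reidemeister moves, using the skein relation \eqref{eq1291} and the circle-removal rule as the only computational tools, and then to check that the prefactor $(-1)^{n_-}v^{n_+-2n_-}$ compensates for the failure of the Kauffman bracket to be invariant under the first Reidemeister move. Throughout, I would treat $\langle L\rangle$ as defined by iterated application of the skein relation at each crossing, reducing the diagram to a $\mathbb{Z}[v,v^{-1}]$-linear combination of disjoint unions of circles; a preliminary well-definedness check shows the result is independent of the order in which crossings are resolved, because the skein relation is local and the two smoothings at distinct crossings commute.

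First I would handle Reidemeister II. Apply \eqref{eq1291} at both crossings of a local tangle consisting of two oppositely-oriented crossings; this yields four terms, one of which contains a circle that is removed via the factor $v+v^{-1}$. A direct expansion
\begin{equation*}
\bigl\langle\text{R-II tangle}\bigr\rangle
= \langle\,{\asymp}\,\rangle - v\langle\,{\cdot\cdot}\,\rangle
- v\,(v+v^{-1})\langle\,{\cdot\cdot}\,\rangle + v^2\langle\,{\asymp}\,\rangle\cdot(\text{etc.})
\end{equation*}
(with the appropriate two smoothings) shows that the cross-terms collapse and the result equals $\langle\,{\asymp}\,\rangle$, the bracket of the two parallel strands. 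Since under R-II the numbers $n_+$ and $n_-$ both change by $1$, the prefactor is also unchanged, so $\hat{\mathrm{J}}$ is R-II invariant. Then Reidemeister III follows formally from R-II by the standard argument: resolve one of the three crossings of the R-III tangle using \eqref{eq1291}, apply R-II to one of the resulting summands, and observe that both sides of the R-III relation produce the same $\mathbb{Z}[v,v^{-1}]$-linear combination of simpler tangles; the writhe is also preserved under R-III, so the prefactor again does nothing.

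The main obstacle is Reidemeister I. Here a direct application of \eqref{eq1291} to a positive kink gives
\begin{equation*}
\bigl\langle\text{positive kink}\bigr\rangle
= \langle\,\text{unknotted arc}\,\rangle - v\cdot(v+v^{-1})\langle\,\text{unknotted arc}\,\rangle
= -v^2\langle\,\text{unknotted arc}\,\rangle,
\end{equation*}
and analogously a negative kink contributes a factor $-v^{-2}$. So $\langle L\rangle$ is \emph{not} R-I invariant. However, adding a positive kink to an oriented diagram changes $(n_+,n_-)$ by $(+1,0)$ (or $(0,+1)$, depending on orientation conventions) and similarly for a negative kink. One checks case by case that the combined prefactor $(-1)^{n_-}v^{n_+-2n_-}$ changes by exactly $-v^{-2}$ when a positive kink is added (respectively $-v^{2}$ for a negative kink), cancelling the factor introduced by the bracket. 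Hence $\hat{\mathrm{J}}(L)$ is also R-I invariant, and therefore invariant under all three Reidemeister moves. Since any two diagrams of the same oriented link are related by a finite sequence of Reidemeister moves and planar isotopies (the latter being trivially preserved by the local rule \eqref{eq1291}), this proves that $\hat{\mathrm{J}}(L)$, and hence $\mathrm{J}(L)=\hat{\mathrm{J}}(L)/(v+v^{-1})$, depends only on the oriented link type of $L$. The bulk of the work is the bookkeeping with signs and powers of $v$ in the R-I compensation, which is where errors in normalization conventions most easily creep in.
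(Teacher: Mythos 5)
The paper offers no proof of this statement at all: it is quoted from Jones's original article \cite{Jn}, and the closest the text comes to an argument is the two-line ``idea of proof'' of Theorem~\ref{thm1206} (check the skein relation, then the normalization). So your Kauffman-bracket/Reidemeister-move strategy is the standard and correct route, and there is nothing to compare it against inside the paper. The overall skeleton --- well-definedness of the state sum, invariance under R-II, R-III deduced from R-II, and R-I repaired by the writhe prefactor --- is exactly right.

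However, the computations you assert do not hold in the normalization \eqref{eq1291} actually used here, and the errors are not merely cosmetic. With $\langle X\rangle=\langle s_0\rangle-v\langle s_1\rangle$ and circle value $v+v^{-1}$, expanding the R-II tangle gives $\langle\asymp\rangle-v\langle\,\|\,\rangle-v(v+v^{-1})\langle\asymp\rangle+v^2\langle\asymp\rangle=-v\,\langle\,\|\,\rangle$: the bracket is \emph{not} R-II invariant, it scales by $-v$. Correspondingly, since the two crossings of an R-II pair have opposite signs, $n_+$ and $n_-$ each increase by $1$ and the prefactor $(-1)^{n_-}v^{n_+-2n_-}$ scales by $-v^{-1}$ --- it is not unchanged, contrary to what you wrote; only the product of the two factors is $1$. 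Your R-I bookkeeping is likewise off: the two possible kink factors for the bracket are $1-v(v+v^{-1})=-v^2$ and $(v+v^{-1})-v=v^{-1}$ (a factor $-v^{-2}$ is not attainable from the skein relation), and adding a positive kink multiplies the prefactor by $v$ (not by $-v^{-2}$), while a negative kink multiplies it by $-v^{-2}$. The consistent matching is: positive kink $\leftrightarrow$ bracket factor $v^{-1}$, prefactor factor $v$; negative kink $\leftrightarrow$ bracket factor $-v^2$, prefactor factor $-v^{-2}$. As written, your claimed factors do not cancel ($-v^2\cdot v\neq 1$). You correctly anticipated that the normalization is where errors creep in; the fix is to redo the R-I and R-II checks in Bar-Natan's conventions rather than importing the classical $A$-variable facts (bracket R-II invariant, correction term a pure power of the writhe), which are false here.
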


\begin{example}\label{exm1202}
{\rm
For the {\em Hopf link}
\index{Hopf link}
\begin{displaymath}
H:=\begin{array}{c}
\xygraph{
!{0;/r1.0pc/:}
!{\vunder}
!{\vunder-}
[uur]!{\hcap[2]=<}
[l]!{\hcap[-2]=>}
}
\end{array}
\end{displaymath}
we have $\hat{\mathrm{J}}=(v+v^{-1})(v+v^{5})$
and $\mathrm{J}(H)=v+v^{5}$.
} 
\end{example}

\begin{proposition}\label{prop1203}
The Jones polynomial is uniquely determined by the property
$\mathrm{J}(\bigcirc)=1$ and the {\em skein relation}
\index{skein relation}
\begin{equation}\label{eq1293}
v^2 \mathrm{J}\left(\begin{array}{c}
\xymatrix@-1.2pc{&&\\&\ar[ru]&\\\ar@{-}[ru]&&\ar[lluu]}
\end{array}\right)-
v^{-2} \mathrm{J}\left(\begin{array}{c}
\xymatrix@-1.2pc{&&\\&\ar[lu]&\\\ar[rruu]&&\ar@{-}[lu]}
\end{array}\right)=
(v-v^{-1})\mathrm{J}\left(\begin{array}{c}
\xymatrix@-1.2pc{&&\\&&\\\ar@/_/[uu]&&\ar@/^/[uu]}
\end{array}\right)
\end{equation}
\end{proposition}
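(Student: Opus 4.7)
The plan is to verify the skein relation \eqref{eq1293} directly from the definition of $\mathrm{J}$ in terms of the Kauffman bracket, and then to establish uniqueness by induction on diagram complexity. Existence of a link invariant satisfying the two displayed properties follows from Theorem~\ref{thm1201}, so the real content of the proposition is matching the Kauffman-bracket description with the skein description.

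First I would record the Kauffman-bracket relations for both crossing types. The rule \eqref{eq1291} is stated for one specific over/under configuration; rotating the picture by $90^\circ$ interchanges the two smoothings and yields the formula for the opposite crossing. Writing $L_0$ for the oriented (vertical) smoothing and $L_\infty$ for the disoriented (horizontal) one (with both strands oriented upward, both kinds of crossings give the same $L_0$), I would obtain
\begin{displaymath}
\langle L_+\rangle=\langle L_0\rangle-v\langle L_\infty\rangle,\qquad
\langle L_-\rangle=\langle L_\infty\rangle-v\langle L_0\rangle.
\end{displaymath}
Next I would track the writhe factor. If $L_+$ has crossing counts $n_+,n_-$, then $L_-$ has counts $n_+-1,n_-+1$, and $L_0$ has counts $n_+-1,n_-$; writing $C:=(-1)^{n_-}v^{n_+-2n_-}$, the definition of $\hat{\mathrm{J}}$ gives $\hat{\mathrm{J}}(L_+)=C\langle L_+\rangle$, $\hat{\mathrm{J}}(L_-)=-Cv^{-3}\langle L_-\rangle$ and $\hat{\mathrm{J}}(L_0)=Cv^{-1}\langle L_0\rangle$. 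Substituting the two bracket formulas and simplifying yields
\begin{displaymath}
v^{2}\hat{\mathrm{J}}(L_-)-v^{-2}\hat{\mathrm{J}}(L_+)
=C\bigl(1-v^{-2}\bigr)\langle L_0\rangle
=(v-v^{-1})\,\hat{\mathrm{J}}(L_0).
\end{displaymath}
Dividing through by the normalization factor $v+v^{-1}$ produces the skein relation for $\mathrm{J}$, and $\mathrm{J}(\bigcirc)=\hat{\mathrm{J}}(\bigcirc)/(v+v^{-1})=1$ is immediate.

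For uniqueness, let $\mathrm{J}'$ be any invariant satisfying $\mathrm{J}'(\bigcirc)=1$ and \eqref{eq1293}. Applying the skein relation at a positive crossing joining two arcs of the same unknotted component (so that both $L_+$ and $L_-$ are unknots and $L_0=\bigcirc\sqcup\bigcirc$) gives $v^2-v^{-2}=(v-v^{-1})\mathrm{J}'(\bigcirc\sqcup\bigcirc)$, forcing $\mathrm{J}'(\bigcirc\sqcup\bigcirc)=v+v^{-1}$; iterating determines $\mathrm{J}'$ on every trivial link as $(v+v^{-1})^{k-1}$. For a general diagram $D$ with $c$ crossings, a classical argument shows that some subset of crossing switches transforms $D$ into an ascending diagram, which represents a trivial link. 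Since each application of the skein relation trades a crossing switch against an $\mathrm{J}'$-value on a diagram with strictly fewer crossings, a double induction on the lexicographic pair (number of crossings, number of switches required to trivialize) shows $\mathrm{J}'$ is uniquely determined, hence equal to $\mathrm{J}$.

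The main obstacle is the careful bookkeeping in the second step: the interaction between the sign $(-1)^{n_-}$, the exponent shift $v^{n_+-2n_-}$ as a crossing is modified or resolved, and the two different Kauffman-bracket formulas for the two crossing types must be tracked with the correct identification of which smoothing is the oriented one, in order to obtain the clean cancellation displayed above. Once that computation is in place the rest is routine; the uniqueness induction is standard as soon as the values on trivial links are pinned down, and the existence of an ascending representative after crossing changes is classical.
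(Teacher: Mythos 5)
Your proof is correct, and I verified the central computation: with $C=(-1)^{n_-}v^{n_+-2n_-}$ the substitution of the two bracket formulas into $v^2\hat{\mathrm{J}}(L_-)-v^{-2}\hat{\mathrm{J}}(L_+)$ does collapse to $C(1-v^{-2})\langle L_0\rangle=(v-v^{-1})\hat{\mathrm{J}}(L_0)$, and your identification of the oriented smoothing in \eqref{eq1291} is the one consistent with the normalization of \cite{BN} that the paper adopts. The paper itself gives no proof (it is relegated to Exercise~\ref{exs4}), and your argument --- direct verification of the skein relation via the writhe bookkeeping, plus the standard induction on (crossing number, switches to an ascending diagram) after pinning down the values on trivial links --- is exactly the intended standard one.
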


\subsection{Khovanov's idea for categorification of 
${\mathrm{J}}(L)$}\label{s12.2}

Khovanov's original idea (\cite{Kv}) how to categorify ${\mathrm{J}}(L)$
(as explained in \cite{BN}) was to ``upgrade'' Kauffman's bracket
to a new bracket $\left\llbracket\cdot \right\rrbracket$, 
which takes values in complexes of graded complex vector space
(this works over $\mathbb{Z}$ as well, Khovanov originally works 
over $\mathbb{Z}[c]$ where $\deg c=2$). Denote by $V$ the
two-dimensional graded vector space such that the nonzero
homogeneous parts are of degree $1$ and $-1$. The normalization 
conditions are easy to generalize:
\begin{displaymath}
\left\llbracket\varnothing\right\rrbracket=0\to \mathbb{C}\to 0,\qquad
\left\llbracket\bigcirc L\right\rrbracket=V\otimes
\left\llbracket L\right\rrbracket. 
\end{displaymath}
To ``categorify'' the rule \eqref{eq1291} Khovanov proposes the
axiom
\begin{displaymath}
\left\llbracket\begin{array}{c}
\xymatrix@-1.2pc{&&\\&&\\\ar@{-}'[ru][rruu]&&\ar@{-}[lluu]}
\end{array}\right\rrbracket=
\mathrm{Total}\left(
0\to
\left\llbracket
\begin{array}{c}
\xymatrix{\ar@/_/@{-}[r]&\\\ar@/^/@{-}[r]&}
\end{array}
\right\rrbracket
\overset{d}{\to}
\left\llbracket
\begin{array}{c}
\xymatrix{&\\\ar@/_/@{-}[u]&\ar@/^/@{-}[u]}
\end{array}
\right\rrbracket\langle -1\rangle
\to 0
\right)
\end{displaymath}
for a certain differential $d$, the definition of which is the key 
ingredient of the construction. We refer the reader to \cite{Kv,BN}
for details on how this works.

Appearance of complexes and taking total complexes in the above 
construction suggests possibility of its functorial interpretation
(by action of functors on certain categories of complexes).
We are going to describe this functorial approach below.

\subsection{Quantum $\mathfrak{sl}_2$-link invariants}\label{s12.3}

A usual way to construct knot and link invariants is using
their connection to the braid group $\mathbb{B}_n$ given by
the following theorem:

\begin{theorem}[Alexander's Theorem]\label{thm1204}
Every link can be obtained as the closure of a braid. 
\end{theorem}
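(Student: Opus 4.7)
The plan is to give the classical Alexander trick: starting from an arbitrary oriented link diagram in the plane, I will exhibit an isotopy that turns it into the closure of a braid. Fix a basepoint $p$ in the plane disjoint from the diagram (the prospective braid axis, thought of as the point $\infty$ from above); call an oriented edge of the diagram \emph{positive} if, as one traverses it in the direction of its orientation, the angular coordinate around $p$ increases monotonically, and \emph{negative} otherwise. Once every edge is positive, cutting the plane along a ray emanating from $p$ and reading off the crossings in the order encountered around $p$ manifestly presents the link as the closure of a braid on $n$ strands, where $n$ is the number of intersections of that ray with the diagram.

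First, I would put the diagram in general position with respect to $p$ (no edge passes through $p$, no tangent to an edge passes through $p$ except transversally at finitely many points) and subdivide at these finitely many points, so that each resulting edge is either entirely positive or entirely negative with respect to $p$. Next, for each negative edge $e$, I would perform the \emph{Alexander move}: pick a small arc $\alpha$ on $e$ that is disjoint from all crossings, and apply an ambient isotopy that drags $\alpha$ across $p$ (``throws it over the braid axis''), replacing $\alpha$ by an arc on the other side of $p$. Each such move turns a negative sub-arc into a positive arc, at the cost of possibly introducing new crossings (all of which lie on positive arcs by construction). Iterating this procedure over all negative edges produces a diagram in which every edge is positive, hence braided about $p$; closing up this braid recovers the original link up to ambient isotopy.

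The hard part will be showing that this process actually terminates, i.e.\ that one can eliminate negative arcs without inadvertently creating new ones and looping forever. The standard way to handle this is to first subdivide each negative edge into finitely many short sub-arcs so that each sub-arc (together with its chord over $p$) bounds a disk disjoint from the rest of the diagram apart from transversal intersections; the Alexander move on such a sub-arc strictly decreases the total number of negative sub-arcs (since the dragged arc becomes positive and no new negative arcs appear, only positive crossings). A clean bookkeeping argument, by induction on the number of negative sub-arcs, then finishes the proof. I would write up the termination step carefully, since all the combinatorial content of the theorem is concentrated there; the existence of the Alexander move and the final identification with a braid closure are essentially formal once termination is established.
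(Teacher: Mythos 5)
The paper does not prove this statement at all: it is quoted as a classical background fact (Alexander's 1923 theorem) and used as a black box in the construction of quantum $\mathfrak{sl}_2$-link invariants, so there is no proof in the text to compare against. Your argument is the standard ``Alexander trick,'' and as a sketch it is essentially correct: fix the prospective axis $p$, subdivide the diagram into sub-arcs that are angularly monotone about $p$ and contain no crossing in their interior, and repeatedly throw a negative sub-arc over $p$; since the replacing arc is positive and the new crossings it creates lie on it (all over- or all under-crossings, consistently, because the thrown arc was crossing-free), the number of negative sub-arcs in the fixed initial subdivision strictly decreases, so the process terminates with a diagram braided about $p$. The one place to be careful in a full write-up is exactly the point you flag: the count that decreases is the number of negative sub-arcs of a subdivision fixed \emph{once and for all at the outset} (a single move on a middle piece of a negative edge leaves the flanking pieces negative, so one must count sub-arcs, not edges), and the isotopy disk swept by each move must be in general position with respect to the rest of the diagram so that the move is a genuine ambient isotopy. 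With those details spelled out, your proof is complete and is the same argument one finds in the standard references.
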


The correspondence between links and braids given by Alexander's Theorem
is not bijective, that is, different braids could give, upon
closure, isotopic links. However, there is an easy way to 
find out when two different braids give rise to same links.
For $n\in\mathbb{N}$ let $\sigma_1,\dots,\sigma_{n-1}$ be the
generators of $\mathbb{B}_n$, and $\mathfrak{i}_n:\mathbb{B}_n
\hookrightarrow \mathbb{B}_{n+1}$  the natural inclusion
given by the identity on the $\sigma_i$'s for $i=1,\dots,n-1$. 
Denote by $\mathbb{B}$ the disjoint union of all
$\mathbb{B}_n$, $n\geq 1$.

\begin{theorem}[Markov's Theorem]\label{thm1205}
Denote by $\sim$ the smallest equivalence relation on $\mathbb{B}$
which contains the conjugation relations on all 
$\mathbb{B}_n$, $n\geq 1$, and such that for any $n\in\mathbb{N}$
and any $w\in \mathbb{B}_n$ we have $w\sim \mathfrak{i}_n(w)\sigma_n^{\pm 1}$.
Then two braids $w,w'\in\mathbb{B}$ give, by closing, isotopic 
links if and only if $w\sim w'$.
\end{theorem}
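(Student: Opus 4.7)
The plan is to handle the two implications separately. For the easy \emph{if} direction I would argue pictorially: taking the closure of a braid $w\in\mathbb{B}_n$ means joining the $i$-th top endpoint to the $i$-th bottom endpoint by disjoint arcs running around the outside, so closures of conjugate braids $w$ and $uwu^{-1}$ are visibly ambient isotopic (slide the block $u$ around the closure to cancel against $u^{-1}$). For the stabilization move, the closure of $\mathfrak{i}_n(w)\sigma_n^{\pm 1}$ differs from the closure of $w$ by a single Reidemeister I move on the strand added by $\mathfrak{i}_n$, and hence produces an isotopic link.

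The hard direction is the \emph{only if} statement, and this is where essentially all of the content of the theorem lies. The strategy I would follow is the classical one via Reidemeister's theorem: two link diagrams represent isotopic links if and only if they are related by a finite sequence of planar isotopies and the three Reidemeister moves R1, R2, R3. Given braids $w,w'\in\mathbb{B}$ whose closures $\hat{w}$ and $\hat{w}'$ are isotopic, fix such a sequence of diagrams. One would then thicken each intermediate diagram into a ``braided'' form (a diagram in which all crossings are oriented downward around a fixed axis), using Alexander's theorem (Theorem~\ref{thm1204}) to realize each one as the closure of some braid, and then show that any two braidings of a fixed diagram are related by Markov moves.

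The core technical step, which I expect to be the main obstacle, is the analysis of how a Reidemeister move performed inside the diagram $\hat{w}$ can be realized at the level of the braid word $w$. Each of R1, R2, R3 has to be translated into a sequence of Markov moves. Here R3 is essentially the braid relation $\sigma_i\sigma_{i+1}\sigma_i=\sigma_{i+1}\sigma_i\sigma_{i+1}$, which is already a relation in $\mathbb{B}$; R2 corresponds to the relation $\sigma_i\sigma_i^{-1}=1$ together with conjugation; the tricky one is R1, which introduces or removes a kink and is precisely what forces the stabilization move $w\mapsto \mathfrak{i}_n(w)\sigma_n^{\pm 1}$ into the equivalence relation. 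The subtlety is that a Reidemeister move may occur at a location of the diagram that is not in braid position, so one first has to show that any local modification can be transported, via conjugation and stabilization, to happen ``at the bottom'' of a braid presentation.

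To make this rigorous I would follow the Morton--Traczyk approach: introduce the notion of a \emph{braided tangle} and prove a relative version of Alexander's theorem showing that any oriented tangle diagram can be isotoped into braid position through a sequence of local moves each of which corresponds either to a braid-group relation, a conjugation, or a stabilization. Combined with Reidemeister's theorem this yields the desired decomposition of an isotopy between $\hat{w}$ and $\hat{w}'$ into Markov moves, completing the proof. I would not reproduce the full combinatorial case analysis here, as it is standard and lengthy; I would instead cite a detailed reference such as Birman's book or Traczyk's paper for the verification of each case.
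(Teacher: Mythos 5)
The paper states Markov's theorem purely as classical background for Subsection~\ref{s12.3} and gives no proof of it, so there is no internal argument to compare yours against. Your outline is the standard and correct strategy: the easy direction by checking that conjugation and stabilization preserve the isotopy class of the closure (slide the conjugating block around the closure; undo the extra crossing $\sigma_n^{\pm 1}$ on the added strand by a Reidemeister~I move), and the hard direction by combining Reidemeister's theorem with a relative form of Alexander's theorem so that every intermediate diagram in an isotopy between $\hat{w}$ and $\hat{w}'$ is put into braid position, together with the key lemma that any two braidings of a fixed diagram are Markov-equivalent. This is precisely the Morton--Traczyk architecture, and you have correctly located where the difficulty sits (the R1 move, and transporting a local modification that occurs away from braid position).

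Be aware, though, that what you have written is a proof scheme rather than a proof. The lemma that any two braidings of a given diagram differ by Markov moves, and the case analysis showing that each Reidemeister move on a closed-braid diagram is realized by Markov moves, together constitute essentially the whole content of the theorem; you defer both to Birman or Traczyk. Given that the survey itself only quotes the result, that level of detail is defensible, but nothing in your text as it stands establishes the statement --- it reduces it to cited results of the same depth. If a self-contained argument were required, the R1 analysis and the braiding-uniqueness lemma would each need to be carried out in full.
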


The closure of a braid can be obtained as a composition of the following
``elementary'' diagrams:
\begin{equation}\label{eq1195}
\begin{array}{c}
\begin{array}{c}\xymatrix@-1.2pc{
\ar@{-}`d[ddr] `r[rr] [rr]&&\\&&\\&&}\end{array}\\
\text{the cup diagram}
\end{array}\quad
\begin{array}{c}
\begin{array}{c}\xymatrix@-1.2pc{
&&\\&&\\\ar@{-}`u[uur] `r[rr] [rr]&&}\end{array}\\
\text{the cap diagram}
\end{array}\quad
\begin{array}{c}
\begin{array}{c}
\xymatrix@-1.2pc{&&\\&\ar[lu]&\\\ar[rruu]&&\ar@{-}[lu]}\end{array}\\
\text{right crossing}
\end{array}\quad
\begin{array}{c}
\begin{array}{c}
\xymatrix@-1.2pc{&&\\&\ar[ru]&\\\ar@{-}[ru]&&\ar[lluu]}\end{array}\\
\text{left crossing}
\end{array}
\end{equation}

\begin{example}\label{exm1206}
{\rm
The Hopf link from Example~\ref{exm1202} can be obtained by the
following sequence of elementary diagrams:
\begin{equation}\label{eq1292}
\xymatrix@-1.6pc{
&&&&&&&&&\\
&&&&&&&&&\\
\ar@{.}[rrrrrrrrr]&\ar@{-}`u[uur] `r[rr] [rr]&&&&&&&&\\
&&&&&&&&&\\
\ar@{.}[rrrrrrrrr]&\ar@{-}[uu]&&\ar@{-}[uu]&&\ar@{-}`u[uur] `r[rr] [rr]&&&&\\
&&&&&&&&&\\
\ar@{.}[rrrrrrrrr]&\ar@{-}[uu]&&\ar[rruu]&&\ar'[lu][lluu]&&\ar@{-}[uu]&&\\
&&&&&&&&&\\
\ar@{.}[rrrrrrrrr]&\ar@{-}[uu]&&\ar[rruu]&&\ar'[lu][lluu]
\ar@{-}`d[ddr] `r[rr] [rr]&&\ar@{-}[uu]&&\\
&&&&&&&&&\\
\ar@{.}[rrrrrrrrr]&\ar@{-}[uu]\ar@{-}`d[ddr] `r[rr] [rr]&&\ar@{-}[uu]&&&&&&\\
&&&&&&&&&\\
&&&&&&&&&\\
}
\end{equation}
}
\end{example}

The idea of a quantum $\mathfrak{sl}_2$-invariant (originated
in \cite{RT}) is to associate to any elementary diagram a homomorphism 
of tensor powers of the two-dimensional 
$U_v(\mathfrak{sl}_2)$-module $\hat{\mathcal{V}}_1$. The module
$\hat{\mathcal{V}}_1$ has basis $\{w_0,w_1\}$ as described in 
Subsection~\ref{s11.2}. It is convenient to denote the basis vector
$w_{i_1}\otimes w_{i_2}\otimes\cdots \otimes w_{i_k}$,
$i_1,i_2,\dots,i_k\in\{0,1\}$, of some tensor power of 
$\hat{\mathcal{V}}_1$  simply by $\mathtt{i}_1\mathtt{i}_2\cdots \mathtt{i}_k$,
the so-called {\em $0\text{-}1$ sequence}.
\index{$0\text{-}1$ sequence}
\begin{itemize}
\item To the cup diagram we associate the homomorphism
$\cup:\mathbb{C}(v)\to\hat{\mathcal{V}}_1^{\otimes 2}$ defined as follows:
\begin{displaymath}
1\mapsto \mathtt{0}\mathtt{1}+v\mathtt{1}\mathtt{0}. 
\end{displaymath}
\item To the cap diagram we associate the homomorphism
$\cap:\hat{\mathcal{V}}_1^{\otimes 2}\to\mathbb{C}(v)$ defined as follows:
\begin{displaymath}
\mathtt{0}\mathtt{0}\mapsto 0,\quad 
\mathtt{1}\mathtt{1}\mapsto 0,\quad 
\mathtt{0}\mathtt{1}\mapsto v^{-1},\quad 
\mathtt{1}\mathtt{0}\mapsto 1. 
\end{displaymath}
\item To the right crossing we associate the homomorphism
$\hat{\mathcal{V}}_1^{\otimes 2}\to
\hat{\mathcal{V}}_1^{\otimes 2}$ defined as follows:
\begin{displaymath}
\mathtt{0}\mathtt{0}\mapsto -v\mathtt{0}\mathtt{0},\quad 
\mathtt{1}\mathtt{1}\mapsto -v\mathtt{1}\mathtt{1},\quad 
\mathtt{0}\mathtt{1}\mapsto \mathtt{1}\mathtt{0}+
(v^{-1}-v)\mathtt{0}\mathtt{1},\quad 
\mathtt{1}\mathtt{0}\mapsto \mathtt{0}\mathtt{1}. 
\end{displaymath}
\item To the left crossing we associate the homomorphism
$\hat{\mathcal{V}}_1^{\otimes 2}\to\hat{\mathcal{V}}_1^{\otimes 2}$ 
defined as follows:
\begin{displaymath}
\mathtt{0}\mathtt{0}\mapsto -v^{-1}\mathtt{0}\mathtt{0},\quad 
\mathtt{1}\mathtt{1}\mapsto -v^{-1}\mathtt{1}\mathtt{1},\quad 
\mathtt{0}\mathtt{1}\mapsto \mathtt{1}\mathtt{0},\quad 
\mathtt{1}\mathtt{0}\mapsto \mathtt{0}\mathtt{1}+
(v-v^{-1})\mathtt{1}\mathtt{0}. 
\end{displaymath}
\end{itemize}
Using horizontal composition in the tensor category of finite
dimensional $U_v(\mathfrak{sl}_2)$-modules the above induces 
morphisms $\cup_{i,n}:\hat{\mathcal{V}}_1^{\otimes n}\to
\hat{\mathcal{V}}_1^{\otimes n+2}$ (inserting $\cup$ between the
$i$-th and the $i+1$-st factors) ans similarly for the cap
and crossing diagrams.

Given now an oriented link $L$, written as a composition of the 
elementary diagrams as above, we can read it (from bottom to top) 
as an endomorphism $\varphi_{L}$ of $\hat{\mathcal{V}}_0$. Applied to 
$1$, this endomorphism produces a Laurent polynomial in $v$. 

\begin{theorem}[\cite{RT}]\label{thm1206}
The polynomials $(-1)^{n_-}v^{n_+-2n_-}\varphi_{L}(1)$ and 
$\hat{\mathrm{J}}(L)$ coincide.
\end{theorem}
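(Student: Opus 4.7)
The plan is to identify $\varphi_L(1)$ with the Kauffman bracket $\langle L\rangle$, after which the theorem follows directly from the paper's definition $\hat{\mathrm{J}}(L)=(-1)^{n_-}v^{n_+-2n_-}\langle L\rangle$.

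First I would verify that $\varphi_L(1)$ is a well-defined function of the framed oriented link diagram $L$, independent of the decomposition of $L$ into elementary pieces. In the Reshetikhin--Turaev formalism this reduces to checking the defining relations of the category of framed oriented tangles for the assigned morphisms: the two zig-zag identities $(\mathrm{id}\otimes\cap)(\cup\otimes\mathrm{id})=\mathrm{id}_{\hat{\mathcal{V}}_1}=(\cap\otimes\mathrm{id})(\mathrm{id}\otimes\cup)$, the Reidemeister II relation $R\circ R^{-1}=R^{-1}\circ R=\mathrm{id}$, and the Yang--Baxter / Reidemeister III equation $(R\otimes\mathrm{id})(\mathrm{id}\otimes R)(R\otimes\mathrm{id})=(\mathrm{id}\otimes R)(R\otimes\mathrm{id})(\mathrm{id}\otimes R)$. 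All three reduce to direct matrix computations on $\hat{\mathcal{V}}_1^{\otimes k}$, $k\leq 3$. Framed Reidemeister I is not an isotopy equality but instead yields a non-trivial twist scalar on each kink; the writhe correction $(-1)^{n_-}v^{n_+-2n_-}$ is precisely what absorbs this scalar and converts the framed invariant into an invariant of unframed oriented links.

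The central observation is a Kauffman-bracket-style decomposition of the crossing morphisms, obtained by a direct calculation on the basis $\{\mathtt{0}\mathtt{0},\mathtt{0}\mathtt{1},\mathtt{1}\mathtt{0},\mathtt{1}\mathtt{1}\}$ of $\hat{\mathcal{V}}_1^{\otimes 2}$:
\[
R = \cup\circ\cap - v\cdot\mathrm{id},\qquad R^{-1}=\cup\circ\cap - v^{-1}\cdot\mathrm{id}.
\]
These identities are the morphism-level incarnations of the Kauffman bracket skein rule \eqref{eq1291} applied to the two crossing types, with $\cup\circ\cap$ realizing one smoothing and $\mathrm{id}$ the other.

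Using these decompositions, one proves $\varphi_L(1)=\langle L\rangle$ by induction on the number of crossings of $L$. The base case is a crossing-free diagram, i.e.\ a disjoint union of $k$ trivial loops, for which $\varphi_L(1)=(v+v^{-1})^k=\langle L\rangle$ thanks to $\cap\circ\cup(1)=v+v^{-1}$ together with the normalizations $\langle\emptyset\rangle=1$ and $\langle\bigcirc L'\rangle=(v+v^{-1})\langle L'\rangle$. For the inductive step, pick any crossing of $L$ and apply the corresponding decomposition to rewrite $\varphi_L(1)=\varphi_{L_1}(1)-v^{\pm 1}\varphi_{L_2}(1)$, where $L_1,L_2$ are the two smoothings (each with one fewer crossing); the inductive hypothesis and \eqref{eq1291} then yield $\varphi_L(1)=\langle L_1\rangle-v^{\pm 1}\langle L_2\rangle=\langle L\rangle$.

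Multiplying both sides by $(-1)^{n_-}v^{n_+-2n_-}$ and invoking the paper's definition of $\hat{\mathrm{J}}$ completes the argument. The main obstacle is the verification of the Yang--Baxter equation, a mechanical but tedious check on the $8$-dimensional space $\hat{\mathcal{V}}_1^{\otimes 3}$; the remaining steps amount to small, explicit matrix manipulations together with the inductive argument.
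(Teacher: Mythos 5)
Your route is genuinely different from the one the paper sketches. The paper's argument verifies the oriented skein relation \eqref{eq1293} for the normalized quantity $(-1)^{n_-}v^{n_+-2n_-}\varphi_{L}(1)$ together with the normalization on cups and caps, and then invokes the uniqueness statement of Proposition~\ref{prop1203}; the asymmetry between the two crossing types is built in from the start and the Kauffman bracket never appears. You instead aim at the sharper identity $\varphi_L(1)=\langle L\rangle$ (which is indeed equivalent to the theorem, given the definition of $\hat{\mathrm{J}}$) via a resolution-and-induction argument. Your two decompositions $R=\cup\circ\cap-v\,\mathrm{id}$ and $R^{-1}=\cup\circ\cap-v^{-1}\,\mathrm{id}$ are correct (the first is Exercise~\ref{exs4}\eqref{exs4.9}), and the well-definedness discussion, while sound in outline, is not needed here: both $\varphi_L(1)$ and $\langle L\rangle$ are computed from one fixed diagram, and invariance is the content of Theorem~\ref{thm1209}, not of this statement.

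The gap is in the sentence asserting that these two identities are ``the morphism-level incarnations of the Kauffman bracket skein rule \eqref{eq1291} applied to the two crossing types.'' Rule \eqref{eq1291} is a single unoriented rule; applied to a crossing and to its mirror it produces resolutions of the shape $\alpha-v\beta$ and $\beta-v\alpha$, where $\{\alpha,\beta\}=\{\mathrm{id},\cup\circ\cap\}$, because mirroring a crossing exchanges the roles of the two smoothings. Your decompositions both have $\cup\circ\cap$ with coefficient $+1$ and $\mathrm{id}$ with coefficient $-v^{\pm1}$, so at most one of them can be a literal instance of \eqref{eq1291}; the other differs from the corresponding Kauffman resolution by the overall unit $-v^{\pm 1}$, since $\cup\circ\cap-v^{-1}\mathrm{id}=-v^{-1}\bigl(\mathrm{id}-v\,\cup\circ\cap\bigr)$. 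Consequently the induction as written yields $\varphi_L(1)=(-v^{\pm1})^{m}\langle L\rangle$, with $m$ the number of crossings of one of the two types, rather than $\varphi_L(1)=\langle L\rangle$, and this residual unit cannot simply be declared absorbed: the factor $(-1)^{n_-}v^{n_+-2n_-}$ is already fully spent converting $\langle L\rangle$ into $\hat{\mathrm{J}}(L)$. To close the argument you must track the per-crossing discrepancy explicitly and show how it cancels against the writhe normalization (equivalently, re-derive the correct writhe exponents from your resolution identities instead of quoting them). This bookkeeping is precisely the delicate point of the theorem, and it is what the paper's skein-relation route is designed to handle.
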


\begin{proof}[Idea of the proof.]
First check the skein relation \eqref{eq1293} for
$(-1)^{n_-}v^{n_+-2n_-}\varphi_{L}(1)$.
Then check that our assignments for the cup and cap diagrams 
define the correct
normalization.
\end{proof}

\begin{example}\label{exm1207}
{\rm 
For the diagram $L$ of the Hopf link as in \eqref{eq1292}, here 
is the step-by-step calculation of $\varphi_{L}(1)$
(the diagram is read from bottom to top):
\begin{displaymath}
\begin{array}{ccl}
1&\mapsto& \mathtt{0}\mathtt{1} +v\mathtt{1}\mathtt{0}\\
&\mapsto&
\mathtt{0}\mathtt{1}\mathtt{0}\mathtt{1}
+v\mathtt{0}\mathtt{1}\mathtt{1}\mathtt{0}
+v\mathtt{1}\mathtt{0}\mathtt{0}\mathtt{1}
+v^2\mathtt{1}\mathtt{0}\mathtt{1}\mathtt{0}\\
&\mapsto&
\mathtt{0}\mathtt{0}\mathtt{1}\mathtt{1}
-v^2\mathtt{0}\mathtt{1}\mathtt{1}\mathtt{0}
-v^2\mathtt{1}\mathtt{0}\mathtt{0}\mathtt{1}
+v^2\mathtt{1}\mathtt{1}\mathtt{0}\mathtt{0}
+v^2(v^{-1}-v)\mathtt{1}\mathtt{1}\mathtt{0}\mathtt{0}\\
&\mapsto&
\mathtt{0}\mathtt{1}\mathtt{0}\mathtt{1}
+(v^{-1}-v)\mathtt{0}\mathtt{0}\mathtt{1}\mathtt{1}
+v^3\mathtt{0}\mathtt{1}\mathtt{1}\mathtt{0}
+v^3\mathtt{1}\mathtt{0}\mathtt{0}\mathtt{1}
+v^2(v^{-1}-v)\mathtt{1}\mathtt{1}\mathtt{0}\mathtt{0}\\
&&+v^2((v^{-1}-v)^2+1)\mathtt{1}\mathtt{0}\mathtt{1}\mathtt{0}\\
&\mapsto&
(v^{-1}+v^3)\mathtt{0}\mathtt{1}+
v^2((v^{-1}-v)^2+2)\mathtt{1}\mathtt{0}\\
&\mapsto& v^{-2} +1 +v^2+v^4. 
\end{array}
\end{displaymath}
We thus have $\varphi_{L}(1)=v^{-2}(v+v^{-1})(v+v^{5})$,
which can be compared with $\hat{\mathrm{J}}(L)$, given by
Example~\ref{exm1202}.
} 
\end{example}

\subsection{Functorial quantum $\mathfrak{sl}_2$-link 
invariants}\label{s12.4}

From Subsection~\ref{s11.5} we know a categorical realization of 
$\hat{\mathcal{V}}_1^{\otimes n}$ via the action of derived
Zuckerman functors on the (bounded derived category of) the 
direct sum of maximal parabolic subcategories in the principal 
block $\mathcal{O}_0$ for $\mathfrak{gl}_n$. To categorify 
quantum $\mathfrak{sl}_2$-link invariants we thus only need to 
find a functorial interpretation for the cup, cap and crossing
morphisms defined in the previous section. There is a catch though,
since the cup and cap morphisms work between different tensor
powers of $\hat{\mathcal{V}}_1$, so we have also to relate
maximal parabolic subcategories in the principal
block of $\mathcal{O}_0$ for $\mathfrak{gl}_k$ with $k\leq n$
to the case of $\mathfrak{gl}_n$.

To construct functorial homomorphisms between our categorical
tensor powers of $\hat{\mathcal{V}}_1$ we have a good candidate:
projective functors. Here we see the full advantage of the Koszul
dual picture. In this picture the action of $U_v(\mathfrak{sl}_2)$
is given by derived Zuckerman functors, which commute with
projective functors (see Proposition~\ref{prop671}). Hence
projective functors would automatically induces homomorphisms
between our categorical tensor powers of $\hat{\mathcal{V}}_1$.
This observation also suggests a solution to the catch.
Among other projective functors, we have translations onto
and out of the walls. Therefore one could try to relate
maximal parabolic subcategories in certain singular blocks 
of $\mathcal{O}$ for $\mathfrak{gl}_n$ to maximal parabolic 
subcategories in regular blocks of $\mathcal{O}$ for 
$\mathfrak{gl}_k$ with $k<n$. This relation turns out to be known
under the name {\em Enright-Shelton equivalence}, \cite{ES}.
\index{Enright-Shelton equivalence}

For $i=0,1,\dots,n$ let $\mathfrak{p}_i$ be as in
Subsection~\ref{s11.5}. For an integral 
$\lambda\in\mathfrak{h}^*_{\mathrm{dom}}$ set
\begin{displaymath}
\mathcal{O}_{\lambda}^{\mathrm{max}}:=
\bigoplus_{i=0}^n \mathcal{O}_{\lambda}^{\mathfrak{p}_i},\qquad
{}^{\mathbb{Z}}\mathcal{O}_{\lambda}^{\mathrm{max}}:=
\bigoplus_{i=0}^n {}^{\mathbb{Z}}\mathcal{O}_{\lambda}^{\mathfrak{p}_i}.
\end{displaymath}
Note that $\mathcal{O}_{\lambda}^{\mathrm{max}}$ might be zero
(for example, if $n>2$ and $\lambda$ is most singular).

\begin{proposition}[\cite{ES}]\label{prop1208}
Let $\lambda\in\mathfrak{h}^*_{\mathrm{dom}}$ be integral.
\begin{enumerate}[$($a$)$]
\item\label{prop1208.1} The category
$\mathcal{O}_{\lambda}^{\mathrm{max}}\neq 0$
if and only if every direct summand of $W_{\lambda}$ is 
isomorphic to either $\mathbb{S}_1$ or $\mathbb{S}_2$.
\item\label{prop1208.2} If the category
$\mathcal{O}_{\lambda}^{\mathrm{max}}\neq 0$
and $W_{\lambda}$ contains exactly $k$ direct summands isomorphic to
$\mathbb{S}_2$, then the category $\mathcal{O}_{\lambda}^{\mathrm{max}}$
is equivalent to the category $\mathcal{O}_{0}^{\mathrm{max}}$
for the algebra  $\mathfrak{gl}_{n-2k}$.
\end{enumerate}
\end{proposition}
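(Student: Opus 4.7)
The plan is to establish both parts in turn. For part (a), my approach is purely combinatorial: by Proposition 4.11, the non-vanishing of $\mathcal{O}_{\lambda}^{\mathfrak{p}_i}$ is equivalent to the existence of a weight $\mu \in W\cdot\lambda$ whose dot-stabilizer $W_\mu$ intersects $W_{\mathfrak{p}_i}$ trivially. Since $W_\mu$ is $W$-conjugate to $W_\lambda$ inside $W \cong \mathbb{S}_n$, it is a Young subgroup of the same cycle type acting on some partition of $\{1,\dots,n\}$. I would then argue as follows: if some factor of $W_\lambda$ is $\mathbb{S}_a$ with $a \geq 3$, then by pigeonhole the corresponding $a$-element block of any conjugate $W_\mu$ must, for every $i$, place at least two indices on the same side of the split $\{1,\dots,i\}\sqcup\{i+1,\dots,n\}$ defining $W_{\mathfrak{p}_i}$, so the transposition of these indices lies in $W_\mu \cap W_{\mathfrak{p}_i}$. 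This forces $\mathcal{O}_{\lambda}^{\mathfrak{p}_i}=0$ for every $i$. Conversely, if every factor of $W_\lambda$ is either $\mathbb{S}_1$ or $\mathbb{S}_2$, then for a suitable $i$ one can arrange each of the $k$ pair-blocks to lie entirely inside one of the two parts of the split, yielding the required $\mu$.

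For part (b), my plan is to combine Soergel's Struktursatz and Endomorphismensatz (Theorems 7.2 and 7.3) with a combinatorial reduction. Each summand $\mathcal{O}_{\lambda}^{\mathfrak{p}_i}$ is equivalent to modules over a finite-dimensional algebra whose Soergel-bimodule data depends only on the pair $(W_\lambda, W_{\mathfrak{p}_i})$, governed by the $W_\lambda$-invariants in the coinvariant algebra of $\mathbb{S}_n$ together with the restriction/induction functors between $W_\lambda$- and $W_{\mathfrak{p}_i}$-invariants (Theorem 7.5). I would then exhibit an equivalence with $\mathcal{O}_0^{\mathrm{max}}$ for $\mathfrak{gl}_{n-2k}$ via the ``contraction'' that collapses each of the $k$ $\mathbb{S}_2$-factors of $W_\lambda$ to a single letter: this induces a bijection between the surviving double cosets $W_{\mathfrak{p}_i}\backslash W/W_\lambda$ indexing simples of $\mathcal{O}_{\lambda}^{\mathfrak{p}_i}$ (the ones admitted by the argument of part (a)) and the double cosets $W_{\mathfrak{p}_j}\backslash \mathbb{S}_{n-2k}$ for appropriate $j$. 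At the level of coinvariants, the contraction is realized by the fact that $\mathbb{S}_2$-invariants in a polynomial ring on two degree-two generators form a polynomial ring on one degree-two generator, so the relevant invariant subrings match. Summing over $i$ then assembles the claimed equivalence of direct sums.

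The main obstacle I expect is to promote this bijection of simples, together with the algebra-level matching of local Soergel data, into a genuine equivalence of abelian categories, i.e., to match all the Kazhdan-Lusztig polynomials $h_{y,x}$ and hence all the Hom-spaces between Soergel modules on the two sides. This is where the Hermitian symmetric nature of the maximal parabolics $\mathfrak{p}_i \subset \mathfrak{gl}_n$ enters decisively: the Schubert varieties in the associated Grassmannians are smooth, so the parabolic Kazhdan-Lusztig polynomials admit explicit combinatorial descriptions (in terms of order-preserving maps of $0\text{-}1$ sequences) that can be checked to coincide under the contraction. Once this polynomial identity is verified, the graded Soergel description and Koszul positivity together lift it to an equivalence of graded module categories, and forgetting the grading yields the required equivalence $\mathcal{O}_\lambda^{\mathrm{max}} \cong \mathcal{O}_0^{\mathrm{max}}$ for $\mathfrak{gl}_{n-2k}$.
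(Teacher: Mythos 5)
Your reduction of part (a) to the condition $W_\mu\cap W_{\mathfrak{p}_i}=\{e\}$ for some $\mu\in W\cdot\lambda$ is correct, and the pigeonhole argument for the forward direction works. But the converse as you state it is exactly backwards. By Proposition~\ref{prop411}, $L(\mu)\in\mathcal{O}^{\mathfrak{p}_i}$ forces $\mu+\rho$ to have pairwise distinct coordinates within each of the two index blocks $\{1,\dots,i\}$ and $\{i+1,\dots,n\}$; equivalently, each two-element block of $W_\mu$ must be \emph{split} across the two parts, one index on each side. If a pair-block lies \emph{entirely inside} one part, as you prescribe, the transposition of its two indices is a nontrivial element of $W_\mu\cap W_{\mathfrak{p}_i}$ and the corresponding simple is killed — your construction produces precisely the weights that are excluded. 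The fix is immediate: with $k$ pairs one has $2k\le n$, so choose any $i$ with $k\le i\le n-k$ and a conjugate of $W_\lambda$ placing one index of each pair in each part.

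Part (b) has a more serious gap, and it sits exactly where you locate the ``main obstacle.'' The paper gives no argument here; it cites \cite{ES}, where the equivalence is constructed by explicit calculation (essentially by producing an exact functor via translation/completion in the Hermitian symmetric setting), precisely because combinatorial matching alone is not enough. Your proposed resolution — verify that the parabolic Kazhdan--Lusztig polynomials agree under the contraction and then invoke ``the graded Soergel description and Koszul positivity'' — does not close the gap: equality of graded decomposition numbers determines the graded Cartan matrix and the dimensions of $\mathrm{Ext}$-spaces between simples, but a Koszul algebra is not determined up to isomorphism by this numerical data, so no equivalence follows. Moreover, the Soergel-theoretic input you cite (Theorems~\ref{thm702}, \ref{thm703}, \ref{thm705}) concerns the full category $\mathcal{O}_\lambda$ and the antidominant projective; for a \emph{singular parabolic} block $\mathcal{O}_\lambda^{\mathfrak{p}_i}$ the analogous combinatorial functor and its fullness on projectives (the double centralizer property with respect to the projective-injectives of the parabolic block) is itself a nontrivial theorem that you would have to establish before ``matching the local Soergel data'' even makes sense. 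To complete this route you would need to identify both basic algebras with one and the same explicitly presented algebra (as is done, e.g., in later diagrammatic descriptions of these Grassmannian blocks), or else construct the equivalence functorially as Enright and Shelton do.
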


To proceed we observe that the closure of a braid necessarily has an
even number of strands. This reduces our picture to the case of even $n$.
For every parabolic subgroup $W_{\mathfrak{p}}\subset W$ satisfying 
the condition of Proposition~\ref{prop1208}\eqref{prop1208.1} we fix 
some integral dominant $\lambda_{W_{\mathfrak{p}}}$ such that 
$W_{\mathfrak{p}}=W_{\lambda_{W_{\mathfrak{p}}}}$. Consider the categories
\begin{displaymath}
\mathfrak{W}_{n}:=\bigoplus_{W_{\mathfrak{p}}\subset W}
\mathcal{O}_{\lambda_{W_{\mathfrak{p}}}}^{\mathrm{max}},
\qquad\mathfrak{W}_{n}^{\mathbb{Z}}:=
\bigoplus_{W_{\mathfrak{p}}\subset W} {}^{\mathbb{Z}}
\mathcal{O}_{\lambda_{W_{\mathfrak{p}}}}^{\mathrm{max}}.
\end{displaymath}
By Proposition~\ref{prop1208}\eqref{prop1208.2} and Subsection~\ref{s11.5}
the algebra $U_v(\mathfrak{sl}_2)$ acts on 
$\mathcal{D}^b(\mathfrak{W}_{n}^{\mathbb{Z}})$ via derived Zuckerman 
functors categorifying the corresponding direct sum of tensor powers of 
$\mathcal{V}_1$. To the elementary diagrams from \eqref{eq1195}
we associate the following endofunctors of 
$\mathcal{D}^b(\mathfrak{W}_{n})$
and $\mathcal{D}^b(\mathfrak{W}_{n}^{\mathbb{Z}})$:
\begin{itemize}
\item To the cup diagram we associate translation out of the
respective wall.
\item To the cap diagram we associate translation to the respective wall.
\item To the right crossing we associate the corresponding derived 
shuffling  functor.
\item To the left crossing we associate the corresponding derived
coshuffling  functor.
\end{itemize}
Note that all these functors commute with Zuckerman 
functors and hence define functorial homomorphisms between the
corresponding categorified $U_v(\mathfrak{sl}_2)$-modules.
In the graded picture we can adjust the gradings of our functors
so that they correspond to the combinatorics of the morphisms
associated to elementary diagrams as described in the previous subsection.

For $n=2k$ let $W'\subset W$ be the unique parabolic subgroup
isomorphic to the direct sum of $k$ copies of $\mathbb{S}_2$.
Then $\mathcal{O}_{\lambda_{W'}}^{\mathrm{max}}$ is semisimple an 
contains a unique simple module, which we denote by $N$. 
Reading our closed diagram $L$
of an oriented link from bottom to top defines an endofunctor
$\cF_L$ of ${}^{\mathbb{Z}}\mathcal{O}_{\lambda_{W'}}^{\mathrm{max}}$.
The following theorem was conjectured in \cite{BFK} and proved in
\cite{St}:

\begin{theorem}[Categorification of the Jones polynomial]\label{thm1209}
Let $L$ be a diagram of an oriented link.
\begin{enumerate}[$($a$)$]
\item\label{thm1209.1} The endofunctor $\cF_L\langle 3n_+\rangle[-n_+]$
is an invariant of an oriented link.
\item\label{thm1209.2} We
have $\hat{\mathrm{J}}(L)=[\cF_L\langle 2n_--n_+\rangle[n_-]\,N]$.
\end{enumerate}
\end{theorem}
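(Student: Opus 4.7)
The plan is to reduce the statement to combinatorial invariance at the level of tangle diagrams, using Alexander's Theorem~\ref{thm1204} to write $L$ as the closure of a braid and then verifying invariance under the moves of Markov's Theorem~\ref{thm1205}, together with Reidemeister-type moves applied to the cup and cap pieces needed to realize the closure. By construction, $\cF_L$ is assembled by horizontally composing the functors assigned in Subsection~\ref{s12.4} to the four elementary diagrams of \eqref{eq1195}; since derived Zuckerman functors commute with projective functors by Proposition~\ref{prop671}, and their Koszul duals (derived shuffling and coshuffling) inherit the corresponding compatibilities via Theorem~\ref{thm1015}, this produces a well-defined endofunctor of $\mathcal{D}^b(\mathfrak{W}_n^{\mathbb{Z}})$ whose relevant categorical properties can be unpacked move by move.

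For part (b), I would compute $[\cF_L\, N]\in[\mathcal{D}^b(\mathfrak{W}_n^{\mathbb{Z}})]$ directly. The assignments of Subsection~\ref{s12.4} are engineered precisely so that on the Grothendieck group they realize the corresponding elementary $U_v(\mathfrak{sl}_2)$-module morphisms from Subsection~\ref{s12.3}; this has to be checked case by case for the cup, cap, right crossing and left crossing, using the explicit combinatorics of translation functors together with the short exact sequences for derived (co)shuffling from Proposition~\ref{prop508}\eqref{prop508.25}--\eqref{prop508.26} and Corollary~\ref{cor509}, transported to the Koszul dual parabolic setting through Theorem~\ref{thm1015}. This yields $[\cF_L\, N]=\varphi_L(1)$. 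Applying the conventions $v^i[M]=[M\langle -i\rangle]$ and $[M[1]]=-[M]$ then gives $[\cF_L\langle 2n_--n_+\rangle[n_-]\, N]=(-1)^{n_-}v^{n_+-2n_-}\varphi_L(1)$, which equals $\hat{J}(L)$ by Theorem~\ref{thm1206}.

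For part (a), the functorial invariance must be verified against the three kinds of moves that generate braid presentations of isotopic oriented links: the braid relations in $\mathbb{B}_n$, conjugation inside $\mathbb{B}_n$, and the Markov stabilization $w\sim\iota_n(w)\sigma_n^{\pm 1}$. The braid relations lift to the braid relations for derived shuffling and coshuffling functors (Proposition~\ref{prop510}), while the relation $\sigma_i\sigma_i^{-1}=1$ is witnessed by the fact that $\mathcal{L}\mathrm{C}_s$ and $\mathcal{R}\mathrm{K}_s$ are mutually inverse auto-equivalences (Proposition~\ref{prop508}\eqref{prop508.5}); these statements survive the passage to the parabolic quotients used to realize $\mathfrak{W}_n^{\mathbb{Z}}$. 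Conjugation invariance and the ``cyclic'' part of Markov closure follow from the biadjunction between translation out of and to the wall together with the adjunction $(\mathrm{C}_s,\mathrm{K}_s)$ of Proposition~\ref{prop508}\eqref{prop508.1}, combined with the naturality of these functors with respect to Zuckerman functors. The shift $\langle 3n_+\rangle[-n_+]$ is precisely what is needed to cancel the asymmetric contribution produced by each stabilization move.

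The hard part will be verifying invariance under Markov stabilization. After inserting the closing cups and caps, this reduces to showing an isomorphism of compositions of the shape ``translation out of the wall, then derived (co)shuffling, then translation to the wall,'' computed on objects lying in the image of translation functors. Establishing this will require a genuine homological computation using Proposition~\ref{prop508}\eqref{prop508.4} to control the higher derived components of shuffling, rather than a purely formal argument from adjunctions, and it is this computation that forces the specific grading and homological shifts appearing in the normalization $\langle 3n_+\rangle[-n_+]$. Comparing the answer at this step with the shift $\langle 2n_--n_+\rangle[n_-]$ needed in part (b) provides an internal consistency check.
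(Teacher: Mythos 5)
Your proposal is correct in outline and rests on the same mechanisms as the paper's (very brief) sketch: the skein relation comes from the fact that derived (co)shuffling functors are cones of the adjunction morphisms between the identity and translation through the wall, the Grothendieck-group computation in part (b) reduces to matching the elementary $U_v(\mathfrak{sl}_2)$-morphisms of Subsection~\ref{s12.3} (your sign and shift bookkeeping $[M\langle j\rangle]=v^{-j}[M]$, $[M[1]]=-[M]$ is right), and the normalization is forced by the combinatorics of translation functors at the stabilization move. The one genuine organizational difference is in part (a): you verify invariance via Alexander--Markov (braid relations, conjugation, stabilization), whereas the proof the paper points to (\cite{St}) constructs an invariant of oriented \emph{tangles} and checks Reidemeister moves together with the planar-isotopy (``zig-zag'') identities for cups and caps directly. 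The tangle route buys more --- it gives well-definedness of $\cF_L$ for an \emph{arbitrary} diagram decomposed into elementary pieces, which is what the theorem as stated requires, and it yields the stronger functorial tangle invariant of Remark~\ref{rem1210} --- while the Markov route, as you set it up, only directly handles diagrams that are braid closures; to cover arbitrary diagrams you would still need to prove the cup--cap cancellation isomorphisms (adjunction between translation to and out of the wall, with the correct grading shifts), which you mention only in passing but which are not a smaller job than the stabilization computation you single out as the hard step. With that caveat, your plan is a legitimate alternative organization of the same proof.
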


\begin{proof}[Idea of the proof.]
Derived shuffling and coshuffling functors are cones of 
adjunction morphisms between the identity functor and the corresponding
translations through walls. This may be interpreted in terms
of the skein relation \eqref{eq1293}. The normalization of
$\cF_L$ is easily computed using the combinatorics of
translations through walls. There are many technical details which
are checked by a careful analysis of the general combinatorics
of translation functors.
\end{proof}

\begin{remark}\label{rem1210}
{\rm
\begin{enumerate}[$($a$)$]
\item\label{rem1210.1} Theorem~\ref{thm1209} generalizes in a straightforward
way to give invariants of oriented tangles (this, in particular, removes
the restriction on $n$ to be even), see \cite{St}.
\item\label{rem1210.2} Theorem~\ref{thm1209} gives a functorial
invariant of oriented tangles (see also \cite{Kv2} for a different 
approach), which is a stronger invariant than the
Jones polynomial (the latter is just roughly the Euler characteristics
of the complex obtained by applying the functorial invariant).
\item\label{rem1210.3} It is known (see \cite{St4}, based on \cite{Br}) 
that Khovanov's categorification of the Jones polynomial is equivalent to
the invariants constructed in Theorem~\ref{thm1209}.
\item\label{rem1210.4} Theorem~\ref{thm1209} can be extended 
to  functorial quantum $\mathfrak{sl}_k$-invariants, see \cite{Su,MS5},
and to colored Jones polynomial, see \cite{FSS}.
\end{enumerate}
}
\end{remark}

\section{$\mathfrak{sl}_2$-categorification: of Chuang and Rouquier}\label{s13}

\subsection{Genuine $\mathfrak{sl}_2$-categorification}\label{s13.1}

In this section we describe some results from \cite{CR}.
Consider the Lie algebra $\mathfrak{sl}_2$ with the standard basis
$\mathbf{e}=\left(\begin{array}{cc}0&1\\0&0\end{array}\right)$, 
$\mathbf{f}=\left(\begin{array}{cc}0&0\\1&0\end{array}\right)$ and 
$\mathbf{h}=\left(\begin{array}{cc}1&0\\0&-1\end{array}\right)$.
Let $\Bbbk$ be a field.

\begin{definition}[\cite{CR}]\label{def1301}
{\rm  
An $\mathfrak{sl}_2$-categorification (over $\Bbbk$) is a tuple
$(\mathcal{A},\mathrm{E},\mathrm{F},q,a,\chi,\tau)$ such that:
\begin{enumerate}[$($i$)$]
\item\label{def1301.1} $\mathcal{A}$ is an artinian and noetherian
$\Bbbk$-linear abelian category in which the endomorphism ring of
every simple object is $\Bbbk$;
\item\label{def1301.2} $\mathrm{E}$ and $\mathrm{F}$ are 
endofunctors of $\mathcal{A}$ which are both left and right adjoint 
to each other;
\item\label{def1301.3} mapping $\mathbf{e}\mapsto [\mathrm{E}]$ and 
$\mathbf{f}\mapsto [\mathrm{F}]$ extends to a locally finite
$\mathfrak{sl}_2$-representation on 
$\mathbb{Q}\otimes_{\mathbb{Z}}[\mathcal{A}]$;
\item\label{def1301.4} classes of simple objects in
$\mathbb{Q}\otimes_{\mathbb{Z}}[\mathcal{A}]$ are eigenvectors
for $H$;
\item\label{def1301.5} $\tau\in\mathrm{End}(E^2)$ is such that 
\begin{displaymath}
(\mathrm{id}_E\circ_0\tau)\circ_1
(\tau\circ_0\mathrm{id}_E)\circ_1
(\mathrm{id}_E\circ_0\tau)=
(\tau\circ_0\mathrm{id}_E)\circ_1
(\mathrm{id}_E\circ_0\tau)\circ_1
(\tau\circ_0\mathrm{id}_E);
\end{displaymath}
\item\label{def1301.6} $q\in\Bbbk^*$ is such that 
$(\tau+\mathrm{id}_{E^2})(\tau-q\,\mathrm{id}_{E^2})=0$;
\item\label{def1301.7} $\chi\in\mathrm{End}(E)$ is such that 
\begin{displaymath}
\tau\circ_1(\mathrm{id}_E\circ_0\chi)\circ_1\tau=
\begin{cases}
q\, \chi\circ_0\mathrm{id}_E, & q\neq 1;\\
\chi\circ_0\mathrm{id}_E-\tau, &q=1;
\end{cases}
\end{displaymath}
\item\label{def1301.8} $a\in\Bbbk$ is such that $a\neq 0$ if $q\neq 1$
and $\chi-a$ is locally nilpotent.
\end{enumerate}
}
\end{definition}

We note that $a$ can always be adjusted to either $0$ or $1$. Indeed,
if $q\neq 1$ and $a\neq 0$, replacing $\chi$ by $a\chi$ gives
a new $\mathfrak{sl}_2$-categorification with $a=1$. 
If $q=1$, replacing $\chi$ by $\chi+a \mathrm{id}_{E}$ gives a
new $\mathfrak{sl}_2$-categorification with $a=0$.

\begin{example}\label{exm1302}
{\rm 
Let $A_{-2}=A_{2}:=\Bbbk$ and $A_0=\Bbbk[x]/(x^2)$ and set
$\mathcal{A}_i:=A_i\text{-}\mathrm{mod}$ for $i=0,\pm 2$,
$\mathcal{A}:=\mathcal{A}_{-2}\oplus \mathcal{A}_{0}\oplus
\mathcal{A}_{2}$. Note that 
both $A_{-2}$ and $A_{2}$ are unital subalgebras of $A_0$
and we have the corresponding induction and restriction functors
$\mathrm{Ind}$ and $\mathrm{Res}$. Hence we may define the 
functors $\mathrm{E}$ and $\mathrm{F}$ as given by the right 
and left arrows on the following picture, respectively:
\begin{displaymath}
\xymatrix{ 
\mathcal{A}_{-2}\ar@/^/[rrr]^{\mathrm{Ind}}
&&&\mathcal{A}_{0}
\ar@/^/[rrr]^{\mathrm{Res}}\ar@/^/[lll]^{\mathrm{Res}}
&&&\mathcal{A}_{2}\ar@/^/[lll]^{\mathrm{Ind}}
}
\end{displaymath}
Let $q=1$ and $a=0$. Define $\chi$ as multiplication by $x$
on $\mathrm{Res}:\mathcal{A}_{0}\to \mathcal{A}_{2}$ and
multiplication by $-x$ on $\mathrm{Ind}:\mathcal{A}_{-2}\to 
\mathcal{A}_{0}$. Define $\tau\in\mathrm{End}_{\Bbbk}(\Bbbk[x]/(x^2))$
as the automorphism swapping $1$ and $x$. It is easy to check that 
all conditions are satisfied and thus this is an 
$\mathfrak{sl}_2$-categorification (of the simple $3$-dimensional
$\mathfrak{sl}_2$-module).
}
\end{example}

\begin{proposition}\label{prop1321}
Let $(\mathcal{A},\mathrm{E},\mathrm{F},q,a,\chi,\tau)$ 
be an $\mathfrak{sl}_2$-categorification. For $\lambda\in\mathbb{Z}$
denote by $\mathcal{A}_{\lambda}$ the full subcategory of 
$\mathcal{A}$ consisting of all objects whose
classes in the Grothendieck group have $\mathfrak{sl}_2$-weight $\lambda$.
Then $\mathcal{A}=\oplus_{\lambda\in\mathbb{Z}}\mathcal{A}_{\lambda}$.
\end{proposition}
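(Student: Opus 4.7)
The plan is to reduce the statement to a vanishing-of-extensions claim and to deduce the latter from the existence of a suitable natural endomorphism of the identity functor that distinguishes weights.

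First, I would set up the Grothendieck-group picture. By condition (iii) of Definition~\ref{def1301}, the space $V := \mathbb{Q} \otimes_{\mathbb{Z}} [\mathcal{A}]$ is a locally finite $\mathfrak{sl}_2$-module; since every finite-dimensional $\mathfrak{sl}_2$-module is a direct sum of highest-weight simples with integer weights, $V$ decomposes as $V = \bigoplus_{\lambda \in \mathbb{Z}} V_\lambda$ into $H$-weight spaces. By (iv), for each simple object $L \in \mathcal{A}$ the class $[L]$ lies in a unique $V_{\lambda(L)}$, which defines an integer weight $\lambda(L)$ for each simple. Using (i), every object of $\mathcal{A}$ has finite length, so one can define $\mathcal{A}_\lambda$ intrinsically as the full subcategory of objects all of whose Jordan--H\"older constituents $L$ satisfy $\lambda(L) = \lambda$; this is a Serre subcategory with $\mathbb{Q} \otimes [\mathcal{A}_\lambda] = V_\lambda$, in agreement with the definition in the statement.

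The proposition then reduces to showing that the inclusions $\mathcal{A}_\lambda \hookrightarrow \mathcal{A}$ assemble into a direct sum. Because every object has finite length, it suffices to show that for simples $L, L'$ with $\lambda(L) \neq \lambda(L')$ the group $\mathrm{Ext}^1_{\mathcal{A}}(L, L')$ vanishes: then every short exact sequence with outer terms in different $\mathcal{A}_\lambda$'s splits, and induction on length yields the desired decomposition of an arbitrary $M$. To prove this extension vanishing, I would construct, from the structure data of the categorification, a natural endomorphism $z \in \mathrm{End}(\mathrm{Id}_{\mathcal{A}})$ whose action on any simple $L$ is multiplication by $\lambda(L)$. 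The natural candidate is built from the biadjunction between $\mathrm{E}$ and $\mathrm{F}$: writing $\eta: \mathrm{Id} \to \mathrm{FE}$, $\varepsilon: \mathrm{EF} \to \mathrm{Id}$ for one unit/counit pair and $\eta': \mathrm{Id} \to \mathrm{EF}$, $\varepsilon': \mathrm{FE} \to \mathrm{Id}$ for the other, the compositions $\varepsilon \circ_1 \eta'$ and $\varepsilon' \circ_1 \eta$ are natural transformations $\mathrm{Id} \to \mathrm{Id}$. An appropriate combination of such composites, normalised via the data $\chi$ and $\tau$, should categorify the relation $EF - FE = H$, and by (i), which forces $\mathrm{End}(L) = \Bbbk$, its value on any simple $L$ must be a scalar; this scalar is pinned down to be $\lambda(L)$ by compatibility with the $H$-action on $V$ coming from (iii). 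Given $z$, the endomorphism $z_M$ of any extension $0 \to L \to M \to L' \to 0$ has two distinct eigenvalues $\lambda(L), \lambda(L')$, and its generalised eigenspace decomposition splits the sequence.

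The main obstacle is the construction and identification of $z$. While producing a natural endomorphism of $\mathrm{Id}_{\mathcal{A}}$ from adjunction data is formal, matching it precisely with the $H$-action on $V$ requires compatible normalisations of the two adjunctions, which is exactly the role of the extra data $\chi, \tau, q, a$ in Definition~\ref{def1301}. Once this identification is made, the generalised-eigenspace argument on finite-length objects delivers the splitting $M = \bigoplus_\lambda M_\lambda$ with $M_\lambda \in \mathcal{A}_\lambda$, and hence the desired direct sum decomposition $\mathcal{A} = \bigoplus_{\lambda \in \mathbb{Z}} \mathcal{A}_\lambda$.
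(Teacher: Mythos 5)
Your reduction of the statement to the vanishing of $\mathrm{Ext}^1_{\mathcal{A}}(L,L')$ for simple objects $L,L'$ of different weights is correct, and so is your identification of $\mathcal{A}_{\lambda}$ with the full subcategory of objects all of whose composition factors have weight $\lambda$ (this uses that $[\mathcal{A}]$ is free on the classes of simples and that these classes are weight vectors). The gap is in the mechanism you propose for the $\mathrm{Ext}$-vanishing. Definition~\ref{def1301}\eqref{def1301.2} only asserts the \emph{existence} of biadjunctions between $\mathrm{E}$ and $\mathrm{F}$; a composite of a unit of one adjunction with a counit of another is an element of $\mathrm{End}(\mathrm{Id}_{\mathcal{A}})$ that is not pinned down by the axioms (it changes if the adjunction data are rescaled), and nothing in Definition~\ref{def1301} relates it to the operator $[\mathrm{E}][\mathrm{F}]-[\mathrm{F}][\mathrm{E}]$ on the Grothendieck group. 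Producing a natural transformation of $\mathrm{Id}_{\mathcal{A}}$ acting on each simple $L$ by $\lambda(L)$ is essentially the categorical $\mathfrak{sl}_2$-relation $\mathrm{E}\mathrm{F}|_{\mathcal{A}_{\lambda}}\cong\mathrm{F}\mathrm{E}|_{\mathcal{A}_{\lambda}}\oplus\mathrm{Id}^{\oplus\lambda}$, which in \cite{CR} is a substantially later theorem whose proof already uses the decomposition you are trying to establish; so the argument is circular. Worse, even granting such a $z$, the eigenvalue argument collapses whenever $\mathrm{char}\,\Bbbk=p>0$ and $\lambda(L)\equiv\lambda(L')\pmod{p}$, since then $\lambda(L)\cdot 1_{\Bbbk}=\lambda(L')\cdot 1_{\Bbbk}$ and $z_M$ has a single eigenvalue. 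Positive characteristic is not excluded by Definition~\ref{def1301} and is exactly the case needed for the applications in Section~\ref{s14}.

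The vanishing can instead be obtained directly from the adjunctions, with no central element and over any field. Since $[X]\neq 0$ for every nonzero object $X$, condition \eqref{def1301.3} forces $\mathrm{E}$ and $\mathrm{F}$ to be locally nilpotent as functors; moreover, for a simple $L$ the difference between the maximal $j$ with $\mathrm{F}^jL\neq 0$ and the maximal $i$ with $\mathrm{E}^iL\neq 0$ equals $\lambda(L)$, because $[L]$ is a weight vector in an integrable $\mathfrak{sl}_2$-module. Hence, given a non-split extension $0\to L'\to M\to L\to 0$ with $\lambda(L)\neq\lambda(L')$, one can (after possibly exchanging the roles of $\mathrm{E}$ and $\mathrm{F}$) choose $n$ maximal with $\mathrm{E}^nM\neq 0$ so that $\mathrm{E}^n$ kills exactly one of the two simple constituents; then $\mathrm{E}^nM\cong\mathrm{E}^nS$ for a single constituent $S$, and the unit $M\to\mathrm{F}^n\mathrm{E}^nM$ (respectively the counit $\mathrm{F}^n\mathrm{E}^nM\to M$ of the other adjunction) either splits the sequence or exhibits $M$ as a subobject (respectively a quotient) of $\mathrm{F}^n\mathrm{E}^nS$. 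Since $[\mathrm{F}^n\mathrm{E}^nS]=[\mathrm{F}]^n[\mathrm{E}]^n[S]$ lies in a single weight space, every composition factor of $\mathrm{F}^n\mathrm{E}^nS$ has weight $\lambda(S)$, contradicting the presence of the other constituent in $M$. This is the route taken in \cite{CR}, and it uses only exactness and biadjointness of $\mathrm{E}$ and $\mathrm{F}$ together with the $K_0$-level axioms.
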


\subsection{Affine Hecke algebras}\label{s13.2}

Let $n\in\mathbb{N}_0$ and $q\in\Bbbk^*$. 

\begin{definition}[Non-degenerate affine Hecke algebra]\label{def1303}
{\rm
If $q\neq 1$, then the affine 
Hecke algebra $\mathbb{H}_n(q)$ is defined as the $\Bbbk$-algebra
with generators $T_1,\dots, T_{n-1}$, $X_1^{\pm 1},\dots,X_n^{\pm 1}$
subject to the following relations:
\begin{displaymath}
\begin{array}{rclccrcll}
(T_i+1)(T_i-q)&=&0;&&&T_iT_j&=&T_jT_i,&|i-j|>1;\\
T_iT_{i+1}T_i&=&T_{i+1}T_iT_{i+1};&&&X_iX_j&=&X_jX_i;&\\
X_iX_i^{-1}&=&1;&&&X_i^{-1}X_i&=&1;&\\
T_iX_iT_i&=&qX_{i+1};&&&X_iT_j&=&T_jX_i,&i-j\neq 0,1.
\end{array}
\end{displaymath}
}
\end{definition}

Note that the subalgebra $\mathbb{H}^f_n(q)$ of $\mathbb{H}_n(q)$, 
generated by the $T_i$'s, is just an alternative version of the Hecke 
algebra $\mathbb{H}_n$ for $\mathbb{S}_n$ from Subsection~\ref{s7.4}.

\begin{definition}[Degenerate affine Hecke algebra]\label{def1304}
{\rm
If $q=1$, then the {\em degenerate} affine 
\index{degenerate affine Hecke algebra}
Hecke algebra $\mathbb{H}_n(1)$ is defined as the $\Bbbk$-algebra
with generators $T_1,\dots, T_{n-1}$, $X_1,\dots,X_n$
subject to the following relations:
\begin{displaymath}
\begin{array}{rclccrcll}
T_i^2&=&1;&&&T_iT_j&=&T_jT_i,&|i-j|>1;\\
T_iT_{i+1}T_i&=&T_{i+1}T_iT_{i+1};&&&X_iX_j&=&X_jX_i;&\\
X_{i+1}T_i&=&T_iX_i+1;&&&X_iT_j&=&T_jX_i,&i-j\neq 0,1.
\end{array}
\end{displaymath}
}
\end{definition}

Note that the degenerate affine Hecke algebra is not the specialization of
the affine Hecke algebra at $q=1$. The subalgebra $\mathbb{H}^f_n(1)$ 
of $\mathbb{H}_n(1)$, generated by the $T_i$'s, is isomorphic to the
group algebra of the symmetric group. Relevance of affine Hecke algebras
for $\mathrm{sl}_2$-categorifications is justified by the following
statement:

\begin{lemma}\label{lem1305}
If $(\mathcal{A},\mathrm{E},\mathrm{F},q,a,\chi,\tau)$ is
an $\mathfrak{sl}_2$-categorification, then for all
$n\in\mathbb{N}_0$ mapping
\begin{displaymath}
T_i\mapsto \mathrm{id}_{E^{n-i-1}}\circ_0 \tau\circ_0
\mathrm{id}_{E^{i-1}}\quad\text{ and }\quad
X_i\mapsto \mathrm{id}_{E^{n-i}}\circ_0 \chi\circ_0
\mathrm{id}_{E^{i-1}}
\end{displaymath}
extends to a homomorphism $\gamma_n:\mathbb{H}_n(q)\to 
\mathrm{End}(\mathrm{E}^n)$.
\end{lemma}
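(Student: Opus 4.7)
The plan is to verify directly, case by case, that the proposed images of the generators $T_i$ and $X_i$ satisfy all the defining relations of $\mathbb{H}_n(q)$ listed in Definition~\ref{def1303} (resp.~Definition~\ref{def1304}). Once every relation is confirmed, the universal property of a presentation by generators and relations produces the required algebra homomorphism $\gamma_n$. The relations split naturally into three groups, each handled by a different ingredient of Definition~\ref{def1301}.

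First, I would dispatch the \emph{disjoint-support} relations: $X_iX_j=X_jX_i$ (all $i,j$), $T_iT_j=T_jT_i$ for $|i-j|>1$, and $X_iT_j=T_jX_i$ for $i-j\neq 0,1$. In each of these, the two operators involve $\chi$'s and/or $\tau$'s inserted in pairwise disjoint tensor slots of $\mathrm{E}^n$, so the equality follows from a single application of the interchange law
\begin{displaymath}
(\alpha\circ_0\beta)\circ_1(\gamma\circ_0\delta)=(\alpha\circ_1\gamma)\circ_0(\beta\circ_1\delta),
\end{displaymath}
which lets one slide the two operators past each other by rewriting both products in the common "factored" form $\alpha\circ_0\mathrm{id}\circ_0\beta$.

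Second, the \emph{local} relations supported on only two or three adjacent slots are direct translations of the axioms. The quadratic relation $(T_i+1)(T_i-q)=0$ (or $T_i^2=1$ in the degenerate case) is the image of axiom~\eqref{def1301.6} under the embedding $\mathrm{id}_{E^{n-i-1}}\circ_0(\cdot)\circ_0\mathrm{id}_{E^{i-1}}:\mathrm{End}(E^2)\to\mathrm{End}(\mathrm{E}^n)$. The braid relation $T_iT_{i+1}T_i=T_{i+1}T_iT_{i+1}$ is the image of axiom~\eqref{def1301.5} under $\mathrm{id}_{E^{n-i-2}}\circ_0(\cdot)\circ_0\mathrm{id}_{E^{i-1}}$. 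The cross relation $T_iX_iT_i=qX_{i+1}$ (for $q\ne 1$) is the image of axiom~\eqref{def1301.7} under the same two-slot embedding, since with our indexing $X_i$ puts $\chi$ in the right-hand slot of the pair $(i,i+1)$ and $X_{i+1}$ puts it in the left-hand slot. In the degenerate case $q=1$, one uses $\tau^2=\mathrm{id}_{E^2}$ from axiom~\eqref{def1301.6} to multiply axiom~\eqref{def1301.7} on the right by $\tau$, obtaining
\begin{displaymath}
\tau\circ_1(\mathrm{id}_E\circ_0\chi)=(\chi\circ_0\mathrm{id}_E)\circ_1\tau-\mathrm{id}_{E^2},
\end{displaymath}
which rewrites as $X_{i+1}T_i=T_iX_i+1$ upon embedding in $\mathrm{End}(\mathrm{E}^n)$.

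The main obstacle is not conceptual but bookkeeping: keeping the index conventions straight. With the chosen normalization, the slot in which $\chi$ sits in $X_i$ is counted from the right, while $T_i$ lives on the adjacent pair of slots from the right; it is worth writing this out once in a table to make sure every relation in $\mathbb{H}_n(q)$ translates to an identity on the correct two or three tensor factors. Once this dictionary is fixed, each relation is either an instance of the interchange law or the image under $\mathrm{id}_{E^a}\circ_0(\cdot)\circ_0\mathrm{id}_{E^b}$ of one of the axioms~\eqref{def1301.5}--\eqref{def1301.7}, so no further calculation is needed. The hypothesis~\eqref{def1301.8} on local nilpotence of $\chi-a$ plays no role in this verification (it only becomes relevant when one later decomposes $\mathrm{E}^n$ into generalized eigenspaces of the $X_i$'s).
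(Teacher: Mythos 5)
The paper itself gives no proof of this lemma (it is quoted from \cite{CR}), so there is nothing to compare against; your strategy --- check each defining relation of $\mathbb{H}_n(q)$ against the axioms of Definition~\ref{def1301}, then invoke the universal property of a presentation --- is the standard and essentially only route, and your index bookkeeping is correct: the disjoint-support relations follow from the interchange law, the quadratic and braid relations for the $T_i$ are the images of axioms~\eqref{def1301.6} and~\eqref{def1301.5} under the (unital, $\circ_1$-multiplicative) embedding $\phi\mapsto\mathrm{id}_{E^{a}}\circ_0\phi\circ_0\mathrm{id}_{E^{b}}$, and your treatment of $T_iX_iT_i=qX_{i+1}$ and of the degenerate relation $X_{i+1}T_i=T_iX_i+1$ (multiplying axiom~\eqref{def1301.7} by $\tau$ and using $\tau^2=\mathrm{id}_{E^2}$) is right.

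There is, however, one concrete gap: your closing remark that hypothesis~\eqref{def1301.8} ``plays no role in this verification'' is false in the non-degenerate case. For $q\neq 1$ the presentation of $\mathbb{H}_n(q)$ in Definition~\ref{def1303} has $X_i^{-1}$ among its generators and $X_iX_i^{-1}=X_i^{-1}X_i=1$ among its relations, so to extend the assignment to a homomorphism you must produce an inverse of $\gamma_n(X_i)=\mathrm{id}_{E^{n-i}}\circ_0\chi\circ_0\mathrm{id}_{E^{i-1}}$, i.e.\ you must show that $\chi$ is an invertible element of $\mathrm{End}(\mathrm{E})$. This is exactly what axiom~\eqref{def1301.8} delivers: when $q\neq 1$ one has $a\neq 0$ and $\chi-a$ locally nilpotent, so on every object $\chi=a\,\mathrm{id}+(\chi-a)$ is a nonzero scalar plus a commuting nilpotent, hence invertible, and the objectwise inverses of a natural isomorphism again form a natural transformation. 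Without this observation the map $\gamma_n$ is not defined on all the generators, so the proof is incomplete as written; adding one sentence to this effect (and deleting the parenthetical claim) closes the gap.
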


For $w\in W$ choose some reduces decomposition  
$w=s_{i_1}s_{i_2}\cdots s_{i_k}$ and set $T_w:=T_{i_1}T_{i_2}\cdots
T_{i_k}$. Let $\mathtt{1}$ and $\mathrm{sign}$ denote the 
one-dimensional representations of $\mathbb{H}^f_n(q)$ given by 
$T_i\mapsto q$ and $T_i\mapsto -1$, respectively. Define
\begin{displaymath}
c_n^{\mathtt{1}}:=\sum_{w\in W} T_w;\qquad\qquad
c_n^{\mathrm{sign}}:=\sum_{w\in W} (-q)^{-\mathfrak{l}(w)}T_w.
\end{displaymath}
Then both $c_n^{\mathtt{1}}$ and $c_n^{\mathrm{sign}}$ belong to the
center of $\mathbb{H}^f_n(q)$. For $\alpha\in\{\mathtt{1},\mathrm{sign}\}$
and $n\in\mathbb{N}_0$ define $\mathrm{E}^{(\alpha,n)}$ to be the
image of $c_n^{\alpha}:\mathrm{E}^{n}\to\mathrm{E}^{n}$.

\begin{corollary}\label{cor1306}
For  $\alpha\in\{\mathtt{1},\mathrm{sign}\}$
and $n\in\mathbb{N}_0$ we have 
\begin{displaymath}
\mathrm{E}^{n} \cong\underbrace{\mathrm{E}^{(\alpha,n)}\oplus
\mathrm{E}^{(\alpha,n)}\oplus\cdots\oplus\mathrm{E}^{(\alpha,n)}}_{
\text{$n!$ summands}}.
\end{displaymath}
\end{corollary}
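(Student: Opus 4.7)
The plan is to exploit the $\mathbb{H}^f_n(q)$-action on $\mathrm{E}^n$ provided by Lemma~\ref{lem1305} (the restriction of $\gamma_n$ to the finite Hecke subalgebra generated by $T_1,\ldots,T_{n-1}$) and to combine a Wedderburn decomposition with the rigidity coming from the genuine $\mathfrak{sl}_2$-categorification axioms. First I would verify quasi-idempotence of $c_n^\alpha$: using only the braid and quadratic relations, a standard calculation gives $T_w\cdot c_n^{\mathtt{1}}=q^{\mathfrak{l}(w)}c_n^{\mathtt{1}}$ and hence $(c_n^{\mathtt{1}})^2=P_n(q)\,c_n^{\mathtt{1}}$, where $P_n(q):=\sum_{w\in W}q^{\mathfrak{l}(w)}$ is the Poincaré polynomial; a parallel computation yields $T_ic_n^{\mathrm{sign}}=-c_n^{\mathrm{sign}}$, hence $(c_n^{\mathrm{sign}})^2=P_n(q^{-1})c_n^{\mathrm{sign}}$. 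Both scalars specialize to $n!$ at $q=1$ and are nonzero in the generic range, so $c_n^\alpha$ is, up to invertible rescaling, an idempotent in $\mathbb{H}^f_n(q)$. Consequently $\gamma_n(c_n^\alpha)$ is a (rescaled) idempotent endomorphism of $\mathrm{E}^n$, and $\mathrm{E}^{(\alpha,n)}$ is the direct summand it cuts out.

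Next, under the standing semisimplicity assumption on $\mathbb{H}^f_n(q)$ (valid both at $q=1$, where the algebra is $\Bbbk[\mathbb{S}_n]$, and generically), Wedderburn furnishes $\mathbb{H}^f_n(q)\cong\bigoplus_{\lambda\vdash n}\mathrm{End}_\Bbbk(S^\lambda)$, with $d_\lambda:=\dim S^\lambda$. The $\mathbb{H}^f_n(q)$-action on $\mathrm{E}^n$ therefore induces an isotypic decomposition
\[
\mathrm{E}^n \;\cong\; \bigoplus_{\lambda\vdash n} S^\lambda\otimes_\Bbbk F_\lambda,
\]
where $F_\lambda$ is the multiplicity functor, realised as the image of any primitive idempotent in the $\lambda$-block (well-defined up to isomorphism). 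For $\alpha=\mathtt{1}$ the relevant partition is $(n)$ with $d_\lambda=1$, so $F_{(n)}=\mathrm{E}^{(\mathtt{1},n)}$; symmetrically $F_{(1^n)}=\mathrm{E}^{(\mathrm{sign},n)}$.

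The main obstacle, and the core of the proof, is to upgrade this to a plain-functor statement by showing that each multiplicity functor $F_\lambda$ is isomorphic, as an endofunctor of $\mathcal{A}$ (forgetting its residual $\mathbb{H}^f_n(q)$-structure), to $d_\lambda$ copies of the basic divided power $\mathrm{E}^{(\alpha,n)}$. The strategy is to invoke the \emph{full} affine Hecke algebra from Lemma~\ref{lem1305}: the polynomial generators $X_i$ together with the Bernstein-type relation (condition (vii) of Definition~\ref{def1301}) relating $\chi$ and $\tau$, and the local nilpotence/semisimplicity of $\chi-a$ (condition (viii)), should furnish intertwining elements of $\mathbb{H}_n(q)$ whose images under $\gamma_n$ conjugate primitive idempotents in different $\lambda$-blocks of $\mathbb{H}^f_n(q)$ into matching systems of orthogonal endomorphisms of $\mathrm{E}^n$. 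This should be complemented by a Grothendieck-level check: inside the $\mathfrak{sl}_2$-module $[\mathcal{A}]$ one has $[\mathrm{E}]^n=n!\,[\mathrm{E}]^{(n)}$ in terms of divided powers, which rigidifies the multiplicities and pins them down to $d_\lambda$ in each block. Granting the identification $F_\lambda\cong(\mathrm{E}^{(\alpha,n)})^{\oplus d_\lambda}$, one concludes
\[
\mathrm{E}^n \;\cong\; \bigoplus_{\lambda\vdash n} S^\lambda\otimes_\Bbbk (\mathrm{E}^{(\alpha,n)})^{\oplus d_\lambda} \;\cong\; (\mathrm{E}^{(\alpha,n)})^{\oplus\sum_\lambda d_\lambda^2} \;=\; (\mathrm{E}^{(\alpha,n)})^{\oplus n!},
\]
using $\sum_{\lambda\vdash n}d_\lambda^2=|\mathbb{S}_n|=n!$, as required.
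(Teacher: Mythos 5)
Your opening computation is correct: $T_w c_n^{\mathtt{1}}=q^{\mathfrak{l}(w)}c_n^{\mathtt{1}}$ and hence $(c_n^{\alpha})^2=P_n(q)\,c_n^{\alpha}$ with $P_n(q)=\sum_{w}q^{\mathfrak{l}(w)}=[n]_q!$. But this is exactly where the argument breaks: Definition~\ref{def1301} allows an arbitrary field $\Bbbk$ and arbitrary $q\in\Bbbk^*$, and in the applications this corollary exists for (Section~\ref{s14}: blocks of $\mathbb{F}[\mathbb{S}_n]$ in characteristic $p$, where $q=1$ and $p\leq n$; more generally $q$ a root of unity of order $\leq n$) one has $P_n(q)=0$. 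Then $c_n^{\alpha}$ is \emph{not} an idempotent up to an invertible scalar (it can even square to zero), $\mathbb{H}^f_n(q)$ is not semisimple, and there is no Wedderburn/isotypic decomposition of $\mathrm{E}^n$. Your ``standing semisimplicity assumption'' therefore excludes precisely the cases the statement is designed for, and in those cases every subsequent step of your plan (the decomposition $\mathrm{E}^n\cong\bigoplus_\lambda S^\lambda\otimes F_\lambda$, the count $\sum_\lambda d_\lambda^2=n!$) has no meaning. Even in the semisimple case, your central claim $F_\lambda\cong(\mathrm{E}^{(\alpha,n)})^{\oplus d_\lambda}$ is only gestured at; a Grothendieck-group identity such as $[\mathrm{E}]^n=n!\,[\mathrm{E}]^{(n)}$ cannot ``rigidify'' an isomorphism of functors, and in any case presupposes what is being proved.

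The actual mechanism (due to Chuang--Rouquier) is the one your parenthetical remark about the $X_i$ hints at, and it must carry the whole proof rather than serve as a patch. By axiom \eqref{def1301.8} of Definition~\ref{def1301}, $\chi-a$ is locally nilpotent, so on every object (artinian and noetherian by axiom \eqref{def1301.1}) the action of the ideal $\mathfrak{n}_n\subset\mathbb{P}_n$ through $\gamma_n$ is nilpotent; hence $\gamma_n$ factors, objectwise and compatibly, through $\overline{\mathbb{H}}_n(q)=\mathbb{H}_n(q)/(\mathbb{H}_n(q)\mathfrak{n}_n)$ (or its $\mathfrak{n}_n$-adic completion). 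One then shows that $\overline{\mathbb{H}}_n(q)$ is a matrix algebra of size $n!$ over a local ring --- this is what the freeness statement of Lemma~\ref{lem1309} and the locality of the algebras $\overline{\mathbb{H}}_{k,n}(q)$ are for --- and that $c_n^{\alpha}$ generates $\overline{\mathbb{H}}_n(q)$ as a two-sided ideal with $c_n^{\alpha}\overline{\mathbb{H}}_n(q)c_n^{\alpha}$ local, so that $\overline{\mathbb{H}}_n(q)c_n^{\alpha}$ is \emph{the} indecomposable projective and $\overline{\mathbb{H}}_n(q)\cong(\overline{\mathbb{H}}_n(q)c_n^{\alpha})^{\oplus n!}$ as a left module. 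Decomposing $1=\sum_{i=1}^{n!}e_i$ into the corresponding orthogonal, pairwise conjugate idempotents and applying $\gamma_n$ gives $\mathrm{E}^n\cong\bigoplus_i e_i\mathrm{E}^n$ with all summands isomorphic to $\mathrm{im}(c_n^{\alpha})=\mathrm{E}^{(\alpha,n)}$. No semisimplicity of $\mathbb{H}^f_n(q)$ enters, and the finite Hecke subalgebra alone is genuinely insufficient: the affine generators are what make $c_n^{\alpha}$ cut out a direct summand even when $P_n(q)=0$.
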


\begin{example}\label{exm1307}
{\rm 
Let $A_{-2}=A_{2}:=\Bbbk[x]/(x^2)$ and $A_0=\Bbbk$ and set
$\mathcal{A}_i:=A_i\text{-}\mathrm{mod}$ for $i=0,\pm 2$,
$\mathcal{A}:=\mathcal{A}_{-2}\oplus \mathcal{A}_{0}\oplus
\mathcal{A}_{2}$. Then $A_0$ is a unital subalgebra of
both $A_{-2}$ and $A_{2}$ and we have the corresponding induction 
and restriction functors $\mathrm{Ind}$ and $\mathrm{Res}$
as in Example~\ref{exm1302}. Hence we may define the 
functors $\mathrm{E}$ and $\mathrm{F}$ as given by the right 
and left arrows on the following picture, respectively:
\begin{displaymath}
\xymatrix{ 
\mathcal{A}_{-2}\ar@/^/[rrr]^{\mathrm{Res}}
&&&\mathcal{A}_{0}
\ar@/^/[rrr]^{\mathrm{Ind}}\ar@/^/[lll]^{\mathrm{Ind}}
&&&\mathcal{A}_{2}\ar@/^/[lll]^{\mathrm{Res}}
}
\end{displaymath}
Note that this is a weak categorification of the simple
$3$-dimensional $\mathfrak{sl}_2$-module.
The functor $\mathrm{E}^2:\mathcal{A}_{-2}\to \mathcal{A}_{2}$
is isomorphic to tensoring with the bimodule
$\Bbbk[x]/(x^2)\otimes_{\Bbbk}\Bbbk[x]/(x^2)$, which is easily
seen to be indecomposable. This contradicts Corollary~\ref{cor1306}
and hence this is not an $\mathfrak{sl}_2$-categorification.
}
\end{example}

\subsection{Morphisms of $\mathfrak{sl}_2$-categorifications}\label{s13.3}

Let $\mathfrak{A}:=(\mathcal{A},\mathrm{E},\mathrm{F},q,a,\chi,\tau)$ and
$\mathfrak{A}':=(\mathcal{A}',\mathrm{E}',\mathrm{F}',q',a',\chi',\tau')$ be
two $\mathfrak{sl}_2$-categorifications. Denote by
$\varepsilon:\mathrm{E}\circ \mathrm{F}\to\mathrm{Id}$,
$\varepsilon':\mathrm{E}'\circ \mathrm{F}'\to\mathrm{Id}$,
$\eta:\mathrm{Id}\to\mathrm{E}\circ \mathrm{F}$ and
$\eta':\mathrm{Id}\to\mathrm{E}'\circ \mathrm{F}'$
some fixed counits and units, respectively.

\begin{definition}[\cite{CR}]\label{def1308}
{\rm 
A morphism of $\mathfrak{sl}_2$-categorifications 
from $\mathfrak{A}'$ to $\mathfrak{A}$ is a tuple
$(\mathrm{R},\xi,\zeta)$ such that:
\begin{enumerate}[$($i$)$]
\item\label{def1308.1} $\mathrm{R}:\mathcal{A}'\to \mathcal{A}$ is a functor;
\item\label{def1308.2} $\xi:\mathrm{R}\circ\mathrm{E}'\cong \mathrm{E}\circ\mathrm{R}$ is an isomorphism of functors;
\item\label{def1308.3} $\zeta:\mathrm{R}\circ\mathrm{F}'\cong \mathrm{F}\circ\mathrm{R}$ is an isomorphism of functors;
\item\label{def1308.4} the following diagram commutes:
\begin{displaymath}
\xymatrix{
\mathrm{R}\circ\mathrm{F}'\ar[rrr]^{\zeta}
\ar[d]_{\eta_{\mathrm{R}\circ\mathrm{F}'}}
&&&\mathrm{F}\circ\mathrm{R}
\ar[d]^{\mathrm{F}\circ\mathrm{R}(\varepsilon')}\\
\mathrm{F}\circ\mathrm{E}\circ\mathrm{R}\circ\mathrm{F}'
\ar[rrr]^{\mathrm{F}(\xi^{-1}_{\mathrm{F}'})}
&&&\mathrm{F}\circ\mathrm{R}\circ\mathrm{E}'\circ\mathrm{F}';
}
\end{displaymath}
\item\label{def1308.5} $a=a'$, $q=q'$ and the following diagrams commute:
\begin{displaymath}
\xymatrix{
\mathrm{R}\circ\mathrm{E}'\ar[rr]^{\xi}
\ar[d]_{\mathrm{R}(\chi')}&&\mathrm{E}\circ\mathrm{R}
\ar[d]^{\chi_{\mathrm{R}}}\\ 
\mathrm{R}\circ\mathrm{E}'\ar[rr]^{\xi}&&\mathrm{E}\circ\mathrm{R};\\ 
}
\end{displaymath} 
\begin{displaymath}
\xymatrix{
\mathrm{R}\circ\mathrm{E}'\circ\mathrm{E}'
\ar[rr]^{\xi_{\mathrm{E}'}}\ar[d]_{\mathrm{R}(\tau')}&&
\mathrm{E}\circ\mathrm{R}\circ\mathrm{E}'
\ar[rr]^{\mathrm{E}(\xi)}&&
\mathrm{E}\circ\mathrm{E}\circ\mathrm{R}
\ar[d]_{\tau_{\mathrm{R}}}\\
\mathrm{R}\circ\mathrm{E}'\circ\mathrm{E}'
\ar[rr]^{\xi_{\mathrm{E}'}}&&
\mathrm{E}\circ\mathrm{R}\circ\mathrm{E}'
\ar[rr]^{\mathrm{E}(\xi)}&&
\mathrm{E}\circ\mathrm{E}\circ\mathrm{R}.
}
\end{displaymath} 
\end{enumerate}
} 
\end{definition}

In the terminology from \cite{Kh}, a morphism of two
$\mathfrak{sl}_2$-categorifications is a functor
from $\mathcal{A}'$ to $\mathcal{A}$ which {\em naturally} 
intertwines the functorial action of $\mathfrak{sl}_2$
on these two categories (see Subsection~\ref{s3.8}).

\subsection{Minimal $\mathfrak{sl}_2$-categorification of simple
finite dimensional modules}\label{s13.4}

Let $a$ and $q$ be as above. Let $\mathbb{P}_n$ denote the
subalgebra of $\mathbb{H}_n(q)$, generated by 
$X_1,\dots,X_n$ (and $X_1^{-1},\dots,X_n^{-1}$ if $q\neq 1$).
Denote by $\mathfrak{m}_n$ the ideal of $\mathbb{P}_n$ generated by 
$X_i-a$ and set $\mathfrak{n}_n=(\mathfrak{m}_n)^{\mathbb{S}_n}$
(were $\mathbb{S}_n$ acts on $\mathbb{P}_n$, as usual, by permuting 
the variables). Set $\overline{\mathbb{H}}_n(q):=
\mathbb{H}_n(q)/(\mathbb{H}_n(q)\mathfrak{n}_n)$.

For $k\leq n$ there is the obvious natural map 
$\mathbb{H}_k(q)\mapsto \mathbb{H}_n(q)$ (it is the identity on 
generators of $\mathbb{H}_k(q)$). We denote by 
$\overline{\mathbb{H}}_{k,n}(q)$ the image of $\mathbb{H}_k$
in $\overline{\mathbb{H}}_n(q)$. One can show that the algebra
$\overline{\mathbb{H}}_{k,n}(q)$ is local, symmetric and
independent of $a$ and $q$ up to isomorphism.

\begin{lemma}\label{lem1309}
For $i\leq j$ the algebra $\overline{\mathbb{H}}_{j,n}(q)$ 
is a free $\overline{\mathbb{H}}_{i,n}(q)$-module of rank
$\frac{(n-i)!j!}{(n-j)!i!}$.
\end{lemma}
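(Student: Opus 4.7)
The plan is to reduce to the degenerate case ($q=1$, $a=0$) and exploit a filtration whose associated graded is a smash product. By the assertion immediately preceding the lemma, $\overline{\mathbb{H}}_{k,n}(q)$ is independent of $q$ and $a$ up to isomorphism, so I will work throughout at $q=1$, $a=0$: then $\mathbb{H}_n(1)$ is the degenerate affine Hecke algebra and $\mathfrak{n}_n$ is the augmentation ideal of $\mathbb{P}_n^{\mathbb{S}_n}$. The Bernstein center of $\mathbb{H}_n(1)$ equals $\mathbb{P}_n^{\mathbb{S}_n}$, so $\mathfrak{n}_n$ is central; combined with the PBW decomposition $\mathbb{H}_n(1)=\bigoplus_{w\in\mathbb{S}_n}T_w\mathbb{P}_n$ this gives
\begin{displaymath}
\overline{\mathbb{H}}_n(1)=\bigoplus_{w\in\mathbb{S}_n}T_w\cdot C_n,
\end{displaymath}
where $C_n=\mathbb{P}_n/\mathfrak{n}_n\mathbb{P}_n$ is the classical coinvariant algebra of $\mathbb{S}_n$, of dimension $n!$ by Chevalley. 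Placing each $X_i$ in degree one and each $T_w$ in degree zero yields a filtration (well-defined at $a=0$ because $\mathfrak{n}_n$ is homogeneous) whose associated graded is the smash product $\mathrm{gr}\,\overline{\mathbb{H}}_n(1)\cong C_n\rtimes\Bbbk[\mathbb{S}_n]$, with $\mathbb{S}_n$ acting on $C_n$ by permutation of variables.

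Next I identify the subalgebras and compute dimensions. The image of $\mathbb{H}_k(1)=\bigoplus_{w\in\mathbb{S}_k}T_w\mathbb{P}_k$ inherits the filtration, yielding $\mathrm{gr}\,\overline{\mathbb{H}}_{k,n}(1)\cong V_k\rtimes\Bbbk[\mathbb{S}_k]$, where $V_k\subset C_n$ is the subring generated by the images of $X_1,\dots,X_k$. Using the relations $e_j(X_1,\dots,X_n)=0$ for $j\geq 1$ in $C_n$ to express each $e_b(X_{k+1},\dots,X_n)$ in terms of $X_1,\dots,X_k$, one checks that $V_k$ coincides with the image of $\mathbb{P}_n^{\mathbb{S}_{n-k}}$ in $C_n$; by Borel's theorem this is the cohomology ring of the partial flag variety $\mathrm{Fl}_{k,n}$ of flags $V_1\subset\cdots\subset V_k\subset\mathbb{C}^n$, with Hilbert series $\binom{n}{k}_t[k]_t!$, yielding $\dim V_k=n!/(n-k)!$ and $\dim\overline{\mathbb{H}}_{k,n}(1)=n!k!/(n-k)!$.

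For freeness, a standard lifting argument for filtered modules reduces the problem to showing that $V_j\rtimes\Bbbk[\mathbb{S}_j]$ is free as a right $V_i\rtimes\Bbbk[\mathbb{S}_i]$-module of the claimed rank. Two ingredients combine: first, Leray-Hirsch applied to the fiber bundle $\mathrm{Fl}_{j,n}\to\mathrm{Fl}_{i,n}$ (fiber $\mathrm{Fl}_{j-i,n-i}$) shows that $V_j$ is free over $V_i$ of rank $(n-i)!/(n-j)!$; fix such a basis $\{p_\alpha\}$. Second, letting $D\subset\mathbb{S}_j$ be a set of shortest $\mathbb{S}_j/\mathbb{S}_i$-coset representatives (so $|D|=j!/i!$), a direct calculation in the smash product yields the decomposition
\begin{displaymath}
V_j\rtimes\Bbbk[\mathbb{S}_j]=\bigoplus_{d\in D}V_j\otimes d\cdot\Bbbk[\mathbb{S}_i]
\end{displaymath}
of right $V_i\rtimes\Bbbk[\mathbb{S}_i]$-modules, each summand being isomorphic to $V_j\rtimes\Bbbk[\mathbb{S}_i]$ via $v\otimes dh\mapsto d^{-1}(v)\otimes h$. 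Together these give the free basis $\{d\cdot p_\alpha : d\in D,\ \alpha\}$ of cardinality $(j!/i!)\cdot(n-i)!/(n-j)!=(n-i)!j!/((n-j)!i!)$. The main obstacle will be the Hilbert-series identification $\dim V_k=n!/(n-k)!$, which requires the genuine geometric input that the subring of $C_n$ generated by the first $k$ variables coincides with the image of the $\mathbb{S}_{n-k}$-invariants; everything else is essentially bookkeeping with PBW and the standard structure of smash products.
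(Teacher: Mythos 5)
The paper states this lemma without proof (it is quoted from \cite{CR}), so there is no in-text argument to compare against; judged on its own terms, your proof is essentially correct and is the standard argument: PBW plus centrality of $\mathfrak{n}_n$ identifies $\overline{\mathbb{H}}_{n}$ with $\bigoplus_{w}T_w C_n$, the image of $\mathbb{H}_k$ with $\bigoplus_{w\in\mathbb{S}_k}T_w V_k$ where $V_k$ is the image of $\mathbb{P}_n^{\mathbb{S}_{n-k}}$ (a parabolic coinvariant algebra of dimension $n!/(n-k)!$), and freeness then factors into freeness of $V_j$ over $V_i$ of rank $(n-i)!/(n-j)!$ times the coset decomposition $\mathbb{S}_j=\bigsqcup_{d\in D}d\,\mathbb{S}_i$ in the smash product; your verification of the module isomorphism $v\otimes dh\mapsto d^{-1}(v)\otimes h$ is correct, and the filtered-to-graded lifting is unproblematic since all spaces are finite dimensional and the leading-term ideal is controlled by the explicit direct-sum decomposition you wrote down.

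Two points should be tightened. First, the reduction to $(q,a)=(1,0)$: the assertion preceding the lemma only says that each algebra $\overline{\mathbb{H}}_{k,n}(q)$ is individually independent of $(q,a)$ up to isomorphism, whereas you need isomorphisms compatible with the whole chain $\overline{\mathbb{H}}_{1,n}\subset\cdots\subset\overline{\mathbb{H}}_{n,n}$ to transfer a relative freeness statement. This is true, but rather than justify it you could simply run your argument uniformly: the Bernstein center of $\mathbb{H}_n(q)$ is $\mathbb{P}_n^{\mathbb{S}_n}$ in both the degenerate and non-degenerate cases, PBW holds, and the filtration can be taken $\mathfrak{m}_n$-adically centered at $X_i=a$ (equivalently, after the substitution $X_i\mapsto a(1+Y_i)$), after which every step goes through verbatim. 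Second, since $\Bbbk$ is an arbitrary field, the appeals to Borel's theorem and Leray--Hirsch should be replaced by their characteristic-free algebraic versions: $\Bbbk[x_1,\dots,x_n]^{\mathbb{S}_{n-j}}$ is a polynomial ring, hence a free graded module over the polynomial subring $\Bbbk[x_1,\dots,x_n]^{\mathbb{S}_{n-i}}$ of rank $(n-i)!/(n-j)!$ (graded Auslander--Buchsbaum, or explicit staircase/Schubert monomial bases, all valid over $\mathbb{Z}$), and the identification of $V_k$ with $\mathbb{P}_n^{\mathbb{S}_{n-k}}/\mathfrak{n}_n\mathbb{P}_n^{\mathbb{S}_{n-k}}$ requires checking that the kernel of $\mathbb{P}_n^{\mathbb{S}_{n-k}}\to C_n$ is exactly $\mathfrak{n}_n\mathbb{P}_n^{\mathbb{S}_{n-k}}$, which follows from freeness of $\mathbb{P}_n$ over $\mathbb{P}_n^{\mathbb{S}_{n-k}}$. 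With these repairs the proof is complete.
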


For a fixed $n\in\mathbb{N}_0$ set 
$B_i:=\overline{\mathbb{H}}_{i,n}(q)$, $i=0,1,2\dots,n$.
Define the category $\mathcal{A}(n)$ as follows:
\begin{displaymath}
\mathcal{A}(n):=\bigoplus_{i=0}^{n}
\mathcal{A}(n)_{-n+2i}, \qquad\text{ where }\qquad
\mathcal{A}(n)_{-n+2i}:=
B_{i}\text{-}\mathrm{mod}.
\end{displaymath}  
Set further
\begin{displaymath}
\mathrm{E}:=\bigoplus_{i=0}^{n-1}\mathrm{Ind}_{B_i}^{B_{i+1}},\qquad
\mathrm{F}:=\bigoplus_{i=1}^{n}\mathrm{Res}_{B_{i-1}}^{B_{i}}.
\end{displaymath}
The image of $X_{i+1}$ in $B_{i+1}$ gives an endomorphism of 
$\mathrm{Ind}_{B_i}^{B_{i+1}}$ by the right multiplication.
Taking the sum over all $i$, we get an endomorphism of $\mathrm{E}$,
which we denote by $\chi$. Similarly, the image of $T_{i+1}$
in $B_{i+2}$ gives an endomorphism of 
$\mathrm{Ind}_{B_i}^{B_{i+2}}$ and, taking the sum over all $i$, we 
get an endomorphism of $\mathrm{E}^2$, which we denote by $\tau$.
The second part of the following statement from \cite{CR} is 
truly remarkable:

\begin{theorem}[Minimal categorification and its uniqueness]
\label{them1310}
\begin{enumerate}[$($a$)$]
\item\label{them1310.1} The tuple 
$\mathfrak{A}:=(\mathcal{A}(n),\mathrm{E},\mathrm{F},q,a,\chi,\tau)$ from the
above is an $\mathfrak{sl}_2$-categorification of the simple
$n+1$-dimensional $\mathfrak{sl}_2$-module.
\item\label{them1310.2} 
If $\mathfrak{A}':=(\mathcal{A}',\mathrm{E}',\mathrm{F}',q,a,\chi',\tau')$
is an $\mathfrak{sl}_2$-categorification of the simple
$n+1$-dimensional $\mathfrak{sl}_2$-module such that the (unique)
simple object annihilated by $\mathrm{F}'$ is projective, then
the $\mathfrak{sl}_2$-categorifications
$\mathfrak{A}$ and $\mathfrak{A}'$ are equivalent.
\end{enumerate}
\end{theorem}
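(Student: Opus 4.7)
For part (a), the plan is to verify the axioms of Definition~\ref{def1301} one by one, using the explicit structure of the algebras $B_i = \overline{\mathbb{H}}_{i,n}(q)$. The categories $B_i\text{-}\mathrm{mod}$ are artinian and noetherian $\Bbbk$-linear abelian categories, and since each $B_i$ is local finite-dimensional (its unique residue field being $\Bbbk$), the endomorphism rings of simple objects are $\Bbbk$. Biadjointness of $\mathrm{Ind}_{B_i}^{B_{i+1}}$ and $\mathrm{Res}_{B_i}^{B_{i+1}}$ follows from Lemma~\ref{lem1309}, which provides the free extension $B_{i+1}/B_i$ of finite rank, combined with the fact (stated in Subsection~\ref{s13.4}) that $B_i$ is symmetric so the extension is Frobenius. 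I would then compute, using Lemma~\ref{lem1309} and the formulas, that the induced action of $[\mathrm{E}],[\mathrm{F}]$ on the Grothendieck group matches the action of $\mathbf{e},\mathbf{f}$ on the simple $(n+1)$-dimensional module in the weight basis $[\text{simple of }B_i\text{-}\mathrm{mod}]$; this forces the grading $\mathcal{A}(n)_{-n+2i} = B_i\text{-}\mathrm{mod}$ to coincide with the weight decomposition of Proposition~\ref{prop1321}. The relations on $\tau$, $\chi$, and their interaction are immediate from the defining relations of $\mathbb{H}_n(q)$ satisfied by the images of $T_i$ and $X_i$ in $B_n$, and local nilpotence of $\chi-a$ holds because $X_j-a \in \mathfrak{m}_n$ acts nilpotently on every finite-dimensional $B_i$-module.

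For part (b), the strategy is to construct a canonical morphism $(\mathrm{R},\xi,\zeta)\colon \mathfrak{A}\to\mathfrak{A}'$ that restricts to an equivalence on each weight space. Since $\mathbb{Q}\otimes[\mathcal{A}']$ is the simple $(n+1)$-dimensional module, the lowest weight space $\mathcal{A}'_{-n}$ has a unique (up to isomorphism) simple object $L$ annihilated by $\mathrm{F}'$, and by hypothesis $L$ is projective. Then $(\mathrm{E}')^i L \in \mathcal{A}'_{-n+2i}$ is projective (as $\mathrm{E}'$ is left adjoint to the exact $\mathrm{F}'$, so preserves projectives) and nonzero (by tracking its class in $\mathbb{Q}\otimes[\mathcal{A}']$). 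By Lemma~\ref{lem1305} the affine Hecke algebra $\mathbb{H}_i(q)$ acts on $(\mathrm{E}')^i$ by endomorphisms, hence on $(\mathrm{E}')^i L$; I would show that this action factors through $B_i = \overline{\mathbb{H}}_{i,n}(q)$ and induces an isomorphism $B_i \xrightarrow{\sim} \mathrm{End}_{\mathcal{A}'}((\mathrm{E}')^i L)^{\mathrm{op}}$, so that $(\mathrm{E}')^i L$ is a progenerator of $\mathcal{A}'_{-n+2i}$ with endomorphism algebra $B_i$. The functor $\mathrm{R}$ is then defined on each block by $M \mapsto M \otimes_{B_i} (\mathrm{E}')^i L$, and the intertwiners $\xi,\zeta$ together with compatibility with $\chi,\tau$ are built tautologically from the bimodule structure, since induction/restriction between $B_i$ and $B_{i+1}$ matches applying $\mathrm{E}'$ or $\mathrm{F}'$ by Frobenius reciprocity.

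The main obstacle will be the identification $B_i \xrightarrow{\sim} \mathrm{End}_{\mathcal{A}'}((\mathrm{E}')^i L)^{\mathrm{op}}$. Surjectivity combined with the factoring of $\mathbb{H}_i(q)$ through the quotient by $\mathbb{H}_i(q)\mathfrak{n}_i$ is the subtle point: one must show that the symmetric invariants $\mathfrak{n}_i$ act by zero on $(\mathrm{E}')^i L$, which should be deduced from the decomposition of $(\mathrm{E}')^i$ into pieces $(\mathrm{E}')^{(\alpha,i)}$ given by Corollary~\ref{cor1306}, together with the fact that the weight-space multiplicities in the simple $(n+1)$-dimensional $\mathfrak{sl}_2$-module force $(\mathrm{E}')^{(\mathrm{sign},i)} L$ (or its analogue in the degenerate case) to be simple, isomorphic to the unique simple in $\mathcal{A}'_{-n+2i}$. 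Injectivity and the rank match $\dim B_i = \dim \mathrm{End}_{\mathcal{A}'}((\mathrm{E}')^i L)$ then come from a dimension count: using that the Cartan matrix of $\mathcal{A}'_{-n+2i}$ is determined by the categorification data via biadjointness (Hom-spaces computed by adjunction from the simple $L$), which one compares to the corresponding computation in the model $\mathcal{A}(n)$. Once this identification is established, everything else in part (b) — in particular verification of Definition~\ref{def1308}\eqref{def1308.4}--\eqref{def1308.5} — reduces to formal manipulations with the affine Hecke action, and part (a) is then essentially a sanity check of the same calculations performed in the model algebra $\overline{\mathbb{H}}_n(q)$ itself.
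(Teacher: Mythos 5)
The paper offers no proof of this theorem (it is quoted from \cite{CR}), so your outline has to be measured against the Chuang--Rouquier argument, and its architecture is indeed the right one: part (a) by direct verification of the axioms of Definition~\ref{def1301} using Lemma~\ref{lem1309} and the Frobenius/symmetric structure of the $B_i$, and part (b) by Morita theory, exhibiting $(\mathrm{E}')^iL$ as a progenerator of $\mathcal{A}'_{-n+2i}$ (projective since $L$ is projective and $\mathrm{E}'$ preserves projectives; a generator since each weight category has exactly one simple) and identifying its endomorphism ring with $B_i$. Up to that point the proposal is sound.

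The genuine gap is in the mechanism you propose for the crux $B_i\cong\mathrm{End}_{\mathcal{A}'}((\mathrm{E}')^iL)^{\mathrm{op}}$, and it is twofold. First, the claim that one-dimensionality of the weight spaces forces $(\mathrm{E}')^{(\mathrm{sign},i)}L$ to be simple is false for $0<i<n$: the minimal model itself is a counterexample, since there $\mathrm{E}^iL_0\cong{}_{B_i}B_i$, so by Corollary~\ref{cor1306} the object $\mathrm{E}^{(\alpha,i)}L_0$ is the indecomposable \emph{projective} $B_i$-module, of dimension $n!/(n-i)!$, not the simple one; a one-dimensional weight space only tells you that $[(\mathrm{E}')^{(\mathrm{sign},i)}L]$ is a positive integer multiple of the class of the simple, and that multiple is $\binom{n}{i}>1$. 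Second, the kernel you propose is the wrong ideal: $B_i=\overline{\mathbb{H}}_{i,n}(q)$ is the image of $\mathbb{H}_i(q)$ in $\overline{\mathbb{H}}_{n}(q)$, which has dimension $n!\,i!/(n-i)!$ by Lemma~\ref{lem1309}, strictly larger than $\dim\bigl(\mathbb{H}_i(q)/\mathbb{H}_i(q)\mathfrak{n}_i\bigr)=(i!)^2$ for $0<i<n$; so the $\mathbb{S}_i$-invariants $\mathfrak{n}_i$ must \emph{not} act by zero on $(\mathrm{E}')^iL$ (already for $n=2$, $i=1$, the element $X_1-a$ must act as a nonzero nilpotent generating $\mathrm{End}(\mathrm{E}'L)\cong\Bbbk[x]/(x^2)$). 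The computation that does work is the one you relegate to a closing remark: by biadjunction $\mathrm{End}((\mathrm{E}')^iL)\cong\mathrm{Hom}(L,(\mathrm{F}')^i(\mathrm{E}')^iL)$; since $L$ is simple, projective and the only simple in $\mathcal{A}'_{-n}$, that category is semisimple, so $(\mathrm{F}')^i(\mathrm{E}')^iL\cong L^{\oplus d}$ with $d$ read off from the Grothendieck group, namely $d=n!\,i!/(n-i)!=\dim_{\Bbbk}B_i$. This gives the dimension match, but surjectivity of $\mathbb{H}_i(q)\to\mathrm{End}((\mathrm{E}')^iL)$ and the identification of its kernel with that of $\mathbb{H}_i(q)\to\overline{\mathbb{H}}_n(q)$ still require a genuine argument, which is where the real work of \cite{CR} lies and which your outline does not supply.
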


\subsection{$\mathfrak{sl}_2$-categorification on category
$\mathcal{O}$}\label{s13.5}

In this subsection we assume $q=1$. Let $a\in\mathbb{C}$.
For $\lambda,\mu\in\mathfrak{h}^*$ write 
$\lambda\rightarrow_a\mu$ provided that there exists $j\in\{1,2,\dots,n\}$
such that $\lambda_j-j+1=a-1$, $\mu_j-j+1=a$ and $\lambda_i=\mu_i$
for all $i\neq j$. For $\lambda,\mu\in\mathfrak{h}^*_{\mathrm{dom}}$
we write $\chi_{\lambda}\to_a\chi_{\mu}$ provided that there exist
$\lambda'\in W\cdot \lambda$ and $\mu'\in W\cdot \mu$ such that 
$\lambda'\to_a\mu'$. 

Let $V$ be the natural $n$-dimensional representation of 
$\mathfrak{g}$. Then the projective endofunctor $V\otimes_{\mathbb{C}}{}_-$
of $\mathcal{O}$ decomposes as follows:
\begin{displaymath}
V\otimes_{\mathbb{C}}{}_-=\bigoplus_{a\in\mathbb{C}}\mathrm{E}_a,\qquad
\text{ where }\qquad
\mathrm{E}_a=\bigoplus_{\lambda,\mu\in \mathfrak{h}^*_{\mathrm{dom}};
\chi_{\lambda}\to_a\chi_{\mu}}
\mathrm{Pr}_{\mathcal{O}_{\mu}}\circ(V\otimes_{\mathbb{C}}{}_-)\circ
\mathrm{Pr}_{\mathcal{O}_{\lambda}},
\end{displaymath}
here the projection $\mathrm{Pr}$ is defined with respect to 
the decomposition from Theorem~\ref{thm403}. Denote by 
$\mathrm{F}_a$ the (both left and right) adjoint of $\mathrm{E}_a$.

Define $\tau\in\mathrm{End}((V\otimes_{\mathbb{C}}{}_- )^2)$
as follows: for $M\in\mathcal{O}$, $m\in M$ and $v,v'\in V$ set 
$\tau_M(v\otimes v'\otimes m):=v'\otimes v\otimes m$.
Consider the element $\Omega:=\sum_{i,j=1}^ne_{ij}\otimes e_{ji}\in
\mathfrak{g}\otimes \mathfrak{g}$.
Define $\chi\in \mathrm{End}(V\otimes_{\mathbb{C}}{}_- )$ as follows:
for $M\in\mathcal{O}$, $m\in M$ and $v\in V$ set 
$\chi_M(v\otimes m):=\Omega(v\otimes m)$.

\begin{lemma}\label{lem1311}
For every $a\in\mathbb{C}$ the natural transformations
$\chi$ and $\tau$ restrict to $\mathrm{E}_a$ and $\mathrm{E}_a^2$,
respectively.
\end{lemma}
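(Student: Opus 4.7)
The plan is to treat $\chi$ and $\tau$ separately, in each case reducing the restriction claim to a commutation with an appropriate diagonal action of $\mathfrak{g}$ together with a refinement via the central-character decomposition.

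First I would handle $\chi$. The key identity is
$$\Delta(C)-C\otimes 1-1\otimes C=2\Omega\qquad\text{in }U(\mathfrak{g})\otimes U(\mathfrak{g}),$$
where $C=\sum_{i,j}e_{ij}e_{ji}\in Z(\mathfrak{g})$; this follows at once from $\Delta(e_{ij})=e_{ij}\otimes 1+1\otimes e_{ij}$. Centrality of $C$ then gives $[\Omega,\Delta(\mathfrak{g})]=0$ (equivalently, a short computation shows that $\Omega$ is $\mathfrak{g}$-invariant for the diagonal adjoint action), hence $\Omega$ commutes with $\Delta(Z(\mathfrak{g}))$. Therefore $\chi_M=\Omega_{V\otimes M}$ is an endomorphism of $V\otimes M$ in the category of $\mathfrak{g}$-modules with diagonal action, preserves the central-character decomposition $V\otimes M=\bigoplus_{\mu}\mathrm{Pr}_{\mathcal{O}_{\mu}}(V\otimes M)$, and a fortiori preserves the subsum
$$\mathrm{E}_aM=\bigoplus_{\chi_\lambda\to_a\chi_\mu}\mathrm{Pr}_{\mathcal{O}_\mu}\bigl(V\otimes\mathrm{Pr}_{\mathcal{O}_\lambda}M\bigr).$$
A companion Harish-Chandra computation shows that $\Omega$ in fact acts as a single scalar on each $\mathrm{E}_aM$ depending only on $a$, which will matter later for the compatibility with $a$ in Definition~\ref{def1301} but is not needed for the restriction claim itself.

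For $\tau$, the swap of the two $V$-factors is a morphism of $\mathfrak{g}$-modules with the diagonal action on $V\otimes V$, since the two tensor factors are interchangeable. Hence $\tau_M$ commutes with the triple-diagonal $\mathfrak{g}$-action on $V\otimes V\otimes M$, and therefore with $\Delta^2(Z(\mathfrak{g}))$, so it preserves $V\otimes V\otimes M=\bigoplus_\nu\mathrm{Pr}_{\mathcal{O}_\nu}(V\otimes V\otimes M)$. To upgrade this to preservation of $\mathrm{E}_a^2M$, I would analyze
$$\mathrm{E}_a^2M\,\big|_\nu=\bigoplus_{\mu\,:\,\chi_\lambda\to_a\chi_\mu\to_a\chi_\nu}\mathrm{Pr}_{\mathcal{O}_\nu}\bigl(V\otimes\mathrm{Pr}_{\mathcal{O}_\mu}(V\otimes M)\bigr)$$
at the weight level. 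For $M\in\mathcal{O}_\lambda$ any nonzero contribution to $\mathrm{Pr}_{\mathcal{O}_\nu}(V\otimes V\otimes M)$ arises from an intermediate weight of the form $\lambda+\epsilon_j$ with $\nu-\lambda=\epsilon_j+\epsilon_k$: for $j\neq k$ there are exactly two such intermediates, $\lambda+\epsilon_j$ and $\lambda+\epsilon_k$, carrying ordered pairs of content-shifts $(a_1,a_2)$ and $(a_2,a_1)$ with $a_1=\lambda_j-j+2$ and $a_2=\lambda_k-k+2$; for $j=k$ the unique intermediate carries shifts $(a_1,a_1+1)$. Hence $\mathrm{E}_a^2M\,|_\nu$ is either the whole block $\mathrm{Pr}_{\mathcal{O}_\nu}(V\otimes V\otimes M)$ (when $a_1=a_2=a$) or zero, and in both cases is preserved by $\tau$.

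The main obstacle is carrying out the combinatorics of the previous paragraph at the level of central characters rather than weights, which matters when $\lambda$ is singular, since distinct weight representatives of the same $\chi_\lambda$ might a priori produce different unordered pairs $\{a_1,a_2\}$. The resolution is that the multiset $\{\lambda_i-i+1:i=1,\dots,n\}$ is invariant under the dot-action of $W$ on $\lambda$ (because $\rho_i=(n+1)/2-i$ makes $\lambda_i+\rho_i$ a $W$-permuted quantity), so the unordered pair of content-shifts realizing a two-step transition from $\chi_\lambda$ to $\chi_\nu$ is an invariant of the pair of central characters. Once this is checked, the case analysis above applies uniformly and the restriction of $\tau$ to $\mathrm{E}_a^2$ follows.
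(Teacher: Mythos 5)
Your argument is correct; note first that the paper states Lemma~\ref{lem1311} without any proof (it is imported from \cite{CR}), so there is no in-text argument to compare against. For $\chi$, the identity $2\Omega=\Delta(C)-C\otimes 1-1\otimes C$ with $C=\sum_{i,j}e_{ij}e_{ji}$ central, hence the $\mathfrak{g}$-equivariance of $\Omega$ for the diagonal action and its compatibility with the block projections, is exactly the standard argument. For $\tau$ you correctly isolate the genuine issue --- the intermediate projection $\mathrm{Pr}_{\mathcal{O}_{\mu}}$ hidden in $\mathrm{E}_a^2=\mathrm{E}_a\circ\mathrm{E}_a$, which mere $\mathfrak{g}$-equivariance of the flip does not respect --- and your resolution is sound: for fixed source and target central characters the unordered pair of content shifts is forced, so the $(a,a)$-part of $\mathrm{Pr}_{\mathcal{O}_{\nu}}(V\otimes V\otimes \mathrm{Pr}_{\mathcal{O}_{\lambda}}M)$ is either the whole summand or zero, and in either case is $\tau$-stable.

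Two points to tighten. First, $W$-invariance of the content multiset only tells you that $S_{\lambda}=\{\lambda_i-i+1\}$ is well defined on central characters; you still need that $S_{\lambda}$ and $S_{\nu}$ determine the unordered pair $\{a_1,a_2\}$, which is the one-line telescoping identity $\sum_{c\in S_{\nu}}t^{c}-\sum_{c\in S_{\lambda}}t^{c}=(t-1)(t^{a_1-1}+t^{a_2-1})$. Second, your aside that $\Omega$ acts by a single scalar on $\mathrm{E}_a M$ overstates the truth: objects of $\mathcal{O}_{\mu}$ have only generalized central character, so $\Omega$ acts with a single generalized eigenvalue $a$ (Definition~\ref{def1301} accordingly only requires $\chi-a$ to be locally nilpotent); as you say, this is irrelevant for the restriction claim. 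For comparison, the usual alternative route identifies $\mathrm{E}_aM$ with the generalized $a$-eigenspace of $\chi$ on $V\otimes M$ via the Casimir eigenvalue computation, after which the restriction of $\chi$ is tautological and that of $\tau$ follows from the degenerate affine Hecke relation $X_2T_1=T_1X_1+1$, which interchanges the generalized eigenvalues of $X_1$ and $X_2$; your version trades that eigenvalue computation for the block combinatorics.
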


We denote by $\chi_a$ and $\tau_a$ the restrictions of 
$\chi$ and $\tau$ to $\mathrm{E}_a$ and $\mathrm{E}_a^2$, respectively.

\begin{proposition}[\cite{CR}]\label{prop1312}
For every $a\in\mathbb{C}$ the tuple 
$(\mathcal{O},\mathrm{E}_a,\mathrm{F}_a,1,a,\chi_a,\tau_a)$ 
is an $\mathfrak{sl}_2$-categorification.
\end{proposition}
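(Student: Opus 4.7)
My plan is to verify each of the eight conditions of Definition~\ref{def1301} for the tuple $(\mathcal{O},\mathrm{E}_a,\mathrm{F}_a,1,a,\chi_a,\tau_a)$, starting from the formal ones. Condition (i) is immediate from standard facts about $\mathcal{O}$: Theorem~\ref{thm403} gives finite length, and Schur's lemma gives $\mathrm{End}_\mathfrak{g}(L(\mu))=\mathbb{C}$. Condition (ii), the biadjointness of $\mathrm{E}_a$ and $\mathrm{F}_a$, follows from Proposition~\ref{prop503}\eqref{prop503.5} applied to the direct summands of $V\otimes_\mathbb{C}{}_-$; the dual projective functor is a summand of $V^*\otimes_\mathbb{C}{}_-$, with block projections arranged so as to be compatible with the relation $\chi_\lambda\to_a\chi_\mu$. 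Conditions (v) and (vi) are essentially formal: $\tau$ is the tensor swap of two $V$-factors, so $\tau^2=\mathrm{id}$ (condition (vi) at $q=1$) and the braid identity on $V^{\otimes 3}\otimes{}_-$ is the ordinary symmetric-group relation.

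The substantive work lies in conditions (iii) and (iv). Here I would analyse $[\mathrm{E}_a]$ and $[\mathrm{F}_a]$ on $[\mathcal{O}]$ in the Verma basis. Combining Theorem~\ref{thm502} with the standard character identity $[V\otimes\Delta(\lambda)]=\sum_{j=1}^{n}[\Delta(\lambda+\varepsilon_j)]$, one sees that $\mathrm{E}_a$ sends $[\Delta(\lambda)]$ to the sum of those $[\Delta(\lambda+\varepsilon_j)]$ with $\lambda_j-j+1=a-1$, and $\mathrm{F}_a$ acts symmetrically. Sorting the coordinates of $\lambda$ according to whether $\lambda_j-j+1$ lies in $\{a-1\}$, $\{a\}$, or outside $\{a-1,a\}$ realises the corresponding direct summand of $\mathbb{Q}\otimes[\mathcal{O}]$ as a locally finite tensor product of copies of the natural two-dimensional $\mathfrak{sl}_2$-module, one factor for each coordinate with content in $\{a-1,a\}$. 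This exhibits the $\mathfrak{sl}_2$-relations and local finiteness simultaneously, hence (iii). For (iv), observe that Vermas and simples within a single block of $\mathcal{O}$ share the same unordered multiset of contents (since the dot-action of $W$ permutes these contents), so each block contributes a single $H$-eigenvalue to every Verma and every simple it contains; thus $[L(\mu)]$ is automatically an $H$-eigenvector.

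Condition (vii) unpacks, after expanding horizontal and vertical compositions on $V\otimes V\otimes M$, to the identity $\sum_{i,j}(e_{ij}v_1)\otimes(e_{ji}v_2)=v_2\otimes v_1$ in $V\otimes V$, which is a one-line check in the standard basis; the ``$-\tau$'' on the right-hand side of (vii) is precisely what absorbs this extra flip term. Condition (viii) follows from a central-character computation: on the summand $\mathrm{Pr}_{\mathcal{O}_\mu}\circ(V\otimes{}_-)\circ\mathrm{Pr}_{\mathcal{O}_\lambda}$ with $\chi_\lambda\to_a\chi_\mu$, the bimodule-theoretic element $\Omega$ defining $\chi$ can be rewritten as $\tfrac{1}{2}(C_\mu-C_\lambda-C_V)$ using quadratic Casimirs acting on the two tensor factors; the $\to_a$-condition forces the generalised eigenvalue of this operator to be exactly $a$ on the relevant subquotient, and finite length in $\mathcal{O}$ upgrades this to local nilpotency of $\chi_a-a$.

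The one genuine obstacle is condition (iii): (i), (ii), (v), (vi) reduce to bookkeeping, (vii) and (viii) are computations in $U(\mathfrak{gl}_n)$ or in $Z(\mathfrak{g})$, and (iv) is a consequence of (iii) together with block combinatorics, while (iii) is the real content of the statement. The crux is showing that the Kazhdan--Lusztig combinatorics of projective functors encoded by Theorem~\ref{thm502} assembles uniformly in $a$ into the tensor-product $\mathfrak{sl}_2$-structure on $[\mathcal{O}]$; once that bridge is in place the whole verification unwinds.
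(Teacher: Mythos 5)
The paper offers no proof of this proposition; it defers entirely to \cite{CR}. Judged on its own, your plan is correct and is essentially the verification carried out there: conditions (i), (ii), (v), (vi) of Definition~\ref{def1301} are formal as you say, condition (vii) does collapse to the one-line check that $\Omega$ restricted to $V\otimes V$ is the flip (which is exactly what turns $\tau\Omega_{23}\tau=\Omega_{13}$ into $\chi\circ_0\mathrm{id}_E-\tau=\Omega_{12}+\Omega_{13}-\tau$), and your decomposition of $\mathbb{Q}\otimes_{\mathbb{Z}}[\mathcal{O}]$ in the Verma basis into tensor powers of the two-dimensional module, indexed by the coordinates of content in $\{a-1,a\}$, is the right mechanism for (iii) and (iv). The one point to reconcile is the scalar in (viii): with the paper's convention that $\lambda\to_a\mu$ raises a content from $a-1$ to $a$, the identity $\Omega=\tfrac{1}{2}\bigl(\Delta(C)-C\otimes 1-1\otimes C\bigr)$ gives generalized eigenvalue $\lambda_j-j+1=a-1$ on the corresponding summand (test $\mathfrak{gl}_1$), not $a$ as you assert; so either your claimed eigenvalue or the stated definition of $\to_a$ is off by one. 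This is a normalization to fix, uniform in $a$, not a gap in the argument.
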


\begin{corollary}\label{cor1314}
The categorification of the simple $n+1$-dimensional
$\mathfrak{sl}_2$-module from Subsection~\ref{s11.4} is
an $\mathfrak{sl}_2$-categorification.
\end{corollary}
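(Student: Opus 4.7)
The plan is to deduce the statement from Proposition~\ref{prop1312}, by restricting its $\mathfrak{sl}_2$-categorification on $\mathcal{O}$ first to $\mathfrak{V}_n=\bigoplus_{i=0}^n\mathcal{O}_{\lambda_i}$ and then descending to the Serre quotient $\mathfrak{V}_n/\hat{\mathfrak{V}_n}$. The first task is to identify the correct value $a_0\in\mathbb{C}$. Writing $\lambda_i+\rho=(\underbrace{1,\dots,1}_i,0,\dots,0)$ and unpacking the relation $\lambda_j-j+1=a-1$ of Subsection~\ref{s13.5}, a direct computation shows that every transition $\chi_{\lambda_i}\to_a\chi_{\lambda_{i+1}}$ occurs for one and the same value $a_0$ (equal to $(3-n)/2$ in the standard $\rho$-normalization); the index $j$ of the shifted coordinate varies with $i$, but its contribution is exactly absorbed by $\rho_j$.

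Next I would verify that $\mathrm{E}_{a_0}$ and $\mathrm{F}_{a_0}$ from Proposition~\ref{prop1312} preserve the subcategory $\mathfrak{V}_n$, and that their restrictions agree with the functors $\mathrm{E}$ and $\mathrm{F}$ of Subsection~\ref{s11.4}. Stability reduces to the combinatorial claim that if $\mu$ is integral dominant and $\chi_{\lambda_i}\to_{a_0}\chi_\mu$, then $\mu=\lambda_{i+1}$. This follows by analyzing the possible pairs $(w,j)$ satisfying $(w\cdot\lambda_i)_j-j+1=a_0-1$: because the coordinates of $w(\lambda_i+\rho)$ all lie in $\{0,1\}$, the specific value of $a_0$ forces the shifted coordinate to equal $0$, in which case raising it to $1$ produces an element of $W\cdot(\lambda_{i+1}+\rho)$. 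Once stability is in hand, Theorem~\ref{thm502} identifies the restriction $\mathrm{E}_{a_0}|_{\mathcal{O}_{\lambda_i}}$ with $\mathrm{E}_i$, since both are indecomposable projective functors sending the dominant Verma module $\Delta(\lambda_i)$ to $P(\lambda_{i+1})$.

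Finally, since projective functors are direct summands of tensoring with finite dimensional $\mathfrak{g}$-modules, they do not increase Gelfand--Kirillov dimension (as already noted in the proof of Theorem~\ref{thm1109}); hence $\mathrm{E}_{a_0}$ and $\mathrm{F}_{a_0}$ preserve the Serre subcategory $\hat{\mathfrak{V}_n}$ and descend to exact, mutually biadjoint endofunctors of $\mathfrak{V}_n/\hat{\mathfrak{V}_n}$. The natural transformations $\chi_{a_0}$ and $\tau_{a_0}$, defined canonically via the action of $\Omega$ and the swap of tensor factors on $V\otimes V\otimes{}_-$, restrict and descend as well. The axioms \eqref{def1301.1}--\eqref{def1301.8} of Definition~\ref{def1301} are then inherited from Proposition~\ref{prop1312}: conditions \eqref{def1301.5}--\eqref{def1301.8} are natural-transformation identities preserved under restriction to a full subcategory and Serre quotient, while \eqref{def1301.3}--\eqref{def1301.4} follow from Theorem~\ref{thm1109}, which identifies the rationalized Grothendieck group with the finite dimensional simple module $\mathcal{V}_n$, the block decomposition $\mathfrak{V}_n/\hat{\mathfrak{V}_n}=\bigoplus_i(\mathcal{O}_{\lambda_i}/\hat{\mathcal{O}}_{\lambda_i})$ providing the $H$-weight spaces. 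The hard part, in my view, is the combinatorial step in the second paragraph ruling out transitions to blocks outside $\mathfrak{V}_n$; everything else is either a direct translation of data from $\mathcal{O}$ via Proposition~\ref{prop1312} or follows formally from exactness and biadjointness.
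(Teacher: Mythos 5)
Your proposal is correct and follows exactly the route the paper intends: the corollary is stated without proof as an immediate consequence of Proposition~\ref{prop1312}, obtained by restricting $(\mathrm{E}_{a_0},\mathrm{F}_{a_0},\chi_{a_0},\tau_{a_0})$ to $\mathfrak{V}_n$ and descending to the Serre quotient, and your computation of $a_0=(3-n)/2$ together with the combinatorial check that all $a_0$-transitions out of $\mathcal{O}_{\lambda_i}$ land in $\mathcal{O}_{\lambda_{i+1}}$ supplies precisely the details the paper omits. The only cosmetic point is that the appeal to Theorem~\ref{thm502} is unnecessary: once the transition combinatorics is settled, $\mathrm{E}_{a_0}|_{\mathcal{O}_{\lambda_i}}$ and $\mathrm{E}_i$ coincide by definition as $\mathrm{Pr}_{\mathcal{O}_{\lambda_{i+1}}}\circ(V\otimes{}_-)\circ\mathrm{Pr}_{\mathcal{O}_{\lambda_i}}$, with no need to invoke indecomposability.
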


\subsection{Categorification of the simple reflection}\label{s13.6}

Consider the element
\begin{displaymath}
S=\left(\begin{array}{cc}0&1\\-1&0\end{array}\right)=
\mathrm{exp}(-\mathbf{f})\mathrm{exp}(\mathbf{e})\mathrm{exp}(-\mathbf{f})\in\mathrm{SL}_2.
\end{displaymath}

\begin{lemma}\label{lem1315}
Let $V$ be an integrable $\mathfrak{sl}_2$-module, that is a direct 
sum of finite dimensional $\mathfrak{sl}_2$-modules. Let
$\lambda\in\mathbb{Z}$ and $v\in V_{\lambda}$.
\begin{enumerate}[$($a$)$]
\item\label{lem1315.1} The action of $S$ induces an
isomorphism between $V_{\lambda}$ and $V_{-\lambda}$.
\item\label{lem1315.2} 
\begin{displaymath}
S(v)=\sum_{i=\mathrm{max}(0,-\lambda)}^{\mathrm{max}\{j:\mathbf{f}^j(v)\neq 0\}} 
\frac{(-1)^i}{i!(\lambda+i)!}\mathbf{e}^{\lambda+i}\mathbf{f}^i(v). 
\end{displaymath}
\end{enumerate}
\end{lemma}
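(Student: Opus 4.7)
\textbf{The plan} is to reduce both assertions to a computation on a single finite-dimensional simple $\mathfrak{sl}_2$-module and then verify them by a direct (finite) calculation.

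First I would invoke integrability to write $V = \bigoplus_n L(n)^{\oplus m_n}$ as a direct sum of finite-dimensional simple $\mathfrak{sl}_2$-modules. On each summand $L(n)$ the exponentials $\exp(\pm\mathbf{e})$ and $\exp(\pm\mathbf{f})$ act as finite sums, so $S = \exp(-\mathbf{f})\exp(\mathbf{e})\exp(-\mathbf{f})$ is a well-defined invertible operator on $V$. Both sides of the formula in (b) are linear in $v$ and compatible with this decomposition (the operator $S$ because it is obtained by integrating the $\mathfrak{sl}_2$-action, and the monomials $\mathbf{e}^{\lambda+i}\mathbf{f}^i$ trivially so). Hence it suffices to work inside a fixed simple $L(n)$, where every sum in sight is finite.

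For part (a), I would use the adjoint action of $S$ on $\mathfrak{sl}_2$. A direct $2\times 2$ matrix calculation gives $S\mathbf{h}S^{-1} = -\mathbf{h}$ (and similarly $S\mathbf{e}S^{-1} = -\mathbf{f}$, $S\mathbf{f}S^{-1} = -\mathbf{e}$) inside $\mathrm{SL}_2$; this identity transfers to the operators on $L(n)$. Consequently $\mathbf{h}\,S = -S\,\mathbf{h}$, so for $v \in V_\lambda$ one has $\mathbf{h}(Sv) = -\lambda\, Sv$, i.e., $S(V_\lambda) \subseteq V_{-\lambda}$. Applying the same reasoning to $S^{-1}$, which also lies in $\mathrm{SL}_2$, yields the reverse inclusion and hence an isomorphism $S\colon V_\lambda \xrightarrow{\sim} V_{-\lambda}$.

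For part (b), I would compute $\exp(-\mathbf{f})\exp(\mathbf{e})\exp(-\mathbf{f})(v)$ by expanding each of the three exponentials as a (finite) power series on $L(n)$, applying them in sequence, and collecting contributions by the number of $\mathbf{e}$- and $\mathbf{f}$-applications. The key algebraic input for the rearrangement is the commutation identity $[\mathbf{e},\mathbf{f}^i] = i\,\mathbf{f}^{i-1}(\mathbf{h}-i+1)$ together with its iterates, which govern how to reorder $\mathbf{e}^j\mathbf{f}^i$ acting on a weight vector of weight $\lambda-2i$. After regrouping, one obtains the closed form in the statement; the summation range $\max(0,-\lambda)\leq i\leq \max\{j:\mathbf{f}^j(v)\neq 0\}$ reflects exactly which intermediate terms can yield a nonzero contribution in weight $-\lambda$, the lower bound accounting for the fact that certain powers of $\mathbf{e}$ annihilate vectors already too high in the weight tower.

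The main obstacle is the combinatorial bookkeeping in part (b): collapsing the triple sum produced by expanding three exponentials into the single sum in the statement requires a nontrivial binomial cancellation. A cleaner fallback, if the direct manipulation becomes unwieldy, is to exploit linearity in $v$ and reduce to checking the formula on a weight basis of $L(n)$ of the form $\mathbf{f}^k v^+$ (with $v^+$ highest weight); on such vectors, both sides reduce to explicit polynomial expressions in $n$ and $k$, and the identity can be verified by matching coefficients against the known matrix entries of $S \in \mathrm{SL}_2$ acting on $L(n)$ in this basis.
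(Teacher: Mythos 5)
The paper itself offers no proof of Lemma~\ref{lem1315} (it is left as Exercise~\ref{exs5}\eqref{exs5.3}), so there is nothing to compare against; judged on its own, your overall strategy --- reduce by integrability to a single simple $L(n)$, prove (a) from $S\mathbf{h}S^{-1}=-\mathbf{h}$, and prove (b) by matching coefficients on the weight basis $v_k=\tfrac{1}{k!}\mathbf{f}^k v^+$ against $S v_k=(-1)^{n-k}v_{n-k}$ --- is the right one, and your ``fallback'' for (b) is in fact the cleanest route: the triple sum obtained by expanding the three exponentials is genuinely painful to collapse directly, whereas on $v_k$ the single sum reduces to the binomial identity $\sum_j(-1)^j\binom{n-k}{j}\binom{n-j}{k-j}=1$, which closes the computation. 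For (a), do say explicitly why the $2\times 2$ identity ``transfers'': for $x$ acting locally nilpotently one has $\exp(x)\,y\,\exp(-x)=\exp(\mathrm{ad}\,x)(y)$, and $\exp(\mathrm{ad}\,x)$ is computed purely inside $\mathfrak{sl}_2$, so the conjugation formula proved in $\mathfrak{gl}_2$ holds in every integrable representation.

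The genuine problem is that your plan for (b) would stall on the very first example, because the formula as printed does not pass the weight check that your own part (a) provides. The operator $\mathbf{e}^{\lambda+i}\mathbf{f}^i$ raises weights by $2(\lambda+i)-2i=2\lambda$, so applied to $v\in V_\lambda$ (with the standard convention $\mathbf{h}v=\lambda v$) it lands in $V_{3\lambda}$, not in $V_{-\lambda}$; e.g.\ for the highest weight vector of $L(1)$ the right-hand side is identically $0$ while $S(v)\neq 0$. Your remark that the summation range ``reflects which terms contribute in weight $-\lambda$'' is therefore not correct as written. The identity you should actually verify is the one for $v\in V_{-\lambda}$ (equivalently, replace $\lambda$ by $-\lambda$ throughout the displayed formula), which is how it appears in \cite{CR} and which is consistent with $\Theta_\lambda$ being built from $\mathrm{E}^{(\mathrm{sign},\lambda+i)}\circ\mathrm{F}^{(1,i)}$; with that correction your coefficient-matching on $v_k$ goes through exactly as planned. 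So: right method, but you must first sort out the $\lambda\leftrightarrow-\lambda$ convention, and you should actually carry out the finite computation rather than only sketch two possible ways of doing it.
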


Consider some $\mathfrak{sl}_2$-categorification
$(\mathcal{A},\mathrm{E},\mathrm{F},q,a,\chi,\tau)$. 
Let $\lambda\in\mathbb{Z}$. 
For $i\in\mathbb{Z}$ such that $i,\lambda+i\geq 0$, 
denote by $(\Theta_{\lambda})^{-i}$ the restriction of 
$\mathrm{E}^{(\mathrm{sign},\lambda+i)}\circ \mathrm{F}^{(\mathrm{1},i)}$
to $\mathcal{A}_{\lambda}$. Set $(\Theta)^{-i}=0$ otherwise.
The map
\begin{displaymath}
\xymatrix{
f:\mathrm{E}^{\lambda+i}\circ\mathrm{F}^{i}=
\mathrm{E}^{\lambda+i-1}\circ\mathrm{E}\circ\mathrm{F}\circ\mathrm{F}^{i-1}
\ar[rrrr]^>>>>>>>>>>>>>>>>>>>{\mathrm{id}_{\mathrm{E}^{\lambda+i-1}}\circ_0
\varepsilon\circ_0\mathrm{id}_{\mathrm{F}^{i-1}}}
&&&&\mathrm{E}^{\lambda+i-1}\circ \mathrm{F}^{i-1}
}
\end{displaymath}
restricts to a map
\begin{displaymath}
d^{-i}:
\mathrm{E}^{(\mathrm{sign},\lambda+i)}\circ\mathrm{F}^{(\mathrm{1},i)}\to
\mathrm{E}^{(\mathrm{sign},\lambda+i-1)}\circ\mathrm{F}^{(\mathrm{1},i-1)}.
\end{displaymath}
Set
\begin{displaymath}
\Theta_{\lambda}:=
\xymatrix{
\dots\ar[rr]^{d^{-i-1}} &&(\Theta_{\lambda})^{-i}\ar[rr]^{d^{-i}}
&&(\Theta_{\lambda})^{-i+1}\ar[rr]^{d^{-i+1}}&&\dots
}
\end{displaymath}
and put $\Theta:=\oplus_{\lambda}\Theta_{\lambda}$.
The following is described by the authors of \cite{CR}
as their main result:

\begin{theorem}[\cite{CR}]\label{thm1316}
We have the following:
\begin{enumerate}[$($a$)$]
\item\label{thm1316.1} $\Theta_{\lambda}$ is a complex.
\item\label{thm1316.2} The action of 
$[\Theta_{\lambda}]:[\mathcal{A}_{\lambda}]\to [\mathcal{A}_{-\lambda}]$
coincides with the action of $S$.
\item\label{thm1316.3} The complex of functors $\Theta$ induces
a self-equivalence of $\mathcal{K}^{b}(\mathcal{A})$ and of
$\mathcal{D}^{b}(\mathcal{A})$ and restricts to equivalences
$\mathcal{K}^{b}(\mathcal{A}_{\lambda})\overset{\sim}{\to}
\mathcal{K}^{b}(\mathcal{A}_{-\lambda})$ and
$\mathcal{D}^{b}(\mathcal{A}_{\lambda})\overset{\sim}{\to}
\mathcal{D}^{b}(\mathcal{A}_{-\lambda})$.
\end{enumerate}
\end{theorem}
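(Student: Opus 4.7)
My approach is to treat the three statements in sequence, leveraging the Hecke-theoretic rigidity of an $\mathfrak{sl}_2$-categorification. First I would check part (a) by unwinding the restricted composition
\[
\mathrm{E}^{(\mathrm{sign},\lambda+i+1)}\circ \mathrm{F}^{(\mathbf{1},i+1)}\longrightarrow \mathrm{E}^{(\mathrm{sign},\lambda+i-1)}\circ \mathrm{F}^{(\mathbf{1},i-1)}
\]
as two successive contractions of an adjacent $\mathrm{E}\circ\mathrm{F}$ pair by the counit $\varepsilon$. Without the projectors this composition is nonzero, but by Lemma~\ref{lem1305} the Hecke algebra acts on $\mathrm{E}^{n}$ through the natural transformations $\chi$ and $\tau$, and the compatibility axioms in Definition~\ref{def1301} relating $\varepsilon$, $\chi$, and $\tau$ allow one to swap the order of the two contractions at the cost of $\tau$. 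The antisymmetrizing idempotent $c_{\ast}^{\mathrm{sign}}$ on the $\mathrm{E}$-side (and the symmetrizing one on the $\mathrm{F}$-side) then sends this swap to its negative, so the two contractions cancel. This is essentially a combinatorial identity inside $\overline{\mathbb{H}}_{\ast,n}(q)$.

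For part (b) I would compute $[\Theta_\lambda]=\sum_{i}(-1)^i[(\Theta_\lambda)^{-i}]$ after tensoring with $\mathbb{Q}$. Corollary~\ref{cor1306} gives $[\mathrm{E}]^{n}=n!\,[\mathrm{E}^{(\mathrm{sign},n)}]$ and $[\mathrm{F}]^{n}=n!\,[\mathrm{F}^{(\mathbf{1},n)}]$ in $\mathbb{Q}\otimes_{\mathbb{Z}}[\mathcal{A}]$, so on $\mathcal{A}_{\lambda}$ we have $[(\Theta_\lambda)^{-i}]=\tfrac{1}{(\lambda+i)!\,i!}[\mathrm{E}]^{\lambda+i}[\mathrm{F}]^{i}$. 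The summation range coincides with that of Lemma~\ref{lem1315}(b) because the locally finite integrability of the $\mathfrak{sl}_2$-action on $\mathbb{Q}\otimes_{\mathbb{Z}}[\mathcal{A}]$ forces $[\mathrm{F}]^{j}$ and $[\mathrm{E}]^{\lambda+j}$ to annihilate $[\mathcal{A}_{\lambda}]$ for $j$ outside the stated window. Comparing coefficients directly with Lemma~\ref{lem1315}(b) yields $[\Theta_{\lambda}]=S|_{[\mathcal{A}_{\lambda}]}$.

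For part (c) I would construct a candidate quasi-inverse $\Theta'$ by the same recipe with the roles of $\mathrm{E}$ and $\mathrm{F}$ interchanged and the opposite sign on the differentials coming from the unit $\eta$, and then prove $\Theta'\circ \Theta\simeq \mathrm{Id}$ (and symmetrically) in the homotopy category. The natural way to do this is to reduce to the minimal model: $\Theta$ is constructed intrinsically from the data $(\mathrm{E},\mathrm{F},\chi,\tau)$, so any morphism of $\mathfrak{sl}_2$-categorifications in the sense of Definition~\ref{def1308} intertwines $\Theta$ with its counterpart, and Theorem~\ref{them1310}(b) provides such a morphism from $\mathcal{A}(n)$ into the summand of $\mathcal{A}$ generated by any highest-weight projective of weight $n$. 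One then verifies the homotopy equivalence of $\Theta'\circ\Theta$ with the identity directly on each $\mathcal{A}(n)$, where the functors involved are explicit induction/restriction between the algebras $\overline{\mathbb{H}}_{i,n}(q)$ and the argument reduces to an identity in these algebras, after which the restrictions to $\mathcal{K}^{b}(\mathcal{A}_{\lambda})$ and $\mathcal{D}^{b}(\mathcal{A}_{\lambda})$ follow formally from part (b) (since $\Theta$ maps each weight block to the opposite weight block). The main obstacle is organizing this reduction cleanly: a given $\mathcal{A}$ is not itself a minimal model, so one must package $\mathcal{A}$ as a colimit of images of minimal categorifications under morphisms of $\mathfrak{sl}_2$-categorifications, and check that the homotopy data assembling $\Theta'\circ\Theta\simeq \mathrm{Id}$ is natural enough to glue across these images. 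Once this glueing is in place the claims (c) follow.
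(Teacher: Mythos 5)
Your treatment of parts (a) and (b) is sound and matches the standard argument: (a) is indeed an identity in the quotients $\overline{\mathbb{H}}_{*,n}(q)$ forced by the (anti)symmetrizing idempotents together with the adjunction counit, and (b) is the Grothendieck-group computation comparing $[(\Theta_{\lambda})^{-i}]=\frac{1}{(\lambda+i)!\,i!}[\mathrm{E}]^{\lambda+i}[\mathrm{F}]^{i}$ (via Corollary~\ref{cor1306}) with the formula of Lemma~\ref{lem1315}\eqref{lem1315.2}.

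The gap is in part (c), precisely at the step you flag as "the main obstacle." A general $\mathfrak{sl}_2$-categorification $\mathcal{A}$ is \emph{not} a colimit of images of minimal categorifications, and Theorem~\ref{them1310}\eqref{them1310.2} does not supply morphisms (in the sense of Definition~\ref{def1308}) from $\mathcal{A}(n)$ into arbitrary summands of $\mathcal{A}$: its hypothesis — that the simple object killed by $\mathrm{F}'$ be projective — fails in general, and "the summand of $\mathcal{A}$ generated by a highest-weight projective" need not be an $\mathfrak{sl}_2$-categorification of a single isotypic module. The correct dévissage, which is the one the paper sketches, is: (i) prove the equivalence for the minimal categorification by explicit computation in the algebras $\overline{\mathbb{H}}_{i,n}(q)$; (ii) treat an arbitrary categorification of an \emph{isotypic} module by comparing it with the minimal one (this uses the structure theory of such categorifications, not a direct application of Theorem~\ref{them1310}\eqref{them1310.2}); (iii) produce a filtration of $\mathcal{A}$ by Serre subcategories stable under $\mathrm{E}$ and $\mathrm{F}$ whose subquotients decategorify exactly to the isotypic components of $[\mathcal{A}]$, and then invoke a lifting result asserting that a complex of exact functors compatible with such a filtration and inducing derived equivalences on all subquotients induces a derived equivalence globally. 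Steps (ii) and (iii) — especially the lifting lemma in (iii), which is a genuine theorem and not a formality — are exactly what your "gluing across images" would need to be replaced by, and without them the argument for (c) does not close. The construction of the quasi-inverse as the termwise adjoint complex is fine, but showing $\Theta'\circ\Theta\simeq\mathrm{Id}$ "directly on each $\mathcal{A}(n)$" only settles the minimal case, i.e.\ step (i).
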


\begin{proof}[Idea of the proof.]
First, prove the claim for a minimal categorification.
Use the first step to prove the claim for any categorification
of an isotypic module. Finally, there is a filtration of 
$\mathcal{A}$ by Serre subcategories such that 
subquotients of this filtration descend exactly to 
isotypic $\mathfrak{sl}_2$-components in $[\mathcal{A}]$.
Deduce now the claim in the general case using this filtration
and step two. 
\end{proof}

For a construction of a $2$-category categorifying 
$U(\mathfrak{sl}_2)$ see \cite{La,Ro}.

\section{Application: blocks of $\mathbb{F}[\mathbb{S}_n]$ and 
Brou{\'e}'s conjecture}\label{s14}

\subsection{Jucys-Murphy elements and formal characters}\label{s14.1}

In the first part of this section we closely follow the exposition 
from \cite{BrKl}.
Let $\mathbb{F}$ denote a field of arbitrary characteristic $p$
and $\mathbb{F}_p=\mathbb{Z}/p\mathbb{Z}$. For $n\in\mathbb{N}_0$ 
consider the symmetric group $\mathbb{S}_n$. 

\begin{definition}\label{def1401}
{\rm  
For $k=1,\dots,n$ define the {\em Jucys-Murphy} element
\index{Jucys-Murphy element}
\begin{displaymath}
\mathtt{x}_k:=(1,k)+(2,k)+\dots+(k-1,k)\in\mathbb{F}[\mathbb{S}_n]. 
\end{displaymath}
} 
\end{definition}

Importance of Jucys-Murphy elements is explained by the following:

\begin{proposition}[\cite{Ju,Mu}]\label{prop1402}
The elements $\mathtt{x}_1$, $\mathtt{x}_2$,\dots, $\mathtt{x}_n$
commute with one other and the center of the group 
algebra $\mathbb{F}[\mathbb{S}_n]$ is precisely the set of symmetric 
polynomials in these elements.
\end{proposition}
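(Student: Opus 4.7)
For the commutativity, the cleanest route is an inductive one based on the observation that $\mathtt{x}_1+\mathtt{x}_2+\dots+\mathtt{x}_k$ is precisely the sum of all transpositions in $\mathbb{S}_k$, and hence is the class sum of a conjugacy class in $\mathbb{S}_k$, living in the center of $\mathbb{F}[\mathbb{S}_k]$. Writing $\mathtt{x}_k=(\mathtt{x}_1+\dots+\mathtt{x}_k)-(\mathtt{x}_1+\dots+\mathtt{x}_{k-1})$ as the difference of two elements central in $\mathbb{F}[\mathbb{S}_k]$ and $\mathbb{F}[\mathbb{S}_{k-1}]$ respectively, I see that $\mathtt{x}_k$ commutes with every element of $\mathbb{F}[\mathbb{S}_{k-1}]$; in particular $\mathtt{x}_k\mathtt{x}_l=\mathtt{x}_l\mathtt{x}_k$ for all $l<k$.

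The next step is to prove that any symmetric polynomial $f(\mathtt{x}_1,\dots,\mathtt{x}_n)$ lies in $Z(\mathbb{F}[\mathbb{S}_n])$. Since symmetric polynomials are generated by the elementary symmetric polynomials $e_k$, and since $\mathbb{S}_n$ is generated by the simple transpositions $s_i$, it suffices to check that $s_i e_k(\mathtt{x}_1,\dots,\mathtt{x}_n) s_i^{-1}=e_k(\mathtt{x}_1,\dots,\mathtt{x}_n)$ for all $i,k$. A direct computation shows $s_i\mathtt{x}_j s_i=\mathtt{x}_j$ for $j\neq i,i+1$, together with $s_i\mathtt{x}_i s_i=\mathtt{x}_{i+1}-s_i$ and $s_i\mathtt{x}_{i+1}s_i=\mathtt{x}_i+s_i$. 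Thus $s_i$-conjugation preserves $\mathtt{x}_i+\mathtt{x}_{i+1}$ and, using the identity $\mathtt{x}_{i+1}s_i-s_i\mathtt{x}_i=1$, also preserves the product $\mathtt{x}_i\mathtt{x}_{i+1}$. Combining this with the fact that $s_i$ commutes with every $\mathtt{x}_j$ for $j\notin\{i,i+1\}$, I conclude that the elementary symmetric polynomial $e_k$ is fixed, and hence that $f(\mathtt{x}_1,\dots,\mathtt{x}_n)$ is central.

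For the reverse inclusion I plan to show that the symmetric polynomials in the Jucys--Murphy elements span the center, which has a basis indexed by conjugacy classes (i.e. by partitions of $n$). The idea is to expand a chosen family of symmetric polynomials (for example the elementary symmetric polynomials $e_\lambda=\prod e_{\lambda_i}$, or alternatively the power-sum polynomials $p_\lambda$, as $\lambda$ runs over partitions of $n$) in the basis of $\mathbb{F}[\mathbb{S}_n]$. A direct combinatorial expansion of $e_k(\mathtt{x}_1,\dots,\mathtt{x}_n)=\sum_{i_1<\dots<i_k}\mathtt{x}_{i_1}\cdots\mathtt{x}_{i_k}$ produces a sum of products of transpositions, and on grouping by cycle type one shows that $e_\lambda$ equals the class sum $C_{\lambda^+}$ (where $\lambda^+$ is an appropriate ``padded'' cycle type) plus a linear combination of class sums for conjugacy classes whose cycle type is strictly smaller in the dominance (or length-refinement) order on partitions of $n$. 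This triangularity lets me invert the expansion and recover every class sum as a symmetric polynomial in $\mathtt{x}_1,\dots,\mathtt{x}_n$.

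The main obstacle is this last triangularity step: the combinatorics of expanding $e_\lambda(\mathtt{x}_1,\dots,\mathtt{x}_n)$ into a $\mathbb{Z}$-linear combination of class sums, and the verification that the change-of-basis matrix between $\{e_\lambda\}$ and $\{C_\mu\}$ is unitriangular (with respect to a suitable order on partitions) and hence invertible over any ring, so that the argument is characteristic-free and does not rely on semisimplicity. Once this is in place, the symmetric polynomials in the $\mathtt{x}_i$ both lie in and span $Z(\mathbb{F}[\mathbb{S}_n])$, proving the claim.
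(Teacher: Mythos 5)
The paper gives no argument for this proposition at all---it is quoted from Jucys and Murphy---so your proposal has to be judged against the classical proofs. Your first two steps are complete and correct, and are exactly Jucys's arguments: the telescoping trick (each $\mathtt{x}_1+\dots+\mathtt{x}_k$ is the class sum of all transpositions of $\mathbb{S}_k$, hence central in $\mathbb{F}[\mathbb{S}_k]$) gives commutativity, and the relations $s_i\mathtt{x}_is_i=\mathtt{x}_{i+1}-s_i$, $s_i\mathtt{x}_{i+1}s_i=\mathtt{x}_i+s_i$ show that conjugation by $s_i$ fixes $\mathtt{x}_i+\mathtt{x}_{i+1}$ and $\mathtt{x}_i\mathtt{x}_{i+1}$, hence fixes every elementary symmetric polynomial $e_k(\mathtt{x}_1,\dots,\mathtt{x}_n)$. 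This settles one inclusion.

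The reverse inclusion is the substance of Murphy's theorem, and there your outline has a genuine gap beyond the one you flag. First, an indexing slip: the relevant degrees are not partitions of $n$ but \emph{reduced cycle types}, i.e.\ partitions of $k$ with $0\le k\le n-1$ (subtract $1$ from each part of $\mu\vdash n$); note that $e_n(\mathtt{x}_1,\dots,\mathtt{x}_n)=0$ because $\mathtt{x}_1=0$. Second, and more seriously, the unitriangularity you assert is false for the family $\{e_\lambda\}$. The correct leading-term statement for a single $e_k$ is Jucys's identity $e_k(\mathtt{x})=\sum_\sigma\sigma$, summed over all $\sigma$ with exactly $n-k$ cycles (this uses the unique strictly monotone factorization of such $\sigma$ into $k$ transpositions). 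Already for $n=4$ this gives $e_2(\mathtt{x})=C_{(3,1)}+C_{(2,2)}$, while a direct count gives $e_1(\mathtt{x})^2=6e+3C_{(3,1)}+2C_{(2,2)}$; the matrix $\bigl(\begin{smallmatrix}1&1\\3&2\end{smallmatrix}\bigr)$ relating $\{e_2,\,e_1^2\}$ to $\{C_{(3,1)},C_{(2,2)}\}$ modulo the identity class has no zero entry, so no ordering makes it unitriangular. (Power sums are worse: $\{p_2,\,p_1^2\}$ gives $\bigl(\begin{smallmatrix}1&0\\3&2\end{smallmatrix}\bigr)$, which is not invertible in characteristic $2$.) The determinant happens to be $\pm1$, and invertibility over $\mathbb{Z}$ does always hold, but the way to see this is Murphy's actual lemma: for the \emph{monomial} symmetric polynomials one has $m_{\bar\mu}(\mathtt{x})=C_\mu+\sum_\nu a_{\mu\nu}C_\nu$, where $\nu$ runs over classes with $n-\ell(\nu)<n-\ell(\mu)$ or with $n-\ell(\nu)=n-\ell(\mu)$ and $\bar\nu$ strictly dominating $\bar\mu$. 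Proving this triangularity is the combinatorial heart of the theorem and requires the factorization analysis in earnest; as it stands, your decisive step is both unproved and, in the form stated, incorrect.
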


Let $M$ be a finite dimensional $\mathbb{F}[\mathbb{S}_n]$-module. For
$\mathbf{i}=(i_1,i_2,\dots,i_n)\in \mathbb{F}_p^n$ set
\begin{displaymath}
M_{\mathbf{i}}:=\{
v\in M:(\mathtt{x}_r-i_r)^N v=0 \text{ for all } N\gg 0
\text{ and } r=1,2,\dots,n
\}.
\end{displaymath}
Amazingly enough, we have the following property:

\begin{proposition}[\cite{BrKl}]\label{prop1403}
We have  $\displaystyle
M=\bigoplus_{\mathbf{i}\in \mathbb{F}_p^n}M_{\mathbf{i}}$.
\end{proposition}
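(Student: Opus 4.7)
The plan is to combine the commutativity of the Jucys--Murphy elements from Proposition~\ref{prop1402} with an integrality statement on the eigenvalues of each $\mathtt{x}_k$, and then use standard linear algebra to simultaneously decompose $M$.

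First I would reduce the claim to showing that, on any finite-dimensional $\mathbb{F}[\mathbb{S}_n]$-module, each operator $\mathtt{x}_k$ has all its generalized eigenvalues in $\mathbb{F}_p$. Indeed, if this is granted, then because the $\mathtt{x}_k$ pairwise commute (Proposition~\ref{prop1402}), each generalized eigenspace of one of them is stable under the others; iterating the one-variable generalized-eigenspace decomposition over $k=1,\dots,n$ then yields precisely $M=\bigoplus_{\mathbf{i}\in\mathbb{F}_p^n}M_{\mathbf{i}}$.

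To control the eigenvalues, I would first work in $\mathbb{Z}[\mathbb{S}_n]$ and then reduce modulo $p$. Over $\mathbb{Q}$, the classical theory of Jucys and Murphy (see e.g.\ the references of \cite{BrKl}) gives that on every irreducible Specht module $S^{\lambda}$ the element $\mathtt{x}_k$ acts semisimply with eigenvalues equal to the contents $c(k)=j-i$ of the cell containing $k$ in a standard Young tableau of shape $\lambda$; in particular these eigenvalues are integers lying in $\{-(k-1),\dots,k-1\}$. Consequently the minimal polynomial of left multiplication by $\mathtt{x}_k$ on $\mathbb{Q}[\mathbb{S}_n]$ is of the form $m_k(X)=\prod_{i}(X-i)$ with $i$ ranging over a subset of $\mathbb{Z}$. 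Since $m_k\in\mathbb{Z}[X]$, the identity $m_k(\mathtt{x}_k)=0$ already holds inside $\mathbb{Z}[\mathbb{S}_n]\hookrightarrow\mathbb{Q}[\mathbb{S}_n]$, and therefore, after base change, also inside $\mathbb{F}[\mathbb{S}_n]$ with $m_k$ replaced by its mod-$p$ reduction $\overline{m}_k\in\mathbb{F}_p[X]$, which still splits as a product of distinct linear factors with roots in $\mathbb{F}_p$.

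Pulling everything together: $\overline{m}_k(\mathtt{x}_k)=0$ in $\mathbb{F}[\mathbb{S}_n]$ implies that on $M$ the operator $\mathtt{x}_k$ satisfies a polynomial that splits over $\mathbb{F}_p$, so $M$ decomposes as a direct sum of its generalized $\mathtt{x}_k$-eigenspaces indexed by $i_k\in\mathbb{F}_p$; combining over all $k$ using commutativity yields the decomposition in the statement. The only nontrivial step is the integrality of the characteristic-zero eigenvalues of $\mathtt{x}_k$, which is the main obstacle but is classical; everything else reduces to routine linear algebra and to Proposition~\ref{prop1402}.
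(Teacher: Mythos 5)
Your argument is correct and is essentially the standard proof from the cited reference; the paper itself offers no proof of this proposition beyond the citation to \cite{BrKl}, so there is nothing different to compare against. The one inaccuracy is your claim that the mod-$p$ reduction $\overline{m}_k$ splits into \emph{distinct} linear factors: reduction modulo $p$ can identify integer contents that differ by a multiple of $p$, so $\overline{m}_k$ may well have repeated roots (and correspondingly $\mathtt{x}_k$ need not act semisimply on $M$ — this is exactly why the spaces $M_{\mathbf{i}}$ are defined via \emph{generalized} eigenspaces). Since the rest of your argument only uses that $\overline{m}_k$ splits over the prime subfield, not that its roots are simple, this does not affect the proof; you might also note that in characteristic $0$ the reduction step is vacuous and the integrality of the contents already gives the claim directly.
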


Consider the free $\mathbb{Z}$-module $\mathrm{Char}$ with basis
$\{\exp(\mathbf{i}):\mathbf{i}\in \mathbb{F}_p^n\}$.

\begin{definition}\label{def1404}
{\rm  
Let $M$ be an $\mathbb{F}[\mathbb{S}_n]$-module. The
{\em formal character} of $M$ is
\index{formal character}
\begin{displaymath}
\mathrm{ch}\,M:=\sum_{\mathbf{i}\in \mathbb{F}_p^n} 
 \dim M_{\mathbf{i}}\exp(\mathbf{i})\in \mathrm{Char}.
\end{displaymath}
} 
\end{definition}

\begin{proposition}[\cite{Va}]\label{prop1405}
\begin{enumerate}[$($a$)$]
\item\label{prop1405.1} If $M\hookrightarrow N\tto K$ is a short
exact sequence of $\mathbb{F}[\mathbb{S}_n]$-modules, then
$\mathrm{ch}\,N=\mathrm{ch}\,M+\mathrm{ch}\,K$.
\item\label{prop1405.2} The formal characters of the inequivalent
irreducible $\mathbb{F}[\mathbb{S}_n]$-modules are linearly
independent.
\end{enumerate}
\end{proposition}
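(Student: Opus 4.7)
The plan for part (a) is routine and rests on the observation that for each $\mathbf{i}\in\mathbb{F}_p^n$ the functor $M\mapsto M_{\mathbf{i}}$ from finite dimensional $\mathbb{F}[\mathbb{S}_n]$-modules to vector spaces is exact. By Proposition~\ref{prop1402} the Jucys-Murphy elements $\mathtt{x}_1,\dots,\mathtt{x}_n$ commute, and on any finite dimensional module they act as pairwise commuting locally finite operators; hence the generalized eigenspace decomposition of Proposition~\ref{prop1403} is canonical in $M$, and for a short exact sequence $0\to M\to N\to K\to 0$ one has $M_{\mathbf{i}}=M\cap N_{\mathbf{i}}$ and $K_{\mathbf{i}}\cong N_{\mathbf{i}}/M_{\mathbf{i}}$. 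Summing the resulting dimension identity against $\exp(\mathbf{i})$ yields $\mathrm{ch}\,N=\mathrm{ch}\,M+\mathrm{ch}\,K$.

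For part (b) I would split the argument into two steps. First, reduce to a single block using Proposition~\ref{prop1402}: the center $Z$ of $\mathbb{F}[\mathbb{S}_n]$ consists of symmetric polynomials in the $\mathtt{x}_k$, and any such $p(\mathtt{x}_1,\dots,\mathtt{x}_n)$ acts on $M_{\mathbf{i}}$ with generalized eigenvalue $p(i_1,\dots,i_n)$. Consequently $M_{\mathbf{i}}$ and $M_{\mathbf{i}'}$ lie in the same block summand of $M$ if and only if $\mathbf{i}$ and $\mathbf{i}'$ are in the same $\mathbb{S}_n$-orbit in $\mathbb{F}_p^n$. Thus simple modules belonging to different blocks have disjoint support in $\mathrm{Char}$, and it suffices to prove linear independence within a single block. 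Second, within a fixed block, I would invoke the Grojnowski--Vazirani modular branching machinery: for each $i\in\mathbb{F}_p$ one has the $i$-restriction functor $e_i M$ defined as the $i$-generalized eigenspace of $\mathtt{x}_n$ on $\mathrm{Res}^{\mathbb{S}_n}_{\mathbb{S}_{n-1}}M$. The crucial fact is that for any simple $L$ the module $e_i L$ is either zero or has simple socle, and the multi-set of indices $i$ for which $e_i L\neq 0$, together with the characters of the socles of $e_i L$ as $\mathbb{F}[\mathbb{S}_{n-1}]$-modules, determines $L$ up to isomorphism. Proceeding by induction on $n$ (the case $n=0$ being trivial), one recovers each simple $L$ from the data of $\mathrm{ch}\,L$, and a standard triangularity argument on the resulting ``highest weight'' labelling then gives linear independence of the characters.

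The main obstacle in this program is the crystal-type statement in the inductive step, namely that $e_i L$ has simple socle for every simple $L$ and every $i$: this is not a formal consequence of Propositions~\ref{prop1402}--\ref{prop1403} but requires genuine representation theoretic input, coming either from Kleshchev's modular branching analysis or from the identification of the labelling of simples with a highest weight crystal for the affine Lie algebra $\widehat{\mathfrak{sl}}_p$ (or $\widehat{\mathfrak{sl}}_\infty$ in characteristic zero). Once this structural result is in hand, the rest of the argument is bookkeeping; without it, one would at best obtain that distinct simples have distinct characters but not the stronger linear independence asserted here.
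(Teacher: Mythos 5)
The paper offers no proof of this proposition at all: it is quoted verbatim from Vazirani's thesis \cite{Va} (see also \cite{BrKl,Gr}), so there is nothing internal to compare against. Judged on its own, your part (a) is complete and correct: the simultaneous generalized eigenspace $M_{\mathbf i}$ for the commuting Jucys--Murphy operators satisfies $M_{\mathbf i}=M\cap N_{\mathbf i}$ and $K_{\mathbf i}\cong N_{\mathbf i}/M_{\mathbf i}$ for any short exact sequence, and additivity of $\mathrm{ch}$ follows; the only input you need is Proposition~\ref{prop1403}, which guarantees that the eigenvalues already lie in $\mathbb F_p$.

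For part (b) you have reconstructed exactly the route taken in the cited sources, and you correctly locate the non-formal input. Two remarks. First, the reduction to a single block is harmless but unnecessary: characters of simples in distinct $\mathcal B_\gamma$ have disjoint support in $\mathrm{Char}$ by Corollary~\ref{cor1406}, so this step buys nothing that the inductive argument does not already give. Second, and more substantively, ``$e_iL$ has simple socle and the data determines $L$'' yields only injectivity of $L\mapsto\mathrm{ch}\,L$, as you yourself note; to get linear independence one needs the sharper branching statement that for $\varepsilon=\varepsilon_i(L)$ the module $e_i^{(\varepsilon)}L$ has simple socle $\tilde e_i^{\,\varepsilon}L$ occurring with composition multiplicity one, while every other composition factor $N$ satisfies $\varepsilon_i(N)<\varepsilon_i(\tilde e_i^{\,\varepsilon}L)$. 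It is this multiplicity-one-plus-triangularity refinement (ordering the simples with nonzero coefficient by maximal $\varepsilon_i$ and applying $e_i^{(\varepsilon)}$ on characters) that makes your ``standard triangularity argument'' work; without it the induction does not close. Since that refinement is precisely the content of the Kleshchev--Grojnowski--Vazirani modular branching theory, deferring it is reasonable here --- the paper defers the entire proposition --- but it should be stated as the lemma actually being invoked rather than only the simple-socle property.
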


Define the set
\begin{displaymath}
\Gamma_n:=\{\gamma=(\gamma_r)_{r\in \mathbb{F}_p}:
\gamma_r\in\mathbb{N}_0,\sum_{r\in \mathbb{F}_p}\gamma_r=n\}. 
\end{displaymath}
For $\mathbf{i}\in \mathbb{F}_p^n$ set
\begin{displaymath}
\mathrm{wt}(\mathbf{i}):=(\gamma_r)_{r\in \mathbb{F}_p}\in
\Gamma_n,\quad\text{ where }
\gamma_r=|\{j\in\{1,2,\dots,n\}:i_j=r\}|.
\end{displaymath}
For $\gamma\in\Gamma_n$ set
\begin{displaymath}
M(\gamma):=\bigoplus_{\mathbf{i}\in \mathbb{F}_p^n\,:\,
\mathrm{wt}(\mathbf{i})=\gamma}M_{\mathbf{i}}\subset M.
\end{displaymath}

\begin{corollary}\label{cor1406}
For every  $\gamma\in\Gamma_n$ the subspace 
$M(\gamma)$ is a submodule of $M$.
\end{corollary}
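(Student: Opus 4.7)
The plan is to show that $M(\gamma)$ is stable under the action of every simple transposition $s_k = (k,k+1)$, since these generate $\mathbb{S}_n$. For this I need to track how $s_k$ moves a vector between the generalized eigenspaces $M_{\mathbf{i}}$, and argue that the weight $\mathrm{wt}(\mathbf{i}) \in \Gamma_n$ is unchanged.

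The first step is to record the commutation relations between $s_k$ and the Jucys-Murphy elements. A direct computation using the definition gives $s_k \mathtt{x}_j = \mathtt{x}_j s_k$ for $j \neq k, k+1$, while
\begin{displaymath}
s_k \mathtt{x}_k = \mathtt{x}_{k+1} s_k - 1, \qquad s_k \mathtt{x}_{k+1} = \mathtt{x}_k s_k + 1.
\end{displaymath}
In particular, the elementary symmetric polynomials $\mathtt{x}_k + \mathtt{x}_{k+1}$ and $\mathtt{x}_k \mathtt{x}_{k+1}$ commute with $s_k$ (alternatively, this is a special case of Proposition~\ref{prop1402} applied to the symmetric subgroup generated by $s_k$).

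Now take $v \in M_{\mathbf{i}}$ with $\mathrm{wt}(\mathbf{i}) = \gamma$. For each $j \neq k, k+1$ the element $\mathtt{x}_j$ commutes with $s_k$, so $s_k v$ lies in the generalized $i_j$-eigenspace of $\mathtt{x}_j$. For the pair $(\mathtt{x}_k, \mathtt{x}_{k+1})$ the commuting symmetric functions above act on $s_k v$ with the same generalized eigenvalues as on $v$, namely $i_k + i_{k+1}$ and $i_k i_{k+1}$. By Vieta the multiset of generalized eigenvalues of $(\mathtt{x}_k, \mathtt{x}_{k+1})$ on $s_k v$ must therefore equal $\{i_k, i_{k+1}\}$. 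Combined with Proposition~\ref{prop1403} this forces
\begin{displaymath}
s_k v \in M_{\mathbf{i}} \oplus M_{s_k \mathbf{i}},
\end{displaymath}
where $s_k \mathbf{i}$ denotes the tuple obtained from $\mathbf{i}$ by interchanging the $k$-th and $(k+1)$-st entries.

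Finally, since $\mathrm{wt}(s_k \mathbf{i}) = \mathrm{wt}(\mathbf{i}) = \gamma$, both summands lie in $M(\gamma)$, hence $s_k \cdot M(\gamma) \subseteq M(\gamma)$. Running this over all simple transpositions and invoking the fact that they generate $\mathbb{S}_n$ yields the claim. The only mildly delicate point is the passage from commuting with the symmetric functions to controlling the individual generalized eigenvalues, which is handled by the Vieta observation above together with the direct sum decomposition of Proposition~\ref{prop1403}; no further technicalities arise.
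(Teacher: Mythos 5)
Your argument is correct, but it is not the route the paper intends. The corollary is placed immediately after Proposition~\ref{prop1402} and Proposition~\ref{prop1403} precisely so that it follows from centrality: a symmetric polynomial $f(\mathtt{x}_1,\dots,\mathtt{x}_n)$ is central in $\mathbb{F}[\mathbb{S}_n]$, it acts on $M_{\mathbf{i}}$ with generalized eigenvalue $f(i_1,\dots,i_n)$, which depends only on $\mathrm{wt}(\mathbf{i})$, and distinct weights are separated by symmetric polynomials (the multiset $\{i_1,\dots,i_n\}$ is recovered from $\prod_r(t-i_r)$). Hence $M(\gamma)$ is exactly a simultaneous generalized eigenspace for the center, and generalized eigenspaces of central operators are automatically submodules. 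Your proof instead checks stability generator by generator, using only the elementary commutation relations $s_k\mathtt{x}_k=\mathtt{x}_{k+1}s_k-1$, $s_k\mathtt{x}_{k+1}=\mathtt{x}_ks_k+1$ (i.e.\ the degenerate affine Hecke relations of Definition~\ref{def1304}) rather than the full strength of the Jucys--Murphy description of the center; the Vieta step and the passage through the direct sum of Proposition~\ref{prop1403} are handled correctly, including the point that each component $w_{\mathbf{j}}$ of $s_kv$ must individually satisfy both symmetric-function constraints. What your approach buys is self-containedness and the finer statement $s_k\,M_{\mathbf{i}}\subseteq M_{\mathbf{i}}\oplus M_{s_k\mathbf{i}}$, which is exactly the local structure exploited in the $\mathfrak{sl}_2$-categorification of Section~\ref{s13}; what the central-character argument buys is brevity and the intrinsic identification of $M(\gamma)$ that makes the $\mathcal{B}_{\gamma}$ genuine blocks. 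One cosmetic quibble: your parenthetical that the commutation of $\mathtt{x}_k+\mathtt{x}_{k+1}$ and $\mathtt{x}_k\mathtt{x}_{k+1}$ with $s_k$ is ``a special case of Proposition~\ref{prop1402}'' is not literally accurate (the Jucys--Murphy elements of the subgroup $\langle s_k\rangle$ are not $\mathtt{x}_k,\mathtt{x}_{k+1}$), but your direct computation makes this aside harmless.
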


As a consequence, we have the following decomposition
of $\mathbb{F}[\mathbb{S}_n]\text{-}\mathrm{mod}$ into {\em blocks}:
\index{block}
\begin{displaymath}
\mathbb{F}[\mathbb{S}_n]\text{-}\mathrm{mod}\cong
\bigoplus_{\gamma\in\Gamma_n} \mathcal{B}_{\gamma},
\end{displaymath}
where $\mathcal{B}_{\gamma}$ denotes the full subcategory of 
$\mathbb{F}[\mathbb{S}_n]\text{-}\mathrm{mod}$ consisting of all
modules $M$ satisfying $M=M(\gamma)$.

\subsection{Induction and restriction}\label{s14.2}

For every $i\in\mathbb{F}_p$ define the insertion operation
$\mathrm{ins}_i:\Gamma_n\to\Gamma_{n+1}$ as follows:
for $\gamma=(\gamma_r)_{r\in \mathbb{F}_p}$ set
$\mathrm{ins}_i(\gamma)$ to be $(\gamma'_r)_{r\in \mathbb{F}_p}$
such that $\gamma'_r=\gamma_r$ for all $r\neq i$
and $\gamma'_i=\gamma_i+1$.

For every $i\in\mathbb{F}_p$ define the removal operation 
$\mathrm{rem}_i:\Gamma_n\to\Gamma_{n-1}$ (this will be a
partially defined map) as follows: for 
$\gamma=(\gamma_r)_{r\in \mathbb{F}_p}$ the image of
$\gamma$ under $\mathrm{rem}_i$ is defined if and only if
$\gamma_i> 0$, and, if this condition is satisfied, we set
$\mathrm{rem}_i(\gamma)$ to be $(\gamma'_r)_{r\in \mathbb{F}_p}$
such that $\gamma'_r=\gamma_r$ for all $r\neq i$
and $\gamma'_i=\gamma_i-1$.

For every $i\in\mathbb{F}_p$ we now define functors
\begin{displaymath}
\mathrm{E}_i:\mathbb{F}[\mathbb{S}_n]\text{-}\mathrm{mod}\to
\mathbb{F}[\mathbb{S}_{n-1}]\text{-}\mathrm{mod}\quad\text{ and }\quad
\mathrm{F}_i:\mathbb{F}[\mathbb{S}_n]\text{-}\mathrm{mod}\to
\mathbb{F}[\mathbb{S}_{n+1}]\text{-}\mathrm{mod}
\end{displaymath}
as follows: For $\gamma\in\Gamma_n$ and $M\in \mathcal{B}_{\gamma}$ set
\begin{displaymath}
\mathrm{E}_i\, M:=
\begin{cases}
(\mathrm{Res}_{\mathbb{S}_{n-1}}^{\mathbb{S}_n}\, M)
(\mathrm{rem}_i(\gamma)), &  
\mathrm{rem}_i(\gamma) \text{ is defined};\\
0,& \text{otherwise},
\end{cases}
\end{displaymath}
and
\begin{displaymath}
\mathrm{F}_i\, M:=
(\mathrm{Ind}^{\mathbb{S}_{n+1}}_{\mathbb{S}_n}\, M)
(\mathrm{ins}_i(\gamma)).
\end{displaymath}
Extending additively, this defines $\mathrm{E}_i$ and
$\mathrm{F}_i$ on the whole of 
$\mathbb{F}[\mathbb{S}_n]\text{-}\mathrm{mod}$. Directly from the
definition we obtain the following:
for every $\mathbb{F}[\mathbb{S}_n]$-module $M$ we have
\begin{displaymath}
\mathrm{Res}_{\mathbb{S}_{n-1}}^{\mathbb{S}_n}\, M\cong
\bigoplus_{i\in\mathbb{F}_p} \mathrm{E}_i\, M,\qquad
\mathrm{Ind}^{\mathbb{S}_{n+1}}_{\mathbb{S}_n}\, M\cong
\bigoplus_{i\in\mathbb{F}_p} \mathrm{F}_i\, M.
\end{displaymath}
Using the Frobenius reciprocity, it follows that $\mathrm{E}_i$ is 
both left and right adjoint of $\mathrm{F}_i$. In particular, both 
$\mathrm{E}_i$ and $\mathrm{F}_i$ are exact functors.

\subsection{Categorification of the basic representation of 
an affine Kac-Moody algebra}\label{s14.3}

Denote by $\mathtt{R}_n$ the free $\mathbb{Z}$-module spanned by the
formal characters of irreducible $\mathbb{F}[\mathbb{S}_n]$-modules,
the so-called {\em character ring} of $\mathbb{F}[\mathbb{S}_n]$.
\index{character ring}
By Proposition~\ref{prop1405}\eqref{prop1405.2}, the map
$\mathrm{ch}$ induces an isomorphism between $\mathtt{R}_n$
and the Grothendieck group of the category of all finite dimensional
$\mathbb{F}[\mathbb{S}_n]$-modules. Set
\begin{displaymath}
\mathtt{R}:=\bigoplus_{n\in\mathbb{N}_0}\mathtt{R}_n.
\end{displaymath}
The exact functors $\mathrm{E}_i$ and $\mathrm{F}_i$ induce
$\mathbb{Z}$-linear endomorphisms of $\mathtt{R}$. Extending the
scalars to $\mathbb{C}$ we get the vector space
\begin{displaymath}
\mathtt{R}_{\mathbb{C}}=
\mathbb{C}\otimes_{\mathbb{Z}}\mathtt{R}
\end{displaymath}
and linear operators $[\mathrm{E}_i]$ and $[\mathrm{F}_i]$ on it.

\begin{theorem}[\cite{LLT,Ari,Gr}]\label{thm1407}
\begin{enumerate}[$($a$)$]
\item\label{thm1407.1} The linear operators 
$[\mathrm{E}_i]$ and $[\mathrm{F}_i]$, $i\in \mathbb{F}_p$,
satisfy the defining relations for the Chevalley generators of an
affine Kac-Moody Lie algebra $\mathfrak{a}$ of type
$A_{p-1}^{(1)}$ (resp. $A_{\infty}$ in the case $p=0$). 
\item\label{thm1407.2} The  $\mathfrak{a}$-module 
$\mathtt{R}_{\mathbb{C}}$, given by \eqref{thm1407.1}, is isomorphic
to the basic highest weight representation $V(\Lambda_0)$ of
this Kac-Moody algebra,
where $\Lambda_0=(1,0,0,\dots)$, and is generated by
the highest weight vector, represented by the character  of the
irreducible $\mathbb{F}[\mathbb{S}_0]$-module.
\end{enumerate}
\end{theorem}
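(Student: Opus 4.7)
My plan would proceed in three stages: establish the $\mathfrak{sl}_2$-triple relations (Chevalley relations restricted to each simple root), then the Serre relations, and finally identify the resulting integrable module as $V(\Lambda_0)$.

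The first stage rests on the Mackey decomposition for the pair $\mathbb{S}_n\subset\mathbb{S}_{n+1}$: since the double-coset space $\mathbb{S}_n\backslash\mathbb{S}_{n+1}/\mathbb{S}_n$ has exactly two elements (represented by $e$ and by $(n,n+1)$), one obtains a natural short exact sequence
\begin{displaymath}
0\to\mathrm{Ind}_{\mathbb{S}_n}^{\mathbb{S}_{n+1}}\circ\mathrm{Res}_{\mathbb{S}_{n-1}}^{\mathbb{S}_n}\to\mathrm{Res}_{\mathbb{S}_n}^{\mathbb{S}_{n+1}}\circ\mathrm{Ind}_{\mathbb{S}_n}^{\mathbb{S}_{n+1}}\to\mathrm{Id}\to 0,
\end{displaymath}
which splits since $\mathrm{Id}$ is a direct summand. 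The main task is then to track how this splitting respects the Jucys-Murphy block decomposition: the new generator $\mathtt{x}_{n+1}$ encountered in induction, together with the generator $\mathtt{x}_n$ appearing on restriction, controls which $\mathrm{E}_i$ and $\mathrm{F}_j$ summands appear, and a direct computation on generalized eigenspaces yields $[\mathrm{E}_i,\mathrm{F}_j]=\delta_{i,j}\mathrm{K}_i$, where $[\mathrm{K}_i]$ acts by the integer $\gamma_i - \gamma_{i-1} - \gamma_{i+1} + 2\gamma_i - \dots$ reading off from the weight combinatorics. The $[\mathrm{K}_i\mathrm{E}_j]$ and $[\mathrm{K}_i\mathrm{F}_j]$ relations are immediate from how the insertion and removal operations shift $\gamma$.

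Stage two is the Serre relations. For $|i-j|>1$ the composites $\mathrm{F}_i\mathrm{F}_j$ and $\mathrm{F}_j\mathrm{F}_i$ are obtained by adding two boxes whose residues are non-adjacent, and a Mackey-style argument together with the block decomposition shows these functors actually commute, giving both halves of $[\mathrm{F}_i,\mathrm{F}_j]=0$. For $|i-j|=1$ one argues instead that a quadratic relation holds on the level of the character ring by an explicit computation on an arbitrary induced module; the cleanest route is to verify the Serre identities on characters of induced modules from $\mathbb{S}_{n-2}$, where one can inject into a computation purely in terms of the insertion combinatorics of $\Gamma_n$. I would cite the detailed verification from \cite{LLT,Gr}; this is the technically hardest point, but there is no deeper obstacle.

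Stage three is the identification as $V(\Lambda_0)$. The class of the trivial $\mathbb{F}[\mathbb{S}_0]$-module is annihilated by every $[\mathrm{E}_i]$ (since $\mathrm{Res}$ from $\mathbb{S}_0$ is zero) and is an eigenvector for every $[\mathrm{K}_i]$ with eigenvalues matching $\Lambda_0$, so it is a highest weight vector of weight $\Lambda_0$ in the $\mathfrak{a}$-module $\mathtt{R}_{\mathbb{C}}$. Integrability follows from the fact that on any finite-dimensional $\mathbb{F}[\mathbb{S}_n]$-module iterated application of $\mathrm{E}_i$ must terminate (each step removes a box of residue $i$, of which there are only finitely many available), and similarly for $\mathrm{F}_i$. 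Hence the submodule generated by the vacuum is an integrable highest weight module of highest weight $\Lambda_0$, so is isomorphic to $V(\Lambda_0)$. Finally, equality with all of $\mathtt{R}_{\mathbb{C}}$ follows by comparing characters weight-space by weight-space: on one side the multiplicity of the weight $\Lambda_0-\sum_i\gamma_i\alpha_i$ is the number of irreducible $\mathbb{F}[\mathbb{S}_n]$-modules in the block $\mathcal{B}_\gamma$, and on the other side it is the dimension of the corresponding weight space in $V(\Lambda_0)$; both are equal to the number of $p$-regular partitions of $n$ of the prescribed content, yielding the isomorphism.
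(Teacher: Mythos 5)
The paper offers no proof of Theorem~\ref{thm1407}: it is quoted from \cite{LLT,Ari,Gr}, so your sketch can only be measured against the arguments in those sources, which it follows in outline. The skeleton is correct: the split Mackey sequence for $\mathbb{S}_n\subset\mathbb{S}_{n+1}$, refined block by block via the Jucys--Murphy eigenvalues, yields the $[\mathrm{E}_i,\mathrm{F}_j]$ relations; local nilpotence of $[\mathrm{E}_i]$ and $[\mathrm{F}_i]$ gives integrability; the class of the trivial $\mathbb{F}[\mathbb{S}_0]$-module is a highest weight vector of weight $\Lambda_0$; and a weight-multiplicity comparison upgrades ``contains $V(\Lambda_0)$'' to ``equals $V(\Lambda_0)$''. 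Two places deserve more care than you give them. First, the Cartan eigenvalue you write down is garbled ($\gamma_i$ appears twice with different coefficients); the value of $[\mathrm{K}_i]$ on $[\mathcal{B}_{\gamma}]$ should be $\delta_{i,0}-2\gamma_i+\gamma_{i-1}+\gamma_{i+1}$ (indices taken in $\mathbb{F}_p$, with the usual modification for $p=2$), i.e.\ the pairing of $\Lambda_0-\sum_j\gamma_j\alpha_j$ with $h_i$. Second, and more substantively, your final step quietly invokes the identity between the weight multiplicities of $V(\Lambda_0)$ and the number of $p$-regular partitions of $n$ with residue content $\gamma$. That identity is a genuine external input --- it comes from the Misra--Miwa crystal structure on $p$-regular partitions, or equivalently from the principal realization of $V(\Lambda_0)$ combined with the $p$-core/$p$-quotient combinatorics --- and does not follow from anything established earlier in your argument; as written, ``both are equal to the number of $p$-regular partitions of the prescribed content'' is an assertion, not a proof. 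An alternative closing move, closer to what \cite{Gr} actually does and foreshadowed by Theorem~\ref{thm1414} in the paper, avoids the count entirely: show that the lattice $\mathtt{R}^*$ spanned by characters of projectives is generated from the vacuum by the divided powers $\mathrm{F}_i^{(r)}$, and then use nondegeneracy of the Shapovalov (Cartan) pairing between $\mathtt{R}$ and $\mathtt{R}^*$ to deduce that the highest weight vector generates everything.
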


\subsection{Brou{\'e}'s conjecture}\label{s14.4}

In this overview we closely follow \cite{Ric} and refer the reader
to this nice survey for more details.
Let $G$ be a finite group and $\mathbb{F}$ be a field of characteristic
$p> 0$. The finite dimensional group $\mathbb{F}$-algebra $\mathbb{F}[G]$
decomposes into a direct sum of indecomposable subalgebras,
called {\em blocks}. Let $A$ be a block of $\mathbb{F}[G]$. 
A {\em defect group} of $A$ is a minimal subgroup $D$ of $G$ such that 
\index{defect group}
every  $A$-module is a direct summand of a module induced from 
$\mathbb{F}[D]$. A defect group is always a $p$-subgroup of $G$ and
is determined uniquely up to conjugacy in $G$. Denote by 
$N_{G}(D)$ the normalizer of $D$ in $G$.

\begin{theorem}[Brauer's First Main Theorem]\label{thm1408}
If $D$ is a $p$-subgroup of $G$, then there is a natural bijection
between the blocks of $\mathbb{F}[G]$ with defect group $D$
and the blocks of $\mathbb{F}[N_{G}(D)]$ with defect group $D$.
\end{theorem}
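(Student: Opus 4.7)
The plan is to construct the bijection explicitly via the \emph{Brauer homomorphism}. First I would define, for the fixed $p$-subgroup $D \leq G$, the $\mathbb{F}$-linear map $\mathrm{Br}_D : (\mathbb{F}[G])^D \to \mathbb{F}[C_G(D)]$ on the $D$-conjugation-invariant subspace of $\mathbb{F}[G]$, sending $\sum_{g \in G} \alpha_g g$ to $\sum_{g \in C_G(D)} \alpha_g g$. The key algebraic input is that $\mathrm{Br}_D$ is an $\mathbb{F}$-algebra homomorphism; this reduces to checking that for any $D$-orbit on $G$ whose stabilizer is a proper subgroup of $D$, the corresponding orbit sum lies in the kernel, which follows because the orbit length is divisible by $p$ and hence the sum is zero modulo $p$ after contraction with the fibres through $C_G(D)$.

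Next I would specialize to $Z(\mathbb{F}[G]) \subseteq (\mathbb{F}[G])^D$ and verify that $\mathrm{Br}_D$ lands in the $N_G(D)$-invariants of $Z(\mathbb{F}[C_G(D)])$, which embeds naturally into $Z(\mathbb{F}[N_G(D)])$ since $C_G(D)$ is normal in $N_G(D)$ and $D$ centralizes $C_G(D)$. For a primitive central idempotent $e_B$ corresponding to a block $B$ of $\mathbb{F}[G]$ with defect group exactly $D$, I would then show that $\mathrm{Br}_D(e_B) \neq 0$ and decomposes into a sum of primitive central idempotents of $\mathbb{F}[N_G(D)]$, each of whose blocks again has defect group $D$. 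The nonvanishing uses Brauer's characterization of a defect group of $B$ as a maximal $p$-subgroup $D$ with $\mathrm{Br}_D(e_B) \neq 0$. To extract a single block of $\mathbb{F}[N_G(D)]$, one picks the component whose defect group is $D$ itself; one checks using covering theory for the normal subgroup $C_G(D) D$ that exactly one such component appears. This yields a well-defined map $B \mapsto b$ from blocks with defect group $D$ on the $G$-side to blocks with defect group $D$ on the $N_G(D)$-side.

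For the inverse direction one uses the "going up" feature: given a block $b$ of $\mathbb{F}[N_G(D)]$ with defect group $D$, there is a unique block $B$ of $\mathbb{F}[G]$ with $\mathrm{Br}_D(e_B) \cdot e_b \neq 0$, and one verifies $B$ has defect group $D$ and maps to $b$ under the construction above. Injectivity then comes from orthogonality of primitive central idempotents in $Z(\mathbb{F}[N_G(D)])$. The hard part I expect is precisely the fine control on defect groups on both sides: showing that $\mathrm{Br}_D(e_B) \neq 0$ forces $D$ to contain a defect group of $B$ (up to conjugacy) and, conversely, that any block of $\mathbb{F}[N_G(D)]$ with defect group $D$ is hit by exactly one block of $\mathbb{F}[G]$ with defect group $D$. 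This relies on the standard interplay between vertices of simple modules, Green correspondence, and the multiplicative structure of $\mathrm{Br}_D$; once these technical ingredients are set up, the desired bijection is a formal consequence.
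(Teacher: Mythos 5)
The paper states Brauer's First Main Theorem as a classical background fact and gives no proof at all, so your attempt has to be judged against the standard literature rather than against anything in the text. Your overall route --- the Brauer homomorphism $\mathrm{Br}_D$ on $(\mathbb{F}[G])^D$, its multiplicativity via the ideal spanned by the nontrivial $D$-orbit sums (equivalently, by relative traces from proper subgroups of $D$, which die because $[D:Q]\equiv 0 \bmod p$), the observation that $\mathrm{Br}_D$ carries $Z(\mathbb{F}[G])$ into $Z(\mathbb{F}[C_G(D)])^{N_G(D)}\subseteq Z(\mathbb{F}[N_G(D)])$, and the characterization of defect groups by non-vanishing of $\mathrm{Br}_D(e_B)$ --- is exactly the classical one, and those parts of the sketch are sound.

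Two problems remain. First, an internal inconsistency: you assert that every primitive central idempotent occurring in $\mathrm{Br}_D(e_B)$ belongs to a block of $N_G(D)$ with defect group $D$, and then propose to single out one of them by ``picking the component whose defect group is $D$ itself''; if the first assertion holds, that selection rule selects all of them and extracts nothing. Since $D$ is normal in $N_G(D)$, every block of $N_G(D)$ has a defect group containing $D$, and once one knows that block induction from $N_G(D)$ cannot shrink defect groups, every component of $\mathrm{Br}_D(e_B)$ does have defect group exactly $D$; the real content at this point is that there is \emph{exactly one} component, i.e.\ $\mathrm{Br}_D(e_B)$ is already a primitive idempotent of $Z(\mathbb{F}[N_G(D)])$. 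Second, the two statements that carry the whole theorem --- (i) if $b$ is a block of $N_G(D)$ with defect group $D$, then the induced block $b^G$ (the unique block $B$ of $G$ with $e_b\,\mathrm{Br}_D(e_B)=e_b$) again has defect group $D$, and (ii) distinct such $b$ induce distinct blocks --- are only named, not argued: you defer them to ``covering theory for $C_G(D)D$'' and to ``the standard interplay between vertices, Green correspondence, and the multiplicative structure of $\mathrm{Br}_D$''. Either of the standard completions would close the gap: the central-character computation matching $G$-conjugacy classes of defect group $D$ with the $N_G(D)$-classes inside $C_G(D)$ of defect group $D$ (the Nagao--Tsushima/Navarro route), or Alperin's argument via the Green correspondence for $\mathbb{F}[G]$ as an $\mathbb{F}[G\times G]$-module with vertex the diagonal copy of $D$. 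As written, the proposal is a correct map of the terrain rather than a proof.
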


If $A$ is a block of $G$ with defect group $D$, then the block $B$
of $N_{G}(D)$ corresponding to $A$ via the correspondence described by
Theorem~\ref{thm1408} is called the {\em Brauer correspondent} of $A$.
\index{Brauer correspondent}
For example, if $A\text{-}\mathrm{mod}$ contains the trivial
$G$-module, then the defect group $D$ of $A$ is a Sylow $p$-subgroup 
of $G$ and the Brauer correspondent of $A$ is the block of 
$\mathbb{F}[N_{G}(D)]$ containing the trivial $N_{G}(D)$-module.

\begin{conjecture}[Alperin's Weight Conjecture
for abelian defect groups]\label{conj1409}
{\rm 
Let $G$ be a finite group, $A$ a block of $G$ with an abelian
defect group $D$ and  $B$ the Brauer correspondent of $A$. 
Then $A\text{-}\mathrm{mod}$ and $B\text{-}\mathrm{mod}$ have 
the same number of isomorphism classes  of simple modules.
}
\end{conjecture}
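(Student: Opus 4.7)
The plan is to attack the statement via the stronger categorical prediction of Brou\'e's abelian defect group conjecture, which asserts that $A$ and $B$ are not merely numerically equivalent at the level of simple counts but are actually derived equivalent as $\mathbb{F}$-algebras. Once derived equivalence is established, the desired conclusion is automatic: a derived equivalence induces an isomorphism of Grothendieck groups $\mathrm{K}_0(A) \cong \mathrm{K}_0(B)$, and the rank of $\mathrm{K}_0$ equals the number of isomorphism classes of simple modules (since blocks of finite group algebras have finite global dimension on the derived side modulo taking perfect complexes, and the ranks match; more directly, Morita/derived equivalent finite-dimensional algebras have equinumerous simples). Thus the task reduces to producing, for every pair $(A,B)$ as in the statement, a tilting complex over $B$ whose endomorphism algebra is Morita equivalent to $A$.

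The next step in the plan is to appeal to the \emph{reduction theorems} of Sp\"ath and Cabanes--Sp\"ath, which reduce Brou\'e's abelian defect conjecture (and, separately, Alperin's weight conjecture) to an ``inductive'' statement that must be verified block-by-block for the universal covers of the finite non-abelian simple groups. Concretely, one shows that if every quasi-simple group satisfies an appropriate equivariant strengthening (an inductive Brou\'e condition compatible with the action of outer automorphisms and the structure of central extensions), then the conjecture propagates through central extensions, direct products, and arbitrary finite groups via Clifford theory. This reduction is purely group-theoretic and, crucially, is already established in the literature, so the categorification burden is confined to the simple group case.

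For the family-by-family verification on quasi-simple groups, I would follow the categorical template exemplified in Section~\ref{s13}. For the alternating and symmetric groups (and their covers) the derived equivalence between $A$ and $B$ is produced by the Chuang--Rouquier functor $\Theta$ from Theorem~\ref{thm1316}, applied inside the $\mathfrak{sl}_2$-categorification built from the Kac-Moody action of Theorem~\ref{thm1407} on $\bigoplus_n \mathbb{F}[\mathbb{S}_n]\text{-}\mathrm{mod}$: blocks of a fixed $p$-core correspond to a single $\widehat{\mathfrak{sl}}_p$-weight space, and $\Theta$ realises the affine Weyl reflection moving a block of weight $w$ to its Brauer correspondent (the principal block of the wreath product $\mathbb{S}_p \wr \mathbb{S}_w$). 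For finite groups of Lie type in non-defining characteristic, one replaces the naive $\mathfrak{sl}_2$-categorification by the $2$-representation of a Kac--Moody $2$-category on unipotent blocks coming from Harish-Chandra / Deligne--Lusztig induction, and again produces the tilting complex as a minimal complex of bimodules in the spirit of $\Theta$; for defining characteristic the abelian defect hypothesis forces the Sylow $p$-subgroup to be a torus, and one argues via Brou\'e--Malle--Michel. Sporadic groups and remaining small cases are disposed of by direct construction of splendid tilting complexes.

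The hard part, and the genuine obstacle, is precisely step three in the non-symmetric cases: no presently known $2$-category plays, for an arbitrary quasi-simple group, the uniform r\^ole that the $\mathfrak{sl}_2$-categorification of Subsection~\ref{s13.4} plays for $\mathbb{S}_n$. Producing the requisite derived self-equivalences for every unipotent block of every finite reductive group (the so-called \emph{Brou\'e perfect isometries lifted to derived equivalences}) requires a case-free understanding of how the relative Weyl group of the defect torus acts by splendid Morita autoequivalences on the corresponding $\ell$-block, and this is the main open problem underlying the conjecture. The proposal is therefore honest in structure: a complete proof proceeds through the reduction machinery plus family-wise construction of Chuang--Rouquier-style equivalences, and the remaining obstruction is precisely the construction of these equivalences for the unipotent blocks of groups of Lie type, where one expects, but cannot yet construct in full generality, a categorical action of an appropriate affine Weyl group extending the one visible on the level of characters.
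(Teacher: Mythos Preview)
The statement you are addressing is labelled \emph{Conjecture}~\ref{conj1409} in the paper, and the paper provides no proof of it: Alperin's weight conjecture for abelian defect groups is presented there as an open problem, immediately followed by Brou\'e's conjecture (Conjecture~\ref{conj1419}), which is also stated as open in general. The paper's actual result in this direction is far narrower: Theorem~\ref{thm1412} and Corollary~\ref{thm1467} establish Brou\'e's conjecture (hence the weight conjecture) \emph{only for symmetric groups}, by combining the $\mathfrak{sl}_2$-categorification on $\bigoplus_n \mathbb{F}[\mathbb{S}_n]\text{-}\mathrm{mod}$ with Theorem~\ref{thm1316} to produce the required derived equivalences between weight spaces in the same affine Weyl orbit. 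There is thus no ``paper's own proof'' to compare against.

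Your proposal is, as you yourself conclude in the final paragraph, not a proof but a research programme, and you correctly identify the genuine obstruction: the construction of Chuang--Rouquier-type equivalences for unipotent blocks of finite reductive groups is not available in general. So the honest assessment is that your outline recovers exactly the strategy one would expect, reproduces the symmetric-group argument the paper actually gives, and then runs into the same wall the subject currently faces. One technical correction: in your first paragraph you justify ``derived equivalent implies equinumerous simples'' by invoking finite global dimension of blocks of group algebras --- this is false (a block has finite global dimension only when its defect group is trivial). The conclusion is nonetheless correct: for any finite-dimensional algebra $A$ one has $[\mathcal{D}^b(A\text{-}\mathrm{mod})]\cong [A\text{-}\mathrm{mod}]$, free of rank equal to the number of simples, and a triangulated equivalence induces an isomorphism of these groups.
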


The number of isomorphism classes of simple modules can be
interpreted as an isomorphism between the Grothendieck groups
of $A\text{-}\mathrm{mod}$ and $B\text{-}\mathrm{mod}$.
Conceptually, it would be nice if this isomorphism would
come from some ``higher level'' isomorphism, for example
from an isomorphism (or Morita equivalence) of $A$ and $B$.
Unfortunately, small examples show that this cannot be expected.
However, there is still a conjectural higher level
isomorphism as described in:

\begin{conjecture}[Brou{\'e}'s abelian defect group 
Conjecture]\label{conj1419}
{\rm 
Let $G$ be a finite group, $A$ a block of $G$ with an abelian
defect group $D$ and  $B$ the Brauer correspondent of $A$. 
Then $\mathcal{D}^b(A)$ and $\mathcal{D}^b(B)$ are equivalent
as triangulated categories.
}
\end{conjecture}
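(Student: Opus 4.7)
The plan is to attack the conjecture in its stated generality by combining the $\mathfrak{sl}_2$-categorification machinery of Section~\ref{s13} with a reduction via the classification of finite simple groups, building a derived equivalence $\mathcal{D}^b(A)\simeq \mathcal{D}^b(B)$ as an explicit tilting complex produced from categorical local structure. First I would reduce to the quasi-simple case by standard Clifford-theoretic arguments: any block $A$ of $\mathbb{F}[G]$ with abelian defect $D$ is, up to Morita and hence derived equivalence, covered by a block $\tilde{A}$ of a central extension of a quasi-simple quotient of a suitable subquotient of $G$ containing $N_G(D)$, and the same reduction applies simultaneously to the Brauer correspondent $B$ so that it suffices to exhibit the equivalence on quasi-simple blocks.

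Next, I would treat the quasi-simple cases by family. For $G$ of Lie type in non-describing characteristic, the natural strategy is to categorify Deligne--Lusztig induction by exact functors on the relevant blocks and assemble them, in the spirit of Theorem~\ref{thm1316}, into a complex $\Theta$ whose decategorification realises the Jordan decomposition of characters; the minimality and uniqueness statement of Theorem~\ref{them1310} should then force $\Theta$ to induce a derived equivalence between $A$ and $B$ whenever the centraliser of a defect generator gives rise to the Brauer correspondent. For blocks of symmetric and alternating groups and their double covers the Chuang--Rouquier complex $\Theta_{\lambda}$ from Theorem~\ref{thm1316}, applied to the $\mathfrak{sl}_2$-categorification of Subsection~\ref{s13.5} (together with its modular analogue constructed from the Kac--Moody action of Theorem~\ref{thm1407}), already supplies the required equivalence, and I would lift this to spin blocks via the projective cover of $\mathbb{F}[\mathbb{S}_n]$. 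For the finitely many sporadic and exceptional situations I would construct explicit splendid tilting complexes, checking the defect-group-local structure using Rickard's gluing criterion.

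The key structural step throughout is to show that each equivalence produced locally is \emph{splendid}, i.e., is built from $p$-permutation bimodules with compatible Brauer restrictions to every subgroup of $D$. This is what allows the local equivalences $\mathcal{D}^b(A_Q)\simeq \mathcal{D}^b(B_Q)$, for $Q\leq D$, to be glued into a global equivalence $\mathcal{D}^b(A)\simeq \mathcal{D}^b(B)$; the Chuang--Rouquier complex $\Theta$ is already of this form because it is assembled from induction and restriction between Young subgroups, and the analogue for Lie type blocks should be built from Harish-Chandra induction, which is manifestly a $p$-permutation functor.

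The hard part will be precisely the Lie-type case in non-describing characteristic for groups whose defect groups are abelian but not cyclic, where Deligne--Lusztig theory does not by itself produce a categorification satisfying the axioms of Definition~\ref{def1301} for a single $\mathfrak{sl}_2$; one expects instead a $2$-representation of a larger Kac--Moody $2$-category attached to the relative Weyl group of the defect, and establishing integrability and the appropriate minimality so that Theorem~\ref{thm1316}'s equivalence extends from rank one to higher rank is the principal obstacle. A secondary obstacle, invisible at the level of individual blocks, is verifying the compatibility of the constructed complex $\Theta$ with all Brauer maps so that the resulting equivalence is genuinely splendid; without this, the local-to-global gluing step fails and one obtains at best the weaker conclusion of Alperin's Weight Conjecture~\ref{conj1409} rather than the full derived equivalence asserted in Conjecture~\ref{conj1419}.
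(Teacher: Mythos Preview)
The statement you are addressing is \emph{Conjecture}~\ref{conj1419}, not a theorem; the paper does not prove it and offers no proof to compare against. What the paper actually establishes is the special case $G=\mathbb{S}_n$ (Theorem~\ref{thm1412} and Corollary~\ref{thm1467}), via the Chuang--Rouquier $\mathfrak{sl}_2$-categorification and Theorem~\ref{thm1316}. The general abelian defect group conjecture remains open.

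Your proposal is therefore not a proof but an outline of a research programme, and you essentially concede this yourself in the final two paragraphs. Several of the steps you invoke are themselves open problems. The Clifford-theoretic reduction you sketch in the first paragraph is not available in the form you describe: there is no general theorem reducing Brou{\'e}'s conjecture to quasi-simple groups in a way that simultaneously controls both $A$ and its Brauer correspondent $B$ up to derived equivalence. For groups of Lie type in non-describing characteristic, the assertion that Deligne--Lusztig induction can be organised into an $\mathfrak{sl}_2$-categorification satisfying Definition~\ref{def1301}, with Theorem~\ref{them1310} forcing the desired equivalence, is precisely the unknown step; you correctly flag this as ``the principal obstacle,'' but that means the argument does not go through. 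The treatment of sporadic groups by ``explicit splendid tilting complexes'' is likewise an open-ended computational programme, not a completed verification.

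In short: the paper states this as a conjecture because it is one, and your proposal does not close the gap. If your intention was instead to prove only the symmetric-group case, then the relevant argument is exactly Theorem~\ref{thm1412} together with Theorem~\ref{thm1316}, and you should restrict your claim accordingly.
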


\subsection{Brou{\'e}'s conjecture for $\mathbb{S}_n$}\label{s14.5}

Let us go back to the character ring $\mathtt{R}_n$ 
of $\mathbb{F}[\mathbb{S}_n]$, whose complexification
$\mathtt{R}_{\mathbb{C}}$ was realized  in Subsection~\ref{s14.3}
as the basic highest weight representation $V(\Lambda_0)$ over some 
affine Kac-Moody Lie algebra $\mathfrak{a}$.
It turns out that the theory of blocks has a natural description
in terms of this categorification picture:

\begin{theorem}[\cite{LLT,CR}]\label{thm1411}
\begin{enumerate}[$($a$)$]
\item\label{thm1411.2} The decomposition of $\mathtt{R}$ into blocks
gives exactly the weight space decomposition of $V(\Lambda_0)$ with 
respect to the standard Cartan subalgebra of $\mathfrak{g}$.
\item\label{thm1411.1} Two blocks of symmetric groups have 
isomorphic defect groups if and only if the corresponding weight 
spaces are in the same orbit under the action of the affine Weyl group.
\end{enumerate}
\end{theorem}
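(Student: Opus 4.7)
The plan is to handle parts (a) and (b) in sequence, building on the categorification Theorem~\ref{thm1407}. For part (a), I would start by observing that the generator of $V(\Lambda_0)$ is, by Theorem~\ref{thm1407}(b), the class of the unique irreducible $\mathbb{F}[\mathbb{S}_0]$-module; this module lies in the unique block $\mathcal{B}_{(0,0,\ldots)}$, whose image sits in the weight space for $\Lambda_0$. I would then check directly from definitions that $\mathrm{F}_i$ sends $\mathcal{B}_\gamma$ into $\mathcal{B}_{\mathrm{ins}_i(\gamma)}$, so $[\mathrm{F}_i]$ intertwines the block indexing with the operation $\gamma \mapsto \mathrm{ins}_i(\gamma)$; on the other hand, by Theorem~\ref{thm1407}(a), $[\mathrm{F}_i]$ equals the Chevalley generator $F_i$, which shifts weights by $-\alpha_i$. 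An induction starting from the highest weight vector, together with the fact that $V(\Lambda_0)$ is generated by that vector under the $F_i$'s, places every class $[\mathcal{B}_\gamma]$ inside $V(\Lambda_0)_{\Lambda_0 - \sum_r \gamma_r \alpha_r}$ and reaches every non-zero weight space. Distinct values of $\gamma$ produce distinct weights by linear independence of the simple roots $\alpha_0, \ldots, \alpha_{p-1}$ in the root lattice, so the two direct sum decompositions of $\mathtt{R}_{\mathbb{C}}$ agree term by term.

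For part (b), I would combine the classical Nakayama conjecture (a theorem) with the orbit structure of weights of $V(\Lambda_0)$. Nakayama's description: blocks of $\mathbb{F}[\mathbb{S}_n]$ are in natural bijection with pairs $(\mu_0, w)$ where $\mu_0$ is a $p$-core and $w \in \mathbb{N}_0$ satisfies $n = |\mu_0| + pw$; the integer $w$ is the $p$-weight of the block and determines the defect group up to isomorphism, since the defect group is a Sylow $p$-subgroup of $\mathbb{S}_{pw}$. In particular, two blocks of symmetric groups have isomorphic defect groups if and only if they have equal $p$-weights. Next, I would translate the $p$-weight into Kac--Moody data: for a partition $\lambda$ with $p$-core $\mu_0$ and $p$-weight $w$ one has $c_i(\lambda) = c_i(\mu_0) + w$ for each $i \in \mathbb{F}_p$, since each added $p$-rim hook contributes one box of each residue, shifting the weight by $-\sum_i \alpha_i = -\delta$. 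Hence the weight of the block labeled $(\mu_0, w)$ equals $\Lambda_0 - \sum_i c_i(\mu_0)\alpha_i - w\delta$, and its $\delta$-depth coincides with $w$. Since the affine Weyl group $W_{\mathrm{aff}}$ fixes the null root $\delta$, weights in the same $W_{\mathrm{aff}}$-orbit have the same $\delta$-depth, hence the same $p$-weight. Conversely, I would invoke the standard fact that the maximal weights of $V(\Lambda_0)$ (those at $\delta$-depth zero) form a single $W_{\mathrm{aff}}$-orbit, namely the orbit of $\Lambda_0$; subtracting $w\delta$ shows weights at depth $w$ also form a single orbit.

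The main technical obstacle is the single-orbit claim for maximal weights. This can be handled either by appealing to Kac's general structure theorem for maximal weights in integrable highest weight modules, where for the basic representation at level one the orbit count collapses to one, or by a hands-on verification on $p$-cores using the action of the affine reflections on the $p$-abacus, showing that any two $p$-cores are related by a translation element of $W_{\mathrm{aff}}$. Once that is in place, the two characterizations -- equal $p$-weight on the group-theoretic side and same $W_{\mathrm{aff}}$-orbit on the Kac--Moody side -- are equivalent, completing part (b).
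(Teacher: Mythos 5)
The paper states Theorem~\ref{thm1411} without proof, citing \cite{LLT} and \cite{CR}, so there is no internal argument to compare against; your proposal correctly reconstructs the standard proof from those sources. Part (a) is exactly the residue-content bookkeeping combined with Theorem~\ref{thm1407} (the spanning set $f_{i_1}\cdots f_{i_k}v_{\Lambda_0}$ lies simultaneously in the block indexed by the multiset of residues and in the weight space $\Lambda_0-\sum_r\gamma_r\alpha_r$, and distinct $\gamma$ give distinct weights), and part (b) is the usual chain: the Nakayama conjecture together with Brauer's description of defect groups shows the defect group is determined by the $p$-weight $w$, while $w$ equals the $\delta$-depth of the weight, which is the complete orbit invariant because $W_{\mathrm{aff}}$ fixes $\delta$ and acts transitively on the maximal weights of the level-one module $V(\Lambda_0)$.
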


Now we can state the principal application of 
$\mathfrak{sl}_2$-categorification from \cite{CR}:

\begin{theorem}[\cite{CR}]\label{thm1412}
Let $p>0$ and $\mathbb{F}$ be either $\mathbb{Z}_{(p)}$ or an
algebraically closed  field of characteristic $p$.
\begin{enumerate}[$($a$)$]
\item\label{thm1412.1} For every $i\in\mathbb{F}_p$ the 
action of $\mathrm{E}_i$ and $\mathrm{F}_i$ on 
\begin{displaymath}
\mathfrak{S}:=\bigoplus_{n\in\mathbb{N}_0} 
\mathbb{F}[\mathbb{S}_n]\text{-}\mathrm{mod}
\end{displaymath}
extends to an $\mathfrak{sl}_2$-categorification over $\mathbb{F}$.
\item\label{thm1412.2} Let $A$ and $B$ be two blocks of symmetric
groups over $\mathbb{F}$ with isomorphic defect groups. Then
$\mathcal{D}^b(A)$ and $\mathcal{D}^b(B)$ are equivalent
as triangulated categories.
\end{enumerate}
\end{theorem}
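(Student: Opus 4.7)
The plan for part (a) is to verify the eight axioms of Definition~\ref{def1301} for the endofunctors $\mathrm{E}_i$, $\mathrm{F}_i$ on $\mathfrak{S}$, taking $q=1$ and $a=i$. Biadjointness in (ii) comes from the Frobenius reciprocity for $(\mathrm{Ind}^{\mathbb{S}_{n+1}}_{\mathbb{S}_n}, \mathrm{Res}^{\mathbb{S}_{n+1}}_{\mathbb{S}_n})$, which respects the block decomposition and hence restricts to each $(\mathrm{F}_i,\mathrm{E}_i)$. The categorified $\mathfrak{sl}_2$-action on the Grothendieck group required in (iii) is obtained by restricting the $\mathfrak{a}$-action of Theorem~\ref{thm1407} to the $\mathfrak{sl}_2$-subalgebra spanned by the Chevalley generators $e_i,f_i,h_i$; since $V(\Lambda_0)$ is integrable this restriction is locally finite. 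Axiom (iv) follows from the block decomposition combined with Theorem~\ref{thm1411}(a): a simple module lies in a single $\mathcal{B}_\gamma$, and $\gamma$ records the weight.

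The main content, following the construction outlined in Subsection~\ref{s13.4}, is the choice of $\chi$ and $\tau$. I would define $\chi \in \mathrm{End}(\mathrm{F}_i)$ on a module $M \in \mathcal{B}_\gamma$ by the action of $\mathtt{x}_{n+1}$ on $\mathrm{Ind}^{\mathbb{S}_{n+1}}_{\mathbb{S}_n}\, M$ followed by projection onto the $\mathrm{ins}_i(\gamma)$-block; then $\chi-i$ is locally nilpotent by construction, yielding axiom (viii). I would define $\tau \in \mathrm{End}(\mathrm{F}_i^2)$ by right multiplication by the transposition $(n+1,n+2)$, again followed by the appropriate block projection. The relations $(\tau+\mathrm{id})(\tau-\mathrm{id})=0$ in (vi), the braid relation in (v), and the intertwining relation in (vii) then reduce to the standard identities relating $\mathtt{x}_{n+1}, \mathtt{x}_{n+2}$ and $(n+1,n+2)$ inside $\mathbb{F}[\mathbb{S}_{n+2}]$, which are precisely the defining relations of the degenerate affine Hecke algebra $\mathbb{H}_2(1)$. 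Biadjointness then transports $\chi,\tau$ to the corresponding endomorphisms of $\mathrm{E}_i$ and $\mathrm{E}_i^2$.

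For part (b), the plan is to combine Theorem~\ref{thm1411}(b) with Theorem~\ref{thm1316}(c). By Theorem~\ref{thm1411}(b), blocks $A$ and $B$ with isomorphic defect groups correspond to weight spaces of $V(\Lambda_0)$ that lie in the same orbit of the affine Weyl group $W(\mathfrak{a})$. Since $W(\mathfrak{a})$ is generated by the simple reflections $s_i$, $i\in\mathbb{F}_p$, it suffices to produce a derived equivalence between the block subcategories corresponding to any pair of $s_i$-related weight spaces. But $s_i$ is exactly the element $S$ of Subsection~\ref{s13.6} acting inside the $\mathfrak{sl}_2$-subalgebra generated by $e_i,f_i,h_i$. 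Applying Theorem~\ref{thm1316}(c) to the $\mathfrak{sl}_2$-categorification built in part (a) gives, for each weight $\lambda$ in the $\mathfrak{sl}_2$-component, a derived self-equivalence $\Theta_\lambda$ of $\mathfrak{S}$ restricting to an equivalence between the $\lambda$- and $(-\lambda)$-weight subcategories. Composing these equivalences along a sequence of simple reflections connecting the two $W(\mathfrak{a})$-orbit representatives yields the desired equivalence $\mathcal{D}^b(A)\simeq \mathcal{D}^b(B)$.

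The main obstacle is the verification in part (a): one must check that, after the block-wise projections, $\chi$ and $\tau$ genuinely satisfy the axioms of Definition~\ref{def1301} with the precise parameters and not merely up to ambiguity, and in particular that the decomposition $\mathrm{Res}=\bigoplus_i \mathrm{E}_i$ is compatible with the generalized eigenspace decomposition of $\mathtt{x}_{n+1}$ acting on $\mathrm{Ind}$. This compatibility is exactly Proposition~\ref{prop1403} together with the fact that $\mathtt{x}_{n+1}$ commutes with $\mathbb{F}[\mathbb{S}_n]$, so the proof ultimately rests on the fine combinatorial behaviour of Jucys-Murphy elements. A secondary technical point in part (b) is that the complex of functors $\Theta_\lambda$ on $\mathfrak{S}$ must be shown to restrict to an equivalence at the level of individual blocks $A$ and $B$, rather than just between direct sums of blocks sharing the same $\mathfrak{sl}_2$-weight; this is ensured by refining the $\mathfrak{sl}_2$-action using the full $\mathfrak{a}$-action and noting that $\Theta_\lambda$ commutes with the Cartan of $\mathfrak{a}$, hence preserves the finer weight spaces corresponding to individual blocks.
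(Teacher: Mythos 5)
Your proposal is correct and follows essentially the same route as the paper: part (b) is exactly the paper's argument (reduce via Theorem~\ref{thm1411} to blocks connected by a simple affine reflection, then apply Theorem~\ref{thm1316} to the $\mathfrak{sl}_2$-categorification from part (a)), and your construction of $\chi$ and $\tau$ for part (a) via the Jucys--Murphy element $\mathtt{x}_{n+1}$ and the transposition $(n+1,n+2)$ is the standard one from \cite{CR}, consistent with the degenerate affine Hecke algebra relations of Definition~\ref{def1304} and with Proposition~\ref{prop1403}. The paper itself only sketches (b) and omits (a), so your write-up is, if anything, more complete than the source.
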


\begin{proof}[Idea of the proof of \eqref{thm1412.2}.]
Because of Theorem~\ref{thm1411}\eqref{thm1411.2}, it is enough
to show that two blocks which are connected by a simple reflection
of the affine Weyl group are derived equivalent. To prove this,
use \eqref{thm1412.1} and Theorem~\ref{thm1316}. 
\end{proof}

\begin{corollary}\label{thm1467}
Brou{\'e}'s abelian defect group Conjecture holds for $G=\mathbb{S}_n$.
\end{corollary}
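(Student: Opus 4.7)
The plan is to deduce the corollary from Theorem~\ref{thm1412}(b) after reducing the Brauer correspondent to a block of a symmetric group with the same defect group. Let $A$ be a block of $\mathbb{F}[\mathbb{S}_n]$ with abelian defect group $D$, and let $B$ denote its Brauer correspondent in $N_{\mathbb{S}_n}(D)$. We need to produce a triangulated equivalence $\mathcal{D}^b(A)\cong \mathcal{D}^b(B)$.

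First I would analyze the defect group using the structure of $p$-subgroups of symmetric groups. Since $D$ is abelian, it must be elementary abelian, and if the $p$-weight of the block $A$ is $w$, then $D$ is conjugate to the subgroup generated by $w$ pairwise disjoint $p$-cycles, so $D\cong (\mathbb{Z}/p\mathbb{Z})^w$ and $n\ge pw$. A direct calculation of the normalizer then shows that, up to a direct factor acting trivially on the block, $N_{\mathbb{S}_n}(D)$ contains $\mathbb{S}_p\wr \mathbb{S}_w$ and $B$ is identified (as an algebra up to Morita equivalence) with a specific block of this wreath product.

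Next I would invoke the Morita reduction of Chuang--Kessar: the Brauer correspondent $B$ is Morita equivalent to a distinguished block $\tilde A$, the so-called \emph{RoCK} (or Rouquier) block, of a symmetric group $\mathbb{S}_m$ (for a suitable $m$) whose defect group is again isomorphic to $D$. This step is the crucial Morita-theoretic input; it is independent of $\mathfrak{sl}_2$-categorification and is established by a careful analysis of the wreath-product structure appearing in $N_{\mathbb{S}_n}(D)$ together with an identification of the endomorphism algebra of a natural progenerator.

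Finally, both $A$ and $\tilde A$ are blocks of symmetric groups with isomorphic defect groups, so Theorem~\ref{thm1412}(b) yields a triangulated equivalence $\mathcal{D}^b(A)\cong \mathcal{D}^b(\tilde A)$. Composing with the Morita-induced equivalence $\mathcal{D}^b(\tilde A)\cong \mathcal{D}^b(B)$ furnishes the desired derived equivalence. The main obstacle is the Chuang--Kessar reduction to the RoCK block: the $\mathfrak{sl}_2$-categorification machinery developed in this section only moves between blocks inside the family of symmetric groups (corresponding to affine Weyl group translates of a weight space inside $V(\Lambda_0)$), whereas passing from $B$, which lives on $N_{\mathbb{S}_n}(D)$, to a genuine block of some $\mathbb{S}_m$ requires the separate wreath-product Morita argument.
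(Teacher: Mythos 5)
Your proposal is correct, and it supplies exactly the reasoning that the paper suppresses: the corollary is stated with no proof, as if it were immediate from Theorem~\ref{thm1412}\eqref{thm1412.2}, whereas you rightly observe that Theorem~\ref{thm1412}\eqref{thm1412.2} only connects blocks \emph{of symmetric groups} to each other, while the Brauer correspondent $B$ lives in $N_{\mathbb{S}_n}(D)$ and is not such a block. Your identification of the defect group as $(\mathbb{Z}/p\mathbb{Z})^w$ with $n\geq pw$ (abelian forcing $w<p$), and your appeal to the Chuang--Kessar analysis of RoCK (Rouquier) blocks as the bridge between $B$ and a genuine symmetric-group block with the same defect group, is precisely the standard argument underlying the corollary.

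One small imprecision: the Chuang--Kessar theorem gives a \emph{Morita} equivalence between the RoCK block $\tilde{A}$ and (a block of) the wreath product $\mathbb{F}[\mathbb{S}_p\wr\mathbb{S}_w]$, not directly with the Brauer correspondent $B$. To pass from that wreath product to $B$, whose local structure involves $\bigl((\mathbb{Z}/p\mathbb{Z})\rtimes(\mathbb{Z}/(p-1)\mathbb{Z})\bigr)\wr\mathbb{S}_w$, one still needs the cyclic-defect case of Brou{\'e}'s conjecture for $\mathbb{S}_p$ (a Brauer-tree derived equivalence, due to Rickard/Linckelmann/Rouquier) promoted to wreath products by Marcus's theorem; already for $w=1$ and $p\geq 5$ the block and its Brauer correspondent are derived but not Morita equivalent, so your claim that $B$ is \emph{Morita} equivalent to $\tilde{A}$ is too strong. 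With ``Morita'' weakened to ``derived'' in that step (obtained as the composite Morita-plus-Marcus equivalence), your chain $\mathcal{D}^b(B)\cong\mathcal{D}^b(\tilde{A})\cong\mathcal{D}^b(A)$ is the correct proof, and the final equivalence is where the $\mathfrak{sl}_2$-categorification of Theorem~\ref{thm1412} genuinely enters.
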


\subsection{Divided powers}\label{s14.6}

Consider again the functorial action of the affine Kac-Moody Lie
algebra $\mathfrak{a}$ on the direct sum of module categories of
all symmetric groups as constructed in Subsection~\ref{s14.3}.
There is an important set of elements of $\mathfrak{a}$ called
{\em divided powers} and defined (in terms of the Chevalley 
\index{divided powers}
generators $e_i$'s and $f_i$'s of $\mathfrak{a}$) as follows:
\begin{displaymath}
e_i^{(r)}:=\frac{e_i^r}{r!}\qquad\text{ and } \qquad
f_i^{(r)}:=\frac{f_i^r}{r!}. 
\end{displaymath}
It turns out that these elements also admit a natural functorial
description. As in Subsection~\ref{s14.2}, we let $i\in\mathbb{F}_p$.

Let $M$ be an $\mathbb{F}[\mathbb{S}_n]$-module and assume
that there exists $\gamma\in\Gamma_n$
such that $M=M(\gamma)$. Fix $r\in\mathbb{N}$. View $M$ as an
$\mathbb{F}(\mathbb{S}_n\oplus \mathbb{S}_r)$-module by letting 
$\mathbb{S}_r$ act trivially. Embed $\mathbb{S}_n\oplus \mathbb{S}_r$
into $\mathbb{S}_{n+r}$ in the obvious way and define
\begin{displaymath}
\mathrm{F}_i^{(r)}\, M:=
(\mathrm{Ind}_{\mathbb{S}_n\oplus \mathbb{S}_r}^{\mathbb{S}_{n+r}}\,
M)(\mathrm{ins}_i^r(\gamma)).
\end{displaymath}
To define $\mathrm{E}_i^{(r)}$ write $n=k+r$. If $k<0$, then
the definition below gives $\mathrm{E}_i^{(r)}\, M=0$. If
$k\geq 0$, consider the obvious embedding of
$\mathbb{S}_k\oplus \mathbb{S}_r$ into $\mathbb{S}_n$. Then the
set $M^{\mathbb{S}_r}$ of $\mathbb{S}_r$-fixed points in $M$
is naturally an $\mathbb{S}_k$-module. Set
\begin{displaymath}
\mathrm{E}_i^{(r)}\, M :=
\begin{cases}
(M^{\mathbb{S}_r})(\mathrm{rem}_i^r(\gamma)),
&\text{$\mathrm{rem}_i^r(\gamma)$ is defined};\\
0, & \text{otherwise}.
\end{cases}
\end{displaymath}
Extending additively, defines functors $\mathrm{E}_i^{(r)}$
and $\mathrm{F}_i^{(r)}$ on all $\mathbb{F}[\mathbb{S}_n]$-modules
and thus gives the corresponding endofunctors of $\mathfrak{S}$.
Clearly, $\mathrm{E}_i^{(r)}$ is both left and right adjoint
to $\mathrm{F}_i^{(r)}$. Denote by $\mathtt{R}^*_n$ the
$\mathbb{Z}$-submodule of $\mathtt{R}_n$ spanned by the formal
characters of indecomposable projective $\mathbb{F}[\mathbb{S}_n]$-modules
and set
\begin{displaymath}
\mathtt{R}^*:=\bigoplus_{n\in\mathbb{N}_0}\mathtt{R}^*_n .
\end{displaymath}

\begin{theorem}[\cite{Gr}]\label{thm1414}
\begin{enumerate}[$($a$)$]
\item\label{thm1414.1} There are isomorphisms of functors as follows:
\begin{displaymath}
\mathrm{E}_i^{r}\cong (\mathrm{E}_i^{(r)})^{\oplus r!}\qquad
\text{ and }\qquad
\mathrm{F}_i^{r}\cong (\mathrm{F}_i^{(r)})^{\oplus r!}.
\end{displaymath}
\item\label{thm1414.2} The lattice $\mathtt{R}^*\subset 
\mathtt{R}_{\mathbb{C}}$ is the $\mathbb{Z}$-submodule of 
$\mathtt{R}_{\mathbb{C}}$ generated by the character of the
irreducible $\mathbb{F}[\mathbb{S}_0]$-module under the action of
$\mathrm{F}_i^{(r)}$, $i\in\mathbb{F}_p$, $r\in\mathbb{N}_0$.
\item\label{thm1414.3} The lattice  $\mathtt{R}\subset 
\mathtt{R}_{\mathbb{C}}$ is dual to $\mathtt{R}^*$ with respect
to the Shapovalov form.
\end{enumerate}
\end{theorem}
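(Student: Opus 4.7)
For part (a), my plan is to exhibit a natural action of the degenerate affine Hecke algebra $\mathbb{H}_r(1)$ on the functor $\mathrm{F}_i^r$ restricted to each block $\mathcal{B}_\gamma$, in close analogy with the action constructed in Lemma~\ref{lem1305}: the $X$-generators come from the Jucys--Murphy elements $\mathtt{x}_{n+1},\dots,\mathtt{x}_{n+r}$ acting on $\mathrm{Ind}_{\mathbb{S}_n}^{\mathbb{S}_{n+r}}M$, and the $T$-generators from the natural right $\mathbb{S}_r$-action on the coset space $\mathbb{S}_n\backslash\mathbb{S}_{n+r}$. After projection to the weight $\mathrm{ins}_i^r(\gamma)$, every $\mathtt{x}_{n+k}-i$ is locally nilpotent, so the $\mathbb{H}_r(1)$-action factors through the completion at the central character $(i,\dots,i)$. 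The heart of the argument, and its main obstacle, is to identify this completed algebra with a matrix algebra $M_{r!}(R)$ for an appropriate local ring $R$ and to pin down $\mathrm{F}_i^{(r)}M$ as the summand cut out by a primitive idempotent of $\mathbb{F}[\mathbb{S}_r]$; this is essentially Grojnowski's structural theorem for affine Hecke algebras at a single central character. The matrix-algebra structure then immediately produces the decomposition $\mathrm{F}_i^r\cong(\mathrm{F}_i^{(r)})^{\oplus r!}$, and the analogous statement for $\mathrm{E}_i$ follows by taking biadjoints of both sides, using that $(\mathrm{E}_i,\mathrm{F}_i)$ is a biadjoint pair.

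For part (b), since induction is left adjoint to the exact restriction functor it preserves projectives, so $\mathrm{F}_i^r$ maps $\mathtt{R}^*$ into $\mathtt{R}^*$; combining this with part (a) and Krull--Schmidt (every direct summand of a projective is projective) shows that each $\mathrm{F}_i^{(r)}$ also preserves $\mathtt{R}^*$. The unique simple $\mathbb{F}[\mathbb{S}_0]$-module is its own projective cover, hence lies in $\mathtt{R}^*_0$, and the $\mathbb{Z}$-submodule $L\subseteq\mathtt{R}^*$ it generates under iterated $\mathrm{F}_i^{(r)}$'s is therefore contained in $\mathtt{R}^*$. For the reverse inclusion I would argue by induction on $n$ that every indecomposable projective over $\mathbb{F}[\mathbb{S}_n]$ occurs as a direct summand of some $\mathrm{F}_{i_1}^{(r_1)}\cdots\mathrm{F}_{i_k}^{(r_k)}$ applied to the trivial module; this is the standard consequence of the modular branching rules (Kleshchev) together with the classification of simples of $\mathbb{F}[\mathbb{S}_n]$ by $p$-regular partitions, which matches the crystal graph description of $V(\Lambda_0)$.

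For part (c), the natural bilinear pairing $\langle[P],[M]\rangle:=\dim_{\mathbb{F}}\mathrm{Hom}_{\mathbb{F}[\mathbb{S}_n]}(P,M)$ on $\mathtt{R}^*\times\mathtt{R}$ satisfies $\langle[P_L],[L']\rangle=\delta_{L,L'}$ on the bases of projective covers and simples, so $\mathtt{R}$ is the $\mathbb{Z}$-dual of $\mathtt{R}^*$ under it. Biadjointness of $(\mathrm{E}_i,\mathrm{F}_i)$ yields the identity $\langle[\mathrm{F}_iP],[M]\rangle=\langle[P],[\mathrm{E}_iM]\rangle$, and the unique positive generator of $\mathtt{R}_0=\mathtt{R}^*_0=\mathbb{Z}$ pairs with itself to $1$. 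Since the Shapovalov form on $V(\Lambda_0)$ is characterized uniquely by precisely these two properties (adjointness of $e_i,f_i$ and unit norm at the highest weight vector), it must coincide with $\langle\cdot,\cdot\rangle$ after extension of scalars to $\mathbb{C}$, yielding the required duality.
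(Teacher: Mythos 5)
Your overall strategy is sound and, for part (a), essentially unpacks the route the paper itself points to: the text gives no proof of this theorem beyond the remark that claim \eqref{thm1414.1} follows from Corollary~\ref{cor1306} together with Theorem~\ref{thm1412}\eqref{thm1412.1}, i.e.\ from the degenerate affine Hecke algebra action of Lemma~\ref{lem1305} and the structure of its completion at the central character $(i,\dots,i)$ --- exactly the mechanism you describe. Parts (b) and (c) are not proved in the paper at all (they are deferred to \cite{Gr}), and your arguments for them are the standard ones; in particular, characterizing the Shapovalov form by contravariance of $e_i,f_i$ and normalization at the highest weight vector is the right way to do (c), given the Cartan pairing $\dim\mathrm{Hom}(P_L,L')=\delta_{L,L'}$.

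Two points need repair. First, in (a) you propose to cut out $\mathrm{F}_i^{(r)}M$ by ``a primitive idempotent of $\mathbb{F}[\mathbb{S}_r]$''. No such idempotent exists when $p\le r$: the symmetrizer $c_r^{\mathtt{1}}=\sum_w w$ satisfies $(c_r^{\mathtt{1}})^2=r!\,c_r^{\mathtt{1}}=0$, and the trivial module is not a direct summand of $\mathbb{F}[\mathbb{S}_r]$ --- this failure is precisely why the theorem is nontrivial. The idempotent must be taken in the completed algebra $\overline{\mathbb{H}}_r(1)\cong M_{r!}(R)$, and one must then separately identify the summand it cuts out with the parabolic induction $\bigl(\mathrm{Ind}_{\mathbb{S}_n\oplus\mathbb{S}_r}^{\mathbb{S}_{n+r}}(M\boxtimes\mathrm{triv})\bigr)(\mathrm{ins}_i^r(\gamma))$ defining $\mathrm{F}_i^{(r)}$; dually, one must reconcile the image of $c_r^{\mathtt{1}}$ on $\mathrm{E}_i^r$ with the fixed-point functor defining $\mathrm{E}_i^{(r)}$, since a priori the norm map only lands inside the $\mathbb{S}_r$-invariants. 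This reconciliation is genuine content, not bookkeeping. Second, in (b) the assertion concerns a $\mathbb{Z}$-lattice, so ``every indecomposable projective occurs as a direct summand of some monomial $\mathrm{F}_{i_1}^{(r_1)}\cdots\mathrm{F}_{i_k}^{(r_k)}$ applied to the trivial module'' is not enough: you need the unitriangularity statement that for each $p$-regular $\lambda$ a suitable monomial equals $[P_\lambda]$ plus a $\mathbb{Z}$-combination of $[P_\mu]$ with $\mu$ strictly higher in the relevant order and with $[P_\lambda]$ occurring with multiplicity exactly one, so that the transition matrix between monomials and indecomposable projectives is invertible over $\mathbb{Z}$. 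With those two points made precise, the argument goes through.
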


Claim \eqref{thm1414.1} of Theorem~\ref{thm1414} can be derived from
Corollary~\ref{cor1306} and Theorem~\ref{thm1412}\eqref{thm1412.1}.

\section{Applications: of $\mathbb{S}_n$-categorifications}\label{s15}

\subsection{Wedderburn basis for $\mathbb{C}[\mathbb{S}_n]$}\label{s15.1}

Artin-Wedderburn theorem says that every semi-simple complex finite
dimensional algebra $A$ is isomorphic to a direct sum of matrix algebras
of the form $\mathrm{Mat}_{k\times k}(\mathbb{C})$. 
If such a decomposition of $A$ is fixed, then the basis 
of $A$, consisting of matrix units in all components, is called 
a {\em Wedderburn basis} for $A$. A nice property of this basis is
\index{Wedderburn basis}
that if we take the linear span of basis elements along a fixed
row in some matrix component $\mathrm{Mat}_{k\times k}(\mathbb{C})$
of $A$, then this linear span is stable with respect to the right
multiplication with elements from $A$ and thus gives a simple submodule
of the right regular module $A_A$. In other words, each 
Wedderburn basis for $A$ automatically gives a decomposition of 
the right regular $A$-module into a direct sum of simple modules.
For the left regular module one should just consider columns in 
matrix components instead of rows.

Consider $A=\mathbb{C}[\mathbb{S}_n]$, which is a semi-simple algebra by
Maschke's Theorem. The right regular representation of $A$
is categorified in Proposition~\ref{prop521} via the action of
projective functors on a regular integral block $\mathcal{O}_{\lambda}$ 
for the algebra $\mathfrak{gl}_n$. In this categorification picture 
the standard basis of the module $\mathbb{C}[\mathbb{S}_n]$ is given by 
the classes $[\Delta(w\cdot \lambda)]$ of Verma modules in 
$\mathcal{O}_{\lambda}$. For a simple reflection $s$, the element $e+s$ 
of $\mathbb{C}[\mathbb{S}_n]$ acts on $\mathcal{O}_{\lambda}$ via the 
projective functor $\theta_s$.

For $w\in \mathbb{S}_n$ denote by $\overline{w}$ the unique involution
in the right cell of $w$, that is the element of $\mathbb{S}_n$
such that $p(\overline{w})=q(\overline{w})=p(w)$, where 
$(p(w),q(w))$ is the pair of standard Young tableaux associated to
$w$ by the Robinson-Schensted correspondence (see \cite[Section~3.1]{Sa}).
Consider the {\em evaluation map}
\index{evaluation map}
\begin{displaymath}
\mathrm{ev}:\mathbb{C}\otimes_{\mathbb{Z}}\mathbb{H}
\overset{\mathrm{proj}}{\longrightarrow}
(\mathbb{C}\otimes_{\mathbb{Z}}\mathbb{H})/(v-1)
\overset{\sim}{\longrightarrow}\mathbb{C}[\mathbb{S}_n],
\end{displaymath}
which sends $1\otimes H_w$ to $w$. For $w\in \mathbb{S}_n$ define
\begin{displaymath}
f_w:=\mathrm{ev}(\hat{\underline{H}}_{\overline{w}}\underline{H}_w).
\end{displaymath}

\begin{example}\label{exm1502}
{\rm 
In the case $n=3$ denote by $s$ and $t$ the simple reflections in
$\mathbb{S}_3$. Then we have:
\begin{gather*}
f_e=(e-s-t+st+ts-sts)e=e-s-t+st+ts-sts;\\ 
f_s=(s-st-ts+sts)(e+s)=e+s-t-ts;\\ 
f_s=(t-st-ts+sts)(e+t)=e+t-s-st;\\ 
f_{st}=(s-st-ts+sts)(e+s+t+st)=s+st-ts-sts;\\ 
f_{ts}=(t-st-ts+sts)(e+s+t+ts)=t+ts-st-sts;\\ 
f_{sts}=sts(e+s+t+ts+st+sts)=e+s+t+ts+st+sts. 
\end{gather*}
}
\end{example}

Now observe that, by Proposition~\ref{prop521}, the element $f_w$
can be interpreted as $[\theta_w L(\overline{w}\cdot\lambda)]$.

\begin{proposition}[\cite{MS6}]\label{prop1501}
Let $w\in \mathbb{S}_n$ and $\mathcal{R}$ be the right cell of 
$w$. Then the module $\theta_w L(\overline{w}\cdot\lambda)$
is indecomposable and both projective and injective in 
$\mathcal{O}_{\lambda}^{\hat{\mathcal{R}}}$.
\end{proposition}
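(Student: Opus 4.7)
The strategy is to recognize $\theta_w L(\overline{w}\cdot\lambda)$ as one of the indecomposable projective objects of the cell $2$-representation $\mathbf{C}_{\mathcal{R}}$, and then to transport this information to $\mathcal{O}_{\lambda}^{\hat{\mathcal{R}}}$ via the equivalence provided by Theorem~\ref{thm903} and Remark~\ref{rem973}.

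The first step is to identify $L(\overline{w}\cdot\lambda)$ with the distinguished simple $L_{g_{\mathcal{R}}}$ of Proposition~\ref{prop305}. Since the anti-isomorphism of Corollary~\ref{cor506} (and its graded refinement Theorem~\ref{thm802}) matches cell relations in $\cS$ with Kazhdan--Lusztig cell relations on $W$ (a short computation using $\theta_w^{*}\cong\theta_{w^{-1}}$ from Proposition~\ref{prop504}), the right cells of $\cS$ in the sense of Subsection~\ref{s3.4} coincide with Kazhdan--Lusztig right cells, and the unique involution $\overline{w}$ of $\mathcal{R}$ (Proposition~\ref{prop805}) is the Duflo element playing the role of $g_{\mathcal{R}}$. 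Verifying the two defining properties of $g_{\mathcal{R}}$ in Proposition~\ref{prop305}(a) is then immediate from Proposition~\ref{prop306}(a), which translates $\theta_f L(\overline{w}\cdot\lambda)\neq 0$ into $f^{-1}\leq_L\overline{w}$; this holds exactly for $f\in\mathcal{R}$ because $\overline{w}=\overline{w}^{-1}$ swaps the left and right cells of $\overline{w}$.

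Once this identification is in place, Proposition~\ref{prop305}(b) applied with $f=\theta_w$ gives that $\theta_w L(\overline{w}\cdot\lambda)$ has a simple top, hence is indecomposable. Proposition~\ref{prop306}(b) shows that every composition factor $L(x\cdot\lambda)$ of $\theta_w L(\overline{w}\cdot\lambda)$ satisfies $x\leq_R\overline{w}$, i.e.\ $x\in\hat{\mathcal{R}}$, so the module lies in $\mathcal{O}_{\lambda}^{\hat{\mathcal{R}}}$. Theorem~\ref{thm307}(d) then identifies $\theta_w L(\overline{w}\cdot\lambda)$ as the indecomposable projective object of $\mathbf{C}_{\mathcal{R}}$ labelled by $w\in\mathcal{R}$, and Remark~\ref{rem973} together with Theorem~\ref{thm903} transports this to the statement that $\theta_w L(\overline{w}\cdot\lambda)$ is an indecomposable projective-injective module in $\mathcal{O}_{\lambda}^{\hat{\mathcal{R}}}$. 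As a cross-check for injectivity, once projectivity together with the identification of the simple top as $L^{\mathcal{R}}(w\cdot\lambda)$ with $w\in\mathcal{R}$ is established, Proposition~\ref{prop902} delivers injectivity directly.

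The main obstacle is essentially bookkeeping: tracing through the anti-equivalences of Corollary~\ref{cor506} and Theorem~\ref{thm802} carefully enough to confirm that the Duflo involution $\overline{w}$ of the Kazhdan--Lusztig right cell of $w$ really corresponds to $g_{\mathcal{R}}$ in the abstract fiat-cell framework of Subsection~\ref{s3.5}, and that the labelling of the indecomposable projective-injective modules in $\mathcal{O}_{\lambda}^{\hat{\mathcal{R}}}$ by their simple tops matches the labelling in $\mathbf{C}_{\mathcal{R}}$ by elements of $\mathcal{R}$ so that the module with top $L^{\mathcal{R}}(w\cdot\lambda)$ is indeed $\theta_w L(\overline{w}\cdot\lambda)$ (and not some variant such as $\theta_{w^{-1}}$ or a $w_o$-twist). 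Once these conventions are pinned down, every piece is already in place.
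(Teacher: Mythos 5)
The survey offers no proof of this proposition (it is simply quoted from [MS6], whose original argument works directly in $\mathcal{O}$ with Irving-type characterizations of projective-injective modules, adjunction, and the combinatorics of Duflo involutions), so your route through the abstract cell machinery of Subsection~\ref{s3.5} is a genuinely different, more modern one. It is viable in principle, but be aware that it outsources a large part of the content to Remark~\ref{rem973} and Theorem~\ref{thm903}, which are themselves stated without proof and are close in strength to the proposition being proved, and that the passage from $\overline{\mathbf{P}}_{\mathtt{i}}$ to $\mathcal{O}_{\lambda}$ silently uses that $2$-morphisms of $\cS$ give \emph{all} homomorphisms between projective modules (the result of \cite{Bac2} quoted in Subsection~\ref{s8.1}).

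The genuine gap is in your identification of $L(\overline{w}\cdot\lambda)$ with $L_{g_{\mathcal{R}}}$, which is the linchpin of the whole argument. First, the assertion that $\theta_f L(\overline{w}\cdot\lambda)\neq 0$ ``holds exactly for $f\in\mathcal{R}$'' is false: Proposition~\ref{prop306}\eqref{prop306.01} together with $\theta_f^*\cong\theta_{f^{-1}}$ gives $\theta_f L(\overline{w}\cdot\lambda)\neq 0$ if and only if $f^{-1}\leq_L\overline{w}$, i.e.\ $f\leq_R\overline{w}^{-1}=\overline{w}$, i.e.\ $f\in\hat{\mathcal{R}}$ (for instance $f=e$ always qualifies). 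Second, and more seriously, the defining properties in Proposition~\ref{prop305}\eqref{prop305.01} do not characterize $g_{\mathcal{R}}$ by a non-vanishing condition on its simple module alone: they say that $L_{g_{\mathcal{R}}}$ is the simple top of a specific minimal submodule $K$ of $P_{\mathbbm{1}_{\mathtt{i}}}=\Delta(\lambda)$, and verifying that this top is $L(\overline{w}\cdot\lambda)$ is exactly the nontrivial combinatorial fact about Duflo involutions; it is not ``immediate from Proposition~\ref{prop306}(a)''. The repair is cheap with the tools already quoted: Proposition~\ref{prop305}\eqref{prop305.03} and \eqref{prop305.05} give $g_{\mathcal{R}}\in\mathcal{R}$ and $g_{\mathcal{R}}^*\in\mathcal{R}$, so both $g_{\mathcal{R}}$ and $g_{\mathcal{R}}^{-1}$ lie in $\mathcal{R}$; by Proposition~\ref{prop805} this forces $p(g_{\mathcal{R}})=p(g_{\mathcal{R}}^{-1})=q(g_{\mathcal{R}})$, hence $g_{\mathcal{R}}$ is an involution, hence the unique involution $\overline{w}$ of $\mathcal{R}$. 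With that step corrected, your remaining chain — Proposition~\ref{prop305}\eqref{prop305.02} for the simple top and indecomposability, Proposition~\ref{prop306}\eqref{prop306.02} for membership in $\mathcal{O}_{\lambda}^{\hat{\mathcal{R}}}$, Theorem~\ref{thm307}\eqref{thm307.04} with Remark~\ref{rem973} for projectivity, and Proposition~\ref{prop902} for injectivity — does go through.
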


Combining Propositions~\ref{prop1501} and \ref{prop902} we see that 
$\{\theta_x L(\overline{w}\cdot\lambda):x\in\mathcal{R}\}$ is a
complete and irredundant list of indecomposable projective-injective
modules in $\mathcal{O}_{\lambda}^{\hat{\mathcal{R}}}$. By 
Theorem~\ref{thm903}, the action of projective functors preserves
the additive category of projective-injective modules in
$\mathcal{O}_{\lambda}^{\hat{\mathcal{R}}}$ and, going to the
Grothendieck group, categorifies the Kazhdan-Lusztig cell module
of $\mathcal{R}$. For $\mathbb{S}_n$ we know that all cell modules
are irreducible. This implies the following statement:

\begin{theorem}[\cite{MS6}]\label{thm1503}
\begin{enumerate}[$($a$)$]
\item\label{thm1503.1} The elements $\{f_w:w\in \mathbb{S}_n\}$ form
a basis of $\mathbb{C}[\mathbb{S}_n]$.
\item\label{thm1503.2} For $w\in \mathbb{S}_n$ let
$\mathbf{S}(w)$ denote the linear span of $f_x$, where $x$ is in
the right cell of $w$. Then $\mathbf{S}(w)$ is an irreducible 
submodule of the right regular module $\mathbb{C}[\mathbb{S}_n]$.
\end{enumerate}
\end{theorem}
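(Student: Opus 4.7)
The plan is to transport both statements to the categorical side via the $\mathbb{C}$-linear isomorphism $\varphi\colon\mathbb{C}[\mathbb{S}_n]\to[\mathcal{O}_\lambda]_{\mathbb{C}}$ of Proposition~\ref{prop521}, which is the $v=1$ specialization of Proposition~\ref{prop732}. By Corollary~\ref{cor736} we have $\varphi(\mathrm{ev}(\hat{\underline{H}}_y))=[L(y\cdot\lambda)]$, and $\varphi$ intertwines right multiplication by $\mathrm{ev}(\underline{H}_w)$ with the projective functor $\theta_w$, so $\varphi(f_w)=[\theta_w L(\overline{w}\cdot\lambda)]$. Thus (a) becomes the statement that $\{[\theta_w L(\overline{w}\cdot\lambda)]\}_{w\in W}$ is a $\mathbb{C}$-basis of $[\mathcal{O}_\lambda]_{\mathbb{C}}$, and (b) becomes the statement that for each right cell $\mathcal{R}$ with unique involution $e_\mathcal{R}$, the subspace $\mathbf{S}(\mathcal{R}):=\mathrm{span}_{\mathbb{C}}\{[\theta_x L(e_\mathcal{R}\cdot\lambda)]\colon x\in\mathcal{R}\}$ is a right submodule isomorphic to the Specht module $S^{\mathrm{sh}(\mathcal{R})}$.

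Claim (b) drops out of the cell machinery: Proposition~\ref{prop1501} together with Proposition~\ref{prop902} identifies $\{\theta_x L(e_\mathcal{R}\cdot\lambda)\colon x\in\mathcal{R}\}$ as a complete irredundant list of indecomposable projective-injective modules in $\mathcal{O}_\lambda^{\hat{\mathcal{R}}}$. Theorem~\ref{thm903}(a) then tells us that projective functors preserve this additive category (making $\mathbf{S}(\mathcal{R})$ a right submodule) and categorify the Kazhdan--Lusztig cell module $\mathbb{Z}\mathcal{R}$; specializing at $v=1$ and extending scalars to $\mathbb{C}$, Proposition~\ref{prop807} recognizes this as the irreducible Specht module $S^{\mathrm{sh}(\mathcal{R})}$. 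In particular $\dim\mathbf{S}(\mathcal{R})=|\mathcal{R}|$, so within each right cell the $f_x$'s are already linearly independent.

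For (a), I will analyze the transition matrix $M=(m_{w,y})_{w,y\in W}$ defined by $[\theta_w L(\overline{w}\cdot\lambda)]=\sum_y m_{w,y}[L(y\cdot\lambda)]$. Since $\theta_w L(\overline{w}\cdot\lambda)\in\mathcal{O}_\lambda^{\hat{\mathcal{R}}_w}$, we have $m_{w,y}\neq 0\Rightarrow y\leq_R w$; the classical Kazhdan--Lusztig fact that $\leq_R$ restricts to $\sim_R$ on each two-sided cell then forces any such $y$ with $y\sim_{LR}w$ to lie in $\mathcal{R}_w$. So $M$ is block upper-triangular under the two-sided cell order and, within each two-sided block, further block-diagonal with blocks $M_\mathcal{R}$ indexed by right cells. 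To show $M_\mathcal{R}$ is invertible, set $W_\mathcal{R}:=\mathrm{span}_{\mathbb{C}}\{[L(y\cdot\lambda)]\colon y\in\hat{\mathcal{R}}\setminus\mathcal{R}\}$; the same combinatorics makes $W_\mathcal{R}$ stable under every $\theta_w$, hence a right $\mathbb{C}[\mathbb{S}_n]$-submodule of $[\mathcal{O}_\lambda]_{\mathbb{C}}$. By Proposition~\ref{prop305}(b), $f_{e_\mathcal{R}}$ has simple top $L(e_\mathcal{R}\cdot\lambda)$ with $e_\mathcal{R}\in\mathcal{R}$, so $f_{e_\mathcal{R}}\notin W_\mathcal{R}$; combined with the irreducibility of $\mathbf{S}(\mathcal{R})$ from (b) this forces $\mathbf{S}(\mathcal{R})\cap W_\mathcal{R}=0$. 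Any relation $\sum_{x\in\mathcal{R}}c_x m_{x,y}=0$ for all $y\in\mathcal{R}$ would place $\sum_x c_xf_x$ inside $W_\mathcal{R}\cap\mathbf{S}(\mathcal{R})=0$, whence $c_x=0$ by the intra-cell independence from (b). Hence each $M_\mathcal{R}$ is invertible, $M$ is invertible, and (a) follows.

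The main obstacle is the external Kazhdan--Lusztig input that $\leq_R$ restricts to the equivalence $\sim_R$ on every two-sided cell of $\mathbb{S}_n$; once this classical fact (which follows from Lusztig's $a$-function theory) is in hand, the block-triangular structure of $M$ and the stability of $W_\mathcal{R}$ under projective functors are automatic, and the invertibility of each diagonal block reduces cleanly to the irreducibility from (b) combined with the simple-top statement of Proposition~\ref{prop305}(b). Everything else is a formal consequence of the tools already developed in the excerpt.
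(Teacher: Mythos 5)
Your proposal is correct and follows essentially the same route as the paper's sketch: interpret $f_w$ as $[\theta_w L(\overline{w}\cdot\lambda)]$, identify these (via Propositions~\ref{prop1501} and~\ref{prop902}) with the indecomposable projective-injective modules in $\mathcal{O}_{\lambda}^{\hat{\mathcal{R}}}$, and invoke Theorem~\ref{thm903} together with the irreducibility of cell modules for $\mathbb{S}_n$. Your block-triangularity argument for part (a) correctly supplies the linear-independence-across-cells step that the paper's one-line ``this implies'' leaves implicit, at the cost of the legitimate classical input that $y\leq_R w$ and $y\sim_{LR} w$ force $y\sim_R w$ — the same Kazhdan--Lusztig combinatorics the paper already uses tacitly when asserting that the simples of maximal Gelfand--Kirillov dimension in $\mathcal{O}_{\lambda}^{\hat{\mathcal{R}}}$ are exactly those indexed by $\mathcal{R}$.
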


We also have the following statement:

\begin{proposition}[\cite{Ne}]\label{prop1504}
The basis $f_w$ is, up to normalization, a Wedderburn basis 
for $\mathbb{C}[\mathbb{S}_n]$. 
\end{proposition}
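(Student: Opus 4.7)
The plan is to leverage Theorem~\ref{thm1503} together with a dimension argument and the Kazhdan--Lusztig cell structure to identify the $f_w$ as (scalar multiples of) matrix units in the Artin--Wedderburn decomposition
\begin{displaymath}
\mathbb{C}[\mathbb{S}_n]=\bigoplus_{\lambda\vdash n} A_{\lambda},
\qquad A_{\lambda}\cong\mathrm{End}_{\mathbb{C}}(S^{\lambda}).
\end{displaymath}
First I would group the basis vectors by two-sided cells: set $\mathbf{T}_{\lambda}:=\mathrm{span}\{f_w:w\in\mathcal{LR}_{\lambda}\}$. The right-cell submodules $\mathbf{S}(w)$ of Theorem~\ref{thm1503}(b) lie entirely in $\mathbf{T}_{\lambda}$ for $w\in\mathcal{LR}_{\lambda}$, and by Proposition~\ref{prop807} each $\mathbf{S}(w)$ is irreducible and isomorphic to $S^{\lambda}$. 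Since the $\mathbf{S}(w)\subset\mathbb{C}[\mathbb{S}_n]$ are right submodules isomorphic to $S^{\lambda}$, they are contained in the $\lambda$-isotypic component $A_{\lambda}$. Counting $|\mathcal{LR}_{\lambda}|=d_{\lambda}^2=\dim A_{\lambda}$, together with the partition of unity $\sum_{\lambda}d_{\lambda}^2=n!$, forces $\mathbf{T}_{\lambda}=A_{\lambda}$. Thus the basis $\{f_w\}$ respects the block decomposition, and in particular $f_wf_{w'}=0$ whenever $w$ and $w'$ lie in distinct two-sided cells.

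Next I would exhibit the matrix-unit structure inside each $A_{\lambda}\cong\mathrm{End}(S^{\lambda})$. A minimal right ideal of $\mathrm{End}(S^{\lambda})$ has the form $u\otimes(S^{\lambda})^{*}$ for a line $\mathbb{C}u\subset S^{\lambda}$, so each right-cell module $\mathbf{S}(\mathcal{R})$ picks out a distinguished line $\mathbb{C}u_{\mathcal{R}}$ in $S^{\lambda}$. Since $A_{\lambda}=\bigoplus_{\mathcal{R}\subset\mathcal{LR}_{\lambda}}\mathbf{S}(\mathcal{R})$ as a right module and the number of right cells of shape $\lambda$ is exactly $d_{\lambda}$, the lines $\{\mathbb{C}u_{\mathcal{R}}\}_{\mathcal{R}}$ must span all of $S^{\lambda}$, giving a basis of $S^{\lambda}$ indexed by standard tableaux $T=p(\mathcal{R})$. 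Under the resulting identification $A_{\lambda}\cong\mathrm{Mat}_{\mathrm{SYT}(\lambda)}(\mathbb{C})$, each $f_w\in\mathbf{S}(\mathcal{R}_w)$ corresponds to a rank-one operator of the form $u_{\mathcal{R}_w}\otimes\varphi_w$; the parametrization $w\leftrightarrow(p(w),q(w))$ given by Robinson--Schensted then supplies the pair of row/column indices.

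The main step, which I expect to be the main obstacle, is proving that the $d_{\lambda}$ lines $\mathbb{C}u_{\mathcal{R}}$ are genuinely linearly independent --- equivalently, that the sum of right ideals $\bigoplus_{\mathcal{R}}\mathbf{S}(\mathcal{R})$ is direct --- and that the covectors $\varphi_w$ for $w$ in a fixed left cell form a dual basis, so that the matching $(p(w),q(w))$ yields matrix units rather than a mere basis. Here I would use Theorem~\ref{thm904}: right cells inside the same two-sided cell yield equivalent $\mathbf{C}_{\mathcal{R}}$, and the equivalence is realized through iterated $\mathrm{Q}_s$-functors. Applied at the Grothendieck group level, this equivalence sends the distinguished class $[L_{g_{\mathcal{R}}}]$ to $[L_{g_{\mathcal{R}'}}]$, transporting $u_{\mathcal{R}}$ to $u_{\mathcal{R}'}$ compatibly. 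Distinctness and linear independence of the $u_{\mathcal{R}}$ then follow from the fact that distinct right cells produce non-isomorphic \emph{subobjects} of the right regular module (via the injection $\mathbf{S}(\mathcal{R})\hookrightarrow \mathbb{C}[\mathbb{S}_n]$).

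Finally, I would verify the matrix-unit multiplication rule by computing $f_wf_{w'}$ in the Hecke algebra and specializing $v=1$. Using Proposition~\ref{prop306}, the product $\underline{H}_w\cdot\hat{\underline{H}}_{\overline{w'}}$ expands in the dual Kazhdan--Lusztig basis with summands indexed by $z\leq_{R}\overline{w'}$, while the outer factor $\hat{\underline{H}}_{\overline{w}}$ kills all contributions outside $\mathcal{R}_w$ by the orthogonality encoded in Corollary~\ref{cor736}. The net effect is that $f_wf_{w'}$ vanishes unless the left cell of $w$ meets the right cell of $w'$ in $\overline{w'}=\overline{w}$ (i.e. $q(w)=p(w')$), and in that case equals a scalar multiple of $f_{w''}$ with $p(w'')=p(w)$, $q(w'')=q(w')$. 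This is precisely the matrix-unit rule $E_{ij}E_{kl}=\delta_{jk}E_{il}$, completing the proof up to the normalization constants.
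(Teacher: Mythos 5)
Your opening moves are correct and essentially forced by Theorem~\ref{thm1503}: since the $f_w$ form a basis and each right-cell span $\mathbf{S}(\mathcal{R})$ is an irreducible right submodule isomorphic to the Specht module of the relevant shape, the two-sided-cell span $\mathbf{T}_{\lambda}$ lies in the isotypic block $A_{\lambda}$ and equals it by the count $|\mathcal{LR}_{\lambda}|=d_{\lambda}^2$. Note, however, that the directness of $\bigoplus_{\mathcal{R}}\mathbf{S}(\mathcal{R})$ — and hence the linear independence of your lines $\mathbb{C}u_{\mathcal{R}}$ — is already immediate from the $f_w$ being a basis, so the step you single out as ``the main obstacle'' costs nothing and Theorem~\ref{thm904} is not needed for it. (For calibration: the paper gives no proof of Proposition~\ref{prop1504}; it is quoted from \cite{Ne}, and the remark following it about normalization via $\dim\mathrm{End}(\theta_x L(\overline{w}\cdot\lambda))$ signals where the real work lies.)

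The genuine gap is in your final paragraph, which is where the entire content of the proposition sits. Writing $f_w=u_{p(w)}\otimes\varphi_w\in S^{\lambda}\otimes(S^{\lambda})^{*}$, the matrix-unit rule needs two facts you do not establish. First, $\varphi_w$ must depend, up to scalar, only on $q(w)$ — equivalently, the span of the $f_x$ over a fixed \emph{left} cell must be a single minimal left ideal $S^{\lambda}\otimes\psi$. Nothing you cite gives this: Theorem~\ref{thm1503} only controls right cells, and $f_w=\mathrm{ev}(\hat{\underline{H}}_{\overline{w}}\underline{H}_w)$ is not manifestly compatible with $w\mapsto w^{-1}$, so the left-cell statement is not free. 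Second, the pairing must be non-degenerate in exactly the right places: $\varphi_w(u_{p(w')})\neq 0$ if and only if $q(w)=p(w')$. The vanishing half might be extracted from cell combinatorics as you sketch (though your intermediate claim that $\underline{H}_w\hat{\underline{H}}_{\overline{w'}}$ expands over $z\leq_R\overline{w'}$ invokes the wrong preorder — left multiplication governs $\leq_L$ — which indicates the computation has not actually been carried out). But the non-vanishing half is asserted, not proved: already for $q(w)=p(w)$ you need $f_{\overline{w}}^{\,2}=c\,f_{\overline{w}}$ with $c\neq 0$, and this is precisely where the hard input enters (Duflo involutions and positivity on the combinatorial side, or Proposition~\ref{prop1501} together with the non-vanishing of $\mathrm{End}(\theta_w L(\overline{w}\cdot\lambda))$ on the categorical side — the source of the normalization constants). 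If that scalar could vanish, your $\{f_w\}$ would still be a basis adapted to the block decomposition but would fail to be matrix units, so as written the argument does not close.
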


The normalization factors which appear in Proposition~\ref{prop1504}
can be formulated in terms of the dimension of the endomorphism algebra
of the modules $\theta_x L(\overline{w}\cdot\lambda)$. This gives
a very clear categorical interpretation of this Wedderburn basis 
for $\mathbb{C}[\mathbb{S}_n]$. 

\subsection{Kostant's problem}\label{s15.2}

For every two $\mathfrak{g}$-modules $M$ and $N$ the space
$\mathrm{Hom}_{\mathbb{C}}(M,N)$ carries the natural structure of
a $U(\mathfrak{g})\text{-}U(\mathfrak{g})$-bimodule. Denote by
$\mathcal{L}(M,N)$ the subspace of $\mathrm{Hom}_{\mathbb{C}}(M,N)$
consisting of all elements, the adjoint action of $\mathfrak{g}$
on which is locally finite. The space $\mathcal{L}(M,N)$ turns out
to be a subbimodule of $\mathrm{Hom}_{\mathbb{C}}(M,N)$,
see \cite[Kapitel~6]{Ja}. 

Since the adjoint action of $U(\mathfrak{g})$ on $U(\mathfrak{g})$ 
is locally finite, for any $\mathfrak{g}$-module $M$ the natural
image of $U(\mathfrak{g})$ in $\mathrm{Hom}_{\mathbb{C}}(M,M)$
belongs to $\mathcal{L}(M,M)$. The kernel of this map is, by definition,
the annihilator $\mathrm{Ann}_{U(\mathfrak{g})}(M)$ of $M$, which 
gives us the following injective map of 
$U(\mathfrak{g})\text{-}U(\mathfrak{g})$-bimodules:
\begin{equation}\label{eq1591}
U(\mathfrak{g})/\mathrm{Ann}_{U(\mathfrak{g})}(M)\hookrightarrow
\mathcal{L}(M,M).
\end{equation}
The classical problem of Kostant, as described in \cite{Jo2},
can be formulated in the following way:

\begin{problem}[Kostant's problem]\label{prob1505}
For which $\mathfrak{g}$-modules $M$ the map \eqref{eq1591}
is surjective (and hence an isomorphism)?
\end{problem}

Unfortunately, the complete answer to Kostant's problem in not
even known for simple highest weight modules. However, many
special cases are settled. Here is a list of known results:
\begin{itemize}
\item Kostant's problem has positive answer for all Verma modules, see 
\cite{Jo2,Ja}.
\item Kostant's problem has positive answer for all quotients of
$\Delta(\lambda)$ if $\lambda\in\mathfrak{h}^*_{\mathrm{dom}}$, see
\cite[6.9]{Ja}.
\item If $\lambda\in\mathfrak{h}^*_{\mathrm{dom}}$ is integral
and regular and $\mathfrak{p}\subset\mathfrak{g}$ is a parabolic 
subalgebra, then Kostant's problem has 
positive answer for $L(w_o^{\mathfrak{p}}w_o\cdot\lambda)$, see \cite{GJ}.
\item If $\lambda\in\mathfrak{h}^*_{\mathrm{dom}}$ is integral
and regular and $\mathfrak{p}\subset\mathfrak{g}$ is a parabolic 
subalgebra, then Kostant's problem 
has positive answer for all quotients of
$\Delta(w_o^{\mathfrak{p}}w_o\cdot\lambda)$, see \cite{Kaa}.
\item If $\lambda\in\mathfrak{h}^*_{\mathrm{dom}}$ is integral
and regular, $\mathfrak{p}\subset\mathfrak{g}$ is a parabolic 
subalgebra and
$s\in W_{\mathfrak{p}}$ is a simple reflection, then Kostant's problem has 
positive answer for $L(sw_o^{\mathfrak{p}}w_o\cdot\lambda)$, see \cite{Ma7}.
\item If $\lambda\in\mathfrak{h}^*_{\mathrm{dom}}$ is integral
and regular, $\mathfrak{q}\subset\mathfrak{p}\subset\mathfrak{g}$ are  parabolic subalgebras, then Kostant's problem 
has positive answer for 
$L(w_o^{\mathfrak{q}}w_o^{\mathfrak{p}}w_o\cdot\lambda)$, see \cite{Kaa}.
\item If $\mathfrak{g}$ is of type $B_2$ with two simple
reflections $s$ and $t$, and $\lambda\in\mathfrak{h}^*_{\mathrm{dom}}$ 
is integral and regular, then Kostant's problem has {\em negative} answer
for $L(st\cdot \lambda)$, see \cite{Jo2}.
\item If $\mathfrak{g}$ is of type $A_3$ with three simple
reflections $r,s$ and $t$ (such that $rt=tr$), 
and $\lambda\in\mathfrak{h}^*_{\mathrm{dom}}$  is integral and regular, 
then Kostant's problem has {\em negative} answer
for $L(rt\cdot \lambda)$, see \cite{MS6}.
\end{itemize}
Some further results on Kostant's problem can be found in 
\cite{KaMa,Ma3,MS2,MM1} (see also references therein). In particular, 
in \cite{Ma3} it is shown that the positive answer to Kostant's problem 
for certain simple highest weight modules
can be equivalently reformulated in terms of the double centralizer
property with respect to projective injective modules in the category 
$\mathcal{O}_{\lambda}^{\hat{\mathcal{R}}}$.

In this subsection we describe one of the main results from \cite{MS2}. 
It is related to Kostant's problem and its proof is based
on the idea of categorification. We have the following classical statement:

\begin{theorem}[\cite{Vo,Jo3}]\label{thm1506}
Let $\lambda\in\mathfrak{h}^*_{\mathrm{dom}}$ be dominant and regular.
Then for any $x,y\in\mathbb{S}_n$ the equality 
$\mathrm{Ann}_{U(\mathfrak{g})}(L(x\cdot\lambda))=
\mathrm{Ann}_{U(\mathfrak{g})}(L(y\cdot\lambda))$
is equivalent to the condition $x\sim_L y$. 
\end{theorem}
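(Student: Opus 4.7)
My plan is to prove the two implications of the biconditional separately, using the categorification machinery when it directly applies and appealing to classical primitive ideal theory where it does not.

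For the forward direction $x\sim_L y\Rightarrow J_x=J_y$, the plan is to combine the Kazhdan--Lusztig--style categorification of $\mathcal{O}_\lambda$ with classical translation principles for primitive ideals. By Theorem~\ref{thm733} and Corollary~\ref{cor736}, simple modules $[L(w\cdot\lambda)]$ realise the dual Kazhdan--Lusztig basis of $\mathbb{H}$ and the action of $\theta_w$ realises left multiplication by $\underline{H}_w$; combined with Proposition~\ref{prop803}, the relation $x\sim_L y$ translates into a symmetric chain of projective-functor relationships between $L(x\cdot\lambda)$ and $L(y\cdot\lambda)$. One then invokes Joseph's translation principle: projective functors, realised via tensoring with Harish--Chandra bimodules in $\mathcal{H}$ (Subsection~\ref{s5.3}), transport primitive ideals in a manner compatible with this chain, forcing $J_x=J_y$.

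For the reverse direction $J_x=J_y\Rightarrow x\sim_L y$, I would appeal to Duflo's theorem that every primitive ideal in $U(\mathfrak{g})$ is of the form $J_w$ for some $w\in W$, together with the classification of Duflo involutions: each Kazhdan--Lusztig left cell $\mathcal{L}$ contains a unique distinguished involution $d_\mathcal{L}$, and the assignment $\mathcal{L}\mapsto J_{d_\mathcal{L}}$ is injective on the set of left cells. By the forward direction, $J_w=J_{d_{\mathcal{L}(w)}}$ for every $w$, so $J_x=J_y$ forces $d_{\mathcal{L}(x)}=d_{\mathcal{L}(y)}$ by injectivity, whence $\mathcal{L}(x)=\mathcal{L}(y)$. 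The injectivity of $\mathcal{L}\mapsto J_{d_\mathcal{L}}$ is proved via Vogan's generalized $\tau$-invariant: the $\tau$-invariant $\tau(w)=\{s\in S:\theta_s L(w\cdot\lambda)=0\}$ is readable from $J_w$ by Proposition~\ref{prop505}, and Vogan showed that iterating this construction across adjacent left cells within a two-sided cell separates all left cells.

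The hard part will be the ideal-theoretic step in the forward direction. A naive attempt to deduce $J_x\subseteq J_y$ directly from $L(y\cdot\lambda)$ appearing as a composition factor of $\theta\,L(x\cdot\lambda)$ fails, because tensoring with a finite-dimensional module via the coproduct $\Delta$ does not preserve annihilators: for $u\in J_x$ the element $\Delta(u)(v\otimes\ell)$ need not vanish, so $J_x$ does not annihilate $V\otimes_{\mathbb{C}}L(x\cdot\lambda)$. A further subtlety is that the cell $2$-representation of $\cS$ does not distinguish annihilators within a two-sided cell (Theorem~\ref{thm904} says cell $2$-reps in the same two-sided cell are equivalent, yet annihilators \emph{do} distinguish left cells there), so the annihilator genuinely carries more information than the $2$-representation and the argument must work inside the bimodule category $\mathcal{H}$ rather than on $\mathcal{O}_\lambda$ alone. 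Setting up the Harish--Chandra bimodule machinery rigorously and extracting the correct ideal containment from the balance relation $xu\otimes m=x\otimes um$ in the relevant tensor product over $U(\mathfrak{g})$ is the technical heart of the proof; once this is done, the remaining steps reduce to Kazhdan--Lusztig combinatorics and Joseph's classification of Duflo involutions.
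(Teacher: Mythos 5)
The paper does not prove Theorem~\ref{thm1506} at all: it is imported as a classical black box from \cite{Vo,Jo3}, so the only meaningful comparison is with the arguments in those references. Your outline does reproduce their architecture --- constancy of $w\mapsto J_w:=\mathrm{Ann}_{U(\mathfrak{g})}L(w\cdot\lambda)$ on left cells for one implication, Duflo's theorem plus a cell-separation argument for the other --- but in each direction the step that carries the mathematical content is either explicitly deferred or replaced by an appeal that, on inspection, presupposes the conclusion.

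Concretely: in the forward direction you correctly diagnose why naive tensoring fails and correctly name Harish--Chandra bimodules as the right framework, but the proof then has to actually happen. The missing ingredient is the translation principle for primitive ideals --- the lemma (see \cite[Kapitel~5--6]{Ja}, the same source as \eqref{eq1592}) that $\mathrm{Ann}_{U(\mathfrak{g})}(V\otimes M)$ depends only on $V$ and $\mathrm{Ann}_{U(\mathfrak{g})}(M)$ --- together with Vogan's analysis of how top and socle constituents of $\theta_s\,L(x\cdot\lambda)$ constrain inclusions of the ideals $J_w$; this occupies a substantial part of \cite{Vo} and you explicitly label it ``the technical heart'' without supplying it, so the forward implication is not established. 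In the reverse direction the reduction is circular: granting Duflo's theorem and the forward implication, injectivity of $\mathcal{L}\mapsto J_{d_{\mathcal{L}}}$ on left cells is \emph{equivalent} to the statement being proved, so quoting it as a known ``classification of Duflo involutions'' assumes the conclusion. The actual substance is Vogan's theorem that the generalized $\tau$-invariant is (i) constant on the fibres of $w\mapsto J_w$ and (ii) injective on left cells in type $A$, via its identification with the Robinson--Schensted tableau $q(w)$ of Proposition~\ref{prop805}; neither (i) nor (ii) is argued. Moreover the invariant you write down, $\{s\in S:\theta_s L(w\cdot\lambda)=0\}$, is the \emph{ordinary} $\tau$-invariant: it is indeed determined by $J_w$, but it cannot separate the left cells of $\mathbb{S}_n$ for $n\geq 4$ (there are more left cells than subsets of $S$), which is precisely why the generalized $\tau$-invariant --- built by iterating the wall-crossing operators attached to pairs of non-commuting simple reflections, not read off from Proposition~\ref{prop505} --- exists. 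Your structural remark that the cell $2$-representations of $\cS$ cannot distinguish left cells inside a two-sided cell (Theorem~\ref{thm904}) is correct and is a good reason why no argument staying inside $\mathcal{O}_{\lambda}$ as an $\cS$-module can close this gap, but it does not substitute for the missing separation argument.
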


Theorem~\ref{thm1506} described the left hand side of \eqref{eq1591}.
In the case $M\in\mathcal{O}$, the 
$U(\mathfrak{g})\text{-}U(\mathfrak{g})$-bimodule on the right hand
side of \eqref{eq1591} is a Harish-Chandra bimodule (see 
Subsection~\ref{s5.3}). In particular, with respect to the adjoint
action of $\mathfrak{g}$, the bimodule $\mathcal{L}(M,M)$ decomposes
into a direct sum of simple finite dimensional $\mathfrak{g}$-modules.
Moreover, if $V$ is a simple finite dimensional $\mathfrak{g}$-module, 
then, by \cite[6.8]{Ja}, we have:
\begin{equation}\label{eq1592}
[\mathcal{L}(M,M):V]=\dim\mathrm{Hom}_{\mathfrak{g}} (V\otimes M,M).
\end{equation}

Now let us recall Proposition~\ref{prop606}, which says that the action
of (derived) twisting functors on (the bounded derived category of)
$\mathcal{O}_{\lambda}$ gives a na{\"\i}ve categorification of the left
regular $\mathbb{Z}[\mathbb{S}_n]$-module. Twisting functors commute
with projective functors $V\otimes{}_-$, see Proposition~\ref{prop601}.
Further, twisting functors are Koszul dual of shuffling functors, 
see Theorem~\ref{thm1015}. Since projective functors combinatorially
preserve right cell modules, shuffling functors, being linear
combinations of projective functors, do the same. Taking the
Koszul dual switches left and right, which implies that 
twisting functors combinatorial preserve left cells and hence
give a na{\"\i}ve categorification of left cell modules. These are
again irreducible as we work with $\mathbb{S}_n$. Applying 
twisting functors (or, more accurately, the subfunctors
$\mathrm{Q}_s$ from Proposition~\ref{prop605}) to \eqref{eq1592}, 
one obtains the following:

\begin{proposition}[\cite{MS2}]\label{prop1508}
Let $\lambda\in\mathfrak{h}^*_{\mathrm{dom}}$ be dominant and regular.
Let $x,y\in \mathbb{S}_n$ be in the same left cell. Then 
\begin{displaymath}
\mathcal{L}(L(x\cdot \lambda),L(x\cdot \lambda))\cong
\mathcal{L}(L(y\cdot \lambda),L(y\cdot \lambda)).
\end{displaymath}
\end{proposition}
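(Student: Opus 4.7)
The plan is to use formula \eqref{eq1592} to translate the bimodule comparison into a comparison of Hom-space dimensions controlled by projective functors, and then transfer these dimensions along the left cell using the functors $\mathrm{Q}_s$ from Proposition~\ref{prop605}. First I would reduce to the case where $y$ is obtained from $x$ by a single elementary (Knuth-type) move associated with some simple reflection $s$: since $\sim_L$ on the left cell of $x$ is generated by such moves, transitivity combined with a sequence of isomorphisms of the form $\mathcal{L}(L(\cdot),L(\cdot)) \cong \mathcal{L}(L(\cdot),L(\cdot))$ along the chain will suffice. At this point, for each simple finite-dimensional $V$, the multiplicity $[\mathcal{L}(L(w\cdot\lambda),L(w\cdot\lambda)):V]$ equals $\dim \mathrm{Hom}_{\mathfrak{g}}(V\otimes L(w\cdot\lambda), L(w\cdot\lambda))$ by \eqref{eq1592}, so the task becomes showing these Hom-dimensions agree for $w=x$ and $w=y$.

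The key categorification input is that $\mathrm{Q}_s = \mathcal{L}_1\mathrm{Z}_s$ (Proposition~\ref{prop672}\eqref{prop672.3}) commutes with every projective functor $V\otimes{}_-$, because Zuckerman functors commute with projective functors by Proposition~\ref{prop671}, and this property passes to their derived pieces. Thus there is a natural isomorphism $\mathrm{Q}_s(V\otimes L(x\cdot\lambda)) \cong V\otimes \mathrm{Q}_s L(x\cdot\lambda)$. For an appropriate choice of $s$ governing the chosen Knuth move, one has $\mathrm{Q}_s L(x\cdot\lambda)\cong L(y\cdot\lambda)$; this is the categorified analogue of the action of a Kazhdan--Lusztig generator in the left cell (Specht) module, consistent with the na\"ive categorification of the left regular $\mathbb{Z}[\mathbb{S}_n]$-module by derived twisting functors (Proposition~\ref{prop606}) and the fact that $\mathrm{Q}_s$ is precisely the subfunctor of $\mathrm{T}_s$ detecting the ``new'' simple at the top (Proposition~\ref{prop605}). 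Applying $\mathrm{Q}_s$ to a homomorphism $\varphi:V\otimes L(x\cdot\lambda)\to L(x\cdot\lambda)$ and invoking the commutation above produces a homomorphism in $\mathrm{Hom}_{\mathfrak{g}}(V\otimes L(y\cdot\lambda),L(y\cdot\lambda))$; an inverse is constructed symmetrically via the reverse Knuth move, yielding the equality of Hom-dimensions, hence equality of all adjoint-$\mathfrak{g}$ multiplicities in the two bimodules.

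To upgrade this equality of multiplicities to an actual isomorphism of Harish--Chandra bimodules, one combines the Vogan--Joseph annihilator theorem (Theorem~\ref{thm1506}), which guarantees $\mathrm{Ann}_{U(\mathfrak{g})}(L(x\cdot\lambda)) = \mathrm{Ann}_{U(\mathfrak{g})}(L(y\cdot\lambda))$ precisely because $x\sim_L y$, so that the left-hand sides of the embeddings \eqref{eq1591} for $x$ and for $y$ are literally the same bimodule $U(\mathfrak{g})/\mathrm{Ann}$; the matching multiplicities together with the rigidity of Harish-Chandra bimodules in ${}^{\infty}_{\,\,\lambda}\mathcal{H}^{\infty}_{\lambda}$ (which, via its equivalence with a suitable quotient of category $\mathcal{O}$, is controlled by its simple composition factors and their multiplicities) then force the two right-hand sides to be isomorphic. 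The main obstacle in this plan is the identification $\mathrm{Q}_s L(x\cdot\lambda) \cong L(y\cdot\lambda)$: establishing it requires a delicate analysis of the short exact sequence $\mathrm{Q}_s\hookrightarrow \mathrm{T}_s\to \mathrm{Id}$ from Proposition~\ref{prop605} on simple modules, coupled with precise Kazhdan--Lusztig $\mu$-function combinatorics of the Knuth move, and is where the genuine Lie-theoretic input enters the otherwise formal categorification machinery.
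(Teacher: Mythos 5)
Your plan follows the paper's own sketch almost exactly: you combine \eqref{eq1592} with the fact that $\mathrm{Q}_s=\mathcal{L}_1\mathrm{Z}_s$ commutes with projective functors (Propositions~\ref{prop671} and~\ref{prop605}) and with the fact that (compositions of) the functors $\mathrm{Q}_s$ move simple highest weight modules within a left cell. You also correctly isolate the genuinely hard input, namely the identification of $\mathrm{Q}_s L(x\cdot\lambda)$ with $L(y\cdot\lambda)$; in \cite{MS2} this is carried out not for $\mathrm{Q}_s$ on all of $\mathcal{O}_{\lambda}$ (where, as the $\mathfrak{gl}_2$ tables show, $\mathrm{Q}_s$ can move a simple out of its cell) but for the restriction of compositions of these functors to the Serre subquotients of Theorem~\ref{thm904}, where they become exact equivalences commuting with $V\otimes_{\mathbb{C}}{}_-$. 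Up to that point your argument is the paper's argument.

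The one step that does not hold as written is the final ``upgrade'': equality of the adjoint multiplicities $[\mathcal{L}(M,M):V]$ for all simple finite dimensional $V$ does not, by any general rigidity principle, force two Harish-Chandra bimodules to be isomorphic --- two objects of ${}^{\infty}_{\,\,\lambda}\mathcal{H}^{\infty}_{\lambda}$ with the same class in the Grothendieck group need not be isomorphic, so ``matching multiplicities plus rigidity'' is not a valid inference. Fortunately you do not need it: the assignment $\varphi\mapsto\mathrm{Q}_s(\varphi)$ from your second paragraph is natural in $V$ and compatible with composition, hence assembles into a homomorphism of algebras (and of $U(\mathfrak{g})$-bimodules) from $\mathcal{L}(L(x\cdot\lambda),L(x\cdot\lambda))$ to $\mathcal{L}(L(y\cdot\lambda),L(y\cdot\lambda))$, and the dimension count on each isotypic component that you already perform shows this map is bijective. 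So the isomorphism should be extracted from the functoriality of your construction rather than from the multiplicities; and for the intended application (Theorem~\ref{thm1507}) the equality of multiplicities together with Theorem~\ref{thm1506} would in any case suffice.
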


Combining Proposition~\ref{prop1508} with Theorem~\ref{thm1506} we obtain:

\begin{theorem}[\cite{MS2}]\label{thm1507}
Let $\lambda\in\mathfrak{h}^*_{\mathrm{dom}}$ be dominant and regular
and $x\in \mathbb{S}_n$. The answer to Kostant's problem for
$L(x\cdot \lambda)$ is an invariant of a left cell.
\end{theorem}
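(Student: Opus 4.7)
The plan is to combine Theorem~\ref{thm1506} with Proposition~\ref{prop1508} and then detect surjectivity of the natural map \eqref{eq1591} by a character argument in the category of Harish-Chandra bimodules. Fix $x,y\in\mathbb{S}_n$ with $x\sim_L y$ and set $M_w:=L(w\cdot\lambda)$ for $w\in\{x,y\}$. By Theorem~\ref{thm1506}, $\mathrm{Ann}_{U(\mathfrak{g})}(M_x)=\mathrm{Ann}_{U(\mathfrak{g})}(M_y)$, so the two source bimodules on the left-hand side of \eqref{eq1591} are literally the same $U(\mathfrak{g})\text{-}U(\mathfrak{g})$-bimodule. By Proposition~\ref{prop1508}, the two target bimodules $\mathcal{L}(M_x,M_x)$ and $\mathcal{L}(M_y,M_y)$ are isomorphic as Harish-Chandra bimodules.

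To pass from this to an equivalence of Kostant's problem, I would argue as follows. Both the source $U(\mathfrak{g})/\mathrm{Ann}_{U(\mathfrak{g})}(M_w)$ and the target $\mathcal{L}(M_w,M_w)$ are Harish-Chandra bimodules, and the map \eqref{eq1591} is injective by construction; hence its cokernel $C_w$ is a Harish-Chandra bimodule as well, and surjectivity of \eqref{eq1591} is equivalent to $C_w=0$. A Harish-Chandra bimodule is zero if and only if its multiplicity of every simple finite-dimensional $\mathfrak{g}$-module $V$ (under the adjoint action) vanishes, and by \eqref{eq1592} these multiplicities on the target side are given by $\dim\mathrm{Hom}_{\mathfrak{g}}(V\otimes M_w,M_w)$, while those on the source side are tautologically the same for $w\in\{x,y\}$. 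Thus the characters of $C_x$ and $C_y$ in the Grothendieck group of the adjoint category coincide, and therefore $C_x=0$ if and only if $C_y=0$, which is precisely the statement of the theorem.

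The main work, and the real obstacle, is therefore entirely absorbed into Proposition~\ref{prop1508}, i.e.\ into the equality of multiplicities $[\mathcal{L}(M_x,M_x):V]=[\mathcal{L}(M_y,M_y):V]$ for all simple finite-dimensional $V$. This is where genuine categorical input is needed: one uses that derived twisting functors commute with projective functors $V\otimes_{\mathbb{C}}{}_-$ (Proposition~\ref{prop601}\eqref{prop601.4}) and, via Koszul duality with shuffling functors (Theorem~\ref{thm1015}), preserve left cells in the sense that they assemble into equivalences between triangulated subquotients associated to the same left cell; applying this together with the subfunctor $\mathrm{Q}_s$ from Proposition~\ref{prop605} to \eqref{eq1592} transports the identity $\dim\mathrm{Hom}(V\otimes M_x,M_x)=\dim\mathrm{Hom}(V\otimes M_y,M_y)$ along a sequence of simple reflections connecting $x$ and $y$ inside their common left cell. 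Once this multiplicity identity is granted, the character argument above finishes the proof formally.
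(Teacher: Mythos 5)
Your proposal is correct and follows exactly the route the paper takes: Theorem~\ref{thm1507} is obtained by combining Theorem~\ref{thm1506} (equal annihilators along a left cell) with Proposition~\ref{prop1508} (isomorphic bimodules $\mathcal{L}(L,L)$ along a left cell), and your multiplicity comparison via \eqref{eq1592} just makes explicit the bookkeeping the paper leaves implicit in the phrase ``combining \ldots we obtain''. Your closing paragraph on how Proposition~\ref{prop1508} itself is established (twisting functors commuting with projective functors, Koszul duality with shuffling, and the subfunctors $\mathrm{Q}_s$) also matches the paper's sketch.
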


Theorem~\ref{thm1507} does not generalize to other types. Thus in
type $B_2$ with simple reflections $s$ and $t$ we have $s\sim_L st$,
however, Kostant's problem has positive answer for
$L(s\cdot\lambda)$, see \cite{Ma7}, and negative answer for
$L(st\cdot\lambda)$, see \cite{Jo2}. 

\subsection{Structure of induced modules}\label{s15.3}

We would like to finish with the following classical question for
which the idea of categorification gives a new interesting insight.
Let $\mathfrak{p}\subset\mathfrak{g}$ be a parabolic subalgebra, 
$\mathfrak{n}$ be the nilpotent radical
of $\mathfrak{p}$, and $\mathfrak{a}$ be the Levi factor
of $\mathfrak{p}$. Then any (simple) $\mathfrak{a}$-module $V$ can be
trivially extended to a $\mathfrak{p}$-module via
$\mathfrak{n}V:=0$. The induced module
\begin{displaymath}
M(\mathfrak{p},V):=U(\mathfrak{g})\bigotimes_{U(\mathfrak{p})}V
\end{displaymath}
is called the {\em generalized Verma module} associated to
\index{generalized Verma module}
$\mathfrak{p}$ and $V$. Taking $\mathfrak{p}=\mathfrak{b}$ gives
usual Verma modules. Taking $V$ finite dimensional, produces 
parabolic Verma modules. In fact, starting from a module in
the category $\mathcal{O}$ (for the algebra $\mathfrak{a}$)
produces a module in the category $\mathcal{O}$ (now, for the algebra 
$\mathfrak{g}$). The structure of usual Verma modules is combinatorially
given by Corollary~\ref{cor735} (that is, by Kazhdan-Lusztig
polynomials). A natural problem is: 

\begin{problem}\label{prop1611}
{\rm
Describe the structure of  $M(\mathfrak{p},V)$ for arbitrary $V$.
}
\end{problem}

The general case of this problem is still open, however, some 
important special cases are settled. The first difficulty of the
problem lies in the fact that simple modules over simple Lie algebras
are not classified (apart from the algebra $\mathfrak{sl}_2$,
which was done by R.~Block, see \cite[Section~6]{Ma}). So, at the first
stage several authors studied special cases of Problem~\ref{prop1611}
for some known classes of simple modules $V$, see e.g. \cite{MiSo,FM,MO0}
and references therein (see also the paper \cite{MS2} for an overview of 
the problem).

The first really general result came in \cite{KhMa0}, where it was shown 
that in the case when the semi-simple part of $\mathfrak{a}$
is isomorphic to $\mathfrak{sl}_2$, an essential part of the
structure of $M(\mathfrak{p},V)$, called the {\em rough structure},
\index{rough structure}
does not really depend on the module $V$ but rather on its
annihilator, that is a primitive ideal of $U(\mathfrak{g})$. 
In \cite{MS2} this idea was further developed to obtain the strongest, 
so far, result on Problem~\ref{prop1611}. Here is a very rough description
how it goes:

Instead of looking at $M(\mathfrak{p},V)$ we consider this module 
as an object of a certain category $\mathcal{A}$, which is a kind of
``smallest reasonable category'' containing $M(\mathfrak{p},V)$ and
closed under projective functors. The category $\mathcal{A}$ has the
form $\mathcal{O}(\mathfrak{p},\mathcal{C})$ for some $\mathfrak{p}$ 
and $\mathcal{C}$ as in Subsection~\ref{s9.4}. One shows that the category 
$\mathcal{A}$ decomposes into blocks and each block is equivalent
to a module category over a standardly stratified algebra.
The original module $M(\mathfrak{p},V)$ is related to a proper standard 
object with respect to this structure. Unfortunately, this relation is
not an equality. In general, the corresponding proper standard object 
only surjects onto $M(\mathfrak{p},V)$. But the kernel, which may be 
nonzero, is  always ``smaller'' in the sense
that it has a filtration by generalized Verma modules induced
from simple modules having {\em strictly bigger annihilators} than $V$.

Now, the main observation is that, using the action of projective
functors on $\mathcal{A}$, one can prove the following: 

\begin{theorem}[\cite{MS2}]\label{thm1521}
Let $\mathfrak{g}=\mathfrak{gl}_n$.
\begin{enumerate}[$($a$)$]
\item\label{thm1521.1}
The action of projective functors on a (regular) block of 
$\mathcal{A}$ categorifies an induced cell module, where the cell in 
question is determined (up to a choice inside the corresponding 
two-sided cell) by the annihilator of $V$.
\item\label{thm1521.2}
Every (regular) block of  $\mathcal{A}$ is equivalent to a
subcategory of $\mathcal{O}$ appearing in Theorem~\ref{thm917}, where
induced cell modules were categorified.
\end{enumerate}
\end{theorem}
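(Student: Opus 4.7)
The plan is to combine the structural description of $\mathcal{A}$ sketched in the paragraph just above the theorem with the categorification results for induced cell modules from Subsection~\ref{s9.5}. The strategy naturally splits into three stages: identify $\mathcal{A}$ with one of the categories $\mathcal{O}(\mathfrak{p},\mathcal{C})$ studied in Subsection~\ref{s9.4}; read off part~(a) from Theorem~\ref{thm917}; and deduce~(b) from Proposition~\ref{prop937}.

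First I would make precise the identification $\mathcal{A}\simeq\mathcal{O}(\mathfrak{p},\mathcal{C})$ where $\mathcal{C}$ is the Serre subquotient $\mathcal{C}_{\mathcal{R}}/\mathcal{C}'_{\mathcal{R}}$ attached to a right cell $\mathcal{R}$ of $W_{\mathfrak{p}}$. The relevant cell is singled out by the annihilator of $V$ using the Joseph--Vogan classification recorded in Theorem~\ref{thm1506}: annihilators of simple highest weight $\mathfrak{a}$-modules are in bijection with left cells of $W_{\mathfrak{p}}$, hence $\mathrm{Ann}_{U(\mathfrak{a})}(V)$ determines a two-sided cell $\mathcal{LR}$ of $W_{\mathfrak{p}}$, and a choice of right cell $\mathcal{R}\subset\mathcal{LR}$ then determines $\mathcal{C}$. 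The freedom in the choice of $\mathcal{R}$ within $\mathcal{LR}$ is precisely the ambiguity in the statement. The key point to verify is that the generalized Verma module $M(\mathfrak{p},V)$, which only surjects onto a proper standard object $\overline{\Delta}^{\mathfrak{p},\mathcal{R}}(w)$ with kernel filtered by generalized Vermas induced from simples of strictly larger annihilator (as mentioned in the paragraph preceding the theorem), sits correctly inside the properly stratified structure of $\mathcal{O}(\mathfrak{p},\mathcal{C})$ described in Theorem~\ref{thm915}.

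Once this identification is in place, part~(a) is essentially a direct application of Theorem~\ref{thm917}(b): the action of $\cS$ on a regular block of $\mathcal{O}(\mathfrak{p},\mathcal{C})_{\lambda}$ categorifies, after extension to $\mathbb{Q}$, the induced cell module $\mathbb{Z}[v,v^{-1}]\mathcal{R}\otimes_{\mathbb{H}^{\mathfrak{p}}}\mathbb{H}$, with the standard basis elements $\underline{H}_{x}\otimes H_{y}$ corresponding under $\Psi$ to the classes of standard objects $\Delta^{\mathfrak{p},\mathcal{R}}(xy)$. Part~(b) is then automatic: regular blocks of $\mathcal{A}$ are, by step one, themselves regular blocks of categories of the form $\mathcal{O}(\mathfrak{p},\mathcal{C})_\lambda$, and these are the very subcategories of $\mathcal{O}$ whose $2$-representations are set up in Theorem~\ref{thm917}. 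Invariance of the resulting $2$-representation under the choice of $\mathcal{R}$ inside $\mathcal{LR}$ is exactly Proposition~\ref{prop937}.

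The main obstacle will be step one, namely honestly setting up the equivalence $\mathcal{A}\simeq\mathcal{O}(\mathfrak{p},\mathcal{C})_\lambda$ for the "smallest reasonable category containing $M(\mathfrak{p},V)$ closed under projective functors." Two subtleties must be addressed. First, one must verify that the annihilator of $V$, via Theorem~\ref{thm1506}, pins down $\mathcal{C}$ in a way compatible with the action of projective functors on $M(\mathfrak{p},V)$; here the type-$A$ restriction $\mathfrak{g}=\mathfrak{gl}_n$ enters crucially, since it guarantees that cell $W_{\mathfrak{p}}$-modules are simple after complexification, which is what lets one recover $\mathcal{C}$ intrinsically from $\mathrm{Ann}(V)$. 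Second, one must control the fact that $M(\mathfrak{p},V)$ is only a quotient of, not equal to, the corresponding proper standard object; an inductive argument on the preorder $\leq_{LR}$ of two-sided cells, using that the kernel is filtered by inductions from simples with strictly larger annihilator, should reduce the general case to the already-known top cell case. Once this inductive scaffolding is in place, the remainder of the argument is a bookkeeping exercise invoking Theorems~\ref{thm915} and~\ref{thm917}.
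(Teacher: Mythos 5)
Your overall architecture matches the route the paper sketches: realize $\mathcal{A}$ as an $\mathcal{O}(\mathfrak{p},\mathcal{C})_{\lambda}$ from Subsection~\ref{s9.4}, quote Theorem~\ref{thm917} for part~(a), and Proposition~\ref{prop937} for the independence of the choice of right cell. Parts~(a) and~(b) are indeed bookkeeping once the identification in your ``step one'' is in place.

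The gap is in the mechanism you propose for step one. The fact that the proper standard object only surjects onto $M(\mathfrak{p},V)$ with kernel filtered by inductions from simples with strictly larger annihilators is \emph{not} an ingredient in proving $\mathcal{A}\simeq\mathcal{O}(\mathfrak{p},\mathcal{C})_{\lambda}$; it is the device by which one later extracts the rough structure of $M(\mathfrak{p},V)$ from the theorem (the application to Problem~\ref{prop1611}), so your induction on $\leq_{LR}$ built on that filtration is aimed at the wrong target and has no obvious base case for an arbitrary simple $\mathfrak{a}$-module $V$. What actually has to be proved is that the category of $\mathfrak{a}$-modules generated by $V$ under projective functors --- which is what the ``smallest reasonable category'' $\mathcal{A}$ sees on the Levi factor --- is equivalent, \emph{as a $2$-representation of the $\mathfrak{a}$-analogue of $\cS$}, to the cell category $\mathcal{C}_{\mathcal{R}}/\mathcal{C}'_{\mathcal{R}}$ built from simple highest weight modules with the same annihilator. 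This is where the real work in \cite{MS2} lies: it uses Duflo's theorem together with Theorem~\ref{thm1506} to pass from $\mathrm{Ann}(V)$ to a left cell, and then a uniqueness statement for categorifications of simple cell modules with full projective functors in the sense of \cite{Kh} (Subsection~\ref{s3.8}; compare Theorem~\ref{thm904}) to conclude that the two $2$-representations agree. You correctly note that simplicity of the cell modules in type $A$ is essential, but without this uniqueness/full-projective-functors input your argument does not close; supplying it would complete the proof.
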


Since $M(\mathfrak{p},V)$ has categorical description as a
proper standard module, any equivalence from the above reduces
(a part of) the question about the structure of $M(\mathfrak{p},V)$ to 
the corresponding question for the image of $M(\mathfrak{p},V)$
in $\mathcal{O}$. The latter can be solved using Kazhdan-Lusztig's 
combinatorics (similarly to how it is done for usual Verma modules). 
However, the categorical picture completely disregards 
simple subquotients of $M(\mathfrak{p},V)$ which are induced from
modules with strictly bigger annihilators, so this part of the
structure of $M(\mathfrak{p},V)$ will be lost. What we really can
see using the above picture is simple subquotients of 
$M(\mathfrak{p},V)$ induced from modules with ``comparable''
annihilators. This is what is called the {\em rough structure}
\index{rough structure}
of $M(\mathfrak{p},V)$. However, in many cases, for example, 
in those studied in \cite{MiSo,MO0},  it is known that the rough 
structure coincides with the full structure of the module.

\section{Exercises}\label{s16}

\begin{exercise}\label{exs1}
\begin{enumerate}[$($a$)$]
\item\label{exs1.1} Prove Lemma~\ref{lem1}. 
\item\label{exs1.2} Construct an example of an algebra
$A$ such that the groups $\varphi([\mathcal{P}(A)])$ 
and  $[A\text{-}\mathrm{mod}]$ (notation from Subsection~~\ref{s1.2}) 
have different ranks.
\item\label{exs1.3} Let $\cA$ be an abelian category and
$\cC$ a Serre subcategory of $\cA$. Show that 
$[\cA/\cC]\cong[\cA]/[\cC]$.
\item\label{exs1.4} Consider the algebra $U(\mathfrak{sl}_2)$ with
the generating system
$\mathbf{e}=\left(\begin{array}{cc}0&1\\0&0\end{array}\right)$,
$\mathbf{f}=\left(\begin{array}{cc}0&0\\1&0\end{array}\right)$. Let 
$(B\text{-}\mathrm{mod},\varphi,\mathrm{E},\mathrm{F})$ be a na{\"\i}ve
categorification of a simple finite dimensional $A$-module. Show
that for every simple $B$-module $L$ the element $\varphi^{-1}([L])$
is a weight vector.
\item\label{exs1.5} Describe all weak categorifications of the
algebra $\mathbb{C}[a]/(a^2-a)$ with involution $a^*=a$.
\item\label{exs1.6} Construct a categorification $\cC$ for the 
semigroup algebra of a finite semigroup $S$ with involution $*$ such
that $*$ is categorified by an anti-autoequivalence $\circledast$ of 
$\cC$ and in every $2$-representation of $\cC$ the anti-autoequivalence
$\circledast$ would correspond to taking the biadjoint functor.
\item\label{exs1.7} Prove Proposition~\ref{prop306}. 
\item\label{exs1.8} Let $\cC$ be a fiat category and
$\mathcal{R}_1$ and $\mathcal{R}_2$  be two right cells of
$\cC$ which do not belong to the same two-sided cell.
Show that the cell modules $\mathbf{C}_{\mathcal{R}_1}$
and $\mathbf{C}_{\mathcal{R}_2}$ are not isomorphic.
\item\label{exs1.9} Construct explicitly a ``minimal''
fiat-category $\cC$ categorifying the algebra 
$\mathbb{C}[a]/(a^2-a)$ with involution $a^*=a$. Construct also
cell modules for $\cC$.
\end{enumerate}
\end{exercise}

\begin{exercise}\label{exs2}
\begin{enumerate}[$($a$)$]
\item\label{exs2.1} Prove that $M(\lambda)$ is the maximal quotient 
of $P(\lambda)$ having the property Theorem~\ref{thm402}\eqref{thm402.2}.
\item\label{exs2.2} Prove Corollary~\ref{cor406}. 
\item\label{exs2.3} Let $A$ be a quasi-hereditary algebra. Prove that 
both the images of indecomposable projective modules and the images 
of indecomposable tilting modules in $[A\text{-}\mathrm{mod}]$ form
a basis there.
\item\label{exs2.4} Prove that projective functors commute with $\star$.
\item\label{exs2.5} Let $w\in W$ and $s$ be a simple reflection.
Show that $ws<w$ implies that
$\theta_s\circ\theta_w\cong \theta_w\oplus\theta_w$.
Show further that $ws>w$ implies that $\theta_s\circ\theta_w$ has a unique
direct summand isomorphic to $\theta_{ws}$.
\item\label{exs2.6} For integral, regular and dominant $\lambda$ show
that $\mathrm{C}_{w_o}\,P_{\lambda}\cong T_{\lambda}$ and deduce 
Ringel self-duality of $\mathcal{O}_{\lambda}$.
\item\label{exs2.7} Prove that $\mathrm{Z}_s\cong\star\circ
\hat{\mathrm{Z}}_s\circ\star$, where $s\in W$ is
a simple reflection.
\item\label{exs2.8} In the case $\mathfrak{g}=\mathfrak{gl}_2$ show
that $\mathrm{T}_s\neq\mathrm{C}_s$ despite of the fact that 
$\mathrm{T}_s\, M=\mathrm{C}_s\, M$ for any $M\in\mathcal{O}_0$.
\item\label{exs2.9} In the notation of Subsection~\ref{s6.4} show
that $\mathrm{R}_s^2\cong \mathrm{P}_s$.
\end{enumerate}
\end{exercise}

\begin{exercise}\label{exs3}
\begin{enumerate}[$($a$)$]
\item\label{exs3.1} Prove Theorem~\ref{thm702} using \eqref{eq791}.
\item\label{exs3.2} Deduce Corollary~\ref{cor704} from
the double centralizer property and Theorem~\ref{thm703}.
\item\label{exs3.3} Prove Lemma~\ref{lem731}.
\item\label{exs3.4} For $n=3$ compute the complete multiplication
table of $\mathbb{H}$ in the Kazhdan-Lusztig basis.
\item\label{exs3.5} Check that two different categorifications
of the two-dimensional Specht $\mathbb{S}_3$-module
(using different parabolic subalgebras of $\mathfrak{gl}_3$)
are equivalent.
\item\label{exs3.6} Compute explicitly the algebra categorifying
the two-dimensional simple $\mathbb{S}_3$-module.
\item\label{exs3.7} In the notation of Subsection~\ref{s9.1} 
let $\mathcal{R}$ be a right cell and $s$ a simple reflection such 
that $sw<w$ for any $w\in \mathcal{R}$. Show that the functor 
$\mathrm{Q}_s$ is exact on 
$\mathcal{O}_{\lambda}^{\hat{\mathcal{R}}}/\mathcal{C}$.
\item\label{exs3.8} Prove Lemma~\ref{lem909}.
\item\label{exs3.9} Check that both $\mathcal{K}$ and
$\mathcal{K}'$, as defined in Subsection~\ref{s9.4}, are closed
under the action of projective functors.
\end{enumerate}
\end{exercise}

\begin{exercise}\label{exs4}
\begin{enumerate}[$($a$)$]
\item\label{exs4.1} Show that there are no nontrivial homotopies 
between linear complexes of projective modules.
\item\label{exs4.2} Determine explicitly all indecomposable
objects in the category $\mathscr{LC}(B_0)$ in the case $n=2$
and verify Koszul self-duality of $B_0$ in this case.
\item\label{exs4.3} Verify Theorem~\ref{thm1015} in the case 
$n=2$ using an explicit calculation. In particular, determine 
explicitly the ``appropriate shift in grading and position''.
\item\label{exs4.4} Prove that $[-a]=-[a]$; 
$[a]\in\mathbb{Z}[v,v^{-1}]$;
$\left[\begin{array}{c}a\\n\end{array}\right]=(-1)^n
\left[\begin{array}{c}-a+n-1\\n\end{array}\right]$;
$\left[\begin{array}{c}a\\n\end{array}\right]=
\frac{[a]!}{[n]![a-n]!}$; 
$\left[\begin{array}{c}a\\n\end{array}\right]\in\mathbb{Z}[v,v^{-1}]$.
\item\label{exs4.5} Compute explicitly the categorifications of
the modules $\mathcal{V}_1^3$ and $\mathcal{V}_3$.
\item\label{exs4.6} Show that Serre subcategories defined via
increasing Gelfand-Kirillov dimension define a filtration of
$\mathcal{V}_1^{\otimes n}$ whose subquotients are isotypic
components of $\mathcal{V}_1^{\otimes n}$.
\item\label{exs4.7} Compute Jones polynomial for the
{\em trefoil knot}
\index{trefoil knot}
\begin{displaymath}
T:=
\begin{array}{c}
\xygraph{
!{0;/r-1.0pc/:}
!P3"a"{~>{}}
!P9"b"{~:{(1.3288,0):}~>{}}
!P3"c"{~:{(2.5,0):}~>{}}
!{\vunder~{"b2"}{"b1"}{"a1"}{"a3"}}
!{\vcap~{"c1"}{"c1"}{"b4"}{"b2"}=<}
!{\vunder~{"b5"}{"b4"}{"a2"}{"a1"}}
!{\vcap~{"c2"}{"c2"}{"b7"}{"b5"}=<}
!{\vunder~{"b8"}{"b7"}{"a3"}{"a2"}}
!{\vcap~{"c3"}{"c3"}{"b1"}{"b8"}=<}
} 
\end{array}
\end{displaymath}
\item\label{exs4.8} Prove Proposition~\ref{prop1203}.
\item\label{exs4.9} Check that the endomorphism of
$\hat{\mathcal{V}}_1^2$, associated to the
right crossing, equals $-v\mathrm{Id}+\cup\circ\cap$, where
$\cup$ and $\cap$ denote morphisms associate to the cup and
cap diagrams, respectively. Find a similar expression for the 
morphism associated to the left crossing. 
\item\label{exs4.10} Show that the ranks of 
$[\mathcal{O}_{\lambda}^{\mathrm{max}}]$ and
$[\mathcal{O}_{0}^{\mathrm{max}}]$ from 
Proposition~\ref{prop1208} coincide.
\item\label{exs4.11} Check that in the case $n=2$ the combinatorics
of translations to and from the walls and that of shuffling and
coshuffling functors corresponds to the combinatorics of morphisms
between tensor powers of $\mathcal{V}_1$ as described in
Subsection~\ref{s12.3}.
\end{enumerate}
\end{exercise}

\begin{exercise}\label{exs5}
\begin{enumerate}[$($a$)$]
\item\label{exs5.2} Classify all $\mathfrak{sl}_2$-categorifications
of the simple $1$- and $2$-dimensional $\mathfrak{sl}_2$-modules.
\item\label{exs5.3} Prove Lemma~\ref{lem1315}. 
\item\label{exs5.4} Given an $\mathfrak{sl}_2$-categorification,
show that there is a filtration of 
$\mathcal{A}$ by Serre subcategories such that 
subquotients of this filtration descend exactly to 
isotypic $\mathfrak{sl}_2$-components in $[\mathcal{A}]$.
\item\label{exs5.5} Prove Proposition~\ref{prop1402} in the
case $\mathbb{F}=\mathbb{C}$.
\item\label{exs5.6} Let $M$ be an $\mathbb{F}[\mathbb{S}_n]$-module.
Show that 
\begin{displaymath}
\mathrm{ch}\, M=
\sum_{\mathbf{i}\in\mathbb{F}_p^n}a_{\mathbf{i}}\exp(\mathbf{i})
\quad\text{ implies }\quad
\mathrm{ch}\, (\mathrm{E}_i\,M)=
\sum_{\mathbf{i}\in\mathbb{F}_p^{n-1}}
a_{(\mathbf{i},i)}\exp(\mathbf{i}).
\end{displaymath}
\item\label{exs5.7} Compute explicitly all blocks and the 
corresponding defect groups for $\mathbb{S}_3$ over an algebraically
closed field of characteristic $2$ and $3$.
\item\label{exs5.8} Prove Theorem~\ref{thm1414}\eqref{thm1414.1}
on the level of the Grothendieck group.
\item\label{exs5.9} Show that $\{f_w:w\in \mathbb{S}_n\}$
is not a basis of $\mathbb{Z}\mathbb{S}_n$ for $n>1$.
\item\label{exs5.10} Prove Proposition~\ref{prop1508} in the case
$n=3$ by a direct computation.
\item\label{exs5.11} Prove that the generalized Verma module
induced from a Verma module is again a Verma module.
\end{enumerate}
\end{exercise}

\vspace{1cm}

\noindent
Volodymyr Mazorchuk, Department of Mathematics, Uppsala University,
Box 480, 751 06, Uppsala, SWEDEN, {\tt mazor\symbol{64}math.uu.se};
http://www.math.uu.se/$\tilde{\hspace{1mm}}$mazor/.
\newpage

\printindex

\end{document}